\numberwithin{equation}{section}
\newtheorem{thm}{Theorem}[section]
\newtheorem{cor}[thm]{Corollary}
\newtheorem{lem}[thm]{Lemma}
\newtheorem{prop}[thm]{Proposition}
\newtheorem{defn}[thm]{Definition}
\def\N{{{\Bbb N}}}
\def\T{{{\Bbb T}}}
\def\B{{{ \textbf{B}}}}
\theoremstyle{remark}
\newtheorem{rem}[thm]{Remark}
\begin{document}
\title[Function spaces of logarithmic smoothness]{Function spaces of logarithmic smoothness: embeddings and characterizations}
\author
{\'Oscar Dom\'inguez}\address{O. Dom\'inguez, Departamento de An\'alisis Matem\'atico y Matem\'atica Aplicada, Facultad de Matem\'aticas, Universidad Complutense de Madrid\\
Plaza de Ciencias 3, 28040 Madrid, Spain.}
\email{oscar.dominguez@ucm.es}

\author{Sergey  Tikhonov} \address{S. Tikhonov, Centre de Recerca Matem\`{a}tica\\
Campus de Bellaterra, Edifici C
08193 Bellaterra (Barcelona), Spain;
ICREA, Pg. Llu\'{i}s Companys 23, 08010 Barcelona, Spain,
 and Universitat Aut\`{o}noma de Barcelona.}
\email{ stikhonov@crm.cat}

\date{\today}
\keywords{} \subjclass{46E35, 46E30,  46B70, 42B10, 42A16, 26A15}


\maketitle



\bigskip
\begin{abstract}
In this paper we present a comprehensive treatment of function spaces with logarithmic smoothness (Besov, Sobolev, Trie\-bel-Lizorkin). We establish the following results:
\begin{enumerate}
	\item Sharp embeddings between the Besov spaces defined by differences and by Fourier-analytical decompositions as well as between Besov and Sobolev/Trie\-bel-Lizorkin spaces;
	\item Various new characterizations for Besov norms in terms of different K-functionals. For instance, we derive characterizations via ball averages, approximation methods, heat kernels, and Bianchini-type norms;
	\item Sharp estimates for Besov norms of derivatives and potential operators (Riesz and Bessel potentials) in terms of norms of functions themselves. We also obtain quantitative estimates of regularity properties of the fractional Laplacian.
\end{enumerate}
The key tools behind our results are limiting interpolation techniques and new characterizations of Besov and Sobolev norms in terms of the behavior of the Fourier transforms for functions such that their Fourier transforms are of monotone type or lacunary series.


\end{abstract}
\tableofcontents
%
%
%
%

\newpage

\section{Introduction}

Function spaces of generalized smoothness play a crucial role in obtaining  the complete solution of several important  questions, for which the classical approach turns out to be limited. Let us mention just a few of them: the celebrated Br\'ezis-Wainger inequality \cite{BrezisWainger} on the logarithmic Lipschitz continuity of functions from the Sobolev space $H^{1+d/p}_p(\mathbb{R}^d), 1 < p < \infty$; the investigation of compactness of limiting embeddings, which  requires the finer tuning given by the logarithmic smoothness \cite{Leopold}; applications to probability theory and the theory of stochastic processes \cite{FarkasLeopold}; smoothness spaces with power-logarithmic majorants, which  are used extensively in functional analysis (see, e.g.,
\cite{robert})
and differential equations (see, e.g., \cite{Colombini, zuazua}); or function spaces defined on fractals and a related spectral theory (see the monographs by Triebel \cite{Triebel01, Triebel3}).

\subsection{Besov, Sobolev, and Triebel--Lizorkin spaces.}
There are several ways to introduce function spaces of generalized smoothness. Among them, the approaches based on differences and Fourier-analytical decompositions are the most popular. For $1 \leq p
\leq \infty, 0 < q \leq \infty,$ and $- \infty < b < \infty$, the Besov space of generalized smoothness can be described via the quasi-norms
\begin{equation}\label{Intro1}
    \|f\|_{\textbf{B}^{s,b}_{p,q}(\mathbb{R}^d)} = \|f\|_{L_p(\mathbb{R}^d)}+ \left(\int_0^1 (t^{-s} (1 - \log t)^b \omega_k(f,t)_p)^q
    \frac{dt}{t}\right)^{1/q}, \quad s \geq
0,
\end{equation}
 and
\begin{equation}\label{Intro2}
    \|f\|_{B^{s,b}_{p,q}(\mathbb{R}^d)} = \Bigg(\sum_{j=0}^\infty \Big(2^{j s} (1 + j)^b \|(\varphi_j
    \widehat{f})^\vee\|_{L_p(\mathbb{R}^d)}\Big)^q\Bigg)^{1/q},\quad s \in  \mathbb{R}.
\end{equation}
See Section \ref{Section 2.2} below for precise definitions. Note that if $b = 0$ we recover the classical Besov spaces
$\mathbf{B}^s_{p,q}(\mathbb{R}^d)$ and ${B}^s_{p,q}(\mathbb{R}^d)$. In the limiting case where $s = 0$ we obtain
the spaces $\mathbf{B}^{0,b}_{p,q}(\mathbb{R}^d)$ and ${B}^{0,b}_{p,q}(\mathbb{R}^d)$ having zero classical smoothness and an additional
logarithmic smoothness with exponent $b$. Even though these spaces  are close to $L_p(\mathbb{R}^d)$, they have several special properties due to 
  their structure as Besov spaces.

Working with positive classical smoothness (i.e., $s > 0$), it is well known that Besov spaces can be equivalently characterized by (\ref{Intro1}) or (\ref{Intro2}). More precisely, we have
\begin{equation}\label{BesovComparison}
	\mathbf{B}^{s,b}_{p,q}(\mathbb{R}^d) =
B^{s,b}_{p,q}(\mathbb{R}^d) \quad \text{ for } \quad s > 0,
\end{equation}
with equivalence of quasi-norms (see \cite[2.5.12]{Triebel1} and \cite[Theorem
2.5]{HaroskeMoura}). However, this formula is no longer true in the
limiting case when $s=0$. Indeed, the space $\mathbf{B}^{0,b}_{p,q}(\mathbb{R}^d)$ is a subspace of $L_p(\mathbb{R}^d)$, but $B^{0,b}_{p,q}(\mathbb{R}^d)$ is not necessarily formed by locally integrable functions (see \cite{CaetanoLeopold} with the forerunner \cite{SickelTriebel}). However, it is still possible to compare these two scales of Besov spaces with the help of some shifts in the logarithmic smoothness. Namely, the following result was obtained in
\cite[Theorem 3.3]{CobosDominguez3}: If $1 < p < \infty, 0 < q \leq \infty$ and $b > -1/q$, then
\begin{equation}\label{1}
    B^{0,b + 1/\min\{2,p,q\}}_{p,q}(\mathbb{R}^d) \hookrightarrow
    \textbf{B}^{0,b}_{p,q}(\mathbb{R}^d) \hookrightarrow
    B^{0,b+1/\max\{2,p,q\}}_{p,q}(\mathbb{R}^d).
\end{equation}
In particular, we have
\begin{equation}\label{BesovZero}
 \mathbf{B}^{0,b}_{2,2}(\mathbb{R}^d) =
B^{0,b+1/2}_{2,2}(\mathbb{R}^d) \quad \text{for} \quad b > -1/2,
\end{equation}
and putting $b=0$, we derive that the classical Besov space $\mathbf{B}^{0}_{2,2}(\mathbb{R}^d)$ can be described through Fourier decompositions with an additional logarithmic smoothness of exponent $1/2$. Consequently, logarithmic smoothness arises in a natural way when we investigate relations between different scales of classical Besov spaces in limiting situations. Little is known about the sharpness of (\ref{1}), except for the case $p=\min\{2,p,q\}$ (respectively, $p=\max\{2,p,q\}$) where it was proved in \cite{CobosDominguezTriebel} that $b + 1/p$ is the best possible logarithmic smoothness for which the left-hand side embedding (respectively, right-hand side embedding) in (\ref{1}) holds. On the other hand, Winfried Sickel \cite{Sickel} conjectured that (\ref{BesovZero}) is the only possible case when the $\mathbf{B}$- and $B$-spaces with classical smoothness zero coincide.


Another scale formed by smoothness spaces, which is intensively investigated in the literature, is given by Triebel-Lizorkin spaces. For $1 < p < \infty, 0 < q \leq \infty,$ and $-\infty < s, b < \infty$,
 the Triebel-Lizorkin space $F^{s,b}_{p,q}(\mathbb{R}^d)$ is formed by all tempered distributions $f \in
\mathcal{S}'(\mathbb{R}^d)$ for which
\begin{equation*}
    \|f\|_{F^{s,b}_{p,q}(\mathbb{R}^d)} = \Big\|\Big(\sum_{j=0}^\infty (2^{js} (1 + j)^b |(\varphi_j \widehat{f})^\vee
    (\cdot)|)^q\Big)^{1/q}\Big\|_{L_p(\mathbb{R}^d)} < \infty
\end{equation*}
(with the usual modification if $q=\infty$).
It was known for a long time that the
 Sobolev spaces
$H^{s,b}_p(\mathbb{R}^d)$
 are particular cases of the spaces $F^{s,b}_{p,q}(\mathbb{R}^d)$. Namely (see \cite[2.3.3]{Triebel} and \cite[Theorem 3.4]{CobosFernandez}),
\begin{equation}\label{LPgeneral}
	F^{s,b}_{p,2}(\mathbb{R}^d) = H^{s,b}_p(\mathbb{R}^d)
\end{equation}
with equivalence of norms. In particular, if $b=0$ we recover the classical (fractional) Sobolev spaces $H^s_p(\mathbb{R}^d)$. See Section \ref{Section 2.2} below for further details 
 on Triebel-Lizorkin and Sobolev spaces.

In order to state our main results and explain the new ideas and phenomena, we first resume briefly several crucial embeddings between $B$, $\mathbf{B}$, $H$, and $F$-spaces.

We start with embeddings with constant smoothness and integrability, that are, in the form $X^{s,b}_{p,q} \hookrightarrow Y^{s,b}_{p,r}$, where $X, Y \in \{B, \mathbf{B}, H, F\}$.  The proofs can be found, for instance, in \cite[Proposition 2.3.2/2]{Triebel1}, \cite[Theorem 3.1.1]{SickelTriebel} and, for the case $b \neq 0$, we refer to \cite[Proposition 3.4]{CaetanoMoura}.

\begin{prop}\label{RecallEmb}
Let $1 < p < \infty, 0 <  q, r \leq \infty$, and $-\infty < s, b < \infty$. Then, we have
\begin{enumerate}[\upshape(i)]
\item
    $F^{s,b}_{p,r}(\mathbb{R}^d) \hookrightarrow B^{s,b}_{p,q}(\mathbb{R}^d) \quad \text{if and only if} \quad q \geq
    \max\{p,r\}$,

\item $B^{s,b}_{p,q}(\mathbb{R}^d) \hookrightarrow F^{s,b}_{p,r}(\mathbb{R}^d) \quad \text{if and only if} \quad  q \leq
    \min\{p,r\}$.
    \end{enumerate}
In particular, $F^{s,b}_{p,p}(\mathbb{R}^d) = B^{s,b}_{p,p}(\mathbb{R}^d)$.

If, in addition, $s > 0$ then
    \begin{enumerate}[\upshape(i)]
    \setcounter{enumi}{2}
    \item      $F^{s,b}_{p,r}(\mathbb{R}^d) \hookrightarrow \mathbf{B}^{s,b}_{p,q}(\mathbb{R}^d) \quad \text{if and only if} \quad  q \geq
    \max\{p,r\}$,
      \item  $H^{s,b}_p(\mathbb{R}^d) \hookrightarrow \mathbf{B}^{s,b}_{p,q}(\mathbb{R}^d) \quad \text{if and only if} \quad  q \geq
    \max\{p,2\}$,
    \item $\mathbf{B}^{s,b}_{p,q}(\mathbb{R}^d) \hookrightarrow F^{s,b}_{p,r}(\mathbb{R}^d) \quad \text{if and only if} \quad  q \leq
    \min\{p,r\}$,
\item
    $\mathbf{B}^{s,b}_{p,q}(\mathbb{R}^d) \hookrightarrow H^{s,b}_p(\mathbb{R}^d) \quad \text{if and only if} \quad  q \leq
    \min\{p,2\}$.
\end{enumerate}
\end{prop}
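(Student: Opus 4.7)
My strategy is to reduce every assertion to the two Fourier-side embeddings (i) and (ii) and then to obtain the rest from identifications already recorded in the paper. Indeed, for $s>0$ the coincidence $\mathbf{B}^{s,b}_{p,q}(\mathbb{R}^d) = B^{s,b}_{p,q}(\mathbb{R}^d)$ from (\ref{BesovComparison}) transports (i) and (ii) directly into (iii) and (v); specializing (iii) and (v) to $r=2$ and invoking $H^{s,b}_p(\mathbb{R}^d) = F^{s,b}_{p,2}(\mathbb{R}^d)$ from (\ref{LPgeneral}) then yields (iv) and (vi). Hence the whole proposition boils down to establishing (i) and (ii).

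For the sufficiency in (i) and (ii), I would combine two elementary facts. First, the monotonicity of $\ell^q$ in $q$ gives $B^{s,b}_{p,q_0} \hookrightarrow B^{s,b}_{p,q_1}$ and $F^{s,b}_{p,r_0} \hookrightarrow F^{s,b}_{p,r_1}$ whenever $q_0\leq q_1$ and $r_0\leq r_1$. Second, Minkowski's integral inequality for mixed norms yields $L^p(\ell^q) \hookrightarrow \ell^q(L^p)$ when $q \geq p$ and the reverse inclusion when $q \leq p$. Composing these through the identity $B^{s,b}_{p,p}(\mathbb{R}^d) = F^{s,b}_{p,p}(\mathbb{R}^d)$ produces, under $q \geq \max\{p,r\}$, the chain
\[
F^{s,b}_{p,r}(\mathbb{R}^d) \hookrightarrow F^{s,b}_{p,\max\{p,r\}}(\mathbb{R}^d) \hookrightarrow B^{s,b}_{p,\max\{p,r\}}(\mathbb{R}^d) \hookrightarrow B^{s,b}_{p,q}(\mathbb{R}^d),
\]
and symmetrically when $q\leq \min\{p,r\}$.

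For the necessity I would test against two lacunary families. To compare $q$ with $p$, fix a bump $\psi$ with $\widehat{\psi}$ supported in the first dyadic annulus, translate so that $\psi_j(x)=e^{i\,2^j e_1\cdot x}\psi(x-y_j)$ is Fourier-localized in the $j$-th annulus and the translates $y_j$ force spatially disjoint supports, and set $f_N=\sum_{j=1}^N 2^{-js}(1+j)^{-b}\psi_j$. Then $\|f_N\|_{B^{s,b}_{p,q}}\asymp N^{1/q}\|\psi\|_p$ while disjointness gives $\|f_N\|_{F^{s,b}_{p,r}}\asymp N^{1/p}\|\psi\|_p$, so (i) forces $q\geq p$. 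To compare $q$ with $r$, I keep all blocks at the same spatial location by taking $\psi_j(x)=e^{i\,2^j e_1 \cdot x}\psi(x)$; now $\|f_N\|_{B^{s,b}_{p,q}}\asymp N^{1/q}\|\psi\|_p$ whereas $\|f_N\|_{F^{s,b}_{p,r}}\asymp N^{1/r}\|\psi\|_p$, giving $q\geq r$. Together these force $q\geq \max\{p,r\}$; the mirror constructions establish the sharpness in (ii).

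The main obstacle is the sharpness direction: the test functions must have Fourier supports that fall cleanly into the Littlewood--Paley annuli while their $L^p$-profiles separate the $L^p(\ell^r)$ and $\ell^q(L^p)$ structures. This is classical for $b=0$, and the logarithmic weight $(1+j)^b$ appears symmetrically on both sides of every comparison above and cancels in the ratio; thus the only effect of $b$ is a harmless amplitude rescaling of each block by $(1+j)^{-b}$, so the sharpness thresholds are unchanged.
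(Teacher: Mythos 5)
The paper does not prove Proposition \ref{RecallEmb}; for parts (i) and (ii) it defers to the literature, citing \cite[Proposition 2.3.2/2]{Triebel1}, \cite[Theorem 3.1.1]{SickelTriebel}, and \cite[Proposition 3.4]{CaetanoMoura}, and then records — as you also do — that (iii)--(vi) follow from (i)--(ii) via the identifications (\ref{BesovComparison}) and (\ref{LPgeneral}). Your sufficiency chain, composing $\ell_q$-monotonicity with Minkowski's inequality $L_p(\ell_t)\hookrightarrow \ell_t(L_p)$ for $t\geq p$ through the identity $F^{s,b}_{p,p}=B^{s,b}_{p,p}$, is the standard argument and is correct; so is the observation that the log-weight $(1+j)^b$ cancels against the amplitudes $(1+j)^{-b}$, which is exactly why the thresholds are independent of $b$. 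The one place I would press you is the necessity of $q\geq p$ in (i): you ask for translates $\psi(\cdot - y_j)$ with spatially disjoint supports, but since $\widehat{\psi}$ is compactly supported, $\psi$ extends to an entire function and cannot vanish on any open set, so the supports are never disjoint, only approximately so after separating the $y_j$; making $\|f_N\|_{F^{s,b}_{p,r}}\asymp N^{1/p}\|\psi\|_{L_p(\mathbb{R}^d)}$ precise then requires a tail estimate controlling the cross-terms uniformly in $N$, which is routine but not free. The references you are implicitly reproducing avoid this entirely by passing to sequence spaces via atomic or wavelet decompositions, where compactly supported building blocks give exact disjointness and your two test families become literal unit-vector sequences; with that substitution, or with a careful tail estimate in place of the disjointness claim, your argument is complete.
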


Note that by (\ref{BesovComparison}) and (\ref{LPgeneral}), the embeddings given in (iii) and (iv) (respectively, (v) and (vi)) are special cases of (i) (respectively, (ii)).

Now we deal with the simple embeddings for Besov and Triebel--Lizorkin spaces of the form $X^{s_0,b_0}_{p,q_0} \hookrightarrow X^{s_1,b_1}_{p,q_1}$,  which are  immediate consequences of H\"older's inequality.
\begin{prop}\label{RecallEmb*}
	Let $1 < p < \infty, 0 < q_0, q_1 \leq \infty$, and $-\infty < s_0, s_1, b_0, b_1 < \infty$. Then, we have
	\begin{equation*}
		B^{s_0, b_0}_{p,q_0}(\mathbb{R}^d) \hookrightarrow B^{s_1, b_1}_{p,q_1}(\mathbb{R}^d)
	\end{equation*}
	if one of the following conditions is valid
	\begin{enumerate}[\upshape(i)]
		\item $s_0 > s_1$,
		\item $s_0 = s_1, q_0 \leq q_1$, and $b_0 \geq b_1$,
		\item $s_0 = s_1, q_0 > q_1$, and $b_0 + \frac{1}{q_0} > b_1 + \frac{1}{q_1}$.
	\end{enumerate}
	The corresponding assertion for $F$-spaces is also true.
\end{prop}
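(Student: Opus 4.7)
The plan is to reduce the Besov embedding to a weighted sequence-space inequality and then invoke H\"older's inequality in the straightforward cases. Fix $f$ and set $\alpha_j := \|(\varphi_j \widehat{f})^\vee\|_{L_p(\mathbb{R}^d)}$, so that
\begin{equation*}
\|f\|_{B^{s,b}_{p,q}(\mathbb{R}^d)} = \bigl\|\bigl(2^{js}(1+j)^b \alpha_j\bigr)_{j \geq 0}\bigr\|_{\ell^q}.
\end{equation*}
Writing $x_j := 2^{js_0}(1+j)^{b_0}\alpha_j$ and $w_j := 2^{-j(s_0-s_1)}(1+j)^{b_1-b_0}$, the desired embedding is equivalent to $\|(w_j x_j)\|_{\ell^{q_1}} \lesssim \|(x_j)\|_{\ell^{q_0}}$ for all nonnegative sequences $(x_j)$.

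Next I would split into the three cases of the statement. In case (i), $s_0 > s_1$ forces $w_j$ to decay geometrically, so $w \in \ell^r$ for every $r \in (0,\infty]$, regardless of the relation between $b_0$ and $b_1$. In case (ii), $s_0 = s_1$ and $b_0 \geq b_1$ give $w_j \leq 1$, whence $\|(w_j x_j)\|_{\ell^{q_1}} \leq \|x\|_{\ell^{q_1}} \leq \|x\|_{\ell^{q_0}}$ by the classical monotonicity $\ell^{q_0} \hookrightarrow \ell^{q_1}$ (valid because $q_0 \leq q_1$). In case (iii) one applies H\"older's inequality with $1/q_1 = 1/q_0 + 1/r$, i.e.\ $r = q_0 q_1/(q_0 - q_1)$, which gives $\|(w_j x_j)\|_{\ell^{q_1}} \leq \|w\|_{\ell^r}\|x\|_{\ell^{q_0}}$; the summability $\sum_j (1+j)^{(b_1-b_0)r} < \infty$ is equivalent to $(b_0 - b_1) r > 1$, which after computing $1/r = 1/q_1 - 1/q_0$ is exactly the hypothesis $b_0 + 1/q_0 > b_1 + 1/q_1$. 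Observe that (i) subsumes both sub-cases with the appropriate choice of $r$ and hence the three items can actually be treated uniformly.

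For the $F$-spaces the same pointwise analysis applies inside the $L_p$-norm. More precisely, replacing $\alpha_j$ by $|(\varphi_j\widehat{f})^\vee(x)|$, the inequality $\|(w_j x_j)\|_{\ell^{q_1}} \lesssim \|(x_j)\|_{\ell^{q_0}}$ holds pointwise in $x \in \mathbb{R}^d$; taking $L_p$-norms of both sides and using the monotonicity of $L_p$ delivers $F^{s_0,b_0}_{p,q_0} \hookrightarrow F^{s_1,b_1}_{p,q_1}$ under the same three sets of conditions. I do not foresee any real obstacle: the entire argument is an exercise in H\"older's inequality once the problem is phrased in terms of the weighted sequence $w_j$, and the only mild care needed concerns the endpoint cases $q_0 = \infty$ or $q_1 = \infty$, where H\"older is replaced by the obvious $\ell^\infty$-bound.
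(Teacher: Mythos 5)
Your proof is correct and is exactly the Hölder argument the paper has in mind: the paper introduces Proposition \ref{RecallEmb*} with the sentence that the embeddings ``are immediate consequences of H\"older's inequality'' and gives no further detail, and your reduction to the weighted sequence inequality $\|(w_j x_j)\|_{\ell^{q_1}}\lesssim\|(x_j)\|_{\ell^{q_0}}$ with $w_j = 2^{-j(s_0-s_1)}(1+j)^{b_1-b_0}$, handled via $\ell^{q_0}\hookrightarrow\ell^{q_1}$ plus boundedness of $w$ (when $q_0\le q_1$) and H\"older with $1/r = 1/q_1-1/q_0$ (when $q_0>q_1$), together with the observation that the same scalar inequality applied pointwise in $x$ yields the $F$-space case after taking $L_p$-norms, is a faithful expansion of that remark.
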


The counterpart of Proposition \ref{RecallEmb*} for $\mathbf{B}$-spaces reads as follows.
\begin{prop}\label{RecallEmb**}
	Let $1 < p < \infty, 0 < q_0, q_1 \leq \infty, s_0, s_1 \geq 0,$ and $-\infty< b_0, b_1 < \infty \, (b_i \geq -1/q_i \quad \text{if} \quad s_i = 0, i = 0, 1)$. Then, we have
	\begin{equation*}
		\mathbf{B}^{s_0, b_0}_{p,q_0}(\mathbb{R}^d) \hookrightarrow \mathbf{B}^{s_1, b_1}_{p,q_1}(\mathbb{R}^d)
	\end{equation*}
	if one of the following conditions is valid
	\begin{enumerate}[\upshape(i)]
		\item $s_0 > s_1 \geq 0$,
		\item $s_0 = s_1 > 0, q_0 \leq q_1$, and $b_0 \geq b_1$,
		\item $s_0 = s_1 > 0, q_0 > q_1$, and $b_0 + \frac{1}{q_0} > b_1 + \frac{1}{q_1}$,
		\item $s_0 = s_1 = 0, b_0 + \frac{1}{q_0} > b_1 + \frac{1}{q_1}$,
		\item $s_0 = s_1 = 0, b_0 + \frac{1}{q_0} = b_1 + \frac{1}{q_1}$, and $q_0 \leq q_1$.
	\end{enumerate}
\end{prop}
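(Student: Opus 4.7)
The plan is to reduce everything to a discrete inequality between weighted sequence (quasi-)norms. First I would establish the equivalence
\[
\|f\|_{\mathbf{B}^{s,b}_{p,q}(\mathbb{R}^d)} \asymp \|f\|_{L_p(\mathbb{R}^d)} + \bigg(\sum_{j=0}^\infty \big(2^{js}(1+j)^b \omega_k(f, 2^{-j})_p\big)^q\bigg)^{1/q},
\]
using that $t \mapsto \omega_k(f,t)_p$ is nondecreasing with $\omega_k(f,2t)_p \le 2^k \omega_k(f,t)_p$, and that $1-\log t \asymp 1+j$ on $t \in [2^{-(j+1)}, 2^{-j})$. Setting $a_j := \omega_k(f, 2^{-j})_p$, this turns the proposition into a comparison of weighted $\ell_q$-norms on the nonnegative, nonincreasing sequence $(a_j)$.

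For case (i), the simple observation $|x_j| \le \|x\|_{\ell_{q_0}}$ applied to $x_j = 2^{j s_0}(1+j)^{b_0}a_j$ yields the pointwise bound $a_j \le C\,\|f\|_{\mathbf{B}^{s_0,b_0}_{p,q_0}}\, 2^{-j s_0}(1+j)^{-b_0}$, after which the geometric gap $s_0 - s_1 > 0$ forces convergence of $\sum_j 2^{-j(s_0-s_1)q_1}(1+j)^{(b_1-b_0)q_1}$ regardless of the values of $b_0, b_1$. For cases (ii) and (iii), where $s_0 = s_1 > 0$, I would let $c_j := 2^{j s_0}(1+j)^{b_0}a_j$ so that the source norm equals $\|c\|_{\ell_{q_0}}$ up to the $L_p$-term. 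Then (ii) follows from $(1+j)^{b_1-b_0} \le 1$ combined with the trivial inclusion $\ell_{q_0} \hookrightarrow \ell_{q_1}$ for $q_0 \le q_1$; case (iii) follows from H\"older's inequality with conjugate exponents $q_0/q_1$ and $q_0/(q_0-q_1)$ applied to $\sum_j (1+j)^{(b_1-b_0)q_1} c_j^{q_1}$, the resulting weight series $\sum_j (1+j)^{(b_1-b_0)q_0 q_1/(q_0-q_1)}$ being convergent precisely under the hypothesis $b_0 + 1/q_0 > b_1 + 1/q_1$.

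Cases (iv) and (v) are more delicate because $s_0 = s_1 = 0$ and only the log weight separates the two norms. Here I would introduce a second, dyadic discretization: grouping $j \in [2^m, 2^{m+1})$ and exploiting the monotonicity of $(a_j)$, one obtains
\[
\sum_{j=0}^\infty \big((1+j)^{b} a_j\big)^q \asymp \sum_{m=0}^\infty \big(2^{m(b + 1/q)} c_m\big)^q, \qquad c_m := a_{2^m},
\]
thereby converting the log scale into a power scale with exponent $\alpha := b + 1/q \ge 0$ on a new nonincreasing sequence $(c_m)$. Writing $\alpha_i := b_i + 1/q_i$, in case (iv) the hypothesis reads $\alpha_0 > \alpha_1 \ge 0$, which forces $\alpha_0 > 0$; monotonicity of $(c_m)$ then gives the pointwise bound $c_K \lesssim 2^{-K \alpha_0}\|(2^{m \alpha_0} c_m)\|_{\ell_{q_0}}$, after which the argument of case (i) applies verbatim with $\alpha_0,\alpha_1$ in place of $s_0,s_1$. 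For (v), the same pointwise bound shows that $d_m := 2^{m \alpha} c_m$ is uniformly bounded by the source norm, and combined with $(d_m) \in \ell_{q_0}$ and $q_0 \le q_1$ this gives $(d_m) \in \ell_{q_1}$.

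The main obstacle is precisely the second discretization required in cases (iv)--(v): without an exponential factor in the weight, decay of the sequence must be extracted from the monotonicity of $(a_j)$ alone, which only succeeds after passing to the dyadic subsequence $c_m = a_{2^m}$ and converting the log scale $(1+j)^b$ into a power scale $2^{m\alpha}$. Once that trick is in place, every case boils down to one of three routine tools: the trivial embedding $\ell_{q_0} \hookrightarrow \ell_{q_1}$ for $q_0 \le q_1$, H\"older's inequality, or the pointwise bound supplied by membership in a weighted $\ell_{q_0}$-space.
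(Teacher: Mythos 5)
Your proof is correct. The paper itself does not spell out an argument for Proposition~\ref{RecallEmb**}; it simply remarks that the analogous embeddings for the Fourier-analytic spaces (Proposition~\ref{RecallEmb*}) ``are immediate consequences of H\"older's inequality'' and then states~\ref{RecallEmb**} as the $\mathbf{B}$-counterpart, leaving the verification to the reader. Your discretization $a_j=\omega_k(f,2^{-j})_p$ and the reduction of cases (i)--(iii) to the trivial inclusion $\ell_{q_0}\hookrightarrow\ell_{q_1}$, a pointwise bound from $\ell_{q_0}$ membership, and one application of H\"older's inequality is exactly the intended routine argument. What you add — and what the paper leaves entirely implicit — is the identification of the genuine obstacle in cases (iv) and (v) and the correct tool for it: because $(a_j)$ is nonincreasing, regrouping over $j\in[2^m,2^{m+1})$ converts the purely logarithmic weight $(1+j)^{b}$ into a geometric weight $2^{m(b+1/q)}$ acting on $c_m=a_{2^m}$, after which (iv) reduces to case (i) with smoothness parameters $\alpha_i=b_i+1/q_i$ and (v) to $\ell_{q_0}\hookrightarrow\ell_{q_1}$. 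This regrouping is precisely where the $1/q$-shift in the exponents originates and where monotonicity of the modulus of smoothness is used beyond what the $s>0$ cases need; without it, for instance $q_0\leq q_1$ together with $b_0+1/q_0>b_1+1/q_1$ does not force $b_0\geq b_1$, so a naive weight comparison would fail. One small remark: in case (iv) the pointwise bound $c_K\lesssim 2^{-K\alpha_0}\|(2^{m\alpha_0}c_m)\|_{\ell_{q_0}}$ follows directly from $\ell_{q_0}$-membership and does not itself require monotonicity; the monotonicity was already spent in establishing the dyadic re-discretization.
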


To the best of our knowledge, the parts ``only if'' in Propositions \ref{RecallEmb*} and \ref{RecallEmb**} remain as open questions.

Below we give the key embeddings with different metrics, that are, in the form $X_{p_0} \hookrightarrow Y_{p_1}$.
\begin{prop}\label{RecallEmb2}
	Let  $-\infty < b < \infty$.
	\begin{enumerate}[\upshape(i)]
		\item Sobolev embeddings: Let $1 \leq p_0 < p_1 \leq \infty, -\infty < s_1 < s_0 < \infty$ with
		\begin{equation*}
			s_0 -\frac{d}{p_0} = s_1 -\frac{d}{p_1},
		\end{equation*}
		and $0 < q \leq \infty$. Then,
		\begin{equation}\label{e0}
		B^{s_0,b}_{p_0,q}(\mathbb{R}^d) \hookrightarrow B^{s_1,b}_{p_1,q}(\mathbb{R}^d).
		\end{equation}
		\item Franke-Jawerth embeddings: Let $1 \leq p_0 < p < p_1 \leq \infty$ and $-\infty < s_1 < s < s_0 < \infty$ with
\begin{equation*}
	s_0 - \frac{d}{p_0} = s -\frac{d}{p} = s_1 -\frac{d}{p_1}.
\end{equation*}
Then
\begin{equation}\label{e0*}
	B^{s_0,b}_{p_0,p} (\mathbb{R}^d) \hookrightarrow H^{s,b}_p(\mathbb{R}^d) \hookrightarrow B^{s_1,b}_{p_1,p}(\mathbb{R}^d).
\end{equation}
	\end{enumerate}
\end{prop}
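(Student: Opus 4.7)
For part (i), the plan is to apply the Nikol'skii inequality $\|f_j\|_{L_{p_1}} \lesssim 2^{jd(1/p_0 - 1/p_1)} \|f_j\|_{L_{p_0}}$ to each Littlewood--Paley block $f_j := (\varphi_j \widehat{f})^\vee$, whose Fourier support lies in a dyadic annulus of radius comparable to $2^j$. Multiplying both sides by the common weight $2^{js_1}(1+j)^b$ and invoking the scaling hypothesis $s_0 - d/p_0 = s_1 - d/p_1$, the right-hand side becomes $2^{js_0}(1+j)^b\|f_j\|_{L_{p_0}}$. Taking the $\ell^q$-norm in $j$ then yields \eqref{e0}. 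The logarithmic weight passes through unchanged because it appears identically on both sides.

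For part (ii), my strategy is to reduce both embeddings in \eqref{e0*} to the classical Franke--Jawerth theorem (the case $b=0$) by absorbing the logarithmic factor into each Littlewood--Paley piece. Set $g_j := (1+j)^b f_j$. Since $(1+j)^b$ is a scalar independent of the spatial variable, $g_j$ has exactly the same Fourier support as $f_j$, namely an annulus of radius comparable to $2^j$. Consequently the norms rewrite as
\begin{equation*}
\|f\|_{B^{s,b}_{p,q}(\mathbb{R}^d)} = \Bigl(\sum_j (2^{js}\|g_j\|_{L_p})^q\Bigr)^{1/q}, \qquad \|f\|_{F^{s,b}_{p,q}(\mathbb{R}^d)} = \Bigl\|\Bigl(\sum_j (2^{js}|g_j|)^q\Bigr)^{1/q}\Bigr\|_{L_p}.
\end{equation*}
Applying the classical Jawerth inequality at the level of sequences with dyadic Fourier supports to the family $\{g_j\}$ (with inner $\ell^2$) then furnishes the left embedding in \eqref{e0*} after invoking \eqref{LPgeneral}. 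Analogously, the sequence-level form of Franke's classical inequality yields the right embedding.

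The main technical point is ensuring that the classical Franke and Jawerth inequalities are at hand in the sequence-of-blocks formulation, rather than merely as embeddings between a priori defined function spaces. Their standard proofs (using the Peetre maximal function together with the Fefferman--Stein vector-valued maximal inequality) pass through precisely this intermediate statement, in which only the dyadic Fourier support of each piece intervenes, so that a $j$-dependent scalar multiplier is harmless. Once this observation is recorded, the extension from $b=0$ to arbitrary $b \in \mathbb{R}$ via the substitution $f_j \mapsto g_j$ is a bookkeeping step that adds no genuine difficulty.
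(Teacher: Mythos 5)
The paper states Proposition~\ref{RecallEmb2} without proof, citing the literature for both parts, so there is no in-paper argument to compare against; I assess your proof on its own terms. It is correct and follows the standard route. Part (i): reducing to Nikol'skii's inequality on each Littlewood--Paley block is clean, and as you observe the logarithmic factor $(1+j)^b$ is inert since it multiplies both sides by the same scalar. Part (ii): the reduction is also correct, and the point you explicitly flag is the right one --- one must know that the classical Franke and Jawerth inequalities hold for arbitrary families $(g_j)$ with dyadic Fourier supports, not merely for the Littlewood--Paley blocks of a single distribution, since $(1+j)^b f_j$ are not the blocks of any one function. That observation is sound: the standard proofs (Peetre maximal function with Fefferman--Stein, or rearrangement arguments) use only the Fourier support of each piece. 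The paper's own machinery offers an equivalent and slightly more systematic path: conjugate by the lifting operator $\mathcal{J}_b$ of (\ref{LiftLog}), whose isomorphism property (\ref{8.7new}) (together with the analogous statement on the $F$- and $H$-scales) reduces the generalized embeddings directly to the $b=0$ case. One naming correction: the left embedding in (\ref{e0*}), from Besov into Triebel--Lizorkin, is usually attributed to Franke, and the right one, from Triebel--Lizorkin into Besov, to Jawerth; you have the two labels interchanged.
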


The embedding given in Proposition \ref{RecallEmb2}(i) is known as the Sobolev embedding theorem and  has a long history in the theory of function spaces.
Details, explanations and references may be found in \cite[2.7.1]{Triebel1}. See also \cite[Proposition 1.9]{Moura}. On the other hand, Proposition \ref{RecallEmb2}(ii) is the Franke-Jawerth embeddings  \cite{Jawerth, Franke} (see also \cite{Marschall}, \cite{CaetanoMoura}, \cite{Vybiral*}).  It is worth mentioning that the embeddings (\ref{e0*}) also hold when we replace the space $H^{s.b}_p(\mathbb{R}^d)$ by $F^{s,b}_{p,q}(\mathbb{R}^d)$. We stress that the latter holds  for any $0 < q \leq \infty$, that is, the parameter $q$ does not play a role in Franke-Jawerth embeddings unlike in the case of Sobolev embeddings (\ref{e0}).

\subsection{Applications of function spaces with smoothness zero.}
In this paper we will pay special attention to function spaces with smoothness zero. There are several important reasons to go deeply into the theory of these spaces. Our analysis goes back to Nikolski$\breve{\text{\i}}$, Lions and Lizorkin \cite{NikolskiiLionsLizorkin}, who  pointed out that lack of classical smoothness yields different results on function spaces as compared to positive smoothness.
In particular, DeVore, Riemenschneider and Sharpley \cite{DeVoreRiemenschneiderSharpley} applied the usefulness of Besov spaces of smoothness close to zero on weak-type interpolation theory to study the smoothness properties of the Hilbert transform. They proved \cite[Corollary 6.3]{DeVoreRiemenschneiderSharpley} that there is a loss of logarithmic smoothness in order to observe that the Hilbert transform of a function belongs to $\mathbf{B}^{0,b}_{1,q}(\mathbb{R}^d)$. This result is in sharp contrast with the setting of the Besov spaces $\mathbf{B}^{s,b}_{1,q}(\mathbb{R}^d), s > 0,$ where it is known that the Hilbert transform acts boundedly. We also refer to the paper by Gol'dman \cite{Goldman} where a rather general method to construct Besov-type spaces and, in particular Besov spaces of smoothness zero, was investigated.


Discussing applications, let us first deal with spaces with zero smoothness ($s=b=0$).
Note that a function $f\in \mathbf{B}^{0,0}_{\infty,1}(\mathbb{R}^d)$
  is called Dini continuous. This condition is widely used in different topics in mathematics.
In particular, for periodic functions  this condition guaranties that a conjugate function is continuous. The Fourier series of a Dini continuous  function converges uniformly (Dini's test). There are many different applications in functional analysis \cite{Garnett}, in PDEs \cite{Gilbarg}, and in probability theory
\cite{Barnsley}. On the other hand, the $\mathbf{B}^{0,0}_{\infty,2}(\mathbb{R}^d)$ condition plays a major role in the paper by Fabes, Jerison and Kenig \cite{FabesJerisonKenig} in order to characterize the absolute continuity of harmonic measure.


The $L_1$-Dini condition as well as the $L_p$-Dini condition, that is, when $f\in \mathbf{B}^{0,0}_{1,1}(\mathbb{R}^d)$ or $f\in \mathbf{B}^{0,0}_{p,1}(\mathbb{R}^d)$, respectively,  is used in various questions of harmonic analysis (in particular, related to  a study of singular integrals  \cite{Lu}) and probability theory \cite{Khoshnevisan}. For instance, Calder\'on, Weiss and Zygmund \cite{CalderonWeissZygmund} showed that if a kernel satisfies the $L_1$-Dini condition,
 then the corresponding singular integral operator exists almost everywhere for integrable functions and that is of strong type $(p,p), 1 < p < \infty$, and of weak type $(1,1)$. Kurtz and Wheeden \cite{KurtzWheeden}  extended this result to weighted Lebesgue spaces assuming that the kernel satisfies the $L_p$-Dini condition with $p>1$.

Furthermore, for periodic functions defined on the unit circle $\T$, the condition
$f\in \textbf{B}^{0,0}_{2,2}(\T)$ is equivalent to the famous  Kolmogorov--Seliverstov--Plessner condition $\sum (a_n^2(f)+b_n^2(f))\log n<\infty$, where $a_n(f)$, $b_n(f)$ are Fourier coefficients of $f$. The latter is the crucial condition for convergence problems
of trigonometric/orthogonal series, see, e.g., \cite[Chapter V]{Bari} and \cite{Lukashenko}.

 Recently there has been a renewed interest in analyzing the structural properties of Besov spaces of smoothness close to zero, not only
 in 
  the theory of function spaces (see, e.g.,  \cite{SeegerTrebels, Vybiral, CaetanoGogatishviliOpic, CaetanoGogatishviliOpic2, Besov, TriebelReport, besov}), but also in applications to approximation theory \cite{CobosDominguezKuhn, KashinTemlyakov, Li, Lin}, trace and extension theorems in the setting of metric measure spaces \cite{MalyShanmugalingamSnipes}, PDE problems under low regularity assumptions on the coefficients \cite{Colombini, DongZhang, MazyaMcOwen}, theory of maximal regularity in parabolic equations in Banach spaces \cite{Milman}, sharp estimates for singular integrals \cite{DingLai, DingLai2}, or problems on mixing flows (Bressan's conjecture) \cite{Bianchini, HadzicSeegerSmartStreet}.

\subsection{Goals and results}
The first goal of this paper is to find
 the precise interrelation between the function spaces with smoothness near zero, i.e., when the classical smoothness parameter $s = 0$.
 This question was also posed recently by Andreas Seeger in \cite{Seeger}.
 Secondly, motivated by applications to PDE and calculus of variations, we intend to find various equivalent characterizations of Besov spaces, in particular,  in terms of ball averages and Bianchini-type norms. As a by-product, we derive smoothness properties of differential operators (partial derivatives and fractional Laplace operator) and potential operators (Bessel and Riesz potentials).


Let us discuss our first goal in detail.
 We will obtain new embedding results for Triebel-Lizorkin, Sobolev, and several Besov spaces with logarithmic smoothness (defined Fourier analytically, in terms of differences, in terms of averages over the ball, Bianchini-type) to fill the gap between these spaces in the literature. Among other results, we prove that
 \begin{equation}\label{Goal1}
     	H^{0,b+1/q}_{p} (\mathbb{R}^d) \hookrightarrow \mathbf{B}^{0,b}_{p,q}(\mathbb{R}^d) \quad \text{ if and only if}\quad q \geq \max\{p,2\},
\end{equation}
\begin{equation}\label{Goal2}
   	\mathbf{B}^{0,b}_{p,q}(\mathbb{R}^d) \hookrightarrow H^{0,b+1/q}_{p}(\mathbb{R}^d)\quad \text{ if and only if} \quad q \leq \min\{p,2\},
\end{equation}
and, in particular,
\begin{equation}\label{Goal3}
	\mathbf{B}^{0,b}_{2,2}(\mathbb{R}^d) = H^{0,b+1/2}_2(\mathbb{R}^d), \quad b > -1/2,
\end{equation}
cf. Proposition \ref{RecallEmb}(iv), (vi) and (\ref{BesovZero}). The basic idea to show (\ref{Goal1})-(\ref{Goal3}) is to apply the Fourier-analytical decomposition of the space $\mathbf{B}^{0,b}_{p,q}(\mathbb{R}^d)$ that has been recently obtained in 
\cite{CobosDominguezTriebel}. Namely, we have
    \begin{equation*}
        \|f\|_{\mathbf{B}^{0,b}_{p,q}(\mathbb{R}^d)} \asymp \Bigg(\sum_{j=0}^\infty \Big[(1 + j)^b \Big\|\Big(\sum_{\nu=j}^\infty |(\varphi_\nu \widehat{f})^\vee
        (\cdot)|^2\Big)^{1/2}\Big\|_{L_p(\mathbb{R}^d)}\Big]^q\Bigg)^{1/q}.
    \end{equation*}

Similarly to (\ref{1}), we observe that an additional logarithmic smoothness with exponent $1/q$ arises both in (\ref{Goal1}) and (\ref{Goal2}). Moreover, we also show that this shift is the best possible. For example, if $q \geq \max\{p,2\}$, then
 \begin{equation}\label{Goal3new}
     	H^{0,\xi}_{p} (\mathbb{R}^d) \hookrightarrow \mathbf{B}^{0,b}_{p,q}(\mathbb{R}^d) \quad \text{ if and only if}\quad \xi \geq b+1/q.
\end{equation}
The corresponding result for (\ref{Goal2}) is also obtained. Regarding (\ref{Goal3}), we obtain the following sharpness assertion
	 \begin{equation}\label{Goal3new2}
	 	\mathbf{B}^{0,b}_{p,q}(\mathbb{R}^d) = H^{0,\xi}_p (\mathbb{R}^d) \quad \text{if and only if} \quad p = q = 2 \quad \text{and} \quad \xi = b+1/2.
	 \end{equation}

Our approach can also be applied to derive optimality in the full range of parameters of the embeddings (\ref{1}). Therefore, we are able to cover the cases left open in \cite{CobosDominguezTriebel}. We now show that
 \begin{equation}\label{Goal3new3}
    B^{0,\xi}_{p,q}(\mathbb{R}^d) \hookrightarrow
    \mathbf{B}^{0,b}_{p,q}(\mathbb{R}^d) \quad \text{if and only if} \quad \xi \geq b + 1/\min\{2,p,q\}
\end{equation}
and
\begin{equation}\label{Goal3new4}
	 \mathbf{B}^{0,b}_{p,q}(\mathbb{R}^d) \hookrightarrow
    B^{0,\xi}_{p,q}(\mathbb{R}^d)  \quad \text{if and only if} \quad \xi \leq b + 1/\max\{2,p,q\}.
\end{equation}
Furthermore, we provide a positive answer to Sickel's Conjecture \cite{Sickel}, that is,
\begin{equation}\label{Goal3new5}
		\mathbf{B}^{0,b}_{p,q}(\mathbb{R}^d) = B^{0,\xi}_{p,q}(\mathbb{R}^d) \quad \text{if and only if} \quad p=q=2 \quad \text{and} \quad \xi= b+1/2.
	\end{equation}
	
	Let us fix the integrability parameter $p$ and the fine index $q$. It turns out that the relationships between function spaces with smoothness zero are determined by adequate shifts of the logarithmic smoothness $b$. Assume first that $q \geq \max\{p,2\}$. Then, we will show that (see Figure 1)
	\begin{itemize}
		\item the space $\mathbf{B}^{0,b}_{p,q}(\mathbb{R}^d)$ is embedded into $B^{0,b+1/q}_{p,q}(\mathbb{R}^d)$. In addition, we have $\mathbf{B}^{0,b}_{p,q}(\mathbb{R}^d) \neq B^{0,b+1/q}_{p,q}(\mathbb{R}^d)$,
		\item the spaces $H^{0,b+1/q}_p(\mathbb{R}^d)$ and $B^{0,b+1/\min\{p,2\}}_{p,q}(\mathbb{R}^d)$ are embedded into $\mathbf{B}^{0,b}_{p,q}(\mathbb{R}^d)$,
		\item the spaces $H^{0,b+1/q}_p(\mathbb{R}^d)$ and $B^{0,b+1/\min\{p,2\}}_{p,q}(\mathbb{R}^d)$ are not comparable.
	\end{itemize}

	\bigskip

\begin{center}
\begin{tikzpicture}[fill opacity=0.05, ,xscale=0.8,yscale=0.8]

\fill (0,0) ellipse (5.0 and 3.0);
\draw[fill opacity=0.2] (0,0) ellipse (5.0 and 3.0);

\fill (0,0) ellipse (4.0 and 2.2);
\draw[fill opacity=0.2] (0,0) ellipse (4.0 and 2.2);

\fill[white] (-1.8,0) ellipse (2.5 and 1.1);
\draw[fill opacity=0.2](-1.4,0) ellipse (2.5 and 1.1);

\fill[white] (1.8,0) circle (2.55 and 1.1);
\draw[fill opacity=0.2] (1.4,0) circle (2.55 and 1.1);

\node[fill opacity=2,xscale=0.8,yscale=0.8] at (0,1.6) {$\mathbf{B}^{0,b}_{p,q}(\mathbb{R}^d)$};
\node[fill opacity=2,xscale=0.8,yscale=0.8] at (-2.5,0) {$H^{0,b+\frac{1}{q}}_p(\mathbb{R}^d)$};
\node[fill opacity=2,xscale=0.8,yscale=0.8] at (2.5,0) {$B^{0,b + \frac{1}{\min\{2,p\}}}_{p,q}(\mathbb{R}^d)$};
\node[fill opacity=2,xscale=0.8,yscale=0.8] at (0,-2.6) {$B^{0,b + \frac{1}{q}}_{p,q}(\mathbb{R}^d)$};

\end{tikzpicture}
\end{center}

\phantom{qqq}

\begin{center}
{\small \textbf{Fig. 1:} Relationships between the Besov and Sobolev  spaces involving only logarithmic smoothness in the case $q \geq \max\{p,2\}$.}
\end{center}

\bigskip

On the other hand, if $q \leq \min\{p,2\}$ then we will establish the following results (see Figure 2)
	\begin{itemize}
		\item the space $\mathbf{B}^{0,b}_{p,q}(\mathbb{R}^d)$ contains the space $B^{0,b+1/q}_{p,q}(\mathbb{R}^d)$. In addition, we have $\mathbf{B}^{0,b}_{p,q}(\mathbb{R}^d) \neq B^{0,b+1/q}_{p,q}(\mathbb{R}^d)$,
		\item both $H^{0,b+1/q}_p(\mathbb{R}^d)$ and $B^{0,b+1/\max\{2,p\}}_{p,q}(\mathbb{R}^d)$ contain the space $\mathbf{B}^{0,b}_{p,q}(\mathbb{R}^d)$,
		\item the spaces $H^{0,b+1/q}_p(\mathbb{R}^d)$ and $B^{0,b+1/\max\{2,p\}}_{p,q}(\mathbb{R}^d)$ are not comparable.
	\end{itemize}
	
	\bigskip

\begin{center}
\begin{tikzpicture}[fill opacity=0.05, ,xscale=0.8,yscale=0.8]

\fill (-1,0) ellipse (3.9 and 2.1);
\draw[fill opacity=0.2] (-1,0) ellipse (3.9 and 2.1);

\fill (1.55,0) ellipse (4.2 and 2.1);
\draw[fill opacity=0.2] (1.55,0) ellipse (4.2 and 2.1);

\fill[white] (-1.4,0) ellipse (2.6 and 1.1);
\draw[fill opacity=0.2]
(0,0) ellipse (2.6 and 1.1);

\fill[white] (1.4,0) circle (1.0 and 0.8);
\draw[fill opacity=0.2] (0,0) circle (1.0 and 0.8);

\node[fill opacity=2,xscale=0.8,yscale=0.8] at (0,0) {$B^{0,b + \frac{1}{q}}_{p,q}(\mathbb{R}^d)$};
\node[fill opacity=2,xscale=0.8,yscale=0.8] at (-1.8,0) {$\mathbf{B}^{0,b}_{p,q}(\mathbb{R}^d)$};
\node[fill opacity=2,xscale=0.8,yscale=0.8] at (4.3,0) {$B^{0,b + \frac{1}{\max\{2,p\}}}_{p,q}(\mathbb{R}^d)$};
\node[fill opacity=2,xscale=0.8,yscale=0.8] at (-3.7,0) {$H^{0,b + \frac{1}{q}}_p(\mathbb{R}^d)$};

\end{tikzpicture}
\end{center}

\phantom{qqq}

\begin{center}
{\small \textbf{Fig. 2:} Relationships between  the Besov and Sobolev spaces involving only logarithmic smoothness 
  in the case  $q \leq \min \{p,2\}$.}
\end{center}

An interesting remark is in order here. Sharp embeddings between function spaces with classical smoothness zero involve certain shifts in their logarithmic smoothness. This is in sharp contrast with the case of positive smoothness (see Proposition \ref{RecallEmb}).
 This phenomenon should be linked
  to the Littlewood-Paley theorem for $L_p(\mathbb{R}^d)$, that is,
\begin{equation}\label{intro:aux*}
	\|f\|_{L_p(\mathbb{R}^d)} \asymp \Big\|\Big(\sum_{j=0}^\infty |(\varphi_j \widehat{f})^\vee (\cdot)|^2\Big)^{1/2}\Big\|_{L_p(\mathbb{R}^d)}, \quad 1 < p < \infty,
\end{equation}
and the fact that the space $\mathbf{B}^{0,b}_{p,q}(\mathbb{R}^d)$ is very close to $L_p(\mathbb{R}^d)$, in the sense that $\mathbf{B}^{0,b}_{p,q}(\mathbb{R}^d)$ can be characterized as a limiting interpolation space between $L_p(\mathbb{R}^d)$ and any Sobolev space $H^s_p(\mathbb{R}^d)$ or Besov space $\mathbf{B}^s_{p,q}(\mathbb{R}^d)$ with $s > 0$. Further details will be given in Section \ref{Section:Interpolation methods}.

 We also investigate the Sobolev embeddings for the spaces $\mathbf{B}^{0,b}_{p,q}(\mathbb{R}^d)$, that is, the $\mathbf{B}^{0,b}_{p,q}(\mathbb{R}^d)$-counterparts of (\ref{e0}). Let $1 \leq p_0 < p < p_1 \leq \infty$ and $-\infty < s_1 < 0 < s_0 < \infty$ with
	\begin{equation*}
		s_0 - \frac{d}{p_0} = -\frac{d}{p} = s_1 - \frac{d}{p_1}.
	\end{equation*}
	Let also $0 < q \leq \infty$ and $b > -1/q$. Using (\ref{1}) and (\ref{e0}), we derive that
\begin{equation}\label{Intro:SobolevZeroSmoothness}
	B^{s_0,b+1/\min\{2,p,q\}}_{p_0,q}(\mathbb{R}^d) \hookrightarrow \mathbf{B}^{0,b}_{p,q}(\mathbb{R}^d) \hookrightarrow B^{s_1,b + 1/\max\{2,p,q\}}_{p_1,q}(\mathbb{R}^d).
\end{equation}
Notice that these embeddings are obtained as compositions of the embeddings (\ref{1}) and (\ref{e0}) which are both optimal (see (\ref{Goal3new3}) and (\ref{Goal3new4})).
However, we will show that the embeddings (\ref{Intro:SobolevZeroSmoothness}) are not the best possible and they can be improved applying Holmstedt's reiteration formulas for limiting interpolation spaces. Namely, we show that
 	\begin{equation}\label{Goal3new6}
		B^{s_0, b + 1/\min\{p,q\}}_{p_0,q}(\mathbb{R}^d) \hookrightarrow \mathbf{B}^{0,b}_{p,q}(\mathbb{R}^d) \hookrightarrow B^{s_1, b + 1/\max\{p,q\}}_{p_1,q}(\mathbb{R}^d).
	\end{equation}
	Moreover, we prove that these embeddings are sharp, that is,
	\begin{equation}\label{Goal3new7}
		B^{s_0,\xi}_{p_0,q}(\mathbb{R}^d)  \hookrightarrow \mathbf{B}^{0,b}_{p,q}(\mathbb{R}^d) \quad \text{if and only if}
 \quad \xi \geq b + 1/\min\{p,q\},
 \end{equation}
 and
  	\begin{equation}\label{Goal3new8}
		 \mathbf{B}^{0,b}_{p,q}(\mathbb{R}^d) \hookrightarrow B^{s_1,\xi}_{p_1,q}(\mathbb{R}^d) \quad \text{if and only if}
 \quad \xi \leq b + 1/\max\{p,q\}.
 \end{equation}
 The comparison between embeddings (\ref{1}) and (\ref{Goal3new6}) will also be addressed.


Concerning Propositions \ref{RecallEmb*} and \ref{RecallEmb**}, we will show the optimality of the conditions given there. For instance, the sharpness assertion for Proposition \ref{RecallEmb**} reads as follows: The embedding
	\begin{equation*}
		\mathbf{B}^{s_0, b_0}_{p,q_0}(\mathbb{R}^d) \hookrightarrow \mathbf{B}^{s_1, b_1}_{p,q_1}(\mathbb{R}^d)
	\end{equation*}
	holds if and only if one of the following conditions is valid
	\begin{enumerate}[\upshape(i)]
		\item $s_0 > s_1 \geq 0$,
		\item $s_0 = s_1 > 0, q_0 \leq q_1$, and $b_0 \geq b_1$,
		\item $s_0 = s_1 > 0, q_0 > q_1$, and $b_0 + \frac{1}{q_0} > b_1 + \frac{1}{q_1}$,
		\item $s_0 = s_1 = 0, b_0 + \frac{1}{q_0} > b_1 + \frac{1}{q_1}$,
		\item $s_0 = s_1 = 0, b_0 + \frac{1}{q_0} = b_1 + \frac{1}{q_1}$, and $q_0 \leq q_1$.
	\end{enumerate}
	
	Next we discuss the novelty of our approach
  to construct counterexamples which lead to sharpness of the embedding results. The usual procedure applied nowadays to establish sharp embeddings between function spaces relies on the decomposition methods developed by Frazier and Jawerth \cite{FrazierJawerth}. Using this approach, one can reduce complicated problems in function spaces to simpler problems in sequence spaces such as the mixed sequence space $\ell_q(\ell_p)$. See, e.g., the books by Triebel \cite{Triebel01} and Haroske \cite{HaroskeBook}. However, this strategy is not adequate to work with functions from the space $\mathbf{B}^{0,b}_{p,q}(\mathbb{R}^d)$. Indeed, the wavelet description of $\mathbf{B}^{0,b}_{p,q}(\mathbb{R}^d)$ was recently obtained in \cite{CobosDominguezTriebel}, but it involves truncated Littlewood-Paley-type constructions (cf. (\ref{intro:aux*})) which makes it rather difficult to construct counterexamples via the wavelet transform. Hence, it would be desirable to introduce some alternative and simple but effective criteria which allow us to decide whether a given function belongs to $\mathbf{B}, B, H$ and $F$ spaces. This was the starting point and main motivation to develop the approach proposed in this paper. It is related to the following general problem: to find sufficiently rich function classes such that their elements have certain limiting smoothness properties. Dealing with such function classes allows us not only to
completely characterize the function norms
 but also to refine known embeddings with respect to parameters.


Let us give two examples  to illustrate this approach. Firstly, regarding characterizations of function spaces, we recall
that $f\in H^{s}_p(\mathbb{R}^d)$ with $p=2$ if and only if $(1+|\xi|^{2})^{\frac s2} \widehat{f}(\xi)\in L_2(\mathbb{R}^d)$.
Such a characterization is possible only when $p=2$. To extend it for $p\ne 2$ requires additional conditions of  structural characteristics of functions. We will obtain a full characterization of the Sobolev norm as well as Besov $B$ and $\mathbf{B}$ norms in terms of behavior of the Fourier transforms
for $p\ne 2$ for functions such that their Fourier transforms are of monotone type or that can be represented as a lacunary series.

Secondly, we recall  the classical embedding between Sobolev and Besov  spaces (cf. Proposition \ref{RecallEmb}(i) with $r=2$): 
 \begin{equation}\label{Goal4}
 H^{s,b}_p(\mathbb{R}^d) \hookrightarrow B^{s,b}_{p,q}(\mathbb{R}^d) \quad \text{if and only if} \quad q \geq
    \max\{p,2\}.
  \end{equation}
      In many questions,  one needs to refine conditions on $q$ in this embedding, that is, to define classes $X$ and $Y$ such that
 \begin{equation*}
    X\cap H^{s,b}_p(\mathbb{R}^d) \hookrightarrow B^{s,b}_{p,q}(\mathbb{R}^d) \quad \text{if and only if} \quad q \geq p
    \end{equation*}
    and
    \begin{equation*}
     Y\cap H^{s,b}_p(\mathbb{R}^d) \hookrightarrow B^{s,b}_{p,q}(\mathbb{R}^d) \quad \text{if and only if} \quad q \geq 2.
     \end{equation*}
      A similar question can be asked for all the embeddings given in Proposition \ref{RecallEmb}, as well as those obtained in (\ref{Goal1}) and (\ref{Goal2}).
As a consequence, this approach gives us an opportunity to  prove necessary and sufficient conditions  for embeddings to hold in terms of involved smoothness and integrability parameters.

The second main goal of the paper is to obtain new characterizations of Besov spaces in terms of oscillations and Bianchini-type norms. The motivation comes from recent applications to the PDE. Bressan posed a question on mixing flows, which at the time of writing of this paper still remains open;
for a precise statement of this problem see \cite{Bressan}.
  An approach to Bressan's problem was proposed by Bianchini \cite{Bianchini} via the following norm of the characteristic functions $f = \chi_A$,
 \begin{equation}\label{introduction:Bianchini1}
 	 \|f\|_{L_1(\mathbb{R}^d)} + \int_0^1 \left\|f - B_t f \right\|_{L_1(\mathbb{R}^d)} \frac{dt}{t}.
 \end{equation}
 Here $B_t f(x)$ denotes the average of $f$ over the ball $B_t(x)$ (see precise definitions in Section \ref{balls1}). The key observation in \cite{Bianchini} is that (\ref{introduction:Bianchini1}) is a measure of mixing of $A$ satisfying
\begin{equation}\label{introduction:Bianchini2}
	\|\chi_A\|_{\mathbf{B}^{0,0}_{1,1}(\mathbb{R}^d)} \asymp \|\chi_A\|_{L_1(\mathbb{R}^d)} + \int_0^1 \left\|\chi_A - B_t \chi_A \right\|_{L_1(\mathbb{R}^d)} \frac{dt}{t}.
\end{equation}
Very recently, a remarkable contribution approaching Bressan's conjecture was obtained by Had$\check{z}$i\'c, Seeger, Smart and Street \cite{HadzicSeegerSmartStreet}. In particular, their approach strongly relies  on the fact that we deal with the space of zero smoothness.
   An alternative approach, which is based on Sobolev-type functionals involving only logarithmic smoothness, can be found in \cite{Leger}.

Let $s \geq 0, 1 \le p \le \infty,$ $0 < q \leq \infty$, and $-\infty < b < \infty$. Assume that $l \in \mathbb{N}$ such that $l > s/2$. Our goal is to show that
\begin{equation}\label{introduction:Bianchini3}
	\|f\|_{\mathbf{B}^{s,b}_{p,q}(\mathbb{R}^d)} \asymp  \|f\|_{L_p(\mathbb{R}^d)} + \left(\int_0^1 t^{-s q} (1 - \log t)^{b q}  \|f - B_{l,t} f\|_{L_p(\mathbb{R}^d)}^q \frac{dt}{t}\right)^{1/q}.
\end{equation}
Here, $B_{l,t}f$ is the $l$-th order average of $f$, with $B_{1,t}f = B_t f$ (see Section \ref{balls1}).

Some remarks are in order here: firstly, the characterization (\ref{introduction:Bianchini3}) extends (\ref{introduction:Bianchini2}) to any locally integrable function, to the full range of parameters $s,b,p,q$, and to higher order averages. Secondly, working with positive smoothness (i.e., $s > 0$), characterizations of smoothness spaces in terms of averages on balls have their roots in the works by  Wheeden \cite{Wheeden, Wheeden2}; see also \cite{AlabernMateuVerdera}. In particular, if $s > 0, b=0$ and $p > 1$ in (\ref{introduction:Bianchini3}) then we recover recent results by Dai, Gogatishvili, Yang, and Yuan \cite{DaiGogatishviliYangYuan}.

Another important step in Bianchini's paper \cite{Bianchini} is the following characterization of $\mathbf{B}^{0,0}_{1,1}(\mathbb{R}^d)$ in terms of
functions of bounded variation. Namely,
\begin{equation}\label{introduction:Bianchini4}
\|f\|_{\mathbf{B}^{0,0}_{1,1}(\mathbb{R}^d)} \asymp \int_0^1  \inf_{g \in \text{BV}(\mathbb{R}^d)} \Big\{\|f-g\|_{L_1(\mathbb{R}^d)}+t \|g\|_{\text{BV}(\mathbb{R}^d)} \Big\} \frac{dt}{t}.
\end{equation}
We aim to extend (\ref{introduction:Bianchini4}) for the general setting and give necessary and sufficient conditions on
 parameters so that the Besov norm is equivalent to various Bianchini type norms.
For the Besov space on $\mathbb{R}$ we obtain similar results involving the space of functions of bounded $p$-variation.

To address both types  of characterizations of Besov spaces (see (\ref{introduction:Bianchini3}) and (\ref{introduction:Bianchini4})), we shall apply (limiting) interpolation techniques. In particular, (\ref{introduction:Bianchini3}) is a special case of the following characterization of $\mathbf{B}^{s,b}_{p,q}(\mathbb{R}^d)$ in terms of the $K$-functional for the couple $(L_p(\mathbb{R}^d), \dot{\mathscr{L}}^{\alpha}_p(\mathbb{R}^d))$, where $\dot{\mathscr{L}}^{\alpha}_p(\mathbb{R}^d)$ is the Riesz potential space. Namely, we show that
\begin{equation}\label{introduction:Bianchini5}
\|f\|_{\mathbf{B}^{s,b}_{p,q}(\mathbb{R}^d)}
\asymp \|f\|_{L_p(\mathbb{R}^d)} + \left(\int_0^1 t^{-s q} (1-\log t)^{b q}
K(t^{\alpha},f;L_p(\mathbb{R}^d), \dot{\mathscr{L}}^{\alpha}_p(\mathbb{R}^d))^q
 \frac{dt}{t}\right)^{1/q}.
 \end{equation}
Here $K(t^\alpha,f;L_p(\mathbb{R}^d), \dot{\mathscr{L}}^\alpha_p(\mathbb{R}^d)) =
\inf_{\Delta^{\alpha/2} g \in L_p(\mathbb{R}^d)} \Big(\|f-g\|_{L_p(\mathbb{R}^d)} + t^\alpha \|\Delta^{\alpha/2} g\|_{L_p(\mathbb{R}^d)}\Big)$, $\alpha > s$, and
 $\Delta^{\alpha/2}$ is the $\frac{\alpha}{2}$-th power of Laplacian. Furthermore, applying realization results for $K(t^\alpha,f;L_p(\mathbb{R}^d), \dot{\mathscr{L}}^\alpha_p(\mathbb{R}^d))$, we will see that the characterization (\ref{introduction:Bianchini5}) in fact comprises several descriptions of Besov spaces in terms of approximation processes (Bochner-Riesz and Weierstrass means) and semi-groups (heat kernels and harmonic extensions).

Concerning the Bianchini-type characterization (\ref{introduction:Bianchini4}), we notice that it can be expressed as the limiting interpolation formula $\mathbf{B}^{0,0}_{1,1}(\mathbb{R}^d) = (L_1(\mathbb{R}^d), \text{BV}(\mathbb{R}^d))_{(0,0),1}$ and $\text{BV}(\mathbb{R}^d) = \text{Lip}^{(1,0)}_{1,\infty}(\mathbb{R}^d)$ (cf. Section \ref{Preliminaries} for precise definitions). Thus, we are able to compute 
 explicitly the $K$-functionals for the following couples:
\begin{itemize}
 \item[$\cdot$]
 $(L_p(\mathbb{R}^d), \mathbf{B}^{s,b}_{p,q}(\mathbb{R}^d)), \quad s \geq 0$,
  \item[$\cdot$] $(\mathbf{B}^{s,b}_{p,q}(\mathbb{R}^d), W^k_p(\mathbb{R}^d)),  \quad s \geq 0$,
 \item[$\cdot$] $(L_p(\mathbb{R}^d), \text{Lip}^{(k,-\alpha)}_{p,r}(\mathbb{R}^d))$,
  \item[$\cdot$] $(\mathbf{B}^{0,b}_{p,q}(\mathbb{R}^d), W^k_p(\mathbb{R}^d))$,
   \item[$\cdot$] $(\text{Lip}^{(k,-\alpha)}_{p,r}(\mathbb{R}^d), W^k_p(\mathbb{R}^d))$,
    \item[$\cdot$] $(\mathbf{B}^{0,b}_{p,q}(\mathbb{R}^d), \text{Lip}^{(k,-\alpha)}_{p,r}(\mathbb{R}^d))$,
    \end{itemize}
   in terms of the moduli of smoothness.
    This  complements the well-known result \cite{BennettSharpley}
    \begin{equation*}
		K(t^k,f;L_p(\mathbb{R}^d), W^k_p(\mathbb{R}^d)) \asymp  t^k  \|f\|_{L_p(\mathbb{R}^d)} + \omega_k(f,t)_p, \quad 0 < t < 1,
\end{equation*}
and allows us to completely characterize Besov spaces in terms of Bianchini-type norms.


Finally, we discuss  some applications  of our techniques based on sharp embeddings and limiting interpolation methods
to differential operators.
Recently there has been a great interest in the study of the fractional Laplace operator $(-\Delta)^s$ (or more generally, nonlocal differential operators) which is motivated by an increasing number of models involving $(-\Delta)^s$ in the description of physical phenomena where the scope of classical local operators (e.g., the classical Laplace operator $-\Delta$) is limited. For instance, it is used in elasticity \cite{DipierroPalatucciValdinoci}, mathematical finance \cite{Silvestre1, Silvestre, CaffarelliSalsaSilvestre}, fluid dynamics \cite{CaffarelliVasseur, KiselevNazarovVolberg}, image processing \cite{GilboaOsher}. In particular, the issue of regularity of $(-\Delta)^s$ is a central question in PDE as can be seen in the papers by Silvestre \cite{Silvestre1, Silvestre}, Caffarelli, Salsa and Silvestre \cite{CaffarelliSalsaSilvestre}, Ros-Oton and Serra \cite{RosOtonSerra}, and Grubb \cite{Grubb}.


In Section \ref{section-fr-lap} we obtain the following result.
Let $u$ be the unique solution to the Poisson-type equation
\begin{equation}\label{introduction:app}
	(-\Delta)^s u = f \quad \text{in} \quad \mathbb{R}^d.
\end{equation}
Let $\lambda > 0$. Then, we prove that
\begin{equation}\label{introduction:app1}
		\left(\int_0^t (\xi^{-2 s} \omega_{ \lambda + 2 s} (u, \xi)_p)^q \frac{d \xi}{\xi}\right)^{1/q} \lesssim \omega_\lambda(f,t)_p \iff q \geq \max\{p,2\}
	\end{equation}
	and
\begin{equation}\label{introduction:app2}
		\left(\int_0^t (\xi^{-2 s} \omega_{ \lambda + 2 s} (u, \xi)_p)^q \frac{d \xi}{\xi}\right)^{1/q} \gtrsim \omega_\lambda(f,t)_p \iff q \leq \min\{p,2\}.
	\end{equation}
	In particular, we have
\begin{equation}\label{introduction:app3}
		\left(\int_0^t (\xi^{-2 s} \omega_{ \lambda + 2 s} (u, \xi)_2)^2 \frac{d \xi}{\xi}\right)^{1/2} \asymp \omega_\lambda(f,t)_2.
	\end{equation}
These formulas are quantitative assertions for the smoothness properties of the solution $u$ and the datum $f$ of (\ref{introduction:app}). As a consequence, we derive the boundedness properties of $(-\Delta)^s$ on the Besov spaces $\mathbf{B}^{\lambda,b}_{p,q}(\mathbb{R}^d)$ and a quantitative version of the $L_p$-regularity result for the Dirichlet boundary problem
\begin{equation}\label{introduction:app4}
		 \left\{\begin{array}{cl}  (-\Delta)^s u = f & \text{in} \quad  \Omega, \\
		u= 0  & \text{on} \quad \mathbb{R}^d \setminus \Omega,
		       \end{array}
                        \right.
	\end{equation}
	where $\Omega \subset \mathbb{R}^d$ is a bounded open set, which has been recently investigated by Biccari, Warma and Zuazua \cite{BiccariWarmaZuazua, BiccariWarmaZuazua2}.

\subsection{Structure of the paper.}
The paper is organized as follows.

In Section \ref{Preliminaries}, we collect basic definitions and known  assertions for function spaces that we will use in the paper. We also provide
needed (limiting) interpolation results and Hardy-type inequalities.

 In Section \ref{section3}, 
 we obtain new embedding theorems for function spaces with smoothness close to zero, which complement Propositions \ref{RecallEmb} and \ref{RecallEmb2}.

 In Sections \ref{section4} and \ref{section5} we develop the core of our approach to establish sharp embeddings.
 More specifically, in Section  \ref{section4} we investigate  a special class of functions, $\widehat{GM}^d$, such that their Fourier transforms satisfy the general monotone condition. 
 This study has its routes in analysis of periodic functions having Fourier series with monotonically  decreasing coefficients. It is known that such functions have many important integrability and smoothness properties \cite{Bari, Zygmund}. Investigations of such questions for the non-periodic case  allow us, in particular,
 to fully characterize the weighted Lebesgue norm of a function in terms of the weighted  norm of its Fourier transform. This question is known as the Boas problem; for more detail  see \cite{Boas, GorbachevLiflyandTikhonov, GorbachevTikhonov, Sagher}.
 Let us stress  that in order to characterize smoothness spaces it is essential to deal with the general monotone condition rather than  just monotone. This is related to multiplier properties of $GM$ class, see Lemma \ref{Lemma 3.1} below.

 In  particular, we show in Section \ref{section4} that the Besov and Sobolev norms
of $\widehat{GM}^d$ functions can be fully described
  in terms of behavior of their Fourier transforms.   In its turn,  these descriptions allow us to prove that, for $X=\widehat{GM}^d$,
\begin{equation}\label{Goal4new}
    	X  \cap H^{s,b}_p(\mathbb{R}^d) \hookrightarrow
        B^{s,b}_{p,q}(\mathbb{R}^d) \quad \text{ if and only if } \quad q \geq p
\end{equation}
(cf. (\ref{Goal4})) and
   \begin{equation}\label{gm-emb}     X\cap H^{0,b+1/q}_p(\mathbb{R}^d) \hookrightarrow
        \mathbf{B}^{0,b}_{p,q}(\mathbb{R}^d) \quad \text{ if and only if } \quad q \geq p
  \end{equation}
 (cf. (\ref{Goal1}))
  and the inverse embeddings for $ q \leq p$.
    Moreover, our method allows us to refine the
embeddings (\ref{1})
\begin{equation}\label{Goal4new2}
X\cap  B^{0,b + 1/\min\{p,q\}}_{p,q}(\mathbb{R}^d) \hookrightarrow
    X\cap  \mathbf{B}^{0,b}_{p,q}(\mathbb{R}^d) \hookrightarrow
    X\cap  B^{0,b+1/\max\{p,q\}}_{p,q}(\mathbb{R}^d),
    \end{equation}
as well as embeddings from
   $B^{0,b}_{p,q}(\mathbb{R}^d)$ into Lebesgue spaces given in \cite{CaetanoLeopold, SickelTriebel}.

In this section we also investigate the diversity of Sobolev and Besov spaces. As can be seen in the book by Triebel \cite[2.3.9]{Triebel1}, this question was a central issue in the early development of the theory of function spaces. Let us recall that, in general, we have
\begin{equation}\label{Goal5}
	H^{s_0}_{p_0}(\mathbb{R}^d) =  B^{s_1}_{p_1,q_1}(\mathbb{R}^d) \quad \text{if and only if} \quad s_0=s_1 \quad \text{and} \quad p_0=p_1=q_1=2,
\end{equation}
and
\begin{equation}\label{Goal6}
	B^{s_0}_{p_0,q_0}(\mathbb{R}^d) = B^{s_1}_{p_1,q_1}(\mathbb{R}^d) \quad \text{if and only if} \quad s_0=s_1, p_0=p_1 \quad \text{and} \quad q_0=q_1.
\end{equation}
We prove here rather surprising results. Namely, we show that the class $X=\widehat{GM}^d$ has a sufficiently rich structure so that
 Besov and Sobolev norms of its elements are equivalent for a much wider range  of parameters than in the trivial cases given by (\ref{Goal5}) and (\ref{Goal6}). Among other results, we show that
   \begin{equation*}
     	X \cap H^{s,b}_p(\mathbb{R}^d) = X \cap B^{s,b}_{p,q}(\mathbb{R}^d) \quad \text{ if and only if } \quad q=p,
    \end{equation*}
      \begin{equation*}
     	 X \cap H^{0,b+1/q}_p(\mathbb{R}^d)  =
    X \cap \mathbf{B}^{0,b}_{p,q}(\mathbb{R}^d)  \quad \text{ if and only if } \quad q=p,
    \end{equation*}
          \begin{equation*}
     	 X \cap B^{0,b+1/q}_{p,q}(\mathbb{R}^d)  =
    X \cap \mathbf{B}^{0,b}_{p,q}(\mathbb{R}^d)  \quad \text{ if and only if } \quad q=p,
    \end{equation*}
 and, under a mild integrability condition near the origin,
		\begin{equation}\label{Goal9}
		X\cap B^{s_0,b}_{p_0,q}(\mathbb{R}^d) = X\cap  B^{s_1,b}_{p_1,q}(\mathbb{R}^d)  \quad \text{if and only if} 		\quad	s_0 -\frac{d}{p_0} = s_1 -\frac{d}{p_1},
		\end{equation}
		\begin{equation}\label{Goal10}
X\cap H^{s,b}_p(\mathbb{R}^d) = X\cap  B^{s_1,b}_{p_1,p}(\mathbb{R}^d) \quad \text{if and only if} \quad s -\frac{d}{p} = s_1 -\frac{d}{p_1},
\end{equation}
and
\begin{equation}\label{Goal11}
	X\cap  B^{s_0,b}_{p_0,p} (\mathbb{R}^d) =
X\cap H^{s,b}_p(\mathbb{R}^d)  \quad \text{if and only if} \quad s_0 - \frac{d}{p_0} = s -\frac{d}{p}.
\end{equation}
Here,
$1 < p_0 < p < p_1 < \infty$, $0 < q \leq \infty, -\infty < b < \infty$, and $-\infty < s_1 < s < s_0 < \infty$.
These formulas consist of refinements of (\ref{Goal5}) and (\ref{Goal6}). In particular, (\ref{Goal9}) sharpens the Sobolev embeddings (\ref{e0}), while (\ref{Goal10}) and (\ref{Goal11}) strengthen the Franke-Jawerth embeddings (\ref{e0*}).



In Section \ref{section5}, we consider the class $\mathfrak{L}$ of functions represented as  the product of an entire function and the lacunary trigonometric series. We prove similar results to those obtained in Section \ref{section4}. In particular, we prove that, for $Y =\mathfrak{L}$,

$$    	Y  \cap H^{s,b}_p(\mathbb{R}^d) \hookrightarrow
        {B}^{s,b}_{p,q}(\mathbb{R}^d) \quad \text{ if and only if } \quad q \geq 2
$$   and
  \begin{equation}\label{lac-emb}     Y \cap H^{0,b+1/q}_p(\mathbb{R}^d) \hookrightarrow
        \mathbf{B}^{0,b}_{p,q}(\mathbb{R}^d) \quad \text{ if and only if } \quad q \geq 2
\end{equation}
(cf. (\ref{Goal4new}) and (\ref{gm-emb})), and the corresponding inverses for $ q \leq 2$. Moreover, we obtain more general results on embeddings between Triebel-Lizorkin and Besov spaces both with classical and logarithmic smoothness.
 We also refine embedding (\ref{1}) as follows:
$$Y \cap  B^{0,b + 1/\min\{2,q\}}_{p,q}(\mathbb{R}^d) \hookrightarrow
    Y \cap  \mathbf{B}^{0,b}_{p,q}(\mathbb{R}^d) \hookrightarrow
    Y \cap  B^{0,b+1/\max\{2,q\}}_{p,q}(\mathbb{R}^d)$$
(cf. (\ref{Goal4new2})).

It is worth mentioning that our technique given in Sections \ref{section4} and \ref{section5} provides a relatively simple approach to construct functions satisfying limiting smoothness properties
with respect to
  behavior of their Besov and Sobolev norms. This method will be implemented in the subsequent Sections \ref{section-propositions}--\ref{section8}. Let us describe the contents of these sections in more detail.

In Section \ref{section-propositions}, we obtain necessary and sufficient conditions for the embeddings
$B^{s_0, b_0}_{p,q_0}(\mathbb{R}^d) \hookrightarrow B^{s_1, b_1}_{p,q_1}(\mathbb{R}^d)$
and
$\mathbf{B}^{s_0, b_0}_{p,q_0}(\mathbb{R}^d) \hookrightarrow \mathbf{B}^{s_1, b_1}_{p,q_1}(\mathbb{R}^d)$, cf. Propositions \ref{RecallEmb*} and \ref{RecallEmb**}.

Section \ref{SectionOptimalityWB} provides optimality of logarithmic smoothness for embeddings (\ref{gm-emb})
and (\ref{lac-emb}). We find necessary and sufficient conditions for
$H^{0,\xi}_p(\mathbb{R}^d) \hookrightarrow \mathbf{B}^{0,b}_{p,q}(\mathbb{R}^d)$,
$\max\{p,2\} \leq q$, (see (\ref{Goal3new})), as well as the corresponding reverse and
$H^{0,\xi}_p(\mathbb{R}^d) = \mathbf{B}^{0,b}_{p,q}(\mathbb{R}^d)$ (see (\ref{Goal3new2})).

In Section \ref{section7}, we find optimal embeddings between the following spaces involving only logarithmic smoothness: $H^{0,b+1/q}_p(\mathbb{R}^d)$, $\mathbf{B}^{0,b}_{p,q}(\mathbb{R}^d)$, and ${B}^{0,b+\tau}_{p,q}(\mathbb{R}^d)$ for various values of $\tau.$

Section \ref{section8} concerns with  optimality of embeddings between Besov spaces with logarithmic smoothness. We will show the sharpness of parameters in (\ref{1}) as well as in Sobolev embedding (\ref{Goal3new6}). More precisely, we prove (\ref{Goal3new3}), (\ref{Goal3new4}), (\ref{Goal3new7}) and (\ref{Goal3new8}). We also solve the question posed by Sickel, see (\ref{Goal3new5}). As already mentioned above, the proofs are partly based on results proved in Sections
\ref{section4} and \ref{section5}. Furthermore, the optimality of the fine index $q$ in embedding results creates additional obstacles. We are able to overcome these obstacles with the help of limiting interpolation techniques.

Let us also emphasize that our approach works in the periodic setting. To avoid  unnecessary complications in the presentation of results for periodic functions, we restrict our attention to the one-dimensional case.


Sections \ref{kfunctional} and \ref{section3.5} deal with characterizations of Besov spaces. More precisely, in Section \ref{kfunctional} we establish characterizations in terms of ball averages, approximation processes, and semi-groups; whereas the main concern in Section \ref{section3.5} is Bianchini-type characterizations.

Section \ref{section9} studies  the smoothness properties of functions  and their derivatives.
It is known  \cite[p. 343]{BennettSharpley} that a function $f
 \in\mathbf{B}^{s,b}_{p,q}(\mathbb{R}^d)$ if and only if its derivatives $D^\beta f, |\beta| \leq k,$ belong to $\mathbf{B}^{s-k,b}_{p,q}(\mathbb{R}^d)$ provided that $s>k$.
 We deal with limiting case $s=k$.
Moreover, we obtain new sharp inequalities for moduli of smoothness of derivatives in terms of moduli of smoothness of functions themselves.

Section \ref{Lifting Estimates} deals with  boundedness properties of the Bessel potential operator $I_\sigma = (\text{id} - \Delta)^{\sigma/2}$.
We establish sharp inequalities for moduli of smoothness of $I_\sigma f$ in terms of the moduli of smoothness of $f$. Further, we obtain the following sharp results:
$
I_\sigma: X^{s,b_0}_{p,q}(\mathbb{R}^d) \longrightarrow Y^{s-\sigma, b_1}_{p,q}(\mathbb{R}^d),
$
where $X, Y$ is one of the Besov spaces $\mathbf{B}$ or $B$ and $s = 0, \sigma$. Quite surprisingly, $I_\sigma$ does not act as a lift operator for $\mathbf{B}^{0,b}_{p,q}(\mathbb{R}^d)$. This is in sharp contrast with the case of $\mathbf{B}^{s,b}_{p,q}(\mathbb{R}^d), s > 0,$ and $B^{s,b}_{p,q}(\mathbb{R}^d), -\infty < s < \infty$, where it is well known that $I_\sigma$ is an isomorphism.

In Section \ref{section-fr-lap} we first clarify the relationship between Besov and Lipschitz spaces introduced in Section \ref{Preliminaries} and
those that are widely used in the study of regularity properties  of differential operators such as H\"older, Zygmund and $\Lambda$-spaces.
Second, we study regularity estimates of the fractional Laplace operator, see
(\ref{introduction:app})--(\ref{introduction:app4}).

Finally, in Appendix \ref{A} we collect the notations used in this paper.

\subsection*{Acknowledgement}
The authors would like to thank Hans Triebel for his comments on a preliminary version of the paper.

The first author has been supported in part by MTM2017-84058-P (AEI/FEDER, UE) and by FCT Grant SFRH/BPD/121838/2016. The second
author was partially supported by MTM 2017-87409-P, 2017 SGR 358, and by the
CERCA Programme of the Generalitat de Catalunya.

\newpage
\section{Preliminaries}\label{Preliminaries}

\subsection{General notation}
As usual, $\mathbb{R}^d$ \index{\bigskip\textbf{Sets}!$\mathbb{R}^d$}\label{SETR} denotes the $d$-dimensional real Euclidean space, $\mathbb{T}$\index{\bigskip\textbf{Sets}!$\mathbb{T}$}\label{SETT}  is an unit circle, $\mathbb{Z}$ is the collection of all integers, $\mathbb{Z}^d$ \index{\bigskip\textbf{Sets}!$\mathbb{Z}^d$}\label{SETZ} is the lattice of all points in $\mathbb{R}^d$ with integer-valued components, $\mathbb{N}$\index{\bigskip\textbf{Sets}!$\mathbb{N}$}\label{SETN} is the collection of all natural numbers, and $\mathbb{N}_0 = \mathbb{N} \cup \{0\}$\index{\bigskip\textbf{Sets}!$\mathbb{N}_0$}\label{SETN0}.

Given two (quasi-) Banach spaces $X$ and $Y$, we write $X \hookrightarrow Y$\index{\bigskip\textbf{Numbers, relations}!$X \hookrightarrow Y$}\label{XY}  if $X \subset Y$ and the natural embedding from $X$ into $Y$ is continuous. By $\text{id}_X$\index{\bigskip\textbf{Operators}!$\text{id}_X$}\label{ID} we mean the identity map in $X$.

Let $|\cdot|_d$ stand for the $d$-dimensional Lebesgue measure. For $E \subset \mathbb{R}^d$, we denote by $\chi_E$\index{\bigskip\textbf{Functionals and functions}!$\chi_E$}\label{INDICATOR} the characteristic function of $E$.

For $a \in \mathbb{R}$, let $[a] = \max \{k \in \mathbb{Z}: k \leq a\}$\index{\bigskip\textbf{Numbers, relations}!$[a]$}\label{[a]}. For $1 \leq p \leq \infty$, we denote by $p'$\index{\bigskip\textbf{Numbers, relations}!$p'$}\label{p'} the dual exponent of $p$ given by $\frac{1}{p} + \frac{1}{p'} = 1$.

We will assume that $A\lesssim B$\index{\bigskip\textbf{Numbers, relations}!$A\lesssim B$}\label{AB} means that $A\leq C B$ with a positive constant $C$ depending only on nonessential parameters.
If $A\lesssim B\lesssim A$, then $A\asymp B$\index{\bigskip\textbf{Numbers, relations}!$A\asymp B$}\label{ASYMP}.

\subsection{Function spaces}\label{Section 2.2}
Let $1 \leq p
\leq \infty, 0 < q \leq \infty, - \infty < b < \infty$ and $s \geq
0$. The Besov space $\mathbf{B}^{s,b}_{p,q}(\mathbb{R}^d)$\index{\bigskip\textbf{Spaces}!$\mathbf{B}^{s,b}_{p,q}(\mathbb{R}^d)$}\label{BESOVDIFF} is formed by all $f \in L_p(\mathbb{R}^d)$
for which
\begin{equation}\label{norm1}
	|f|_{\mathbf{B}^{s,b}_{p,q}(\mathbb{R}^d)} =  \left(\int_0^1 (t^{-s} (1 - \log t)^b \omega_k(f,t)_p)^q
    \frac{dt}{t}\right)^{1/q} < \infty
\end{equation}
(with the usual modification if $q=\infty$).
Here $k \in \mathbb{N}$ with $k > s$ and $\omega_k(f,t)_p$\index{\bigskip\textbf{Functionals and functions}!$\omega_k(f,t)_p$}\label{MODK} is the $k$-th order modulus of smoothness of $f,$ given by
\begin{equation}\label{modulus}
	\omega_k(f,t)_p = \sup_{|h| \leq t} \|\Delta^k_h f\|_{L_p(\mathbb{R}^d)},\quad  t > 0,
\end{equation}
where \index{\bigskip\textbf{Functionals and functions}!$\Delta^k_h$}\label{DELTA}
\begin{equation}\label{differences}
 \Delta^1_h f (x) = f(x + h)-f(x), \quad \Delta^{k+1}_h = \Delta^1_h (\Delta^k_h),\qquad x, h \in \mathbb{R}^d.
 \end{equation}
The Besov space $\mathbf{B}^{s,b}_{p,q}(\mathbb{R}^d)$ becomes a quasi-Banach space equipped with
\begin{equation}\label{norm}
	\|f\|_{\mathbf{B}^{s,b}_{p,q}(\mathbb{R}^d)} =
\|f\|_{L_p(\mathbb{R}^d)} + |f|_{\mathbf{B}^{s,b}_{p,q}(\mathbb{R}^d)}.
\end{equation}
If $b=0$ we recover the classical Besov spaces $\mathbf{B}^s_{p,q}(\mathbb{R}^d)$. Note that if $s=0$ in $\mathbf{B}^{s,b}_{p,q}(\mathbb{R}^d)$ the case of interest is when $b \geq -1/q \, (b > 0 \text{ if } q=\infty)$. Otherwise, it is easy to see that $\mathbf{B}^{0,b}_{p,q}(\mathbb{R}^d)=L_p(\mathbb{R}^d)$ and $\|f\|_{\mathbf{B}^{0,b}_{p,q}(\mathbb{R}^d)} \asymp \|f\|_{L_p(\mathbb{R}^d)}$.

For $k \in \mathbb{N}$, the logarithmic Lipschitz space $\text{Lip}^{(k,-b)}_{p,q}(\mathbb{R}^d)$\index{\bigskip\textbf{Spaces}!$\text{Lip}^{(k,-b)}_{p,q}(\mathbb{R}^d)$}\label{LOGLIPSCHITZ}  consists of all $f \in L_p(\mathbb{R}^d)$ for which
\begin{equation}\label{Lipschitz}
	\|f\|_{\text{Lip}^{(k,-b)}_{p,q}(\mathbb{R}^d)} = \|f\|_{L_p(\mathbb{R}^d)} + \left(\int_0^1(t^{-k} (1 - \log t)^{-b} \omega_k(f,t)_p)^q \frac{dt}{t}\right)^{1/q} < \infty
\end{equation}
(with the usual modification if $q=\infty$). In order to avoid trivial spaces, we  assume that $b > 1/q$ if $q < \infty$ ($b \geq 0$ if $q=\infty$). If $k=1$ we obtain the spaces $\text{Lip}^{(1,-b)}_{p,q}(\mathbb{R}^d)$ studied in detail in \cite{Haroske, EdmundsEvans, HaroskeBook}. 

Some distinguished elements within the class $\text{Lip}^{(k,-b)}_{p,q}(\mathbb{R}^d)$ are the following: Let $q= \infty$ and $b=0$. If $p = \infty$ and $k=1$ then the space $\text{Lip}^{(1,0)}_{\infty,\infty}(\mathbb{R}^d)$ coincides with the classical Lipschitz space $\text{Lip}(\mathbb{R}^d)$\index{\bigskip\textbf{Spaces}!$\text{Lip}(\mathbb{R}^d)$}\label{LIPSCHITZ}, which is endowed with the norm
 	\begin{equation*}
		\|f\|_{\text{Lip}(\mathbb{R}^d)} = \|f\|_{L_\infty(\mathbb{R}^d)} + \sup_{\substack{x,y \in \mathbb{R}^d \\ 0 < |x-y| < 1/2}} \frac{|f(x)-f(y)|}{|x-y|}.
	\end{equation*}
If $p=1$ and $k=1$ in $\text{Lip}^{(k,0)}_{p,\infty}(\mathbb{R}^d)$, then we recover the well-known space $\text{BV}(\mathbb{R}^d)$\index{\bigskip\textbf{Spaces}!$\text{BV}(\mathbb{R}^d)$}\label{BV} formed by all integrable functions of bounded variation (see, e.g., \cite[p. 245]{CohenDahmenDaubechiesDeVore}), i.e.,
\begin{equation}\label{def:BV}
\text{BV}(\mathbb{R}^d)=\{f\in L_1(\mathbb{R}^d):\; \omega_1(f,t)_1=O(t)\}.
\end{equation}
If $1 < p < \infty$ and $k \in \mathbb{N}$ in $\text{Lip}^{(k,0)}_{p,\infty}(\mathbb{R}^d)$, then we arrive at the classical Sobolev spaces $W^k_p(\mathbb{R}^d)$ (see (\ref{SobolevSpacesDefinition}) below). Further information may be found in \cite{T10, T11, HaroskeTriebel11, HaroskeTriebel13}; see also \cite[Section 4.2.1]{Triebel13}.

By $\mathbf{B}^{s,b}_{p,q}(\mathbb{T})$\index{\bigskip\textbf{Spaces}!$\mathbf{B}^{s,b}_{p,q}(\mathbb{T})$}\label{BESOVDIFFPER} and $\text{Lip}^{(k,-b)}_{p,q}(\mathbb{T})$\index{\bigskip\textbf{Spaces}!$\text{Lip}^{(k,-b)}_{p,q}(\mathbb{T})$}\label{LOGLIPSCHITZPER} we mean the periodic counterparts which are defined by replacing the $L_p(\mathbb{R}^d)$-norm in (\ref{norm1}), (\ref{modulus}), (\ref{norm}), and (\ref{Lipschitz}) by the $L_p(\mathbb{T})$-norm.

As we mentioned above, Besov spaces of generalized smoothness can be also introduced via the
Fourier-analytical approach; see, e.g., \cite{Triebel1, SchmeisserTriebel, CobosFernandez, Goldman, Moura, FarkasLeopold, CaetanoLeopold}.
Let $\mathcal{D}(\mathbb{T})$ be the set of all complex-valued infinitely differentiable functions on $\mathbb{T}$. So, if $f \in \mathcal{D}(\mathbb{T})$ and $x,y \in \mathbb{T}$ with $x - y = 2 k \pi, k \in \mathbb{Z}$, then $f(x)=f(y)$. The topology in $\mathcal{D}(\mathbb{T})$\index{\bigskip\textbf{Spaces}!$\mathcal{D}(\mathbb{T})$}\label{D} is defined by the family of semi-norms $\|f\|_\alpha = \sup \{|D^\alpha f (x)| : x \in \mathbb{T}\}$ where $\alpha$ is any non-negative integer. We write $\mathcal{D}'(\mathbb{T})$\index{\bigskip\textbf{Spaces}!$\mathcal{D}'(\mathbb{T})$}\label{D'} for the topological dual of $\mathcal{D}(\mathbb{T})$. Let $\mathcal{S}(\mathbb{R}^d)$\index{\bigskip\textbf{Spaces}!$\mathcal{S}(\mathbb{R}^d)$}\label{S} and $\mathcal{S}'(\mathbb{R}^d)$\index{\bigskip\textbf{Spaces}!$\mathcal{S}'(\mathbb{R}^d)$}\label{S'} be the Schwartz space of all complex-valued rapidly decreasing infinitely differentiable functions on $\mathbb{R}^d$, and the space of tempered distributions on $\mathbb{R}^d$, respectively. The Fourier transform of $f \in \mathcal{S}(\mathbb{R}^d)$ is defined by \index{\bigskip\textbf{Operators}!$\widehat{f}$}\label{FT}
\begin{equation*}
	\widehat{f}(\xi) = (2 \pi)^{-d/2} \int_{\mathbb{R}^d}  f(x) e^{-i x \cdot \xi} d x, \quad \xi \in \mathbb{R}^d,
\end{equation*}
where $x \cdot \xi = \sum_{j=1}^d x_j \xi_j$, and the inverse Fourier transform is given by \index{\bigskip\textbf{Operators}!$f^\vee$}\label{IFT}
\begin{equation*}
	f^\vee(\xi) = (2 \pi)^{-d/2} \int_{\mathbb{R}^d} f(x)  e^{i x \cdot \xi} d x.
\end{equation*}
These operators are extended to $\mathcal{S}'(\mathbb{R}^d)$ in the usual way.

Take $\varphi_0 \in \mathcal{S}(\mathbb{R}^d)$ such that
\begin{equation}\label{SmoothFunction}
	\text{supp } \varphi_0 \subset \{x \in \mathbb{R}^d : |x| \leq 2\} \text{ and } \varphi_0(x) = 1 \text{ if } |x| \leq 1.
\end{equation}
For $j \in \mathbb{N}$ and $x \in \mathbb{R}^d$ let
\begin{equation}\label{resolution}
	\varphi_j(x) = \varphi_0(2^{-j}x) - \varphi_0(2^{-j+1}x).
\end{equation}
Then the sequence $\{\varphi_j\}_{j \in \mathbb{N}_0}$ forms a smooth dyadic resolution of unity in $\mathbb{R}^d$, $\sum_{j=0}^\infty \varphi_j (x) = 1$ for all $x \in \mathbb{R}^d$.

Let $1 \leq p \leq \infty, 0 < q \leq \infty$, and $-\infty < s, b < \infty$. The Besov space $B^{s,b}_{p,q}(\mathbb{R}^d)$\index{\bigskip\textbf{Spaces}!$B^{s,b}_{p,q}(\mathbb{R}^d)$}\label{BESOVF} consists
of all $f \in \mathcal{S}'(\mathbb{R}^d)$ such that
\begin{equation*}
    \|f\|_{B^{s,b}_{p,q}(\mathbb{R}^d)} = \Bigg(\sum_{j=0}^\infty \Big(2^{j s} (1 + j)^b \|(\varphi_j
    \widehat{f})^\vee\|_{L_p(\mathbb{R}^d)}\Big)^q\Bigg)^{1/q} < \infty
\end{equation*}
(the sum should be replaced by the supremum if $q=\infty$). We should remark that the spaces $B^{s,b}_{p,q}(\mathbb{R}^d)$ are non-trivial spaces formed by distributions for any $s \in \mathbb{R}$, which are independent of the choice of $\varphi_0$ satisfying (\ref{SmoothFunction}), in the sense of equivalent quasi-norms.

Analogously, one introduces the periodic Besov space $B^{s,b}_{p,q}(\mathbb{T})$\index{\bigskip\textbf{Spaces}!$B^{s,b}_{p,q}(\mathbb{T})$}\label{BESOVFPER} as the set of all $f \in \mathcal{D}'(\mathbb{T})$ which have a finite quasi-norm
\begin{equation}\label{BesovPerDef}
    \|f\|_{B^{s,b}_{p,q}(\mathbb{T})} = \Bigg(\sum_{j=0}^\infty \Big(2^{j s} (1 + j)^b \|(\varphi_j
    \widehat{f})^\vee\|_{L_p(\mathbb{T})}\Big)^q\Bigg)^{1/q}.
\end{equation}
In this setting, it is worthwhile mentioning that $(\varphi_j \widehat{f})^\vee$ is equal to the trigonometric polynomial  of degree at most $2^{j+1}$   given by $\sum_{k = -\infty}^\infty \varphi_j(k) \widehat{f}(k) e^{i k x}$, where $\widehat{f}(k)$ are the Fourier coefficients of $f \in \mathcal{D}'(\mathbb{T})$. In particular, if $f \in L_p(\mathbb{T})$ then
\begin{equation}\label{FourierCoef}
	\widehat{f}(k) = \frac{1}{2 \pi} \int_{-\pi}^\pi f(x) e^{-i k x} d x.
\end{equation}
Hence, we can rewrite (\ref{BesovPerDef}) as
\begin{equation}\label{Section2:new}
	\|f\|_{B^{s,b}_{p,q}(\mathbb{T})} = \left(\sum_{j=0}^\infty 2^{j s q} (1 + j)^{b q} \Big\|\sum_{k=-\infty}^\infty \varphi_j(k) \widehat{f}(k) e^{i k x}\Big\|_{L_p(\mathbb{T})}^q\right)^{1/q}.
\end{equation}

Note that if $b=0$ in $B^{s,b}_{p,q}(\mathbb{R}^d)$ or $B^{s,b}_{p,q}(\mathbb{T})$ we get the classical spaces $B^{s}_{p,q}(\mathbb{R}^d)$ or $B^{s}_{p,q}(\mathbb{T})$, respectively. These spaces have been extensively studied in the literature. For further details and properties, we refer to the books \cite{Nikolskii}, \cite{Peetre}, \cite{Triebel1}, and \cite{DeVoreLorentz}.

 For $1 < p < \infty, 0 < q \leq \infty$, and $-\infty < s, b < \infty$,
 the Triebel-Lizorkin space $F^{s,b}_{p,q}(\mathbb{R}^d)$\index{\bigskip\textbf{Spaces}!$F^{s,b}_{p,q}(\mathbb{R}^d)$}\label{TL} is formed by all $f \in
\mathcal{S}'(\mathbb{R}^d)$ for which
\begin{equation*}
    \|f\|_{F^{s,b}_{p,q}(\mathbb{R}^d)} = \Big\|\Big(\sum_{j=0}^\infty (2^{js} (1 + j)^b |(\varphi_j \widehat{f})^\vee
    (\cdot)|)^q\Big)^{1/q}\Big\|_{L_p(\mathbb{R}^d)} < \infty
\end{equation*}
(with the usual modification if $q=\infty$). This definition is independent of the choice of $\varphi_0$ satisfying (\ref{SmoothFunction}).

The Sobolev space
$H^{s,b}_p(\mathbb{R}^d)$\index{\bigskip\textbf{Spaces}!$H^{s,b}_p(\mathbb{R}^d)$}\label{SOB} consists of all $f \in \mathcal{S}'(\mathbb{R}^d)$ such that
\begin{equation}\label{def:SobolevSpace}
    \|f\|_{H^{s,b}_p(\mathbb{R}^d)} = \|((1 + |x|^2)^{s/2} (1 + \log (1 + |x|^2))^b \widehat{f})^\vee\|_{L_p(\mathbb{R}^d)}
\end{equation}
is finite. These spaces are also called generalized Bessel potential spaces. See \cite{KalyabinLizorkin, OpicTrebels, FarkasLeopold}.

The periodic counterparts $F^{s,b}_{p,q}(\mathbb{T})$\index{\bigskip\textbf{Spaces}!$F^{s,b}_{p,q}(\mathbb{T})$}\label{TLPER} and $H^{s,b}_p(\mathbb{T})$\index{\bigskip\textbf{Spaces}!$H^{s,b}_p(\mathbb{T})$}\label{SOBPER}  are formed by all $f \in \mathcal{D}'(\mathbb{T})$ with the Fourier series
\begin{equation*}
	f (x) = \sum_{k=-\infty}^\infty \widehat{f}(k) e^{i k x} \text{ (convergence in $\mathcal{D}'(\mathbb{T})$)}
\end{equation*}
such that
\begin{equation}\label{Section 2.1: New2}
	\|f\|_{F^{s,b}_{p,q}(\mathbb{T})} = \Big\|\Bigg(\sum_{j=0}^\infty \Big(2^{j s} (1 + j)^{b} \Big|\sum_{k=-\infty}^\infty \varphi_j(k) \widehat{f}(k) e^{i k x} \Big|\Big)^q \Bigg)^{1/q}\Big\|_{L_p(\mathbb{T})} < \infty
\end{equation}
and
\begin{equation*}
	\|f\|_{H^{s,b}_p(\mathbb{T})} = \Big\|\sum_{k=-\infty}^\infty (1 + |k|^2)^{s/2} (1 + \log (1 + |k|^2))^b \widehat{f}(k) e^{i k x} \Big\|_{L_p(\mathbb{T})} < \infty,
\end{equation*}
respectively.

In the case  $b=0$ in $F^{s,b}_{p,q}(\mathbb{R}^d)$ (respectively, $F^{s,b}_{p,q}(\mathbb{T})$) we obtain the spaces $F^s_{p,q}(\mathbb{R}^d)$ (respectively, $F^s_{p,q}(\mathbb{T})$) which are studied systematically in \cite{Triebel, Triebel1, SchmeisserTriebel}.
If $b=0$ in $H^{s,b}_p(\mathbb{R}^d)$ (respectively, $H^{s,b}_p(\mathbb{T})$) we get the well-known (fractional) Sobolev spaces $H^s_p(\mathbb{R}^d)$ (respectively, periodic (fractional) Sobolev spaces $H^s_p(\mathbb{T})$). In particular, if $s=k \in \mathbb{N}_0$ and $1 < p < \infty$, we obtain the classical Sobolev spaces $W^k_p(\mathbb{R}^d)$ (respectively, periodic Sobolev spaces $W^k_p(\mathbb{T})$\index{\bigskip\textbf{Spaces}!$W^k_p(\mathbb{T})$}\label{SOBPERCLAS}) formed by
$L_p$-integrable (respectively, periodic) functions whose (weak) derivatives of order less
than or equal to $k$ are also $L_p$-integrable. See \cite[2.3.3]{Triebel} and \cite[Theorem 3.5.4, p. 169]{SchmeisserTriebel}. We equip the space $W^k_p(\mathbb{R}^d), 1 \leq p \leq \infty$\index{\bigskip\textbf{Spaces}!$W^k_p(\mathbb{R}^d)$}\label{SOBCLAS}, with the norm
\begin{equation}\label{SobolevSpacesDefinition}
    \|f\|_{W^{k}_p(\mathbb{R}^d)} = \sum_{|\alpha| \leq k} \|D^\alpha f\|_{L_p(\mathbb{R}^d)}.
\end{equation}
Here, $D^\alpha$ \index{\bigskip\textbf{Operators}!$D^\alpha$}\label{DERIVATIVE} denotes the differential operator $\frac{\partial^{|\alpha|}}{\partial x_1^{\alpha_1} \cdots \partial x_d^{\alpha_d}}$ with multiindex $\alpha$, $|\alpha| = \sum_{i=1}^d \alpha_i$.

Analogously, we introduce the norm $\|f\|_{W^k_p(\mathbb{T})}$ for $f \in W^k_p(\mathbb{T})$.

We also mention a partial result of (\ref{LPgeneral}), the Littlewood-Paley theorem,  which claims that
\begin{equation}\label{LP}
	\|f\|_{L_p(\mathbb{R}^d)} \asymp \Big\|\Big(\sum_{j=0}^\infty |(\varphi_j \widehat{f})^\vee (\cdot)|^2\Big)^{1/2}\Big\|_{L_p(\mathbb{R}^d)}, \quad 1 < p < \infty.
\end{equation}

The periodic counterpart of (\ref{LPgeneral}) also holds true. Namely, for
$1 < p < \infty$ and $s,b \in \mathbb{R}$, we have
	\begin{equation*}
	F^{s,b}_{p,2}(\mathbb{T}) = H^{s,b}_p(\mathbb{T}).
\end{equation*}
 	In the case $s=b=0$ this can be found in \cite[3.5.4]{SchmeisserTriebel}.
 In the general case one can use the lifting operator $\mathfrak{J}_{s,b}$\index{\bigskip\textbf{Operators}!$\mathfrak{J}_{s,b}$}\label{LIFTPER}.
Set, for any $s, b \in \mathbb{R}$,
	\begin{equation*}
		\mathfrak{J}_{s,b} : f \rightarrow \sum_{k=-\infty}^\infty (1 + |k|^2)^{s/2} (1 + \log (1 + |k|^2))^b \widehat{f}(k) e^{i k x}.
	\end{equation*}
	It is clear that $\mathfrak{J}_{s,b}$ is an isometry from $H^{s,b}_p(\mathbb{T})$ onto $L_p(\mathbb{T})$. Moreover,  $\mathfrak{J}_{s,b}$ maps $F^{s,b}_{p,2}(\mathbb{T})$ isomorphically onto $F^0_{p,2}(\mathbb{T})$. To show this, one can follow
 the same argument as in the proof of
  \cite[Theorem 2.3.8]{Triebel1}.

\subsection{Interpolation methods}\label{Section:Interpolation methods}
Let $A_0$ and  $A_1$ be quasi-Banach spaces with $A_1 \hookrightarrow A_0$. For every $a \in A_0$, the Peetre's $K$-functional \index{\bigskip\textbf{Functionals and functions}!$K(t,a)$}\label{K} is defined by
\begin{equation}\label{Peetre}
	K(t,a) = K(t, a; A_0, A_1) = \inf \{\|a_0\|_{A_0} + t \|a_1\|_{A_1} : a =a_0 + a_1, a_j \in A_j \}, \quad  t > 0.
\end{equation}
In some cases, the $K$-functional is defined by using a quasi-semi-norm for $A_1$. In this paper, we will deal with the following three interpolation methods.

First, for $0 < \theta < 1$ and $0 < q \leq \infty$, the real interpolation space $(A_0,A_1)_{\theta,q}$\index{\bigskip\textbf{Spaces}!$(A_0,A_1)_{\theta,q}$}\label{REAL} consists of all those $a \in A_0$ having a finite quasi-norm
\begin{equation}\label{def:IntSpace}
	\|a\|_{(A_0,A_1)_{\theta,q}} = \left(\int_0^\infty (t^{-\theta}  K(t,a))^q \frac{dt}{t}\right)^{1/q}
\end{equation}
(with the usual modification if $q=\infty$). See \cite{BennettSharpley, BerghLofstrom, Triebel}.

Second, for $\mathbb{A}=(\alpha_0,\alpha_\infty) \in \mathbb{R}^2$, let $\ell (t) = 1 + |\log t|$\index{\bigskip\textbf{Functionals and functions}!$\ell (t)$}\label{LOG} and
\begin{equation}\label{brokenlog}
    \ell^{\mathbb{A}}(t) = \left\{\begin{array}{lcl}
                                \ell^{\alpha_0}(t) & \text{ for } & t \in (0,1], \\
                                             &              &              \\
                                \ell^{\alpha_\infty}(t) & \text{ for } & t \in
                                (1,\infty).
                                \end{array}
                        \right.
\end{equation}
 The logarithmic interpolation space $(A_0,A_1)_{\theta,q;\mathbb{A}}$\index{\bigskip\textbf{Spaces}!$(A_0,A_1)_{\theta,q;\mathbb{A}}$}\label{REALLOG} is formed by all $a \in A_0$ such that
 \begin{equation*}
 	\|a\|_{(A_0,A_1)_{\theta,q;\mathbb{A}}} = \left(\int_0^\infty (t^{-\theta} \ell^{\mathbb{A}}(t)  K(t,a))^q \frac{dt}{t}\right)^{1/q} < \infty.
 \end{equation*}
See \cite{EvansOpic, EvansOpicPick, GogatishviliOpicTrebels}. Under suitable assumptions on $\mathbb{A}$ and $q$, these spaces are well-defined even if $\theta=0$ or $\theta=1$. If $\alpha_0=\alpha_\infty = \alpha$, we use the notation $(A_0,A_1)_{\theta,q;\alpha}$, that is,
\begin{equation}\label{inter*}
	\|a\|_{(A_0,A_1)_{\theta,q;\alpha}} =  \left(\int_0^\infty (t^{-\theta} \ell^\alpha(t) K(t,a))^q \frac{dt}{t}\right)^{1/q}.
\end{equation}
Note that if $\alpha = 0$ in $(A_0,A_1)_{\theta,q;\alpha}$ we recover the classical spaces $(A_0,A_1)_{\theta,q}$ (see (\ref{def:IntSpace})).

Before going further, let us make an easy observation. Assume that $0 < \theta < 1, 0 < q \leq \infty$, and $\alpha \in \mathbb{R}$. Since $A_1 \hookrightarrow A_0$, we have
\begin{equation}\label{inter}
	\|a\|_{(A_0,A_1)_{\theta,q;\alpha}} \asymp  \left(\int_0^1 (t^{-\theta} \ell^\alpha(t) K(t,a))^q \frac{dt}{t}\right)^{1/q}.
\end{equation}

Third, we introduce the limiting cases of the right-hand side space of (\ref{inter}). For $\theta=1$ or $\theta=0$ and $\alpha \in \mathbb{R}$, the space $(A_0,A_1)_{(\theta,\alpha),q}$\index{\bigskip\textbf{Spaces}!$(A_0,A_1)_{(\theta,\alpha),q}$}\label{REALLIM} consists of all those $a \in A_0$ with
\begin{equation}\label{limitinginterpolation}
	\|a\|_{(A_0,A_1)_{(\theta,\alpha),q}} = \left(\int_0^1 (t^{-\theta} \ell^\alpha(t) K(t,a))^q \frac{dt}{t}\right)^{1/q} < \infty
\end{equation}
(see \cite{CobosFernandezCabreraKuhnUllrich, CobosDominguez2}). To avoid trivial spaces we shall assume that $\alpha < -1/q$ if $q < \infty$ ($\alpha \leq 0$ if $q=\infty$) when $\theta = 1$. In the case $\theta = 0$ we are interested in $\alpha \geq -1/q$ if $q < \infty$ ($\alpha > 0$ if $q=\infty$). Otherwise, it is plain to check that $(A_0,A_1)_{(0,\alpha),q} = A_0$ (with equivalence of quasi-norms). Under some additional assumptions, it turns out that the spaces $(A_0,A_1)_{(\theta,\alpha),q}, \theta =0, 1,$ are special cases of the spaces $(A_0, A_1)_{\theta,q;\mathbb{A}}$ (see \cite[Proposition 1]{EdmundsOpic}).

Let $\mathcal{F}$ be any of these three interpolation methods. Clearly, the following embeddings hold
\begin{equation}\label{PrelimInterpolationEmb}
	A_1 \hookrightarrow \mathcal{F}(A_0,A_1) \hookrightarrow A_0.
\end{equation}
Furthermore, $\mathcal{F}$ has the interpolation property for bounded linear operators: If $A_0, A_1, B_0, B_1$ are quasi-Banach spaces with $A_1 \hookrightarrow A_0, B_1 \hookrightarrow B_0$ and $T$ is a linear operator such that the restrictions $T: A_0 \longrightarrow B_0$ and $T:A_1 \longrightarrow B_1$ are bounded, then the restriction $T: \mathcal{F}(A_0,A_1) \longrightarrow \mathcal{F}(B_0,B_1)$ is also bounded.

Moreover, working with some particular interpolation methods
one can show
 that the left-hand side embedding in (\ref{PrelimInterpolationEmb}) is dense. For example, if $0 < \theta < 1, 0 < q < \infty$, and $-\infty < \alpha < \infty$, then
\begin{equation}\label{PrelimInterpolationEmbDense}
	A_1 \quad \text{is densely embedded into} \quad (A_0,A_1)_{\theta,q;\alpha}
\end{equation}
(see \cite[Theorem 2.2]{Gustavsson}).

For later use we write down some Holmstedt's and reiteration formulae for interpolation spaces. Recall the shorthand notation $K(t,a) = K(t, a; A_0, A_1)$ for $t > 0$.

\begin{lem}[cf. \cite{BennettSharpley}, \cite{EvansOpicPick}, \cite{Fernandez-MartinezSignes}]\label{PrelimHolmstedt}
	Let $A_0, A_1$ be quasi-Banach spaces with $A_1 \hookrightarrow A_0$.
	\begin{enumerate}[\upshape(i)]
		\item Let $0 < \theta < 1, \alpha \in \mathbb{R}$, and $0 < q \leq \infty$. Then
		\begin{equation*}
			K(t^\theta \ell^{-\alpha}(t),a; A_0, (A_0,A_1)_{\theta,q;\alpha}) \asymp t^\theta \ell^{-\alpha}(t) \left(\int_t^\infty (u^{-\theta} \ell^\alpha(u) K(u,a))^q \frac{du}{u}\right)^{1/q}.
		\end{equation*}
		\item Let $0 < \theta < 1, \alpha \in \mathbb{R}$, and $0 < q \leq \infty$. Then
		\begin{equation*}
			K(t^{1-\theta} \ell^\alpha(t),a; (A_0,A_1)_{\theta,q;\alpha}, A_1) \asymp \left(\int_0^t (u^{-\theta} \ell^\alpha(u) K(u,a))^q\frac{du}{u}\right)^{1/q}.
		\end{equation*}
		\item Let $0 < q \leq \infty$ and $\alpha < -1/q \, (\alpha \leq 0 \text{ if } q = \infty)$. Then
		\begin{align*}
			K(t (1-\log t)^{-\alpha -\frac{1}{q}}, a; A_0, (A_0, A_1)_{(1,\alpha),q}) &  \\
			& \hspace{-5cm} \asymp K(t, a)  + t (1-\log t)^{-\alpha -\frac{1}{q}} \left(\int_t^1 (u^{-1} (1-\log u)^\alpha K(u, a))^q \frac{du}{u} \right)^{\frac{1}{q}}
			\end{align*}
			for all $0 < t< 1$.
			\item Let $0 < q \leq \infty$ and $\alpha > -1/q$. Then
			\begin{align*}
				K(t (1-\log t)^{\alpha + \frac{1}{q}}, a; (A_0, A_1)_{(0,\alpha),q}, A_1) & \\
				& \hspace{-5cm} \asymp (1-\log t)^{\alpha + \frac{1}{q}} K(t,a) + \left(\int_0^t ((1-\log u)^\alpha K(u,a))^q \frac{du}{u}\right)^{\frac{1}{q}}
			\end{align*}
			for all $0 < t< 1$.
			\item Let $0 < q \leq \infty$ and $\alpha > -1/q$. Then
			\begin{align*}
				K((1-\log t)^{-\alpha -1/q}, a; A_0, (A_0, A_1)_{(0,\alpha),q}) & \\
				& \hspace{-5cm} \asymp (1-\log t)^{-\alpha -1/q} \left(\int_t^1 ((1-\log u)^\alpha K(u,a))^q \frac{du}{u}\right)^{1/q}
			\end{align*}
			for all $0 < t< 1$.
			\item Let $0 < q \leq \infty$ and $\alpha < -1/q \,  (\alpha \leq 0 \text{ if } q = \infty)$. Then
			\begin{align*}
				K((1-\log t)^{\alpha +1/q}, a; (A_0, A_1)_{(1,\alpha),q}, A_1) & \\
				& \hspace{-5cm} \asymp \left(\int^t_0 (u^{-1}(1-\log u)^\alpha K(u,a))^q \frac{du}{u}\right)^{1/q}
			\end{align*}
			for all $0 < t< 1$.
			\item Let $0 < p, q \leq \infty, \alpha > -1/q$ and $\beta < -1/p$. Then
			\begin{align*}
				K(t (1-\log t)^{\alpha + \frac{1}{q} -\beta -\frac{1}{p}}, a; (A_0, A_1)_{(0,\alpha), q}, (A_0,A_1)_{(1,\beta),p}) & \\
				& \hspace{-7cm} \asymp (1- \log t)^{\alpha + \frac{1}{q}} K(t,a) + \left(\int_0^t ((1-\log u)^{\alpha} K(u,a))^q \frac{du}{u}\right)^{\frac{1}{q}}  \\
				& \hspace{-6.5cm}+ t (1-\log t)^{\alpha + \frac{1}{q} - \beta - \frac{1}{p}} \left(\int_t^1 (u^{-1} (1-\log u)^\beta K(u,a))^p \frac{du}{u}\right)^{\frac{1}{p}}
			\end{align*}
			for all $0 < t< 1$.
	\end{enumerate}
\end{lem}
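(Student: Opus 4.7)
The plan is to establish each of the seven formulas by the standard Holmstedt-type scheme: match an upper bound and a lower bound for the $K$-functional on the left-hand side, where the gauge function $\phi(t)$ appearing as the argument of $K$ is chosen precisely so that two competing contributions balance. All seven statements share a common template in which the defining integral for one of the interpolation norms is split at the level $t$, the inner portion producing the integral on the right-hand side and the outer portion being absorbed into the other summand by monotonicity of $K(\cdot, a; A_0, A_1)$. In fact, parts (i)--(ii) and (iii)--(vii) are essentially the results proved in \cite{EvansOpicPick} and \cite{Fernandez-MartinezSignes}, respectively, so my proposal is to refer to those sources after giving the gauge and the key balancing step for each case.

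For parts (i) and (ii), in the non-limiting regime $0 < \theta < 1$, the upper bound proceeds as follows. Given a near-optimal decomposition $a = a_0(t) + a_1(t)$ of $K(t, a; A_0, A_1)$, use $\|a_0(t)\|_{A_0} \lesssim K(t, a)$ and estimate $\|a_1(t)\|_{(A_0, A_1)_{\theta, q; \alpha}}$ by splitting its defining integral at $u = t$. On $(0, t)$ one exploits $K(u, a_1(t); A_0, A_1) \lesssim u \|a_1(t)\|_{A_1} \lesssim (u/t) K(t, a)$; on $(t, \infty)$ one uses $K(u, a_1(t)) \lesssim K(u, a) + K(u, a_0(t)) \lesssim K(u, a) + \|a_0(t)\|_{A_0}$. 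Both contributions combine to produce the claimed right-hand side integral, once the gauge $\phi(t) = t^{\theta} \ell^{-\alpha}(t)$ is used to weight the split optimally. The matching lower bound follows by testing against an arbitrary decomposition $a = b_0 + b_1$ with $b_1 \in (A_0, A_1)_{\theta, q; \alpha}$ and applying the reverse direction of these same estimates; this is standard Holmstedt algebra with logarithmic weights. Part (ii) is dual and proceeds by the symmetric argument.

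For parts (iii)--(vi), the limiting cases $\theta = 0, 1$, the new feature is the emergence of an additional pointwise term $K(t, a; A_0, A_1)$ (or a weighted version thereof) on the right-hand side, alongside the tail integral. This reflects the fact that at a limiting endpoint the scaling $t^{\theta}$ degenerates and the ``trivial'' piece of the $K$-functional must be accounted for explicitly. The technique of \cite{Fernandez-MartinezSignes} would be to start from the definitions (\ref{limitinginterpolation}), follow the same splitting of the defining integral at the point $t$, and then observe that one of the two resulting pieces no longer simplifies into an $A_0$- or $A_1$-norm but must remain as $K(t, a; A_0, A_1)$. The admissibility constraints $\alpha > -1/q$ (at $\theta = 0$) and $\alpha < -1/q$ (at $\theta = 1$) ensure convergence of the relevant tails at $0$ or at $1$, respectively, and hence the well-definedness of the $K$-functional on the left.

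The main obstacle is the bookkeeping for part (vii), which couples both limiting endpoints at once: the left-hand couple is $((A_0, A_1)_{(0, \alpha), q}, (A_0, A_1)_{(1, \beta), p})$, so \emph{both} the inner space and the outer space contribute a pointwise-plus-tail structure. The correct gauge $\phi(t) = t(1 - \log t)^{\alpha + 1/q - \beta - 1/p}$ is forced by the requirement that the two tail integrals (one over $(0, t)$, weighted by $\ell^{\alpha}$; the other over $(t, 1)$, weighted by $u^{-1} \ell^{\beta}$) and the pointwise term $(1 - \log t)^{\alpha + 1/q} K(t, a)$ all scale consistently at the matching point. My proposal is to derive (vii) by combining the one-sided formulas (iv) and (v) through the reiteration identity $K(\cdot, a; X_0, X_1) \lesssim K(\cdot, a; A_0, X_1) + K(\cdot, a; X_0, A_1)$ (up to a change of gauge), which avoids a direct splitting argument and reduces (vii) to the already established limiting cases.
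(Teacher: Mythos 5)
The paper does not give a proof of Lemma~\ref{PrelimHolmstedt}: it is stated with ``cf.'' citations to \cite{BennettSharpley}, \cite{EvansOpicPick}, and \cite{Fernandez-MartinezSignes}, so there is no in-paper argument to compare against. Your sketch for parts (i)--(vi) is the standard Holmstedt scheme and matches what those sources do; deferring to them is appropriate.

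Your reduction of part (vii), however, has a genuine gap. Writing $X_0 = (A_0,A_1)_{(0,\alpha),q}$, $X_1 = (A_0,A_1)_{(1,\beta),p}$, the inequality you propose,
\begin{equation*}
K(\cdot,a;X_0,X_1) \lesssim K(\cdot,a;A_0,X_1) + K(\cdot,a;X_0,A_1),
\end{equation*}
is vacuous in the $\lesssim$ direction: $A_1 \hookrightarrow X_1$ already gives $K(s,a;X_0,X_1) \lesssim K(s,a;X_0,A_1)$ for every $s$, so the second summand alone dominates, and the inequality contributes nothing beyond (iv). What is actually needed for the two-sided equivalence in (vii) is a \emph{lower} bound on $K(\cdot,a;X_0,X_1)$ that sees both tail integrals, and this cannot come from the stated inequality since the embeddings run in the opposite direction ($K(s,a;X_0,A_1) \geq K(s,a;X_0,X_1)$, $K(s,a;A_0,X_1) \leq K(s,a;X_0,X_1)$). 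Moreover, the relevant one-sided formulas for the couple $(X_0,X_1)$ are (iii) with $(\beta,p)$ in place of $(\alpha,q)$, together with (iv) --- not (iv) and (v); part (v) is about the couple $(A_0,X_0)$, which does not occur in (vii). The robust route is the same direct splitting you already describe for (i)--(ii): take a near-optimal $A_0$--$A_1$ decomposition $a = a_0(t) + a_1(t)$, split the $(0,\alpha)$-quasi-norm of $a_0(t)$ and the $(1,\beta)$-quasi-norm of $a_1(t)$ at $u = t$, and observe that the four pieces (two inner, two outer) assemble into the $K(t,a)$ correction plus the two tail integrals, with the gauge $t(1-\log t)^{\alpha+\frac{1}{q}-\beta-\frac{1}{p}}$ forced by matching the outer piece of the $X_0$-norm against the inner piece of the $X_1$-norm; the lower bound follows by testing an arbitrary decomposition $a = b_0 + b_1$, $b_i \in X_i$, against (iv) and (iii) separately on the pieces. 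This is the content of the cited references.
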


\begin{lem}\label{PrelimLemma7.2}
	Let $A_0, A_1$ be quasi-Banach spaces with $A_1 \hookrightarrow A_0$. Assume that $0 < \theta, \eta < 1, 0 < p, q \leq \infty, -\infty < \alpha, \beta < \infty$, and $b > -1/q >c$. Then
	\begin{enumerate}[\upshape(i)]
		\item $((A_0,A_1)_{(0,b),q} , A_1)_{\theta, p;\alpha} = (A_0,A_1)_{\theta,p; (1-\theta)(b+1/q) + \alpha},$
		\item $(A_0, (A_0, A_1)_{(1,c),q})_{\theta,p;\alpha} = (A_0, A_1)_{\theta, p ; \theta (c + 1/q) + \alpha},$
		\item $((A_0,A_1)_{\theta,p;\alpha}, A_1)_{(1,c),q} = (A_0, A_1)_{(1,c),q}$,
			\item $(A_0, (A_0,A_1)_{\theta,p;\alpha})_{\eta,q;\beta} = (A_0,A_1)_{\eta \theta,q; \beta + \eta \alpha}$,
		\item $(A_0, (A_0,A_1)_{\theta,p;\alpha})_{(0,b),q} = (A_0,A_1)_{(0,b),q}$,
		\item $(A_0, A_1)_{\theta,q; b + 1/\min\{p,q\}} \hookrightarrow ((A_0,A_1)_{\theta,p}, A_1)_{(0,b),q} \hookrightarrow (A_0, A_1)_{\theta,q; b + 1/\max\{p,q\}}$.
	\end{enumerate}
\end{lem}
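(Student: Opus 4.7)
The strategy throughout is to apply the Holmstedt-type formulas of Lemma~\ref{PrelimHolmstedt} to compute the $K$-functional of one of the spaces appearing in the outer interpolation, then substitute the resulting expression into the definition of the outer quasi-norm, and finally reduce to the quasi-norm of the claimed interpolation space by a monotone change of variable combined with a Hardy-type inequality. Since by (\ref{inter}) the relevant integrals may always be taken over $(0,1)$, and since $K$-functionals relative to $(A_0,A_1)$ are equivalent to $K$-functionals of the interpolated couples on a suitable range, the computations reduce to manipulating weighted logarithmic integrals of $K(t,a;A_0,A_1)$.

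For the equalities (i) and (ii), I would apply Lemma~\ref{PrelimHolmstedt}(iv) and (iii), respectively, to obtain equivalent expressions for $K\bigl(t(1-\log t)^{b+1/q},a;(A_0,A_1)_{(0,b),q},A_1\bigr)$ and $K\bigl(t(1-\log t)^{-c-1/q},a;A_0,(A_0,A_1)_{(1,c),q}\bigr)$. Inserting these into (\ref{inter*}) and passing from the variable $t$ to $\varphi(t)=t(1-\log t)^{\pm(b+1/q)}$ (respectively $t(1-\log t)^{-c-1/q}$), the two summands in the Holmstedt formula are handled separately: the pointwise term yields, after integration, a multiple of the logarithmic $\theta$-interpolation norm with the claimed shifted exponent $(1-\theta)(b+1/q)+\alpha$ (respectively $\theta(c+1/q)+\alpha$), while the tail term is absorbed by a standard Hardy inequality for the integral operator $f\mapsto\int_0^t f$ (respectively $\int_t^1 f$), whose boundedness on the logarithmically weighted $L^q$ is guaranteed by $0<\theta<1$.

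The identities (iii) and (v) are obtained by a sandwich argument. The embeddings $A_1\hookrightarrow(A_0,A_1)_{\theta,p;\alpha}\hookrightarrow A_0$ from (\ref{PrelimInterpolationEmb}) immediately give one inclusion in each case. For the reverse, one uses the fact that the limiting functor $(\cdot,\cdot)_{(1,c),q}$ only depends on the behaviour of the $K$-functional near $A_1$ (and $(\cdot,\cdot)_{(0,b),q}$ on its behaviour near $A_0$); applying Lemma~\ref{PrelimHolmstedt}(ii) (respectively (i)) one sees that, modulo a negligible tail that can be absorbed using the assumption on $c$ (respectively $b$), the $K$-functional $K(t,a;(A_0,A_1)_{\theta,p;\alpha},A_1)$ is controlled by $K(t,a;A_0,A_1)$, yielding the missing inclusion. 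Statement (iv) is the classical reiteration theorem with logarithmic weights, obtained by substituting the $K$-functional description of Lemma~\ref{PrelimHolmstedt}(i) into the definition of the $(\eta,q;\beta)$ outer norm and performing the change of variable $s=t^\theta \ell^{-\alpha}(t)$ together with Hardy's inequality.

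The final statement (vi) is the main obstacle. Using Lemma~\ref{PrelimHolmstedt}(ii), we have
\begin{equation*}
K\bigl(t^{1-\theta},a;(A_0,A_1)_{\theta,p},A_1\bigr)\asymp \left(\int_0^t\bigl(u^{-\theta}K(u,a)\bigr)^p\,\frac{du}{u}\right)^{\!1/p},
\end{equation*}
so that, after a change of variable, $\|a\|_{((A_0,A_1)_{\theta,p},A_1)_{(0,b),q}}$ is equivalent to a nested norm of mixed $L^q\bigl(L^p\bigr)$ type in $(t,u)$, weighted by logarithms. The embeddings in (vi) then follow by comparing this mixed norm with the corresponding $L^p\bigl(L^q\bigr)$ (i.e.\ purely $\theta,q;b+\text{shift}$) norm; Minkowski's integral inequality applied in one of the two orderings gives the embedding with $b+1/\max\{p,q\}$, while a reverse estimate, obtained either by a Hardy-type duality argument or by exchanging the order of integration after a monotonicity trick for the inner integral, yields the embedding with $b+1/\min\{p,q\}$. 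The delicate point is that the appearance of both $\min\{p,q\}$ and $\max\{p,q\}$ is not a consequence of one calculation but of Minkowski/Hardy applied in opposite directions according to whether $p\le q$ or $p>q$, which is why only embeddings (and not equalities) can be expected in (vi).
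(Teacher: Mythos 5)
The paper itself does not prove Lemma~\ref{PrelimLemma7.2}: it simply refers to \cite[Theorems 7.1, 7.2, 7.4*]{EvansOpicPick} and \cite[Lemmas 2.2, 2.5(b)]{CobosDominguez2}. So you are supplying a from-scratch derivation where the paper gives only a citation. Your strategy --- compute the $K$-functional of the inner space via Lemma~\ref{PrelimHolmstedt}, substitute into the outer norm, change variables, and close with Hardy-type inequalities --- is indeed the mechanism by which results of this kind are established in the cited sources, and your outlines of (i)--(v) are sound: the correct parts of Lemma~\ref{PrelimHolmstedt} are invoked, the logarithmic factor $\ell^{\pm(b+1/q)}$ in the change of variable produces exactly the shifted exponent claimed, and the one-sided inclusions in (iii) and (v) do follow from the $K$-functional monotonicity $K(t,a;B_0,A_1)\gtrsim K(t,a;A_0,A_1)$ when $B_0\hookrightarrow A_0$ (respectively $K(t,a;A_0,B_1)\lesssim K(t,a;A_0,A_1)$ when $A_1\hookrightarrow B_1$).

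The gap is in (vi). After applying Lemma~\ref{PrelimHolmstedt}(ii) with $\alpha=0$ and changing variables, the norm of $((A_0,A_1)_{\theta,p},A_1)_{(0,b),q}$ becomes
\begin{equation*}
\Phi(a)\asymp\left(\int_0^1 (1-\log t)^{bq}\left(\int_0^t \bigl(u^{-\theta}K(u,a)\bigr)^p\,\frac{du}{u}\right)^{q/p}\frac{dt}{t}\right)^{1/q}.
\end{equation*}
You claim that Minkowski's integral inequality ``in one ordering'' gives the embedding into $(A_0,A_1)_{\theta,q;b+1/\max\{p,q\}}$. This is confused on two counts. First, Minkowski applied to the nested norm (when $q\geq p$) bounds $\Phi$ from \emph{above}, which is the other embedding (the one with $b+1/\min\{p,q\}$ on the left). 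Second, and more seriously, even the correctly applied Minkowski estimate produces a $(\theta,p;b+1/q)$-norm, not the required $(\theta,q;b+1/p)$-norm: one obtains
\begin{equation*}
\Phi(a)\lesssim\left(\int_0^1 \bigl(u^{-\theta}(1-\log u)^{b+1/q}K(u,a)\bigr)^p\,\frac{du}{u}\right)^{1/p}=\|a\|_{(A_0,A_1)_{\theta,p;\,b+1/q}},
\end{equation*}
and the spaces $(A_0,A_1)_{\theta,q;\,b+1/p}$ and $(A_0,A_1)_{\theta,p;\,b+1/q}$ are genuinely incomparable. To see that Minkowski alone is insufficient, take $K(u,a)\asymp u^\theta(1-\log u)^{-(b+1/p+1/q)}(1+\log(1-\log u))^{-\delta}$ with $p<q$ and $1/q<\delta\leq 1/p$: then the $(\theta,q;b+1/p)$-norm and $\Phi(a)$ are both finite, while the $(\theta,p;b+1/q)$-norm diverges. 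The actual $\min$/$\max$ dichotomy in (vi) comes instead from the super- or subadditivity of $x\mapsto x^{q/p}$ together with the monotonicity of the inner integral in $t$: for $q\geq p$ superadditivity gives the lower bound $\Phi\gtrsim\|\cdot\|_{\theta,q;b+1/q}$ and a Hardy inequality (using $b>-1/q$) gives the upper bound $\Phi\lesssim\|\cdot\|_{\theta,q;b+1/p}$, and the roles reverse for $p>q$. Your sketch would need to be reworked along these lines to close the argument.
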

See \cite[Theorems 7.1, 7.2 and 7.4*]{EvansOpicPick} and \cite[Lemmas 2.2 and 2.5(b)]{CobosDominguez2}. 

It is well known that Besov and Lipschitz spaces can be characterized as interpolation spaces. Let $1 \leq p \leq \infty$. For $t > 0$ and $k \in \mathbb{N}$, we have
	\begin{equation}\label{KFunctional}
		K(t^k,f;L_p(\mathbb{R}^d), W^k_p(\mathbb{R}^d)) \asymp  \min\{1,t^k\}  \|f\|_{L_p(\mathbb{R}^d)} + \omega_k(f,t)_p
	\end{equation}
	and
	\begin{equation*}
		K(t^k,f;L_p(\mathbb{T}), W^k_p(\mathbb{T})) \asymp \min\{1,t^k\}  \|f\|_{L_p(\mathbb{T})} + \omega_k(f,t)_p
	\end{equation*}
 	(see \cite[Theorem 5.4.12, p. 339]{BennettSharpley} and \cite[Theorem 1]{JohnenScherer}). Consequently, given $0 < s < k \in \mathbb{N}, 0 < q \leq \infty$, and $-\infty < b < \infty$, one easily gets that
	\begin{equation}\label{PrelimInterpolation}
	 (L_p(\mathbb{R}^d), W^k_p(\mathbb{R}^d))_{s/k,q;b} = \mathbf{B}^{s, b}_{p,q}(\mathbb{R}^d) \quad \text{and} \quad  (L_p(\mathbb{T}), W^k_p(\mathbb{T}))_{s/k,q;b} = \mathbf{B}^{s, b}_{p,q}(\mathbb{T}).
	\end{equation}
	If $s=0$ and $k \in \mathbb{N}$, we have
	\begin{equation}\label{PrelimInterpolationnew}
	 (L_p(\mathbb{R}^d), W^k_p(\mathbb{R}^d))_{(0,b),q} = \mathbf{B}^{0, b}_{p,q}(\mathbb{R}^d) \quad \text{and} \quad (L_p(\mathbb{T}), W^k_p(\mathbb{T}))_{(0,b),q} = \mathbf{B}^{0, b}_{p,q}(\mathbb{T}).
	\end{equation}
	
	As a consequence of (\ref{PrelimInterpolation}) and (\ref{PrelimInterpolationnew}) and reiteration formulas given in Lemma \ref{PrelimLemma7.2} it is possible to derive further interpolation formulas. We shall illustrate below this approach obtaining some interpolation formulas that we shall use in the paper.

Combining Lemma \ref{PrelimLemma7.2}(iv), (v), (\ref{PrelimInterpolation}) and (\ref{PrelimInterpolationnew}) we get
	\begin{equation}\label{PrelimInterpolationnew2.2}
	 (L_p(\mathbb{R}^d), \mathbf{B}^{s,\xi}_{p,r}(\mathbb{R}^d))_{\frac{s_0}{s},q;b} = \mathbf{B}^{s_0, b + \frac{s_0 \xi}{s}}_{p,q}(\mathbb{R}^d) \quad \text{and} \quad  (L_p(\mathbb{T}), \mathbf{B}^{s,\xi}_{p,r}(\mathbb{T}))_{\frac{s_0}{s},q;b} = \mathbf{B}^{s_0, b + \frac{s_0 \xi}{s}}_{p,q}(\mathbb{T})
	\end{equation}
	 and (see also \cite[Theorem 3.4]{CobosDominguezTriebel} and \cite[(4.2), p. 60]{CobosDominguez2})
	\begin{equation}\label{PrelimInterpolationnew2}
	 (L_p(\mathbb{R}^d), \mathbf{B}^{s,\xi}_{p,r}(\mathbb{R}^d))_{(0,b),q} = \mathbf{B}^{0, b}_{p,q}(\mathbb{R}^d) \quad \text{and} \quad (L_p(\mathbb{T}),  \mathbf{B}^{s,\xi}_{p,r}(\mathbb{T}))_{(0,b),q} = \mathbf{B}^{0, b}_{p,q}(\mathbb{T}).
	\end{equation}
	Here, $s > s_0 >0, -\infty < b, \xi < \infty, 1 \leq p \leq \infty$, and $0 < q, r \leq \infty$.

	Let $1 \leq p \leq \infty, 0 < q, r, u \leq \infty, b > -1/r > d, k \in \mathbb{N}, s > 0, -\infty < c, \alpha < \infty,$ and $0 < \theta < 1$. Applying (\ref{PrelimInterpolationnew}), Lemma \ref{PrelimLemma7.2}(i) and (\ref{PrelimInterpolation}), we obtain
	\begin{equation}\label{PrelimInterpolationnew2.3}
	\begin{aligned}	(\mathbf{B}^{0,b}_{p,r}(\mathbb{R}^d), W^k_p(\mathbb{R}^d))_{\theta,q;\alpha} &= \mathbf{B}^{k \theta, (1-\theta)(b + \frac{1}{r}) + \alpha}_{p,q} (\mathbb{R}^d), \\ (\mathbf{B}^{0,b}_{p,r}(\mathbb{T}), W^k_p(\mathbb{T}))_{\theta,q;\alpha} &= \mathbf{B}^{k \theta, (1-\theta)(b + \frac{1}{r}) + \alpha}_{p,q} (\mathbb{T}).
\end{aligned}	\end{equation}
	On the other hand, by (\ref{PrelimInterpolationnew2}), Lemma \ref{PrelimLemma7.2}(i) and (\ref{PrelimInterpolationnew2.2}), we have
	\begin{equation}\label{PrelimInterpolationnew3}
	\begin{aligned}	(\mathbf{B}^{0,b}_{p,r}(\mathbb{R}^d), \mathbf{B}^{s,c}_{p,u}(\mathbb{R}^d))_{\theta,q;\alpha} &= \mathbf{B}^{s \theta, (1-\theta)(b + \frac{1}{r}) + \theta c + \alpha}_{p,q} (\mathbb{R}^d),
\\  	(\mathbf{B}^{0,b}_{p,r}(\mathbb{T}), \mathbf{B}^{s,c}_{p,u}(\mathbb{T}))_{\theta,q;\alpha} &= \mathbf{B}^{s \theta, (1-\theta)(b + \frac{1}{r}) + \theta c + \alpha}_{p,q} (\mathbb{T}).
\end{aligned}
\end{equation}

	In case that $s=k \in \mathbb{N}$ in (\ref{PrelimInterpolation}) we have
	\begin{equation}\label{interpolationLipschitz}
		(L_p(\mathbb{R}^d), W^k_p(\mathbb{R}^d))_{(1,-b),q} = \text{Lip}^{(k,-b)}_{p,q}(\mathbb{R}^d) \quad \text{and} \quad (L_p(\mathbb{T}), W^k_p(\mathbb{T}))_{(1,-b),q} = \text{Lip}^{(k,-b)}_{p,q}(\mathbb{T}) .
	\end{equation}
	In particular,
		\begin{equation}\label{interpolationLipschitz1}
		(L_\infty(\mathbb{R}^d), W^1_\infty(\mathbb{R}^d))_{(1,0),\infty} = \text{Lip}(\mathbb{R}^d) \quad \text{and} \quad (L_\infty(\mathbb{T}), W^1_\infty(\mathbb{T}))_{(1,0),\infty} = \text{Lip}(\mathbb{T})
	\end{equation}
	and
	\begin{equation}\label{0}
	 (L_1(\mathbb{R}^d), W^1_1(\mathbb{R}^d))_{(1,0),\infty} = \text{BV}(\mathbb{R}^d) \quad \text{and} \quad  (L_1(\mathbb{T}), W^1_1(\mathbb{T}))_{(1,0),\infty} = \text{BV}(\mathbb{T}).
\end{equation}

Let $1 \leq p \leq \infty, 0 < r, u \leq \infty, b >  1/r, 0 < s < k \in \mathbb{N}$, and $-\infty < c < \infty$. By (\ref{PrelimInterpolation}), Lemma \ref{PrelimLemma7.2}(iii), and (\ref{interpolationLipschitz}), we have
	\begin{equation}\label{PrelimInterpolationnew4}
		(\mathbf{B}^{s,c}_{p,u}(\mathbb{R}^d), W^k_p(\mathbb{R}^d))_{(1,-b),r} = \text{Lip}^{(k,-b)}_{p,r}(\mathbb{R}^d)
	\end{equation}
	and
	\begin{equation*}
	(\mathbf{B}^{s,c}_{p,u}(\mathbb{T}), W^k_p(\mathbb{T}))_{(1,-b),r} = \text{Lip}^{(k,-b)}_{p,r}(\mathbb{T}).
	\end{equation*}
	
The interpolation properties of the Fourier-analytically defined Besov spaces of generalized smoothness on $\mathbb{R}^d$ were investigated by Cobos and Fernandez \cite{CobosFernandez} with the help of the retraction method.\footnote{ Note that this method also works in the periodic case.}
In particular,
the following interpolation formula is valid
 \cite[Theorem 5.3]{CobosFernandez}.

\begin{lem}\label{PrelimLemmaCF}
	Let $1 \leq p \leq \infty, 0 < q_0, q_1, q \leq \infty, -\infty < s_0 \neq s_1 < \infty, - \infty < b_0, b_1, \alpha < \infty$ and $0 < \theta < 1$. Put $s = (1 - \theta) s_0 + \theta s_1$, and $b = (1 - \theta) b_0 + \theta b_1$. Then, we have
\begin{equation*}
(B^{s_0, b_0}_{p,q_0}(\mathbb{R}^d), B^{s_1, b_1}_{p,q_1}(\mathbb{R}^d))_{\theta,q;\alpha} = B^{s,b + \alpha}_{p,q}(\mathbb{R}^d).
\end{equation*}
	The corresponding formula for periodic Besov spaces also holds true.
\end{lem}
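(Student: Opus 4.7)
My approach is the classical retraction–coretraction scheme: reduce the formula for Besov spaces to an interpolation identity for vector-valued weighted sequence spaces, then compute the $K$-functional there directly.

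Step 1 (Retraction). Introduce the sequence space $\ell_q^{s,b}(L_p) = \{(f_j)_{j\geq 0} : f_j \in L_p(\mathbb{R}^d),\ \|(f_j)\| = (\sum_{j} (2^{js}(1+j)^b \|f_j\|_p)^q)^{1/q} < \infty\}$. Define the coretraction $Jf = ((\varphi_j \widehat{f})^\vee)_j$ and the retraction $R(f_j) = \sum_j (\psi_j \widehat{f_j})^\vee$, where $\{\psi_j\}$ is a companion smooth resolution with $\psi_j \varphi_j = \varphi_j$ and slightly enlarged supports. Standard convolution estimates for functions with Fourier support in a dyadic annulus (valid for all $1 \leq p \leq \infty$) show that $J \colon B^{s,b}_{p,q}(\mathbb{R}^d) \to \ell_q^{s,b}(L_p)$ and $R \colon \ell_q^{s,b}(L_p) \to B^{s,b}_{p,q}(\mathbb{R}^d)$ are bounded, uniformly across the admissible parameter ranges, with $RJ = \mathrm{id}$. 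By the general retraction/coretraction principle for interpolation functors, the claim reduces to
\begin{equation*}
(\ell_{q_0}^{s_0,b_0}(L_p), \ell_{q_1}^{s_1,b_1}(L_p))_{\theta,q;\alpha} = \ell_q^{s,\, b+\alpha}(L_p).
\end{equation*}

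Step 2 ($K$-functional). Assume WLOG $s_0 < s_1$ and set $\tau_j = 2^{-j(s_1-s_0)}(1+j)^{b_0-b_1}$; then $\tau_j \to 0$ and $|\log \tau_j| \asymp j$. A componentwise optimization in the definition of the $K$-functional gives
\begin{equation*}
K(t,(a_j)) \asymp \Bigl(\sum_{\tau_j < t} (2^{js_0}(1+j)^{b_0}\|a_j\|_p)^{q_0}\Bigr)^{1/q_0} + t\Bigl(\sum_{\tau_j \geq t} (2^{js_1}(1+j)^{b_1}\|a_j\|_p)^{q_1}\Bigr)^{1/q_1}.
\end{equation*}
The upper bound is obtained by the sharp splitting $a_j^{(1)} = a_j$ if $\tau_j \geq t$ and $a_j^{(0)} = a_j$ otherwise. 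The lower bound uses that, since $s_0 \neq s_1$, the weights $2^{js_0}$ and $2^{js_1}$ are geometrically separated in $j$, so any admissible decomposition has the same order of magnitude as the sharp cut-off up to a constant.

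Step 3 (Final integration). Insert the expression above into $\int_0^\infty (t^{-\theta}\ell^\alpha(t) K(t,a))^q\,dt/t$, truncate at $t=1$ thanks to $\ell_{q_1}^{s_1,b_1}(L_p) \hookrightarrow \ell_{q_0}^{s_0,b_0}(L_p)$, and interchange sum and integral. Each component $j$ contributes a weight
\begin{equation*}
2^{js_0 q}(1+j)^{b_0 q} \cdot \tau_j^{(1-\theta)q}\,\ell^{\alpha q}(\tau_j) \;\asymp\; 2^{jsq}(1+j)^{(b+\alpha)q},
\end{equation*}
using $(1-\theta)s_0 + \theta s_1 = s$, $(1-\theta)b_0 + \theta b_1 = b$, $\tau_j \asymp 2^{-j(s_1-s_0)}$, and $|\log \tau_j| \asymp j$. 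A standard Hardy-type inequality (needed only when $q \neq q_0, q_1$, to move between the inner $\ell_{q_i}$-norm and an $\ell_q$-norm after pairing with the monotone sequence $\tau_j$) produces exactly $\|(a_j)\|_{\ell_q^{s,b+\alpha}(L_p)}^q$.

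The main obstacle is the $K$-functional identity in Step 2 when $q_0 \neq q_1$: the genuine infimum is not literally componentwise, and one has to show that the sharp cut-off at $\tau_j = t$ is within a constant of the true optimum. The hypothesis $s_0 \neq s_1$ is essential here; it forces the dominant factor $2^{j(s_1-s_0)}$ in the weight ratios, which absorbs the logarithmic perturbation $(1+j)^{b_0-b_1}$ and yields a unique transition scale. The periodic case is identical, with $L_p(\mathbb{R}^d)$ replaced by $L_p(\mathbb{T})$ and the Fourier multipliers $\psi_j$ acting by convolution with trigonometric polynomials.
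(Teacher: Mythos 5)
Your proposal follows the retraction/coretraction method, which is exactly the route the paper relies on: Lemma~\ref{PrelimLemmaCF} is stated without proof and attributed to Cobos and Fernandez (cited as the retraction-method paper), so your argument reproduces the intended proof in all essentials.

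One small slip in Step 3 worth flagging: pairing $2^{js_0 q}(1+j)^{b_0 q}=w_0(j)^q$ with $\tau_j^{(1-\theta)q}$ yields $2^{j(s_0-(1-\theta)(s_1-s_0))q}$, not $2^{jsq}$. The correct per-component weight is $w_0(j)^q\,\tau_j^{-\theta q}\,\ell^{\alpha q}(\tau_j)$, or equivalently $w_1(j)^q\,\tau_j^{(1-\theta)q}\,\ell^{\alpha q}(\tau_j)$ with $w_1(j)=2^{js_1}(1+j)^{b_1}$, either of which does simplify to $2^{jsq}(1+j)^{(b+\alpha)q}$ as you claim. Aside from this typo, the outline — including the observation that $s_0\neq s_1$ is what makes the componentwise cut-off sharp up to constants even when $q_0\neq q_1$ — is correct.
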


Putting $r=2$ in the embeddings given in Proposition \ref{RecallEmb}(i) and (ii) and applying (\ref{LPgeneral}) we derive, in particular, that
\begin{equation}\label{Prelim7.18}
	B^{s,b}_{p,\min\{p,2\}}(\mathbb{R}^d) \hookrightarrow H^{s,b}_{p}(\mathbb{R}^d) \hookrightarrow B^{s,b}_{p, \max\{p,2\}}(\mathbb{R}^d), \quad 1 < p < \infty.
\end{equation}
Therefore, it is an immediate consequence of (\ref{Prelim7.18}) and Lemma \ref{PrelimLemmaCF} that
\begin{equation}\label{PrelimInterpolationBWnew}
	(B^{s_0,b_0}_{p,q_0}(\mathbb{R}^d), H^{s_1, b_1}_p(\mathbb{R}^d))_{\theta,q;\alpha} = B^{s,(1-\theta)b_0 + \theta b_1 + \alpha}_{p,q}(\mathbb{R}^d)
\end{equation}
and
\begin{equation}\label{PrelimInterpolationW}
	(H^{s_0,b_0}_p(\mathbb{R}^d), H^{s_1, b_1}_p)_{\theta,q} = B^{(1-\theta)s_0 + \theta s_1, (1-\theta)b_0 + \theta b_1}_{p,q}(\mathbb{R}^d),
\end{equation}
where $1 < p < \infty$ and the remaining parameters satisfy the conditions given in Lemma \ref{PrelimLemmaCF}. The periodic counterparts of (\ref{PrelimInterpolationBWnew}) and (\ref{PrelimInterpolationW}) also hold.

Note that Lemma \ref{PrelimLemmaCF} shows that interpolation between Besov spaces $B^{s,b}_{p,q}(\mathbb{R}^d)$ with fixed $p$ produces spaces of the same type. In general, this is not true if we change the parameter $p$ (see \cite[2.4.1]{Triebel}). However, under some restrictions on the involved parameters, the following result still holds \cite[(7), p. 182]{Triebel}.

\begin{lem}\label{PrelimLemmaT}
		Let $1 < p_0, p_1 < \infty, p_0 \neq p_1, 1 \leq q_0, q_1 < \infty, -\infty < s_0, s_1 < \infty,$ and $0 < \theta < 1$. Assume that
		\begin{equation*}
			s = (1-\theta) s_0 + \theta s_1 \quad \text{and} \quad \frac{1-\theta}{p_0} + \frac{\theta}{p_1} = \frac{1}{p} = \frac{1-\theta}{q_0} + \frac{\theta}{q_1}.
		\end{equation*}
		Then, we have
\begin{equation*}
(B^{s_0}_{p_0,q_0}(\mathbb{R}^d), B^{s_1}_{p_1,q_1}(\mathbb{R}^d))_{\theta,p} = B^{s}_{p,p}(\mathbb{R}^d).
\end{equation*}
\end{lem}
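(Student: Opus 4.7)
The plan is to apply Triebel's retraction method to reduce the assertion to an interpolation formula of Lions--Peetre type for vector-valued sequence spaces, a strategy that works precisely because the hypothesis $\tfrac{1-\theta}{p_0}+\tfrac{\theta}{p_1}=\tfrac{1}{p}=\tfrac{1-\theta}{q_0}+\tfrac{\theta}{q_1}$ is the exact balance condition needed for the classical $L_p$-interpolation of mixed-norm spaces.

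First, introduce the weighted sequence spaces $\ell^s_q(L_p(\mathbb{R}^d))$ of sequences $(f_j)_{j\geq 0}\subset L_p(\mathbb{R}^d)$ for which
\begin{equation*}
\|(f_j)\|_{\ell^s_q(L_p)} := \Bigl(\sum_{j\geq 0} 2^{jsq}\|f_j\|_{L_p(\mathbb{R}^d)}^q\Bigr)^{1/q} < \infty,
\end{equation*}
and the pair of operators $S\colon f \mapsto ((\varphi_j \widehat{f})^{\vee})_{j\geq 0}$ and $R\colon (g_j)_{j\geq 0}\mapsto \sum_{j\geq 0}(\widetilde{\varphi}_j\widehat{g_j})^\vee$, where $\widetilde{\varphi}_j$ is smooth with dyadic support and equals $1$ on $\operatorname{supp}\varphi_j$. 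Using Mikhlin-type Fourier multiplier estimates (valid since $1<p_i<\infty$), I would verify that $S$ and $R$ are bounded \emph{simultaneously} as
\begin{equation*}
S\colon B^{s_i}_{p_i,q_i}(\mathbb{R}^d)\to \ell^{s_i}_{q_i}(L_{p_i}(\mathbb{R}^d)), \qquad R\colon \ell^{s_i}_{q_i}(L_{p_i}(\mathbb{R}^d))\to B^{s_i}_{p_i,q_i}(\mathbb{R}^d),
\end{equation*}
for $i=0,1$, and that $R\circ S=\operatorname{id}$ on $B^{s_i}_{p_i,q_i}(\mathbb{R}^d)$; hence each $B^{s_i}_{p_i,q_i}(\mathbb{R}^d)$ is a retract of the corresponding weighted sequence space.

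Next, by the interpolation property of retraction-coretraction pairs applied with $(S,R)$ simultaneously for both endpoints,
\begin{equation*}
\bigl(B^{s_0}_{p_0,q_0}(\mathbb{R}^d),\,B^{s_1}_{p_1,q_1}(\mathbb{R}^d)\bigr)_{\theta,p} = R\Bigl(\bigl(\ell^{s_0}_{q_0}(L_{p_0}(\mathbb{R}^d)),\,\ell^{s_1}_{q_1}(L_{p_1}(\mathbb{R}^d))\bigr)_{\theta,p}\Bigr).
\end{equation*}
Regarding $\ell^{s_i}_{q_i}(L_{p_i}(\mathbb{R}^d))$ as the Lebesgue space $L_{q_i}(\mathbb{N}_0,\mu_i; L_{p_i}(\mathbb{R}^d))$ for the counting measure $\mu_i$ weighted by $2^{js_iq_i}$, I would invoke the Lions--Peetre formula for vector-valued $L_p$-spaces,
\begin{equation*}
\bigl(L_{q_0}(\mu_0;L_{p_0}),\,L_{q_1}(\mu_1;L_{p_1})\bigr)_{\theta,p} = L_p(\mu;L_p),
\end{equation*}
valid precisely when both balance conditions $\tfrac{1-\theta}{p_0}+\tfrac{\theta}{p_1}=\tfrac{1}{p}=\tfrac{1-\theta}{q_0}+\tfrac{\theta}{q_1}$ hold. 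Tracking the dyadic weights gives $s=(1-\theta)s_0+\theta s_1$ and the identification $(\ell^{s_0}_{q_0}(L_{p_0}),\ell^{s_1}_{q_1}(L_{p_1}))_{\theta,p}=\ell^s_p(L_p)$; applying $R$ yields $B^s_{p,p}(\mathbb{R}^d)$.

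The main obstacle is establishing that the single retraction $R$ acts boundedly on \emph{both} endpoint sequence spaces with different integrability exponents $p_0\neq p_1$, uniformly in the frequency band index $j$. This reduces to a Mikhlin multiplier estimate applied band-by-band, for which the restriction $1<p_i<\infty$ is essential and which ultimately dictates why the conclusion fails outside the reflexive range. The Lions--Peetre step itself, once set up on the product measure space, is a direct $K$-functional computation exploiting the matching of the outer $q$-parameters with both $p$ and the inner $L_{p_i}$-exponents.
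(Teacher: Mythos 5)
The paper does not prove this lemma at all — it is a direct citation of Triebel [(7), p.\,182], and the proof there is exactly the retraction/coretraction argument you describe, reducing the claim to interpolation of weighted vector-valued sequence spaces. Your proposal therefore coincides with the source's; the one place worth sharpening is that the sequence-space formula you attribute globally to a ``Lions--Peetre theorem'' in fact splits into two cases: when $s_0\neq s_1$ the weights on the two sides differ and it is Triebel's weighted-$\ell_q$ interpolation theorem for an arbitrary couple $(A_0,A_1)$ that applies (and that case does not even require the $q$-balance), whereas the genuine vector-valued Lions--Peetre theorem over a single measure space is only what one needs in the residual case $s_0=s_1$, and there the $q$-balance condition is essential.
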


We will show in Lemma \ref{PrelimLemmaT2} below that Lemma \ref{PrelimLemmaT} also holds for Besov spaces of generalized smoothness.

\subsection{Hardy-type inequalities}

We collect some Hardy-type inequalities that we shall use on several occasions throughout the course of the paper. For more information,
 see the monographs \cite{OpicKufner} and \cite{KufnerPersson}. Regarding Hardy inequalities for monotone functions see also \cite{Zeltser}.

\begin{lem}
	Let $\lambda > 0, 1 \leq q \leq \infty$ and $-\infty < b < \infty$. Let $\psi$ be a non-negative measurable function on $(0,1)$. Then
	\begin{equation}\label{HardyInequal1}
		\left(\int_0^1 \left(t^{-\lambda} (1 - \log t)^b \int_0^t \psi(s) ds \right)^q \frac{dt}{t}\right)^{1/q} \lesssim \left(\int_0^1 (t^{1-\lambda} (1-\log t)^b \psi(t))^q \frac{dt}{t}\right)^{1/q}
	\end{equation}
	and
	\begin{equation}\label{HardyInequal2}
		\left(\int_0^1 \left(t^{\lambda} (1 - \log t)^b \int_t^1 \psi(s) ds \right)^q \frac{dt}{t}\right)^{1/q} \lesssim \left(\int_0^1 (t^{1+\lambda} (1-\log t)^b \psi(t))^q \frac{dt}{t}\right)^{1/q}.
	\end{equation}
	Furthermore, if $\psi (t) = t^{\alpha -1} \varphi(t)$ with $\alpha > 0$ and $\varphi$ is a decreasing function then the inequalities (\ref{HardyInequal1}) and (\ref{HardyInequal2}) still hold true when $0 < q < 1$.
\end{lem}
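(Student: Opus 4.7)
These are standard weighted Hardy inequalities, and my plan is to prove them by a weight-analysis argument with a separate treatment of the $q < 1$ case that exploits the monotonicity hypothesis. The key tool throughout is the elementary weight equivalence
\begin{equation*}
\int_s^1 t^{-\lambda q}(1-\log t)^{bq}\,\frac{dt}{t} \asymp s^{-\lambda q}(1-\log s)^{bq}, \quad s \in (0, 1/2],
\end{equation*}
together with its symmetric counterpart $\int_0^s t^{\lambda q}(1-\log t)^{bq}\,dt/t \asymp s^{\lambda q}(1-\log s)^{bq}$, both valid for $\lambda > 0$, $b \in \mathbb{R}$ and $q \in (0, \infty]$; these follow by the substitution $u = -\log t$ and standard Laplace-type asymptotics (the exponential integrand in $u$ is dominated by the appropriate endpoint).

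For $q = 1$ both inequalities follow immediately from Fubini and the weight equivalence. For $1 < q < \infty$ I would apply H\"older's inequality with a tailored weight: for (\ref{HardyInequal1}),
\begin{equation*}
\int_0^t \psi(s)\,ds \leq \Big(\int_0^t \psi(s)^q s^\epsilon\,ds\Big)^{1/q}\Big(\int_0^t s^{-\epsilon q'/q}\,ds\Big)^{1/q'}
\end{equation*}
with $\epsilon$ chosen in the nonempty range $(\max\{0, q(1-\lambda) - 1\}, q-1)$; a symmetric choice of weight $s^{-a}$ with $a \in (1/q', \lambda + 1/q')$ handles (\ref{HardyInequal2}). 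In each case, raising to the $q$-th power, multiplying by the LHS weight, integrating in $t$, and swapping the order of integration via Fubini reduces matters to the weight equivalence above. The case $q = \infty$ is immediate from a pointwise application of the weight equivalence.

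The main obstacle is the case $0 < q < 1$ with $\psi(s) = s^{\alpha-1}\varphi(s)$, $\alpha > 0$, $\varphi$ decreasing. My approach is a dyadic decomposition: writing $\int_0^t \psi(s)\,ds = \sum_{k \geq 0}\int_{t 2^{-k-1}}^{t 2^{-k}}\psi(s)\,ds$ and using the monotonicity of $\varphi$ together with $\alpha > 0$ yields the uniform piecewise bound $\int_{t 2^{-k-1}}^{t 2^{-k}}\psi \lesssim (t 2^{-k-1})\psi(t 2^{-k-1})$. The subadditivity $(\sum_k a_k)^q \leq \sum_k a_k^q$ (valid for $q < 1$ and $a_k \geq 0$) then converts the $q$-th power of the integral into a sum. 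Integrating term by term against $t^{-\lambda q - 1}(1-\log t)^{bq}\,dt$, performing the change of variables $u = t 2^{-k-1}$, and swapping the sum and the integral reduces the proof to the inner-sum estimate
\begin{equation*}
\sum_{k:\, 2^{-k-1}\geq u} 2^{-(k+1)\lambda q}(1-\log u - (k+1)\log 2)^{bq} \lesssim (1-\log u)^{bq}, \quad u \in (0,1).
\end{equation*}
For $b \geq 0$ this is immediate since each summand is bounded above by $(1-\log u)^{bq}$ and $\sum 2^{-(k+1)\lambda q} < \infty$ (using $\lambda > 0$). For $b < 0$ the bound requires a two-range analysis of the $k$-sum, whose crux is that the small-$k$ terms, where the log factor is $\asymp 1-\log u$, dominate due to the geometric decay $2^{-(k+1)\lambda q}$. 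The inequality (\ref{HardyInequal2}) is handled analogously, with the additional simplification that the corresponding dyadic sum is finite (of length $\asymp \log_2(1/t)$).
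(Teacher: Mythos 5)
The paper does not prove this lemma: it is stated without proof, with the monographs of Opic--Kufner and Kufner--Persson (and \cite{Zeltser} for the monotone case) cited as the source, so there is no in-paper argument to compare against. Your proof stands on its own and is correct. The weight equivalence $\int_s^1 t^{-\lambda q}(1-\log t)^{bq}\,dt/t \asymp s^{-\lambda q}(1-\log s)^{bq}$ on $(0,1/2]$ (and its mirror $\int_0^s t^{\lambda q}(1-\log t)^{bq}\,dt/t \asymp s^{\lambda q}(1-\log s)^{bq}$) is the right Laplace-type asymptotic, Fubini handles $q=1$, the $q=\infty$ case is a pointwise estimate, and for $1<q<\infty$ the parameter windows $\epsilon \in (\max\{0,q(1-\lambda)-1\},\,q-1)$ and $a\in(1/q',\lambda+1/q')$ are exactly what is needed for the second H\"older factor to be endpoint-dominated and for the swapped-order integral to fall under the weight equivalence, with the $\epsilon$ (resp.\ $a$) dependence cancelling in the end. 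One small sign slip: for (\ref{HardyInequal2}) you want to split $\psi = (\psi s^{a})\cdot s^{-a}$, so the weight that lands on $\psi$ in H\"older is $s^{a}$, with $s^{-a}$ on the conjugate factor $\left(\int_t^1 s^{-aq'}\,ds\right)^{1/q'}\asymp t^{1/q'-a}$; with that reading your range for $a$ is right. For $0<q<1$, the dyadic bound $\int_{t2^{-k-1}}^{t2^{-k}}\psi \lesssim t2^{-k-1}\psi(t2^{-k-1})$ does hold for $\psi(s)=s^{\alpha-1}\varphi(s)$ with constant $\max\{1,2^{\alpha-1}\}$, $q$-subadditivity plus the change of variables $u=t2^{-k-1}$ plus Tonelli reduces to the inner geometric-log sum you write, and the two-range verification of that sum for $b<0$ is correct: the block $k\lesssim \tfrac12\log_2(1/u)$ contributes $\asymp(1-\log u)^{bq}$, and the remaining tail is $O(u^{\lambda q/2})$, which is negligible against $(1-\log u)^{bq}$.
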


 \begin{lem}
	Let $1 \leq q \leq \infty$ and $b + 1/q \neq 0$. Let $\psi$ be a non-negative measurable function on $(0,1)$. Then, if $b + 1/q > 0$,
	\begin{equation}\label{HardyInequal3}
		\left(\int_0^1 \left((1 - \log t)^b \int_0^t \psi(s) ds \right)^q \frac{dt}{t}\right)^{1/q} \lesssim \left(\int_0^1 (t (1-\log t)^{b + 1} \psi(t))^q \frac{dt}{t}\right)^{1/q}
	\end{equation}
	and, if $b + 1/q < 0$,
	\begin{equation}\label{HardyInequal4}
		\left(\int_0^1 \left((1 - \log t)^b \int_t^1 \psi(s) ds \right)^q \frac{dt}{t}\right)^{1/q} \lesssim \left(\int_0^1 (t (1-\log t)^{b + 1} \psi(t))^q \frac{dt}{t}\right)^{1/q}.
	\end{equation}
\end{lem}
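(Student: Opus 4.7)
The plan is to reduce both inequalities to classical weighted Hardy inequalities on the half-line $(0,\infty)$ via the logarithmic change of variables $t = e^{-u}$ and $s = e^{-v}$. Setting $g(v) := e^{-v}\psi(e^{-v})$, one has $dt/t = du$, $(1-\log t)^b = (1+u)^b$, $t(1-\log t)^{b+1}\psi(t) = (1+u)^{b+1}g(u)$, $\int_0^t \psi(s)\,ds = \int_u^\infty g(v)\,dv$, and $\int_t^1 \psi(s)\,ds = \int_0^u g(v)\,dv$. Thus (\ref{HardyInequal3}) and (\ref{HardyInequal4}) are equivalent, respectively, to
$$\Big\|(1+u)^b \int_u^\infty g(v)\,dv\Big\|_{L_q(0,\infty)} \lesssim \|(1+u)^{b+1} g\|_{L_q(0,\infty)} \quad (b + 1/q > 0),$$
$$\Big\|(1+u)^b \int_0^u g(v)\,dv\Big\|_{L_q(0,\infty)} \lesssim \|(1+u)^{b+1} g\|_{L_q(0,\infty)} \quad (b + 1/q < 0).$$

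For $1 < q < \infty$, I would prove the first via H\"older's inequality: fix $0 < \varepsilon < b + 1/q$ and split $g(v) = [g(v)(1+v)^{b+1-\varepsilon}]\cdot(1+v)^{-(b+1)+\varepsilon}$. The dual-exponent factor integrated over $(u,\infty)$ to the $q'$-th power is finite precisely because $\varepsilon < b + 1/q$, and equals a constant multiple of $(1+u)^{1-(b+1-\varepsilon)q'}$. Raising the resulting pointwise estimate to the $q$-th power, multiplying through by $(1+u)^{bq}$, integrating in $u$, and swapping the order of integration via Fubini produces an excess factor $\int_0^v (1+u)^{\varepsilon q -1}\,du \asymp (1+v)^{\varepsilon q}/(\varepsilon q)$, which is absorbed cleanly so that the exponents collapse to $(b+1)q$. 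The second inequality is proved identically with $\int_0^u$ in place of $\int_u^\infty$, the splitting $[g(v)(1+v)^{b+1+\varepsilon}]\cdot(1+v)^{-(b+1)-\varepsilon}$ with $0 < \varepsilon < -(b+1/q)$, and Fubini producing $\int_v^\infty(1+u)^{-1-\varepsilon q}\,du \asymp (1+v)^{-\varepsilon q}/(\varepsilon q)$ instead. The endpoint cases are elementary: for $q = 1$ both inequalities follow immediately by applying Fubini directly to the original integrals and computing $\int_s^1(1-\log t)^b\,dt/t \asymp (1-\log s)^{b+1}/(b+1)$ (resp.\ $\int_0^s(1-\log t)^b\,dt/t \asymp (1-\log s)^{b+1}/(-(b+1))$); for $q = \infty$, one pulls the sup norm out of the inner integral and bounds $\int_u^\infty(1+v)^{-b-1}\,dv = (1+u)^{-b}/b$ when $b > 0$ (resp.\ $\int_0^u(1+v)^{-b-1}\,dv \lesssim (1+u)^{-b}/(-b)$ when $b < 0$).

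The main obstacle is purely bookkeeping across the various ranges of $q$ and the two sign cases of $b + 1/q$; there is no substantive difficulty, since the logarithmic weights transform into well-behaved power weights on $(0,\infty)$ and the two sign conditions correspond exactly to the two opposite directions of the Hardy averaging being integrable against those weights. Alternatively, both inequalities are immediate consequences of the general two-weight Hardy theorems in the monographs \cite{OpicKufner} and \cite{KufnerPersson} already cited in the preceding subsection, once one verifies the Muckenhoupt-type balance
$$\sup_{t > 0}\Big(\int_0^t (1+u)^{bq}\,du\Big)^{1/q}\Big(\int_t^\infty (1+u)^{-(b+1)q'}\,du\Big)^{1/q'} \asymp \sup_{t > 0}(1+t)^{(b + 1/q) + (1/q' - (b+1))} = 1,$$
which is finite precisely when $b + 1/q > 0$, and the analogous balance with the ranges of integration swapped is finite precisely when $b + 1/q < 0$.
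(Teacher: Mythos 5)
Your proposal is correct. One important thing to note: the paper gives \emph{no} proof of this lemma at all. The subsection on Hardy-type inequalities opens with ``We collect some Hardy-type inequalities \ldots For more information, see the monographs \cite{OpicKufner} and \cite{KufnerPersson},'' and then states the lemmas without further argument. So there is no ``paper proof'' to compare against; what you have supplied is a genuine self-contained proof of a fact the paper treats as known.

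Your argument is sound in all three ranges of $q$. The logarithmic substitution $t=e^{-u}$, $s=e^{-v}$, $g(v)=e^{-v}\psi(e^{-v})$ correctly converts the problem to a weighted Hardy inequality on $(0,\infty)$ with weights $(1+u)^b$ and $(1+u)^{b+1}$ (and with the appropriate one of the two Hardy operators). The H\"older/Fubini argument for $1<q<\infty$ works exactly as you describe: the exponent bookkeeping after splitting $g$ and using H\"older collapses to $(1+u)^{\varepsilon q-1}$ in the double integral, and Fubini turns $\int_0^v(1+u)^{\varepsilon q-1}\,du\asymp(1+v)^{\varepsilon q}$ into precisely the factor needed to restore the exponent $(b+1)q$ (and symmetrically with $\int_v^\infty(1+u)^{-1-\varepsilon q}\,du\asymp(1+v)^{-\varepsilon q}$ for the second inequality). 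The $q=1$ case by direct Fubini and the $q=\infty$ case by pulling out the sup are both correct, with the exact threshold conditions $b+1>0$ and $b>0$ emerging naturally. Your closing observation that both inequalities also follow from the two-weight Hardy theorems in \cite{OpicKufner}, \cite{KufnerPersson} once one checks the Muckenhoupt balance $\sup_{t>0}(1+t)^{(b+1/q)+(1/q'-(b+1))}=1$ is exactly the implicit route the paper takes; what you have added is a direct, elementary verification that avoids the black-box invocation.
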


\begin{rem}\label{RemarkHardy}
 (i) It is worthwhile mentioning that the inequalities (\ref{HardyInequal1}) and (\ref{HardyInequal2}) hold true when we replace the integral $\int_0^1$ by $\int_a^b$ with $0 \leq a < b \leq \infty$ (with appropriate modifications).

 (ii) By simple change of variables, it follows from (\ref{HardyInequal3}) that if $q \geq 1$ and $b + 1/q > 0$ then
 	\begin{equation}\label{HardyInequal5}
		\left(\int_1^\infty \left((1 + \log t)^b \int_t^\infty \psi(s) ds \right)^q \frac{dt}{t}\right)^{1/q} \lesssim \left(\int_1^\infty (t (1+\log t)^{b + 1} \psi(t))^q \frac{dt}{t}\right)^{1/q}.
	\end{equation}
	Analogously, we derive the counterpart of (\ref{HardyInequal4}), that is, assume $q \geq 1$ and $b+1/q < 0$ then
	\begin{equation*}
		\left(\int_1^\infty \left((1 + \log t)^b \int_1^t \psi(s) ds \right)^q \frac{dt}{t}\right)^{1/q} \lesssim \left(\int_1^\infty (t (1+\log t)^{b + 1} \psi(t))^q \frac{dt}{t}\right)^{1/q}.
	\end{equation*}

\end{rem}

We also give some Hardy's inequalities for sequences.

\begin{lem}
	Let $\lambda > 0, 1 \leq q \leq \infty$ and $-\infty < b < \infty$. Let $b_j \geq 0, j \in \mathbb{N},$ and $j_0 \in \mathbb{N}$. Then
	\begin{equation*}
		\left(\sum_{j=j_0}^\infty \left(j^{-\lambda} (1 + \log j)^b \sum_{k=j_0}^j b_k \right)^q \frac{1}{j}\right)^{1/q} \lesssim \left(\sum_{j=j_0}^\infty (j^{1-\lambda} (1+\log j)^b b_j)^q \frac{1}{j}\right)^{1/q},
	\end{equation*}
	\begin{equation*}
		\left(\sum_{j=j_0}^\infty \left(2^{-j \lambda} (1 + j)^b \sum_{k=j_0}^j b_k \right)^q \right)^{1/q} \lesssim \left(\sum_{j=j_0}^\infty (2^{-j \lambda} (1+ j)^b b_j)^q \right)^{1/q},
	\end{equation*}
	\begin{equation}\label{HardyInequal2*}
		\left(\sum_{j=j_0}^\infty \left(j^{\lambda} (1 + \log j)^b \sum_{k=j}^\infty b_k \right)^q \frac{1}{j}\right)^{1/q} \lesssim \left(\sum_{j=j_0}^\infty (j^{1+\lambda} (1+\log j)^b b_j)^q \frac{1}{j}\right)^{1/q},
	\end{equation}
	and
	\begin{equation}\label{HardyInequal2**}
		\left(\sum_{j=j_0}^\infty \left(2^{j \lambda} (1 + j)^b \sum_{k=j}^\infty b_k \right)^q \right)^{1/q} \lesssim \left(\sum_{j=j_0}^\infty (2^{j \lambda} (1+ j)^b b_j)^q \right)^{1/q}.
	\end{equation}
\end{lem}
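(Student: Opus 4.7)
The plan is to deduce all four discrete inequalities from their integral counterparts (\ref{HardyInequal1}) and (\ref{HardyInequal2}), possibly after the elementary change of variable $t \mapsto 1/t$ which transfers the interval $(0,1)$ to $(1, \infty)$. I would use the standard step-function argument for inequalities (1) and (3) (polynomial--logarithmic weights), and a direct convolution argument for inequalities (2) and (4) (exponential weights).

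For inequalities (1) and (3), I would associate with a non-negative sequence $(b_j)_{j \geq j_0}$ the step function $\psi(t) = b_{[t]}$ for $t \geq j_0$ and $\psi(t) = 0$ otherwise. Since for $t \in [j, j+1)$ with $j \geq j_0 \geq 1$ we have $t \asymp j$ and $(1+\log t) \asymp (1+\log j)$, and since $\int_{j_0}^{j+1} \psi(s)\, ds = \sum_{k=j_0}^j b_k$, the discrete left-hand side of inequality (1) is comparable to
\[ \int_{j_0}^\infty \left(t^{-\lambda}(1+\log t)^b \int_{j_0}^t \psi(s)\, ds\right)^q \frac{dt}{t}, \]
while the discrete right-hand side is comparable to
\[ \int_{j_0}^\infty \left(t^{1-\lambda}(1+\log t)^b \psi(t)\right)^q \frac{dt}{t}. \]
The desired estimate then reduces to the integral Hardy inequality on $(1,\infty)$, which follows from (\ref{HardyInequal1}) via the substitution $t \mapsto 1/t$ (as is done, e.g., in Remark \ref{RemarkHardy}(ii)). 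Inequality (3) is handled analogously, starting from (\ref{HardyInequal2}).

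For inequalities (2) and (4), I would proceed directly via convolution. Substituting $c_j = 2^{-j\lambda}(1+j)^b b_j$ in inequality (2) reduces the task to
\[ \sum_{j=j_0}^\infty \left(\sum_{k=j_0}^j 2^{-(j-k)\lambda} \left(\frac{1+j}{1+k}\right)^b c_k\right)^q \lesssim \sum_{j=j_0}^\infty c_j^q. \]
The elementary estimate $\bigl((1+j)/(1+k)\bigr)^b \lesssim (1+j-k)^{|b|}$, valid for $j \geq k \geq 1$, shows that the left-hand side is dominated by a convolution $K \ast c$ with kernel $K(m) = 2^{-m\lambda}(1+m)^{|b|}$, $m \in \mathbb{N}_0$. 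Since $\lambda > 0$, $K \in \ell_1(\mathbb{N}_0)$, and Young's convolution inequality $\|K \ast c\|_{\ell_q} \leq \|K\|_{\ell_1}\|c\|_{\ell_q}$ delivers the claim. Inequality (4) is symmetric: the roles of $j$ and $k$ are interchanged and the weight ratio $(1+j)/(1+k) \leq 1$ is again dominated by $(1+|j-k|)^{|b|}$ up to a constant.

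The main technical issue is purely bookkeeping. For inequalities (1) and (3), one has to check that the discrete sum and the corresponding integral are comparable uniformly in $j_0$, which amounts to verifying that the boundary contribution near $t = j_0$ does not spoil the estimate; this requires a short separate argument if $j_0$ is small. For inequalities (2) and (4), one must justify the weight ratio estimate $((1+j)/(1+k))^b \lesssim (1+|j-k|)^{|b|}$ with separate treatment according to the sign of $b$. Neither step poses a genuine obstacle, and the proof therefore essentially reduces to invoking the integral Hardy inequalities of the previous lemmas together with Young's inequality.
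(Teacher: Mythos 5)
The paper does not prove this lemma; it is stated together with the integral Hardy inequalities as a standard fact, with references to the monographs \cite{OpicKufner} and \cite{KufnerPersson}, so your proof supplies an argument the paper omits. Your overall strategy is sound. The convolution argument for the exponential-weight inequalities (2) and (4) is clean and self-contained: after the substitution $c_j = 2^{-j\lambda}(1+j)^b b_j$ the sum is dominated by a convolution against the kernel $K(m)=2^{-m\lambda}(1+m)^{|b|}$, which lies in $\ell_1(\mathbb{N}_0)$ because $\lambda>0$, and Young's inequality on $\ell_q$ (valid for $1\le q\le\infty$) finishes; the weight-ratio bound $\bigl((1+j)/(1+k)\bigr)^b\lesssim (1+|j-k|)^{|b|}$ is indeed elementary once $b\ge 0$ and $b<0$ are treated separately, as you note. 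The step-function reduction for (1) and (3) is also standard and correct, but you have the cross-references crossed: under $t\mapsto 1/t$ the inequality (\ref{HardyInequal1}) on $(0,1)$ transforms into the continuous analogue of (3) on $(1,\infty)$ (the weight gains $t^{\lambda}$ and the cumulative integral becomes $\int_t^\infty$), while (\ref{HardyInequal2}) transforms into the continuous analogue of (1). Equivalently, you can dispense with the substitution and invoke (\ref{HardyInequal1}) and (\ref{HardyInequal2}) directly on $(j_0,\infty)$ via Remark \ref{RemarkHardy}(i), which then match (1) and (3) respectively in their written form. Either route uses ingredients the paper makes available, so the mismatch is a labeling slip and not a gap.
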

%
%

\newpage
\section{Embeddings between Besov, Sobolev and Triebel-Lizorkin spaces with logarithmic smoothness}\label{section3}

\subsection{Embeddings with smoothness zero}
In this subsection, we first obtain embeddings between Triebel-Lizorkin and Besov spaces similar to those given in   Proposition \ref{RecallEmb}(iii) and (v) for the limiting case $s = 0$.
As a corollary we extend embeddings  between Sobolev and Besov spaces involving only logarithmic smoothness. The latter complements Proposition \ref{RecallEmb}(iv) and (vi).

\begin{thm}\label{Theorem 2.1}
    Let $1 < p < \infty, 0 < q,r \leq \infty$, and $b > -1/q$. Then
    \begin{equation}\label{newF}
    	F^{0,b+1/q+1/2-1/\max\{r,2\}}_{p,r} (\mathbb{R}^d) \hookrightarrow \mathbf{B}^{0,b}_{p,q}(\mathbb{R}^d) \quad \text{ if }\quad q \geq \max\{p,r\}
    \end{equation}
    and
    \begin{equation}\label{newF2}
    	\mathbf{B}^{0,b}_{p,q}(\mathbb{R}^d) \hookrightarrow F^{0,b+1/q + 1/2 - 1/\min\{r,2\}}_{p,r}(\mathbb{R}^d)\quad \text{ if } \quad q \leq \min\{p,r\}.
    \end{equation}
\end{thm}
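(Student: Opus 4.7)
The central ingredient will be the Fourier-analytic characterization from \cite{CobosDominguezTriebel} recalled in the introduction,
\begin{equation*}
\|f\|_{\mathbf{B}^{0,b}_{p,q}(\mathbb{R}^d)} \asymp \Big(\sum_{j=0}^\infty (1+j)^{bq} \|G_j\|_{L_p(\mathbb{R}^d)}^{q}\Big)^{1/q}, \qquad G_j := \Big(\sum_{\nu \geq j} |(\varphi_\nu \widehat{f})^\vee|^2\Big)^{1/2}.
\end{equation*}
Writing $u_\nu := |(\varphi_\nu \widehat f)^\vee|$, both the $\mathbf{B}$-norm and the $F$-norm $\|(\sum_\nu ((1+\nu)^{\beta} u_\nu)^r)^{1/r}\|_{L_p}$ are built from the same sequence $(u_\nu)$ by combining $\ell^2$-, $\ell^r$-, $\ell^q$- and $L_p$-norms in different orders. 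The strategy is to interchange these norms using Minkowski's inequality (outward when $q \geq p$, inward via the $L_{p/q}$-triangle inequality when $q \leq p$) and to convert the inner $\ell^2$-piece into the inner $\ell^r$-piece by H\"older.

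For \eqref{newF}, set $\beta = b + 1/q + 1/2 - 1/\max\{r,2\}$ and carry out three steps. Since $q/p \geq 1$, Minkowski's inequality applied in $\ell^{q/p}$ gives
\begin{equation*}
\Big(\sum_{j} (1+j)^{bq}\|G_j\|_{L_p}^{q}\Big)^{1/q} \leq \Big\|\Big(\sum_{j} (1+j)^{bq} G_j^{q}\Big)^{1/q}\Big\|_{L_p}.
\end{equation*}
Pointwise, I then bound the inner expression by $(\sum_\nu (1+\nu)^{\beta r} u_\nu^r)^{1/r}$: when $r \leq 2$, the embedding $\ell^r \hookrightarrow \ell^2$ yields $(\sum_{\nu \geq j} u_\nu^2)^{1/2} \leq (\sum_{\nu \geq j} u_\nu^r)^{1/r}$, whereas for $r > 2$, H\"older with exponents $r/2$ and $r/(r-2)$, applied with any $\gamma$ in the nonempty interval $(1/2 - 1/r,\,\beta)$ (nonemptiness is precisely the condition $b > -1/q$), gives $(\sum_{\nu \geq j} u_\nu^2)^{1/2} \lesssim (1+j)^{1/2 - 1/r - \gamma}(\sum_{\nu \geq j} ((1+\nu)^\gamma u_\nu)^r)^{1/r}$. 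Finally, since $q/r \geq 1$, the triangle inequality in $\ell^{q/r}$ swaps the $j$- and $\nu$-sums,
\begin{equation*}
\Big(\sum_j c_j \Big(\sum_{\nu \geq j} a_\nu\Big)^{q/r}\Big)^{r/q} \leq \sum_\nu a_\nu \Big(\sum_{j \leq \nu} c_j\Big)^{r/q},
\end{equation*}
and the inner partial sum $\sum_{j \leq \nu} c_j$ is $\asymp (1+\nu)^{(\ldots)+1}$ precisely because $b > -1/q$, producing the exact weight $(1+\nu)^{\beta r}$ that matches the $F$-norm.

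For \eqref{newF2}, set $\beta' = b + 1/q + 1/2 - 1/\min\{r,2\}$ and run the chain backwards. Since $r \geq q$, the pointwise embedding $\ell^q \hookrightarrow \ell^r$ gives $(\sum_\nu ((1+\nu)^{\beta'} u_\nu)^r)^{1/r} \leq (\sum_\nu ((1+\nu)^{\beta'} u_\nu)^q)^{1/q}$; since $p \geq q$, the reverse Minkowski (triangle inequality in $L_{p/q}$) pulls the $L_p$-norm inside the $\ell^q$-sum. It remains to bound the resulting $\ell^q(L_p)$-quantity by the tail-square-function expression for $\|f\|_{\mathbf{B}^{0,b}_{p,q}}$. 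For $r \geq 2$ I would use the pointwise estimate $u_\nu^r \leq G_\nu^r - G_{\nu+1}^r$, which is valid thanks to the elementary inequality $(a-b)^{r/2} \leq a^{r/2} - b^{r/2}$ for $a \geq b \geq 0$ and $r \geq 2$; an Abel summation then transforms the telescoping sum into $\sum_\nu (1+\nu)^{\beta' r - 1} G_\nu^r$, after which a Hardy-type rearrangement (again exploiting $b > -1/q$) matches the weight to the $(1+j)^{bq}$ appearing in the $\mathbf{B}$-characterization. For $r \leq 2$ an analogous H\"older-based estimate plays the same role.

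The principal difficulty I foresee is the bookkeeping of exponents. The shifts $1/2 - 1/\max\{r,2\}$ and $1/2 - 1/\min\{r,2\}$ arise precisely from comparing the inner $\ell^2$-norm defining $G_j$ to the inner $\ell^r$-norm of the $F$-space---a direct embedding on one side of $r=2$ (losing nothing) or H\"older on the other (paying $1/2 - 1/r$). The extra $1/q$ in $\beta$ and $\beta'$ is arithmetically produced by the Hardy/Abel partial summations, which behave with the predicted polynomial order only because $b > -1/q$. Carried out naively the argument would lose this exact $1/q$-shift; the delicate choice of H\"older parameter $\gamma$ in the $r>2$ case (and the sharp inequality $(a-b)^{r/2} \leq a^{r/2} - b^{r/2}$ in the reverse direction) is what prevents such losses.
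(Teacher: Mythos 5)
Your argument for \eqref{newF} is correct and matches the paper's proof step for step: the tail square function characterization of $\mathbf{B}^{0,b}_{p,q}$, reverse Minkowski in $L_{p/q}$, the pointwise conversion from the inner $\ell^2$ to the inner $\ell^r$ (direct embedding for $r\le 2$, H\"older with $\gamma\in(1/2-1/r,\beta)$ for $r>2$---your $\gamma$ is precisely the paper's $\varepsilon$), and finally the triangle inequality in $\ell^{q/r}$ to swap the $j$- and $\nu$-sums, at which point $\sum_{j\le\nu}(1+j)^{bq}\asymp(1+\nu)^{bq+1}$ produces the shift $1/q$.

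The proof of \eqref{newF2}, however, has a genuine gap. Consider the case $r\ge 2$, where $\beta'=b+1/q$. Your Abel/telescoping estimate $u_\nu^r\le G_\nu^r-G_{\nu+1}^r$ (which is indeed valid for $r\ge 2$) gives pointwise
\begin{equation*}
\sum_\nu (1+\nu)^{\beta' r}u_\nu^r \lesssim \sum_\nu (1+\nu)^{\beta' r-1}G_\nu^r.
\end{equation*}
To reach the $\mathbf{B}$-characterization one must now take $L_p$ and arrive at $(\sum_j(1+j)^{bq}\|G_j\|_{L_p}^q)^{1/q}$. Applying $\ell^q\hookrightarrow\ell^r$ pointwise (valid since $r\ge q$) to the right-hand side, with the weight rewritten as $(1+\nu)^{(\beta'-1/r)r}$, gives $(\sum_\nu((1+\nu)^{\beta'-1/r}G_\nu)^q)^{1/q}$, and Minkowski in $L_{p/q}$ then yields $(\sum_\nu(1+\nu)^{(\beta'-1/r)q}\|G_\nu\|_{L_p}^q)^{1/q}$. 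For this to be controlled by $\|f\|_{\mathbf{B}^{0,b}_{p,q}}$ you would need $\beta'-1/r\le b$, i.e.\ $\beta'\le b+1/r$; but $\beta'=b+1/q>b+1/r$ whenever $q<r$. The weight $(1+\nu)^{(\beta'-1/r)q}$ is therefore \emph{larger} than the target weight $(1+\nu)^{bq}$, so the chain does not close. In effect the Abel approach proves only $\mathbf{B}^{0,b}_{p,q}\hookrightarrow F^{0,b+1/r}_{p,r}$, losing exactly $1/q-1/r$ in the logarithmic exponent. No ``Hardy-type rearrangement'' at the scalar level can recover this, because the monotone sequence $\|G_j\|_{L_p}^q$ has already been separated from the $\ell^r$-weight by Minkowski. (There is also a bookkeeping problem in the order of operations you describe: the pointwise estimate $u_\nu^r\le G_\nu^r-G_{\nu+1}^r$ can only be used \emph{before} Minkowski pulls $L_p$ inside, yet you invoke it afterwards.)

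What the paper does instead---and what produces the sharp $1/q$-shift---is a \emph{reverse} Minkowski in $\ell^{q/r}$ (valid since $q/r\le 1$), applied \emph{after} the reverse Minkowski in $L_{p/q}$ but \emph{before} any collapse of the tail:
\begin{equation*}
\sum_j (1+j)^{bq}\Big(\sum_{\nu\ge j}u_\nu^r\Big)^{q/r}\ \ge\ \Big(\sum_\nu u_\nu^r\Big(\sum_{j\le\nu}(1+j)^{bq}\Big)^{r/q}\Big)^{q/r},
\end{equation*}
with the inner partial sum giving $\asymp(1+\nu)^{bq+1}$, hence the weight $(1+\nu)^{br+r/q}$. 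The reverse Minkowski is the exact dual of the sum-swap you used successfully in \eqref{newF}; the Abel/telescoping route is not an equivalent substitute. The $r<2$ case of \eqref{newF2} also requires a genuine H\"older argument with a carefully chosen (negative) auxiliary exponent $\varepsilon<1/2-1/r$, which your proposal only gestures at.
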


The proof of Theorem \ref{Theorem 2.1} relies on the description of $\mathbf{B}^{0,b}_{p,q}(\mathbb{R}^d)$
    in terms of Fourier-analytical decompositions recently obtained in
    \cite[Theorem 4.3]{CobosDominguezTriebel}.

    \begin{lem}\label{lem: CDT}
    Let $1 < p < \infty, 0 < q \leq \infty,$ and $b > -1/q$. Then, we have
    \begin{equation}\label{2.4}
        \|f\|_{\mathbf{B}^{0,b}_{p,q}(\mathbb{R}^d)} \asymp \Bigg(\sum_{j=0}^\infty \Big[(1 + j)^b \Big\|\Big(\sum_{\nu=j}^\infty |(\varphi_\nu \widehat{f})^\vee
        (\cdot)|^2\Big)^{1/2}\Big\|_{L_p(\mathbb{R}^d)}\Big]^q\Bigg)^{1/q}.
    \end{equation}
    \end{lem}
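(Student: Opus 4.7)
The plan is to combine the limiting-interpolation description of $\mathbf{B}^{0,b}_{p,q}(\mathbb{R}^d)$ with a realization of the Peetre $K$-functional via dyadic partial sums. Fix $k\in\mathbb{N}$ and write $S_j f = \sum_{\nu<j}(\varphi_\nu\widehat f)^\vee$ together with
$A_j := \bigl\|\bigl(\sum_{\nu\geq j}|(\varphi_\nu\widehat f)^\vee|^2\bigr)^{1/2}\bigr\|_{L_p(\mathbb{R}^d)}$. By (\ref{PrelimInterpolationnew}) we have $\mathbf{B}^{0,b}_{p,q}(\mathbb{R}^d)=(L_p(\mathbb{R}^d),W^k_p(\mathbb{R}^d))_{(0,b),q}$, and the change of variables $t\mapsto t^k$ together with a standard discretization of the integral in (\ref{limitinginterpolation}) (using the monotonicity of the $K$-functional) yields
\begin{equation*}
\|f\|_{\mathbf{B}^{0,b}_{p,q}(\mathbb{R}^d)}^q \;\asymp\; \sum_{j=0}^\infty \bigl((1+j)^b\,K_j\bigr)^q,\qquad K_j := K(2^{-jk},f;L_p,W^k_p).
\end{equation*}
The statement (\ref{2.4}) therefore reduces to the sequence-space equivalence $\sum_j ((1+j)^b K_j)^q\asymp\sum_j ((1+j)^b A_j)^q$.

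For the inequality $A_j\lesssim K_j$ I would take any splitting $f=f_0+f_1$ with $f_1\in W^k_p$: the Littlewood-Paley theorem (\ref{LP}) applied to $f_0$ bounds the $f_0$-contribution of $A_j$ by $\|f_0\|_{L_p}$, whereas applying it to the spectrally localized remainder $f_1-S_j f_1$ and invoking Jackson's inequality $\|f_1-S_j f_1\|_{L_p}\lesssim \omega_k(f_1,2^{-j})_p\lesssim 2^{-jk}\|f_1\|_{W^k_p}$ handles the $f_1$-contribution; taking the infimum over decompositions gives $A_j\lesssim K_j$. Conversely, plugging $g=S_j f$ into the infimum defining $K_j$, the Littlewood-Paley identity on $f-S_j f$ yields $\|f-S_j f\|_{L_p}\asymp A_j$ (up to a harmless index shift by $1$). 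For the remaining piece $B_j:=2^{-jk}\|S_j f\|_{W^k_p}$, the Bernstein inequality $\|D^\alpha(\varphi_\nu\widehat f)^\vee\|_{L_p}\lesssim 2^{\nu|\alpha|}\|(\varphi_\nu\widehat f)^\vee\|_{L_p}$ together with the trivial majorization $\|(\varphi_\nu\widehat f)^\vee\|_{L_p}\leq A_\nu$ gives
\begin{equation*}
B_j \;\lesssim\; \sum_{\nu=0}^{j-1} 2^{(\nu-j)k}\,A_\nu,
\end{equation*}
i.e.\ $B_j$ is a discrete convolution of $(A_\nu)$ with the exponentially summable kernel $\{2^{-lk}\}_{l\geq 1}$. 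A weighted Young/Hardy-type estimate in $\ell^q$ against the weight $(1+j)^{bq}$ (invoking the $q$-convexity inequality when $0<q<1$) then gives $\sum_j ((1+j)^b B_j)^q\lesssim \sum_j ((1+j)^b A_j)^q$, and combined with $K_j\lesssim A_j+B_j$ this completes the proof.

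The hardest step is the justification of the Bernstein/Jackson bounds for the Littlewood-Paley blocks and of the square-function representation underlying both $\|f-S_jf\|_{L_p}\asymp A_j$ and the control of $\|S_jf\|_{W^k_p}$: all three rest on Mihlin-type Fourier multiplier theorems and the vector-valued Fefferman-Stein maximal inequality, which is precisely what forces the restriction $1<p<\infty$ in the statement. The assumption $b>-1/q$ is exactly what keeps both the interpolation norm (\ref{limitinginterpolation}) and the right-hand side of (\ref{2.4}) non-degenerate; absent it, the weight $(1+j)^b$ fails to balance the decay of $A_j$, and the right-hand side collapses to a quantity merely equivalent to $\|f\|_{L_p}$.
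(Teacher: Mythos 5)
Your proposal is correct, and all the ingredients you invoke are available in the paper: the identification $\mathbf{B}^{0,b}_{p,q}(\mathbb{R}^d)=(L_p(\mathbb{R}^d),W^k_p(\mathbb{R}^d))_{(0,b),q}$ is (\ref{PrelimInterpolationnew}), the square-function equivalence is the Littlewood--Paley theorem (\ref{LP}), and the discrete Hardy estimate you need for the term $B_j\lesssim\sum_{\nu<j}2^{(\nu-j)k}A_\nu$ is exactly the second inequality in the Hardy lemma for sequences (with the $q$-subadditivity trick when $0<q<1$, as you say); the boundary Littlewood--Paley blocks you dismiss as ``a harmless index shift'' are indeed harmless because $A_j$ is nonincreasing and the weight $(1+j)^b$ changes by a bounded factor under $j\mapsto j\pm1$. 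Note, however, that the paper does not prove this lemma at all: it is imported verbatim from \cite[Theorem 4.3]{CobosDominguezTriebel}. The proof there runs along the same conceptual lines as yours --- limiting interpolation of $(L_p,\,\text{Sobolev})$ at $\theta=0$ combined with a Fourier-analytic realization of the corresponding $K$-functional --- but it quotes the known description of $K(t,f;L_p,H^k_p)$ in terms of truncated square functions, whereas you re-derive that realization from scratch (Jackson for the smooth dyadic cut-off $S_j$, Bernstein for $\|S_jf\|_{W^k_p}$, Littlewood--Paley for the tail). Your route is therefore somewhat more self-contained, at the cost of carrying out the routine multiplier/boundary-block bookkeeping explicitly; both correctly locate the roles of $1<p<\infty$ (vector-valued multiplier bounds behind (\ref{LP})) and of $b>-1/q$ (nondegeneracy of the logarithmic weight).
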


\begin{proof}[Proof of Theorem \ref{Theorem 2.1}]
To show (\ref{newF}), we  assume that $q \geq  \max\{p,r\}$. If $r \leq 2$, applying (\ref{2.4}) and Minkowski's
    inequality twice,
    \begin{align*}
       \|f\|_{\mathbf{B}^{0,b}_{p,q}(\mathbb{R}^d)} &\lesssim  \left(\sum_{j=0}^\infty (1 + j)^{bq} \Big\|\Big(\sum_{\nu=j}^\infty |(\varphi_\nu
        \widehat{f})^\vee(\cdot)|^r\Big)^{1/r}\Big\|_{L_p(\mathbb{R}^d)}^q\right)^{1/q}
        \\
        & \leq \Bigg(\int_{\mathbb{R}^d} \Bigg(\sum_{j=0}^\infty \Big(\sum_{\nu=j}^\infty |(\varphi_\nu \widehat{f})^\vee (x)|^r\Big)^{q/r} (1 + j)^{b q}\Bigg)^{p/q}
        dx\Bigg)^{1/p} \\
        & \leq \Bigg(\int_{\mathbb{R}^d} \Bigg(\sum_{\nu = 0}^\infty |(\varphi_\nu \widehat{f})^\vee (x)|^r \Big(\sum_{j=0}^\nu (1 +
        j)^{bq}\Big)^{r/q}\Bigg)^{p/r} dx\Bigg)^{1/p} \\
        & \lesssim \Big\|\Bigg(\sum_{\nu=0}^\infty \Big((1 + \nu)^{b + 1/q} |(\varphi_\nu \widehat{f})^\vee
        (\cdot)|\Big)^r\Bigg)^{1/r}\Big\|_{L_p(\mathbb{R}^d)}  = \|f\|_{F^{0,b+1/q}_{p,r}(\mathbb{R}^d)}
    \end{align*}
    where we have used that $b + 1/q > 0$.

    If $r > 2$, we put $\rho=r/2$ and choose $\varepsilon$ such that $\frac{1}{2 \rho'} < \varepsilon < \frac{1}{2 \rho'} + b + \frac{1}{q}$. For every $x \in \mathbb{R}^d$, by H\"older's inequality we derive
    \begin{align*}
    	\left(\sum_{\nu=j}^\infty |(\varphi_\nu \widehat{f})^\vee (x)|^2\right)^{1/2}
     \lesssim j^{-\varepsilon + 1/2\rho'} \left(\sum_{\nu=j}^\infty (\nu^\varepsilon |(\varphi_\nu \widehat{f})^\vee (x)|)^r\right)^{1/r}.
    \end{align*}
    Therefore, (\ref{2.4}) and Minkowski's inequality yield that
    \begin{align*}
    	\|f\|_{\mathbf{B}^{0,b}_{p,q}(\mathbb{R}^d)} & \lesssim \left(\sum_{j=0}^\infty (1 + j)^{b q - \varepsilon q + q/2 \rho'} \Big\|\Big(\sum_{\nu=j}^\infty (\nu^\varepsilon |(\varphi_\nu \widehat{f})^\vee (\cdot)|)^r\Big)^{1/r} \Big\|_{L_p(\mathbb{R}^d)}^q\right)^{1/q} \\
	& \leq \Bigg(\int_{\mathbb{R}^d} \Bigg(\sum_{j=0}^\infty (1 + j)^{(b-\varepsilon + 1/2 \rho') q} \Big(\sum_{\nu = j}^\infty (\nu^\varepsilon |(\varphi_\nu \widehat{f})^\vee(x)|)^r\Big)^{q/r}\Bigg)^{p/q} d x\Bigg)^{1/p} \\
	& \leq \Bigg(\int_{\mathbb{R}^d} \Bigg(\sum_{\nu =0}^\infty \nu^{\varepsilon r} |(\varphi_\nu \widehat{f})^\vee (x)|^r \Big(\sum_{j=0}^\nu (1 + j)^{(b - \varepsilon + 1/2 \rho') q}\Big)^{r/q}\Bigg)^{p/r} d x\Bigg)^{1/p} \\
	& \lesssim \Bigg\| \Bigg(\sum_{\nu = 0}^\infty \nu^{(b + 1/q + 1/2 \rho') r} |(\varphi_\nu \widehat{f})^\vee(\cdot)|^r\Bigg)^{1/r} \Bigg\|_{L_p(\mathbb{R}^d)} = \|f\|_{F^{0,b+1/q+1/2-1/r}_{p,r}(\mathbb{R}^d)}.
    \end{align*}

	Next we proceed with the converse embedding (\ref{newF2}). Firstly, let us assume that $r \geq 2$. Then, using (\ref{2.4}) and Minkowski's inequality twice,
	\begin{align*}
		\|f\|_{\mathbf{B}^{0,b}_{p,q}(\mathbb{R}^d)} & \gtrsim \Bigg(\sum_{j=0}^\infty \Bigg[(1 + j)^b\Big\|\Big(\sum_{\nu=j}^\infty |(\varphi_\nu \widehat{f})^\vee
        (\cdot)|^r\Big)^{1/r}\Big\|_{L_p(\mathbb{R}^d)}\Bigg]^q\Bigg)^{1/q}
        \\
        & \geq \Bigg(\int_{\mathbb{R}^d} \Bigg[\sum_{j=0}^\infty \Bigg(\sum_{\nu=j}^\infty |(\varphi_\nu \widehat{f})^\vee (x)|^r\Bigg)^{q/r} (1 + j)^{b q}\Bigg]^{p/q} d x\Bigg)^{1/p} \\
        & \geq \Bigg(\int_{\mathbb{R}^d} \Bigg[\sum_{\nu=0}^\infty |(\varphi_\nu \widehat{f})^\vee(x)|^r \Bigg(\sum_{j=0}^\nu (1 + j)^{b q}\Bigg)^{r/q}\Bigg]^{p/r} d x\Bigg)^{1/p} \\
        & \asymp \Bigg(\int_{\mathbb{R}^d} \Bigg[\sum_{\nu=0}^\infty |(\varphi_\nu \widehat{f})^\vee(x)|^r (1 + \nu)^{b r + r/q}\Bigg]^{p/r} d x\Bigg)^{1/p} = \|f\|_{F^{0,b+1/q}_{p,r}(\mathbb{R}^d)}.
	\end{align*}
	
	In the case $r < 2$, we set $\rho = 2/r$. Take any $\varepsilon$ for which $\varepsilon < 1/2 - 1/r$. For $j \in \mathbb{N}_0$, by H\"older's inequality we obtain
	\begin{equation}\label{New2.4}
		\left(\sum_{\nu=j}^\infty (1 + \nu)^{\varepsilon r} |(\varphi_\nu \widehat{f})^\vee(x)|^r\right)^{1/r}
 \lesssim (1 + j)^{\varepsilon + 1/r -1/2} \left(\sum_{\nu=j}^\infty  |(\varphi_\nu \widehat{f})^\vee(x)|^2\right)^{1/2}.
	\end{equation}
	Then, applying Minkowski's inequality and (\ref{New2.4}),
	\begin{align*}
		\|f\|_{F^{0,b+1/q+1/2-1/r}_{p,r}(\mathbb{R}^d)} & \\
		& \hspace{-3cm}= \Big\|\Bigg(\sum_{\nu=0}^\infty \Big[(1 + \nu)^{b - \varepsilon + 1/2 - 1/r + 1/q} (1 + \nu)^\varepsilon |(\varphi_\nu \widehat{f})^\vee (\cdot)|\Big]^r\Bigg)^{1/r}\Big\|_{L_p(\mathbb{R}^d)} \\
		& \hspace{-3cm} \asymp \Big\|\Bigg(\sum_{\nu=0}^\infty (1 + \nu)^{\varepsilon r} |(\varphi_\nu \widehat{f})^\vee(\cdot)|^r \Big(\sum_{j=0}^\nu (1 + j)^{(b - \varepsilon - 1/r \rho') q} \Big)^{r/q} \Bigg)^{1/r}\Big\|_{L_p(\mathbb{R}^d)} \\
		& \hspace{-3cm} \leq \Big\|\Bigg(\sum_{j=0}^\infty (1 + j)^{(b- \varepsilon - 1/r + 1/2) q} \Big(\sum_{\nu=j}^\infty (1 + \nu)^{\varepsilon r} |(\varphi_\nu \widehat{f})^\vee(\cdot)|^r \Big)^{q/r}\Bigg)^{1/q}\Big\|_{L_p(\mathbb{R}^d)}  \\
		&\hspace{-3cm}   \leq \Bigg(\sum_{j=0}^\infty (1 + j)^{(b - \varepsilon -1/r + 1/2) q} \Big\|\Big(\sum_{\nu=j}^\infty (1 + \nu)^{\varepsilon r} |(\varphi_\nu \widehat{f})^\vee (\cdot)|^r\Big)^{1/r}\Big\|_{L_p(\mathbb{R}^d)}^q\Bigg)^{1/q} \\
		& \hspace{-3cm}  \lesssim \Bigg(\sum_{j=0}^\infty (1 + j)^{(b - \varepsilon - 1/r + 1/2)q} (1 + j)^{(\varepsilon + 1/r-1/2) q} \Big\|\Big(\sum_{\nu=j}^\infty |(\varphi_\nu \widehat{f})^\vee (\cdot)|^2\Big)^{1/2}\Big\|_{L_p(\mathbb{R}^d)}^q\Bigg)^{1/q} \\
		& \hspace{-3cm}  \asymp \|f\|_{\mathbf{B}^{0,b}_{p,q}(\mathbb{R}^d)}
	\end{align*}
	where the last estimate follows from (\ref{2.4}).
\end{proof}

\begin{rem} Taking $r=q$ in
Theorem \ref{Theorem 2.1} we arrive at
    \begin{equation}
    	F^{0,b+1/\min\{q,2\}}_{p,q} (\mathbb{R}^d) \hookrightarrow \mathbf{B}^{0,b}_{p,q}(\mathbb{R}^d) \quad \text{ if }\quad q \geq p \quad \text{and} \quad b > -1/q,
    \end{equation}
      \begin{equation}
    	\mathbf{B}^{0,b}_{p,q}(\mathbb{R}^d) \hookrightarrow F^{0,b+1/\max\{q,2\}}_{p,q}(\mathbb{R}^d)\quad \text{ if } \quad q \leq p \quad \text{and} \quad b > -1/q,
    \end{equation}
    \begin{equation}\label{newF3}
    	F^{0,b+1/\min\{p,2\}}_{p,p}(\mathbb{R}^d) \hookrightarrow \mathbf{B}^{0,b}_{p,p}(\mathbb{R}^d) \hookrightarrow F^{0,b+1/\max\{p,2\}}_{p,p}(\mathbb{R}^d), \qquad b > -1/p,
    \end{equation}
    and
    \begin{equation}\label{newF4}
     \mathbf{B}^{0,b}_{2,2}(\mathbb{R}^d) = F^{0,b+1/2}_{2,2}(\mathbb{R}^d), \qquad b > -1/2.
     \end{equation}
     	Note that the formulas (\ref{newF3}) and (\ref{newF4}) can also be derived from (\ref{1}) and (\ref{BesovZero}), respectively, because $F^{0,b+1/\min\{p,2\}}_{p,p}(\mathbb{R}^d) = B^{0,b+1/\min\{p,2\}}_{p,p}(\mathbb{R}^d)$, $F^{0,b+1/\max\{p,2\}}_{p,p}(\mathbb{R}^d) = B^{0,b+1/\max\{p,2\}}_{p,p}(\mathbb{R}^d)$, and $F^{0,b+1/2}_{2,2}(\mathbb{R}^d) = B^{0,b+1/2}_{2,2}(\mathbb{R}^d)$ (see Proposition \ref{RecallEmb}).
	
	\end{rem}

Letting  $r=2$ in Theorem \ref{Theorem 2.1} we obtain the following

\begin{cor}\label{Corollary 2.3}
	Let $1 < p < \infty, 0 < q \leq \infty$, and $b > -1/q$. Then
	    \begin{equation}\label{2.2}
        H^{0,b+1/q}_p(\mathbb{R}^d) \hookrightarrow \emph{\textbf{B}}^{0,b}_{p,q}(\mathbb{R}^d) \quad \text{ if
        } \quad q \geq \max\{p,2\}
    \end{equation}
    and
    \begin{equation}\label{2.3}
        \emph{\textbf{B}}^{0,b}_{p,q}(\mathbb{R}^d) \hookrightarrow H^{0,b+1/q}_p(\mathbb{R}^d) \quad \text{ if
        } \quad q \leq \min\{p,2\}.
    \end{equation}
    In particular, for $b > -1/2$ we obtain with equivalence of
    norms
    \begin{equation}\label{SobolevZero}
        \mathbf{B}^{0,b}_{2,2}(\mathbb{R}^d) = H^{0,b+1/2}_2(\mathbb{R}^d).
    \end{equation}
	
\end{cor}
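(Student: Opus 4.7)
The plan is to derive Corollary \ref{Corollary 2.3} directly from Theorem \ref{Theorem 2.1} by specializing $r=2$ and invoking the Littlewood--Paley identification of Sobolev spaces with Triebel--Lizorkin spaces. No fresh work is needed beyond tracking the exponents and the branches $\max\{r,2\}$, $\min\{r,2\}$.

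First I would apply the embedding (\ref{newF}) of Theorem \ref{Theorem 2.1} with $r=2$. In this case $\max\{r,2\}=2$, so the shift parameter becomes $b + 1/q + 1/2 - 1/2 = b + 1/q$ and the hypothesis $q \geq \max\{p,r\}$ collapses to $q \geq \max\{p,2\}$. Hence
\begin{equation*}
F^{0,b+1/q}_{p,2}(\mathbb{R}^d) \hookrightarrow \mathbf{B}^{0,b}_{p,q}(\mathbb{R}^d) \quad \text{whenever}\quad q \geq \max\{p,2\}.
\end{equation*}
Invoking (\ref{LPgeneral}), namely $F^{s,b}_{p,2}(\mathbb{R}^d)=H^{s,b}_p(\mathbb{R}^d)$ with equivalence of norms, immediately rewrites the left-hand side as $H^{0,b+1/q}_p(\mathbb{R}^d)$, which produces (\ref{2.2}).

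Next, I would handle (\ref{2.3}) symmetrically via (\ref{newF2}). With $r=2$ one again has $\min\{r,2\}=2$, so the target space becomes $F^{0,b+1/q}_{p,2}(\mathbb{R}^d)$, while the hypothesis $q \leq \min\{p,r\}$ reduces to $q \leq \min\{p,2\}$. Another application of (\ref{LPgeneral}) identifies this Triebel--Lizorkin space with $H^{0,b+1/q}_p(\mathbb{R}^d)$ and yields (\ref{2.3}).

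Finally, for the identity (\ref{SobolevZero}) I would simply observe that the choice $p=q=2$ simultaneously satisfies $q \geq \max\{p,2\}$ and $q \leq \min\{p,2\}$ (both reduce to $2 \geq 2$ and $2 \leq 2$), so both embeddings (\ref{2.2}) and (\ref{2.3}) apply and combine to give $\mathbf{B}^{0,b}_{2,2}(\mathbb{R}^d) = H^{0,b+1/2}_2(\mathbb{R}^d)$ with equivalence of norms, for any $b > -1/2$. Since the corollary is a direct specialization of the preceding theorem, there is no real obstacle here; the only thing to verify carefully is the arithmetic of the shift exponent and the fact that the two branch conditions $\max\{r,2\}$ and $\min\{r,2\}$ both degenerate to $2$ when $r=2$, so the correction term $1/2 - 1/\max\{r,2\}$ (respectively $1/2 - 1/\min\{r,2\}$) vanishes.
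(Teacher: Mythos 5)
Your proposal is correct and follows exactly the same route as the paper: the paper introduces Corollary \ref{Corollary 2.3} with the words ``Letting $r=2$ in Theorem \ref{Theorem 2.1} we obtain the following,'' and the identification $F^{0,b+1/q}_{p,2}(\mathbb{R}^d)=H^{0,b+1/q}_p(\mathbb{R}^d)$ via (\ref{LPgeneral}) is the same step you use. Your exponent arithmetic and the observation that $p=q=2$ satisfies both branch conditions simultaneously are also exactly right.
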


\begin{rem}
	(i) The formula (\ref{SobolevZero}) can also be derived from (\ref{BesovZero}) or (\ref{newF4}) since
	\begin{equation*}
	 B^{0,b+1/2}_{2,2}(\mathbb{R}^d) = F^{0,b+1/2}_{2,2}(\mathbb{R}^d) = H^{0,b+1/2}_2(\mathbb{R}^d)
	 \end{equation*}
	 (see Proposition \ref{RecallEmb} and (\ref{LPgeneral})).
	However, as we will see in Section \ref{section7}, outside of the case when $p=q=2$, the embeddings  given in Corollary \ref{Corollary 2.3} and (\ref{1}) are not comparable. Moreover, we will prove in Section \ref{SectionOptimalityWB} (see Theorem \ref{Theorem 6.6} below) that (\ref{2.2}), (\ref{2.3}) and (\ref{SobolevZero}) are sharp in the following sense
	     \begin{equation}\label{Sharp2.2}
        H^{0,b+1/q}_p(\mathbb{R}^d) \hookrightarrow \mathbf{B}^{0,b}_{p,q}(\mathbb{R}^d) \iff q \geq \max\{p,2\},
    \end{equation}
        \begin{equation}\label{Sharp2.3}
        \mathbf{B}^{0,b}_{p,q}(\mathbb{R}^d) \hookrightarrow H^{0,b+1/q}_p(\mathbb{R}^d) \iff q \leq \min\{p,2\},
    \end{equation}
    and
	 \begin{equation}\label{SharpBesovSobolev}
	 	\mathbf{B}^{0,b}_{p,q}(\mathbb{R}^d) = H^{0,\xi}_p (\mathbb{R}^d) \iff p = q = 2 \quad \text{and} \quad \xi = b+1/2.
	 \end{equation}

	 (ii) It will be shown in Theorem \ref{Proposition 5.3} below that the losses of logarithmic smoothness in the Sobolev norms obtained in (\ref{2.2}) and (\ref{2.3}) are the best possible. More precisely, we will prove that if $q \geq \max\{p,2\}$ then
	 	\begin{equation*}
	H^{0,\xi}_p(\mathbb{R}^d) \hookrightarrow \mathbf{B}^{0,b}_{p,q}(\mathbb{R}^d) \text{ \qquad if and only if  \qquad } \xi \geq b + \frac{1}{q},
\end{equation*}
and if $q \leq \min\{p,2\}$ then
	\begin{equation*}
	\mathbf{B}^{0,b}_{p,q}(\mathbb{R}^d) \hookrightarrow H^{0,\xi}_p(\mathbb{R}^d) \text{ \qquad  if and only if  \qquad } \xi \leq b + \frac{1}{q}.
\end{equation*}
\end{rem}

\subsection{Sobolev embeddings for $\mathbf{B}^{0,b}_{p,q}(\mathbb{R}^d)$}

The aim of this section is to derive the Sobolev embedding theorem for Besov spaces $\mathbf{B}^{0,b}_{p,q}(\mathbb{R}^d)$, that is, the $\mathbf{B}$-counterpart of Proposition \ref{RecallEmb2}(i).


\begin{thm}\label{TheoremSobolevEmb}
	Let $1 \leq p_0 < p < p_1 \leq \infty, -\infty < s_1 < 0 < s_0 < \infty, 0 < q \leq \infty$, and $b > -1/q$ with
		\begin{equation}\label{differentialdimension}
		s_0 - \frac{d}{p_0} = -\frac{d}{p} = s_1 - \frac{d}{p_1}.
	\end{equation}
	Then
	\begin{equation}\label{EmbSobolev}
		B^{s_0, b + 1/\min\{p,q\}}_{p_0,q}(\mathbb{R}^d) \hookrightarrow \mathbf{B}^{0,b}_{p,q}(\mathbb{R}^d) \hookrightarrow B^{s_1, b + 1/\max\{p,q\}}_{p_1,q}(\mathbb{R}^d).
	\end{equation}
\end{thm}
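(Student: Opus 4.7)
My plan is to derive both inclusions from Holmstedt's limiting reiteration formula (Lemma \ref{PrelimLemma7.2}(vi)), applied to couples of classical Besov spaces whose endpoints I can connect to $L_p$ (or $W^k_p$) via Sobolev and Franke--Jawerth embeddings (Proposition \ref{RecallEmb2}). Fix throughout an integer $k$ larger than $\max\{s_0,-s_1\}$ so that all intermediate smoothness indices remain positive and $B^s = \mathbf{B}^s$ whenever it matters.

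For the left embedding, I first realize the source as a log-weighted real interpolation space. Lemma \ref{PrelimLemmaCF} gives
\begin{equation*}
B^{s_0,\, b + 1/\min\{p,q\}}_{p_0,q}(\mathbb{R}^d) = \bigl(B^{s_a}_{p_0,p}(\mathbb{R}^d),\, B^{k+s_0}_{p_0,p}(\mathbb{R}^d)\bigr)_{\theta,\, q;\, b + 1/\min\{p,q\}},
\end{equation*}
where $\theta \in (0,1)$ and $s_a$ are chosen so that $(1-\theta) s_a + \theta(k+s_0) = s_0$. The lower embedding in Lemma \ref{PrelimLemma7.2}(vi) then sends this into $\bigl((B^{s_a}_{p_0,p}, B^{k+s_0}_{p_0,p})_{\theta,p},\, B^{k+s_0}_{p_0,p}\bigr)_{(0,b),q}$, and a second application of Lemma \ref{PrelimLemmaCF} identifies the inner space as $B^{s_0}_{p_0,p}(\mathbb{R}^d)$. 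Now the dimension condition (\ref{differentialdimension}) triggers two endpoint embeddings: Franke--Jawerth produces $B^{s_0}_{p_0,p}(\mathbb{R}^d) \hookrightarrow L_p(\mathbb{R}^d)$, and the classical Sobolev embedding Proposition \ref{RecallEmb2}(i) gives $B^{k+s_0}_{p_0,p}(\mathbb{R}^d) \hookrightarrow B^{k}_{p,p}(\mathbb{R}^d)$. Monotonicity of the interpolation functor and (\ref{PrelimInterpolationnew2}) close the chain:
\begin{equation*}
\bigl(B^{s_0}_{p_0,p},\, B^{k+s_0}_{p_0,p}\bigr)_{(0,b),q} \hookrightarrow \bigl(L_p,\, B^{k}_{p,p}\bigr)_{(0,b),q} = \mathbf{B}^{0,b}_{p,q}(\mathbb{R}^d).
\end{equation*}

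For the right embedding I run the dual strategy. Starting from the identification $\mathbf{B}^{0,b}_{p,q}(\mathbb{R}^d) = (L_p, W^k_p)_{(0,b),q}$ of (\ref{PrelimInterpolationnew}), Franke--Jawerth applied to $L_p = H^0_p$ and $W^k_p = H^k_p$ yields $L_p \hookrightarrow B^{s_1}_{p_1,p}$ and $W^k_p \hookrightarrow B^{k+s_1}_{p_1,p}$, both relying on (\ref{differentialdimension}). Monotonicity then gives
\begin{equation*}
\mathbf{B}^{0,b}_{p,q}(\mathbb{R}^d) \hookrightarrow \bigl(B^{s_1}_{p_1,p}(\mathbb{R}^d),\, B^{k+s_1}_{p_1,p}(\mathbb{R}^d)\bigr)_{(0,b),q}.
\end{equation*}
I next rewrite $B^{s_1}_{p_1,p}$ itself as $(B^{s_a'}_{p_1,p}, B^{k+s_1}_{p_1,p})_{\theta',p}$ via Lemma \ref{PrelimLemmaCF}, so that the upper embedding in Lemma \ref{PrelimLemma7.2}(vi) pushes the previous space into $\bigl(B^{s_a'}_{p_1,p}, B^{k+s_1}_{p_1,p}\bigr)_{\theta',\, q;\, b + 1/\max\{p,q\}}$. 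A final application of Lemma \ref{PrelimLemmaCF} identifies this as $B^{s_1, b + 1/\max\{p,q\}}_{p_1,q}(\mathbb{R}^d)$.

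The main obstacle is the careful bookkeeping required at each step, in particular the verification that every Sobolev and Franke--Jawerth embedding applies with the correct differential-dimension arithmetic, and that the positivity of the intermediate smoothness indices (guaranteed by the choice of $k$) makes $B^s = \mathbf{B}^s$ at every appearance. Additional care is needed at the boundary cases $p_0 = 1$ and $p_1 = \infty$, where one must confirm that Proposition \ref{RecallEmb2} and the interpolation identifications (\ref{PrelimInterpolationnew}) and (\ref{PrelimInterpolationnew2}) remain valid.
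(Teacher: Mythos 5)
Your proposal is correct and follows essentially the same route as the paper: realize $\mathbf{B}^{0,b}_{p,q}$ as a limiting interpolation space between $L_p$ and a higher-order Sobolev/Besov space, transport the couple across the Franke--Jawerth and Sobolev endpoint embeddings, and then close the loop with Lemma \ref{PrelimLemmaCF} and the Holmstedt-type reiteration in Lemma \ref{PrelimLemma7.2}(vi). The paper fixes $k=1$ and invokes Franke--Jawerth for both endpoints while you allow general $k$ and mix Franke--Jawerth with the Sobolev embedding, but these are cosmetic variations of the same argument.
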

\begin{rem}\label{Remark 3.5}
	(i) The left-hand side embedding in (\ref{EmbSobolev}) was already proved in \cite[Corollary 2.8]{GogatishviliOpicTikhonovTrebels} and \cite[Theorem 3.7]{CobosDominguez3}. The method given in \cite{GogatishviliOpicTikhonovTrebels} relies on Ulyanov inequalities, that is, inequalities for the moduli of smoothness in different metrics. Alternatively, we give here an approach based on limiting interpolation, which is a streamlined version of that given in \cite{CobosDominguez3}. Furthermore, this method allows us to obtain the right-hand side embedding in (\ref{EmbSobolev}), which can not be derived by using Ulyanov inequalities because $s_1 < 0$.
	
	(ii) Note that the embeddings (\ref{EmbSobolev}) do not hold in the limiting cases  $s_0=0$ or $s_1=0$. More precisely, if $s_1=0$ (respectively, $s_0 =0$) and so, $p_1=p$ (respectively, $p_0=p$), then the embedding $\mathbf{B}^{0,b}_{p,q}(\mathbb{R}^d) \hookrightarrow B^{0,b+1/\max\{p,q\}}_{p,q}(\mathbb{R}^d)$ (respectively, $B^{0,b+1/\min\{p,q\}}_{p,q}(\mathbb{R}^d) \hookrightarrow \mathbf{B}^{0,b}_{p,q}(\mathbb{R}^d)$) is not true in general. In fact, we will show in Section \ref{Optimality of embeddings with smoothness near zero} below that the embeddings given in (\ref{1}) are the best possible with respect to logarithmic smoothness parameters.
	
	(iii) It will be shown in Section \ref{Optimality of Sobolev embeddings} that the embeddings (\ref{EmbSobolev}) are optimal with respect to logarithmic smoothness parameters. To be more precise, given any $\varepsilon > 0$ we have that
	\begin{equation*}
		B^{s_0, \xi}_{p_0,q}(\mathbb{R}^d)  \hookrightarrow \mathbf{B}^{0,b}_{p,q}(\mathbb{R}^d) \iff \xi \geq b + \frac{1}{\min\{p,q\}}
	\end{equation*}
	and
	\begin{equation*}
		 \mathbf{B}^{0,b}_{p,q}(\mathbb{R}^d) \hookrightarrow B^{s_1, \xi}_{p_1,q}(\mathbb{R}^d) \iff \xi \leq b +  \frac{1}{\max\{p,q\}}.
	\end{equation*}
	
	(iv) Both (\ref{EmbSobolev}) and (\ref{1}) establish embeddings involving the Besov spaces $\mathbf{B}^{0,b}_{p,q}(\mathbb{R}^d)$. Then, it is a natural question to compare (\ref{EmbSobolev}) with (\ref{1}). Note that if $2 \geq \min\{p,q\}$ then (see Proposition \ref{RecallEmb2}(i))
	\begin{equation*}
		B^{s_0,b+1/\min\{p,q\}}_{p_0,q}(\mathbb{R}^d) \hookrightarrow B^{0,b+1/\min\{2,p,q\}}_{p,q}(\mathbb{R}^d)
	\end{equation*}
	and so, the left-hand side embedding in (\ref{EmbSobolev}) follows directly from (\ref{1}). Similarly, if $2 \leq \max\{p,q\}$ then the right-hand side embedding in (\ref{EmbSobolev}) is an immediate consequence of (\ref{1}). However, we will see in Section \ref{Comparison between embeddings with smoothness near zero and Sobolev embeddings} that the embeddings given in (\ref{EmbSobolev}) and (\ref{1}) are not comparable in general.
\end{rem}
\begin{proof}[Proof of Theorem \ref{TheoremSobolevEmb}]
	We shall only prove the right-hand side embedding in (\ref{EmbSobolev}). The proof of the left-hand side embedding can be carried out similarly. By our assumptions on the parameters, accordingly to Proposition \ref{RecallEmb2}(ii) we have
	\begin{equation}\label{NewRemark}
		L_p(\mathbb{R}^d) \hookrightarrow B^{s_1}_{p_1,p}(\mathbb{R}^d) \quad \text{and} \quad W^1_p(\mathbb{R}^d) \hookrightarrow B^{s_1 + 1}_{p_1,p}(\mathbb{R}^d).
	\end{equation}
	If we interpolate these embeddings by the $((0,b),q)$-method (cf. (\ref{limitinginterpolation})), we derive
	\begin{equation}\label{InterpolationEmb1}
		(L_p(\mathbb{R}^d), W^1_p(\mathbb{R}^d))_{(0,b),q} \hookrightarrow (B^{s_1}_{p_1,p}(\mathbb{R}^d) , B^{s_1 + 1}_{p_1,p}(\mathbb{R}^d))_{(0,b),q}.
	\end{equation}
	
	By (\ref{PrelimInterpolationnew}), we have
	\begin{equation}\label{NewRemark2}
	 \mathbf{B}^{0,b}_{p,q}(\mathbb{R}^d) = (L_p(\mathbb{R}^d), W^1_p(\mathbb{R}^d))_{(0,b),q}.
	 \end{equation}
	
	  Concerning the right-hand side space in (\ref{InterpolationEmb1}), we can apply Lemmas \ref{PrelimLemmaCF} and \ref{PrelimLemma7.2}(vi) to get
	\begin{align*}
		\big(B^{s_1}_{p_1,p}(\mathbb{R}^d) , B^{s_1 + 1}_{p_1,p}(\mathbb{R}^d)\big)_{(0,b),q} & =\big((B^{s_1-1}_{p_1,p}(\mathbb{R}^d) , B^{s_1 + 1}_{p_1,p}(\mathbb{R}^d))_{1/2,p}, B^{s_1 + 1}_{p_1,p}(\mathbb{R}^d)\big)_{(0,b),q} \\
		&
 \hookrightarrow \big(B^{s_1-1}_{p_1,p}(\mathbb{R}^d) , B^{s_1 + 1}_{p_1,p}(\mathbb{R}^d)\big)_{1/2,q;b+1/\max\{p,q\}}
\\
&= B^{s_1,b+1/\max\{p,q\}}_{p_1,q}(\mathbb{R}^d).
	\end{align*}
	Whence, we obtain that $\mathbf{B}^{0,b}_{p,q}(\mathbb{R}^d) \hookrightarrow B^{s_1, b + 1/\max\{p,q\}}_{p_1,q}(\mathbb{R}^d)$.
\end{proof}

\begin{rem}
(i) The periodic counterpart of Theorem \ref{TheoremSobolevEmb} also holds true. More specifically: 	Let $1 < p_0 < p < p_1 < \infty, -\infty < s_1 < 0 < s_0 < \infty, 0 < q \leq \infty$, and $b > -1/q$ with (\ref{differentialdimension}). Then
	\begin{equation*}
		B^{s_0, b + 1/\min\{p,q\}}_{p_0,q}(\mathbb{T}) \hookrightarrow \mathbf{B}^{0,b}_{p,q}(\mathbb{T}) \hookrightarrow B^{s_1, b + 1/\max\{p,q\}}_{p_1,q}(\mathbb{T}).
	\end{equation*}
	This can be shown by following the same approach as in the proof of Theorem \ref{TheoremSobolevEmb}
 using the fact that
 the embeddings (\ref{NewRemark}) also hold for periodic functions (see \cite[(8), page 170]{SchmeisserTriebel}), as well as the interpolation formula (\ref{NewRemark2}) (see (\ref{PrelimInterpolationnew})).
	
	(ii) If $b=-1/q$ the corresponding result to Theorem \ref{TheoremSobolevEmb} reads as follows: Let $1 \leq p_0 < p < p_1 \leq \infty, -\infty < s_1 < 0 < s_0 < \infty$ and $0 < q \leq \infty$ with (\ref{differentialdimension}). Then
	\begin{equation*}
		B^{s_0, -1/q + 1/\min\{p,q\}, 1/\min\{p,q\}}_{p_0,q}(\mathbb{R}^d) \hookrightarrow \mathbf{B}^{0,-1/q}_{p,q}(\mathbb{R}^d) \hookrightarrow B^{s_1, -1/q + 1/\max\{p,q\}, 1/\max\{p,q\}}_{p_1,q}(\mathbb{R}^d).
	\end{equation*}
	Here, $B^{s,b,\xi}_{p,q}(\mathbb{R}^d), -\infty < \xi < \infty,$\index{\bigskip\textbf{Spaces}!$B^{s,b,\xi}_{p,q}(\mathbb{R}^d)$}\label{BESOVFLOG} is the Besov space of iterated logarithmic smoothness endowed with the quasi-norm
	\begin{equation}\label{BesovIteratedSmoothness}
		 \|f\|_{B^{s,b,\xi}_{p,q}(\mathbb{R}^d)} = \left(\sum_{j=0}^\infty \Big(2^{j s} (1 + j)^b (1 + \log (1+j))^\xi \|(\varphi_j
    \widehat{f})^\vee\|_{L_p(\mathbb{R}^d)}\Big)^q\right)^{1/q}.
	\end{equation}
	The proof is similar to that of Theorem \ref{TheoremSobolevEmb}  but now using the limiting case of Lemma \ref{PrelimLemma7.2}(vi) when $b = -1/q$ which has been obtained in \cite[Lemma 3.1(b)]{CobosDominguez4}. Namely, if $0 < \theta < 1, 0 < p \leq \infty$ and $0 < q < \infty$, then
	\begin{align}
	(A_0, A_1)_{\theta,q; -1/q + 1/\min\{p,q\}, 1/\min\{p,q\}} & \hookrightarrow ((A_0,A_1)_{\theta,p}, A_1)_{(0,-1/q),q}  \nonumber \\
	& \hspace{-4cm} \hookrightarrow (A_0, A_1)_{\theta,q; -1/q + 1/\max\{p,q\}, 1/\max\{p,q\}} \label{limiting interpolation reiteration formula}.
	\end{align}
	Here, \index{\bigskip\textbf{Spaces}!$(A_0,A_1)_{\theta,q;\alpha,\xi}$}\label{REALLOGIT}
	\begin{equation*}
	\|a\|_{(A_0,A_1)_{\theta,q;\alpha,\xi}} =  \left(\int_0^\infty (t^{-\theta} \ell^\alpha(t) \ell^\xi (\ell (t)) K(t,a))^q \frac{dt}{t}\right)^{1/q}, \quad -\infty < \xi < \infty.
\end{equation*}
In particular, if $\xi = 0$ we recover the spaces $(A_0,A_1)_{\theta,q;\alpha}$ introduced in (\ref{inter*}).
\end{rem}

\newpage
\section{Characterizations and embedding theorems for general monotone functions}\label{section4}

\subsection{Definition and basic properties of general monotone functions}

We recall the definition of the general monotone functions given in \cite{LiflyandTikhonov, Tikhonov}. A complex-valued function $\varphi (z), z
>0,$ is called \emph{general monotone}  if it is locally of bounded variation and for some  constant $C > 1$ the following is true
\begin{equation}\label{3.1}
    \int_z^{2z}  |d \varphi (u)| \leq C |\varphi (z)|
\end{equation}
for all $z > 0$. Constant $C$ in (\ref{3.1}) is independent of $z$.
The set of all general monotone functions is denoted by $GM$\index{\bigskip\textbf{Sets}!$GM$}\label{GM}. Examples of general monotone functions consist of:
decreasing functions,
 quasi-monotone
functions $\varphi$ (i.e, $\varphi(t) t^{-\alpha}$ is non-increasing for some $\alpha \geq 0$), and increasing functions $\varphi$ such that
$ \varphi(2z) \lesssim \varphi(z)$. Note also that (\ref{3.1}) implies
\begin{equation}\label{3.2}
    |\varphi (u)| \lesssim |\varphi (z)|\quad \text{ for any } \quad z \leq u \leq
    2z,
\end{equation}
which subsequently gives
\begin{equation}\label{3.3}
    |\varphi (z)| \lesssim \int_{z/c}^\infty \frac{|\varphi(u)|}{u}
    du, \qquad c>1.
\end{equation}

For later use we recall the following lemma on multipliers of general monotone functions (see \cite[Remark 5.5]{LiflyandTikhonov}).

\begin{lem}\label{Lemma 3.1}
    Let $\varphi, \alpha \in GM$, then $\varphi \alpha \in GM$.
\end{lem}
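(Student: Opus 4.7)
The plan is to verify the two defining conditions of the class $GM$ directly for the product $\varphi\alpha$. First, local bounded variation of $\varphi\alpha$ is immediate: the product of two locally BV functions is locally BV, since on any compact interval $[a,b] \subset (0,\infty)$ both $\varphi$ and $\alpha$ are bounded (being BV, hence having only jump discontinuities) and the Leibniz rule for Stieltjes measures gives $d(\varphi\alpha) = \varphi_-\, d\alpha + \alpha\, d\varphi$ (or the symmetric variant), whose total variation over $[a,b]$ is finite.

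The main work is the control of $\int_z^{2z} |d(\varphi\alpha)(u)|$. Using the product formula for BV functions and the triangle inequality for total variation measures, one obtains
\begin{equation*}
\int_z^{2z} |d(\varphi\alpha)(u)| \;\leq\; \int_z^{2z} |\varphi(u)|\,|d\alpha(u)| \;+\; \int_z^{2z} |\alpha(u)|\,|d\varphi(u)|.
\end{equation*}
Now I would apply the pointwise bound (\ref{3.2}), valid for any element of $GM$: for $u \in [z,2z]$ one has $|\varphi(u)| \lesssim |\varphi(z)|$ and $|\alpha(u)| \lesssim |\alpha(z)|$, with constants depending only on the $GM$-constants of $\varphi$ and $\alpha$. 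Pulling these pointwise bounds out of the integrals and then invoking the defining inequality (\ref{3.1}) for each factor separately yields
\begin{equation*}
\int_z^{2z} |d(\varphi\alpha)(u)| \;\lesssim\; |\varphi(z)| \int_z^{2z} |d\alpha(u)| + |\alpha(z)| \int_z^{2z} |d\varphi(u)| \;\lesssim\; |\varphi(z)\alpha(z)|,
\end{equation*}
which is precisely the $GM$ condition (\ref{3.1}) for $\varphi\alpha$ with a new constant.

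The one subtlety — the main (mild) obstacle — is the correct form of the Leibniz rule when $\varphi$ or $\alpha$ has jumps, since one must choose left- or right-continuous representatives to make $d(\varphi\alpha) = \varphi_-\,d\alpha + \alpha\, d\varphi$ literally correct. This is harmless here: whichever representative is used, the two terms are each bounded pointwise by $\sup_{[z,2z]}|\varphi| \cdot |d\alpha|$ and $\sup_{[z,2z]}|\alpha|\cdot |d\varphi|$, and (\ref{3.2}) converts these suprema into $|\varphi(z)|$ and $|\alpha(z)|$ up to constants. Thus no delicate cancellation argument is needed, and the proof reduces to combining (\ref{3.1}), (\ref{3.2}), and the triangle inequality.
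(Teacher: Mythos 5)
Your proof is correct, and it uses exactly the ingredients one would expect: the product decomposition for BV (Leibniz/triangle inequality), the pointwise monotonicity-type bound (\ref{3.2}) to replace $\sup_{[z,2z]}|\varphi|$ and $\sup_{[z,2z]}|\alpha|$ by $|\varphi(z)|$ and $|\alpha(z)|$, and then the defining inequality (\ref{3.1}) applied to each factor. The paper itself offers no proof of this lemma and only cites \cite[Remark 5.5]{LiflyandTikhonov}, which rests on the same argument. One small simplification worth noting: the worry about left/right-continuous representatives can be bypassed entirely by working with Riemann--Stieltjes sums over a partition $z=t_0<\cdots<t_n=2z$ and the algebraic identity $f(t_i)g(t_i)-f(t_{i-1})g(t_{i-1})=f(t_i)\bigl(g(t_i)-g(t_{i-1})\bigr)+g(t_{i-1})\bigl(f(t_i)-f(t_{i-1})\bigr)$, which gives $\operatorname{Var}_{[z,2z]}(\varphi\alpha)\leq \sup_{[z,2z]}|\varphi|\cdot\operatorname{Var}_{[z,2z]}(\alpha)+\sup_{[z,2z]}|\alpha|\cdot\operatorname{Var}_{[z,2z]}(\varphi)$ directly, with no measure-theoretic product rule needed.
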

Now we are in a position to give the main definition in this section.
First, we recall that
the Fourier transform of a radial function $f(x) = f_0(|x|)$ is also radial, $\widehat{f}(\xi)=F_{0}(|\xi|)$
(see \cite[Ch.~4]{SteinWeiss}) and it can be written as the Fourier--Hankel transform
\begin{equation}\label{FourierHankel}
F_{0}(s)= \frac{2 \pi^{d/2}}{\Gamma\left(\frac{d}{2}\right)} \int_{0}^{\infty}f_{0}(t)j_{d/2-1}(st)t^{d-1}\,dt,
\end{equation}
where $j_{\alpha}(t)=\Gamma(\alpha+1)(t/2)^{-\alpha}J_{\alpha}(t)$ is the
normalized Bessel function ($j_{\alpha}(0)=1$), $\alpha\ge -1/2$, with $J_\alpha(t)$ the classical Bessel function of the first kind of order $\alpha$.

Let $\widehat{GM}^d$ \index{\bigskip\textbf{Sets}!$\widehat{GM}^d$}\label{GMF} be the collection of all radial functions such that the radial component $F_0$ of the Fourier transform of $f$, that is, $\widehat{f}(\xi)= F_0(|\xi|)$, belongs to the class $GM$, is positive and satisfies the condition
\begin{equation}\label{3.4new}
    \int_0^1 u^{d-1} F_0(u) du + \int_1^\infty u^{(d-1)/2} |d
    F_0(u)| < \infty;
\end{equation}
see \cite{GorbachevTikhonov}.
In other words,
 $\widehat{GM}{}^{d}$ consists of radial functions
$f(x)=f_{0}(|x|)$, $x\in \mathbb{R}^{d}$, which are defined in terms of the inverse Fourier--Hankel transform
\begin{equation}\label{3.4new+}
f_{0}(z)=\frac{2}{\Gamma\left(\frac{d}{2}\right) (2 \sqrt{\pi})^d}\int_{0}^{\infty}F_{0}(s)j_{d/2-1}(zs)s^{d-1}\,ds,
\end{equation}
where the function $F_{0}\in GM$ and satisfies  condition (\ref{3.4new}).
We note that, by  \cite[Lemma 1]{GorbachevLiflyandTikhonov}, the integral in \eqref{3.4new}
converges in the improper sense and therefore $f_{0}(z)$ is continuous for $z>0$.
 If $d=1$ we simply write $\widehat{GM}$.

In the discrete case, $\widehat{GM}$ coincides with the  well investigated class of periodic functions $f(x) \sim \sum_{n=1}^\infty (a_n \cos n x + b_n \sin nx)$ such that the sequences of their Fourier coefficients $\{a_n\}_{n \in \mathbb{N}}, \{b_n\}_{n \in \mathbb{N}}$
sa\-tis\-fy the discrete general monotone condition, that is,
$ \sum_{k=n}^{2n-1} |\Delta d_k| \leq C |d_n|$
 for all $n\in \mathbb{N}$ ($\Delta d_k := d_k - d_{k+1}$) (cf. (\ref{3.1})); see \cite{Tikhonov, LiflyandTikhonov} and the references therein.
 In particular, the important class (see, e.g., \cite[Chapters V, XII]{Zygmund}) of Fourier series with monotonic coefficients belongs to $\widehat{GM}$.
In Subsection \ref{Section 3.8},
we address some important properties of
 the subclass of the Besov space, consisting of the periodic functions from $\widehat{GM}$.

\subsection{Characterization of spaces $\mathbf{B}^{s,b}_{p,q}(\mathbb{R}^d)$}\label{SubsectionContinuous}
Working with the $\widehat{GM}^d$ class, we are able to characterize
functions from the Besov space $\textbf{B}^{s,b}_{p,q}(\mathbb{R}^d)$ in terms of
the growth properties of their Fourier transform.

\begin{thm}\label{Theorem 3.2}
    Let $\frac{2d}{d+1} < p < \infty, 0 < q \leq \infty$, and $- \infty < b < \infty$. Let $f \in \widehat{GM}^d$.
    In the case  $s>0$ we have
    \begin{equation}\label{3.3.1}
        \|f\|_{{\mathbf{B}}^{s,b}_{p,q}(\mathbb{R}^d)} \asymp \left(\int_0^1 t^{dp - d -1} F_0^p(t) dt\right)^{1/p} +  \left(\int_1^\infty t^{sq + d q -dq/p-1} (1 + \log t)^{b q} F_0^q(t) dt \right)^{1/q}
    \end{equation}
    whenever $q < \infty$ and
    \begin{equation}\label{3.3.2}
         \|f\|_{{\mathbf{B}}^{s,b}_{p,\infty}(\mathbb{R}^d)} \asymp \left(\int_0^1 t^{dp - d -1} F_0^p(t) dt\right)^{1/p} + \sup_{t > 1} t^{s + d -d/p} (1 + \log t)^b F_0(t).
    \end{equation}
    In the case  $s=0$ we have
    \begin{multline}\label{3.3.3}\qquad
        \|f\|_{\mathbf{B}^{0,b}_{p,q}(\mathbb{R}^d)} \asymp \left(\int_0^1 t^{dp - d -1} F_0^p(t) dt\right)^{1/p} \\ \hspace{5cm}+ \left(\int_1^\infty \left[(1 + \log t)^b \left(\int_t^\infty u^{d p -d-1} F_0^p(u) du\right)^{1/p}\right]^q
        \frac{dt}{t} \right)^{1/q}
    \end{multline}
      whenever $q < \infty$ and
    \begin{equation}\label{3.3.4}
              \|f\|_{\mathbf{B}^{0,b}_{p,\infty}(\mathbb{R}^d)} \asymp \left(\int_0^1 t^{dp - d -1} F_0^p(t) dt\right)^{1/p}  + \sup_{t > 1} \,(1 + \log t)^b \left(\int_t^\infty u^{d p -d-1} F_0^p(u)
        du\right)^{1/p}.
    \end{equation}
\end{thm}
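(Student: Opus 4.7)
The plan is to pass from the Besov norm to an integral of $F_0$ by first computing the size of each Littlewood--Paley block. The crucial dyadic estimate is
\begin{equation*}
\|(\varphi_j \widehat{f})^\vee\|_{L_p(\mathbb{R}^d)} \asymp 2^{j(d - d/p)} F_0(2^j), \qquad j \in \mathbb{N},
\end{equation*}
valid in the range $\frac{2d}{d+1} < p < \infty$. To prove it I would express $(\varphi_j \widehat{f})^\vee$ via the inverse Fourier--Hankel transform (cf.\ (\ref{3.4new+})), replace $F_0(u)$ by $F_0(2^j)$ on the annulus $u \asymp 2^j$ at the cost of a controlled error thanks to the GM property (\ref{3.2}), and then use the Bessel bound $|j_{d/2-1}(t)| \lesssim \min\{1, t^{-(d-1)/2}\}$ combined with the oscillation/stationary-phase decay of the resulting integral to obtain the pointwise size estimate $|(\varphi_j \widehat{f})^\vee(x)| \lesssim 2^{jd} F_0(2^j) \min\{1, (2^j|x|)^{-(d+1)/2}\}$, which is matched from below by $|(\varphi_j \widehat{f})^\vee(x)| \gtrsim 2^{jd} F_0(2^j)$ on $|x| \lesssim 2^{-j}$ using positivity of $F_0$. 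Integrating the $p$-th power in radial coordinates then yields the block estimate; it is precisely the integrability of $(2^j|x|)^{-p(d+1)/2}$ at infinity that forces the restriction $p > \frac{2d}{d+1}$. A parallel low-frequency computation at $j = 0$ produces the local term $\bigl(\int_0^1 t^{dp-d-1} F_0(t)^p\, dt\bigr)^{1/p}$ appearing on the right-hand side of each of (\ref{3.3.1})--(\ref{3.3.4}).

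With the block estimate in hand, the case $s > 0$ is bookkeeping. By (\ref{BesovComparison}) we may work with the Fourier-analytic norm (\ref{Intro2}); substitution gives
\begin{equation*}
\|f\|_{\mathbf{B}^{s,b}_{p,q}(\mathbb{R}^d)}^q \asymp \|(\varphi_0 \widehat{f})^\vee\|_{L_p}^q + \sum_{j=1}^\infty \bigl(2^{j(s+d-d/p)} (1+j)^b F_0(2^j)\bigr)^q,
\end{equation*}
and by the GM property of $F_0$ this dyadic sum is comparable to its integral version under $t = 2^j$, yielding (\ref{3.3.1}) (and (\ref{3.3.2}) for $q = \infty$).

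The case $s = 0$ is the main obstacle. Here I would invoke Lemma \ref{lem: CDT} to rewrite
\begin{equation*}
\|f\|_{\mathbf{B}^{0,b}_{p,q}(\mathbb{R}^d)} \asymp \Bigg(\sum_{j=0}^\infty (1+j)^{bq} \Bigl\|\Bigl(\sum_{\nu \geq j} |(\varphi_\nu \widehat{f})^\vee|^2\Bigr)^{1/2}\Bigr\|_{L_p(\mathbb{R}^d)}^q\Bigg)^{1/q},
\end{equation*}
and then show that for $f \in \widehat{GM}^d$ the inner $L_p$-norm of the square function collapses to
\begin{equation*}
\Bigl\|\Bigl(\sum_{\nu \geq j} |(\varphi_\nu \widehat{f})^\vee|^2\Bigr)^{1/2}\Bigr\|_{L_p(\mathbb{R}^d)} \asymp \Bigl(\int_{2^j}^\infty u^{dp-d-1} F_0(u)^p\, du\Bigr)^{1/p}.
\end{equation*}
This is the heart of the argument: the pointwise size picture from Step 1 shows that at radius $r = |x|$ the square sum is dominated by the single frequency $\nu^\ast(r) = \max\{j, \log_2(1/r)\}$, the remaining terms contributing only a geometrically small amount thanks to the stationary-phase decay; integrating $|(\varphi_{\nu^\ast(r)} \widehat{f})^\vee(x)|^p$ in $r$ and unfolding the change of variables $u \asymp 1/r$ produces the claimed integral. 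Once this identity is established, converting the outer sum over $j$ to an integral in $t = 2^j$ by means of the GM property of $F_0$, together with the Hardy-type inequalities from Section \ref{Preliminaries} (needed in particular at the endpoint $b + 1/q = 0$), yields (\ref{3.3.3}) and (\ref{3.3.4}).
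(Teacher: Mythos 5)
Your proposal takes a genuinely different route from the paper's. The paper never passes through a Littlewood--Paley decomposition at all: it invokes the two-sided formula from \cite{GorbachevTikhonov},
\begin{equation*}
\omega^p_{k}(f,t)_p \asymp t^{kp}\int_0^{1/t} u^{kp+dp-d-1}F_0^p(u)\,du+\int_{1/t}^\infty u^{dp-d-1}F_0^p(u)\,du,
\end{equation*}
expressing the modulus of smoothness of a $\widehat{GM}^d$ function directly in terms of $F_0$, and then carries out Hardy-inequality bookkeeping in (\ref{norm1}). Your Step~1 (a continuous analogue of the periodic dyadic block estimate (\ref{1new-})) together with Step~2 is plausible and would reproduce (\ref{3.3.1})--(\ref{3.3.2}); this is a legitimate alternative to the paper's treatment of $s>0$.

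Step~3, however, has a real gap. The assertion that at radius $r$ the square sum $\sum_{\nu\geq j}|(\varphi_\nu\widehat f)^\vee(x)|^2$ is dominated by the single frequency $\nu^\ast(r)$ ``the remaining terms contributing only a geometrically small amount'' is false in general. Take $F_0\equiv c$ on $[1,2^N]$ with a sharp decrease thereafter; this is admissible ($F_0\in GM$, strictly positive, satisfying (\ref{3.4new})). Using your own pointwise bound $|(\varphi_\nu\widehat f)^\vee(x)|\lesssim 2^{\nu d}F_0(2^\nu)\min\{1,(2^\nu r)^{-(d+1)/2}\}$, for $r\in(2^{-N},2^{-j})$ the contribution from $\nu\in(\nu^\ast(r),N]$ is $\asymp c^2 r^{-(d+1)}\sum_{\nu^\ast<\nu\le N}2^{\nu(d-1)}\asymp c^2\,2^{N(d-1)}r^{-(d+1)}$ for $d\geq2$ (and grows linearly in $N-\nu^\ast$ for $d=1$), which strictly dominates the single $\nu^\ast$ term $c^2 r^{-2d}$ since $r>2^{-N}$. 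The point is that the amplitude $2^{\nu d}F_0(2^\nu)$ can grow faster than the oscillatory decay $(2^\nu r)^{-(d+1)/2}$ shrinks whenever $F_0(2^\nu)$ decays slower than $2^{-\nu(d-1)/2}$, and the GM class permits this on any finite range of scales. The $L_p$ asymptotic you want for the square function is true --- it is a ``tail'' version of the Hardy--Littlewood estimate (\ref{HL}), encoded in the second term of the cited modulus-of-smoothness formula --- but it is an $\ell_2$-in-$\nu$ to $\ell_p$-in-$\nu$ interchange that needs a genuine argument; in the periodic case the paper proves the analogue (\ref{3.19}) via the Askey--Wainger-type Lemma~\ref{Lemma 3.5}, not via pointwise single-scale dominance. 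Two smaller points: Lemma~\ref{lem: CDT} requires $b>-1/q$, so the range $b\le -1/q$ allowed by the theorem (where both sides of (\ref{3.3.3}), (\ref{3.3.4}) reduce to $\|f\|_{L_p}$) must be treated separately; and the pointwise lower bound on the ball $|x|\lesssim2^{-\nu}$ by itself only yields $\|(\sum_{\nu\geq j}\cdots)^{1/2}\|_{L_p}\gtrsim\sup_{\nu\geq j}2^{\nu(d-d/p)}F_0(2^\nu)$, not the required $\ell_p$ sum over $\nu$.
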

\begin{rem}\label{RemarkContinuous}
(i) If the right-hand sides in (\ref{3.3.1}), (\ref{3.3.2}), (\ref{3.3.3}), and (\ref{3.3.4}) are finite, then
\begin{equation*}
	\left(\int_0^\infty t^{d p - d - 1} F_0^p(t) dt\right)^{1/p} < \infty,
\end{equation*}
and, subsequently, $f \in L_p(\mathbb{R}^d)$.
This fact follows from the Hardy-Littlewood-type estimate for functions $f \in \widehat{GM}^d$ obtained in \cite[Theorem 1]{GorbachevLiflyandTikhonov} (see also \cite[(4.10)]{GorbachevTikhonov}) which asserts that
\begin{equation}\label{HL}
	\|f\|_{L_p(\mathbb{R}^d)} \asymp \left(\int_0^\infty t^{d p - d - 1} F_0^p(t) dt\right)^{1/p},\qquad  \frac{2d}{d+1} < p < \infty.
\end{equation}
 For the one-dimensional case and monotone functions, see \cite{Tit, Boas, Sagher}.

(ii) The interesting cases in (\ref{3.3.3}) and (\ref{3.3.4}) are $b \geq -1/q$ and $b > 0$, respectively. Otherwise, it is readily seen that both sides in (\ref{3.3.3}) and (\ref{3.3.4}) are equivalent to $\|f\|_{L_p(\mathbb{R}^d)}$ (see Section \ref{Section 2.2}).

(iii) Note that if $q=p$ then the right-hand side of (\ref{3.3.3}) is equivalent to
  \begin{equation}\label{3.3.3+}
 \left(\int_0^1 t^{dp - d -1} F_0^p(t) dt\right)^{1/p} + \left(\int_1^\infty t^{ d q -dq/p-1} (1 + \log t)^{(b+1/q) q} F_0^q(t) dt\right)^{1/q}, \end{equation}
 showing the natural jump for the logarithmic exponent when $s=0$, cf. (\ref{3.3.1}) and (\ref{BesovZero}).
 This is not the case when $q \neq p$.
   Indeed, if $q < p$ the example $F_0 (t) = t^{-d + d/p} (1 + |\log t|)^{-\beta}$ with $b + 1/p + 1/q < \beta < b + 2/q$ shows that (\ref{3.3.3}) holds but the second integral in (\ref{3.3.3+}) diverges.
    In the case $p < q$, one can take the function $F_0(t) = t^{-d + d/p} (1 + |\log t|)^{-\beta}$ with $\max\{b + 2/q, 1/p\} < \beta < b + 1/p + 1/q$.
\end{rem}

\begin{proof}[Proof of Theorem \ref{Theorem 3.2}]
        We shall use the following description of the modulus of smoothness in terms of the Fourier transform for functions in the $\widehat{GM}^d$ class (see \cite[Corollary 4.1 and
    (7.6)]{GorbachevTikhonov})
    \begin{equation*}
        \omega^p_{k}(f,t)_p \asymp t^{ k p} \int_0^{1/t} u^{ k p + dp - d
        -1} F_0^p(u) du + \int_{1/t}^\infty u^{dp - d -1} F^p_0(u)
        du
    \end{equation*}
    where $k \in \mathbb{N}$ if $d=1$ and $k$ is even if $d \geq 2$. Assume further that $k > s$. Then, we have
    \begin{align*}
        |f|_{\textbf{B}^{s,b}_{p,q}(\mathbb{R}^d)} 
        & \asymp \left(\int_0^1 \left[t^{k-s}(1 - \log t)^b  \left(\int_0^{1/t} u^{k p + dp -d-1} F_0^p(u) du\right)^{1/p}\right]^q
        \frac{dt}{t}\right)^{1/q} \\
        & \hspace{1cm}+ \left(\int_0^1 \left[t^{-s} (1 - \log t)^b \left(\int_{1/t}^\infty u^{d p -d-1} F_0^p(u) du\right)^{1/p}\right]^q
        \frac{dt}{t}\right)^{1/q} \\
        & \asymp \left(\int_0^1 t^{k p + dp -d-1} F_0^p(t)
        dt\right)^{1/p}
        \\& \hspace{1cm}
        + \left(\int_1^\infty \left[t^{s-k}(1 + \log t)^b \left(\int_1^t u^{k p + dp -d-1} F_0^p(u) du\right)^{1/p}\right]^q
        \frac{dt}{t}\right)^{1/q} \\
        & \hspace{1cm}+ \left(\int_1^\infty \left[t^s (1 + \log t)^b \left(\int_t^\infty u^{d p -d-1} F_0^p(u) du\right)^{1/p}\right]^q
        \frac{dt}{t}\right)^{1/q}\\
        & =: I_1 + I_2 + I_3.
    \end{align*}
    First, we point out that
$\displaystyle
        I_1 \leq \left(\int_0^1 t^{dp -d-1} F_0^p(t)
        dt\right)^{1/p}.$
    Next we show that
    \begin{equation}\label{3.7new}
     \quad   I_2 \lesssim \left(\int_{1/2}^1 t^{dp-d-1} F^p_0(t)
        dt\right)^{1/p} + \left(\int_1^\infty (t^{s+d-d/p} (1 + \log t)^b F_0(t))^q
        \frac{dt}{t}\right)^{1/q}.
    \end{equation}
    Indeed, if $q \geq p$ then we can apply Hardy's inequality (see (\ref{HardyInequal1}) and Remark \ref{RemarkHardy}(i)) to derive
    \begin{equation*}
        I_2  \lesssim \left(\int_1^\infty (t^{s+d-d/p} (1 + \log t)^b F_0(t))^q
        \frac{dt}{t}\right)^{1/q}.
    \end{equation*}
    If $q < p$, for a given $ t > 1$, we find  $j_0 \in \mathbb{N}_0$ such that $2^{j_0} \leq t < 2^{j_0 + 1}$ and then, by using the monotonicity property (\ref{3.2}) of $GM$ functions, we obtain
    \begin{equation}
\begin{aligned}
        \left(\int_1^t u^{ k p + d p - d -1} F^p_0(u)
        du\right)^{1/p} &\lesssim \left(\sum_{j=0}^{j_0} 2^{j( k + d -d/p) p}
        F^p_0(2^j)\right)^{1/p}  \\
        &\hspace{-4.5cm}
         \leq \left(\sum_{j=0}^{j_0} 2^{j( k + d -d/p) q}
        F^q_0(2^j)\right)^{1/q} 
        \lesssim \left(\int_{1/2}^t u^{ k q + dq -dq/p-1} F^q_0(u)
        du\right)^{1/q}. \label{newestimate}
   \end{aligned}
\end{equation}
    Applying this latter estimate, H\"older's inequality  and changing the order of
    integration, we have
    \begin{align*}
        I_2 & \lesssim \left(\int_1^\infty t^{sq- k q} (1 + \log t)^{b q} \int_{1/2}^t u^{ k q + dq -dq/p} F^q_0(u)
        \frac{du}{u} \frac{dt}{t} \right)^{1/q} \\
        & \asymp \left(\int_{1/2}^1 u^{ k q + dq -dq/p} F^q_0(u)
        \frac{du}{u} \right)^{1/q} \\
        & \hspace{1cm}+ \left(\int_1^\infty t^{sq- k q} (1 + \log t)^{bq} \int_1^t u^{  k q+ dq -dq/p} F^q_0(u) \frac{du}{u}
        \frac{dt}{t}\right)^{1/q} \\
        & \lesssim \left(\int_{1/2}^1 u^{dp-d-1} F^p_0(u)
        du\right)^{1/p} + \left(\int_1^\infty (u^{s+ d-d/p} (1 + \log u)^b
        F_0(u))^q
        \frac{du}{u}\right)^{1/q}
    \end{align*}
    and so, (\ref{3.7new}) follows.

   Further,  H\"older's inequality yields
    \begin{align}
        \int_t^\infty \frac{F_0(u)}{u} du
         \lesssim t^{-d + d/p} \left(\int_t^\infty F_0^p(u) u^{dp-d-1}
        du\right)^{1/p}. \label{3.7}
    \end{align}
   This and (\ref{3.3}) imply
    \begin{align}
        \left(\int_1^\infty (t^{s+d-d/p} (1 + \log t)^b F_0(t))^q
        \frac{dt}{t}\right)^{1/q} \nonumber \\
        & \hspace{-5cm}\lesssim \left(\int_1^\infty \left(t^{s+d-d/p} (1 + \log t)^b \int_{t/c}^\infty \frac{F_0(u)}{u} du\right)^q
        \frac{dt}{t}\right)^{1/q} \nonumber \\
        & \hspace{-5cm}\lesssim \left(\int_1^\infty \left[t^s (1 + \log t)^b \left(\int_{t/c}^\infty F_0^p(u) u^{dp-d-1}
        du\right)^{1/p}\right]^q \frac{dt}{t}\right)^{1/q} \nonumber \\
        & \hspace{-5cm}\lesssim \left(\int_0^\infty t^{dp -d-1} F_0^p(t)
        dt\right)^{1/p} + I_3. \label{Section6:new}
    \end{align}
    Thus, taking into account  (\ref{3.7new}), we arrive at
    \begin{equation*}
        I_2 \lesssim \left(\int_0^\infty t^{dp -d-1} F_0^p(t)
        dt\right)^{1/p} + I_3.
    \end{equation*}
In light of (\ref{HL}), we have that, for any $s\ge 0$,
    \begin{align}
        \|f\|_{\textbf{B}^{s,b}_{p,q}(\mathbb{R}^d)} & = \|f\|_{L_p(\mathbb{R}^d)} +
        |f|_{\textbf{B}^{s,b}_{p,q}(\mathbb{R}^d)} \nonumber\\
        & \asymp \left(\int_0^\infty t^{d p -d-1} F_0^p(t)
        dt\right)^{1/p} + I_3 \nonumber\\
        & \asymp \left(\int_0^1 t^{d p -d-1} F_0^p(t)
        dt\right)^{1/p} + I_3, \label{newestimate2}
    \end{align}
which is the  desired characterization for  $s=0$ (cf. (\ref{3.3.3})).

    Suppose that $s > 0$. Let us verify the following estimate
    \begin{equation}\label{3.9new}\;
        I_3 \lesssim  \left(\int_0^1 t^{d p -d-1} F_0^p(t)
        dt\right)^{1/p} + \left(\int_1^\infty t^{sq + d q -dq/p-1} (1 + \log t)^{b q} F_0^q(t)
        dt\right)^{1/q}.
    \end{equation}
    If $q \geq p$, it follows from Hardy's inequality (cf. (\ref{HardyInequal2}) and Remark \ref{RemarkHardy}(i)) that
    \begin{equation*}
        I_3 \lesssim \left(\int_1^\infty t^{sq+dq-dq/p-1} (1 + \log t)^{bq} F^q_0(t)
        dt\right)^{1/q}
    \end{equation*}
    which obviously implies (\ref{3.9new}).
    If $q < p$, there exists $c>1$ such that
    \begin{equation}\label{ProofBesovGM}
        \left(\int_t^\infty u^{dp-d-1} F^p_0(u) du\right)^{1/p}
        \lesssim \left(\int_{t/c}^\infty u^{dq -dq/p-1} F^q_0(u)
        du\right)^{1/q}.
    \end{equation}
    This can be shown
    applying a similar argument we used to obtain (\ref{newestimate}).
    This implies, together with Fubini's theorem and H\"older's inequality, that
    \begin{align*}
        I_3 & \lesssim \left(\int_1^\infty t^{sq} (1 + \log t)^{bq} \int_{t/c}^\infty u^{dq-dq/p-1} F^q_0(u) du
        \frac{dt}{t}\right)^{1/q} \\
        & \lesssim \left(\int_{1/c}^\infty u^{sq+dq-dq/p-1} (1 + \log cu)^{bq} F^q_0(u)
        du\right)^{1/q} \\
        & \lesssim \left(\int_0^1 u^{sq+dq-dq/p-1} 
        F^q_0(u)
        du\right)^{1/q} \\
        & \hspace{1cm}+ \left(\int_1^\infty u^{sq+dq-dq/p-1} (1 + \log u)^{bq} F^q_0(u)
        du\right)^{1/q} \\
        & \lesssim \left(\int_0^1 u^{d p- d-1} F^p_0(u) du\right)^{1/p} \\
        & \hspace{1cm}+ \left(\int_1^\infty u^{sq+dq-dq/p-1} (1 + \log u)^{bq} F^q_0(u)
        du\right)^{1/q} .
    \end{align*}
    Hence, (\ref{3.9new}) also holds in this case.

Combining (\ref{newestimate2}) and (\ref{3.9new}), we obtain
   \begin{equation*}
   	  \|f\|_{\textbf{B}^{s,b}_{p,q}(\mathbb{R}^d)} \lesssim \left(\int_0^1 t^{d p -d-1} F_0^p(t)
        dt\right)^{1/p} + \left(\int_1^\infty t^{sq+dq-dq/p-1} (1 + \log t)^{bq} F^q_0(t)
        dt\right)^{1/q}.
   \end{equation*}

   Finally, the converse estimate follows from (\ref{Section6:new}) and (\ref{newestimate2}).
\end{proof}

\begin{rem}\label{Remark 3.3}
    Let us consider a slightly wider  class (\cite{GorbachevLiflyandTikhonov, LiflyandTikhonov}) of general monotone functions $\varphi(z)$  which are locally of bounded variation, vanishes at
    infinity, and there is a constant $c > 1$ depending on $\varphi$
    such that
    \begin{equation}\label{3.12}
        \int_z^\infty |d \varphi(u)| \lesssim \int_{z/c}^\infty
        \frac{|\varphi(u)|}{u} du < \infty,\qquad  z >0.
    \end{equation}
Noting that such  functions  satisfy the monotonicity condition (\ref{3.3}), we claim that  Theorem \ref{Theorem 3.2}  holds in the case $q \geq p$
 if we consider the  class formed
    by all
    radial functions $f(x) = f_0(|x|), x \in \mathbb{R}^d,$ with the Fourier transform $F_0 = \widehat{f} \geq 0$
    satisfying
    conditions (\ref{3.12}) and (\ref{3.4new}).
    In the particular case  $s > 0$ and $b=0$, this was
    proved in \cite[Theorem 7.3]{GorbachevTikhonov}.
\end{rem}

\subsection{Characterization of spaces $B^{s,b}_{p,q}(\mathbb{R}^d)$}

The goal of this section is to obtain the counterpart of Theorem \ref{Theorem 3.2} for spaces $B^{s,b}_{p,q}(\mathbb{R}^d)$, that is, the description of functions from $B^{s,b}_{p,q}(\mathbb{R}^d)$ in terms of the growth properties of their Fourier transforms. If $s > 0$ this follows from Theorem \ref{Theorem 3.2} and the fact that  Besov spaces $\mathbf{B}^{s,b}_{p,q}(\mathbb{R}^d)$ and $B^{s,b}_{p,q}(\mathbb{R}^d)$ coincide (cf. (\ref{BesovComparison})). However, one can not apply Theorem \ref{Theorem 3.2} to characterize $B^{s,b}_{p,q}(\mathbb{R}^d)$ with $s \leq 0$. We are able to overcome this obstacle  with the help of the lifting property of the Besov spaces $B^{s,b}_{p,q}(\mathbb{R}^d)$. Let us  recall this property. For further details, we refer to \cite[Theorem 2.3.8]{Triebel1} for the case $b=0$ and to \cite[Proposition 1.8]{Moura} for the general case  $b \in \mathbb{R}$.

For $\sigma \in \mathbb{R}$, the lifting operator is defined by\index{\bigskip\textbf{Operators}!$I_\sigma$}\label{LIFT}
\begin{equation}\label{liftdef}
	I_\sigma f = ((1 + |\xi|^2)^{\sigma/2} \widehat{f})^\vee, \qquad f \in \mathcal{S}'(\mathbb{R}^d).
\end{equation}

\begin{lem}\label{LemmaLift}
	Let $1 \leq p \leq \infty, 0 < q \leq \infty$, and $s, b, \sigma \in \mathbb{R}$. Then, $I_\sigma$ maps $B^{s,b}_{p,q}(\mathbb{R}^d)$ isomorphically onto $B^{s-\sigma,b}_{p,q}(\mathbb{R}^d)$, and
	\begin{equation}\label{lift}
		\|I_\sigma f\|_{B^{s-\sigma,b}_{p,q}(\mathbb{R}^d)} \asymp \|f\|_{B^{s,b}_{p,q}(\mathbb{R}^d)}.
	\end{equation}
\end{lem}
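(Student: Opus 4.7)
The plan is to read off the lifting property directly from the Fourier-analytic definition of $B^{s,b}_{p,q}(\mathbb{R}^d)$, by showing that the Fourier multiplier $\psi_\sigma(\xi):=(1+|\xi|^2)^{\sigma/2}$ behaves on each dyadic Littlewood--Paley block as multiplication by $2^{j\sigma}$, up to a uniformly bounded operator. Once this is established, the estimate $\|I_\sigma f\|_{B^{s-\sigma,b}_{p,q}} \lesssim \|f\|_{B^{s,b}_{p,q}}$ is immediate from the definition, and the reverse inequality follows by applying the same bound to $I_{-\sigma}$, since $I_\sigma I_{-\sigma} = \mathrm{id}$ on $\mathcal{S}'(\mathbb{R}^d)$.

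The key technical step is the pointwise multiplier estimate
\begin{equation*}
\|(\varphi_j\,\widehat{I_\sigma f})^\vee\|_{L_p(\mathbb{R}^d)} \lesssim 2^{j\sigma}\!\!\sum_{\substack{|k-j|\le 1\\ k\ge 0}}\|(\varphi_k\,\widehat f)^\vee\|_{L_p(\mathbb{R}^d)},\qquad j\in\mathbb{N}_0,
\end{equation*}
with constant independent of $j$. To prove it, introduce $\tilde\varphi_j=\varphi_{j-1}+\varphi_j+\varphi_{j+1}$ (with $\varphi_{-1}\equiv 0$), so that $\tilde\varphi_j\equiv 1$ on $\operatorname{supp}\varphi_j$. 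Then
\begin{equation*}
(\varphi_j\,\widehat{I_\sigma f})^\vee = (\varphi_j\,\psi_\sigma\,\tilde\varphi_j\,\widehat f)^\vee = (\varphi_j\,\psi_\sigma)^\vee \ast (\tilde\varphi_j\,\widehat f)^\vee,
\end{equation*}
so by Young's inequality (valid for $1\le p\le\infty$) it suffices to prove the uniform bound $\|(\varphi_j\,\psi_\sigma)^\vee\|_{L_1(\mathbb{R}^d)}\lesssim 2^{j\sigma}$. For $j=0$ the symbol $\varphi_0\,\psi_\sigma$ is a Schwartz function, so its inverse Fourier transform is integrable. For $j\ge 1$, write $\varphi_j(\xi)=\varphi(2^{-j}\xi)$ for a fixed annular bump $\varphi$ and substitute $\eta=2^{-j}\xi$: a short computation gives $(\varphi_j\,\psi_\sigma)^\vee(x)=2^{jd}\,g_j(2^j x)$ where $g_j(\eta)^\wedge = \varphi(\eta)(1+4^j|\eta|^2)^{\sigma/2}$. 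On the support of $\varphi$ the factor $(4^{-j}+|\eta|^2)^{\sigma/2}$ together with all its derivatives is bounded uniformly in $j$, hence $g_j$ lies in a bounded subset of Schwartz-class functions multiplied by $2^{j\sigma}$; after the change of variables its $L_1$-norm equals $\|g_j\|_{L_1}\lesssim 2^{j\sigma}$.

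With this estimate in hand, the lifting bound follows by summation: using quasi-triangle inequality in $\ell_q$ (accounting for $q<1$) and the obvious fact that shifting the index by one unit does not change the value of the Besov quasi-norm, one gets
\begin{equation*}
\|I_\sigma f\|^q_{B^{s-\sigma,b}_{p,q}(\mathbb{R}^d)} \lesssim \sum_{j=0}^\infty \bigl(2^{j(s-\sigma)}(1+j)^b 2^{j\sigma}\|(\tilde\varphi_j\,\widehat f)^\vee\|_{L_p}\bigr)^q \lesssim \|f\|^q_{B^{s,b}_{p,q}(\mathbb{R}^d)},
\end{equation*}
with the obvious modification for $q=\infty$. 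Interchanging the roles of $\sigma$ and $-\sigma$ yields the opposite inequality, proving the equivalence \eqref{lift} and the isomorphism claim.

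The main obstacle is essentially bookkeeping: verifying that the $L_1$-norm of the Fourier preimage of the localized symbol $\varphi_j\,\psi_\sigma$ scales precisely like $2^{j\sigma}$, uniformly in $j$ and for every $\sigma\in\mathbb{R}$ (including negative $\sigma$, where one must exploit that $\operatorname{supp}\varphi_j$ is bounded away from the origin for $j\ge 1$ so that $\psi_\sigma$ stays smooth there). Everything else is standard dyadic summation and the trivial algebraic identity $I_\sigma\circ I_{-\sigma}=\mathrm{id}$, which gives both directions of the isomorphism at once.
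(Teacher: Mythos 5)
Your proof is correct. The paper itself does not give a proof of this lemma: it states it as a known result, citing \cite[Theorem 2.3.8]{Triebel1} for $b=0$ and \cite[Proposition 1.8]{Moura} for the general logarithmic case. The argument you give — showing that the symbol $(1+|\xi|^2)^{\sigma/2}$, localized to the $j$-th dyadic annulus and rescaled, lies in a fixed bounded set of smooth compactly supported functions up to the factor $2^{j\sigma}$, so that its inverse Fourier transform has $L_1$-norm $\lesssim 2^{j\sigma}$ uniformly, and then invoking Young's inequality and the identity $I_\sigma\circ I_{-\sigma}=\mathrm{id}$ — is the standard one and is essentially what one finds in those references. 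You have handled the potentially delicate points correctly: the case $j=0$ (where $\varphi_0\psi_\sigma$ is globally smooth and compactly supported, hence Schwartz), the case of negative $\sigma$ (where the annular support keeps the symbol away from the origin), the use of the quasi-triangle inequality to accommodate $q<1$, and the range $1\le p\le\infty$ for Young's inequality. The only cosmetic remark is that you could note explicitly that the uniform $L_1$-bound on $\tilde g_j^\vee$ follows from a standard integration-by-parts estimate $\|(1+|\cdot|^2)^{N}\tilde g_j^\vee\|_{L_\infty}\lesssim\sum_{|\alpha|\le 2N}\|\partial^\alpha \tilde g_j\|_{L_1}$ with $N>d/2$, but the idea is plainly there. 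In short: a correct, self-contained proof of a result the paper only cites.
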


\begin{thm}\label{Theorem 3.6}
	Let $\frac{2d}{d+1} < p < \infty, 0 < q \leq \infty$, and $s,b \in \mathbb{R}$. Assume that $f \in \widehat{GM}^d$. Then
	\begin{equation}\label{BesovGM}
		\|f\|_{B^{s,b}_{p,q}(\mathbb{R}^d)} \asymp \left(\int_0^1 t^{d p-d - 1} F_0^p(t) dt\right)^{1/p} + \left(\int_1^\infty t^{s q +  dq -d q/p - 1} (1 + \log t)^{b q} F_0^q(t) dt\right)^{1/q}.
	\end{equation}
\end{thm}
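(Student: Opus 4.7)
The plan is to split the proof according to the sign of $s$ and, for the non-positive case, reduce to the case $s>0$ via lifting.

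First, when $s>0$, the statement is immediate. By the comparison formula (\ref{BesovComparison}), $B^{s,b}_{p,q}(\mathbb{R}^d)=\mathbf{B}^{s,b}_{p,q}(\mathbb{R}^d)$ with equivalence of quasi-norms, so we may apply Theorem \ref{Theorem 3.2}: formulas (\ref{3.3.1}) and (\ref{3.3.2}) (the latter handling the case $q=\infty$) coincide verbatim with the right-hand side of (\ref{BesovGM}).

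For general $s\in\mathbb{R}$, I would use the lifting property of Lemma \ref{LemmaLift}. Fix $\sigma<s$, so that $s-\sigma>0$. Since $f$ is radial, so is $I_\sigma f$, and its radial Fourier component is $G_0(t):=(1+t^2)^{\sigma/2}F_0(t)$. The next step is to verify that $I_\sigma f\in\widehat{GM}^d$. The function $t\mapsto(1+t^2)^{\sigma/2}$ is positive, locally of bounded variation, and satisfies $\int_z^{2z}|d(1+u^2)^{\sigma/2}|\lesssim(1+z^2)^{\sigma/2}$ by direct calculation, so it belongs to $GM$; hence, by the multiplier property (Lemma \ref{Lemma 3.1}), $G_0\in GM$ and is positive. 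Choosing $\sigma<0$ (which is always possible when $s\le 0$), $(1+t^2)^{\sigma/2}\le 1$ makes the verification of the integrability requirement (\ref{3.4new}) for $G_0$ straightforward from the corresponding property for $F_0$, using in addition the elementary bound $|d((1+u^2)^{\sigma/2})|\lesssim u^{\sigma-1}du$ on $(1,\infty)$ together with pointwise control of $F_0$ derived from (\ref{3.3}).

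Once $I_\sigma f\in\widehat{GM}^d$ is established, applying the case $s-\sigma>0$ already proved gives
\begin{equation*}
\|I_\sigma f\|_{B^{s-\sigma,b}_{p,q}(\mathbb{R}^d)}\asymp\left(\int_0^1 t^{dp-d-1}G_0^p(t)\,dt\right)^{1/p}+\left(\int_1^\infty t^{(s-\sigma)q+dq-dq/p-1}(1+\log t)^{bq}G_0^q(t)\,dt\right)^{1/q}.
\end{equation*}
On $(0,1)$ we have $(1+t^2)^{\sigma/2}\asymp 1$, so the first term matches the first term in (\ref{BesovGM}); on $(1,\infty)$, $(1+t^2)^{\sigma/2}\asymp t^\sigma$, and absorbing the factor $t^{\sigma q}$ into the weight converts the exponent $(s-\sigma)q$ into $sq$, producing the second term in (\ref{BesovGM}). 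Combining this equivalence with (\ref{lift}), namely $\|f\|_{B^{s,b}_{p,q}(\mathbb{R}^d)}\asymp\|I_\sigma f\|_{B^{s-\sigma,b}_{p,q}(\mathbb{R}^d)}$, completes the proof.

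The main technical obstacle is the verification that $I_\sigma f\in\widehat{GM}^d$, since (\ref{3.4new}) is a genuine convergence condition not automatically preserved under multiplication by $(1+t^2)^{\sigma/2}$. Restricting to $\sigma<0$ (sufficient for our purposes, as we only need $s-\sigma>0$) circumvents this, since then $(1+t^2)^{\sigma/2}$ is bounded and decreasing, and the $GM$-multiplier lemma does the rest.
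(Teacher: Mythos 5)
Your proof is correct and follows essentially the same route as the paper's: reduce to the positive-smoothness case via the lifting operator $I_\sigma$ with $\sigma<s$ (automatically $\sigma<0$ when $s\le 0$), apply (\ref{BesovComparison}) and Theorem~\ref{Theorem 3.2} to $I_\sigma f$, and use the $GM$-multiplier property (Lemma~\ref{Lemma 3.1}). The one place where you go beyond what the paper records is the explicit check that $I_\sigma f$ satisfies the integrability condition (\ref{3.4new}); the paper leaves this implicit (it cites only Lemma~\ref{Lemma 3.1} and (\ref{3.3.1})), and your observation that restricting to $\sigma<0$ makes $(1+t^2)^{\sigma/2}$ bounded and decreasing—so that both terms of (\ref{3.4new}) for $G_0$ are controlled by those for $F_0$, together with the decay $F_0(z)\lesssim z^{-(d-1)/2}$ that (\ref{3.4new}) itself provides—is exactly the right way to fill that small gap.
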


\begin{rem}
	The characterizations of $\mathbf{B}^{0,b}_{p,q}(\mathbb{R}^d)$ and $B^{0,b}_{p,q}(\mathbb{R}^d)$ given by (\ref{3.3.3}) and (\ref{BesovGM}), respectively, are of a different kind. In particular, for the function $F_0(t) = t^{-d + d/p} (1 + |\log t|)^{\beta}$
with $\max\{1/p,b+1/q\} < \beta \leq b + 1/q + 1/p$,  the right-hand sides of  (\ref{3.3.3}) and (\ref{BesovGM}) are finite and infinite, respectively.
 Further relationships between these two kinds of Besov spaces of smoothness near zero for $GM$ functions will be given in Subsection \ref{Embeddings BB} below.
\end{rem}

\begin{proof}[Proof of Theorem \ref{Theorem 3.6}]
	If $s > 0$ the characterization (\ref{BesovGM}) follows from (\ref{3.3.1}). Then, it remains to show (\ref{BesovGM}) with $s \leq 0$. Let $\sigma < s$. Applying the lifting property (\ref{lift}) and (\ref{BesovComparison}), we get
	\begin{equation*}
		\|f\|_{B^{s,b}_{p,q}(\mathbb{R}^d)} \asymp \|I_{\sigma} f\|_{B^{s-\sigma,b}_{p,q}(\mathbb{R}^d)} \asymp \|I_{\sigma} f\|_{\mathbf{B}^{s-\sigma,b}_{p,q}(\mathbb{R}^d)}
	\end{equation*}
	and then, the result can be obtained by using Lemma \ref{Lemma 3.1} and (\ref{3.3.1}).
\end{proof}

\subsection{Characterization of the space $H^{s,b}_p(\mathbb{R}^d)$}
In this subsection, we give a complete  description of
$\widehat{GM}^d$-functions from the Sobolev  space
$H^{s,b}_p(\mathbb{R}^d)$ in terms of decay properties of their Fourier transforms.

\begin{thm}\label{Theorem 3.9}
    Let $\frac{2d}{d+1} < p < \infty$, and $s,b \in \mathbb{R}$. For $f \in
    \widehat{GM}^d$, we have
    \begin{equation}\label{W}
        \|f\|_{H^{s,b}_p(\mathbb{R}^d)} \asymp \left(\int_0^1 t^{dp - d -1} F_0^p(t) dt\right)^{1/p} + \left(\int_1^\infty t^{s p + d p - d - 1} (1 + \log  t)^{b p}
        F_0^p(t) dt \right)^{1/p}.
    \end{equation}
\end{thm}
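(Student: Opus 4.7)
The plan is to reduce the Sobolev characterization to the Hardy–Littlewood-type estimate (\ref{HL}) already established for $\widehat{GM}^d$ functions. By the definition (\ref{def:SobolevSpace}), $\|f\|_{H^{s,b}_p(\mathbb{R}^d)} = \|g\|_{L_p(\mathbb{R}^d)}$, where $g$ is the tempered distribution whose Fourier transform is $\widehat{g}(\xi) = G_0(|\xi|)$ with
\begin{equation*}
G_0(t) = \mu(t)\, F_0(t), \qquad \mu(t) := (1+t^2)^{s/2}\bigl(1+\log(1+t^2)\bigr)^b.
\end{equation*}
So the strategy reduces to showing that $g \in \widehat{GM}^d$ and then applying (\ref{HL}) to $g$.

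First I would verify that $\mu \in GM$. Since $\mu$ is smooth on $(0,\infty)$, we have $|d\mu(u)| = |\mu'(u)|\, du$, and a direct computation gives
\begin{equation*}
\mu'(t) = t(1+t^2)^{s/2-1}\bigl[s(1+\log(1+t^2))^b + 2b(1+\log(1+t^2))^{b-1}\bigr].
\end{equation*}
On $(0,1]$ one has $\mu(t) \asymp 1$ and $|\mu'(t)| = O(t)$, so $\int_t^{2t}|\mu'(u)|\,du \lesssim t \lesssim \mu(t)$. On $[1,\infty)$ one has $\mu(t) \asymp t^s(1+\log t)^b$ and $|\mu'(t)| \asymp t^{s-1}(1+\log t)^b$ (with the obvious logarithmic correction when $s=0$), giving $\int_t^{2t}|\mu'(u)|\,du \asymp \mu(t)$. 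Hence $\mu \in GM$, and Lemma \ref{Lemma 3.1} yields $G_0 = \mu F_0 \in GM$; clearly $G_0 > 0$.

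Next, the integrability condition (\ref{3.4new}) for $G_0$ at the origin is immediate from $\mu \asymp 1$ on $(0,1]$ and the assumption that $F_0$ satisfies (\ref{3.4new}); at infinity, I would use the $GM$-bound (\ref{3.1}) together with the asymptotics of $\mu$ (or, if preferred, argue that if one side of (\ref{W}) is finite then so is the other, so the statement is nontrivial only when $G_0$ already enjoys sufficient decay). Consequently $g \in \widehat{GM}^d$, and (\ref{HL}) gives
\begin{equation*}
\|f\|_{H^{s,b}_p(\mathbb{R}^d)} = \|g\|_{L_p(\mathbb{R}^d)} \asymp \Bigl(\int_0^\infty t^{dp-d-1} G_0^p(t)\, dt\Bigr)^{1/p}.
\end{equation*}
Splitting this integral at $t=1$ and inserting the asymptotics $\mu(t) \asymp 1$ on $(0,1]$ and $\mu(t) \asymp t^s(1+\log t)^b$ on $[1,\infty)$ produces the two-term expression on the right-hand side of (\ref{W}).

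The routine parts are the verification that $\mu \in GM$ and the splitting of the integral. The main obstacle is the rigorous verification that $g \in \widehat{GM}^d$, specifically condition (\ref{3.4new}) for $G_0$ at infinity, which in principle requires combining the $GM$-property (and hence (\ref{3.3})) for both $\mu$ and $F_0$ with a standard approximation argument when the right-hand side of (\ref{W}) is finite.
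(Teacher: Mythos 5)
Your proof is correct and follows essentially the same route as the paper: both apply Lemma \ref{Lemma 3.1} to conclude that $G_0 = \mu F_0$ is general monotone and then invoke the Hardy--Littlewood estimate (\ref{HL}) before splitting the integral at $t=1$. The paper's own argument is in fact terser than yours---it does not explicitly verify that $\mu\in GM$ nor that $G_0$ satisfies (\ref{3.4new}), so your added care about the integrability condition at infinity is a point the paper itself glosses over.
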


\begin{proof}
Taking into account Lemma \ref{Lemma 3.1} and using (\ref{HL}), we obtain
      \begin{align*}
        \|f\|^p_{H^{s,b}_p(\mathbb{R}^d)} & \asymp \int_0^\infty t^{d p -
        d-1} (1 + t^2)^{s p/2} (1 + \log (1 + t^2))^{b p} F^p_0(t)
        dt \\
        & \asymp \int_0^1 t^{d p - d-1} F^p_0(t) dt
        + \int_1^\infty t^{s p + d p -d-1} (1 + \log t)^{b p}
        F^p_0(t) dt.
    \end{align*}

\end{proof}

\begin{rem}
	According to \cite[Theorem 4.4 and Corollary 4.6(iii)]{CaetanoLeopold}, we have
	\begin{equation}\label{CL}
		  H^{s,b}_p(\mathbb{R}^d) \hookrightarrow L_p(\mathbb{R}^d) \iff
    \left\{\begin{array}{lcl}
                            s > 0 & \text{ and }  & b \in \mathbb{R}, \\
                            & & \\
                            s=0 & \text{ and } & b \geq 0.
            \end{array}
            \right.
	\end{equation}
In fact,
 using Theorem \ref{Theorem 3.9},
we can easily obtain, to a certain extent, a sharper result
\begin{equation}\label{CL++}	\widehat{GM}^d \cap H^{s,b}_p(\mathbb{R}^d) \hookrightarrow L_p(\mathbb{R}^d) \iff
    \left\{\begin{array}{lcl}
                            s > 0 & \text{ and }  & b \in \mathbb{R}, \\
                            & & \\
                            s=0 & \text{ and } & b \geq 0,
            \end{array}
            \right.
	\end{equation}
which, in particular, implies the only-if part in (\ref{CL}).
To show (\ref{CL++}), assume that $s=0$ and $b < 0$. Set
		\begin{equation*}
    g(t) = \left\{\begin{array}{lcl}
                            (1 - \log t)^{-\beta_0} & ,  & 0 < t  < 1, \\
                            & & \\
                            (1 + \log t)^{-\beta_1} & , & t \geq 1,
            \end{array}
            \right.
	\end{equation*}
	where $\beta_0 > 1/p$ and $b+1/p < \beta_1 \leq 1/p$. Then, by Theorem \ref{Theorem 3.9} and (\ref{HL}) we have that the function $f(x) = f_0(|x|)$ given by $F_0(t) = t^{-d + d/p} g(t)$ via (\ref{3.4new+}) satisfies that $f \in H^{0,b}_p(\mathbb{R}^d)$ but $f \not \in L_p(\mathbb{R}^d)$.	 
	If $s < 0$, then the function $F_0(t) = t^{-d + d/p + \varepsilon}$ with $0 < \varepsilon < -s$ allows us to construct the desired counterexample showing that $H^{s,b}_p(\mathbb{R}^d) \not \hookrightarrow L_p(\mathbb{R}^d)$.
		
Conversely, we have that $\widehat{GM}^d \cap H^{s,b}_p(\mathbb{R}^d) \hookrightarrow L_p(\mathbb{R}^d)$ under one of the conditions given in the right-hand side of (\ref{CL}). Indeed, since for any $f\in \widehat{GM}^d \cap H^{s,b}_p(\mathbb{R}^d)$  the right-hand side of (\ref{W}) is finite, we have that
$\displaystyle
		\left(\int_0^\infty t^{dp - d -1} F_0^p(t) dt\right)^{1/p} < \infty,
	$ 
	or, equivalently, $f \in L_p(\mathbb{R}^d)$ (see (\ref{HL})).
\end{rem}

\subsection{Embeddings of $B^{0,b}_{p,q}(\mathbb{R}^d)$ in $L_p(\mathbb{R}^d)$}

If $s > 0$ then it is clear that $B^{s,b}_{p,q}(\mathbb{R}^d)$ is continuously embedded into $L_p(\mathbb{R}^d)$. In the limiting case when $s=0$, such an embedding holds under certain assumptions on the parameters. Namely, it was shown in \cite[Theorem 4.3 and Corollary 4.6(i)]{CaetanoLeopold} (with the forerunner  \cite[Theorem 3.3.2 and Corollary 3.3.1]{SickelTriebel}) that
\begin{equation}\label{Loc}
  B^{0,b}_{p,q}(\mathbb{R}^d) \hookrightarrow L_p(\mathbb{R}^d) \iff  \left\{\begin{array}{lcl}
                            b \geq 0 & \text{ if }  & 0 < q \leq
                            \min\{2,p\}, \\
                            & & \\
                            b > 1/p - 1/q & \text{ if } & 1 < p \leq
                            2 \text{ and } p < q \leq \infty, \\
                            & & \\
                            b > 1/2 -1/q & \text{ if } & 2 < p <
                            \infty \text{ and } 2 < q \leq \infty.
            \end{array}
            \right.
\end{equation}

Our following result improves the range of parameters given in (\ref{Loc}) for which $B^{0,b}_{p,q}(\mathbb{R}^d)$ is embedded into $L_p(\mathbb{R}^d)$ when we work with $GM$ functions.

\begin{thm}\label{TheoremEmbGM}
	Let $\frac{2d}{d+1} < p < \infty$, and $0 < q \leq \infty$. Then
	\begin{equation}\label{Lp}
	\widehat{GM}^d \cap B^{0,b}_{p,q}(\mathbb{R}^d) \hookrightarrow L_p(\mathbb{R}^d) \iff
    \left\{\begin{array}{lcl}
    				 b \geq 0 & \text{ if } &q \leq
                            p , \\

                            & & \\
                              b > 1/p - 1/q & \text{ if }  & p < q  .
            \end{array}
            \right.
	\end{equation}
	
\end{thm}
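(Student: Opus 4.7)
My plan is to combine the $\widehat{GM}^d$-characterization of $\|f\|_{B^{0,b}_{p,q}(\mathbb{R}^d)}$ furnished by Theorem \ref{Theorem 3.6} with the Hardy-Littlewood-type formula (\ref{HL}) for $\|f\|_{L_p(\mathbb{R}^d)}$. Since both expressions share the common term $\bigl(\int_0^1 t^{dp-d-1}F_0^p(t)\,dt\bigr)^{1/p}$, the embedding $\|f\|_{L_p(\mathbb{R}^d)}\lesssim\|f\|_{B^{0,b}_{p,q}(\mathbb{R}^d)}$ for $f\in\widehat{GM}^d$ is equivalent to the tail comparison
\[
\Bigl(\int_1^\infty t^{dp-d-1}F_0^p(t)\,dt\Bigr)^{1/p} \lesssim \Bigl(\int_1^\infty t^{dq-dq/p-1}(1+\log t)^{bq}F_0^q(t)\,dt\Bigr)^{1/q}.
\]
By the quasi-constancy (\ref{3.2}) of any $F_0\in GM$ on each dyadic block $[2^j,2^{j+1}]$, both integrals are comparable to $\sum_{j\ge 0}a_j^p$ and $\sum_{j\ge 0}(1+j)^{bq}a_j^q$ respectively, where $a_j:=2^{j(d-d/p)}F_0(2^j)$; the question thus becomes purely sequential.

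For the sufficient direction I split the argument into two cases. When $q\le p$ and $b\ge 0$, the bound $(1+j)^{bq}\ge 1$ forces $\sum_j a_j^q\le\sum_j(1+j)^{bq}a_j^q<\infty$, so $a_j\to 0$; once $a_j\le 1$ one has $a_j^p\le a_j^q$ and hence $\sum_j a_j^p<\infty$. When $q>p$ and $b>1/p-1/q$ I apply H\"older's inequality with the conjugate exponents $q/p$ and $q/(q-p)$ to the decomposition $a_j^p=[(1+j)^{bq}a_j^q]^{p/q}\cdot(1+j)^{-bp}$; the dual weight sum $\sum_j(1+j)^{-bpq/(q-p)}$ converges precisely when $b>1/p-1/q$, giving the required bound.

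For the necessity I test the embedding against $F_0(t)=t^{-d+d/p}(1+\log t)^{-\beta}$ for $t\ge 1$, smoothly extended on $(0,1]$ so that $F_0\in\widehat{GM}^d$ and the first term in the $B^{0,b}_{p,q}$-characterization is finite (the hypothesis $p>2d/(d+1)$ is used here to guarantee (\ref{3.4new})). For this $F_0$ the $L_p$-tail diverges iff $\beta\le 1/p$ while the $B^{0,b}_{p,q}$-tail converges iff $\beta>b+1/q$, so a counterexample is produced whenever $b<1/p-1/q$. At the borderline $b=1/p-1/q$ in the case $q>p$ I insert an iterated-logarithm correction $(\log\log t)^{-\delta}$ with $\delta\in(1/q,1/p]$, whose interval is non-empty precisely for $q>p$. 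The main obstacle I anticipate is producing counterexamples in the remaining subcase $q\le p$ with $b<0$ lying above $1/p-1/q$ (non-trivial only when $q<p$): here the power-logarithm family alone is inadequate, and a more delicate multi-scale refinement within $\widehat{GM}^d$ is needed, exploiting the fact that any $F_0\in GM$ with $F_0\ge 0$ has a multiplicatively-closed zero set by (\ref{3.2}), which sharply limits how sparse the test function can be.
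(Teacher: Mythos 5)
Your sufficiency argument is essentially the paper's: both reduce, via Theorem \ref{Theorem 3.6} and (\ref{HL}), to the weighted sequence inequality for the dyadic sequence $a_j = 2^{j(d-d/p)}F_0(2^j)$, and your case split ($\ell_q\hookrightarrow\ell_p$ for $q\le p$, H\"older for $q>p$) is a valid way to establish the tail estimate. Your necessity arguments for $p<q$ — the power-log family when $b<1/p-1/q$ and the iterated-log correction at the endpoint $b=1/p-1/q$ — also match the paper in substance. But the subcase you flag as the ``main obstacle'' (namely $q<p$ and $1/p-1/q\le b<0$) contains a genuine gap, and the fix you sketch is not the right one.

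You propose to construct a \emph{single} function $f\in\widehat{GM}^d\cap B^{0,b}_{p,q}(\mathbb{R}^d)$ with $f\notin L_p(\mathbb{R}^d)$ by some multi-scale refinement. This cannot succeed in the stated range: working with the dyadic sequence $a_j$ and the GM growth constraint $a_{j+1}\lesssim a_j$, any attempt — powers, iterated logarithms, dyadic staircases $a_j=c_k$ on $[2^k,2^{k+1})$, or sparsely supported staircases on $[J_k,J_{k+1})$ — leads to the contradiction that the condition $\sum_j(1+j)^{bq}a_j^q<\infty$ already forces $\sum_j a_j^p<\infty$ when $q<p$ and $b\ge 1/p-1/q$. (At the borderline $b=1/p-1/q$ the required exponents $\delta$ satisfy $1/p<\delta\le 1/q$, which is incompatible with $q<p$; for $b>1/p-1/q$ the geometric factor forces convergence.) In other words, the set inclusion $\widehat{GM}^d\cap B^{0,b}_{p,q}\subset L_p$ may well still hold in this range; what fails is the \emph{bounded} embedding. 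The paper therefore argues not by producing a single counterexample, but by testing against the sequence $(F_0)_n=\chi_{(0,n)}$: one computes $\|f_n\|_{L_p(\mathbb{R}^d)}\asymp n^{d-d/p}$ while $\|f_n\|_{B^{0,b}_{p,q}(\mathbb{R}^d)}\asymp n^{d-d/p}(1+\log n)^b$, so boundedness of the embedding forces $(1+\log n)^{-b}\lesssim 1$ uniformly in $n$, hence $b\ge 0$. This quantitative, family-based argument (rather than a single-function counterexample) is the missing idea your proposal needs; without it, the necessity for $q\le p$ with $b<0$ is not established.
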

\begin{rem} In contrast to (\ref{Loc}), necessary and sufficient conditions given by (\ref{Lp})
  do not depend on the conditions $p \leq 2$ or $p>2$.
  In particular, if $2 = \min\{2,p,q\}$ then, by (\ref{Lp}), the embedding $\widehat{GM}^d \cap B^{0,b}_{p,q}(\mathbb{R}^d) \hookrightarrow L_p(\mathbb{R}^d)$ holds for a wider range of the logarithmic smoothness parameter than for the embedding
  $B^{0,b}_{p,q}(\mathbb{R}^d) \hookrightarrow L_p(\mathbb{R}^d)$. 
    More specifically, we have
	\begin{equation*}
		B^{0,b}_{p,q}(\mathbb{R}^d) \hookrightarrow L_p(\mathbb{R}^d) \quad \text{if} \quad b > 1/2-1/q = 1/\min\{2,p,q\}-1/q,
	\end{equation*}
	whereas
		\begin{equation*}
		\widehat{GM}^d \cap B^{0,b}_{p,q}(\mathbb{R}^d) \hookrightarrow L_p(\mathbb{R}^d) \quad \text{if} \quad b > 1/\min\{p,q\}-1/q .
	\end{equation*}
\end{rem}
\begin{proof}[Proof of Theorem \ref{TheoremEmbGM}]
	It is not hard to check that if any of two conditions given in the right-hand side of (\ref{Lp}) holds, then
	\begin{align*}
		\left(\int_1^\infty t^{dp-d-1} F^p_0(t) dt\right)^{1/p} &\lesssim \left(\int_{\frac{1}{2}}^1 t^{d p-d -1} F^p_0(t) dt\right)^{1/p}  \\
		& \hspace{1cm}+ \left(\int_1^\infty t^{d q -d q/p - 1} (1 + \log t)^{b q} F_0^q(t) dt\right)^{1/q}.
	\end{align*}
	Then, the embedding $\widehat{GM}^d \cap B^{0,b}_{p,q}(\mathbb{R}^d) \hookrightarrow L_p(\mathbb{R}^d)$ follows from Theorem \ref{Theorem 3.6} and (\ref{HL}).
	
	Conversely, we prove that the embedding $\widehat{GM}^d \cap B^{0,b}_{p,q}(\mathbb{R}^d) \hookrightarrow L_p(\mathbb{R}^d)$ does not hold if the involved parameters do not fulfilled one of the conditions stated in the right-hand side of (\ref{Lp}). Indeed, assume first that $p < q$ and $b = 1/p - 1/q$.  Let
	\begin{equation*}
		 F_0(t) = \left\{\begin{array}{lcl}
                            t^{-d + d/p + \varepsilon} & ,  & 0 < t  < 1, \\
                            & & \\
                            t^{-d + d/p} (1 + \log t)^{-1/p} (1+\log (1 + \log t))^{-\delta} & , & t \geq 1,
            \end{array}
            \right.
	\end{equation*}
	where $0 < \varepsilon < d - d/p$ and $1/q < \delta < 1/p$. It is easy to check that
the function
$f(x)=f_{0}(|x|)$, where $f_{0}$ is given by (\ref{3.4new+}),
 belongs to $B^{0,1/p-1/q}_{p,q}(\mathbb{R}^d)$ (see (\ref{BesovGM})) but not to $L_p(\mathbb{R}^d)$ (see (\ref{HL})).
	
	Moreover, the embedding given in (\ref{Lp}) is not valid  for $b < 1/p -1/q$ and $p < q$ too, because $B^{0,1/p-1/q}_{p,q}(\mathbb{R}^d) \hookrightarrow B^{0,b}_{p,q}(\mathbb{R}^d)$ and so, we can apply the preceding result.
	
	Assume now that the embedding $\widehat{GM}^d \cap B^{0,b}_{p,q}(\mathbb{R}^d) \hookrightarrow L_p(\mathbb{R}^d)$ holds for $p \geq q$. Then, we shall prove that $b \geq 0$. We define the sequences of functions given by $(F_0)_n  = \chi_{(0,n)}$ and $f_n (x)= (f_0)_n(|x|)$ satisfying (\ref{3.4new+}) for $n \in \mathbb{N}$. By our assumption and using (\ref{BesovGM}) and (\ref{HL}), we have
	\begin{equation*}
		 n^{d-d/p} 	\asymp \|f_n\|_{L_p(\mathbb{R}^d)} \lesssim \|f_n\|_{B^{0,b}_{p,q}(\mathbb{R}^d)} \asymp n^{d-d/p} (1 + \log n)^b.
	\end{equation*}
	Consequently, we conclude that $b \geq 0$.
\end{proof}

\subsection{Embeddings between $H^{s,b}_p(\mathbb{R}^d)$ and $B^{s,b}_{p,q}(\mathbb{R}^d)$}
Putting $r=2$ in Proposition \ref{RecallEmb}(i)-(ii), we derive the classical embeddings between Besov and Sobolev spaces. Namely, let $1 < p < \infty, 0 < q \leq \infty$, and $-\infty < s, b < \infty$, then
\begin{equation}\label{4.28new}
	H^{s,b}_p(\mathbb{R}^d) \hookrightarrow B^{s,b}_{p,q}(\mathbb{R}^d) \quad \text{if and only if} \quad q \geq \max\{p,2\}
\end{equation}
and
\begin{equation}\label{4.29new}
	B^{s,b}_{p,q}(\mathbb{R}^d)  \hookrightarrow H^{s,b}_p(\mathbb{R}^d) \quad \text{if and only if} \quad q \leq \min\{p,2\}.
\end{equation}

Theorems \ref{Theorem 3.6} and \ref{Theorem 3.9} allow us to supplement (\ref{4.28new}) and (\ref{4.29new}).
 In more detail,
   the range of parameters $p$ and $q$ in the following result is wider than the
corresponding range in (\ref{4.28new}) and (\ref{4.29new}).

\begin{thm}\label{Theorem 3.10new}
    Let $\frac{2d}{d+1} < p < \infty, 0 < q \leq \infty$, and $-\infty < s , b < \infty$. Then
    \begin{equation}\label{emb1new}
    	 \widehat{GM}^d \cap H^{s,b}_p(\mathbb{R}^d) \hookrightarrow
        B^{s,b}_{p,q}(\mathbb{R}^d) \quad \text{ if and only if } \quad q \geq p
    \end{equation}
    and
     \begin{equation}\label{emb3new}
        \widehat{GM}^d \cap B^{s,b}_{p,q}(\mathbb{R}^d) \hookrightarrow
        H^{s,b}_p(\mathbb{R}^d) \quad \text{ if and only if }  \quad q \leq p.
    \end{equation}
    In particular, we have
    \begin{equation*}
     	\widehat{GM}^d \cap H^{s,b}_p(\mathbb{R}^d) = \widehat{GM}^d \cap B^{s,b}_{p,q}(\mathbb{R}^d) \quad \text{ if and only if } \quad q=p.
    \end{equation*}
\end{thm}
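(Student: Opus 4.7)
My plan is to combine Theorems \ref{Theorem 3.9} and \ref{Theorem 3.6} to reduce both embeddings to a comparison of two weighted integrals of the same auxiliary function. Introduce
\[ G(t) := t^{s + d - d/p}(1 + \log t)^b F_0(t), \qquad t \geq 1, \]
and let $A(F_0) := \bigl(\int_0^1 t^{dp - d - 1} F_0^p(t)\, dt\bigr)^{1/p}$ denote the common ``local'' part. Formulas (\ref{W}) and (\ref{BesovGM}) then read
\[ \|f\|_{H^{s,b}_p(\mathbb{R}^d)} \asymp A(F_0) + \Bigl(\int_1^\infty G(t)^p\, \tfrac{dt}{t}\Bigr)^{1/p}, \qquad \|f\|_{B^{s,b}_{p,q}(\mathbb{R}^d)} \asymp A(F_0) + \Bigl(\int_1^\infty G(t)^q\, \tfrac{dt}{t}\Bigr)^{1/q}. \]
Since $F_0 \in GM$ and the power-logarithmic factor $t \mapsto t^{s+d-d/p}(1+\log t)^b$ also belongs to $GM$, Lemma \ref{Lemma 3.1} yields $G \in GM$ on $(1,\infty)$, and the whole question reduces to comparing the $L_p$ and $L_q$ tail norms of a positive general monotone function with respect to the Haar measure $dt/t$.

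For the ``if'' direction I would use the dyadic equivalence
\[ \int_1^\infty G(t)^r\, \frac{dt}{t} \asymp \sum_{n=0}^\infty G(2^n)^r, \qquad 0 < r < \infty, \]
whose upper bound follows directly from the quasi-monotonicity (\ref{3.2}) and whose lower bound is extracted from (\ref{3.1}) together with (\ref{3.3}) (in the same spirit as the dyadic passages appearing in the proof of Theorem \ref{Theorem 3.2}). Writing $a_n := G(2^n)$, the elementary sequence embedding $\ell_p \hookrightarrow \ell_q$ valid for $p \leq q$ immediately gives $(\int_1^\infty G^q\, dt/t)^{1/q} \lesssim (\int_1^\infty G^p\, dt/t)^{1/p}$ and hence $\|f\|_{B^{s,b}_{p,q}} \lesssim \|f\|_{H^{s,b}_p}$, proving (\ref{emb1new}) in the sufficient direction. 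The reverse embedding $\ell_q \hookrightarrow \ell_p$ valid for $q \leq p$ establishes (\ref{emb3new}) in the same fashion.

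For the ``only if'' direction I would construct a one-parameter test family. Fix $N \gg 1$ and set $F_{0,N}(t) = t^{-s-d+d/p}(1+\log t)^{-b}$ on $[1,N]$, extended by a constant on $(0,1]$ and by sufficiently fast decay on $(N,\infty)$ so that $F_{0,N} \in \widehat{GM}^d$ uniformly in $N$. The integrability in (\ref{3.4new}) is direct when $s$ is sufficiently large; smaller or negative values of $s$ are reduced to this case by applying the lifting operator $I_\sigma$ of Lemma \ref{LemmaLift}, which preserves the class $\widehat{GM}^d$ thanks to Lemma \ref{Lemma 3.1}. For this choice $G_N \equiv 1$ on $[1,N]$, so that
\[ \Bigl(\int_1^\infty G_N(t)^r\, \tfrac{dt}{t}\Bigr)^{1/r} \asymp (\log N)^{1/r}, \qquad A(F_{0,N}) = O(1). \]
If (\ref{emb1new}) were to hold for some $q < p$, taking $f = f_N$ would force $(\log N)^{1/q - 1/p} \lesssim 1$ as $N \to \infty$, a contradiction; the same family, read in the opposite direction, rules out (\ref{emb3new}) when $q > p$. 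The principal technical obstacle is justifying the dyadic equivalence for $GM$ functions in both directions (the lower bound requires more care than the upper one) and arranging the extensions of $F_{0,N}$ so that the integrability condition (\ref{3.4new}) and the $GM$ condition (\ref{3.1}) hold uniformly in $N$ across the entire parameter range.
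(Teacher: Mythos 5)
Your reduction of both norms to the tail quantities $\int_1^\infty G^r\,\tfrac{dt}{t}$ via Theorems \ref{Theorem 3.9} and \ref{Theorem 3.6}, followed by dyadic discretization and the elementary embedding $\ell_p\hookrightarrow\ell_q$ ($p\le q$), is exactly the paper's argument for the ``if'' direction; the paper works directly with $F_0$ and the power--log weight rather than naming $G$, but the content is identical. (Incidentally, for the two-sided dyadic bound you only need (\ref{3.2}), which already gives $G(2^j)\lesssim G(t)$ for $t\in[2^{j-1},2^j]$; appealing to (\ref{3.1}) and (\ref{3.3}) for the lower estimate is unnecessary.) Where you diverge is the ``only if'' direction: you take a truncated family $F_{0,N}$ with $G_N\equiv 1$ on $[1,N]$ and compare growth rates $(\log N)^{1/q}$ versus $(\log N)^{1/p}$, which requires arranging the extensions so the $GM$ constant and the characterizations are uniform in $N$. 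The paper instead exhibits a single witness $F_0(t)=t^{-s-d+d/p}(1+\log t)^{-\beta}$ for $t\ge 1$ with a fixed exponent $b+1/p<\beta\le b+1/q$ (available precisely because $q<p$), which lies in $H^{s,b}_p$ but fails to be in $B^{s,b}_{p,q}$ outright, sidestepping any uniformity bookkeeping. Both counterexamples work; the paper's is more economical. Your caution about the integrability condition (\ref{3.4new}) when $s$ is small or negative, and the proposed reduction via the lifting operator, addresses a genuine technical point that the paper's proof passes over with ``it is readily seen''; it is a warranted addition rather than an overcomplication.
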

\begin{proof} We will prove only  (\ref{emb1new}). To verify (\ref{emb3new}), we can proceed similarly. Using the monotonicity properties of $GM$-functions (\ref{3.2}) and the assumption $q \geq p$ we have
	\begin{align*}
		F_0^q(1)+\int_1^\infty t^{s q + d q - d q/p -1} (1+ \log t)^{b q} F_0^q(t) dt & \asymp \sum_{j=0}^\infty  2^{j(s q + d q - d q/p)} (1+ j)^{b q} F_0^q(2^j) \\
		& \hspace{-4cm}\leq \left(\sum_{j=0}^\infty  2^{j(s + d - d /p)p} (1+ j)^{b p} F_0^p(2^j)\right)^{q/p} \\
		& \hspace{-4cm}\asymp F_0^q(1)+ \left(\int_1^\infty t^{s p + d p -d -1} (1 + \log t)^{b p} F_0^p(t) dt\right)^{q/p}.
	\end{align*}
	Then  embedding given in (\ref{emb1new}) follows from the characterizations (\ref{BesovGM}) and (\ref{W}).
	
	Next we treat the only-if-part, that is, if the embedding $ \widehat{GM}^d \cap H^{s,b}_p(\mathbb{R}^d) \hookrightarrow
        B^{s,b}_{p,q}(\mathbb{R}^d)$ holds then $q \geq p$. Indeed, assume that $q < p$. We define
        		\begin{equation*}
    F_0(t) = \left\{\begin{array}{lcl}
                            t^{\varepsilon -d+d/p} & ,  & 0 < t  < 1, \\
                            & & \\
                            t^{-s-d+d/p} (1 + \log t)^{-\beta} & , & t \geq 1,
            \end{array}
            \right.
	\end{equation*}
        with $\varepsilon > 0$ and $b+1/p < \beta \leq b + 1/q$. It is readily seen that the function $f(x) = f_0(|x|)$ where $f_0$ is given by (\ref{3.4new+}), satisfies that $f \in H^{s,b}_p(\mathbb{R}^d)$ and $f \not \in B^{s,b}_{p,q}(\mathbb{R}^d)$. This leads to a contradiction and so, $q \geq p$.
\end{proof}

\subsection{Embeddings between $H^{0,b}_p(\mathbb{R}^d)$ and $\mathbf{B}^{0,b}_{p,q}(\mathbb{R}^d)$}

Applying the obtained characterizations in Theorems \ref{Theorem 3.2} and \ref{Theorem 3.9}, here we  establish new embeddings between Besov and Sobolev spaces, which improve those obtained in Corollary \ref{Corollary 2.3}.
 More specifically, 
   the range of parameters $p$ and $q$ in the following result is wider than the
corresponding range in Corollary \ref{Corollary 2.3}.

\begin{thm}\label{Theorem 3.10}
    Let $\frac{2d}{d+1} < p < \infty, 0 < q \leq \infty,$ and $b > -1/q$. Then
    \begin{equation}\label{emb2}
        \widehat{GM}^d \cap H^{0,b+1/q}_p(\mathbb{R}^d) \hookrightarrow
        \emph{\textbf{B}}^{0,b}_{p,q}(\mathbb{R}^d) \quad \text{ if and only if } \quad q \geq p
    \end{equation}
    and
    \begin{equation}\label{emb4}
        \widehat{GM}^d \cap \emph{\textbf{B}}^{0,b}_{p,q}(\mathbb{R}^d) \hookrightarrow
        H^{0,b+1/q}_p(\mathbb{R}^d) \quad \text{ if and only if } \quad q \leq p.
    \end{equation}
    In particular, we have
    \begin{equation}\label{sharpemb4}
     	 \widehat{GM}^d \cap H^{0,b+1/q}_p(\mathbb{R}^d)  =
    \widehat{GM}^d \cap \mathbf{B}^{0,b}_{p,q}(\mathbb{R}^d)  \quad \text{ if and only if } \quad q=p.
    \end{equation}
\end{thm}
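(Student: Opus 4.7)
\emph{Plan.} By Theorems \ref{Theorem 3.2} and \ref{Theorem 3.9}, for $f\in\widehat{GM}^d$ both sides of the embeddings (\ref{emb2}), (\ref{emb4}) reduce, after dyadic discretization at $t=2^{k}$ using the $GM$-property (\ref{3.2}), to the comparison of
\[
A:=\Big(\sum_{j\geq 0}(1+j)^{bq}S_{j}^{q/p}\Big)^{1/q},\quad B:=\Big(\sum_{k\geq 0}(1+k)^{(b+1/q)p}c_{k}\Big)^{1/p},
\]
where $c_{k}:=2^{k(dp-d)}F_{0}^{p}(2^{k})$ and $S_{j}:=\sum_{k\geq j}c_{k}$ is non-increasing in $j$. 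Both norms also carry the common low-frequency term $X=\int_{0}^{1}t^{dp-d-1}F_{0}^{p}(t)\,dt$, which is irrelevant for the analysis. Thus (\ref{emb2}), (\ref{emb4}), (\ref{sharpemb4}) amount, respectively, to $A\lesssim B$, $B\lesssim A$, and their equivalence, for non-increasing $S$.

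\emph{Sufficiency.} For $q\geq p$, the plan is to apply Minkowski's inequality with exponent $q/p\geq 1$ to
\[
A^q=\sum_{j}\Big[\sum_{k}(1+j)^{bp}\mathbf{1}_{k\geq j}\,c_{k}\Big]^{q/p},
\]
which gives $A^{p}\lesssim\sum_{k}c_{k}\big(\sum_{j\leq k}(1+j)^{bq}\big)^{p/q}\asymp\sum_{k}(1+k)^{(b+1/q)p}c_{k}=B^{p}$; the last equivalence uses $bq+1>0$. For $q\leq p$, set $\omega_{k}:=(1+k)^{b+1/q}S_{k}^{1/p}$, so that $A^{q}=\sum_{k}(1+k)^{-1}\omega_{k}^{q}$, and Abel summation (whose boundary term $(1+N)^{(b+1/q)p}S_{N}\leq \sum_{k\geq N}(1+k)^{(b+1/q)p}c_{k}\to 0$) yields $B^{p}\asymp S_{0}+\sum_{k\geq 1}(1+k)^{-1}\omega_{k}^{p}$. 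The key step is the pointwise bound $\omega_{j}\lesssim A$: since $S_{k}\geq S_{j}$ for $k\leq j$,
\[
A^{q}\geq S_{j}^{q/p}\sum_{k\leq j}(1+k)^{bq}\asymp S_{j}^{q/p}(1+j)^{bq+1}=\omega_{j}^{q}.
\]
Hence $\omega_{k}^{p}\leq A^{p-q}\omega_{k}^{q}$, and summing gives $B^{p}\lesssim S_{0}+A^{p-q}\sum_{k}(1+k)^{-1}\omega_{k}^{q}\lesssim A^{p}$, using also $S_{0}=\omega_{0}^{p}\lesssim A^{p}$.

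\emph{Necessity and main obstacle.} For the reverse implications the plan is to use the family of $\widehat{GM}^d$ functions defined, for $t\geq e^{e}$, by $F_{0}(t)=t^{-d+d/p}(\log t)^{-\beta}(\log\log t)^{-\xi}$ with the critical exponent $\beta=b+\tfrac{1}{p}+\tfrac{1}{q}$, extended suitably to small $t$; the hypothesis $p>2d/(d+1)$ together with the regular decay of $F_{0}'$ ensures both $F_{0}\in GM$ and condition (\ref{3.4new}). Substitutions $u=\log t$ and $v=\log u$ in the characterizations (\ref{3.3.3}) and (\ref{W}) show that at this critical $\beta$ the $H^{0,b+1/q}_p$-norm is finite iff $\xi>1/p$, while the $\mathbf{B}^{0,b}_{p,q}$-norm is finite iff $\xi>1/q$. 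Consequently, if $q<p$ any $\xi\in(1/p,1/q)$ provides an element of $H^{0,b+1/q}_p\setminus\mathbf{B}^{0,b}_{p,q}$; if $q>p$ any $\xi\in(1/q,1/p)$ provides an element of $\mathbf{B}^{0,b}_{p,q}\setminus H^{0,b+1/q}_p$. This yields the necessity of $q\geq p$ in (\ref{emb2}) and $q\leq p$ in (\ref{emb4}), and (\ref{sharpemb4}) follows by combining the two. The delicate step is the sufficiency for $q\leq p$, where naive Minkowski reverses the desired inequality and one must extract the pointwise bound $\omega_{k}\lesssim A$ from monotonicity before bootstrapping $\ell^{q}$ to $\ell^{p}$; equally subtle is the necessity, since pure power-log $F_{0}$'s make the two critical integrals diverge at the same threshold, forcing the introduction of a second-order (double-logarithmic) perturbation to separate them.
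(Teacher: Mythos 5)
Your proof is correct and follows essentially the same route as the paper: you reduce the embeddings to inequalities between the two Fourier-side expressions via Theorems \ref{Theorem 3.2} and \ref{Theorem 3.9}, apply Minkowski's inequality for $q\geq p$ exactly as the paper does, and the sharpness counterexample $F_0(t)=t^{-d+d/p}(\log t)^{-b-1/p-1/q}(\log\log t)^{-\xi}$ with $\xi$ chosen strictly between $1/p$ and $1/q$ is precisely the function the paper uses. For the sufficiency direction of (\ref{emb4}) with $q\leq p$, which the paper dispatches with the single word ``Analogously,'' your Abel-summation argument built around the pointwise bound $\omega_j\lesssim A$ extracted from the monotonicity of $S_j$, together with the bootstrapping $\omega_k^p\leq A^{p-q}\omega_k^q$, is a correct and well-motivated way to supply the missing detail.
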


\begin{rem}
	In Propositions \ref{Proposition 5.1} and \ref{Proposition 5.4} below we will show that $\xi=b+1/q$ in $H^{0,\xi}_p(\mathbb{R}^d)$ is the best possible exponent of the logarithmic smoothness for which the embeddings (\ref{emb2}) and (\ref{emb4}) hold, respectively.
\end{rem}

\begin{proof}[Proof of Theorem \ref{Theorem 3.10}]
    We proceed by showing (\ref{emb2}). Assume that $q \geq p$. Applying Minkowski's inequality for integrals, we get
    \begin{align*}
         \left(\int_1^\infty \left[(1 + \log t)^b \left(\int_t^\infty u^{d p -d-1} F_0^p(u) du\right)^{1/p}\right]^q
        \frac{dt}{t}\right)^{1/q} \\
        & \hspace{-7cm}\leq \left(\int_1^\infty u^{dp - d- 1} F^p_0(u) \left(\int_1^u (1 + \log t)^{b q} \frac{dt}{t}\right)^{p/q}
        du\right)^{1/p} \\
        & \hspace{-7cm}\lesssim \left(\int_1^\infty u^{dp - d - 1} (1 + \log u)^{(b+1/q)p} F^p_0(u)
        du\right)^{1/p}.
    \end{align*}
	Hence, the embedding $\widehat{GM}^d \cap H^{0,b+1/q}_p(\mathbb{R}^d) \hookrightarrow
        \mathbf{B}^{0,b}_{p,q}(\mathbb{R}^d)$ follows from (\ref{3.3.3}) and (\ref{W}).

        Conversely, we will show that if $\widehat{GM}^d \cap H^{0,b+1/q}_p(\mathbb{R}^d) \hookrightarrow
        \mathbf{B}^{0,b}_{p,q}(\mathbb{R}^d)$ then $q \geq p$. This will be done by contradiction. Suppose that $q < p$. Let
        \begin{equation*}
        		F_0(t) = t^{-d+d/p} (1 + |\log t|)^{-b-1/p-1/q} (1 + \log (1 + |\log t|))^{-\delta}, \quad t >0,
        \end{equation*}
        with $1/p < \delta < 1/q$, and $f(x) = f_0(|x|)$ where $f_0$ is defined by (\ref{3.4new+}). Accordingly to (\ref{W}) and (\ref{3.3.3}), we have $f \in H^{0,b+1/q}_p(\mathbb{R}^d)$ and $f \not \in  \mathbf{B}^{0,b}_{p,q}(\mathbb{R}^d)$, respectively. This leads to a contradiction.

        Analogously, one can prove (\ref{emb4}).
\end{proof}

\subsection{Embeddings between $B^{0,b}_{p,q}(\mathbb{R}^d)$ and $\mathbf{B}^{0,b}_{p,q}(\mathbb{R}^d)$}\label{Embeddings BB}

Let us recall the relationships between the spaces $B^{0,b}_{p,q}(\mathbb{R}^d)$ and $\mathbf{B}^{0,b}_{p,q}(\mathbb{R}^d)$ given by (\ref{1}).
Let $1 < p < \infty, 0 < q \leq \infty$, and $b > -1/q$. Then, we have
\begin{equation*}
    B^{0,b + 1/\min\{2,p,q\}}_{p,q}(\mathbb{R}^d) \hookrightarrow
    \textbf{B}^{0,b}_{p,q}(\mathbb{R}^d) \hookrightarrow
    B^{0,b+1/\max\{2,p,q\}}_{p,q}(\mathbb{R}^d).
\end{equation*}
Our next result shows that working with $GM$ functions we are able to improve the loss of logarithmic smoothness in the previous embeddings.

\begin{thm}\label{Section 6.8: Theorem}
	Let $\frac{2d}{d+1} < p < \infty, 0 < q \leq \infty$, and $b > -1/q$. Then,
	\begin{equation}\label{EmbBesovGM}
  \widehat{GM}^d\cap  B^{0,b + 1/\min\{p,q\}}_{p,q}(\mathbb{R}^d) \hookrightarrow
    \widehat{GM}^d\cap  \mathbf{B}^{0,b}_{p,q}(\mathbb{R}^d) \hookrightarrow
    \widehat{GM}^d\cap  B^{0,b+1/\max\{p,q\}}_{p,q}(\mathbb{R}^d).
\end{equation}
In particular, we have
\begin{equation}\label{EmbBesovGMNew}
	\widehat{GM}^d\cap  B^{0,b + 1/q}_{p,q}(\mathbb{R}^d) =
    \widehat{GM}^d\cap  \mathbf{B}^{0,b}_{p,q}(\mathbb{R}^d) \quad \text{if and only if} \quad q=p.
\end{equation}
\end{thm}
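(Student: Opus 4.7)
The plan is to combine the $\widehat{GM}^d$-characterizations of $\mathbf{B}^{0,b}_{p,q}$ (Theorem \ref{Theorem 3.2}) and $B^{0,\xi}_{p,q}$ (Theorem \ref{Theorem 3.6}) with discrete Hardy and H\"older inequalities. Setting $G(t):=t^{d-d/p}F_0(t)$ and $\Phi(t):=(\int_t^\infty G(u)^p\,du/u)^{1/p}$, the low-frequency part $(\int_0^1 t^{dp-d-1}F_0^p\,dt)^{1/p}$ is common to both norms and cancels, reducing each embedding to the comparison of
$$
J_1^q := \int_1^\infty (1+\log t)^{bq}\Phi(t)^q\,\frac{dt}{t} \quad\text{with}\quad J_2(\xi)^q := \int_1^\infty (1+\log t)^{\xi q}G(t)^q\,\frac{dt}{t}
$$
for $\xi\in\{b+1/\min\{p,q\},\,b+1/\max\{p,q\}\}$. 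Since $F_0\in GM$, $G$ is essentially constant on $[2^j,2^{j+1}]$ by (\ref{3.2}), so with $g_j:=G(2^j)$, $A_j:=(\sum_{k\geq j}g_k^p)^{1/p}$, $S_m:=\sum_{k\asymp 2^m}g_k^p$, $T_m:=\sum_{k\asymp 2^m}g_k^q$, the task becomes to compare $\sum_m 2^{m(bq+1)}A_{2^m}^q$ with $\sum_m 2^{m\xi q}T_m$.

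This splits into four subcases keyed to $\min$/$\max$:
\begin{itemize}
\item $q\geq p$, left embedding: H\"older with exponent $q/p\geq 1$ inside each dyadic block gives $S_m^{q/p}\leq T_m\cdot 2^{m(q-p)/p}$, while the discrete Hardy inequality (\ref{HardyInequal2**}) applied with exponent $q/p\geq 1$ and parameter $\lambda=bp+p/q>0$ (positivity guaranteed by $b>-1/q$) yields $\sum_m 2^{m(bq+1)}A_{2^m}^q\lesssim \sum_m 2^{m(bq+1)}S_m^{q/p}\leq \sum_m 2^{m(bq+q/p)}T_m$, i.e.\ $J_1\lesssim J_2(b+1/p)$.
\item $q\geq p$, right embedding: $\ell^p\hookrightarrow\ell^q$ yields $A_j^q\geq \sum_{k\geq j}g_k^q$, and Fubini (using $bq+1>0$) gives $\sum_j(1+j)^{bq}A_j^q\gtrsim \sum_k(1+k)^{bq+1}g_k^q$, hence $J_1\gtrsim J_2(b+1/q)$.
\item $q\leq p$, left embedding: the elementary subadditivity $(\sum a_k)^{q/p}\leq \sum a_k^{q/p}$ for $q/p\leq 1$ yields $A_j^q\leq \sum_{k\geq j}g_k^q$, and the same Fubini gives $J_1\lesssim J_2(b+1/q)$.
\item $q\leq p$, right embedding: H\"older with $p/q\geq 1$ inside blocks yields $T_m\leq S_m^{q/p}\cdot 2^{m(1-q/p)}\leq A_{2^m}^q\cdot 2^{m(1-q/p)}$, whence $\sum_m 2^{m(bq+q/p)}T_m\leq \sum_m 2^{m(bq+1)}A_{2^m}^q$, so $J_2(b+1/p)\lesssim J_1$.
\end{itemize}

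For the equivalence (\ref{EmbBesovGMNew}): when $q=p$ the four embeddings above collapse to $\mathbf{B}^{0,b}_{p,p}=B^{0,b+1/p}_{p,p}$. For the converse direction, I take $F_0(t)=t^{-d+d/p}(1+|\log t|)^{-\alpha}$ (extended in a GM-compatible way near $0$), which is eventually decreasing, hence in $GM$, and belongs to $\widehat{GM}^d$ thanks to the hypothesis $p>2d/(d+1)$ which is exactly what ensures (\ref{3.4new}). A direct computation using Theorems \ref{Theorem 3.2} and \ref{Theorem 3.6} gives $\|f\|_{\mathbf{B}^{0,b}_{p,q}}<\infty\iff \alpha>b+1/p+1/q$ and $\|f\|_{B^{0,b+1/q}_{p,q}}<\infty\iff \alpha>b+2/q$; the two thresholds agree exactly when $q=p$, so whenever $q\neq p$ any $\alpha$ in the gap between the thresholds produces a function lying in exactly one of the two spaces, proving the strict inclusion.

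The main technical obstacle is the right embedding in the case $q\leq p$: the na\"ive pointwise bound $G\lesssim \Phi$ supplied by the GM property (via (\ref{3.3}) and (\ref{3.7})) is too coarse and loses the full logarithmic gain; only the dyadic-block H\"older estimate $T_m\leq A_{2^m}^q\cdot 2^{m(1-q/p)}$ recovers the sharp exponent $1/p$ that matches the optimal loss in the embedding.
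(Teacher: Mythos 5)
Your argument is correct and rests on the same two pillars as the paper's: the $\widehat{GM}^d$-characterizations in Theorems \ref{Theorem 3.2} and \ref{Theorem 3.6}, and the counterexample $F_0(t)=t^{-d+d/p}(1+|\log t|)^{-\beta}$, whose two membership thresholds $\beta>b+1/p+1/q$ and $\beta>b+2/q$ coincide exactly when $p=q$ (with $p>\tfrac{2d}{d+1}$ indeed the precise condition for (\ref{3.4new})). What differs is the technical implementation of the four inequality cases. The paper stays continuous throughout: for the $q\geq p$ left embedding it applies the continuous Hardy inequality (\ref{HardyInequal5}) to $\Phi(t)^{q/p}$, and for the $q<p$ right embedding it introduces an auxiliary H\"older parameter $\beta<1/p-1/q$ and uses Fubini to relocate the logarithmic weight. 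You instead discretize systematically via the two-sided GM comparability from (\ref{3.2}), turning every case into a sequence inequality. The payoff is twofold: the hypothesis $b>-1/q$ surfaces transparently as the positivity of the discrete Hardy exponent $\lambda=bp+p/q$, and the $q<p$ right embedding --- the subtlest case in the continuous version --- reduces to the one-line block-H\"older bound $T_m\leq A_{2^m}^q\,2^{m(1-q/p)}$ with no auxiliary parameter. One small point worth flagging in your discretization step: (\ref{3.2}) gives $g_{j+1}\lesssim G(t)\lesssim g_j$ on the $j$-th dyadic block rather than a same-scale two-sided bound, so the integral-to-sum reduction only holds up to a shift by one scale; this is harmless here because both the weights $(1+j)^{\xi q}$ and the tail quantities $A_j$ are stable under such a shift. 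Your closing diagnostic --- that the raw pointwise GM bound $G\lesssim\Phi$ from (\ref{3.3}) and (\ref{3.7}) is off by a full power of the logarithm --- is accurate, and it is precisely what forces either your block-H\"older or the paper's auxiliary-$\beta$ device.
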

\begin{proof}
	We start by showing the left-hand side embedding of (\ref{EmbBesovGM}). Let $f \in \widehat{GM}^d$. If $q \geq p$, then an application of Hardy's inequality (see (\ref{HardyInequal5})) together with (\ref{3.3.3}) and (\ref{BesovGM}) leads to
	\begin{align*}
		\|f\|_{\mathbf{B}^{0,b}_{p,q}(\mathbb{R}^d)}  &\lesssim  \left(\int_0^1 t^{d p-d - 1} F_0^p(t) dt\right)^{1/p} + \left(\int_1^\infty t^{dq -d q/p - 1} (1 + \log t)^{(b + 1/p) q} F_0^q(t) dt\right)^{1/q}  \\
		&\asymp \|f\|_{B^{0,b+1/p}_{p,q}(\mathbb{R}^d)}.
	\end{align*}
	
	On the other hand, if $q < p$ we can apply (\ref{ProofBesovGM}) and change the order of integration to get
	\begin{align*}
		\left(\int_1^\infty (1 + \log t)^{b q} \left(\int_t^\infty u^{d p -d-1} F_0^p(u) du\right)^{q/p}
        \frac{dt}{t} \right)^{1/q}\\
        & \hspace{-8cm} \lesssim \left(\int_1^\infty (1 + \log t)^{b q} \int_{t/c}^\infty u^{d q - d q/p - 1} F_0^q(u) du \frac{dt}{t}\right)^{1/q} \\
        & \hspace{-8cm} \lesssim \left(\int_0^1 u^{d p - d -1} F_0^p(u) du \right)^{1/p} + \left(\int_1^\infty u^{d q - d q/p - 1} (1 + \log u)^{(b + 1/q)q} F_0^q(u) du\right)^{1/q},
	\end{align*}
	where we have used the H\"older's inequality in the last estimate. Therefore, by (\ref{3.3.3}) and (\ref{BesovGM}), we have $\|f\|_{\mathbf{B}^{0,b}_{p,q}(\mathbb{R}^d)} \lesssim \|f\|_{B^{0,b+1/q}_{p,q}(\mathbb{R}^d)}$.
	
	Next we proceed with the proof of the right-hand side embedding in (\ref{EmbBesovGM}). Assume first that $p \leq q$. A similar argument we applied to show (\ref{newestimate}) yields that
	\begin{equation*}
		\left(\int_t^\infty u^{d p - d- 1} F_0^p(u) du\right)^{1/p} \gtrsim \left(\int_{c \,t}^\infty u^{d q - d q/p - 1} F_0^q(u) du\right)^{1/q}
	\end{equation*}
	for some $c > 1$. Then a combination of this latter estimate, the monotonicity property (\ref{3.2}), a change of the order of integration, formula (\ref{HL}), and the characterizations (\ref{3.3.3}) and (\ref{BesovGM}) imply
	\begin{align*}
		\|f\|_{B^{0,b+1/q}_{p,q}(\mathbb{R}^d)} & \asymp \left(\int_0^1 t^{d p-d - 1} F_0^p(t) dt\right)^{1/p}
\\ &+ \left(\int_c^\infty t^{ dq -d q/p - 1} (1 + \log \big(\frac{t}{c}\big))^{(b+1/q) q} F_0^q(t) dt\right)^{1/q} \\
		& \hspace{-1cm}\lesssim \left(\int_0^\infty t^{d p-d - 1} F_0^p(t) dt\right)^{1/p} + \left(\int_1^\infty (1 + \log t)^{b q} \int_{c \, t}^\infty u^{d q - d q/p -1} F_0^q(u) du \frac{dt}{t} \right)^{1/q}  \\
		& \hspace{-1cm}\lesssim \|f\|_{L_p(\mathbb{R}^d)} + \left(\int_1^\infty (1 + \log t)^{b q} \left(\int_t^\infty u^{d p - d - 1} F_0^p(u) du\right)^{q/p} \frac{dt}{t}\right)^{1/q} \\
		& \hspace{-1cm}\lesssim \|f\|_{\mathbf{B}^{0,b}_{p,q}(\mathbb{R}^d)}
	\end{align*}
	because $\mathbf{B}^{0,b}_{p,q}(\mathbb{R}^d) \hookrightarrow L_p(\mathbb{R}^d)$. So we have shown the desired estimate if $p \leq q$.
	
	Suppose now that $p > q$. Let $\beta$ be such that $\beta < 1/p - 1/q$. For $u \geq 1$, by H\"older's inequality, we have
	\begin{align*}
		\left(\int_u^\infty t^{d q - d q/p - 1} (1 + \log t)^{\beta q} F_0^q(t) dt \right)^{1/q}  \\		& \hspace{-5cm}\lesssim (1 + \log u)^{\beta + 1/q -1/p} \left(\int_u^\infty t^{d p - d - 1} F_0^p(t) dt\right)^{1/p}.
	\end{align*}
	Applying this estimate together with (\ref{BesovGM}) and (\ref{3.3.3}), one obtains
	\begin{align*}
		\|f\|_{B^{0,b+1/p}_{p,q}(\mathbb{R}^d)}  & \lesssim \left(\int_0^\infty t^{d p-d - 1} F_0^p(t) dt\right)^{1/p} \\
		 &\hspace{-1cm}+ \left(\int_1^\infty t^{dq -d q/p - 1} (1 + \log t)^{\beta q} F_0^q(t) \int_1^t (1 + \log u)^{(b-\beta -1/q + 1/p)q} \frac{du}{u} dt\right)^{1/q} \\
		 &\hspace{-1cm} \asymp \|f\|_{L_p(\mathbb{R}^d)} \\
&\hspace{-0.5cm}+ \left(\int_1^\infty (1 + \log u)^{(b-\beta -1/q + 1/p) q} \int_u^\infty t^{d q - dq/p-1} (1 + \log t)^{\beta q} F_0^q(t) dt \frac{du}{u}\right)^{1/q}
\\
		  &
\hspace{-1cm}\lesssim \|f\|_{L_p(\mathbb{R}^d)} 
+ \left(\int_1^\infty (1 + \log u)^{b q} \left(\int_u^\infty t^{d p - d - 1} F_0^p(t) dt\right)^{q/p} \frac{du}{u}\right)^{1/q} \\
		  &
\hspace{-1cm}\lesssim \|f\|_{\mathbf{B}^{0,b}_{p,q}(\mathbb{R}^d)}
	\end{align*}
	where we used (\ref{HL}) again.
	
	Finally we prove (\ref{EmbBesovGMNew}). It is clear from (\ref{EmbBesovGM}) that $\widehat{GM}^d\cap  B^{0,b + 1/p}_{p,p}(\mathbb{R}^d) =
    \widehat{GM}^d\cap  \mathbf{B}^{0,b}_{p,p}(\mathbb{R}^d)$. It remains to show the only-if-part in (\ref{EmbBesovGMNew}). This will be done by contradiction. Assume, e.g., that $q < p$. Let
	\begin{equation*}
		F_0(t) = t^{-d + d/p} (1 + |\log t|)^{-\beta},\quad t > 0,
	\end{equation*}
	where $b + 1/q + 1/p < \beta < b + 2/q$.
 Define
the function
$f(x)=f_{0}(|x|)$, where $f_{0}$ is given by (\ref{3.4new+}). Accordingly to Theorems \ref{Theorem 3.2} and \ref{Theorem 3.6} we derive that $f \in \mathbf{B}^{0,b}_{p,q}(\mathbb{R}^d)$ but $f \not \in B^{0,b+1/q}_{p,q}(\mathbb{R}^d)$, respectively. Then we arrive at a contradiction. The case $q > p$ can be treated similarly.

\end{proof}

\subsection{Sobolev embeddings}

Recall the Sobolev embedding theorem for Besov spaces stated in Proposition \ref{RecallEmb2}(i): Let $1 < p_0 < p_1 < \infty, -\infty < s_1 < s_0 < \infty$ with
		\begin{equation*}
			s_0 -\frac{d}{p_0} = s_1 -\frac{d}{p_1},
		\end{equation*}
		$0 < q \leq \infty$ and $-\infty < b < \infty$. Then,
		\begin{equation}\label{SobolevEmbedding}
		B^{s_0,b}_{p_0,q}(\mathbb{R}^d) \hookrightarrow B^{s_1,b}_{p_1,q}(\mathbb{R}^d).
		\end{equation}
The goal of this subsection is to show that both spaces coincide
    when we work with general monotone functions. We will use the following notation. For $f \in \widehat{GM}^d, f(x) = f_0(|x|),$ and  any $\tau>0$, set $$J_f(\tau)=\left(\int_{0}^1 t^{d \tau-d -1} F^\tau_0(t) dt\right)^{1/\tau},$$
where $F_0$ is given by (\ref{FourierHankel}).
Note that we have
\begin{equation}\label{Jf}
	J_f(\tau)\lesssim J_f(\varkappa) \quad \text{if} \quad \varkappa\le \tau.
\end{equation}
	 This follows by using a similar discretization we used to show estimate (\ref{newestimate}).

\begin{thm}\label{SobolevEmbeddingImprovement}
	Let $\frac{2d}{d+1} < p_0  < p_1 <\infty, -\infty < s_1 < s_0 < \infty, -\infty < b < \infty$, and $0 < q \leq \infty$.
	\begin{enumerate}[\upshape(i)]
	\item Assume that
	\begin{equation}\label{DiffDim}
	s_0 - \frac{d}{p_0}  = s_1 -\frac{d}{p_1}.
\end{equation}
 Let $f \in \widehat{GM}^d$. If $J_f(p_0)<\infty$, then
	\begin{equation}\label{DiffDim1}
		f \in B^{s_0,b}_{p_0,q}(\mathbb{R}^d) \iff f \in B^{s_1,b}_{p_1,q}(\mathbb{R}^d).
	\end{equation}
	\item Conversely, if
	\begin{equation*}
		\widehat{GM}^d \cap B^{s_0,b}_{p_0,q}(\mathbb{R}^d) = \widehat{GM}^d \cap B^{s_1,b}_{p_1,q}(\mathbb{R}^d),
	\end{equation*}
	then (\ref{DiffDim}) holds.
	\end{enumerate}
\end{thm}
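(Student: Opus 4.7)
The strategy is to read off everything from the Fourier-side characterization (\ref{BesovGM}) in Theorem \ref{Theorem 3.6}, which expresses $\|f\|_{B^{s,b}_{p,q}(\mathbb{R}^d)}$ for $f\in\widehat{GM}^d$ as the head $J_f(p)$ plus the tail
\[
T_{s,b,p,q}(f):=\left(\int_1^\infty t^{sq+dq-dq/p-1}(1+\log t)^{bq}F_0^q(t)\,dt\right)^{1/q}.
\]
The key observation is that under (\ref{DiffDim}) the tail exponent $sq+dq-dq/p$ takes the same value at $(s_0,p_0)$ and $(s_1,p_1)$, so $T_{s_0,b,p_0,q}(f)=T_{s_1,b,p_1,q}(f)$; only the heads $J_f(p_0)$ and $J_f(p_1)$ can a priori differ.

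For part (i), the inclusion $\widehat{GM}^d\cap B^{s_0,b}_{p_0,q}(\mathbb{R}^d)\hookrightarrow\widehat{GM}^d\cap B^{s_1,b}_{p_1,q}(\mathbb{R}^d)$ is just the classical Sobolev embedding (\ref{SobolevEmbedding}), and is also immediate from $J_f(p_1)\lesssim J_f(p_0)$ given by (\ref{Jf}). The reverse inclusion, which is the new content, uses precisely the hypothesis $J_f(p_0)<\infty$: membership $f\in B^{s_1,b}_{p_1,q}(\mathbb{R}^d)$ forces $T_{s_1,b,p_1,q}(f)<\infty$, hence $T_{s_0,b,p_0,q}(f)<\infty$, and combined with the assumed finite head this yields $f\in B^{s_0,b}_{p_0,q}(\mathbb{R}^d)$.

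For part (ii), I would argue by contraposition. Setting $\alpha_i:=s_i+d-d/p_i$, the failure of (\ref{DiffDim}) amounts to $\alpha_0\neq\alpha_1$; assume without loss of generality $\alpha_0>\alpha_1$. To produce a separating function take $F_0$ bounded and positive on $(0,1)$ and $F_0(t)=t^{-\beta}$ for $t\geq 1$, with $\beta$ chosen in $(\alpha_1,\alpha_0)$, and define $f(x)=f_0(|x|)$ via (\ref{3.4new+}). Then $F_0\in GM$, the head $J_f(p_0)$ is finite (since $p_0>1$), the tail $T_{s_1,b,p_1,q}(f)$ converges because $\beta>\alpha_1$ and $T_{s_0,b,p_0,q}(f)$ diverges because $\beta<\alpha_0$; hence $f\in B^{s_1,b}_{p_1,q}(\mathbb{R}^d)\setminus B^{s_0,b}_{p_0,q}(\mathbb{R}^d)$. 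The main obstacle is ensuring the integrability condition (\ref{3.4new}), which demands $\beta>(d-1)/2$; when the admissible interval $(\alpha_1,\alpha_0)$ lies below this threshold, I would first apply the lifting operator $I_\tau$ of Lemma \ref{LemmaLift} with $\tau<0$ of large modulus. Since $(1+|\xi|^2)^{-\tau/2}\in GM$, Lemma \ref{Lemma 3.1} ensures $I_\tau$ preserves $\widehat{GM}^d$, while it shifts both smoothness parameters uniformly and preserves the difference $\alpha_0-\alpha_1$; the counterexample is then built in the shifted scale and transported back via $I_{-\tau}$.
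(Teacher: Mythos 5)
For part (i), your argument is the paper's argument: under (\ref{DiffDim}) the tail integrals in Theorem \ref{Theorem 3.6} agree, $J_f(p_1)\lesssim J_f(p_0)$ by (\ref{Jf}), and the hypothesis $J_f(p_0)<\infty$ closes the loop.

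For part (ii), your candidate function is essentially the one the paper uses (the paper works with the orientation $\alpha_0<\alpha_1$ and takes a nonconstant power $t^{-\varepsilon}$ near the origin, but these are cosmetic differences). You are right to flag that $\beta>(d-1)/2$ is needed for (\ref{3.4new}), and the paper passes silently over this point. However, the lifting repair you propose does not work; it is circular. After constructing $g$ with $G_0(t)\asymp t^{-\beta'}$, $\beta'>(d-1)/2$, in the shifted scale where the smoothness indices are $s_i-\tau$ and the corresponding exponents are $\alpha_i+|\tau|$, the transported function $f=I_{-\tau}g$ has Fourier profile $F_0(t)=(1+t^2)^{-\tau/2}G_0(t)\asymp t^{|\tau|-\beta'}=t^{-(\beta'-|\tau|)}$ for $t\gg 1$, and $\beta'-|\tau|$ lies back in the \emph{original} window $(\alpha_1,\alpha_0)$. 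So the integrability requirement in (\ref{3.4new}) for $f$ is exactly the constraint you set out to avoid. Lemma \ref{Lemma 3.1} does guarantee that $\widehat{f}$ is still GM, but membership in $\widehat{GM}^d$ also needs (\ref{3.4new}), which is a decay-rate condition on $\widehat{f}$ that the Bessel potential shifts in the obvious way, here right back into the forbidden range. The obstruction is invariant under your lift-and-transport construction, and the gap persists.

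In fact the regime you are worried about cannot be handled by \emph{any} tail-based example: for $f\in\widehat{GM}^d$, (\ref{3.4new}) already forces $F_0(t)=o(t^{-(d-1)/2})$, so when $\max\{\alpha_0,\alpha_1\}<(d-1)/2$ both tails in Theorem \ref{Theorem 3.6} are automatically finite for every admissible $f$. Separation in that corner must come from the head, as in Remark \ref{Remark 4.15} of the paper: take $F_0(t)\asymp t^{-\gamma}$ near $0$ with $d-d/p_0\le\gamma<d-d/p_1$ and a fast enough decay at infinity, giving $J_f(p_0)=\infty$ but $J_f(p_1)<\infty$. Neither the paper's written proof nor your proposal supplies this complementary construction, so the paper has the same unacknowledged gap; you get credit for spotting it, but your fix does not close it.
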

\begin{proof}
	By (\ref{DiffDim}) we have
	\begin{equation*}
		\int_1^\infty t^{s_0 q + d q - d q/p_0 - 1} (1 + \log t)^{b q} F^q_0(t) dt = \int_1^\infty t^{s_1 q + d q - d q/p_1 - 1} (1 + \log t)^{b q} F^q_0(t) dt .
	\end{equation*}
	Then, the equivalence (\ref{DiffDim1}) follows from (\ref{Jf}) and Theorem \ref{Theorem 3.6}.
	
	On the other hand, by contradiction, one can show that $\widehat{GM}^d \cap B^{s_0,b}_{p_0,q}(\mathbb{R}^d) = \widehat{GM}^d \cap B^{s_1,b}_{p_1,q}(\mathbb{R}^d)$ implies (\ref{DiffDim}). For example, assume that $s_0-d/p_0 < s_1 -d/p_1$, and let
        		\begin{equation*}
    F_0(t) = \left\{\begin{array}{lcl}
                            t^{-\varepsilon} & ,  & 0 < t  < 1, \\
                            & & \\
                            t^{-\beta} & , & t \geq 1,
            \end{array}
            \right.
	\end{equation*}
        with $\varepsilon < d - d/p_0$ and $s_0 -d/p_0 + d < \beta < s_1 -d/p_1 +d$. According to Theorem \ref{Theorem 3.6}, we obtain  that the function $f(x) = f_0(|x|)$, where $f_0$ is given by (\ref{3.4new+}), satisfies that $f \in B^{s_0,b}_{p_0,q}(\mathbb{R}^d)$ and $f \not \in B^{s_1,b}_{p_1,q}(\mathbb{R}^d)$, which is a contradiction. Analogously, one can prove that $s_0-d/p_0 > s_1 -d/p_1$ yields a contradiction.
\end{proof}

\begin{rem}\label{Remark 4.15}
	The condition $J_f(p_0) < \infty$ used in Theorem \ref{SobolevEmbeddingImprovement}(i) cannot be dropped. For instance, let $f$ be the radial function whose Fourier-Hankel transform is
	\begin{equation*}
	F_0(t)=
    \left\{\begin{array}{lcl}
    				 t^{-\varepsilon} & \text{ if } & 0 < t \leq 1, \\

                            & & \\
                              t^{-\delta} & \text{ if }  & t >1,
            \end{array}
            \right.
	\end{equation*}
	where $d-d/p_0 \leq \varepsilon < d - d/p_1$ and $\delta > s_1 + d - d/p_1$. Note that $J_f(p_0) = \infty$. Furthermore, applying Theorem \ref{Theorem 3.6} we obtain that $f$ belongs to $B^{s_1,b}_{p_1,q}(\mathbb{R}^d)$ but not to $B^{s_0,b}_{p_0,q}(\mathbb{R}^d)$.
\end{rem}

\subsection{Franke-Jawerth embeddings}\label{subsection4.10}

Let us first recall the Franke-Jawerth embedding theorem for Besov and Sobolev spaces given in Proposition \ref{RecallEmb2}(ii): Let $1 < p_0 < p < p_1 < \infty$ and $-\infty < s_1 < s < s_0 < \infty$ satisfying
\begin{equation*}
	s_0 - \frac{d}{p_0} = s -\frac{d}{p} = s_1 -\frac{d}{p_1}.
\end{equation*}
Let $-\infty < b< \infty$. Then
\begin{equation*}
	B^{s_0,b}_{p_0,p} (\mathbb{R}^d) \hookrightarrow H^{s,b}_p(\mathbb{R}^d) \hookrightarrow B^{s_1,b}_{p_1,p}(\mathbb{R}^d).
\end{equation*}

Quite surprisingly, our next result claims that the previous embeddings become identities between Besov and Sobolev spaces when we deal with $GM$-functions.
\begin{thm}\label{-}
	Let $\frac{2d}{d+1} < p_0 < p < p_1 <\infty, -\infty < s_1 < s < s_0 < \infty$, and $-\infty < b < \infty$.
	\begin{enumerate}[\upshape(a)]
	 \item Let
	\begin{equation*}
	s_0 - \frac{d}{p_0} = s -\frac{d}{p} = s_1 -\frac{d}{p_1}.
\end{equation*}
Suppose that $f \in \widehat{GM}^d$.
\begin{enumerate}[\upshape(i)]
\item If $J_f(p_0)<\infty$, then
	\begin{equation}\label{-1-}
		f \in B^{s_0,b}_{p_0,p}(\mathbb{R}^d) \iff f \in H^{s,b}_{p}(\mathbb{R}^d).
	\end{equation}
\item If $J_f(p) < \infty$, then
	\begin{equation}\label{-2-}
		f \in H^{s,b}_p(\mathbb{R}^d) \iff f \in B^{s_1,b}_{p_1,p}(\mathbb{R}^d).
	\end{equation}
\end{enumerate}
\item Conversely, we have
	\begin{enumerate}[\upshape(i)]
		\item If $\widehat{GM}^d \cap B^{s_0,b}_{p_0,p}(\mathbb{R}^d) = \widehat{GM}^d \cap  H^{s,b}_{p}(\mathbb{R}^d)$ then $s_0-d/p_0 = s-d/p$.
		\item If $\widehat{GM}^d \cap  H^{s,b}_{p}(\mathbb{R}^d)=\widehat{GM}^d \cap B^{s_1,b}_{p_1,p}(\mathbb{R}^d)$ then $s-d/p = s_1-d/p_1$.
	\end{enumerate}
	\end{enumerate}
\end{thm}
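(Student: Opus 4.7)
The plan is to follow the blueprint of Theorem \ref{SobolevEmbeddingImprovement}, combining the characterization of the Besov norm of $\widehat{GM}^d$ functions from Theorem \ref{Theorem 3.6} with the Sobolev characterization from Theorem \ref{Theorem 3.9}. For $f \in \widehat{GM}^d$ and $q=p$, both characterizations take the form ``near-origin term plus tail integral''. A direct computation shows that under the equality $\sigma - d/r = s - d/p$ the tail integral
$$\Bigl(\int_1^\infty t^{\sigma p + dp - dp/r - 1}(1+\log t)^{bp} F_0^p(t)\, dt\Bigr)^{1/p}$$
is independent of which admissible pair $(\sigma,r) \in \{(s_0,p_0), (s,p), (s_1,p_1)\}$ one substitutes, since the exponent collapses to $sp + dp - d - 1$.

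Consequently, for part (a) the three norms reduce to
$$\|f\|_{B^{s_0,b}_{p_0,p}(\mathbb{R}^d)} \asymp J_f(p_0) + \mathcal{I}, \quad \|f\|_{H^{s,b}_p(\mathbb{R}^d)} \asymp J_f(p) + \mathcal{I}, \quad \|f\|_{B^{s_1,b}_{p_1,p}(\mathbb{R}^d)} \asymp J_f(p_1) + \mathcal{I},$$
with the same tail integral $\mathcal{I}$. By the monotonicity (\ref{Jf}) one has $J_f(p) \lesssim J_f(p_0)$ and $J_f(p_1) \lesssim J_f(p)$, so the hypothesis $J_f(p_0) < \infty$ in (i) already forces $J_f(p) < \infty$, and the membership of $f$ in each of the two spaces is governed solely by whether $\mathcal{I} < \infty$. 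The argument for (ii) is identical, with $(p_0,p)$ replaced by $(p,p_1)$.

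For part (b), I would argue by contradiction, in the spirit of Theorem \ref{SobolevEmbeddingImprovement}(ii) and Remark \ref{Remark 4.15}. Suppose $s_0 - d/p_0 \neq s - d/p$ in (i), and consider the two-piece radial Fourier transform
$$F_0(t) = t^{-\varepsilon} \chi_{(0,1]}(t) + t^{-\beta} \chi_{(1,\infty)}(t),$$
with $\varepsilon < d - d/p_0$ (so that both $J_f(p_0)$ and $J_f(p)$ converge) and $\beta$ chosen strictly between $s_0 + d - d/p_0$ and $s + d - d/p$. A direct computation with Theorems \ref{Theorem 3.6} and \ref{Theorem 3.9} then places $f$ in exactly one of the two spaces, contradicting equality. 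The case (ii) is handled by the same construction with $(s_0,p_0)$ replaced by $(s_1,p_1)$ and $\varepsilon < d - d/p$. The main technical point to check is that the constructed $F_0$ lies in $GM$ and satisfies the admissibility condition (\ref{3.4new}) required for $f \in \widehat{GM}^d$; since $F_0$ is piecewise a positive power of $t^{-1}$, monotonicity is immediate and the two convergence conditions in (\ref{3.4new}) follow directly from the choices of $\varepsilon$ and $\beta$.
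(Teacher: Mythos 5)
Your proposal matches the paper's own proof essentially line for line: in part (a) the paper likewise observes that the constant-differential-dimension condition makes the three tail integrals coincide and then invokes (\ref{Jf}), and in part (b) the paper constructs the identical two-piece radial Fourier transform $F_0(t)=t^{-\varepsilon}$ on $(0,1)$, $t^{-\beta}$ on $[1,\infty)$ with the same parameter windows, arguing by contradiction via Theorems \ref{Theorem 3.6} and \ref{Theorem 3.9}. The only cosmetic differences are that you spell out the monotonicity $J_f(p)\lesssim J_f(p_0)$, $J_f(p_1)\lesssim J_f(p)$ and fold the two inequality cases in (b) into a single ``strictly between'' choice of $\beta$, whereas the paper treats $s_0-d/p_0 < s-d/p$ explicitly and waves at the other case.
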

\begin{proof}
	We start by showing (a). Since
	\begin{align*}
		\int_1^\infty t^{s_0 p + d p - d p/p_0 - 1} (1 + \log t)^{b p} F^p_0(t) dt & = \int_1^\infty t^{s p + d p - d  - 1} (1 + \log t)^{b p} F^p_0(t) dt \\
		& \hspace{-3.5cm}= \int_1^\infty t^{s_1 p + d p - d p/p_1 - 1} (1 + \log t)^{b p} F^p_0(t) dt
	\end{align*}
	and (\ref{Jf}) holds, the proofs of (i) and (ii) are immediate consequences of Theorems \ref{Theorem 3.9} and \ref{Theorem 3.6}.
	
	Next we prove (b). Let us assume that $\widehat{GM}^d \cap B^{s_0,b}_{p_0,p}(\mathbb{R}^d) = \widehat{GM}^d \cap  H^{s,b}_{p}(\mathbb{R}^d)$. If $s_0 -d/p_0 < s -d/p$, we set
        		\begin{equation*}
    F_0(t) = \left\{\begin{array}{lcl}
                            t^{-\varepsilon} & ,  & 0 < t  < 1, \\
                            & & \\
                            t^{-\beta} & , & t \geq 1,
            \end{array}
            \right.
	\end{equation*}
        where $\varepsilon < d - d/p_0$ and $s_0 -d/p_0 + d < \beta < s -d/p +d$, and $f(x) = f_0(|x|)$ where $f_0$ is given by (\ref{3.4new+}). According to Theorems \ref{Theorem 3.6} and \ref{Theorem 3.9}, we derive that $f \in B^{s_0,b}_{p_0,p}(\mathbb{R}^d)$ and $f \not \in H^{s,b}_{p}(\mathbb{R}^d)$. This contradicts our assumption. On the other hand, a similar argument shows that $s_0 -d/p_0 > s -d/p$ is not possible. Then we arrive at $s_0 -d/p_0 = s-d/p$, which proves (i).

        The proof of (ii) can be carried out similarly.
\end{proof}

\begin{rem}
	Similarly to Remark \ref{Remark 4.15} one can construct counterexamples showing that the conditions $J_f(p_0)< \infty$ and $J_f(p) < \infty$ given in Theorem \ref{-}(a) are indeed necessary.
\end{rem}

\subsection{Characterization of spaces $B^{s,b}_{p,q}(\mathbb{T}), H^{s,b}_p(\mathbb{T}),$ and $ \mathbf{B}^{s,b}_{p,q}(\mathbb{T})$}\label{Section 3.8}

The goal of this subsection is to obtain the periodic counterparts of the results given in Sections
\ref{SubsectionContinuous}--\ref{subsection4.10}.
 This will be done with the help of general monotone sequences (see \cite{Tikhonov, LiflyandTikhonov} and the
references given there). First, we recall the definition of general monotone sequences and some of their properties.

 A sequence $a=\{a_n\}_{n \in \mathbb{N}}$
is called \emph{general monotone}, written $a \in GM$, if there is a
constant $C > 0$ such that
\begin{equation*}
    \sum_{k=n}^{2n-1} |\Delta a_k| \leq C |a_n| \text{ for all } n
    \in \mathbb{N}.
\end{equation*}
Here $\Delta a_k = a_k - a_{k+1}$ and the constant $C$ is
independent of $n$. It is proved in \cite[p. 725]{Tikhonov} that $a \in GM$
if and only if
\begin{equation}\label{3.16}
    |a_\nu| \lesssim |a_n| \text{ for } n \leq \nu \leq 2n
\end{equation}
and
\begin{equation}\label{3.17}
    \sum_{k=n}^N |\Delta a_k| \lesssim |a_n| + \sum_{k=n+1}^N
    \frac{|a_k|}{k} \text{ for any } n < N.
\end{equation}

Note that conditions (\ref{3.16}) and (\ref{3.17})
    imply that there exists $c > 1$ such that
    \begin{equation}\label{3.17new}
        \sum_{k=n}^\infty |\Delta a_k| \lesssim
        \sum_{k=[n/c]}^\infty \frac{|a_k|}{k}
    \end{equation}
    for any $n \in \mathbb{N}$ (see (\ref{3.12})).

The multiplier property for $GM$ sequences reads as follows.
\begin{lem}[\cite{LiflyandTikhonov}]
\label{Section 6.11: Lemma}
	Let $a= \{a_n\}_{n \in \mathbb{N}}, b= \{b_n\}_{n \in \mathbb{N}} \in GM$, then $a b = \{a_n b_n\}_{n \in \mathbb{N}} \in GM$.
\end{lem}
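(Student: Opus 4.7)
The plan is to verify the defining inequality
\[
\sum_{k=n}^{2n-1} |\Delta(a_k b_k)| \lesssim |a_n b_n|, \qquad n \in \mathbb{N},
\]
by the discrete Leibniz rule $\Delta(a_k b_k) = a_k \Delta b_k + b_{k+1} \Delta a_k$, which splits the sum into two pieces
\[
\sum_{k=n}^{2n-1} |a_k|\,|\Delta b_k| \quad \text{and} \quad \sum_{k=n}^{2n-1} |b_{k+1}|\,|\Delta a_k|.
\]

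The next step is to pull out the ``large'' factor from each piece using the near-constancy property (\ref{3.16}): since $n \leq k \leq 2n-1$ forces $|a_k| \lesssim |a_n|$, and $n+1 \leq k+1 \leq 2n$ forces $|b_{k+1}| \lesssim |b_n|$ (noting $|b_{2n}| \lesssim |b_n|$ is the borderline case of (\ref{3.16})). Thus
\[
\sum_{k=n}^{2n-1} |\Delta(a_k b_k)| \lesssim |a_n| \sum_{k=n}^{2n-1} |\Delta b_k| + |b_n| \sum_{k=n}^{2n-1} |\Delta a_k|.
\]

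Finally, applying the $GM$ condition to $\{a_n\}$ and $\{b_n\}$ separately bounds each inner sum by a constant times $|a_n|$ and $|b_n|$ respectively, giving the claim. Since the argument is a direct transcription of the continuous multiplier statement (Lemma \ref{Lemma 3.1}), there is no essential obstacle; the only point requiring slight attention is the index shift in $b_{k+1}$, which is why I invoke (\ref{3.16}) on both $\{a_k\}$ and its ``shifted'' partner rather than using the monotonicity estimate only once.
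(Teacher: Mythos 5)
Your proof is correct, and it is the canonical argument: Leibniz rule, pull out the dominant factor via the near-constancy property (\ref{3.16}) (taking care of the index shift to $b_{k+1}$, which is still inside $[n,2n]$), and then apply the $GM$ bound to each remaining sum. The paper does not actually prove this lemma — it is cited directly from \cite{LiflyandTikhonov}, just like its continuous analogue Lemma \ref{Lemma 3.1} — so there is nothing to compare against, but your argument is exactly what is intended.
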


Note that Lemma \ref{Section 6.11: Lemma} can be considered as the discrete counterpart of Lemma \ref{Lemma 3.1}.

We will need the following auxiliary result (see \cite{AskeyWainger}).

\begin{lem}\label{Lemma 3.5}
    Let $1 < p < \infty$, and let $\sum_{n=1}^\infty a_n \cos nx$ be
    the Fourier series of $f \in L_1(\mathbb{T})$.
    \begin{enumerate}[\upshape(i)]
        \item If the sequences $\{a_n\}_{n \in \mathbb{N}}$ and $\{\beta_n\}_{n \in
        \mathbb{N}}$ are such that
      $\displaystyle
            \sum_{k=n}^\infty |\Delta a_k| \lesssim \beta_n, n \in
            \mathbb{N},
      $
        then
        \begin{equation*}
            \|f\|^p_{L_p(\mathbb{T})} \lesssim \sum_{n=1}^\infty
            n^{p-2} \beta_n^p.
        \end{equation*}
        \item If $\{a_n\}_{n \in \mathbb{N}}$ is a nonnegative
        sequence, then
        \begin{equation*}
            \sum_{n=1}^\infty \Big(\sum_{k=[n/2]}^n a_k\Big)^p
            n^{-2} \lesssim \|f\|^p_{L_p(\mathbb{T})}.
        \end{equation*}
    \end{enumerate}
\end{lem}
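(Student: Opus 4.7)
The starting point is Abel summation applied to the partial sums $S_N(x) = \sum_{n=1}^N a_n \cos nx$. Setting $D_n(x) = \sum_{k=1}^n \cos kx$ (the conjugate Dirichlet kernel), one has $S_N(x) = \sum_{n=1}^{N-1} \Delta a_n \, D_n(x) + a_N D_N(x)$, and the hypothesis $\sum_{k \geq n} |\Delta a_k| \lesssim \beta_n$ forces $|a_N| \lesssim \beta_N \to 0$, so passing to the limit gives $f(x) = \sum_{n=1}^\infty \Delta a_n \, D_n(x)$ pointwise for $x \in (0,\pi)$. Next I would invoke the classical bound $|D_n(x)| \lesssim \min(n, 1/|x|)$ and split at $N = [1/|x|]$: for the tail, bound $\bigl|\sum_{n > N} \Delta a_n D_n(x)\bigr| \lesssim |x|^{-1} \sum_{n > N} |\Delta a_n| \lesssim |x|^{-1} \beta_N$; for the head, bound $\bigl|\sum_{n \leq N} \Delta a_n D_n(x)\bigr| \leq \sum_{n \leq N} n |\Delta a_n|$. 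After taking $L_p$-norms, one obtains
\begin{equation*}
\|f\|_{L_p(\mathbb{T})}^p \lesssim \int_0^\pi |x|^{-p} \beta_{[1/|x|]}^p\, dx + \int_0^\pi \Bigl(\sum_{n \leq 1/|x|} n\, |\Delta a_n|\Bigr)^p dx,
\end{equation*}
and a change of variable turns the first term into $\sum n^{p-2} \beta_n^p$. For the second term one applies Hardy's inequality (\ref{HardyInequal2*}) after discretizing, using once more the tail hypothesis to rewrite $\sum_{n \leq N} n |\Delta a_n|$ in terms of $\{\beta_n\}$.

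\textbf{Plan for (ii).} Here positivity of $\{a_n\}$ is essential. I would localize on the intervals $I_n = [\pi/(3n), \pi/(2n)]$, whose length is comparable to $1/n^2$. For $x \in I_n$ and $k \in [n/2, n]$ one has $kx \in [\pi/6, \pi/2]$, so $\cos kx \geq \cos(\pi/2 - \delta) \gtrsim 1$ uniformly (with a small adjustment of endpoints if needed; alternatively one uses $\sin kx \gtrsim 1$ and passes to the conjugate function, noting that both $f$ and $\tilde f$ have the same $L_p$ norm up to a constant when $1<p<\infty$). This yields the pointwise lower bound
\begin{equation*}
\Bigl|\sum_{k=[n/2]}^n a_k \cos kx\Bigr| \gtrsim \sum_{k=[n/2]}^n a_k, \qquad x \in I_n.
\end{equation*}
The remaining contributions $\sum_{k < n/2} a_k \cos kx$ and $\sum_{k > n} a_k \cos kx$ do not spoil this bound on average: one argues that the full sum $f$ on $I_n$ cannot be much smaller than the displayed block, e.g. by the Nikolskii-type smoothing that controls low frequencies uniformly and a sign/cancellation argument (or Abel summation plus $|D_k(x)| \lesssim n$) for the high frequencies. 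Integrating and summing over $n$ gives
\begin{equation*}
\sum_n n^{-2} \Bigl(\sum_{k=[n/2]}^n a_k\Bigr)^p \lesssim \sum_n \int_{I_n} |f(x)|^p\, dx \leq \|f\|_{L_p(\mathbb{T})}^p.
\end{equation*}

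\textbf{Main obstacle.} For (i), the delicate point is that the hypothesis controls \emph{tail sums} of $|\Delta a_k|$ rather than $|a_n|$ directly, so one must be careful that the Hardy-type rearrangement leading to $\sum n^{p-2}\beta_n^p$ does not cost a spurious logarithmic factor; the dyadic decomposition $n \asymp 2^j$ together with the tail hypothesis is what makes the two contributions (head and tail) balance exactly. For (ii), the obstacle is isolating the block $[n/2,n]$ from the rest of the series: ensuring that the lower frequencies (which oscillate slowly on $I_n$ and could cancel $f$) and the higher frequencies (which oscillate rapidly but could resonate) are dominated; this is typically handled by an auxiliary argument showing that the block contribution is in fact the dominant one on a positive-measure subset of $I_n$, rather than on all of $I_n$.
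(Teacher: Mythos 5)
The paper gives no proof of this lemma (it is quoted from Askey--Wainger \cite{AskeyWainger}), so your plan has to be judged on its own terms. Part (i) is sound. Abel summation, the bound $|D_n(x)|\lesssim\min(n,1/|x|)$, and the split at $N=[1/|x|]$ are the right ingredients; the one step worth writing out is the head term, where summation by parts against the tails $T_n=\sum_{k\ge n}|\Delta a_k|\lesssim\beta_n$ gives
\begin{equation*}
\sum_{n\le N} n\,|\Delta a_n| \;=\; \sum_{n\le N} T_n - N\,T_{N+1}\;\le\;\sum_{n\le N}\beta_n,
\end{equation*}
after which the weighted Hardy inequality $\sum_N N^{-2}\bigl(\sum_{n\le N}\beta_n\bigr)^p\lesssim\sum_n n^{p-2}\beta_n^p$ (valid for $p>1$ because the weight exponent $-2<-1$) closes the estimate with no logarithmic loss, which settles the worry you raised.

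Part (ii) has a genuine gap, and it is not where you think it is. On an interval $I_n$ of length $\asymp n^{-2}$ near $x\asymp1/n$ the low frequencies are harmless: for $k\lesssim n$ one has $kx<\pi/2$, so $\cos kx>0$ and they reinforce the block. The real obstruction is the high frequencies, and with only $a_k\ge0$ it cannot be removed. Concretely, take $a_k=1$ for $k\in[n/2,n]$ and one additional coefficient $a_N\asymp n$ with $N\approx4n$, so that $Nx\approx\pi$ on all of $I_n$; then $a_N\cos Nx\approx-a_N$ cancels the block on essentially the whole interval (both terms vary by only $O(1)$ across $I_n$, while the block has size $\asymp n$). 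So neither $|f|\gtrsim\sum_{k=[n/2]}^n a_k$ on $I_n$ nor the weaker ``positive-measure subset'' version holds; the fixes you list (Abel summation, Nikolskii smoothing, sign arguments) all silently require decay or variation control on $\{a_k\}$ beyond nonnegativity, which the hypothesis does not give.

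The missing idea is iterated integration, and it is precisely what makes block isolation legitimate. Set $F_1(x)=\int_0^x f$ and $F_2(x)=\int_0^x F_1$; then
\begin{equation*}
F_2(x)=\sum_{k\ge1}a_k\,\frac{1-\cos kx}{k^2},
\end{equation*}
a series of \emph{nonnegative} terms, so all cancellation disappears. For $x\in[\pi/(4(n+1)),\pi/(4n)]$ and $k\in[n/2,n]$ one has $kx$ in a fixed compact subinterval of $(0,\pi/4]$, hence $1-\cos kx\gtrsim1$ and $F_2(x)/x^2\gtrsim\sum_{k=[n/2]}^n a_k$. Raising to the $p$-th power, integrating over these intervals, and summing in $n$ produces the left-hand side of (ii). Two applications of the integral Hardy inequality (valid since $p>1$) give $\|F_2(x)/x^2\|_{L_p(0,\pi)}\lesssim\|F_1(x)/x\|_{L_p(0,\pi)}\lesssim\|f\|_{L_p}$, and the proof is complete. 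The positivity of $1-\cos$, available only after integrating twice, is the step your direct pointwise approach cannot supply.
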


\begin{thm}\label{TheoremBesovGMPer}
	Let $1 < p < \infty, 0 < q \leq \infty$, and $s,b \in \mathbb{R}$.
Let the Fourier series of
    $f \in L_1(\mathbb{T})$ be given by
	 \begin{equation*}
	 f(x) \sim \sum_{n=1}^\infty (a_n \cos n x + b_n \sin nx)
	 \end{equation*}
    where  $\{a_n\}_{n \in \mathbb{N}}, \{b_n\}_{n \in \mathbb{N}}$ are  nonnegative general monotone sequences.
 Then
	\begin{equation}\label{BesovGMPer}
		\|f\|_{B^{s,b}_{p,q}(\mathbb{T})} \asymp \left(\sum_{n=1}^\infty n^{sq + q -q/p -1} (1 + \log n)^{b q} (a_{n}^q + b_n^q)\right)^{1/q}.
	\end{equation}
\end{thm}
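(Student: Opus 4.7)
The plan is to reduce the Fourier-analytic definition of $\|f\|_{B^{s,b}_{p,q}(\mathbb{T})}$ to a sum over $a_n, b_n$ by estimating each dyadic block $(\varphi_j \widehat{f})^\vee$ in $L_p(\mathbb{T})$ via the Askey--Wainger lemma, then exploiting the general monotone stability $a_n \asymp a_{2^j}$ on dyadic intervals to pass to the continuous-type sum on the right-hand side of \eqref{BesovGMPer}.

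First I would write, using the reality of $f$ and the symmetry $\varphi_j(-k)=\varphi_j(k)$,
\begin{equation*}
(\varphi_j \widehat{f})^\vee(x)=\sum_{n=1}^{\infty}\varphi_j(n)\bigl(a_n\cos nx+b_n\sin nx\bigr)+\varphi_j(0)\widehat{f}(0).
\end{equation*}
Since $\{\varphi_j(n)\}_{n\in\mathbb N}$ is itself a $GM$ sequence (it is nonnegative, supported essentially in the annulus $2^{j-1}\le n\le 2^{j+1}$, and satisfies \eqref{3.16}), Lemma~\ref{Section 6.11: Lemma} shows that $\{\varphi_j(n) a_n\}$ and $\{\varphi_j(n) b_n\}$ are nonnegative $GM$ sequences. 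Applying Lemma~\ref{Lemma 3.5}(ii) for the lower bound, and Lemma~\ref{Lemma 3.5}(i) together with the tail estimate \eqref{3.17new} for the upper bound, I would obtain the Boas-type equivalence
\begin{equation*}
\|(\varphi_j \widehat{f})^\vee\|_{L_p(\mathbb{T})}^p \asymp \sum_{n=1}^\infty n^{p-2}\,\varphi_j(n)^p\bigl(a_n^p+b_n^p\bigr).
\end{equation*}
Because $\varphi_j(n)\asymp 1$ for $n\asymp 2^j$ and vanishes otherwise, and because \eqref{3.16} yields $a_n\asymp a_{2^j}$, $b_n\asymp b_{2^j}$ whenever $n\asymp 2^j$, the right-hand side is $\asymp 2^{j(p-1)}(a_{2^j}^p+b_{2^j}^p)$, giving the block estimate $\|(\varphi_j \widehat{f})^\vee\|_{L_p(\mathbb{T})}\asymp 2^{j(1-1/p)}(a_{2^j}+b_{2^j})$.

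Plugging this into the definition \eqref{Section2:new} of $\|f\|_{B^{s,b}_{p,q}(\mathbb{T})}$,
\begin{equation*}
\|f\|_{B^{s,b}_{p,q}(\mathbb{T})}^q \asymp \sum_{j=0}^\infty 2^{j(s+1-1/p)q}(1+j)^{bq}\bigl(a_{2^j}^q+b_{2^j}^q\bigr),
\end{equation*}
and a second use of $a_n\asymp a_{2^j}$, $b_n\asymp b_{2^j}$ on the dyadic interval $2^j\le n<2^{j+1}$ converts this dyadic sum into the claimed continuous sum
\begin{equation*}
\sum_{n=1}^\infty n^{sq+q-q/p-1}(1+\log n)^{bq}(a_n^q+b_n^q),
\end{equation*}
with the obvious modification if $q=\infty$.

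The main obstacle will be the block estimate: Lemma~\ref{Lemma 3.5} as stated only furnishes one-sided bounds phrased in terms of a majorant $\beta_n$ for the tail variation (upper) and of partial sums $\sum_{k=[n/2]}^n a_k$ (lower), so combining them requires using the $GM$ structure to show that both expressions are, for our sequence, comparable to $\sum_n n^{p-2}\varphi_j(n)^p(a_n^p+b_n^p)$. A mild care is also needed at $j=0$, where $\varphi_0$ covers low frequencies and a separate trivial estimate handles the finitely many terms.
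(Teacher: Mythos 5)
Your plan runs into a genuine obstruction at the very first step of the block estimate. You claim that $\{\varphi_j(n)\}_{n\in\mathbb{N}}$ is a $GM$ sequence and then invoke the multiplier lemma (Lemma~\ref{Section 6.11: Lemma}) to conclude that $\{\varphi_j(n)a_n\}$ is $GM$. That claim is false. The sequence $\{\varphi_j(n)\}$ vanishes identically for $n\le 2^{j-1}$ and for $n\ge 2^{j+1}$, while being nonzero with nonzero increments on $(2^{j-1},2^{j+1})$. Taking $n$ with $2^{j-2}\le n<2^{j-1}$ gives $\varphi_j(n)=0$, yet the window $\{n,\dots,2n-1\}$ reaches into the support of $\varphi_j$, so $\sum_{k=n}^{2n-1}|\Delta\varphi_j(k)|>0=C|\varphi_j(n)|$ and the $GM$ condition~\eqref{3.1} (in its discrete form) is violated. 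Consequently you cannot appeal to the multiplier lemma, and the Boas-type two-sided estimate for $\|(\varphi_j\widehat f)^\vee\|_{L_p(\mathbb{T})}$ does not follow as stated. The obstacle you flagged at the end of your write-up (combining the two one-sided bounds in Lemma~\ref{Lemma 3.5}) is a secondary concern; the primary gap is that $\{\varphi_j(n)a_n\}$ is simply not known to be $GM$.

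The paper sidesteps this entirely by \emph{not} working with the smooth resolution of unity. It first invokes Lemma~\ref{LemmaBesovGMPer}, a Lizorkin-type representation which replaces $\varphi_j$ by the characteristic functions of sharp dyadic blocks $K_j=\{k:2^{j-1}\le|k|<2^j\}$ in the $B^{s,b}_{p,q}(\mathbb{T})$-norm. It then proves the two-sided block estimate \eqref{1new-} for the \emph{truncated} cosine sum $\|\sum_{n=2^j}^{2^{j+1}-1}a_n\cos nx\|_{L_p(\mathbb{T})}$ by a direct hands-on computation: the lower bound from Lemma~\ref{Lemma 3.5}(ii) uses only nonnegativity plus \eqref{3.16}, and the upper bound from Lemma~\ref{Lemma 3.5}(i) is obtained by estimating the tail variation of the finite truncated sequence explicitly from the $GM$ structure of the original $\{a_n\}$ (no $GM$ property of the truncated sequence is claimed or needed). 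The final passage from a dyadic sum to the sum over $n$ via \eqref{3.16} is the same as in your proposal. If you want to retain the smooth cutoff route, you would instead have to estimate $\sum_{k\ge n}|\Delta(\varphi_j(k)a_k)|$ by the Leibniz rule for differences, controlling the $a_{k+1}|\Delta\varphi_j(k)|$ contribution by the bounded variation of $\varphi_j$ on $[2^{j-1},2^{j+1}]$ together with \eqref{3.16}, and verify separately that $\varphi_j$ is bounded below on a fixed-ratio neighbourhood of $2^j$ so as to salvage the lower bound from Lemma~\ref{Lemma 3.5}(ii) — but this is noticeably more work than the paper's route through Lemma~\ref{LemmaBesovGMPer}.
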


The proof of Theorem \ref{TheoremBesovGMPer} relies on the following (Lizorkin-type) representation of $B^{s,b}_{p,q}(\mathbb{T})$. Further details and references can be found in \cite[Section 3.5.3]{SchmeisserTriebel}.

\begin{lem}\label{LemmaBesovGMPer}
	Let $1 < p < \infty, 0 < q \leq \infty$, and $s,b \in \mathbb{R}$. We have that
	\begin{equation*}
		\|f\|_{B^{s,b}_{p,q}(\mathbb{T})} \asymp \left(\sum_{j=0}^\infty 2^{j s q} (1+j)^{b q} \Big\|\sum_{k \in K_j} \widehat{f}(k) e^{i k x}\Big\|^q_{L_p(\mathbb{T})}\right)^{1/q}
	\end{equation*}
	where $K_0 =\{0\}$ and $K_j = \{k : 2^{j-1} \leq |k| < 2^j\}$ for $j \in \mathbb{N}$.
\end{lem}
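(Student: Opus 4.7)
The plan is to prove the equivalence of two Littlewood-Paley-type decompositions of $B^{s,b}_{p,q}(\mathbb{T})$: the smooth one used in the definition, namely $S_j f := (\varphi_j \widehat f)^\vee$, and the sharp one $\Delta_j f(x) := \sum_{k \in K_j} \widehat f(k) e^{ikx}$ appearing on the right-hand side of the displayed equivalence. The strategy is to pass between the two block operators using their finite overlap together with uniform $L_p(\mathbb{T})$-bounds, and then sum against the weight $w_j := 2^{jsq}(1+j)^{bq}$.

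First, I would establish two uniform multiplier estimates. On the one hand, each sharp block $\Delta_j$ is a finite combination of partial Fourier sum operators $f\mapsto \sum_{|k|\le N}\widehat f(k)e^{ikx}$, which by the classical M.~Riesz theorem on the boundedness of the conjugate function (equivalently, of the Riesz projection on $\mathbb{T}$) are uniformly bounded on $L_p(\mathbb{T})$ for $1<p<\infty$; hence $\|\Delta_j g\|_{L_p(\mathbb{T})}\lesssim \|g\|_{L_p(\mathbb{T})}$ uniformly in $j$. On the other hand, $S_j$ is the periodic Fourier multiplier with symbol $k\mapsto \varphi_j(k)=\varphi_0(2^{-j}k)-\varphi_0(2^{-j+1}k)$; since $\varphi_0\in\mathcal S(\mathbb{R}^d)$, the periodization of $(\varphi_0(2^{-j}\cdot))^\vee$ has $L_1(\mathbb{T})$-norm bounded independently of $j$ (by a scaling argument reducing to the $L_1(\mathbb{R}^d)$-norm of the Schwartz function $(\varphi_0)^\vee$, together with Poisson summation), so $\|S_j g\|_{L_p(\mathbb{T})}\lesssim \|g\|_{L_p(\mathbb{T})}$ uniformly in $j$ for all $1\le p\le\infty$.

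Second, I would use the explicit supports coming from \eqref{SmoothFunction}--\eqref{resolution}: $\varphi_j$ is supported in the dyadic annulus $\{\xi:2^{j-1}\le |\xi|\le 2^{j+1}\}$, and $\varphi_{j-1}(k)+\varphi_j(k)=1$ for $k\in K_j$. This yields two finite-overlap identities,
\begin{equation*}
\Delta_j f \;=\; \Delta_j\bigl(S_{j-1}f+S_j f\bigr), \qquad S_j f \;=\; S_j\bigl(\Delta_j f+\Delta_{j+1}f\bigr),
\end{equation*}
with the convention $S_{-1}=0$. Combined with the uniform $L_p$-bounds above, this gives, for all $j\ge 0$,
\begin{equation*}
\|\Delta_j f\|_{L_p(\mathbb{T})}\lesssim \|S_{j-1}f\|_{L_p(\mathbb{T})}+\|S_j f\|_{L_p(\mathbb{T})}, \qquad \|S_j f\|_{L_p(\mathbb{T})}\lesssim \|\Delta_j f\|_{L_p(\mathbb{T})}+\|\Delta_{j+1}f\|_{L_p(\mathbb{T})}.
\end{equation*}

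Third, I would sum these inequalities in the weighted quasi-norm $\ell_q(w_j)$. Since $w_{j\pm 1}\asymp w_j$ uniformly in $j$, shifting the index produces only a bounded constant; the quasi-triangle inequality in $\ell_q$ (if $q<1$, one uses the $q$-inequality instead) then gives the required two-sided comparison between $\bigl(\sum_j w_j\|S_j f\|_{L_p}^q\bigr)^{1/q}$ and $\bigl(\sum_j w_j\|\Delta_j f\|_{L_p}^q\bigr)^{1/q}$, which by the definition of the Besov norm in \eqref{BesovPerDef} is exactly the statement of the lemma.

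The main obstacle is the uniform $L_p(\mathbb{T})$-boundedness of the smooth block operators $S_j$ with constants independent of $j$: periodic Mihlin-type multiplier theorems are not usually packaged in the precise form needed, so one either quotes a de Leeuw-type transference result to deduce it from the $\mathbb{R}^d$-case or, more directly, verifies by Poisson summation and scaling that the convolution kernel of $S_j$ has uniformly bounded $L_1(\mathbb{T})$-norm, reducing the bound to the elementary estimate $\|(\varphi_0)^\vee\|_{L_1(\mathbb{R}^d)}<\infty$.
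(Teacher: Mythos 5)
Your argument is correct: the finite-overlap identities are valid (since $\varphi_0$ vanishes on $\{|x|=2\}$ by continuity, only $\varphi_{j-1},\varphi_j$ survive on $K_j$, and $\varphi_j(k)\neq 0$ only for $k\in K_j\cup K_{j+1}$), the uniform $L_p(\mathbb{T})$-bounds for the sharp blocks (M.~Riesz) and the smooth blocks (uniform $L_1(\mathbb{T})$ kernel bound via periodization and scaling) are exactly what is needed, and the weighted $\ell_q$ summation with $w_{j\pm1}\asymp w_j$ closes the loop. The paper itself gives no proof and only cites \cite[Section 3.5.3]{SchmeisserTriebel}; your proof is precisely the standard Fourier-multiplier argument that the citation covers, so it is essentially the same approach.
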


\begin{rem}
	Lemma \ref{LemmaBesovGMPer} claims that one can replace the classical smooth resolution of unity $\{\varphi_j\}_{j \in \mathbb{N}_0}$ given by (\ref{SmoothFunction}), (\ref{resolution}) in the definition of the space $B^{s,b}_{p,q}(\mathbb{T})$ (see (\ref{Section2:new})) by characteristic functions of the dyadic blocks $K_j$.
\end{rem}

\begin{proof}[Proof of Theorem \ref{TheoremBesovGMPer}]

	We first remark that it suffices to show (\ref{BesovGMPer}) for $f(x) \sim \sum_{n=1}^\infty a_n \cos n x$ with nonnegative $\{a_n\}_{n \in \mathbb{N}} \in GM$.
	
	We will prove that
	\begin{equation}\label{1new-}
		a_{2^{j+1}} 2^{j(1-1/p)} \lesssim \Big\|\sum_{n=2^j}^{2^{j+1}-1} a_n \cos n x\Big\|_{L_p(\mathbb{T})} \lesssim a_{2^j} 2^{j(1-1/p)}
	\end{equation}
	for $j \in \mathbb{N}_0$, see also \cite{Tikhonov}.
	
	By Lemma \ref{Lemma 3.5}(ii) and (\ref{3.16}) we have
	\begin{align*}
		\Big\|\sum_{n=2^j}^{2^{j+1}-1} a_n \cos n x\Big\|_{L_p(\mathbb{T})}^p & \gtrsim
\sum_{n=[\frac43 2^j]}^{2^{j+1}} \left(\sum_{k=
[\frac34 n]}^n a_k\right)^p n^{-2}  \gtrsim \sum_{n=[\frac43 2^j]}^{2^{j+1}} a_n^p n^{p-2}
 \gtrsim a_{2^{j+1}}^p 2^{j(p-1)}.
	\end{align*}
	This gives the lower estimate in (\ref{1new-}).
To obtain the upper bound, we apply Lemma \ref{Lemma 3.5}(i), together with the definition of $GM$ sequence:
	\begin{equation*}
		\Big\|\sum_{n=2^j}^{2^{j+1}-1} a_n \cos n x\Big\|_{L_p(\mathbb{T})}^p  \lesssim
\left(a_{2^j} + \sum_{n=2^j}^{2^{j+1}} \frac{a_n}{n}\right)^p 2^{j (p-1)}+
\sum_{n=2^j}^{2^{j+1}-2} n^{p-2} a_n^p+
2^{j(p-2)} a_{2^{j+1}-2}^p.
	\end{equation*}
By the monotonicity property (\ref{3.16}) the latter can be bounded by $\displaystyle a_{2^j}^p 2^{j(p-1)}.$

	With (\ref{1new-}) in hand, we complete the proof by using Lemma \ref{LemmaBesovGMPer}  and (\ref{3.16}).
\end{proof}

Using (\ref{1new-}), Lemma \ref{Section 6.11: Lemma}, and the Littlewood-Paley decomposition
together with property (\ref{3.16}),
  we can obtain the Hardy-Littlewood theorem for Sobolev spaces.

\begin{thm}\label{Section 6.11: Theorem}
	Let $1 < p < \infty$ and $-\infty < s, b < \infty$. Let the Fourier series of $f \in L_1(\mathbb{T})$ be given by
	\begin{equation*}
		f(x) \sim \sum_{n=1}^\infty (a_n \cos
    nx + b_n \sin nx), 
	\end{equation*}
where  $\{a_n\}_{n \in \mathbb{N}}, \{b_n\}_{n \in \mathbb{N}}$ are nonnegative general monotone sequences.
	Then,
	\begin{equation*}
		\|f\|_{H^{s,b}_p(\mathbb{T})}^p \asymp \sum_{n=1}^\infty n^{s p +p-2} (1 + \log n)^{b p} (a_n^p + b_n^p).
	\end{equation*}
	In particular,
	\begin{equation}\label{HLPer}
		\|f\|_{L_p(\mathbb{T})}^p \asymp \sum_{n=1}^\infty n^{p-2} (a_n^p + b_n^p).
	\end{equation}
\end{thm}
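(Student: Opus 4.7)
My proof plan splits into two stages: first I would establish the $L_p$-case (\ref{HLPer}), then deduce the general $H^{s,b}_p$ statement by a lifting argument. For the lifting, set $\lambda_n = (1+n^2)^{s/2}(1+\log(1+n^2))^b$ and $\tilde a_n = \lambda_n a_n$, $\tilde b_n = \lambda_n b_n$, so that the function $\tilde f \sim \sum (\tilde a_n \cos nx + \tilde b_n \sin nx)$ satisfies $\|\tilde f\|_{L_p(\mathbb{T})} = \|f\|_{H^{s,b}_p(\mathbb{T})}$ directly from the definition of the periodic Sobolev norm. The sequence $\{\lambda_n\}$ is easily seen to be $GM$: writing $\lambda_n \asymp n^s(1+\log n)^b$ for $n\geq 1$ and applying the mean value theorem gives $|\Delta \lambda_n|\lesssim \lambda_n/n$, so $\sum_{k=n}^{2n-1}|\Delta\lambda_k|\lesssim \lambda_n$. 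The multiplier property (Lemma \ref{Section 6.11: Lemma}) then ensures $\{\tilde a_n\}, \{\tilde b_n\}$ are nonnegative $GM$ sequences. Applying (\ref{HLPer}) to $\tilde f$ and using $\lambda_n\asymp n^s(1+\log n)^b$ for $n\geq 1$ yields the desired asymptotic.

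For the proof of (\ref{HLPer}) itself, I would first reduce to the pure cosine case by splitting $f = f^c + f^s$ into its even and odd parts; both projections are bounded on $L_p(\mathbb{T})$ for $1<p<\infty$, so $\|f\|_{L_p}^p\asymp\|f^c\|_{L_p}^p+\|f^s\|_{L_p}^p$. For $f^c \sim \sum a_n\cos nx$ with $\{a_n\}$ a nonnegative $GM$ sequence, the lower bound
\[
\sum_n n^{p-2} a_n^p \lesssim \|f^c\|_{L_p}^p
\]
follows directly from Lemma \ref{Lemma 3.5}(ii) combined with the monotonicity property (\ref{3.16}), which gives $\sum_{k=[n/2]}^n a_k\asymp n a_n$. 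For the upper bound, I would invoke (\ref{3.17new}) to obtain $\sum_{k\geq n}|\Delta a_k|\lesssim \sum_{k\geq [n/c]} a_k/k =: \beta_n$, then apply Lemma \ref{Lemma 3.5}(i) to get $\|f^c\|_{L_p}^p\lesssim \sum_n n^{p-2}\beta_n^p$. A final application of the discrete Hardy inequality (\ref{HardyInequal2*}) with $\lambda=1-1/p>0$, $q=p$, taking $b_k = a_k/k$, yields
\[
\sum_n n^{p-2}\Big(\sum_{k\geq n/c} \tfrac{a_k}{k}\Big)^p \lesssim \sum_n n^{p-2} a_n^p,
\]
closing the estimate. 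The sine case is handled identically, which completes (\ref{HLPer}).

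The main technical obstacle is matching the Askey–Wainger inequalities of Lemma \ref{Lemma 3.5} with the $GM$ structure: this is precisely where both non-trivial consequences (\ref{3.16}) and (\ref{3.17new}) of the $GM$ condition enter the argument, together with the Hardy inequality (\ref{HardyInequal2*}) to collapse the tail sum back to $\sum n^{p-2} a_n^p$. An alternative route to (\ref{HLPer}) could be based on (\ref{1new-}) to evaluate the $L_p$-norm of each dyadic block $f_j$ as $\asymp 2^{j(1-1/p)}(a_{2^j}+b_{2^j})$ combined with a Littlewood–Paley-type square-function identity and (\ref{3.16}) to transfer the resulting dyadic sum to a continuous one. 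I would prefer the Lemma \ref{Lemma 3.5} approach above, however, as it avoids the subtle question of how square functions relate to $\ell^p$-sums for $GM$-expansions, which would otherwise require exploiting the concentration of $f_j$ in physical space.
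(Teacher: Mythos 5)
Your proof is correct, and your overall strategy (reduce to the $L_p$ case via the lifting operator and the $GM$-multiplier Lemma \ref{Section 6.11: Lemma}, then prove the $L_p$ Hardy--Littlewood estimate) matches the paper's intent. The point of divergence is how (\ref{HLPer}) is established. The paper's one-sentence proof invokes the dyadic-block estimate (\ref{1new-}) together with the Littlewood--Paley decomposition and (\ref{3.16}), i.e.\ it controls $\|f\|_{L_p}$ via the square function $\|(\sum_j |f_j|^2)^{1/2}\|_{L_p}$ and the block-wise norms $\|f_j\|_{L_p}\asymp 2^{j(1-1/p)}a_{2^j}$. You instead apply Lemma \ref{Lemma 3.5} directly to the full series, combining (\ref{3.16}) for the lower bound and (\ref{3.17new}) plus the discrete Hardy inequality (\ref{HardyInequal2*}) for the upper bound, explicitly opting out of the square-function route to avoid the issue of passing from $\|(\sum_j |f_j|^2)^{1/2}\|_{L_p}^p$ to $\sum_j\|f_j\|_{L_p}^p$, which for $GM$ series requires additional localization in physical space. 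Your route is more elementary and self-contained; it is also essentially the tail estimate (\ref{3.19}) the paper proves inside Theorem \ref{Theorem 3.4} (take $n=1$ there), and is the route attributed in the paper's remark to Tikhonov's original proof of (\ref{HLPer}). One small point to flag: Lemma \ref{Lemma 3.5} is stated only for cosine series, so the sine case requires its (standard) sine-series analogue from Askey--Wainger rather than being literally "identical."
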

\begin{rem}
	Equivalence  (\ref{HLPer}) was already proved in \cite[Theorem 4.2]{Tikhonov}.
\end{rem}

Since periodic spaces $\mathbf{B}^{s,b}_{p,q}(\mathbb{T})$ and $B^{s,b}_{p,q}(\mathbb{T})$ coincide if $s > 0$,  Theorem \ref{TheoremBesovGMPer} gives a description of functions from $\mathbf{B}^{s,b}_{p,q}(\mathbb{T}), s > 0,$ in terms of their Fourier coefficients. Alternatively, we give a new approach to characterize $\mathbf{B}^{s,b}_{p,q}(\mathbb{T})$ which also covers the limiting case $s=0$. The following result can be considered as the periodic counterpart of Theorem \ref{Theorem 3.2}.

\begin{thm}\label{Theorem 3.4}
    Let $1 < p < \infty, 0 < q \leq \infty$, and $-\infty < b < \infty$. Let the Fourier series of
    $f \in L_1(\mathbb{T})$ be given by
    \begin{equation*}
     	f(x) \sim \sum_{n=1}^\infty (a_n \cos
    nx + b_n \sin nx),
    \end{equation*}
    where  $\{a_n\}_{n \in \mathbb{N}}, \{b_n\}_{n \in \mathbb{N}}$ are nonnegative general monotone sequences. If $s > 0$, then
    \begin{equation}\label{char}
        \|f\|_{\emph{\textbf{B}}^{s,b}_{p,q}(\mathbb{T})} \asymp
        \left(\sum_{n=1}^\infty (n^{s+1-1/p} (1 + \log n)^b (a_n + b_n))^q
        \frac{1}{n}\right)^{1/q}.
    \end{equation}
    In the case  $s=0$ we have
    \begin{equation}\label{char2}
        \|f\|_ {\emph{\textbf{B}}^{0,b}_{p,q}(\mathbb{T})} \asymp
        \left(\sum_{n=1}^\infty \left[(1 + \log n)^b \left(\sum_{k=n}^\infty k^{p-2} (a_k^p + b_k^p)\right)^{1/p}\right]^q
        \frac{1}{n}\right)^{1/q}.
    \end{equation}
\end{thm}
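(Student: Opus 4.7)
The plan is to follow the same scheme used in the proof of Theorem \ref{Theorem 3.2}, replacing the continuous Fourier--Hankel representation of $\omega_{k}(f,t)_p$ by its periodic analogue. So the first task is to establish, for a $GM$-series $f(x)\sim\sum(a_{n}\cos nx+b_{n}\sin nx)$ with nonnegative general monotone coefficients, the equivalence
\begin{equation}\label{plan:mod}
\omega_{k}(f,1/n)_{p}^{p}\asymp n^{-kp}\sum_{\nu=1}^{n}\nu^{kp+p-2}(a_{\nu}^{p}+b_{\nu}^{p})+\sum_{\nu=n}^{\infty}\nu^{p-2}(a_{\nu}^{p}+b_{\nu}^{p}),
\end{equation}
for $1<p<\infty$, $k\in\mathbb{N}$. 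The proof of \eqref{plan:mod} is the periodic counterpart of \cite[Corollary 4.1]{GorbachevTikhonov}: the upper bound follows from Lemma \ref{Lemma 3.5}(i) applied to the dyadic blocks of $f-S_{n}f$ and of its $k$-th derivative, using the multiplier property (Lemma \ref{Section 6.11: Lemma}) to keep the blocks in the $GM$ class after differentiation and after multiplying by $(e^{ih\cdot}-1)^{k}$; the lower bound uses Lemma \ref{Lemma 3.5}(ii) on the same blocks, combined with the monotonicity property \eqref{3.16}.

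Once \eqref{plan:mod} is available, I would substitute it into
$$|f|_{\mathbf{B}^{s,b}_{p,q}(\mathbb{T})}^{q}\asymp\sum_{n=1}^{\infty}\bigl(n^{s}(1+\log n)^{b}\omega_{k}(f,1/n)_{p}\bigr)^{q}\frac{1}{n},$$
choose $k>s$, and split the resulting expression into the two pieces coming from the two summands on the right of \eqref{plan:mod}. The analysis is then exactly parallel to the continuous case. For the first piece one applies the discrete Hardy inequality (\ref{HardyInequal2**}) when $q\ge p$, and when $q<p$ one uses the monotonicity \eqref{3.16} together with the elementary embedding $\ell^{p}\hookrightarrow\ell^{q}$ (the discrete analogue of \eqref{newestimate}) to reduce to the same Hardy inequality. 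The symmetric Hardy inequality handles the second piece.

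This produces, for $s>0$, the bound
$$\|f\|_{\mathbf{B}^{s,b}_{p,q}(\mathbb{T})}\lesssim\|f\|_{L_{p}(\mathbb{T})}+\left(\sum_{n=1}^{\infty}(n^{s+1-1/p}(1+\log n)^{b}(a_{n}+b_{n}))^{q}\frac{1}{n}\right)^{1/q},$$
and Theorem \ref{Section 6.11: Theorem} together with \eqref{HLPer} shows that $\|f\|_{L_{p}(\mathbb{T})}$ is dominated by the sum on the right. The reverse inequality is obtained by keeping only the dyadic diagonal $\nu\asymp n$ of the second summand in \eqref{plan:mod} and invoking \eqref{3.16}. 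For $s=0$ the first summand in \eqref{plan:mod} is absorbed into the second by the discrete counterpart of the argument in \eqref{3.7}--\eqref{Section6:new}, so only the tail sum survives and one obtains \eqref{char2} directly.

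The main obstacle is the rigorous proof of \eqref{plan:mod}, since the differentiation and the difference operator must be shown to preserve the general monotone structure on each dyadic block; this is where Lemma \ref{Section 6.11: Lemma} and the two-sided estimate \eqref{1new-} play the decisive role. Once \eqref{plan:mod} is established, the rest is a routine repetition of the Hardy-inequality bookkeeping from the proof of Theorem \ref{Theorem 3.2}, with the extra subtlety that for $s=0$ and $q\ne p$ the logarithmic exponent does not jump, in agreement with Remark \ref{RemarkContinuous}(iii).
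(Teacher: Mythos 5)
Your plan would work but it is genuinely different from the paper's route. You propose to first establish a periodic two-sided estimate for $\omega_k(f,1/n)_p$ of Gorbachev--Tikhonov type and then repeat the Hardy-inequality bookkeeping of Theorem \ref{Theorem 3.2}. The paper avoids the modulus of smoothness altogether: it invokes the approximation characterization of $\mathbf{B}^{s,b}_{p,q}(\mathbb{T})$ (Lemma \ref{LemmaBesovApprox}) together with the realization $E_n^*(f)_{L_p(\mathbb{T})} \asymp \|f - S_{n-1}(f)\|_{L_p(\mathbb{T})}$, which is available for $1<p<\infty$. The only estimate needed is then the two-sided bound (\ref{3.19}) for the $L_p$-norm of the untransformed tail $\sum_{k\ge n}a_k\cos kx$, which uses Lemma \ref{Lemma 3.5} together with the monotonicity property (\ref{3.16}) and (\ref{3.17new}) directly, with no multiplier lemma, no differentiation, and no difference operator $(e^{ih\cdot}-1)^k$ to track. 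This yields (\ref{3.20}), which is (\ref{char2}) outright, and then (\ref{char}) for $s>0$ follows by Hardy's inequality and (\ref{3.16}) exactly as in your final step. In short, the paper reaches the key tail identity in one step and keeps $s=0$ and $s>0$ in a single formula, whereas your route front-loads the nontrivial lemma \eqref{plan:mod} (which you correctly flag as the main obstacle) only to then recombine its two terms into the same tail expression. Your plan is implementable---periodic analogues of \eqref{plan:mod} for GM series do exist in the literature cited by the paper---but the paper's argument is appreciably shorter and avoids the multiplier/derivative bookkeeping you foresee as the hard part.
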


\begin{rem}
(i) Convergence of sums in (\ref{char}) and (\ref{char2}) implies that
$\displaystyle
\sum_{n=1}^\infty n^{p-2} (a_n^p + b_n^p)<\infty
 $ 
which is equivalent to $f\in L_p(\T)$ (see (\ref{HLPer})).
\\
(ii) Of course, (\ref{char2}) is only interesting for $b \geq -1/q$ if $q < \infty$ ($b > 0$ if $q=\infty$). See Remark \ref{RemarkContinuous}(ii).
\\
(iii) Similar to Remark \ref{RemarkContinuous}(iii), the example  $f(x) \sim \sum_{n=1}^\infty a_n \cos
    nx $ with $a_n=n^{1/p-1} (1 + \log n)^{-\beta}$ for certain $\beta$ shows that, in general, the sum in (\ref{char2}) is not equivalent to the sum
$$\left(\sum_{n=1}^\infty (n^{1-1/p} (1 + \log n)^{b+1/q} a_n)^q
        \frac{1}{n}\right)^{1/q}.$$
 The latter characterizes, by (\ref{BesovGMPer}), the Besov norm $\|f\|_{B^{0,b+1/q}_{p,q}(\mathbb{T})}$.
\\
(iv) Some partial results of Theorem \ref{Theorem 3.4} can be also found in \cite{etna}. For the case $p=\infty$ see the papers \cite{Tikhonov-hung, Tikhonov-jat}. Note also that the statements of Theorem \ref{Theorem 3.4} hold in fact for all general monotone sequences
$\{a_n\}_{n \in \mathbb{N}}, \{b_n\}_{n \in \mathbb{N}}$, not necessarily  nonnegative. This can be observed using the results from the  recent paper \cite{tikhonov-stu}.
\end{rem}

    For $n \in \mathbb{N}$, let $E_n^\ast(f)_{L_p(\mathbb{T})}$\index{\bigskip\textbf{Numbers, relations}!$E_n^\ast(f)_{L_p(\mathbb{T})}$}\label{ERRORPER} be the $L_p$-best approximation of $f$ by trigonometric polynomials $t_{n-1}$ of degree at most $n-1$, i.e.,
    \begin{equation}\label{errors}
    E^\ast_n(f)_{L_p(\mathbb{T})} = \inf_{t_{n-1}} \|f - t_{n-1}\|_{L_p(\mathbb{T})}.
    \end{equation}
    To show Theorem \ref{Theorem 3.4} we shall use the approximation description of the spaces
    $\textbf{B}^{s,b}_{p,q}(\mathbb{T})$ (see, e.g., \cite[Chapter 7, Theorem 9.2, p. 235]{DeVoreLorentz} and \cite[Corollary 7.1]{DeVoreRiemenschneiderSharpley}) given in the following

    \begin{lem}\label{LemmaBesovApprox}
    Let $s \geq 0, 1 \leq p \leq \infty, 0 < q \leq \infty$, and $-\infty < b < \infty$. Then,
    \begin{equation}\label{BesovApprox}
        \|f\|_{\mathbf{B}^{s,b}_{p,q}(\mathbb{T})} \asymp
        \left(\sum_{n=1}^\infty (n^s (1 + \log n)^b E^\ast_n(f)_{L_p(\mathbb{T})})^q
        \frac{1}{n}\right)^{1/q}+\|f\|_{L_p(\mathbb{T})}.
    \end{equation}
\end{lem}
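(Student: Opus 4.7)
The strategy is to reduce the statement to an equivalence of weighted $\ell_q$ sequence norms via dyadic discretization, and then to combine the classical Jackson and Marchaud–type estimates on $\mathbb{T}$ with the weighted Hardy inequalities collected in the Preliminaries. Fix $k\in\mathbb{N}$ with $k>s$; we may use such a $k$ in the definition (\ref{norm1}) of $\|f\|_{\mathbf{B}^{s,b}_{p,q}(\mathbb{T})}$. Since $\omega_k(f,\cdot)_p$ is non-decreasing and $E^\ast_n(f)_{L_p(\mathbb{T})}$ is non-increasing in $n$, standard monotonicity arguments give, with $2^{-j-1}<t\le 2^{-j}$ and $2^{j}\le n<2^{j+1}$,
\begin{equation*}
\int_0^1\!\big(t^{-s}(1-\log t)^b\omega_k(f,t)_p\big)^q\tfrac{dt}{t}\asymp\sum_{j=0}^\infty\!\big(2^{js}(1+j)^b\omega_k(f,2^{-j})_p\big)^q,
\end{equation*}
and
\begin{equation*}
\sum_{n=1}^\infty\!\big(n^s(1+\log n)^b E^\ast_n(f)_{L_p(\mathbb{T})}\big)^q\tfrac{1}{n}\asymp\sum_{j=0}^\infty\!\big(2^{js}(1+j)^b E^\ast_{2^j}(f)_{L_p(\mathbb{T})}\big)^q,
\end{equation*}
with the obvious modification if $q=\infty$. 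Hence it suffices to show that the two right-hand dyadic expressions are equivalent modulo $\|f\|_{L_p(\mathbb{T})}$.

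The direction ``$\gtrsim$'' is the easy one. Jackson's theorem on $\mathbb{T}$ gives $E^\ast_n(f)_{L_p(\mathbb{T})}\lesssim\omega_k(f,1/n)_p$ for every $n\in\mathbb{N}$, and in particular $E^\ast_{2^j}(f)_{L_p(\mathbb{T})}\lesssim\omega_k(f,2^{-j})_p$. Summing in the weighted $\ell_q$ norm against $2^{js}(1+j)^b$, together with the trivial bound $E^\ast_1(f)_{L_p(\mathbb{T})}\lesssim\|f\|_{L_p(\mathbb{T})}$, yields the estimate of the sum by $\|f\|_{\mathbf{B}^{s,b}_{p,q}(\mathbb{T})}$.

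For the direction ``$\lesssim$'' I plan to invoke the Marchaud-type converse to Jackson's inequality on the torus, which in dyadic form reads
\begin{equation*}
\omega_k(f,2^{-j})_p\lesssim 2^{-jk}\sum_{i=0}^{j}2^{ik}E^\ast_{2^i}(f)_{L_p(\mathbb{T})},\qquad j\in\mathbb{N}_0,
\end{equation*}
(see DeVore--Lorentz, Ch.~7, \S 8). Multiplying by $2^{js}(1+j)^b$ and taking $\ell_q$-norms, the task becomes to bound
\begin{equation*}
\left(\sum_{j=0}^\infty\!\Big(2^{-j(k-s)}(1+j)^b\sum_{i=0}^{j}2^{ik}E^\ast_{2^i}(f)_{L_p(\mathbb{T})}\Big)^{q}\right)^{1/q}
\end{equation*}
by the analogous sum without the inner summation. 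Because $k-s>0$, this is exactly a weighted discrete Hardy inequality of the form established in the Preliminaries (cf.~the second displayed Hardy inequality for sequences used to derive (\ref{HardyInequal2**}), applied with $\lambda=k-s$ and the logarithmic weight $(1+j)^b$; the sign of $b$ plays no role here since the geometric weight $2^{-j(k-s)}$ is dominant). This produces the required bound by $\big(\sum_j(2^{js}(1+j)^b E^\ast_{2^j}(f)_{L_p})^q\big)^{1/q}$, with the usual sup-modification when $q=\infty$.

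The main technical point to check is that the Marchaud-type converse inequality is available in the form stated for all $1\le p\le\infty$ (including the endpoints) and for arbitrary $k\in\mathbb{N}$; this is standard, but the endpoints $p=1,\infty$ require the use of the classical de la Vall\'ee Poussin means (or a near-best polynomial of approximation) as approximating operators in the proof of the Marchaud inequality, rather than a Fourier multiplier argument. Once this is granted, the remainder of the proof is a bookkeeping exercise combining the dyadic discretization with the two Hardy inequalities; no structural difficulty arises from the logarithmic factor $(1+j)^b$ because the strict inequality $k>s$ gives a geometric weight that absorbs any polylogarithmic perturbation. Finally, note that when $s=0$ the argument works verbatim (the hypothesis $k>s$ just becomes $k\ge 1$), and the additive term $\|f\|_{L_p(\mathbb{T})}$ on the right-hand side of (\ref{BesovApprox}) is needed precisely to recover the seminorm/norm equivalence (\ref{norm}) from the equivalence of the difference/approximation terms proved above.
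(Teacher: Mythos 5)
Your proposal is correct, and it is worth noting that the paper does not actually prove this lemma: it is quoted as a known approximation characterization, with references to DeVore--Lorentz (Ch.~7, Theorem~9.2) and DeVore--Riemenschneider--Sharpley (Corollary~7.1). What you have written is precisely the standard argument underlying those citations: dyadic discretization of both the modulus integral and the best-approximation sum, the Jackson inequality $E^\ast_n(f)_{L_p(\mathbb{T})}\lesssim\omega_k(f,1/n)_p$ for the easy direction, the inverse (Bernstein--Salem--Stechkin) estimate $\omega_k(f,2^{-j})_p\lesssim 2^{-jk}\sum_{i\le j}2^{ik}E^\ast_{2^i}(f)_{L_p(\mathbb{T})}$ for the converse, and a weighted discrete Hardy inequality with the geometric weight $2^{-j(k-s)}$, $k>s$, absorbing the logarithmic factor $(1+j)^b$. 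Two small points to tighten. First, the discrete Hardy lemma in the Preliminaries is stated only for $1\le q\le\infty$; for $0<q<1$ you need either the $q$-subadditivity trick ($(\sum_i a_i)^q\le\sum_i a_i^q$, then swap the order of summation and use that $\sum_{j\ge i}2^{-j(k-s)q}(1+j)^{bq}\lesssim 2^{-i(k-s)q}(1+i)^{bq}$), or the variant noted after the continuous Hardy lemma for integrands of the form ``geometric factor times a decreasing quantity'', which applies here since $E^\ast_{2^i}(f)_{L_p(\mathbb{T})}$ is non-increasing. Second, your caution about the endpoints $p=1,\infty$ in the inverse estimate is harmless but slightly misplaced: that inequality is proved by telescoping near-best approximants and applying Bernstein's inequality for trigonometric polynomials, which is valid for all $1\le p\le\infty$, so no multiplier argument (and no special treatment of the endpoints) is needed; de la Vall\'ee Poussin means enter rather on the Jackson side if one wants near-best linear approximants, as in (\ref{4.7}). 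With these remarks your proof is complete and matches the classical route the paper relies on.
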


Note that Lemma \ref{LemmaBesovApprox} is the periodic counterpart of Corollary \ref{cor4.8*}.

    Let us denote by $S_n (f)$\index{\bigskip\textbf{Operators}!$S_n(f)$}\label{PARTIALSUM} the $n$-th
    partial sum of Fourier series of $f$ and $S_0(f) = 0$. It is well known that $E^\ast_n(f)_{L_p(\mathbb{T})}  \asymp \|f -
    S_{n-1}(f)\|_{L_p(\mathbb{T})}$ and then one can rewrite (\ref{BesovApprox}) as
    \begin{equation}\label{3.18}
        \|f\|_{\textbf{B}^{s,b}_{p,q}(\mathbb{T})} \asymp
        \left(\sum_{n=1}^\infty (n^s (1 + \log n)^b \|f - S_{n-1}(f)\|_{L_p(\mathbb{T})})^q
        \frac{1}{n}\right)^{1/q}+\|f\|_{L_p(\mathbb{T})}.
    \end{equation}

\begin{proof}[Proof of Theorem \ref{Theorem 3.4}]
    It is easy to see that, without loss of generality, we may assume that $f (x) \sim \sum_{n=1}^\infty a_n \cos nx$ with $\{a_n\}_{n \in \mathbb{N}}$ nonnegative general monotone sequence. Next we prove
    that there exists a constant $c > 1$ such that
    \begin{equation}\label{3.19}
        \sum_{k=2n}^\infty a_k^p k^{p-2} \lesssim \|f -
        S_{n-1}(f)\|_{L_p(\mathbb{T})}^p \lesssim
        \sum_{k=[n/c]}^\infty a_k^p k^{p-2}.
    \end{equation}
    Indeed, it follows from Lemma \ref{Lemma 3.5}(ii) and
    (\ref{3.16}) that
    \begin{equation*}
        \|f -
        S_{n-1}(f)\|_{L_p(\mathbb{T})}^p  = \Big\|\sum_{k=n}^\infty a_k \cos kx
        \Big\|_{L_p(\mathbb{T})}^p
         \gtrsim \sum_{k=2n}^\infty a_k^p k^{p-2}.
    \end{equation*}
Moreover,
applying
    (\ref{3.17new}),  Lemma \ref{Lemma 3.5}(i), and Hardy's inequality (see (\ref{HardyInequal2*})), we obtain
    \begin{align*}
        \|f - S_{n-1}(f)\|^p_{L_p(\mathbb{T})} & \lesssim
        \sum_{k=1}^{n-1} k^{p-2} \left(\sum_{k=[n/c]}^\infty
        \frac{a_k}{k}\right)^p + \sum_{k=n}^\infty k^{p-2}
        \left(\sum_{\nu=[k/c]}^\infty \frac{a_\nu}{\nu}\right)^p \\
        & \lesssim  n^{p-1} \left(\sum_{k=[n/c]}^\infty
        \frac{a_k}{k}\right)^p + \sum_{k=n}^\infty k^{p-2} a_k^p.
    \end{align*}
Then, H\"older's inequality implies  the right-hand side estimate in (\ref{3.19}).

Combining (\ref{3.18}) and (\ref{3.19}), we derive that
    \begin{equation}\label{3.20}
         \|f\|_{\textbf{B}^{s,b}_{p,q}(\mathbb{T})} \asymp
        \Bigg(\sum_{n=1}^\infty \Bigg[n^s (1 + \log n)^b \Bigg(\sum_{k=n}^\infty a_k^p k^{p-2}\Bigg)^{1/p}\Bigg]^q
        \frac{1}{n}\Bigg)^{1/q},
    \end{equation}
which gives (\ref{char2}) in the case
    $s=0$.

    Furthermore, using  equivalence (\ref{3.20}) and property
    (\ref{3.16}), one obtains
    \begin{align*}
         \|f\|_{\textbf{B}^{s,b}_{p,q}(\mathbb{T})} 
        & \gtrsim \left(\sum_{n=1}^\infty [n^{s+1-1/p} (1 + \log n)^b a_n]^q
        \frac{1}{n}\right)^{1/q}.
    \end{align*}
    On the other hand, we shall show that the converse estimate also holds under the assumption that $s > 0$. Indeed, if $p \leq q$ we can apply Hardy's inequality (\ref{HardyInequal2*}) to the right-hand side of (\ref{3.20}) to derive
    \begin{align*}
        \|f\|_{\textbf{B}^{s,b}_{p,q}(\mathbb{T})} \lesssim \left(\sum_{n=1}^\infty [n^{s+1-1/p} (1 + \log n)^b a_n]^q
        \frac{1}{n}\right)^{1/q}.
    \end{align*}
    In the case  $p > q$ we use $\ell_q \hookrightarrow \ell_p$ which implies, together with (\ref{3.20}) and the monotonicity property (\ref{3.16}), that
    \begin{align*}
         \|f\|^q_{\textbf{B}^{s,b}_{p,q}(\mathbb{T})} &
         \asymp \sum_{n=0}^\infty \Bigg[2^{n s} (1 + n)^b \Bigg(\sum_{k=n}^\infty a_{2^k}^p 2^{k(1-1/p)p}\Bigg)^{1/p}\Bigg]^q+
         \Bigg(\sum_{k=1}^\infty a_{k}^p k^{p-2}\Bigg)^{q/p} \\
        & \lesssim
         \sum_{n=0}^\infty 2^{n s q} (1 + n)^{b q} \sum_{k=n}^\infty a_{2^k}^q 2^{k(1-1/p)q} +\sum_{k=1}^\infty a_{2^k}^q 2^{k(1-1/p)q}
          \\
        & \asymp \sum_{k=0}^\infty 2^{k(s+1-1/p)q} (1+k)^{b q}
        a_{2^k}^q \\       &
         \asymp \sum_{n=1}^\infty n^{(s+1-1/p) q-1} (1 + \log n)^{b q} a_n^q,
    \end{align*}
completing the proof.
\end{proof}

%

\begin{rem}
In the case 	$s = 0$, unlike the case $s>0$ (cf. Theorems \ref{TheoremBesovGMPer} and \ref{Theorem 3.4}), the
 characterizations of the spaces $B^{0,b}_{p,q}(\mathbb{T})$ and $\mathbf{B}^{0,b}_{p,q}(\mathbb{T})$ are different (cf. (\ref{BesovGMPer}) and (\ref{char2})).
\end{rem}

\begin{rem}
Applying now the characterizations given in Theorems \ref{TheoremBesovGMPer}, \ref{Section 6.11: Theorem} and \ref{Theorem 3.4} we can establish the corresponding results to Theorems \ref{TheoremEmbGM}, \ref{Theorem 3.10new}, \ref{Theorem 3.10}, \ref{Section 6.8: Theorem}, \ref{SobolevEmbeddingImprovement}, and \ref{-} for periodic functions. The proofs are obtained \emph{mutatis mutandis} and we leave the details to the reader. For instance, the periodic counterpart of Theorem \ref{TheoremEmbGM} reads as follows


\begin{thm}
 Let $1 < p < \infty$, and $0 < q \leq \infty$. Then, the inequality
		\begin{equation*}
	\|f\|_{L_p(\mathbb{T})} \lesssim \|f\|_{B^{0,b}_{p,q}(\mathbb{T})}
	\end{equation*}
	holds for all
	  \begin{equation*}
	 f(x) \sim \sum_{n=1}^\infty (a_n \cos n x + b_n \sin nx),
	 \end{equation*}
    where  $\{a_n\}_{n \in \mathbb{N}}, \{b_n\}_{n \in \mathbb{N}}$ are nonnegative general monotone sequences, if and only if one of the following conditions is satisfied:
    \begin{enumerate}[\upshape(i)]
    \item  $b \geq 0$ if  $p \geq q,$
    \item $b > 1/p - 1/q$ if $p < q.$
	\end{enumerate}

\end{thm}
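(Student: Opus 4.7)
My plan is to mirror the proof of the continuous counterpart (Theorem \ref{TheoremEmbGM}), now using the periodic characterizations available to us: by Theorem \ref{Section 6.11: Theorem}, $\|f\|_{L_p(\mathbb{T})}^p \asymp \sum_{n=1}^\infty n^{p-2}(a_n^p+b_n^p)$, and by Theorem \ref{TheoremBesovGMPer}, $\|f\|_{B^{0,b}_{p,q}(\mathbb{T})}^q \asymp \sum_{n=1}^\infty n^{q-q/p-1}(1+\log n)^{bq}(a_n^q+b_n^q)$. Thus the claimed embedding is equivalent to the sequence inequality obtained by substituting these expressions. By symmetry in $a_n$ and $b_n$, it suffices to work with $\{a_n\}$.

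For the sufficient direction, I would dyadically discretize using the GM monotonicity (\ref{3.16}): set $A_j := a_{2^j}\,2^{j(1-1/p)}$, so that $\sum_n n^{p-2} a_n^p \asymp \sum_j A_j^p$ and $\sum_n n^{q-q/p-1}(1+\log n)^{bq} a_n^q \asymp \sum_j A_j^q (1+j)^{bq}$. If $p\ge q$ and $b\ge 0$, I apply the trivial embedding $\ell_q\hookrightarrow\ell_p$ and $(1+j)^{bq}\ge 1$ to get $(\sum_j A_j^p)^{1/p}\le (\sum_j A_j^q)^{1/q}\le (\sum_j A_j^q(1+j)^{bq})^{1/q}$. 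If $p<q$ and $b>1/p-1/q$, I use H\"older with exponents $q/p$ and $q/(q-p)$:
\begin{equation*}
\sum_j A_j^p=\sum_j \bigl(A_j^p(1+j)^{bp}\bigr)\bigl((1+j)^{-bp}\bigr)\le \Bigl(\sum_j A_j^q (1+j)^{bq}\Bigr)^{p/q}\!\Bigl(\sum_j (1+j)^{-bpq/(q-p)}\Bigr)^{(q-p)/q},
\end{equation*}
and the second factor is finite precisely because $bpq/(q-p)>1\Leftrightarrow b>1/p-1/q$.

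For the necessary direction, I would supply three counterexamples, each built from a GM sequence (which is immediate to verify in each case). \textbf{(A)} If $p\ge q$ and $b<0$, take $a_n=\chi_{\{1\le n\le N\}}$ and $b_n=0$; direct summation gives $\|f\|_{L_p(\mathbb{T})}\asymp N^{1-1/p}$ but $\|f\|_{B^{0,b}_{p,q}(\mathbb{T})}\asymp N^{1-1/p}(1+\log N)^b$, so the ratio $(1+\log N)^{-b}\to\infty$. \textbf{(B)} If $p<q$ and $b<1/p-1/q$, choose $\beta$ with $b+1/q<\beta\le 1/p$ (possible by the strict inequality on $b$) and set $a_n=n^{1/p-1}(\log n)^{-\beta}$ for $n\ge 2$; then $\|f\|_{L_p(\mathbb{T})}^p\asymp\sum_n n^{-1}(\log n)^{-\beta p}=\infty$ while $\|f\|_{B^{0,b}_{p,q}(\mathbb{T})}^q\asymp\sum_n n^{-1}(\log n)^{(b-\beta)q}<\infty$. \textbf{(C)} If $p<q$ and the borderline $b=1/p-1/q$ holds, I pass to a double-logarithmic refinement: take $a_n=n^{1/p-1}(\log n)^{-1/p}(\log\log n)^{-\gamma}$ with $1/q<\gamma\le 1/p$; the exponent of $\log n$ in the Besov sum becomes exactly $(b-1/p)q=-1$, so that sum reduces to $\sum_n n^{-1}(\log n)^{-1}(\log\log n)^{-\gamma q}$, convergent for $\gamma q>1$, whereas the $L_p$ sum is $\sum_n n^{-1}(\log n)^{-1}(\log\log n)^{-\gamma p}$, divergent for $\gamma p\le 1$.

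The main obstacle will be the borderline case \textbf{(C)}: the exponents must be tuned so that the logarithmic factors combine to produce divergence in the $L_p$ norm and convergence in the Besov norm simultaneously, which is only possible when $p<q$ (it is precisely the reason the theorem requires a strict inequality on $b$). Once the GM property of each test sequence is checked (straightforward: the first is eventually zero, and the other two are non-increasing after a bounded index, hence GM with a constant independent of the parameters $N,\beta,\gamma$), the asymptotics of the $L_p$ and Besov norms follow at once from Theorems \ref{Section 6.11: Theorem} and \ref{TheoremBesovGMPer}, completing the proof.
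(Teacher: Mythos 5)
Your proof is correct and follows essentially the same route the paper intends: the paper only states that the periodic case is obtained ``mutatis mutandis'' from Theorem~\ref{TheoremEmbGM}, and your argument carries this out by combining the discretized Hardy--Littlewood characterization of $\|f\|_{L_p(\mathbb{T})}$ (Theorem~\ref{Section 6.11: Theorem}) and of $\|f\|_{B^{0,b}_{p,q}(\mathbb{T})}$ (Theorem~\ref{TheoremBesovGMPer}), exactly the dyadic-block reduction and power--log/double-log test sequences used in the non-periodic proof. The details you fill in (H\"older in the case $p<q$, the three explicit GM counterexamples, and the observation that the borderline $b=1/p-1/q$ forces the iterated-log refinement) are all consistent with the paper's method and do not deviate from it.
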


Note that in the periodic setting the additional assumptions $J_f(p_0) < \infty$ (respectively, $J_f(p_0) < \infty$ and $J_f(p) < \infty$) given in Theorem \ref{SobolevEmbeddingImprovement} (respectively, Theorem \ref{-}) are superfluous because we have that $L_{p_1}(\mathbb{T}) \hookrightarrow L_{p_0}(\mathbb{T}), p_0 < p_1$.
\end{rem}

\newpage
\section{Characterizations and embedding theorems for lacunary Fourier series}\label{section5}

\subsection{Characterization of spaces $B^{s,b}_{p,q}(\mathbb{R}^d), F^{s,b}_{p.q}(\mathbb{R}^d)$, and $\mathbf{B}^{s,b}_{p,q}(\mathbb{R}^d)$}\label{Section 5.1}

In this section, we deal with lacunary Fourier series, that is, Fourier series of the form
\begin{equation}\label{4.1}
    W (x) \sim \sum_{j=3}^\infty b_j e^{i \lambda_j x_1} \text{ for } x=(x_1,\ldots,
    x_d) \in \mathbb{R}^d,
\end{equation}
where $\{b_j\}_{j \in \mathbb{N}} \subset \mathbb{C}$ and $\{\lambda_j\}_{j \in \mathbb{N}}$ is formed by positive integers for which there exists $\lambda$ such that $\frac{\lambda_{j+1}}{\lambda_j} > \lambda > 1$, $j \in \mathbb{N}$.

Let $\psi\in \mathcal{S}(\mathbb{R}^d) \backslash \{0\}$
 be a fixed function satisfying
\begin{equation}\label{4.2}
    \text{supp } \widehat{\psi} \subset \{\xi : |\xi| \leq 2\}
\end{equation}
and let $\mathfrak{L}$\index{\bigskip\textbf{Sets}!$\mathfrak{L}$}\label{LAC} be the set
\begin{equation*}
    \mathfrak{L} = \{ \psi W : W \text{ given by (\ref{4.1})}\}.
\end{equation*}
Throughout this section $\lambda_j = 2^j-2, j \geq 3$. It is not hard to check that
\begin{equation}\label{4.2new}
	(\varphi_j \widehat{f})^\vee(x) = b_j e^{i \lambda_j x_1} \psi (x) \text{ for } f \in \mathfrak{L},
\end{equation}
where $\varphi_j$ is defined by (\ref{resolution}).
Then, we are in a position to obtain the following characterizations of the Besov and Triebel-Lizorkin norms of functions from the class $\mathfrak{L}$ in terms of their Fourier coefficients.

\begin{prop}\label{Proposition 4.1}
	Let $1 \leq p \leq \infty, 0 < q \leq \infty$, and $s,b \in \mathbb{R}$. Then
	\begin{equation}\label{4.3}
		\|f\|_{B^{s,b}_{p,q}(\mathbb{R}^d)}  \asymp \|f\|_{F^{s,b}_{p,q}(\mathbb{R}^d)} \asymp \left(\sum_{j=3}^\infty (2^{j s} (1 + j)^{b} |b_j|)^q \right)^{1/q}
	\end{equation}
	for all $f \in \mathfrak{L}$.
	
	In particular, if $1 < p < \infty$ then
	\begin{equation}\label{4.4}
    		\|f\|_{L_p(\mathbb{R}^d)} \asymp \left(\sum_{j=3}^\infty
    		|b_j|^2\right)^{1/2} \text{ for } f \in \mathfrak{L}
	\end{equation}
	or, more generally,
	\begin{equation}\label{4.4new}
		\|f\|_{H^{s,b}_p(\mathbb{R}^d)} \asymp \left(\sum_{j=3}^\infty (2^{j s} (1 + j)^b |b_j|)^2 \right)^{1/2} \text{ for } f \in \mathfrak{L}.
	\end{equation}
\end{prop}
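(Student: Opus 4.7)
The plan is to exploit the key identity \eqref{4.2new} which reveals that for $f \in \mathfrak{L}$ the Littlewood-Paley pieces have a remarkably simple structure: they are just a scalar multiple of $\psi$ modulated by a single frequency. This reduces the whole proposition to a direct computation, with no real technical obstacle beyond applying the definitions.

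First I would handle \eqref{4.3}. Starting from \eqref{4.2new}, one has $\|(\varphi_j \widehat f)^\vee\|_{L_p(\mathbb{R}^d)} = |b_j|\,\|\psi\|_{L_p(\mathbb{R}^d)}$ for every $j \geq 3$, so the definition of the Besov quasi-norm immediately gives
\begin{equation*}
\|f\|_{B^{s,b}_{p,q}(\mathbb{R}^d)} = \|\psi\|_{L_p(\mathbb{R}^d)} \left(\sum_{j=3}^\infty (2^{js}(1+j)^b |b_j|)^q\right)^{1/q}.
\end{equation*}
Similarly, \eqref{4.2new} yields the pointwise equality $|(\varphi_j \widehat f)^\vee(x)| = |b_j|\,|\psi(x)|$, so the mixed norm in $F^{s,b}_{p,q}(\mathbb{R}^d)$ factorizes and produces
\begin{equation*}
\|f\|_{F^{s,b}_{p,q}(\mathbb{R}^d)} = \|\psi\|_{L_p(\mathbb{R}^d)} \left(\sum_{j=3}^\infty (2^{js}(1+j)^b |b_j|)^q\right)^{1/q},
\end{equation*}
with the usual modification if $q=\infty$.

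Next, for the $L_p$-statement \eqref{4.4} with $1<p<\infty$, I would invoke the Littlewood-Paley theorem \eqref{LP}. Using \eqref{4.2new} once more,
\begin{equation*}
\Big(\sum_{j=3}^\infty |(\varphi_j \widehat f)^\vee(x)|^2\Big)^{1/2} = |\psi(x)| \Big(\sum_{j=3}^\infty |b_j|^2\Big)^{1/2},
\end{equation*}
so taking the $L_p$-norm and applying \eqref{LP} gives $\|f\|_{L_p(\mathbb{R}^d)} \asymp \|\psi\|_{L_p(\mathbb{R}^d)}\big(\sum_{j\geq 3}|b_j|^2\big)^{1/2}$.

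Finally, to obtain \eqref{4.4new} I would simply combine \eqref{LPgeneral}, which identifies $H^{s,b}_p(\mathbb{R}^d)$ with $F^{s,b}_{p,2}(\mathbb{R}^d)$ for $1<p<\infty$, with the already-established equivalence \eqref{4.3} in the special case $q=2$. No step requires any genuine estimate: the lacunary structure together with the support of $\widehat\psi$ guarantees that at most one term survives in each dyadic block, which is precisely the content of \eqref{4.2new}, and after that everything is bookkeeping.
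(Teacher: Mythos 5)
Your proof is correct and follows exactly the argument the paper intends: the paper states the identity~\eqref{4.2new} just before the proposition and leaves the remaining computation implicit, and your writeup simply carries out that computation — pulling $|\psi|$ out of the $\ell^q$ sum for the $F$-norm, invoking the Littlewood–Paley theorem~\eqref{LP} for $L_p$, and reducing~\eqref{4.4new} to~\eqref{4.3} with $q=2$ via~\eqref{LPgeneral}.
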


\begin{rem}
(i) It follows from (\ref{4.3})-(\ref{4.4new}) that the norms of $\mathfrak{L}$-functions
 in Besov, Triebel-Lizorkin, Lebesgue, and Sobolev spaces do not depend on $p$. In particular, (\ref{4.4}) implies the Zygmund--type property $\|f\|_{L_{p_1}(\mathbb{R}^d)}\asymp \|f\|_{L_{p_2}(\mathbb{R}^d)}, f \in \mathfrak{L},$
for $p_1\ne p_2$ (see, e.g., \cite[Theorem 3.7.4]{Grafakos}).

(ii) Note that the equivalence constants hidden in (\ref{4.3})-(\ref{4.4new}) depend on the $L_p(\mathbb{R}^d)$ norm of the chosen function $\psi$ in the definition of the class $\mathfrak{L}$.
\end{rem}

The corresponding characterization for spaces $\mathbf{B}^{s,b}_{p,q}(\mathbb{R}^d)$ is given as follows.

\begin{thm}\label{Theorem 4.2}
	Let $1 < p < \infty, 0 < q \leq \infty$, and $b \in \mathbb{R}$. Assume that $f \in \mathfrak{L}$. Then
	\begin{equation}\label{4.5}
		\|f\|_{\mathbf{B}^{s,b}_{p,q}(\mathbb{R}^d)} \asymp \left(\sum_{j=3}^\infty 2^{j s q} (1 + j)^{b q} |b_j|^q \right)^{1/q}  \text{ if } s > 0
	\end{equation}
	and
	\begin{equation}\label{4.6}
		\|f\|_{\mathbf{B}^{0,b}_{p,q}(\mathbb{R}^d)} \asymp \left(\sum_{j=3}^\infty (1 + j)^{b q} \left(\sum_{k=j}^\infty |b_k|^2 \right)^{q/2} \right)^{1/q}.
	\end{equation}
\end{thm}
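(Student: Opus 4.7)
The plan is to treat the two regimes separately, reducing each to a previously established characterization applied to the explicit Fourier-analytical structure of $\mathfrak{L}$.

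\textbf{Case $s>0$.} Since $\mathbf{B}^{s,b}_{p,q}(\mathbb{R}^d) = B^{s,b}_{p,q}(\mathbb{R}^d)$ with equivalence of quasi-norms (see (\ref{BesovComparison})), the characterization (\ref{4.5}) follows immediately from the first equivalence in (\ref{4.3}) of Proposition \ref{Proposition 4.1}.

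\textbf{Case $s=0$.} Here I would invoke the Littlewood-Paley-type characterization of $\mathbf{B}^{0,b}_{p,q}(\mathbb{R}^d)$ provided by Lemma \ref{lem: CDT}, namely
\begin{equation*}
\|f\|_{\mathbf{B}^{0,b}_{p,q}(\mathbb{R}^d)} \asymp \Bigg(\sum_{j=0}^\infty \Big[(1+j)^b \Big\|\Big(\sum_{\nu=j}^\infty |(\varphi_\nu \widehat{f})^\vee(\cdot)|^2\Big)^{1/2}\Big\|_{L_p(\mathbb{R}^d)}\Big]^q\Bigg)^{1/q},
\end{equation*}
and then exploit the explicit formula (\ref{4.2new}), which gives $(\varphi_\nu \widehat{f})^\vee(x) = b_\nu e^{i\lambda_\nu x_1}\psi(x)$ for every $f \in \mathfrak{L}$. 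Since $|e^{i\lambda_\nu x_1}|=1$ pointwise, the inner square function factorises as
\begin{equation*}
\Big(\sum_{\nu=j}^\infty |(\varphi_\nu \widehat{f})^\vee(x)|^2\Big)^{1/2} = |\psi(x)| \Big(\sum_{\nu=j}^\infty |b_\nu|^2\Big)^{1/2},
\end{equation*}
so that its $L_p$-norm equals $\|\psi\|_{L_p(\mathbb{R}^d)} \big(\sum_{\nu=j}^\infty |b_\nu|^2\big)^{1/2}$. Substituting back yields
\begin{equation*}
\|f\|_{\mathbf{B}^{0,b}_{p,q}(\mathbb{R}^d)} \asymp \Bigg(\sum_{j=0}^\infty (1+j)^{bq}\Big(\sum_{\nu=j}^\infty |b_\nu|^2\Big)^{q/2}\Bigg)^{1/q}.
\end{equation*}
Because $b_\nu = 0$ for $\nu < 3$, the summands corresponding to $j \in \{0,1,2\}$ are each comparable to $\big(\sum_{\nu=3}^\infty |b_\nu|^2\big)^{q/2}$, which is in turn bounded by the $j=3$ term in the right-hand side of (\ref{4.6}); the truncation of the outer sum from $j=0$ to $j=3$ therefore does not change the equivalence, and (\ref{4.6}) follows.

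The argument is essentially mechanical once the two ingredients (Lemma \ref{lem: CDT} and the identity (\ref{4.2new})) are available; no delicate estimate is needed. The only mild point to check is that replacing the outer index range $j\ge 0$ by $j\ge 3$ preserves the $\asymp$ relation, which reduces to the trivial observation above about the $j<3$ contributions being absorbed into a single lacunary block.
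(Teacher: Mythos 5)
Your argument for $s>0$ is correct and is actually a more economical route than the paper's: you invoke the coincidence $\mathbf{B}^{s,b}_{p,q}(\mathbb{R}^d)=B^{s,b}_{p,q}(\mathbb{R}^d)$ (formula (\ref{BesovComparison})) and then read off (\ref{4.5}) directly from the lacunary characterization of $B^{s,b}_{p,q}(\mathbb{R}^d)$ in Proposition \ref{Proposition 4.1}. The paper instead derives both cases simultaneously from the approximation description (\ref{4.8}) in terms of de la Vall\'ee-Poussin means $\eta_{2^j}$ together with (\ref{4.9}) and (\ref{4.4}), which yields (\ref{4.10}) for all $s\ge 0$, and then reduces the $s>0$ case via Hardy's inequality ($q\ge 2$) or the embedding $\ell_q\hookrightarrow\ell_2$ ($q<2$).

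For $s=0$ your proof uses Lemma \ref{lem: CDT}, which is precisely the alternative route the paper records in the remark following the theorem. The one genuine gap is that Lemma \ref{lem: CDT} is stated only for $b>-1/q$, while the theorem asserts (\ref{4.6}) for all $b\in\mathbb{R}$. The sub-range $b<-1/q$ is trivial (both sides collapse to $\|f\|_{L_p(\mathbb{R}^d)}$), but the boundary case $b=-1/q$ with $q<\infty$ is non-trivial and is not reached by your argument. The paper's approach via (\ref{4.8}), and hence (\ref{4.10}), holds for every $b\in\mathbb{R}$ and therefore covers this case automatically; to obtain the same coverage you would either need to switch to the $\eta_{2^j}$-description for $s=0$ as well, or add a separate treatment of $b=-1/q$. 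Your truncation of the outer sum from $j\ge 0$ to $j\ge 3$ is fine, since for $j\le 3$ the inner sums coincide and the factors $(1+j)^{bq}$ are mutually comparable.
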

\begin{rem}
	(i) If the right-hand sides in (\ref{4.5}) and (\ref{4.6}) are finite then
	$
		\sum_{j=3}^\infty |b_j|^2 < \infty
	$ 
	which is equivalent to $f \in L_p(\mathbb{R}^d)$ (see (\ref{4.4})).

(ii) The case of interest in (\ref{4.6}) is when $b \geq -1/q$ if $q < \infty$ ($b > 0$ if $q=\infty$). Otherwise, it is easy to check that both sides in (\ref{4.6}) are equivalent to $\|f\|_{L_p(\mathbb{R}^d)}$ (see (\ref{4.4})).

	(iii) In the case 	$s = 0$, unlike the case $s>0$ (see (\ref{BesovComparison})), 
 the characterizations of the spaces  $\mathbf{B}^{s,b}_{p,q}\mathbb{R}^d)$ and $B^{s,b}_{p,q}(\mathbb{R}^d)$ are different (cf. (\ref{4.3}) and (\ref{4.6})).
\end{rem}

We start with the definition of the  de la Vall\'ee-Poussin operator.
	    For $R > 0$, let $\eta _R$ be
the
    de la Vall\'ee-Poussin operator, that is, $\eta _R$\index{\bigskip\textbf{Operators}!$\eta_R$}\label{VALLEEPOUSSIN} is
    defined on $L_p(\mathbb{R}^d)$ by
    \begin{equation}\label{eta}
        (\eta_R f)^\wedge (\xi)= \eta \left(\frac{|\xi|}{R}\right)
        \widehat{f}(\xi)
    \end{equation}
    where $\eta \in C^\infty[0,\infty)$ with $\eta (u) = 1$ if $u \leq
    1$ and $\eta (u) = 0$ if $u \geq 3/2$. We have  (see, for example, \cite[Section 8.6]{Nikolskii}) that
    \begin{equation}\label{4.7}
        \|f - \eta_R f\|_{L_p(\mathbb{R}^d)} \lesssim E_R(f)_{L_p(\mathbb{R}^d)}, 
    \end{equation}
    where $E_R(f)_{L_p(\mathbb{R}^d)}$
   \index{\bigskip\textbf{Numbers, relations}!$E_k(f)_{L_p(\mathbb{R}^d)}$}\label{ERROR}
is  the best approximation of $f \in {L_p(\mathbb{R}^d)}$ by  entire functions of
spherical exponential type $R$.


    Note that Corollary \ref{cor4.8*} below together with (\ref{4.7}) allows us to derive
    \begin{equation}\label{4.8}
      \|f\|_{\mathbf{B}^{s,b}_{p,q}(\mathbb{R}^d)}  \asymp \|f\|_{L_p(\mathbb{R}^d)} + \left(\sum_{j=0}^\infty 2^{j s q} (1 + j)^{b q}
        \|f - \eta_{2^j}f\|_{L_p(\mathbb{R}^d)}^q\right)^{1/q} .
    \end{equation}

\begin{proof}[Proof of Theorem  \ref{Theorem 4.2}]
    Let $W$ be a function whose Fourier series expansion satisfies (\ref{4.1}) and let $\psi \in \mathcal{S}(\mathbb{R}^d) \backslash \{0\}$ be such that (\ref{4.2}) holds. By using assumptions on $\psi$ and $\eta$, it is plain to check that
    \begin{equation}\label{4.9}
    	\eta_{2^j} f (x) = \sum_{k=3}^j b_k e^{i \lambda_k x_1} \psi(x) \text{ for } j \geq 3.
    \end{equation}

    Inserting (\ref{4.9}) into (\ref{4.8}) and applying (\ref{4.4}) we get
    \begin{align}
    	\|f\|_{\mathbf{B}^{s,b}_{p,q}(\mathbb{R}^d)} & \asymp \|f\|_{L_p(\mathbb{R}^d)} + \left(\sum_{j=3}^\infty 2^{j s q} (1 + j)^{b q}
        \Big\|\sum_{k=j+1}^\infty b_k e^{i \lambda_k x_1} \psi\Big\|_{L_p(\mathbb{R}^d)}^q\right)^{1/q} \nonumber\\
        & \asymp  \left(\sum_{j=3}^\infty 2^{j s q} (1 + j)^{b q}
        \left(\sum_{k=j}^\infty |b_k|^2\right)^{q/2}\right)^{1/q} \label{4.10}.
    \end{align}
    This proves (\ref{4.6}) when $s=0$.

    Suppose now that $s > 0$. If $q \geq 2$ we can apply Hardy's inequality (\ref{HardyInequal2**}) in (\ref{4.10}) to derive
    \begin{equation*}
    	\|f\|_{\mathbf{B}^{s,b}_{p,q}(\mathbb{R}^d)}  \lesssim \left(\sum_{j=3}^\infty 2^{j s q} (1 + j)^{b q} |b_j|^q \right)^{1/q}.
    \end{equation*}
    If $q < 2$,  we have
    \begin{align*}
    	\|f\|^q_{\mathbf{B}^{s,b}_{p,q}(\mathbb{R}^d)}  & \lesssim \sum_{j=3}^\infty 2^{j s q} (1 + j)^{b q}
        \sum_{k=j}^\infty |b_k|^q  \lesssim \sum_{k=3}^\infty 2^{k s q} (1 + k)^{b q} |b_k|^q.
    \end{align*}

\end{proof}

\begin{rem}
	If $b > -1/q$ the characterization (\ref{4.6}) for the norms of lacunary Fourier series in the spaces $\mathbf{B}^{0,b}_{p,q}(\mathbb{R}^d)$ can also be shown by using their Fourier-analytical description given in Lemma \ref{lem: CDT}. Indeed, by (\ref{4.2new}), we have
	     \begin{align*}
        \|f\|_{\mathbf{B}^{0,b}_{p,q}(\mathbb{R}^d)} & \asymp \Bigg(\sum_{j=3}^\infty \Big[(1 + j)^b \Big\|\Big(\sum_{k=j}^\infty |(\varphi_k \widehat{f})^\vee
        (\cdot)|^2\Big)^{1/2}\Big\|_{L_p(\mathbb{R}^d)}\Big]^q\Bigg)^{1/q} \\
        &  = \|\psi\|_{L_p(\mathbb{R}^d)} \left(\sum_{j=3}^\infty (1 + j)^{b q} \left(\sum_{k=j}^\infty |b_k|^2 \right)^{q/2} \right)^{1/q}.
    \end{align*}
\end{rem}

\subsection{Embeddings of $B^{0,b}_{p,q}(\mathbb{R}^d)$ in $L_p(\mathbb{R}^d)$}
First, we recall that 
\begin{equation}\label{Loc2}
   B^{0,b}_{p,q}(\mathbb{R}^d) \hookrightarrow L_p(\mathbb{R}^d) \iff \left\{\begin{array}{lcl}
                            b \geq 0 & \text{ if }  & 0 < q \leq
                            \min\{2,p\}, \\
                            & & \\
                            b > 1/p - 1/q & \text{ if } & 1 < p \leq
                            2 \text{ and } p < q \leq \infty, \\
                            & & \\
                            b > 1/2 -1/q & \text{ if } & 2 < p <
                            \infty \text{ and } 2 < q \leq \infty,
            \end{array}
            \right.
\end{equation}
cf. (\ref{Loc}). We will see that this result can be  extended to a bigger range of the logarithmic smoothness $b$ when we deal with functions from the $\mathfrak{L}$ class. Moreover, we observe that assuming additional conditions on the growth of Fourier coefficients of functions $W$ given by (\ref{4.1}) implies even bigger range of the parameter $b$.

Let us introduce the subclass $\mathfrak{L}^{\text{wm}}$\index{\bigskip\textbf{Sets}!$\mathfrak{L}^{\text{wm}}$}\label{LACMON} of $\mathfrak{L}$, which consists of those functions from $\mathfrak{L}$ such that the  sequences $\{b_j\}_{j \in \mathbb{N}}$ in (\ref{4.1}) satisfy the weak monotone  condition (\ref{3.16}), that is,
\begin{equation*}
	\mathfrak{L}^{\text{wm}} = \left\{\psi W : W(x_1, \ldots, x_d) \sim \sum_{j=3}^\infty b_j e^{i \lambda_j x_1}, \quad |b_j| \lesssim |b_n| \quad \text{for} \quad n \leq j \leq 2n\right\}.
\end{equation*}
Here, $\psi \in \mathcal{S}(\mathbb{R}^d) \backslash \{0\}$ is a fixed function with (\ref{4.2}).
For more detail on weak monotone sequence see \cite{liflyand}.

\begin{thm}\label{TheoremEmbL}
	Let $1 < p < \infty$, and $0 < q \leq \infty$. Then
	\begin{equation}\label{LpLacunary}
	\mathfrak{L} \cap B^{0,b}_{p,q}(\mathbb{R}^d) \hookrightarrow L_p(\mathbb{R}^d) \iff
    \left\{\begin{array}{lcl}
    				 b \geq 0 & \text{ if } &q \leq 2 , \\

                            & & \\
                              b > 1/2 - 1/q & \text{ if }  & 2 < q
            \end{array}
            \right.
	\end{equation}
	and
	\begin{equation}\label{LpLacunaryDoubling}
		\mathfrak{L}^{\emph{wm}}
\cap B^{0,b}_{p,q}(\mathbb{R}^d) \hookrightarrow L_p(\mathbb{R}^d) \iff     \left\{\begin{array}{lcl}
    				 b \geq 1/2 - 1/q & \text{ if } & q \leq 2 , \\

                            & & \\
                              b > 1/2 - 1/q & \text{ if }  & 2 < q.
            \end{array}
            \right.
	\end{equation}
\end{thm}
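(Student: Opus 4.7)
My plan is to translate the embedding into a discrete weighted sequence inequality via Proposition \ref{Proposition 4.1}: for $f = \psi W \in \mathfrak{L}$ with $W(x) \sim \sum_{j=3}^{\infty} b_j e^{i\lambda_j x_1}$, one has $\|f\|_{L_p(\mathbb{R}^d)} \asymp (\sum_j |b_j|^2)^{1/2}$ and $\|f\|_{B^{0,b}_{p,q}(\mathbb{R}^d)} \asymp (\sum_j ((1+j)^b |b_j|)^q)^{1/q}$. Hence (\ref{LpLacunary}) is equivalent to the inequality
\begin{equation*}
\Bigl(\sum_{j=3}^\infty |b_j|^2\Bigr)^{1/2} \lesssim \Bigl(\sum_{j=3}^\infty ((1+j)^b |b_j|)^q\Bigr)^{1/q}
\end{equation*}
holding for all $\{b_j\}$, while (\ref{LpLacunaryDoubling}) is the same inequality restricted to sequences with $|b_j| \lesssim |b_n|$ for $n \le j \le 2n$.

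For the unrestricted embedding (\ref{LpLacunary}), sufficiency is elementary. If $q \le 2$ and $b \ge 0$, the embedding $\ell_q \hookrightarrow \ell_2$ together with $(1+j)^b \ge 1$ yields the claim. If $q > 2$ and $b > 1/2 - 1/q$, I would apply H\"older's inequality with exponents $q/2$ and $q/(q-2)$ to $|b_j|^2 = ((1+j)^b |b_j|)^2 \cdot (1+j)^{-2b}$, reducing the matter to convergence of $\sum (1+j)^{-2bq/(q-2)}$, which holds precisely when $b > 1/2 - 1/q$. For necessity I would give explicit counterexamples: the single-atom sequence $b_j = \delta_{jn}$ (whose quotient equals $(1+n)^b \to 0$) rules out $b < 0$ in the regime $q \le 2$; for $q > 2$ and $b < 1/2 - 1/q$ the choice $b_j = j^{-1/2}$ has divergent $L_p$-norm and convergent weighted $\ell_q$-norm, while the borderline $b = 1/2 - 1/q$ is handled by the refinement $b_j = j^{-1/2}(\log j)^{-\beta}$ with $1/q < \beta \le 1/2$.

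For the weakly monotone class (\ref{LpLacunaryDoubling}), I would first reduce to dyadic blocks. Using $|b_j| \le C |b_{2^k}|$ for $2^k \le j < 2^{k+1}$ together with the block-constancy $(1+j)^{bq} \asymp 2^{kbq}$, I obtain
\begin{equation*}
\sum_j |b_j|^2 \lesssim \sum_k 2^k |b_{2^k}|^2 \quad \text{and} \quad \sum_j ((1+j)^b |b_j|)^q \gtrsim \sum_k 2^{kbq} |b_{2^k}|^q.
\end{equation*}
Setting $c_k = 2^{k/2} |b_{2^k}|$ and $\alpha = b + 1/q - 1/2$, it suffices to establish $(\sum_k c_k^2)^{1/2} \lesssim (\sum_k (2^{k\alpha} c_k)^q)^{1/q}$, which holds exactly as in the previous paragraph under $\alpha \ge 0$ when $q \le 2$ and under $\alpha > 0$ when $q > 2$.

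The main obstacle is sharpness in the weakly monotone case, where test sequences are constrained. My plan is to use dyadic-block sequences $b_j := d_k$ for $2^k \le j < 2^{k+1}$ with $\{d_k\}$ nonincreasing, which are admissible and for which both of the above dyadic bounds are saturated. Concretely, $b_j = j^{-1/2}$ shows failure for $b < 1/2 - 1/q$ in both regimes of $q$, and $b_j = j^{-1/2}(\log j)^{-\beta}$ with $1/q < \beta \le 1/2$ rules out the borderline $b = 1/2 - 1/q$ when $q > 2$. In the complementary range $q \le 2$, the borderline $b = 1/2 - 1/q$ is permitted because the reduced inequality becomes $(\sum c_k^2)^{1/2} \le (\sum c_k^q)^{1/q}$, which is simply $\ell_q \hookrightarrow \ell_2$, consistent with the statement.
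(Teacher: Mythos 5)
Your overall strategy is the same as the paper's: reduce the embedding to a weighted sequence inequality via Proposition~\ref{Proposition 4.1}, handle the unrestricted class by $\ell_q \hookrightarrow \ell_2$ or H\"older, and pass to dyadic blocks for $\mathfrak{L}^{\mathrm{wm}}$ to lower the threshold from $1/2$ (with $\ell_q\hookrightarrow\ell_2$) to $1/2-1/q$. The counterexamples you propose for necessity (the single spike $\delta_{jn}$ for $q\le 2$, and $j^{-1/2}$, $j^{-1/2}(\log j)^{-\beta}$ on the $q>2$ side) are minor variants of the paper's and are all admissible in the respective classes.

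There is, however, a genuine slip in the dyadic reduction for $\mathfrak{L}^{\mathrm{wm}}$. You display
\[
\sum_j \bigl((1+j)^b|b_j|\bigr)^q \gtrsim \sum_k 2^{kbq}|b_{2^k}|^q,
\]
and attribute it to weak monotonicity via $|b_j|\le C|b_{2^k}|$. But that bound follows for \emph{any} sequence by keeping only the terms $j=2^k$, and more importantly it is too weak: combined with $\sum_j|b_j|^2\lesssim\sum_k 2^k|b_{2^k}|^2$ and $c_k=2^{k/2}|b_{2^k}|$ it would force $\alpha=b-1/2$, giving the wrong threshold $b\ge 1/2$. The value $\alpha=b+1/q-1/2$ you actually use is the correct one, and it corresponds to the stronger estimate
\[
\sum_j \bigl((1+j)^b|b_j|\bigr)^q \gtrsim \sum_k 2^k\,2^{kbq}|b_{2^k}|^q,
\]
with the extra factor $2^k$ coming from the block length. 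This needs a \emph{second} use of weak monotonicity, in the opposite direction from the one you cite: $|b_{2^{k+1}}|\lesssim|b_j|$ for $2^k\le j<2^{k+1}$ (from $j\le 2^{k+1}\le 2j$), so that the whole block contributes. Once this is inserted the chain $\sum c_k^2 \lesssim \sum(2^{k\alpha}c_k)^q$ with $\alpha\ge 0$ (resp.\ $\alpha>0$) closes correctly; the rest of your argument, including the sharpness examples, then goes through as in the paper.
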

\begin{rem}
Note that the integrability parameter $p$ is not important when we investigate the embedding $B^{0,b}_{p,q}(\mathbb{R}^d) \hookrightarrow L_p(\mathbb{R}^d)$ for $\mathfrak{L}$ and $\mathfrak{L}^{\text{wm}}$ functions (see (\ref{LpLacunary}) and (\ref{LpLacunaryDoubling})), but it only involves the relationships between $q, 2$, and $b$. This is in sharp contrast to the general case given in (\ref{Loc2}). In particular, working with functions from the $\mathfrak{L}$ class (respectively, $\mathfrak{L}^{\text{wm}}$ class), one can extend the range of $b$ given in (\ref{Loc2}) for which this embedding holds when $p < \min\{2,q\}$ (respectively, $\min\{p,q\} < 2$).
\end{rem}
\begin{proof}[Proof of Theorem \ref{TheoremEmbL}]
We start by showing the if-part in (\ref{LpLacunary}). Assume first that $q \leq 2$ and $b \geq 0$. Then, using (\ref{4.3}) and (\ref{4.4}), we obtain
\begin{equation*}
	\|f\|_{L_p(\mathbb{R}^d)} \asymp \left(\sum_{j=3}^\infty |b_j|^2\right)^{1/2} \leq \left(\sum_{j=3}^\infty (1 + j)^{b q} |b_j|^q\right)^{1/q} \asymp \|f\|_{B^{0,b}_{p,q}(\mathbb{R}^d)},
\end{equation*}
which yields the desired result. On the other hand, in the case $2 < q$ and $b > 1/2- 1/q$ the result follows from H\"older's inequality.

Next we show the only-if-part in (\ref{LpLacunary}). If $q > 2$ we proceed by contradiction. We distinguish two possible cases. Firstly, we assume that $b = 1/2- 1/q$. Then, putting
\begin{equation}\label{5.16new}
 \{b_j\}_{j \in \mathbb{N}} = \{(1 + j)^{-b - 1/q} (1 + \log j)^{-\beta}\}_{j \in \mathbb{N}}, \quad 1/q < \beta \leq 1/2,
 \end{equation}
 and
 \begin{equation}\label{5.17new}
 	f(x) \sim \psi (x) \sum_{j=3}^\infty b_j e^{i (2^j - 2) x_1},  \quad x \in \mathbb{R}^d,
\end{equation}
(with $\psi \in \mathcal{S}(\mathbb{R}^d) \backslash \{0\}$ satisfying (\ref{4.2})), we have by (\ref{4.3}) and (\ref{4.4}) that
\begin{equation*}
	\|f\|_{B^{0,b}_{p,q}(\mathbb{R}^d)}^q \asymp \sum_{j=3}^\infty (1 + \log j)^{-\beta q} \frac{1}{1+j} < \infty
\end{equation*}
but
\begin{equation*}
	\|f\|_{L_p(\mathbb{R}^d)}^2 \asymp \sum_{j=3}^\infty (1 + \log j)^{-2\beta } \frac{1}{1+j} = \infty.
\end{equation*}
Whence, this example shows that $\mathfrak{L} \cap B^{0,b}_{p,q}(\mathbb{R}^d) \not \hookrightarrow L_p(\mathbb{R}^d)$. Further, if we assume that $b < 1/2 - 1/q$ we arrive at a contradiction applying the trivial embedding $B^{0, 1/2-1/q}_{p,q}(\mathbb{R}^d) \hookrightarrow B^{0,b}_{p,q}(\mathbb{R}^d)$ together with the previous case.

Assume now that $\mathfrak{L} \cap B^{0,b}_{p,q}(\mathbb{R}^d) \hookrightarrow L_p(\mathbb{R}^d)$ holds for $q \leq 2$. This implies that $b \geq 0$. Indeed, for every $n \geq 3$, we introduce
 \begin{equation*}
 	f_n(x) \sim \psi (x) \sum_{j=3}^n 2^j e^{i (2^j - 2) x_1},  \quad x \in \mathbb{R}^d.
\end{equation*}
By our assumption,
\begin{align*}
	2^n &\asymp \Big(\sum_{j=3}^n 2^{2 j} \Big)^{1/2} \asymp \|f_n\|_{L_p(\mathbb{R}^d)} \lesssim \|f_n\|_{B^{0,b}_{p,q}(\mathbb{R}^d)}  \asymp \Big(\sum_{j=3}^n 2^{j q} (1 + j)^{b q} \Big)^{1/q}
	 \asymp 2^n (1 + n)^b
\end{align*}
for $n \geq 3$, which yields that $b \geq 0$. This completes the proof of (\ref{LpLacunary}).

Next we prove (\ref{LpLacunaryDoubling}). Assume that $b \geq 1/2 - 1/q$ and $2 \geq q$. Let $f \in \mathfrak{L}^\text{wm}$. Applying monotonicity properties, we get
\begin{align*}
 \|f\|_{L_p(\mathbb{R}^d)} & \asymp \Big(\sum_{j=3}^\infty |b_j|^2 \Big)^{1/2} \lesssim \Big(|b_{3}|^2 +\sum_{j=2}^\infty 2^j  |b_{2^j}|^2 \Big)^{1/2} \leq \Big(|b_{3}|^q +\sum_{j=2}^\infty 2^{j(b + 1/q) q} |b_{2^j}|^q  \Big)^{1/q} \\
 & \lesssim \Big(\sum_{j=3}^\infty (1 + j)^{bq} |b_j|^q\Big)^{1/q} \asymp \|f\|_{B^{0,b}_{p,q}(\mathbb{R}^d)},
\end{align*}
which implies the embedding of the left-hand side in (\ref{LpLacunaryDoubling}). On the other hand, if $2 < q$ and $b > 1/2 - 1/q$ the embedding is a direct consequence of the H\"older's inequality. Hence, the if-part in (\ref{LpLacunaryDoubling}) is shown.

Finally, we obtain the only-if-part in (\ref{LpLacunaryDoubling}). Suppose that $\mathfrak{L}^{\text{wm}} \cap B^{0,b}_{p,q}(\mathbb{R}^d) \hookrightarrow L_p(\mathbb{R}^d)$. We shall distinguish two cases. Firstly, if $q \leq 2$ we consider the functions
 \begin{equation*}
 	g_n(x) \sim \psi (x) \sum_{j=3}^n  e^{i (2^j - 2) x_1},  \quad x \in \mathbb{R}^d, \quad n \geq 3.
\end{equation*}
Note that $g_n \in \mathfrak{L}^{\text{wm}}$.
Then
\begin{equation*}
	n^{1/2} \asymp \|g_n\|_{L_p(\mathbb{R}^d)} \lesssim \|g_n\|_{B^{0,b}_{p,q}(\mathbb{R}^d)}
	 \asymp n^{b+1/q}
\end{equation*}
for $n \geq 3$, which yields that $b + 1/q - 1/2 \geq 0$.

Secondly, assume that $q > 2$. We observe that the Fourier series (\ref{5.17new}) with $\{b_j\}$ given by  (\ref{5.16new}) belongs to $\mathfrak{L}^{\text{wm}}$. Then, the proof by contradiction given above to show that $\mathfrak{L} \cap B^{0,b}_{p,q}(\mathbb{R}^d) \not \hookrightarrow L_p(\mathbb{R}^d)$ for $b \leq 1/2 - 1/q$ and $q > 2$ also allows us to derive the counterpart for the $\mathfrak{L}^{\text{wm}}$ class.

\end{proof}

\subsection{Embeddings between $F^{s,b}_{p,q}(\mathbb{R}^d)$ and $B^{s,b}_{p,q}(\mathbb{R}^d)$}

Let $1 < p < \infty, 0 < q \leq \infty,$ and $-\infty < b < \infty$. It is an immediate consequence of the characterizations (\ref{4.3}) that
\begin{equation}\label{LacunaryFB}
	\mathfrak{L} \cap F^{s,b}_{p,q}(\mathbb{R}^d) = \mathfrak{L} \cap B^{s,b}_{p,q}(\mathbb{R}^d) \quad \text{for} \quad s \in \mathbb{R}
\end{equation}
and, in particular,
\begin{equation}\label{LacunaryFB1}
	\mathfrak{L} \cap H^{s,b}_{p}(\mathbb{R}^d) = \mathfrak{L} \cap B^{s,b}_{p,2}(\mathbb{R}^d) \quad \text{for} \quad s \in \mathbb{R}.
\end{equation}
If $s > 0$ then (see (\ref{BesovComparison}))
  \begin{equation*}
    		\mathfrak{L} \cap F^{s,b}_{p,q}(\mathbb{R}^d) = \mathfrak{L} \cap \mathbf{B}^{s,b}_{p,q}(\mathbb{R}^d) \quad \text{and} \quad \mathfrak{L} \cap H^{s,b}_{p}(\mathbb{R}^d) = \mathfrak{L} \cap \mathbf{B}^{s,b}_{p,2}(\mathbb{R}^d)
\end{equation*}
(compare also (\ref{4.3}) and (\ref{4.4new}) with (\ref{4.5})).
Moreover, the characterizations of smoothness spaces obtained in Subsection \ref{Section 5.1} will allow us to refine the classical embeddings given in Proposition \ref{RecallEmb} for the $\mathfrak{L}$ class. To be more precise, the range of the parameter $q$ for which these embeddings hold is bigger if we restrict ourselves to functions with lacunary Fourier series.

\begin{prop}\label{Proposition 4.5}
Let $1 < p < \infty, 0 < q, r \leq \infty$, and $-\infty < s, b < \infty$. Then we have
\begin{enumerate}[\upshape(i)]
\item
    $\mathfrak{L} \cap F^{s,b}_{p,r}(\mathbb{R}^d) \hookrightarrow B^{s,b}_{p,q}(\mathbb{R}^d) \quad \text{ if and only if } \quad q \geq r$,

\item $\mathfrak{L} \cap  B^{s,b}_{p,q}(\mathbb{R}^d) \hookrightarrow F^{s,b}_{p,r}(\mathbb{R}^d) \quad \text{ if and only if } \quad q \leq r$,
\item $\mathfrak{L} \cap F^{s,b}_{p,r}(\mathbb{R}^d)  = \mathfrak{L} \cap  B^{s,b}_{p,q}(\mathbb{R}^d) \quad \text{ if and only if } \quad r=q.$
    \end{enumerate}

If, in addition, $s > 0$ then
    \begin{enumerate}[\upshape(i)]
    \setcounter{enumi}{3}
    \item      $\mathfrak{L} \cap F^{s,b}_{p,r}(\mathbb{R}^d) \hookrightarrow \mathbf{B}^{s,b}_{p,q}(\mathbb{R}^d) \quad \text{ if and only if } \quad q \geq r$,
        \item  $\mathfrak{L} \cap H^{s,b}_p(\mathbb{R}^d) \hookrightarrow \mathbf{B}^{s,b}_{p,q}(\mathbb{R}^d) \quad \text{ if and only if } \quad q \geq 2$,
        \item $\mathfrak{L} \cap \mathbf{B}^{s,b}_{p,q}(\mathbb{R}^d) \hookrightarrow F^{s,b}_{p,r}(\mathbb{R}^d) \quad \text{ if and only if } \quad q \leq
    r$,
    \item
    $\mathfrak{L} \cap \mathbf{B}^{s,b}_{p,q}(\mathbb{R}^d) \hookrightarrow H^{s,b}_p(\mathbb{R}^d) \quad \text{ if and only if } \quad q \leq
    2$,
    \item $\mathfrak{L} \cap F^{s,b}_{p,r}(\mathbb{R}^d)  = \mathfrak{L} \cap  \mathbf{B}^{s,b}_{p,q}(\mathbb{R}^d) \quad \text{ if and only if } \quad r=q$,
    \item $\mathfrak{L} \cap H^{s,b}_{p}(\mathbb{R}^d)  = \mathfrak{L} \cap  \mathbf{B}^{s,b}_{p,q}(\mathbb{R}^d) \quad \text{ if and only if } \quad 2=q$.
\end{enumerate}
\end{prop}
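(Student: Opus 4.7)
The plan is to reduce every one of the nine embedding assertions to an elementary embedding between weighted $\ell_q$-spaces of sequences. For $f\in\mathfrak{L}$ with coefficient sequence $\{b_j\}_{j\ge 3}$, introduce the weighted sequence $c_j := 2^{js}(1+j)^b |b_j|$. Then (\ref{4.3}) gives $\|f\|_{B^{s,b}_{p,q}(\mathbb{R}^d)} \asymp \|f\|_{F^{s,b}_{p,q}(\mathbb{R}^d)} \asymp \|c\|_{\ell_q}$ for every $0<q\le\infty$ and every $s\in\mathbb{R}$; (\ref{4.4new}) gives $\|f\|_{H^{s,b}_p(\mathbb{R}^d)} \asymp \|c\|_{\ell_2}$; and under the extra assumption $s>0$, (\ref{4.5}) yields $\|f\|_{\mathbf{B}^{s,b}_{p,q}(\mathbb{R}^d)} \asymp \|c\|_{\ell_q}$.

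With this reduction in hand, parts (i) and (iv) become $\ell_r\hookrightarrow\ell_q$, parts (ii) and (vi) become $\ell_q\hookrightarrow\ell_r$, part (v) becomes $\ell_2\hookrightarrow\ell_q$, and part (vii) becomes $\ell_q\hookrightarrow\ell_2$. The equalities (iii), (viii), (ix) follow by combining the two directions. The sufficient directions are just the standard monotonicity $\ell_u\hookrightarrow\ell_v$ for $u\le v$ applied to $c$, which produces each claimed inequality between $f$-norms.

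For the necessary directions, given $q<r$ (respectively $q<2$ or $q>2$ in (v), (vii), (ix)), one has to exhibit an $f\in\mathfrak{L}$ whose weighted sequence $c$ lies in $\ell_r$ but not in $\ell_q$. Concretely, when $r<\infty$ take
\[
c_j := (1+j)^{-1/r}(1+\log(1+j))^{-\alpha},\qquad \alpha>1/r,
\]
which lies in $\ell_r$ while failing to lie in $\ell_q$ since $q/r<1$ forces $\sum_j (1+j)^{-q/r}(1+\log(1+j))^{-\alpha q}=\infty$; when $r=\infty$, take $c_j\equiv 1$. Setting $b_j := 2^{-js}(1+j)^{-b} c_j$ and $f := \psi W$ with $W(x)\sim \sum_{j\ge 3} b_j e^{i\lambda_j x_1}$ and any $\psi\in\mathcal{S}(\mathbb{R}^d)\setminus\{0\}$ satisfying (\ref{4.2}) yields $f\in\mathfrak{L}$ that violates the embedding under consideration; the same counterexample serves for the statements involving $H^{s,b}_p$ or $\mathbf{B}^{s,b}_{p,q}$ thanks to the two additional characterizations recalled above.

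There is no genuine obstacle in this argument: once the characterizations of Subsection~\ref{Section 5.1} are invoked, the proposition is essentially the classical statement about nesting of $\ell_q$-spaces. The only point demanding some care is that (\ref{4.5}) is available only for $s>0$, which is precisely why parts (iv)--(ix) carry that restriction, whereas parts (i)--(iii) extend to all $s\in\mathbb{R}$ via (\ref{4.3}). Writing the counterexamples uniformly for the cases $r<\infty$ and $r=\infty$ (and analogously replacing $r$ by $2$ for the Sobolev statements) completes the verification of all nine equivalences.
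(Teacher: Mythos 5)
Your proof is correct and reduces the proposition to the same nesting of $\ell_q$-spaces that the paper uses; the sufficient directions are identical. The genuinely different point is the necessity direction. You produce a single explicit counterexample with slowly decaying coefficients $c_j = (1+j)^{-1/r}(1+\log(1+j))^{-\alpha}$, $\alpha > 1/r$, lying in $\ell_r$ but not in $\ell_q$ when $q<r$ (with the roles of $q$ and $r$ swapped for parts (ii), (vi), (vii) and with $r$ replaced by $2$ for the Sobolev statements, though you leave this swap implicit). The paper instead uses a family of truncations $f_n \sim \psi \sum_{j=3}^n 2^{-js}(1+j)^{-b} e^{i\lambda_j x_1}$, whose $B^{s,b}_{p,q}$ and $F^{s,b}_{p,r}$ norms scale like $n^{1/q}$ and $n^{1/r}$ respectively, so that a uniform embedding constant forces $n^{1/q}\lesssim n^{1/r}$ and hence $q\ge r$. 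The paper's family-of-truncations argument avoids the logarithmic fine-tuning entirely and works uniformly in all parameters (including $r=\infty$, where your recipe needs a separate case); your version has the small virtue of exhibiting one concrete witness function rather than a blow-up of constants. Finally, for parts (iv)--(ix), you invoke the characterization (\ref{4.5}) of $\mathbf{B}^{s,b}_{p,q}(\mathbb{R}^d)$ on $\mathfrak{L}$ directly, whereas the paper deduces these as immediate special cases of (i)--(iii) via (\ref{BesovComparison}) and (\ref{LPgeneral}); the two are equivalent, but the paper's route is more economical since it reproves nothing.
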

\begin{proof}
	Let us show (i). Assume first that $q \geq r$. Then, the inequality
	\begin{equation*}
	\left(\sum_{j=3}^\infty (2^{j s} (1 + j)^{b} |b_j|)^q \right)^{1/q} \leq \left(\sum_{j=3}^\infty (2^{j s} (1 + j)^{b} |b_j|)^r \right)^{1/r}
	\end{equation*}
	together with the characterizations  (\ref{4.3}) yield that $\mathfrak{L} \cap F^{s,b}_{p,r}(\mathbb{R}^d) \hookrightarrow B^{s,b}_{p,q}(\mathbb{R}^d)$.
	
	Conversely, suppose that
	\begin{equation*}
		\|f\|_{B^{s,b}_{p,q}(\mathbb{R}^d)} \lesssim \|f\|_{F^{s,b}_{p,r}(\mathbb{R}^d) } \quad \text{for all} \quad f \in \mathfrak{L}.
	\end{equation*}
	In particular, taking
	\begin{equation*}
	 f_n(x) \sim \psi(x) \sum_{j=3}^n 2^{-j s} (1+ j)^{-b} e^{i \lambda_j x_1}, \quad x \in \mathbb{R}^d, \quad n \geq 3,
	 \end{equation*}
	 where $\psi \in \mathcal{S}(\mathbb{R}^d) \backslash \{0\}$ is a fixed function such that (\ref{4.2}) holds, we derive
	 \begin{equation*}
	 	\|f_n\|_{B^{s,b}_{p,q}(\mathbb{R}^d)} \lesssim \|f_n\|_{F^{s,b}_{p,r}(\mathbb{R}^d) },  \quad n \geq 3.
	 \end{equation*}
	 These estimates together with (\ref{4.3}) yield that $n^{1/q} \lesssim n^{1/r}$ uniformly in $n \geq 3$, or equivalently, $r \leq q$. This finishes the proof of (i).
	
	The proof of (ii) follows the same lines as above and is left to the reader. Putting together (i) and (ii), we obtain (iii). Finally, we note that the statements (iv)--(ix) are special cases of (i)--(iii) for $s > 0$ (see (\ref{BesovComparison}) and (\ref{LPgeneral})).
	
\end{proof}


\subsection{Embeddings between $F^{0,b}_{p,q}(\mathbb{R}^d)$ and $\mathbf{B}^{0,b}_{p,q}(\mathbb{R}^d)$}
In this subsection we deal with embeddings between $F^{0,b}_{p,q}(\mathbb{R}^d)$ and $\mathbf{B}^{0,b}_{p,q}(\mathbb{R}^d)$.
More precisely, for $\mathfrak{L}$-functions we improve the range of the parameter $q$ for which the embeddings obtained in Theorem \ref{Theorem 2.1} hold. Moreover, we prove that this range is sharp with the help of Hardy's and Copson's inequality with weights. Let $\{a_j\}_{j \in \mathbb{N}}$ and $\{b_j\}_{j \in \mathbb{N}}$ be sequences of non-negative numbers and let $0 < p, q \leq \infty$. The Hardy's inequality claims that there exists $C> 0$ such that
\begin{equation}\label{hardy}
	\left(\sum_{j=1}^\infty \Big(a_j \sum_{k=j}^\infty b_k x_k\Big)^{q}\right)^{1/q} \leq C \left(\sum_{j=1}^\infty x_j^p\right)^{1/p}
\end{equation}
for all non-negative sequences $\{x_j\}_{j \in \mathbb{N}}$. In particular, the inequalities (\ref{HardyInequal2*}) and (\ref{HardyInequal2**}) are special cases of (\ref{hardy}).

Conversely, the Copson's inequality asserts that
\begin{equation}\label{copson}
	\left(\sum_{j=1}^\infty \Big(a_j \sum_{k=j}^\infty b_k x_k\Big)^{q}\right)^{1/q} \geq C \left(\sum_{j=1}^\infty x_j^p\right)^{1/p}
\end{equation}
for all non-negative sequences $\{x_j\}_{j \in \mathbb{N}}$.

A complete characterization of those $\{a_j\}_{j \in \mathbb{N}}$ and $\{b_j\}_{j \in \mathbb{N}}$ for which (\ref{hardy}) and (\ref{copson}) hold, was obtained by Bennett \cite{Bennett2, Bennett} and Grosse-Erdmann \cite[Chapter 3]{Grosse-Erdmann}.
For later use, we write down a special case of (\ref{hardy}) and (\ref{copson}) in Lemmas \ref{LemmaHardy} and \ref{LemmaCopson} below, respectively. For the proofs, we refer to \cite[(9.2ix'), p. 55 and Theorem 10.4, p. 60]{Grosse-Erdmann}.

\begin{lem}[\bf{Hardy's inequality; $0 < q < p \leq 1$}]\label{LemmaHardy}
Let $0 < q < p \leq 1$. The inequality (\ref{hardy}) holds if and only if
\begin{equation*}
	\sum_{j=1}^\infty a_j^q \Big(\sum_{k=1}^j a_k^q\Big)^{r/p}  \sup_{k \geq j} b_k^r < \infty,
\end{equation*}
where $1/r = 1/q-1/p$.
\end{lem}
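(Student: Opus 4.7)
The plan is to prove the two implications separately, adapting the general framework of Grosse-Erdmann for weighted Hardy-type inequalities to the ``reversed'' range $0 < q < p \le 1$, where standard duality techniques are unavailable since $\ell_p^{*} = \{0\}$ when $p < 1$. Throughout, I write $B_j := \sup_{k \ge j} b_k^r$ and $A_j := \sum_{k=1}^j a_k^q$, so the stated condition reads $\sum_{j=1}^\infty a_j^q A_j^{r/p} B_j < \infty$.

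For the sufficiency direction, I would first reduce the problem by observing that the extremizing $\{x_k\}$ can be taken non-increasing: since $p \le 1$ the functional $\bigl(\sum x_k^p\bigr)^{1/p}$ is superadditive on disjointly supported sequences, while the left-hand side of (\ref{hardy}) only increases upon rearranging $\{x_k\}$ into decreasing order (because the weights $b_k$ may be replaced by their non-increasing majorant $B_k^{1/r}$ after a summation-by-parts and a standard layer-cake argument). After this reduction, I would split the sum $\sum_{k=j}^\infty b_k x_k$ along the dyadic level sets of $B_k$, estimate each block directly, and then apply the elementary inequality $(\sum u_m)^{q/p} \le \sum u_m^{q/p}$ (valid because $q/p < 1$) to exchange the order of summation. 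The resulting expression is controlled precisely by the quantity in the hypothesis multiplied by $\sum x_k^p$, after using the H\"older-type duality $\tfrac{1}{r} + \tfrac{q}{p} = 1$ relative to the exponent $p/q > 1$.

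For the necessity direction, I would insert into (\ref{hardy}) the family of test sequences $x_k = c_k^{r/p}\,\chi_{\{k = k_j\}}$, where $\{k_j\}$ is chosen so that $b_{k_j}^r$ is close to $B_j$, and more generally block-supported sequences $x_k = w_k \chi_{[N,M]}(k)$ extremizing the constant. Optimizing over these discrete test sequences, together with Abel summation to shift the outer sum $\sum_{j \le k} a_j^q$ to $A_k$, yields exactly the condition $\sum_j a_j^q A_j^{r/p} B_j < \infty$. The main obstacle will be the absence of a usable dual space at $p \le 1$: one cannot derive necessity by a sweeping duality pairing, and instead the full characterization must be squeezed out of an essentially optimal family of rearrangement-invariant test sequences, with the delicate step being to show that such sequences suffice (i.e., that the extremal constant in (\ref{hardy}) is attained, up to constants, on non-increasing sequences supported on level sets of $B_j^{1/r}$). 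Since the result is standard and documented in \cite{Grosse-Erdmann}, the full proof can be omitted and only this sketch need be invoked in the applications below.
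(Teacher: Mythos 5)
The paper does not prove this lemma at all: it is stated as a quoted result with the explicit reference to Grosse-Erdmann (\cite[(9.2ix'), p.~55 and Theorem~10.4, p.~60]{Grosse-Erdmann}), and your proposal ultimately does the same, concluding with ``the full proof can be omitted and only this sketch need be invoked.'' In that sense the treatments agree.

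The sketch you give, however, contains some slippage worth flagging if it were to be fleshed out. Your claimed H\"older relation $\tfrac{1}{r} + \tfrac{q}{p} = 1$ is incorrect: from $\tfrac{1}{r} = \tfrac{1}{q} - \tfrac{1}{p}$ one gets $\tfrac{q}{r} + \tfrac{q}{p} = 1$, i.e. $r = q\,(p/q)'$, not $r = (p/q)'$; if you use the exponent $p/q>1$ with conjugate $(p/q)'$ you must keep track of the extra power $q$ throughout. More seriously, your sufficiency reduction to non-increasing $\{x_k\}$ is asserted rather than argued. The superadditivity of $\|\cdot\|_p$ for $p\le1$ on disjointly supported sequences does not by itself imply that the left-hand side of the inequality grows under decreasing rearrangement of $\{x_k\}$, since the coefficients $b_k$ (or even their non-increasing majorant $B_k^{1/r}$) appear inside a tail sum $\sum_{k\ge j}$ that is then raised to the power $q$ against the weights $a_j$; the monotonicity of the quantity in $\{x_k\}$ is not a consequence of a one-line rearrangement argument and is not how the Bennett/Grosse-Erdmann blocking technique proceeds. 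That technique instead partitions $\mathbb{N}$ into blocks on which the partial sums $A_j = \sum_{k\le j} a_k^q$ are comparable and estimates block by block, rather than first reducing to monotone test sequences. Given that you defer to the reference, none of this breaks the application — but the sketch as written would not compile into a proof without reworking both points.
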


\begin{lem}[\bf{Copson's inequality; $1 \leq p < q < \infty$}]\label{LemmaCopson}
Let $1 \leq p < q < \infty$ and $\sum_{j=1}^\infty a_j^q = \infty$. The inequality (\ref{copson}) holds if and only if
\begin{equation*}
	\sum_{j=1}^\infty a_{j+1}^q \Big(\sum_{k=1}^{j+1} a_k^q\Big)^{-1} \Big(\sum_{k=1}^{j} a_k^q\Big)^{-r/q}  \sup_{k \leq j} b_k^{-r} < \infty,
\end{equation*}
where $1/r = 1/p-1/q$.
\end{lem}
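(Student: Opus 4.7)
The plan is to view this as a reverse (Copson) Hardy inequality and reduce it, via duality, to a weighted bilinear embedding for which sharp conditions are classical. By the duality $(\ell^q)^\ast = \ell^{q'}$,
\[
\left(\sum_{j=1}^\infty \Big(a_j \sum_{k \ge j} b_k x_k\Big)^q\right)^{1/q} = \sup_{\|y\|_{\ell^{q'}} \le 1,\ y \ge 0} \sum_{j=1}^\infty a_j y_j \sum_{k \ge j} b_k x_k.
\]
Interchanging the order of summation rewrites this as $\sup_y \sum_k b_k x_k A_k(y)$ with $A_k(y) := \sum_{j \le k} a_j y_j$, so that the stated lower bound $\|Tx\|_{\ell^q} \gtrsim \|x\|_{\ell^p}$ for $T(x)_j = a_j \sum_{k \ge j} b_k x_k$ becomes the bilinear inequality
\[
\sup_{\|y\|_{\ell^{q'}} \le 1,\ y \ge 0}\ \sum_{k=1}^\infty b_k x_k A_k(y) \ \gtrsim \ \|x\|_{\ell^p} \qquad (x \ge 0).
\]

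For necessity, I would first test against single-mass sequences $x = e_m$ and against step sequences $x = \mathbf{1}_{[1,N]}$. The first, after choosing the near-extremal $y$ proportional to $a_j^{q-1} \mathbf{1}_{[1,m]}(j) / S_m^{1/q'}$ with $S_j := \sum_{k \le j} a_k^q$, forces a pointwise bound on $b_m^{-1}$ in terms of $S_m$, and taking the running maximum over $k \le j$ produces the factor $\sup_{k \le j} b_k^{-r}$ in the characterization. The step-sequence test, combined with the block decomposition below, yields the remaining summation.

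For sufficiency I would perform a block decomposition indexed by the dyadic level sets $\{j : 2^n \le S_j < 2^{n+1}\}$, using the hypothesis $\sum_j a_j^q = \infty$ so that these blocks partition $\mathbb{N}$, and exploiting $q > p$ to assemble local $\ell^p$–$\ell^q$ estimates into the global one through a H\"older pairing in $\ell^{r}$ with $1/r = 1/p - 1/q$, in the manner of the classical proofs of discrete Muckenhoupt-type characterizations as carried out in Grosse-Erdmann's monograph. The main obstacle will be the factor $\sup_{k \le j} b_k^{-r}$: being a supremum rather than a sum, it does not decompose additively along the blocks, so the stratification must be refined to track, on each block, the index where the running maximum of $b_k^{-1}$ is attained. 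Abel summation on each such refined piece then recombines to the stated condition, and the precise form of the exponent $r$ emerges naturally from the H\"older pairing in the bilinear reformulation.
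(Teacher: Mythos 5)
This lemma is not proved in the paper at all: the authors state it as a known result and refer the reader to Grosse-Erdmann's monograph (specifically Theorem 10.4, p.\ 60 of \cite{Grosse-Erdmann}) and Bennett's work \cite{Bennett2, Bennett}. So there is no in-paper proof to compare against; what you have written is an attempted reconstruction, and it has to be judged on its own terms.

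On those terms, the sketch has genuine gaps. First, the duality step is a true identity but not a simplification: the supremum over $y\ge 0$ with $\|y\|_{\ell^{q'}}\le 1$ of $\sum_k b_k x_k A_k(y)$ is attained at the $\ell^{q'}$-normalized dual element of $Tx$ and merely unwinds to $\|Tx\|_{\ell^q}$ again, so you have not actually reduced the problem to a ``bilinear embedding'' in any operative sense. Second, the necessity tests you propose do not yield the stated condition. Testing against $x=e_m$ gives $b_m^q S_m\gtrsim 1$ (with $S_m=\sum_{k\le m}a_k^q$), i.e.\ only the pointwise bound $\sup_{k\le j}b_k^{-r}\lesssim S_j^{r/q}$; substituting this into the characterizing sum produces $\sum_j a_{j+1}^q/S_{j+1}$, which is in general divergent (e.g.\ $a_j\equiv 1$), so this test is strictly weaker than the condition and you would need to exhibit a more carefully designed family of test sequences, not just steps $\mathbf{1}_{[1,N]}$ whose image involves $\sum_{k=j}^N b_k$ rather than the running maximum of $b_k^{-1}$. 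Third, in the sufficiency direction you explicitly flag that the factor $\sup_{k\le j}b_k^{-r}$ does not decompose along the dyadic blocks of $S_j$ and say only that ``the stratification must be refined'' and ``Abel summation on each such refined piece then recombines''; that is the entire technical content of the sufficiency proof, and it is asserted rather than carried out. As written, the argument does not establish the lemma, and the honest move here is exactly what the paper does: cite Grosse-Erdmann.
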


\begin{thm}\label{Theorem 4.5new}
    Let $1 < p < \infty, 0 < q,r \leq \infty$, and $b > -1/q$. Then
    \begin{equation}\label{5.13new}
    	\mathfrak{L} \cap F^{0,b+1/q+1/2-1/\max\{r,2\}}_{p,r} (\mathbb{R}^d) \hookrightarrow \mathbf{B}^{0,b}_{p,q}(\mathbb{R}^d) \quad \text{ if and only if }\quad q \geq r,
    \end{equation}
    and
    \begin{equation}\label{5.14new}
    	\mathfrak{L} \cap \mathbf{B}^{0,b}_{p,q}(\mathbb{R}^d) \hookrightarrow F^{0,b+1/q + 1/2 - 1/\min\{r,2\}}_{p,r}(\mathbb{R}^d)\quad \text{ if and only if } \quad q \leq  r.
    \end{equation}
    In particular,
    \begin{equation}\label{Section 7.4: new}
    	\mathfrak{L} \cap F^{0,b+1/\min\{2,q\}}_{p,q} (\mathbb{R}^d) \hookrightarrow \mathfrak{L} \cap \mathbf{B}^{0,b}_{p,q}(\mathbb{R}^d) \hookrightarrow \mathfrak{L} \cap F^{0,b+1/\max\{q,2\}}_{p,q} (\mathbb{R}^d).
    \end{equation}
\end{thm}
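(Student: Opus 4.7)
The plan is to reduce the embeddings (\ref{5.13new}) and (\ref{5.14new}) to weighted sequence inequalities using the characterizations $\|f\|_{F^{0,b}_{p,r}} \asymp (\sum_j ((1+j)^b |b_j|)^r)^{1/r}$ (Proposition \ref{Proposition 4.1}) and $\|f\|_{\mathbf{B}^{0,b}_{p,q}} \asymp (\sum_j (1+j)^{bq}(\sum_{k\geq j}|b_k|^2)^{q/2})^{1/q}$ (Theorem \ref{Theorem 4.2}) applied to $f \in \mathfrak{L}$. With $\xi = 1/q+1/2-1/\max\{r,2\}$, (\ref{5.13new}) is equivalent to
\begin{equation*}
\left(\sum_j (1+j)^{bq}\Big(\sum_{k\geq j}|b_k|^2\Big)^{q/2}\right)^{1/q} \lesssim \left(\sum_j ((1+j)^{b+\xi}|b_j|)^r\right)^{1/r} \quad \text{iff} \quad q \geq r,
\end{equation*}
and similarly for (\ref{5.14new}); the statement (\ref{Section 7.4: new}) is then the special case $r=q$ of both.

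For sufficiency in (\ref{5.13new}) when $q \geq r$, I follow the two-case strategy of Theorem \ref{Theorem 2.1}. If $r \leq 2$, then $\ell^r \hookrightarrow \ell^2$ yields $(\sum_{k\geq j}|b_k|^2)^{1/2} \leq (\sum_{k\geq j}|b_k|^r)^{1/r}$; Minkowski's inequality in $\ell^{q/r}$ (permissible since $q/r \geq 1$) interchanges the two sums, and the summability $\sum_{j\leq k}(1+j)^{bq} \asymp (1+k)^{bq+1}$, which uses $b > -1/q$, finishes the argument. If $r > 2$, Hölder's inequality with a parameter $\varepsilon$ gives
\begin{equation*}
\Big(\sum_{k\geq j}|b_k|^2\Big)^{1/2} \lesssim (1+j)^{-\varepsilon+1/2-1/r}\Big(\sum_{k\geq j}(1+k)^{\varepsilon r}|b_k|^r\Big)^{1/r},
\end{equation*}
and choosing $\varepsilon$ in the non-empty range $(1/2-1/r,\, b+1/q+1/2-1/r)$ (possible since $b>-1/q$) lets the Minkowski--Fubini step produce the desired bound. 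Sufficiency in (\ref{5.14new}) when $q \leq r$ is proved by the mirror argument.

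Necessity is the technical core. For $q < r$ with $r \geq 2$, take the test coefficients $|b_j| = j^{-(b+1/q+1/2)}(\log j)^{-\alpha}$. Since $b+1/q>0$, the tail behaves as $\sum_{k\geq j}|b_k|^2 \asymp j^{-2b-2/q}(\log j)^{-2\alpha}$, so both the $F$- and $\mathbf{B}$-norms collapse to sums of the form $\sum_j j^{-1}(\log j)^{-\gamma}$, with $\gamma = \alpha r$ and $\gamma = \alpha q$ respectively; any $\alpha \in (1/r, 1/q]$ then yields $f \in F^{0,b+\xi}_{p,r} \setminus \mathbf{B}^{0,b}_{p,q}$. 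For $r < 2$ the previous summand is already absolutely convergent, forcing me to use a doubly lacunary construction: place the mass on $j = 2^i$ with $|b_{2^i}| = 2^{-i(b+1/q)}\beta_i$. The condition $b+1/q>0$ makes the tail geometric, giving $(\sum_{m\geq i}|b_{2^m}|^2)^{1/2} \asymp 2^{-i(b+1/q)}\beta_i$, and the problem reduces cleanly to the failure of $\ell^r \hookrightarrow \ell^q$ at the scale of $\{\beta_i\}$; the choice $\beta_i = i^{-1/r}(\log i)^{-\alpha}$ with $\alpha r > 1$ produces a counterexample because $\sum_i \beta_i^q = \sum_i i^{-q/r}(\log i)^{-\alpha q} = \infty$ when $q/r < 1$. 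Necessity in (\ref{5.14new}) is handled by parallel constructions.

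The hard step will be controlling the double-lacunary counterexample in the regime $r < 2$: one has to verify that the telescoping of $\sum_{k\geq j}|b_k|^2$ genuinely isolates the leading term at index $2^i$, so that the reduced sequence inequality faithfully mirrors the $\ell^r$ versus $\ell^q$ embedding problem on the $\{\beta_i\}$'s. The geometric decay produced by $b+1/q>0$ is precisely what is needed here, and it is the place where the assumption on $b$ enters in an essential way.
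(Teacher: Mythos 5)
Your argument is correct, and the sufficiency half (the two-case $r \le 2$ / $r > 2$ Minkowski--H\"older argument) is essentially the paper's proof. The genuine difference is in the necessity direction for the ``hard'' regime. The paper handles the case $0 < q < r < 2$ of~(\ref{5.13new}) by rewriting the embedding as a weighted discrete Hardy inequality with $\ell_{q/2}$- and $\ell_{r/2}$-scales, $0 < q/2 < r/2 < 1$, and then invoking the Bennett--Grosse-Erdmann characterization (Lemma~\ref{LemmaHardy}); the resulting finiteness condition collapses to $\sum_j 1/j < \infty$, a contradiction. The case $q > r > 2$ of~(\ref{5.14new}) is handled symmetrically with Copson's inequality (Lemma~\ref{LemmaCopson}). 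You instead build an explicit counterexample: a doubly lacunary coefficient sequence $b_{2^i} = 2^{-i(b+1/q)}\beta_i$ (all other $b_j = 0$). Because $b + 1/q > 0$ forces geometric decay in the tail $\sum_{k\ge j}|b_k|^2$, the dyadic block at scale $i$ dominates, and both norms collapse exactly to $\bigl(\sum_i \beta_i^q\bigr)^{1/q}$ and $\bigl(\sum_i \beta_i^r\bigr)^{1/r}$ respectively; the choice $\beta_i = i^{-1/r}(\log i)^{-\alpha}$ with $\alpha r > 1$ then exhibits the failure of $\ell^r \hookrightarrow \ell^q$. This is a valid and more self-contained route --- it trades the external Hardy/Copson machinery for a computation you control directly, and the reduction to a pure $\ell^r$-vs-$\ell^q$ comparison is clean. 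The one thing to tidy up: for~(\ref{5.14new}) the ``parallel construction'' is needed in the regime $r > 2$, not $r < 2$ (roles switch under the $q \leftrightarrow r$ symmetry; the simple power-log choice still works for $r \le 2$ there), and the claim that ``the previous summand is already absolutely convergent'' for $r < 2$ in~(\ref{5.13new}) is imprecise --- the real obstruction is that the window of admissible exponents $\gamma$ for a single power-log sequence is empty (one needs $\gamma > b + 1/q + 1/r$ for finiteness of the $F$-norm but $\gamma \le b + 1/q + 1/2$ for divergence of the $\mathbf{B}$-norm, and $1/r > 1/2$).
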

\begin{proof}
Let $q \geq r$. We consider two cases. Assume first that $r \leq 2$. By using (\ref{4.6}), (\ref{4.3}) and Minkowski's inequality with $q/r \geq 1$, we obtain
	\begin{align*}
		\|f\|_{\mathbf{B}^{0,b}_{p,q}(\mathbb{R}^d)} & \asymp \left(\sum_{j=3}^\infty (1 + j)^{b q} \Big(\sum_{k=j}^\infty |b_k|^2\Big)^{q/2}\right)^{1/q} \leq \left(\sum_{j=3}^\infty (1 + j)^{b q} \Big(\sum_{k=j}^\infty |b_k|^r\Big)^{q/r}\right)^{1/q} \\
		& \leq \left(\sum_{k=3}^\infty \Big(\sum_{j=3}^k (1 + j)^{b q}\Big)^{r/q} |b_k|^r \right)^{1/r}
\lesssim\|f\|_{F^{0,b+1/q}_{p,r}(\mathbb{R}^d)},
	\end{align*}
	i.e., the embedding given in (\ref{5.13new}) follows.
	
	Suppose now that $2 < r$. We take $\beta$ such that $1/2 - 1/r < \beta < 1/2 - 1/r + b + 1/q$. For $j \geq 3$, we have
	\begin{equation*}
		\Big(\sum_{k=j}^\infty |b_k|^2\Big)^{1/2} \lesssim (1 + j)^{-\beta + 1/2 - 1/r} \Big(\sum_{k=j}^\infty (1 + k)^{\beta r} |b_k|^r \Big)^{1/r}.
	\end{equation*}
	Combining  this estimate and (\ref{4.6}) and applying Minkowski's inequality, we derive
	\begin{align*}
		\|f\|_{\mathbf{B}^{0,b}_{p,q}(\mathbb{R}^d)} &  \lesssim \left(\sum_{j=3}^\infty (1 + j)^{(b -\beta + 1/2 -1/r)q} \Big(\sum_{k=j}^\infty (1 + k)^{\beta r} |b_k|^r\Big)^{q/r}\right)^{1/q} \\
		& \lesssim \left(\sum_{k=3}^\infty (1 + k)^{(b +1/q + 1/2 -1/r)r} |b_k|^r \right)^{1/r}.
	\end{align*}
	Therefore, the embedding in (\ref{5.13new}) follows from (\ref{4.3}).
	
	Conversely, let us show that if
	\begin{equation}\label{4.17new}
	\mathfrak{L} \cap F^{0,b+1/q+1/2-1/\max\{r,2\}}_{p,r} (\mathbb{R}^d) \hookrightarrow \mathbf{B}^{0,b}_{p,q}(\mathbb{R}^d)
	\end{equation}
	 then $q \geq r$. We will proceed by contradiction, that is, assume $q < r$. We distinguish two possible cases. Firstly, if $2 \leq r$, we let
	\begin{equation*}
		b_j = (1 + j)^{-(b+1/q+1/2)} (1 + \log j)^{-\delta}, \quad j \in \mathbb{N},
	\end{equation*}
	where $1/r < \delta < 1/q$. Thus, it follows from (\ref{4.3}) and (\ref{4.6}) that the Fourier series $f(x)\sim \psi(x) \sum_{j=3}^\infty b_j e^{i(2^j-2) x_1}, x \in \mathbb{R}^d$, where $\psi \in \mathcal{S}(\mathbb{R}^d) \backslash \{0\}$ with (\ref{4.2}), belongs to $F^{0,b+1/q+1/2-1/r}_{p,r} (\mathbb{R}^d)$ but not to $\mathbf{B}^{0,b}_{p,q}(\mathbb{R}^d)$. This leads to a contradiction.
	
	Secondly, suppose that $2 > r$. We notice that, by (\ref{4.3}) and (\ref{4.6}), the embedding (\ref{4.17new}) can be rewritten as
	\begin{equation*}
		\left(\sum_{j=3}^\infty \Big((1  + j)^{2 b} \sum_{k=j}^\infty |b_k|^2 \Big)^{q/2} \right)^{2/q} \leq C \left(\sum_{j=3}^\infty ((1 + j)^{2(b+1/q)} |b_j|^2)^{r/2}\right)^{2/r}
	\end{equation*}
	for all $\{b_j\}_{j \in \mathbb{N}}$. Since $0 < \frac{q}{2} < \frac{r}{2} < 1$, Lemma \ref{LemmaHardy} yields that
	\begin{equation*}
	\sum_{j=3}^\infty (1 + j)^{b q} \Big(\sum_{k=1}^j (1 + k)^{b q}  \Big)^{\frac{q}{r-q}}  \sup_{k \geq j} k^{-\frac{(b q +1)r}{r-q}} < \infty,
\end{equation*}
or equivalently, $\sum_{j=3}^\infty \frac{1}{j} < \infty$, which is not true. This completes the proof of (\ref{5.13new}).
	
	The proof of (\ref{5.14new}) can be done in a similar way but now using Lemma \ref{LemmaCopson}. The rigorous details can be safely left to the reader.
\end{proof}

Letting $r=2$ in the previous theorem we obtain the following improvement of Corollary \ref{Corollary 2.3}.

\begin{cor}\label{Theorem 4.5}
	Let $1 < p < \infty, 0 < q \leq \infty$, and $b > -1/q$. Then
	    \begin{equation}\label{4.17}
        \mathfrak{L} \cap H^{0,b+1/q}_p(\mathbb{R}^d) \hookrightarrow \mathbf{B}^{0,b}_{p,q}(\mathbb{R}^d)\quad \text{if and only if}\quad q \geq 2
    \end{equation}
    and
    \begin{equation}\label{4.18}
        \mathfrak{L} \cap \mathbf{B}^{0,b}_{p,q}(\mathbb{R}^d) \hookrightarrow H^{0,b+1/q}_p(\mathbb{R}^d) \quad\text{if and only if}\quad q \leq 2.
    \end{equation}
    In particular,
    \begin{equation}\label{LacunaryWB}
        \mathfrak{L} \cap \emph{\textbf{B}}^{0,b}_{p,q}(\mathbb{R}^d) =  \mathfrak{L} \cap H^{0,b+1/q}_p(\mathbb{R}^d) \quad \text{if and only if} \quad q=2.
    \end{equation}
\end{cor}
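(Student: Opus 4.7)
The corollary is a direct specialization of Theorem~\ref{Theorem 4.5new}, so the proof is essentially a bookkeeping exercise once that theorem is in hand. My plan is to simply set $r=2$ in the statement of Theorem~\ref{Theorem 4.5new} and read off what happens to the various parameters.

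First, recall from (\ref{LPgeneral}) that $F^{s,b}_{p,2}(\mathbb{R}^d) = H^{s,b}_p(\mathbb{R}^d)$ with equivalence of norms, for $1<p<\infty$ and arbitrary $s,b \in \mathbb{R}$. This identification is what converts the Triebel--Lizorkin statement of Theorem~\ref{Theorem 4.5new} into a Sobolev statement in the conclusion.

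Next, observe that when $r=2$, both $\max\{r,2\}$ and $\min\{r,2\}$ collapse to $2$, so that
\begin{equation*}
b + \tfrac{1}{q} + \tfrac{1}{2} - \tfrac{1}{\max\{r,2\}} = b + \tfrac{1}{q} \quad\text{and}\quad b + \tfrac{1}{q} + \tfrac{1}{2} - \tfrac{1}{\min\{r,2\}} = b + \tfrac{1}{q}.
\end{equation*}
Substituting $r=2$ into (\ref{5.13new}) and (\ref{5.14new}) and invoking the preceding identification of $F^{0,b+1/q}_{p,2}(\mathbb{R}^d)$ with $H^{0,b+1/q}_{p}(\mathbb{R}^d)$, we obtain exactly the assertions (\ref{4.17}) and (\ref{4.18}).

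Finally, (\ref{LacunaryWB}) follows by combining (\ref{4.17}) and (\ref{4.18}): the identity $\mathfrak{L} \cap \mathbf{B}^{0,b}_{p,q}(\mathbb{R}^d) = \mathfrak{L} \cap H^{0,b+1/q}_p(\mathbb{R}^d)$ requires both embeddings to hold simultaneously, which forces $q \geq 2$ and $q \leq 2$, i.e., $q = 2$. Conversely, when $q=2$ both embeddings are available, and so equality holds. Since no nontrivial obstacle arises once Theorem~\ref{Theorem 4.5new} has been established, the only subtlety worth flagging is the need to verify that the constants hidden in (\ref{LPgeneral}) are independent of the functions considered (which is standard), so that the two embeddings can be chained without loss.
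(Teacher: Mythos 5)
Your proposal is correct and takes essentially the same route as the paper: the paper itself presents this corollary as an immediate consequence of Theorem~\ref{Theorem 4.5new} via the specialization $r=2$ and the identification $F^{0,b+1/q}_{p,2}(\mathbb{R}^d)=H^{0,b+1/q}_p(\mathbb{R}^d)$ from (\ref{LPgeneral}). The arithmetic on the logarithmic exponents, and the intersection-of-two-iffs argument for (\ref{LacunaryWB}), match what the paper intends.
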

\begin{rem}
	In Propositions \ref{Proposition 5.2} and \ref{Proposition 5.5} below we will show that $\xi=b+1/q$ in $H^{0,\xi}_p(\mathbb{R}^d)$ is the best possible exponent of the logarithmic smoothness for which the embeddings (\ref{4.17}) and (\ref{4.18}) hold, respectively.
\end{rem}

\subsection{Embeddings between $B^{0,b}_{p,q}(\mathbb{R}^d)$ and $\mathbf{B}^{0,b}_{p,q}(\mathbb{R}^d)$}
Let $1 < p < \infty, 0 < q \leq \infty$, and $b > -1/q$. Recall that (cf. (\ref{1}))
\begin{equation}\label{1new}
    B^{0,b + 1/\min\{2,p,q\}}_{p,q}(\mathbb{R}^d) \hookrightarrow
    \textbf{B}^{0,b}_{p,q}(\mathbb{R}^d) \hookrightarrow
    B^{0,b+1/\max\{2,p,q\}}_{p,q}(\mathbb{R}^d).
\end{equation}
The goal of this subsection is to show that these embeddings can be sharpened when we work with the $\mathfrak{L}$ class, that is, the loss of logarithmic smoothness is less than $1/\min\{2,p,q\}$ (respectively, $1/\max\{2,p,q\}$) in the left-hand side embedding (respectively, right-hand side embedding) in (\ref{1new}).

\begin{thm}\label{Theorem 5.10new}
		Let $1 < p < \infty, 0 < q \leq \infty$, and $b > -1/q$. Then
	\begin{equation}\label{EmbLacunaryBB}
 	 \mathfrak{L} \cap  B^{0,b + 1/\min\{2,q\}}_{p,q}(\mathbb{R}^d) \hookrightarrow
   \mathfrak{L} \cap  \mathbf{B}^{0,b}_{p,q}(\mathbb{R}^d) \hookrightarrow
    \mathfrak{L} \cap  B^{0,b+1/\max\{2,q\}}_{p,q}(\mathbb{R}^d).
\end{equation}
In particular, we have
\begin{equation}\label{LacunaryBB}
	 \mathfrak{L} \cap  B^{0,b + 1/q}_{p,q}(\mathbb{R}^d) =
     \mathfrak{L} \cap  \mathbf{B}^{0,b}_{p,q}(\mathbb{R}^d) \quad \text{ if and only if } \quad q=2.
\end{equation}
\end{thm}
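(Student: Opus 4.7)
The plan is to reduce both embeddings in \eqref{EmbLacunaryBB} to explicit coefficient inequalities using the characterizations already established in this section. Writing $S_j:=\sum_{k\ge j}|b_k|^2$ for $f\in\mathfrak{L}$ with coefficients $\{b_j\}$, Theorem~\ref{Theorem 4.2} gives
\begin{equation*}
\|f\|_{\mathbf{B}^{0,b}_{p,q}(\mathbb{R}^d)}^q \asymp \sum_{j\ge 3} (1+j)^{bq} S_j^{q/2},
\end{equation*}
and Proposition~\ref{Proposition 4.1} yields
\begin{equation*}
\|f\|_{B^{0,b+\tau}_{p,q}(\mathbb{R}^d)}^q \asymp \sum_{j\ge 3} (1+j)^{(b+\tau)q} |b_j|^q
\end{equation*}
for any $\tau\in\mathbb{R}$. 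Hence the two embeddings in \eqref{EmbLacunaryBB} translate into a pair of weighted discrete inequalities between these sums, with $\tau=1/\min\{2,q\}$ on one side and $\tau=1/\max\{2,q\}$ on the other.

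For the left embedding we split on the value of $q$. When $q\ge 2$, setting $r=q/2\ge 1$ and $a_j=|b_j|^2$ recasts the required estimate as the classical discrete Hardy inequality for tail sums,
\begin{equation*}
\sum_j (1+j)^{2br}\Bigl(\sum_{k\ge j}a_k\Bigr)^r \lesssim \sum_j (1+j)^{2br}\bigl((1+j)a_j\bigr)^r,
\end{equation*}
whose admissibility condition $2br+1>0$ reads exactly $b>-1/q$, our standing hypothesis. When $q<2$, the subadditivity of $t\mapsto t^{q/2}$ gives $S_j^{q/2}\le\sum_{k\ge j}|b_k|^q$, and swapping the order of summation together with the elementary bound $\sum_{j=3}^k(1+j)^{bq}\asymp (1+k)^{bq+1}$ (again valid since $bq+1>0$) completes the estimate. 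For the right embedding we perform a dyadic decomposition on the blocks $D_m=\{k:2^m\le k<2^{m+1}\}$ and bound $\sum_{k\in D_m}|b_k|^q$ in terms of $B_m^{q/2}$, with $B_m:=\sum_{k\in D_m}|b_k|^2$, via the elementary $\ell^q$--$\ell^2$ inequality on $D_m$ (no cardinality factor if $q\ge 2$, a factor $|D_m|^{1-q/2}\asymp 2^{m(1-q/2)}$ if $q<2$). In either case the exponent of $2^m$ in the resulting estimate collapses to $bq+1$, whence
\begin{equation*}
\sum_j (1+j)^{(b+1/\max\{2,q\})q}|b_j|^q \lesssim \sum_m 2^{m(bq+1)} B_m^{q/2};
\end{equation*}
using $B_m\le S_{2^m}$, the monotonicity of $j\mapsto S_j$, and once more $b>-1/q$, we conclude with the desired upper bound by $\sum_j (1+j)^{bq} S_j^{q/2}$.

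For the sharpness assertion \eqref{LacunaryBB}, if $q=2$ the two outer spaces in \eqref{EmbLacunaryBB} coincide. If $q\ne 2$, we test the norms on power-type lacunary series with $b_j=(1+j)^{-\beta}$: direct computation from the coefficient characterizations shows that the $B^{0,b+1/q}_{p,q}$-norm is finite iff $\beta>b+2/q$, whereas the $\mathbf{B}^{0,b}_{p,q}$-norm is finite iff $\beta>b+1/2+1/q$. Since $2/q<1/2+1/q$ when $q>2$ and the opposite inequality holds for $q<2$, in each case one can pick $\beta$ in the non-empty interval separating the two thresholds, producing a lacunary series in one space but not the other, and contradicting equality. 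The main technical hurdle is the exponent bookkeeping in the dyadic reduction of the right embedding; once this is in hand, the Hardy and subadditivity arguments each consume the hypothesis $b>-1/q$ sharply, in line with the range of parameters for which Theorem~\ref{Theorem 4.2} is stated.
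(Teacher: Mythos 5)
Your proof is correct. The paper dispatches the embeddings \eqref{EmbLacunaryBB} in one line by citing the already-established Triebel--Lizorkin result \eqref{Section 7.4: new} together with the identity $\mathfrak{L}\cap F^{s,b}_{p,q}=\mathfrak{L}\cap B^{s,b}_{p,q}$ from \eqref{LacunaryFB}, whereas you unpack the content directly at the coefficient level. The underlying mechanism is the same (a weighted discrete Hardy inequality for the left embedding with $q\ge 2$, subadditivity of $t\mapsto t^{q/2}$ plus a summation swap for $q<2$), but your treatment of the right embedding via dyadic blocks $D_m$ and the $\ell^q$--$\ell^2$ comparison with explicit cardinality factor is a different organization than the paper's parent argument inside Theorem~\ref{Theorem 4.5new}, which instead introduces an auxiliary exponent $\beta$ and applies H\"older term by term. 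Your route is more self-contained; the paper's is shorter because it reuses the $F$-space machinery. Both consume $b>-1/q$ at exactly the same step (convergence of $\sum_{j\le k}(1+j)^{bq}\asymp(1+k)^{bq+1}$). For \eqref{LacunaryBB}, your power-type counterexample $b_j=(1+j)^{-\beta}$ with the two thresholds $b+2/q$ and $b+1/2+1/q$ is essentially identical to the paper's; the only cosmetic difference is that the paper additionally imposes $\beta>1/2$ so that $S_j$ converges and \eqref{4.6} can be applied as a finite quantity, though as you note the conclusion holds either way. One small gap worth flagging: when invoking the Hardy inequality for $r=q/2$ with $q\ge 2$, one should either cite a version valid for general exponent $r\ge 1$ (the paper's \eqref{HardyInequal2*} is stated in a slightly different weight form) or give the short direct dyadic proof as you did for the other embedding; as written the reference to ``the classical discrete Hardy inequality'' is a bit glib, but the inequality itself is certainly true under $b>-1/q$.
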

\begin{rem} The fact that $\mathfrak{L} \cap  B^{0,b + 1/2}_{p,2}(\mathbb{R}^d) =
     \mathfrak{L} \cap  \mathbf{B}^{0,b}_{p,2}(\mathbb{R}^d)$ also follows from (\ref{LacunaryFB1}) and (\ref{LacunaryWB}).

\end{rem}
\begin{proof}[Proof of Theorem \ref{Theorem 5.10new}]
The embeddings (\ref{EmbLacunaryBB}) are direct consequences of (\ref{Section 7.4: new}) and (\ref{LacunaryFB}).
    Hence it only remains to show the only-if-part of (\ref{LacunaryBB}). Suppose that $2 < q$. Let
	\begin{equation*}
		b_j = (1 + j)^{-\beta}, \quad j \in \mathbb{N},
	\end{equation*}
	where $\max\{b + 2/q,1/2\} < \beta < b + 1/q + 1/2$. Thus, applying the characterizations (\ref{4.3}) and (\ref{4.6}) to the Fourier series $f(x)\sim \psi(x) \sum_{j=3}^\infty b_j e^{i(2^j-2) x_1}, x \in \mathbb{R}^d$, where $\psi \in \mathcal{S}(\mathbb{R}^d) \backslash \{0\}$ with (\ref{4.2}), we obtain that $f \in B^{0,b+1/q}_{p,q}(\mathbb{R}^d)$, but $f \not \in \mathbf{B}^{0,b}_{p,q}(\mathbb{R}^d)$. We arrive at the contradiction. The case $2 > q$ can be treated similarly.
\end{proof}

\subsection{Characterization of spaces $B^{s,b}_{p,q}(\mathbb{T}), F^{s,b}_{p,q}(\mathbb{T}),$ and $\mathbf{B}^{s,b}_{p,q}(\mathbb{T})$}

Our goal in this subsection is to establish the periodic counterparts of results stated in Sections 7.1-7.5. We start by recalling the definition of lacunary sequences.


Fixing $\lambda>1$, we say that
$a=\{a_n\}_{n\in\N}$ is \emph{lacunary}, $a\in
\mathfrak{l}$\index{\bigskip\textbf{Sets}!$\mathfrak{l}$}\label{LACSEQ}, if there is $\lambda > 1$ such that
\begin{equation}\label{lacunarysequence}
	a_n=0 \quad\mbox{for all} \quad n\neq \lambda^j, j = 0, 1, \ldots .
\end{equation}
The following result extends the famous Zygmund property on lacunary series
for various smoothness spaces.
\begin{prop}\label{Proposition Lac}
	Let $1< p < \infty$, $0 < q \leq \infty$, and $s,b \in \mathbb{R}$. Let $f \in L_1(\mathbb{T})$ be such that
	\begin{equation*}
		f(x) \sim \sum_{n=1}^\infty (a_n \cos nx + b_n \sin nx) \text{ with } a, b \in \mathfrak{l}.
	\end{equation*}
	Then
	\begin{equation}\label{4.3per}
		\|f\|_{B^{s,b}_{p,q}(\mathbb{T})}  \asymp \|f\|_{F^{s,b}_{p,q}(\mathbb{T})} \asymp \left(\sum_{j=0}^\infty \Big(\lambda^{j s} (1 + j)^{b} (|a_{\lambda^j}| + |b_{\lambda^j}|)\Big)^q \right)^{1/q}.
	\end{equation}
	In particular, we have
	\begin{equation}\label{4.4per}
    		\|f\|_{L_p(\mathbb{T})} \asymp \left(\sum_{j=0}^\infty
    		(|a_{\lambda^j}| + |b_{\lambda^j}|)^2\right)^{1/2}
	\end{equation}
	or, more generally,
	\begin{equation}\label{4.4newper}
		\|f\|_{H^{s,b}_p(\mathbb{T})} \asymp \left(\sum_{j=0}^\infty \Big(\lambda^{j s} (1 + j)^b (|a_{\lambda^j}|+|b_j|)\Big)^2 \right)^{1/2}.
	\end{equation}
\end{prop}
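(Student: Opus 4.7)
The plan is to follow the strategy of Proposition \ref{Proposition 4.1} in the periodic setting, the main new ingredient being the classical Zygmund theorem on real lacunary trigonometric series: for any lacunary $\{\mu_k\}\subset\mathbb{N}$ (with ratio $>1$) and $1<p<\infty$,
\begin{equation*}
\Big\|\sum_k (\alpha_k\cos\mu_k x+\beta_k\sin\mu_k x)\Big\|_{L_p(\mathbb{T})}\asymp \Big(\sum_k(\alpha_k^2+\beta_k^2)\Big)^{1/2};
\end{equation*}
see, e.g., \cite[Ch.~V]{Zygmund}. Applied directly to $f$ this is precisely (\ref{4.4per}), and applied to the lacunary series $\sum_k(1+|k|^2)^{s/2}(1+\log(1+|k|^2))^b\widehat{f}(k)e^{ikx}$ (whose Fourier support is still lacunary) it yields the Sobolev characterization (\ref{4.4newper}).

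First, I would link the lacunary indices with the dyadic resolution of unity $\{\varphi_j\}_{j\in\mathbb{N}_0}$. Since $\lambda>1$ is fixed, each dyadic block $K_j=\{k\in\mathbb{Z}:2^{j-1}\le|k|<2^j\}$ contains at most $M_\lambda:=\lceil 2/\log_2\lambda\rceil$ indices $\pm\lambda^m$, and each $\lambda^m$ belongs to exactly one $K_{j_m}$ with $j_m=\lfloor m\log_2\lambda\rfloor+1$, so $2^{j_m}\asymp\lambda^m$ and $1+j_m\asymp 1+m$. Up to bounded multiplicities this yields a bijection $m\leftrightarrow j_m$ under which $2^{j_ms}(1+j_m)^b\asymp\lambda^{ms}(1+m)^b$. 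For each $j$, $(\varphi_j\widehat{f})^\vee$ is a finite sum of at most $M_\lambda$ terms $\varphi_j(\lambda^m)(a_{\lambda^m}\cos\lambda^m x+b_{\lambda^m}\sin\lambda^m x)$ with $\lambda^m\in\mathrm{supp}\,\varphi_j$, whose frequencies still form a lacunary set; Zygmund's theorem then yields
\begin{equation*}
\|(\varphi_j\widehat{f})^\vee\|_{L_p(\mathbb{T})}\asymp \Big(\sum_{m:\lambda^m\in K_j}(a_{\lambda^m}^2+b_{\lambda^m}^2)\Big)^{1/2}
\end{equation*}
uniformly in $j$. Inserting this into the definition of $\|f\|_{B^{s,b}_{p,q}(\mathbb{T})}$ and reindexing as above gives the $B$-norm part of (\ref{4.3per}); for the converse inequality one uses that for each $m$ at least one of $\varphi_{j_m-1},\varphi_{j_m},\varphi_{j_m+1}$ satisfies $|\varphi_j(\lambda^m)|\gtrsim 1$, which ensures the dyadic block in question actually detects the frequency.

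For the $F^{s,b}_{p,q}(\mathbb{T})$ characterization, set $d_j=2^{js}(1+j)^b$ and $T:=\bigl(\sum_m(\lambda^{ms}(1+m)^b(|a_{\lambda^m}|+|b_{\lambda^m}|))^q\bigr)^{1/q}$. Writing each single-frequency summand as $\sigma_{m_j}\cos(\lambda^{m_j}x-\psi_{m_j})$ with $\sigma_{m_j}=(a_{\lambda^{m_j}}^2+b_{\lambda^{m_j}}^2)^{1/2}$, the pointwise bound $|\cos|\le 1$ gives $S(x):=\bigl(\sum_j d_j^q|(\varphi_j\widehat{f})^\vee(x)|^q\bigr)^{1/q}\le CT$, whence $\|f\|_{F^{s,b}_{p,q}(\mathbb{T})}\lesssim T$. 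For the matching lower bound I would split into cases. If $q\le p$, on the finite measure space $\mathbb{T}$ one has $\|S\|_{L_p(\mathbb{T})}\gtrsim\|S\|_{L_q(\mathbb{T})}$, and the direct evaluation
\begin{equation*}
\|S\|_{L_q(\mathbb{T})}^q = \sum_j d_j^q|\varphi_j(\lambda^{m_j})|^q\sigma_{m_j}^q\cdot\frac{1}{2\pi}\!\int_{\mathbb{T}}|\cos|^q\,dx\asymp T^q
\end{equation*}
closes the argument. If $q\ge p$, the periodic analogue of the embedding $F^{s,b}_{p,q}(\mathbb{T})\hookrightarrow B^{s,b}_{p,q}(\mathbb{T})$ from Proposition \ref{RecallEmb}(i), combined with the $B$-characterization just proved, yields $\|f\|_{F^{s,b}_{p,q}(\mathbb{T})}\gtrsim\|f\|_{B^{s,b}_{p,q}(\mathbb{T})}\asymp T$.

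The main obstacle is the $F$-space lower bound for $q\ge p$: the elementary finite-measure inequality used for $q\le p$ goes the wrong way, forcing reliance on the nontrivial embedding theory between $F$- and $B$-spaces rather than a self-contained Zygmund/Khintchine argument. The book-keeping of overlaps in the first step is purely combinatorial and only affects implicit constants.
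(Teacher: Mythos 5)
Your proof is correct, and it parallels the paper's argument while spelling out details the paper leaves implicit. The paper's own proof is deliberately terse: it invokes a $\lambda$-adapted resolution of unity $\{\varphi_j^\lambda\}$ from Triebel's book, whose blocks have supports in $[\lambda^{j-1},\lambda^{j+1}]$ so that the lacunary series becomes essentially diagonal (each block sees only the frequencies $\lambda^{j-1},\lambda^j,\lambda^{j+1}$), and then declares the proposition an immediate consequence. You keep the standard dyadic partition and carry out the $m\leftrightarrow j_m$ bookkeeping by hand; this is the same idea in different clothing, trading the appeal to Triebel's lemma for explicit multiplicity counting.

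Two points worth flagging. First, your worry about ``reliance on nontrivial embedding theory'' for the $F$-space lower bound when $q\ge p$ is overstated: you only need the diagonal case $r=q$ of Proposition \ref{RecallEmb}(i), and the inequality $\|f\|_{B^{s,b}_{p,q}(\mathbb{T})}\le\|f\|_{F^{s,b}_{p,q}(\mathbb{T})}$ for $q\ge p$ is just Minkowski's integral inequality applied to $h_j=|(\varphi_j\widehat f)^\vee|^p$ with exponent $q/p\ge1$, i.e.\
\begin{equation*}
\Big(\sum_j\Big(\int_{\mathbb{T}}h_j\Big)^{q/p}\Big)^{p/q}\le\int_{\mathbb{T}}\Big(\sum_j h_j^{q/p}\Big)^{p/q},
\end{equation*}
no embedding theorem required, and your argument can be made self-contained. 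Second, the displayed equivalence $\|(\varphi_j\widehat f)^\vee\|_{L_p(\mathbb{T})}\asymp(\sum_{m:\lambda^m\in K_j}(a_{\lambda^m}^2+b_{\lambda^m}^2))^{1/2}$ is not literally exact as written: $\varphi_j$ sees all $\lambda^m\in\mathrm{supp}\,\varphi_j\supsetneq K_j$, and Zygmund actually gives $\asymp(\sum_m|\varphi_j(\lambda^m)|^2\sigma_m^2)^{1/2}$, with weights $\varphi_j(\lambda^m)\in[0,1]$ not bounded below on all of $\mathrm{supp}\,\varphi_j$. Your two-sided bookkeeping (upper bound from $|\varphi_j|\le1$, lower bound from detecting each $m$ by some $j$ with $\varphi_j(\lambda^m)\gtrsim1$) does carry the argument through, but the displayed intermediate equivalence should be stated with the $\varphi_j(\lambda^m)$ weights or replaced by the two separate one-sided bounds you actually use. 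Likewise the ``single-frequency summand'' simplification in the $F$-norm step tacitly assumes at most one $\lambda^m$ per dyadic block; for $1<\lambda<2$ one must keep Zygmund's $\ell_2$-sum inside each block, though this changes nothing essential.
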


\begin{rem}
Relation (\ref{4.4per}) was proved by Zygmund \cite[Chapter V]{Zygmund}. Note that it holds for any $0<p<\infty$.
As in the case of Lebesgue spaces, we see that   Besov, Triebel-Lizorkin,  and Sobolev norms of
  lacunary Fourier series
   are independent of the integrability parameter $p$.
\end{rem}

The proof of Proposition \ref{Proposition Lac} is an immediate consequence of the following equivalent decompositions of Besov and Triebel-Lizorkin spaces which extend (\ref{Section2:new}) and (\ref{Section 2.1: New2}) from the usual resolution of unity $\{\varphi_j\}_{j \in \mathbb{N}_0}$ with (\ref{SmoothFunction}) and (\ref{resolution}) to more general resolutions of unity in $\mathbb{R}^d$. We refer to \cite[Remark 3, p. 46]{Triebel1} for further details.

\begin{lem}
	Let $1 < p < \infty, 0 < q \leq \infty$, and $s,b \in \mathbb{R}$. Assume that $\lambda > 1$. Then there exists a resolution of unity $\{\varphi_j^\lambda\}_{j \in \mathbb{N}_0} \subset \mathcal{S}(\mathbb{R}^d)$  satisfying
	\begin{equation*}
		\emph{supp } \varphi_0^\lambda  \subset \{x \in \mathbb{R}^d : |x| \leq \lambda\},
	\end{equation*}
	\begin{equation*}
		\emph{supp } \varphi_j^\lambda \subset \{x \in \mathbb{R}^d : \lambda^{j-1} \leq |x| \leq \lambda^{j+1}\} \text{ if } j = 1,2,\ldots
	\end{equation*}
	for which
	\begin{equation*}
	\|f\|_{B^{s,b}_{p,q}(\mathbb{T})} \asymp \left(\sum_{j=0}^\infty \lambda^{j s q} (1 + j)^{b q} \Big\|\sum_{k=-\infty}^\infty \varphi^\lambda_j(k) \widehat{f}(k) e^{i k x}\Big\|_{L_p(\mathbb{T})}^q\right)^{1/q}
	\end{equation*}
	and
	\begin{equation*}
	\|f\|_{F^{s,b}_{p,q}(\mathbb{T})} \asymp \left\|\left(\sum_{j=0}^\infty \left(\lambda^{j s} (1 + j)^{b} \Big|\sum_{k=-\infty}^\infty \varphi^\lambda_j(k) \widehat{f}(k) e^{i k x} \Big|\right)^q \right)^{1/q}\right\|_{L_p(\mathbb{T})}.
\end{equation*}
\end{lem}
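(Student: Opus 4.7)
The plan is to construct the resolution explicitly and then reduce the claimed equivalence to a uniform Fourier multiplier estimate that compares the new decomposition to the dyadic one used in (\ref{Section2:new}) and (\ref{Section 2.1: New2}). First I would fix a cutoff $\eta \in C^\infty([0,\infty))$ with $\eta \equiv 1$ on $[0,1]$ and $\eta \equiv 0$ on $[\lambda,\infty)$, and set $\varphi_0^\lambda(x) = \eta(|x|)$ together with $\varphi_j^\lambda(x) = \eta(\lambda^{-j}|x|) - \eta(\lambda^{-(j-1)}|x|)$ for $j \geq 1$. Then $\{\varphi_j^\lambda\}_{j \in \mathbb{N}_0} \subset \mathcal{S}(\mathbb{R}^d)$, the sum telescopes to $\sum_{j \geq 0} \varphi_j^\lambda \equiv 1$, and the required support conditions are immediate.

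Next I would compare with the standard dyadic decomposition $\{\varphi_k\}$ from (\ref{SmoothFunction})--(\ref{resolution}). By the support conditions, for every $j$ the set $K_j = \{k : \mathrm{supp}\,\varphi_k \cap \mathrm{supp}\,\varphi_j^\lambda \neq \emptyset\}$ has cardinality bounded by some $N = N(\lambda)$, and for all $k \in K_j$ one has $2^k \asymp \lambda^j$; the symmetric statement with $J_k$ also holds. Writing $\varphi_j^\lambda = \sum_{k \in K_j} \varphi_k\, \varphi_j^\lambda$ and $\varphi_k = \sum_{j \in J_k} \varphi_j^\lambda\, \varphi_k$, the Besov equivalence reduces to the claim that each of the (at most $N$) operators $T_{j,k}: g \mapsto (\varphi_k\,\varphi_j^\lambda\, \widehat{g})^\vee$ is a bounded Fourier multiplier on $L_p(\mathbb{T})$ with norm uniform in $(j,k)$. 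Since $\varphi_k\,\varphi_j^\lambda$ is smooth and satisfies Mikhlin-type estimates $|D^\alpha(\varphi_k\,\varphi_j^\lambda)(\xi)| \lesssim_\alpha |\xi|^{-|\alpha|}$ with constants independent of $(j,k)$ (only one scale $\asymp \lambda^j \asymp 2^k$ is involved), this follows from the periodic Mikhlin/Marcinkiewicz multiplier theorem. Comparability of weights $2^{ksq}(1+k)^{bq} \asymp \lambda^{jsq}(1+j)^{bq}$ on interacting blocks then yields the norm equivalence term by term for $B^{s,b}_{p,q}(\mathbb{T})$.

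For $F^{s,b}_{p,q}(\mathbb{T})$ the same decomposition works, but the scalar multiplier argument must be replaced by its vector-valued counterpart: one needs a uniform bound for the family $\{T_{j,k}\}_{j,k}$ acting diagonally on $\ell_q$-valued functions in $L_p(\mathbb{T}; \ell_q)$. This follows from the Fefferman--Stein vector-valued extension of Mikhlin's theorem, transferred from $\mathbb{R}^d$ to $\mathbb{T}$ in the standard way (see \cite[Theorem 2.5.6]{Triebel1} for the $\mathbb{R}^d$-statement and the transference remarks in \cite[Section 3.5]{SchmeisserTriebel}). Applying this to the sequence $\{(\varphi_j^\lambda \widehat{f})^\vee\}_j$ and its dyadic image, and once again exploiting the comparability of weights on interacting blocks, gives the required equivalence.

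The main technical obstacle is the vector-valued multiplier estimate needed for the $F$-space equivalence; once this black-box tool is in hand, the argument is a routine bookkeeping exercise built on the finite-overlap property of the two resolutions and the comparability $2^k \asymp \lambda^j$ on interacting blocks. The $B$-space equivalence requires only the scalar Mikhlin theorem and the triangle inequality in $\ell_q$.
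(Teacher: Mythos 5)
The paper does not actually prove this lemma; it just refers to \cite[Remark 3, p. 46]{Triebel1}, where the independence of the Besov/Triebel--Lizorkin quasi-norms from the choice of (generalized dyadic) resolution of unity is discussed. Your proposal supplies precisely the standard argument that lies behind that reference: explicit construction of the $\lambda$-adic partition via telescoping of a fixed cutoff, bounded overlap between the $\lambda$-adic and dyadic frequency annuli with $2^k\asymp\lambda^j$ on interacting blocks, and reduction to a uniform Fourier multiplier bound. The construction, the support/smoothness verification, the comparability of weights, and the $B$-space reduction are all correct.

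One point deserves sharpening. For the $F$-space equivalence you invoke ``the Fefferman--Stein vector-valued extension of Mikhlin's theorem.'' As usually formulated that tool requires $1<q\le\infty$, while the lemma covers $0<q\le\infty$. For $0<q\le 1$ the needed vector-valued multiplier bound does not follow from the Banach-space-valued singular integral theory; one has to exploit the compact frequency support of $\varphi_k\varphi_j^\lambda$ via the Peetre maximal function $\varphi^*_{j,a}f(x)=\sup_y |(\varphi_j\widehat f)^\vee(x-y)|/(1+\lambda^j|y|)^a$ with $a>d/\min\{p,q\}$, and only then apply Fefferman--Stein to a suitable power. This is exactly what \cite[Theorem 2.5.6]{Triebel1} does, so your citation is to the right place; just be aware that the name ``Fefferman--Stein vector-valued Mikhlin'' masks the Peetre-maximal-function step that is essential below $q=1$. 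With that clarification your proof matches the standard route the paper is implicitly relying on.
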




Let $1 < p < \infty$. It follows from (\ref{4.4per}) that
\begin{equation}\label{4.13}
E^\ast_n(f)_{L_p(\mathbb{T})}\,\,\asymp\,\, \|f - S_{n-1}(f)\|_{L_p(\mathbb{T})} \asymp \left(\sum\limits_{k=n}^\infty
(|a_k| + |b_k|)^2\right)^{1/2}
\end{equation}
where $E^\ast_n(f)_{L_p(\mathbb{T})}$ is given by (\ref{errors}).

\begin{thm}\label{Theorem 4.4}
	Let $1 < p < \infty, 0 < q \leq \infty$, and $-\infty < b < \infty$. Let $f \in L_1(\mathbb{T})$ be such that
	\begin{equation*}
		f(x) \sim \sum_{n=1}^\infty (a_n \cos nx + b_n \sin nx) \text{ with } a, b \in \mathfrak{l}.
	\end{equation*}
	If $s > 0$ then
	\begin{equation}\label{4.14}
		\|f\|_{\mathbf{B}^{s,b}_{p,q}(\mathbb{T})} \asymp \left(\sum_{j=0}^\infty \lambda^{j s q} (1 + j)^{b q} (|a_{\lambda^j}| + |b_{\lambda^j}|)^q \right)^{1/q}.
	\end{equation}
	In the case  $s =0$ we have
	\begin{equation}\label{4.15}
		\|f\|_{\mathbf{B}^{0,b}_{p,q}(\mathbb{T})} \asymp \left(\sum_{j=0}^\infty \left( (1 + j)^b \Big(\sum\limits_{k=j}^\infty
(|a_{\lambda^k}| + |b_{\lambda^k}|)^2\Big)^{1/2}\right)^q \right)^{1/q}.
	\end{equation}
\end{thm}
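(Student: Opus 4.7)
The plan is to mimic the proof of Theorem \ref{Theorem 4.2}, replacing the Fourier--analytic approach by the approximation-theoretic one that is natural on $\mathbb{T}$. The two main inputs are the approximation description (\ref{3.18}) of $\mathbf{B}^{s,b}_{p,q}(\mathbb{T})$,
\[
\|f\|_{\mathbf{B}^{s,b}_{p,q}(\mathbb{T})} \asymp \|f\|_{L_p(\mathbb{T})}+\left(\sum_{n=1}^\infty \bigl(n^s (1+\log n)^b \|f-S_{n-1}(f)\|_{L_p(\mathbb{T})}\bigr)^q \frac{1}{n}\right)^{1/q},
\]
together with Zygmund's formula (\ref{4.13}) which, for lacunary $f$ and $1<p<\infty$, asserts
\[
\|f-S_{n-1}(f)\|_{L_p(\mathbb{T})} \asymp \Bigl(\sum_{k=n}^\infty (|a_k|+|b_k|)^2\Bigr)^{1/2}.
\]
Since $a,b\in\mathfrak{l}$ are supported on $\{\lambda^j\}_{j\in\mathbb{N}_0}$, only the tail indices $k=\lambda^m$ contribute. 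In particular, for $\lambda^j\le n<\lambda^{j+1}$ the tail $\sum_{k\ge n}(|a_k|+|b_k|)^2$ equals either $\sum_{m\ge j}$ or $\sum_{m\ge j+1}(|a_{\lambda^m}|+|b_{\lambda^m}|)^2$; these two tails differ only by the single term $(|a_{\lambda^j}|+|b_{\lambda^j}|)^2$ and hence are comparable up to a term absorbed by the next block.

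First I would treat the case $s=0$. Plugging Zygmund's formula into (\ref{3.18}), discretizing the outer sum into the dyadic-type blocks $n\in[\lambda^j,\lambda^{j+1})$, using $(1+\log n)^b \asymp (1+j)^b$ on each block, and the fact that $\sum_{\lambda^j\le n<\lambda^{j+1}} n^{-1}\asymp \log\lambda$, I obtain
\[
\|f\|_{\mathbf{B}^{0,b}_{p,q}(\mathbb{T})}^q \asymp \|f\|_{L_p(\mathbb{T})}^q + \sum_{j=0}^\infty (1+j)^{bq}\Bigl(\sum_{m\ge j}(|a_{\lambda^m}|+|b_{\lambda^m}|)^2\Bigr)^{q/2}.
\]
Because the $j=0$ term of this sum is comparable to $\|f\|_{L_p(\mathbb{T})}^q$ by (\ref{4.4per}) (recall that the interesting range is $b\ge -1/q$), the $L_p$ norm can be absorbed, which gives (\ref{4.15}).

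Next I would treat $s>0$ starting from the equivalence just established for $s=0$, namely
\[
\|f\|_{\mathbf{B}^{s,b}_{p,q}(\mathbb{T})}^q \asymp \sum_{j=0}^\infty \lambda^{jsq}(1+j)^{bq}\Bigl(\sum_{m\ge j}(|a_{\lambda^m}|+|b_{\lambda^m}|)^2\Bigr)^{q/2},
\]
derived in the same way (the $L_p$-norm is again absorbed by the $j=0$ term since $s\ge 0$). The lower bound in (\ref{4.14}) is immediate by retaining only the diagonal $m=j$. For the upper bound I split into two subcases: if $q\ge 2$, Hardy's inequality (\ref{HardyInequal2**}) applied to $b_j=(|a_{\lambda^j}|+|b_{\lambda^j}|)^2$ with weights $\lambda^{jsq}(1+j)^{bq}$ and exponent $q/2\ge 1$ yields (\ref{4.14}); if $q<2$, I use the embedding $\ell_{q/2}\hookrightarrow\ell_1$ to bound the inner tail by $\sum_{m\ge j}(|a_{\lambda^m}|+|b_{\lambda^m}|)^q$ and then interchange the order of summation, summing the geometric series $\sum_{j\le m}\lambda^{jsq}(1+j)^{bq}\asymp \lambda^{msq}(1+m)^{bq}$ (since $s>0$), which again gives (\ref{4.14}).

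The main obstacle is the careful treatment of the discretization step identifying the outer $n$-sum in (\ref{3.18}) with the index $j$ of the lacunary block, and showing that the two slightly different tails that arise (starting at $m=j$ or $m=j+1$) are interchangeable at the level of the full sum; this requires absorbing the leftover term $(|a_{\lambda^j}|+|b_{\lambda^j}|)^2$ into the next block, which in the $s=0$ case must be done with some care because there is no geometric decay available to close the estimate.
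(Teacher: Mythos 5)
Your plan is correct and follows the same route as the paper's proof: use the approximation characterization \eqref{3.18} together with Zygmund's formula \eqref{4.13} for lacunary series to pass to the tail expression \eqref{4.15}, then for $s>0$ deduce \eqref{4.14} by Hardy's inequality \eqref{HardyInequal2**} when $q\ge2$ and by the subadditivity $(\sum c_k^2)^{q/2}\le\sum c_k^q$ when $q<2$. The discretization and tail-shift detail you flag is handled exactly as you anticipate, by absorbing the $j=0$ block into $\|f\|_{L_p(\mathbb{T})}$ via \eqref{4.4per}.
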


\begin{rem}
(i) The convergence of the sums in (\ref{4.14}) and (\ref{4.15}) yield that
$
	\sum_{j=0}^\infty (|a_{\lambda^j}| + |b_{\lambda^j}|)^2 < \infty,
$
and then, by (\ref{4.4per}), $f \in L_p(\mathbb{T})$.

(ii) The interesting case in (\ref{4.15}) is when $b \geq -1/q$ if $q < \infty$ ($b > 0$ if $q=\infty$). Otherwise, both terms in (\ref{4.15}) are equivalent to $\|f\|_{L_p(\mathbb{T})}$ (see (\ref{4.4per})).

\end{rem}
\begin{proof}[Proof of Theorem \ref{Theorem 4.4}]
	Using Lemma \ref{LemmaBesovApprox} and relation (\ref{4.13}), we derive that
	\begin{align}
		\|f\|_{\mathbf{B}^{s,b}_{p,q}(\mathbb{T})} & \asymp \left(\sum_{j=0}^\infty (\lambda^{j s} (1 + j)^b E^\ast_{\lambda^j}(f)_{L_p(\mathbb{T})})^q \right)^{1/q} + \|f\|_{L_p(\mathbb{T})} \nonumber\\
		& \asymp \left(\sum_{j=0}^\infty \left(\lambda^{j s} (1 + j)^b \Big(\sum\limits_{k=j}^\infty
(|a_{\lambda^k}| + |b_{\lambda^k}|)^2\Big)^{1/2}\right)^q \right)^{1/q}, \label{4.16}
	\end{align}
where we used (\ref{4.4per}) in the last estimate. Hence, (\ref{4.15}) holds.
Relation (\ref{4.14}) easily follows  from (\ref{4.16}) by Hardy's inequality ($q \geq 2$) (see (\ref{HardyInequal2**})) and by $\ell_q \hookrightarrow \ell_2$ ($q <2$).

\end{proof}

We are now in a position to establish the periodic counterparts of Proposition \ref{Proposition 4.5} and Theorem \ref{Theorem 4.5new}. The proofs follow along the same lines as the proofs of Proposition \ref{Proposition 4.5} and Theorem \ref{Theorem 4.5new} but now using Proposition \ref{Proposition Lac} and Theorem \ref{Theorem 4.4}.

\begin{prop}
Let $1 < p < \infty, 0 < q, r \leq \infty$, and $-\infty < s, b < \infty$. Then we have
\begin{enumerate}[\upshape(i)]
\item
    $\mathfrak{l} \cap F^{s,b}_{p,r}(\mathbb{T}) \hookrightarrow B^{s,b}_{p,q}(\mathbb{T}) \quad \text{ if and only if } \quad q \geq r$,

\item $\mathfrak{l} \cap  B^{s,b}_{p,q}(\mathbb{T}) \hookrightarrow F^{s,b}_{p,r}(\mathbb{T}) \quad \text{ if and only if } \quad  q \leq r$,
\item $\mathfrak{l} \cap F^{s,b}_{p,r}(\mathbb{T})  = \mathfrak{l} \cap  B^{s,b}_{p,q}(\mathbb{T}) \quad \text{ if and only if } \quad r=q.$
    \end{enumerate}

If, in addition, $s > 0$, then
    \begin{enumerate}[\upshape(i)]
    \setcounter{enumi}{3}
    \item      $\mathfrak{l} \cap F^{s,b}_{p,r}(\mathbb{T}) \hookrightarrow \mathbf{B}^{s,b}_{p,q}(\mathbb{T}) \quad \text{ if and only if } \quad  q \geq r$,
          \item  $\mathfrak{l} \cap H^{s,b}_p(\mathbb{T}) \hookrightarrow \mathbf{B}^{s,b}_{p,q}(\mathbb{T}) \quad \text{ if and only if } \quad  q \geq 2$,
        \item $\mathfrak{l} \cap \mathbf{B}^{s,b}_{p,q}(\mathbb{T}) \hookrightarrow F^{s,b}_{p,r}(\mathbb{T}) \quad \text{ if and only if } \quad  q \leq
    r$,
        \item $\mathfrak{l} \cap \mathbf{B}^{s,b}_{p,q}(\mathbb{T}) \hookrightarrow H^{s,b}_p(\mathbb{T}) \quad \text{ if and only if } \quad  q \leq
    2$,
        \item $\mathfrak{l} \cap F^{s,b}_{p,r}(\mathbb{T})  = \mathfrak{l} \cap  \mathbf{B}^{s,b}_{p,q}(\mathbb{T}) \quad \text{ if and only if } \quad r=q$,

     \item $\mathfrak{l} \cap H^{s,b}_{p}(\mathbb{T})  = \mathfrak{l} \cap \mathbf{B}^{s,b}_{p,q}(\mathbb{T}) \quad \text{ if and only if } \quad 2=q$.
\end{enumerate}
\end{prop}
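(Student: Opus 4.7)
The plan is to reduce every embedding in the statement to an elementary embedding between weighted $\ell_q$ sequence spaces, exploiting the fact that on lacunary Fourier series every relevant function space norm collapses to a simple sequence norm. More precisely, for $a, b \in \mathfrak{l}$ and $f(x) \sim \sum_n (a_n \cos nx + b_n \sin nx)$, Proposition \ref{Proposition Lac} gives
\[
\|f\|_{B^{s,b}_{p,q}(\mathbb{T})} \asymp \|f\|_{F^{s,b}_{p,q}(\mathbb{T})} \asymp \Big(\sum_{j=0}^\infty (\lambda^{js}(1+j)^b (|a_{\lambda^j}|+|b_{\lambda^j}|))^q\Big)^{1/q},
\]
together with the analogous $\ell_2$-characterization (\ref{4.4newper}) for $\|f\|_{H^{s,b}_p(\mathbb{T})}$; Theorem \ref{Theorem 4.4} supplies the corresponding characterization for $\|f\|_{\mathbf{B}^{s,b}_{p,q}(\mathbb{T})}$ (weighted $\ell_q$ if $s>0$, and the more subtle expression (\ref{4.15}) if $s=0$).

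For (i)--(iii), both $F^{s,b}_{p,r}$- and $B^{s,b}_{p,q}$-norms of $f \in \mathfrak{l}$ become weighted $\ell_r$ and $\ell_q$ norms with \emph{identical} weight sequence $\lambda^{js}(1+j)^b$. The sufficiency of $r \leq q$ (resp.\ $r \geq q$) is then the trivial inclusion $\ell_r \hookrightarrow \ell_q$ (resp.\ $\ell_q \hookrightarrow \ell_r$). For necessity, I will use the test functions
\[
f_N(x) \sim \sum_{j=0}^N \lambda^{-js}(1+j)^{-b} \cos(\lambda^j x), \qquad N \in \mathbb{N},
\]
which lie in $\mathfrak{l}$ and satisfy $\|f_N\|_{F^{s,b}_{p,r}(\mathbb{T})} \asymp N^{1/r}$ while $\|f_N\|_{B^{s,b}_{p,q}(\mathbb{T})} \asymp N^{1/q}$; the assumed embedding forces $N^{1/q} \lesssim N^{1/r}$ uniformly in $N$, hence $r \leq q$. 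The reverse embedding is ruled out analogously. Part (iii) is immediate from (i) and (ii).

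For $s>0$, statements (iv), (vi), (viii) follow verbatim from the preceding argument, since by (\ref{4.14}) the norm $\|f\|_{\mathbf{B}^{s,b}_{p,q}(\mathbb{T})}$ is the same weighted $\ell_q$ norm; the $F$ vs.\ $\mathbf{B}$ embedding thus again reduces to $\ell_r \leftrightarrow \ell_q$. For (v) and (vii), the Sobolev norm $\|f\|_{H^{s,b}_p(\mathbb{T})}$ corresponds to the $\ell_2$ norm with the same weights via (\ref{4.4newper}), so these become $\ell_2 \hookrightarrow \ell_q$ (iff $q\geq 2$) and $\ell_q \hookrightarrow \ell_2$ (iff $q\leq 2$). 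The necessity directions use the same test functions $f_N$ as above, computing the $H^{s,b}_p$-norm via (\ref{4.4newper}) to get $N^{1/2}$ on the Sobolev side. Finally, (ix) is obtained by combining (v) and (vii).

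No real obstacle is expected: the argument is almost entirely mechanical once the characterizations of Proposition \ref{Proposition Lac} and Theorem \ref{Theorem 4.4} are in hand. The only mildly delicate point is bookkeeping for the necessity direction---one must verify in each case that the chosen $f_N$ actually belongs to the ``large'' space with the claimed norm asymptotic while having a strictly larger norm in the ``small'' space, but this is a routine computation with geometric and arithmetic sums against logarithmic weights, identical in flavor to the necessity arguments in Proposition \ref{Proposition 4.5} and Theorem \ref{Theorem 4.5new}.
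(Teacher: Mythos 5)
Your proposal is correct and follows essentially the same route as the paper, which simply transfers the argument of Proposition \ref{Proposition 4.5} and Theorem \ref{Theorem 4.5new} to the periodic setting by invoking Proposition \ref{Proposition Lac} and Theorem \ref{Theorem 4.4}. The only cosmetic difference is that the paper obtains (iv)--(ix) as literal special cases of (i)--(iii) via $\mathbf{B}^{s,b}_{p,q}=B^{s,b}_{p,q}$ for $s>0$ and $H^{s,b}_p=F^{s,b}_{p,2}$, whereas you re-derive the sequence characterizations directly from (\ref{4.14}) and (\ref{4.4newper}); both are equally valid.
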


\begin{thm}\label{Theorem 5.18new}
    Let $1 < p < \infty, 0 < q,r \leq \infty$, and $b > -1/q$. Then
    \begin{equation*}
    	\mathfrak{l} \cap F^{0,b+1/q+1/2-1/\max\{r,2\}}_{p,r} (\mathbb{T}) \hookrightarrow \mathbf{B}^{0,b}_{p,q}(\mathbb{T}) \quad \text{if and only if}\quad q \geq r
    \end{equation*}
    and
    \begin{equation*}
    	\mathfrak{l} \cap \mathbf{B}^{0,b}_{p,q}(\mathbb{T}) \hookrightarrow F^{0,b+1/q + 1/2 - 1/\min\{r,2\}}_{p,r}(\mathbb{T})\quad \text{if and only if} \quad q \leq  r.
    \end{equation*}
       In particular,
    \begin{equation*}
    	\mathfrak{l} \cap F^{0,b+1/\min\{q,2\}}_{p,q} (\mathbb{T}) \hookrightarrow \mathfrak{l} \cap \mathbf{B}^{0,b}_{p,q}(\mathbb{T}) \hookrightarrow \mathfrak{l} \cap F^{0,b+1/\max\{q,2\}}_{p,q} (\mathbb{T}).
    \end{equation*}
\end{thm}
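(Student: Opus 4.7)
The plan is to reduce the theorem to a sequence inequality via the characterizations already established for lacunary series in the periodic setting, and then mirror the argument of Theorem \ref{Theorem 4.5new} almost verbatim. For any $f \in L_1(\mathbb{T})$ with lacunary Fourier coefficients, set $c_j = |a_{\lambda^j}| + |b_{\lambda^j}|$. By Proposition \ref{Proposition Lac} (applied with $s=0$ and logarithmic weight $b+1/q+1/2-1/\max\{r,2\}$), we have
\begin{equation*}
\|f\|_{F^{0,b+1/q+1/2-1/\max\{r,2\}}_{p,r}(\mathbb{T})} \asymp \left(\sum_{j=0}^\infty \left((1+j)^{b+1/q+1/2-1/\max\{r,2\}} c_j\right)^r\right)^{1/r},
\end{equation*}
and by Theorem \ref{Theorem 4.4}, formula (\ref{4.15}),
\begin{equation*}
\|f\|_{\mathbf{B}^{0,b}_{p,q}(\mathbb{T})} \asymp \left(\sum_{j=0}^\infty \left((1+j)^b \Bigl(\sum_{k=j}^\infty c_k^2\Bigr)^{1/2}\right)^q\right)^{1/q}.
\end{equation*}
Thus both embeddings become purely sequential statements that do not involve $p$ at all and are formally identical to those appearing in the proof of Theorem \ref{Theorem 4.5new}.

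For the sufficiency in the first embedding, I would distinguish the cases $r \leq 2$ and $r>2$ exactly as in the proof of (\ref{5.13new}). When $r \leq 2$, the estimate $(\sum_{k\geq j} c_k^2)^{1/2} \leq (\sum_{k\geq j}c_k^r)^{1/r}$ combined with Minkowski's inequality for $q/r \geq 1$ and a change in the order of summation yields the bound. When $r > 2$, one chooses an auxiliary parameter $\beta$ with $1/2 - 1/r < \beta < 1/2 - 1/r + b + 1/q$, applies Hölder to pass from $\ell_2$ to a weighted $\ell_r$ tail, and then uses Minkowski again. The sufficiency of the second embedding follows by the analogous reversal, treating $r \geq 2$ and $r<2$ separately and using the simple comparison between $\ell_r$ and $\ell_2$ tails together with Minkowski's inequality in the form $\ell_q \hookrightarrow \ell_r$.

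For the necessity, I would construct the same counterexamples as in Theorem \ref{Theorem 4.5new}. Concretely, to show that $q\geq r$ is necessary for the first embedding when $r \geq 2$, set $c_j = (1+j)^{-(b+1/q+1/2)}(1+\log j)^{-\delta}$ with $1/r < \delta < 1/q$ and take the lacunary series $f(x) \sim \sum_j c_j \cos(\lambda^j x)$; the above two characterizations then give $f \in F^{0,b+1/q+1/2-1/r}_{p,r}(\mathbb{T})$ but $f \notin \mathbf{B}^{0,b}_{p,q}(\mathbb{T})$. For the case $r<2$, the embedding translates into a weighted Hardy-type inequality
\begin{equation*}
\left(\sum_j \Bigl((1+j)^{2b}\sum_{k\geq j}c_k^2\Bigr)^{q/2}\right)^{2/q} \leq C \left(\sum_j \bigl((1+j)^{2(b+1/q)}c_j^2\bigr)^{r/2}\right)^{2/r},
\end{equation*}
and Lemma \ref{LemmaHardy} (with $0 < q/2 < r/2 \leq 1$) forces $\sum_j 1/j < \infty$, a contradiction. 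The necessity for the second embedding is obtained symmetrically by invoking Lemma \ref{LemmaCopson} instead.

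The main potential obstacle is bookkeeping of the logarithmic exponents in the two regimes $r \leq 2$ and $r > 2$ (and their mirror images) to ensure that all the Hardy/Copson applications use admissible parameters; however, since the sequence inequalities are exactly those already handled in the proof of Theorem \ref{Theorem 4.5new}, no new analytic idea is needed and the argument transfers verbatim once the characterizations above are in place.
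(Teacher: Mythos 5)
Your proposal is correct and is exactly the route the paper intends: the paper explicitly states that Theorem~\ref{Theorem 5.18new} is obtained by the same argument as Theorem~\ref{Theorem 4.5new}, replacing the $\mathbb{R}^d$-characterizations (\ref{4.3}) and (\ref{4.6}) by their periodic analogues, Proposition~\ref{Proposition Lac} and formula~(\ref{4.15}) of Theorem~\ref{Theorem 4.4}. Your reduction to the sequence-level Hardy/Copson inequalities and the counterexamples (including the admissibility of the $\delta$-range and the application of Lemmas~\ref{LemmaHardy} and~\ref{LemmaCopson}) transfer verbatim as you describe, so the proof is complete.
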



\begin{cor}
	Let $1 < p < \infty, 0 < q \leq \infty$, and $b > -1/q$. Then
	    \begin{equation*}
        \mathfrak{l} \cap H^{0,b+1/q}_p(\mathbb{T}) \hookrightarrow \mathbf{B}^{0,b}_{p,q}(\mathbb{T}) \quad \text{if and only if} \quad q \geq 2
    \end{equation*}
    and
    \begin{equation*}
        \mathfrak{l} \cap \mathbf{B}^{0,b}_{p,q}(\mathbb{T}) \hookrightarrow H^{0,b+1/q}_p(\mathbb{T}) \quad \text{if and only if} \quad q \leq 2.
    \end{equation*}
    In particular,
    \begin{equation*}
        \mathfrak{l} \cap \emph{\textbf{B}}^{0,b}_{p,q}(\mathbb{T}) = \mathfrak{l} \cap H^{0,b+1/q}_p(\mathbb{T}) \quad \text{if and only if} \quad q=2.
    \end{equation*}
\end{cor}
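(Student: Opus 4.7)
The plan is to deduce this corollary directly from the preceding Theorem \ref{Theorem 5.18new} by specializing the parameter $r$ to $r=2$, combined with the identification $H^{s,b}_p(\mathbb{T}) = F^{s,b}_{p,2}(\mathbb{T})$ for lacunary series. The key observation is that for any $f \in \mathfrak{l}$, formulas \eqref{4.3per} (with $q=2$) and \eqref{4.4newper} give
\begin{equation*}
    \|f\|_{F^{s,b}_{p,2}(\mathbb{T})} \asymp \Bigl(\sum_{j=0}^\infty (1+j)^{2b} \lambda^{2js}(|a_{\lambda^j}|+|b_{\lambda^j}|)^2\Bigr)^{1/2} \asymp \|f\|_{H^{s,b}_p(\mathbb{T})},
\end{equation*}
so the two spaces coincide on $\mathfrak{l}$. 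In particular, $\mathfrak{l}\cap F^{0,b+1/q}_{p,2}(\mathbb{T}) = \mathfrak{l}\cap H^{0,b+1/q}_p(\mathbb{T})$ with equivalent norms.

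Taking $r=2$ in Theorem \ref{Theorem 5.18new}, the exponent appearing in the first embedding becomes $b+1/q+1/2-1/\max\{2,2\} = b+1/q$, and the iff condition is $q\ge r=2$. Substituting $F^{0,b+1/q}_{p,2}(\mathbb{T})$ by $H^{0,b+1/q}_p(\mathbb{T})$ via the identification above yields
\begin{equation*}
    \mathfrak{l}\cap H^{0,b+1/q}_p(\mathbb{T}) \hookrightarrow \mathbf{B}^{0,b}_{p,q}(\mathbb{T}) \iff q\ge 2.
\end{equation*}
Similarly, the choice $r=2$ in the second embedding of Theorem \ref{Theorem 5.18new} gives exponent $b+1/q+1/2-1/\min\{2,2\} = b+1/q$ and the condition $q\le r=2$, producing
\begin{equation*}
    \mathfrak{l}\cap \mathbf{B}^{0,b}_{p,q}(\mathbb{T}) \hookrightarrow H^{0,b+1/q}_p(\mathbb{T}) \iff q\le 2.
\end{equation*}
The ``in particular'' assertion then follows trivially: equality of the two spaces forces both embeddings to hold, hence $q=2$; conversely, when $q=2$ both embeddings are valid and combine to an identity.

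Since the two non-trivial embeddings are handled by an invocation of an already-established result, there is no genuine obstacle here; the only thing to check is the bookkeeping of exponents and the equivalence $H^{s,b}_p\asymp F^{s,b}_{p,2}$ on $\mathfrak{l}$, both of which are straightforward.
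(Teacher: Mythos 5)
Your proof is correct and follows exactly the paper's intended argument: specialize $r=2$ in Theorem \ref{Theorem 5.18new}, noting that the exponent $b+1/q+1/2-1/\max\{2,2\}$ (resp.\ $b+1/q+1/2-1/\min\{2,2\}$) collapses to $b+1/q$, and use the Littlewood--Paley identification $F^{0,b+1/q}_{p,2}(\mathbb{T}) = H^{0,b+1/q}_p(\mathbb{T})$ to replace the Triebel--Lizorkin space by the Sobolev space. Your extra verification of the equivalence $H^{s,b}_p \asymp F^{s,b}_{p,2}$ on $\mathfrak{l}$ via \eqref{4.3per} and \eqref{4.4newper} is harmless but unnecessary, since the identity $F^{s,b}_{p,2}(\mathbb{T}) = H^{s,b}_p(\mathbb{T})$ holds on the full spaces, as the paper notes in Section \ref{Section 2.2}.
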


\begin{rem}
Using Proposition \ref{Proposition Lac} and Theorem \ref{Theorem 4.4} one can also obtain the periodic counterparts of Theorems \ref{TheoremEmbL} and \ref{Theorem 5.10new}. We leave details to the reader.
\end{rem}

\newpage
\section{Optimality of Propositions \ref{RecallEmb*} and \ref{RecallEmb**}}\label{section-propositions}

In this section we will prove that the conditions given in Propositions \ref{RecallEmb*} and \ref{RecallEmb**} are indeed necessary. This will be done with the help of the characterizations of the norms of smoothness spaces for lacunary Fourier series obtained in Section \ref{section5}.

\begin{prop}\label{RecallEmb*optim}
	Let $1 < p < \infty, 0 < q_0, q_1 \leq \infty$, and $-\infty < s_0, s_1, b_0, b_1 < \infty$. Then, we have
	\begin{equation}\label{8.1new}
		B^{s_0, b_0}_{p,q_0}(\mathbb{R}^d) \hookrightarrow B^{s_1, b_1}_{p,q_1}(\mathbb{R}^d)
	\end{equation}
	if and only if one of the following conditions is valid
	\begin{enumerate}[\upshape(i)]
		\item $s_0 > s_1$,
		\item $s_0 = s_1, q_0 \leq q_1$, and $b_0 \geq b_1$,
		\item $s_0 = s_1, q_0 > q_1$, and $b_0 + \frac{1}{q_0} > b_1 + \frac{1}{q_1}$.
	\end{enumerate}
The similar assertions hold for $F$-spaces as well as for periodic $B$- and $F$-spaces. 
\end{prop}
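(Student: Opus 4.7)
The ``if'' direction is exactly Proposition \ref{RecallEmb*}, so only the necessity of one of (i)--(iii) requires proof. The strategy is to test the embedding \eqref{8.1new} against lacunary Fourier series $f \in \mathfrak{L}$. By Proposition \ref{Proposition 4.1}, for $f(x)\sim \psi(x)\sum_{j\geq 3} b_j e^{i\lambda_j x_1}$ one has
\begin{equation*}
\|f\|_{B^{s,b}_{p,q}(\mathbb{R}^d)} \asymp \Bigl(\sum_{j\geq 3} 2^{jsq}(1+j)^{bq}|b_j|^q\Bigr)^{1/q},
\end{equation*}
so \eqref{8.1new} restricted to $\mathfrak{L}$ is equivalent to a weighted $\ell_{q_0}\hookrightarrow \ell_{q_1}$ embedding for the sequences $\{2^{js}(1+j)^b|b_j|\}$. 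The plan is then to assume none of (i)--(iii) holds and, in each resulting case, exhibit explicit coefficients $\{b_j\}$ producing a function in $B^{s_0,b_0}_{p,q_0}(\mathbb{R}^d)\setminus B^{s_1,b_1}_{p,q_1}(\mathbb{R}^d)$.

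I would split into three cases. \textbf{Case A:} $s_0<s_1$. Pick $s_*\in(s_0,s_1)$ and take $b_j=2^{-js_*}$; the two geometric sums then converge and diverge respectively, independently of $b_0,b_1,q_0,q_1$. \textbf{Case B:} $s_0=s_1$, $q_0\leq q_1$, and $b_0<b_1$. The sequence-space embedding fails because the pointwise ratio of weights $(1+j)^{b_1-b_0}$ is unbounded; to produce a single witness function, I would concentrate the coefficients on a lacunary skeleton $j=n_k$ (e.g.\ $n_k=2^k$) and set
\begin{equation*}
b_{n_k}=2^{-n_k s}(1+n_k)^{-b_0} k^{-1/q_0}(1+\log(1+k))^{-\gamma},\qquad \gamma>1/q_0,
\end{equation*}
and $b_j=0$ otherwise. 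The $B^{s,b_0}_{p,q_0}$-norm reduces to $\sum_k k^{-1}(1+\log(1+k))^{-\gamma q_0}$ (convergent), while the $B^{s,b_1}_{p,q_1}$-norm acquires the factor $(1+n_k)^{(b_1-b_0)q_1}$ of exponential growth in $k$, forcing divergence.

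\textbf{Case C:} $s_0=s_1$, $q_0>q_1$, and $b_0+1/q_0\leq b_1+1/q_1$. Here I would take $b_j=2^{-js}(1+j)^{-c}(1+\log(1+j))^{-\beta}$. The two norms become
\begin{equation*}
\sum_j (1+j)^{(b_i-c)q_i}(1+\log(1+j))^{-\beta q_i},\qquad i=0,1.
\end{equation*}
If the inequality is strict, pick $c\in(b_0+1/q_0,\,b_1+1/q_1]$ and any $\beta>1/q_0$; if $b_0+1/q_0=b_1+1/q_1$, set $c=b_0+1/q_0$ and choose $\beta\in(1/q_0,1/q_1]$, which is a non-empty interval precisely because $q_0>q_1$. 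In both sub-cases the first sum converges while the second diverges, giving the required counterexample.

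Finally, for $F$-spaces the same sequences work verbatim, since by \eqref{4.3} the $B$- and $F$-norms coincide on $\mathfrak{L}$; and for the periodic versions I would replace $\mathfrak{L}$ with $\mathfrak{l}$ and invoke Proposition \ref{Proposition Lac}, which provides the identical characterization for lacunary trigonometric series on $\mathbb{T}$. The only delicate point in the whole argument is the equality sub-case of (iii), where the double-logarithmic refinement and the strict inequality $q_0>q_1$ are both essential to locate a suitable exponent $\beta$; once this is set up, the remaining cases are straightforward weighted-sum computations.
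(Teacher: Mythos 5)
Your approach is exactly the one the paper uses: reduce to the sequence-space level via the lacunary characterization in Proposition~\ref{Proposition 4.1} (and Proposition~\ref{Proposition Lac} for $\mathbb{T}$, with $F$-norms coinciding with $B$-norms on $\mathfrak{L}$ by \eqref{4.3}). Case~A is identical to the paper's. The details differ in the other two cases. In Case~B the paper does not build a single witness function; it uses the truncated family $b_{j,N}=2^{-j(s-\varepsilon)}\chi_{\{j\le N\}}$ and observes that the ratio of the two norms scales like $(1+N)^{b_0-b_1}$, so boundedness of the embedding directly forces $b_0\ge b_1$. Your sparse-skeleton construction with a double-logarithmic correction also works and yields a genuine witness in $B^{s,b_0}_{p,q_0}\setminus B^{s,b_1}_{p,q_1}$, which the paper's argument does not; the paper's version is simply shorter. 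In Case~C the paper first reduces the strict sub-case to the equality case $b_0+1/q_0=b_1+1/q_1$ via the trivial embedding $B^{s,\xi}_{p,q_0}\hookrightarrow B^{s,b_0}_{p,q_0}$ for $\xi>b_0$, and then only constructs a witness there; your witness in the equality case is the same as the paper's up to the endpoint $\beta=1/q_1$.

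There is one small gap in your Case~C as written. In the strict sub-case you allow $c\in(b_0+1/q_0,\,b_1+1/q_1]$ together with \emph{any} $\beta>1/q_0$. If you take $c=b_1+1/q_1$ and $\beta>1/q_1$ (which is permitted since $1/q_0<1/q_1$), then the second sum is $\sum_j j^{-1}(1+\log(1+j))^{-\beta q_1}$ with $\beta q_1>1$, which \emph{converges}, and you do not obtain a counterexample. The fix is easy: either take $c$ in the open interval $(b_0+1/q_0,\,b_1+1/q_1)$ in the strict sub-case, so that the power alone forces divergence, or follow the paper and reduce the strict sub-case to the equality sub-case via a trivial embedding. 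Once this is corrected, your proof is complete.
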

\begin{proof}
	The if-part is covered by Proposition \ref{RecallEmb*}. Next we rely on lacunary Fourier series
	\begin{equation}\label{8.2new}
		f (x) \sim \sum_{j=3}^\infty b_j e^{i (2^j - 2) x_1} \psi (x), \quad x = (x_1, \ldots, x_d) \in \mathbb{R}^d,
	\end{equation}
	where $\psi \in \mathcal{S}(\mathbb{R}^d) \backslash \{0\}$ with (\ref{4.2}), and we will choose specific examples of sequences of complex numbers $\{b_j\}_{j \in \mathbb{N}}$ in (\ref{8.2new}) showing that the embedding (\ref{8.1new}) does not hold if the conditions (i), (ii), and (iii) are not satisfied.
	
	Assume first that $s_0 < s_1$. Let $0 < \varepsilon < s_1 - s_0$, and $b_j = 2^{-j (s_0 + \varepsilon)}, j \in \mathbb{N}$. According to Proposition \ref{Proposition 4.1}, we have
	\begin{equation*}
		\|f\|_{B^{s_0,b_0}_{p,q_0}(\mathbb{R}^d)}^{q_0} \asymp \sum_{j=3}^\infty 2^{-j \varepsilon q_0} (1 + j)^{b_0 q_0} < \infty
		\end{equation*}
		 and
		 \begin{equation*}
		 \|f\|_{B^{s_1,b_1}_{p,q_1}(\mathbb{R}^d)}^{q_1} \asymp \sum_{j=3}^\infty 2^{j(s_1 - s_0 - \varepsilon) q_1} (1 + j)^{b_1 q_1} = \infty.
	\end{equation*}
	Hence, (\ref{8.1new}) does not hold.
	
	Assume that $s = s_0 = s_1$ and $q_0 \leq q_1$. Let $\varepsilon > 0$. For $N \geq 3$, we put
	\begin{equation*}
			 b_{j,N} = \left\{\begin{array}{lcl}
                            2^{-j (s - \varepsilon)} & ,  & j \leq N, \\
                            & & \\
                            0 & , & j > N.
            \end{array}
            \right.
	\end{equation*}
	Applying Proposition \ref{Proposition 4.1} to $f_N$, where $f_N$ denotes the Fourier series (\ref{8.2new}) with coefficients $\{b_{j, N}\}_{j \in \mathbb{N}}$, we arrive at
	\begin{equation*}
		\|f_N\|_{B^{s,b_0}_{p,q_0}(\mathbb{R}^d)} \asymp \left(\sum_{j=3}^N 2^{j \varepsilon q_0} (1 + j)^{b_0 q_0} \right)^{1/q_0} \asymp 2^{N \varepsilon} (1 + N)^{b_0}
		\end{equation*}
		 and
		 \begin{equation*}
		 \|f_N\|_{B^{s,b_1}_{p,q_1}(\mathbb{R}^d)} \asymp \left(\sum_{j=3}^N 2^{j \varepsilon q_1} (1 + j)^{b_1 q_1} \right)^{1/q_1} \asymp 2^{N \varepsilon} (1 + N)^{b_1}.
	\end{equation*}
	If (\ref{8.1new}) holds then we have necessarily $b _0 \geq b_1$.
	
	Finally we treat the case $s=s_0=s_1, q_0 > q_1$, and $b_0 + 1/q_0 \leq b_1 + 1/q_1$. Using trivial embeddings between Besov spaces, it suffices to show that (\ref{8.1new}) is not true in the limiting case $b_0 + 1/q_0 = b_1 + 1/q_1$. Let $b_j = 2^{-j s} (1 + j)^{-(b_0 + 1/q_0)} (1 + \log j)^{-\beta}$ where $1/q_0 < \beta < 1/q_1$. Applying again Proposition \ref{Proposition 4.1}, it can easily be checked that (\ref{8.2new}) belongs to $B^{s,b_0}_{p,q_0}(\mathbb{R}^d)$ but not to $B^{s,b_1}_{p,q_1}(\mathbb{R}^d)$.
	
	Note that the above arguments also work with $F$-spaces, periodic $B$-spaces and periodic $F$-spaces.

\end{proof}

\begin{rem}
	Working with function spaces on $\mathbb{R}^d$, Proposition \ref{RecallEmb*optim} can also be proved by wavelet arguments which, allow us to obtain embeddings for function spaces via those for  sequence spaces. See \cite{Almeida} and \cite[Section 3.4.2]{Triebel10}.
\end{rem}

\begin{prop}\label{RecallEmb**optim}
	Let $1 < p < \infty, 0 < q_0, q_1 \leq \infty, s_0, s_1 \geq 0,$ and $-\infty< b_0, b_1 < \infty \, (b_i \geq -1/q_i \quad \text{if} \quad s_i = 0, i = 0, 1)$. Then, we have
	\begin{equation}\label{8.3new2}
		\mathbf{B}^{s_0, b_0}_{p,q_0}(\mathbb{R}^d) \hookrightarrow \mathbf{B}^{s_1, b_1}_{p,q_1}(\mathbb{R}^d)
	\end{equation}
	if and only if one of the following conditions is valid
	\begin{enumerate}[\upshape(i)]
		\item $s_0 > s_1$,
		\item $s_0 = s_1 > 0, q_0 \leq q_1$, and $b_0 \geq b_1$,
		\item $s_0 = s_1 > 0, q_0 > q_1$, and $b_0 + \frac{1}{q_0} > b_1 + \frac{1}{q_1}$,
		\item $s_0 = s_1 = 0, b_0 + \frac{1}{q_0} > b_1 + \frac{1}{q_1}$,
		\item $s_0 = s_1 = 0, b_0 + \frac{1}{q_0} = b_1 + \frac{1}{q_1}$, and $q_0 \leq q_1$.
	\end{enumerate}
	The corresponding assertion for periodic $\mathbf{B}$-spaces holds.
\end{prop}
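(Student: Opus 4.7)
The plan is to note that sufficiency is covered by Proposition \ref{RecallEmb**}, and focus entirely on the converse. The engine will be the lacunary class $\mathfrak{L}$ from Section \ref{section5}: by Theorem \ref{Theorem 4.2}, membership of $f\in\mathfrak{L}$ in $\mathbf{B}^{s,b}_{p,q}(\mathbb{R}^d)$ reduces to a transparent condition on its coefficients $\{b_j\}$, so counterexamples to the embedding in each forbidden parameter region can be built by choosing those coefficients appropriately.

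I would first dispatch the case $s_0<s_1$ (which negates (i)). Fix any $s'\in(s_0,s_1)$ and take $b_j=2^{-js'}$. Then (\ref{4.5}), respectively (\ref{4.6}) together with $\sum_{k\ge j}|b_k|^2\asymp 2^{-2js'}$ when $s_0=0$, yields $f\in\mathbf{B}^{s_0,b_0}_{p,q_0}(\mathbb{R}^d)$; yet by (\ref{4.5})
\begin{equation*}
\|f\|_{\mathbf{B}^{s_1,b_1}_{p,q_1}(\mathbb{R}^d)}^{q_1}\asymp\sum_{j\ge 3}2^{j(s_1-s')q_1}(1+j)^{b_1q_1}=\infty,
\end{equation*}
ruling out the embedding. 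The case $s_0=s_1>0$ is then immediate: the identification $\mathbf{B}^{s,b}_{p,q}(\mathbb{R}^d)=B^{s,b}_{p,q}(\mathbb{R}^d)$ from (\ref{BesovComparison}) reduces the statement to $B$-spaces, where Proposition \ref{RecallEmb*optim} delivers exactly conditions (ii) and (iii).

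The substantive case is $s_0=s_1=0$. I would test (\ref{4.6}) against the two-parameter family $b_j=(1+j)^{-\alpha}(1+\log(1+j))^{-\beta}$ with $\alpha>1/2$. An Abel-type estimate gives $\sum_{k\ge j}|b_k|^2\asymp(1+j)^{1-2\alpha}(1+\log(1+j))^{-2\beta}$, so
\begin{equation*}
\|f\|_{\mathbf{B}^{0,b}_{p,q}(\mathbb{R}^d)}^{q}\asymp\sum_{j\ge 3}(1+j)^{q(b+1/2-\alpha)}(1+\log(1+j))^{-\beta q},
\end{equation*}
which is finite precisely when either $\alpha>b+1/2+1/q$, or $\alpha=b+1/2+1/q$ and $\beta>1/q$. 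From this dichotomy, if $b_0+1/q_0<b_1+1/q_1$ the choice $\beta=0$ and any $\alpha\in(b_0+1/2+1/q_0,\,b_1+1/2+1/q_1]$ produces $f\in\mathbf{B}^{0,b_0}_{p,q_0}(\mathbb{R}^d)\setminus\mathbf{B}^{0,b_1}_{p,q_1}(\mathbb{R}^d)$, contradicting the embedding and forcing $b_0+1/q_0\ge b_1+1/q_1$. In the equality case $b_0+1/q_0=b_1+1/q_1$ with $q_0>q_1$, setting $\alpha$ equal to this common value plus $1/2$ and choosing $\beta\in(1/q_0,\,1/q_1]$ (non-empty since $q_0>q_1$) yields the same contradiction, forcing $q_0\le q_1$.

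The main obstacle is routine bookkeeping: the ansatz must resolve both the first-order jump in $b+1/q$ and, on its boundary, the second-order jump in $q$, while also verifying $f\in L_p$ at the degenerate threshold $b_0=-1/q_0$ (where $\alpha=1/2$), which is handled by supplementing the ansatz with an iterated logarithmic factor $(1+\log\log(2+j))^{-\gamma}$ and repeating the analysis. The periodic statement follows identically upon replacing $\mathfrak{L}$ by the lacunary class $\mathfrak{l}$ of Subsection 7.6 and invoking Theorem \ref{Theorem 4.4} in place of Theorem \ref{Theorem 4.2}.
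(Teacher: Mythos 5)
Your proposal is correct and follows essentially the same route as the paper: lacunary series together with Theorem \ref{Theorem 4.2} supply the counterexamples, and the $s_0=s_1>0$ case is handled by falling back on Proposition \ref{RecallEmb*optim}. You streamline a couple of spots (a single ansatz $b_j=2^{-js'}$ covering both $0<s_0<s_1$ and $s_0=0<s_1$, and an explicit treatment of the threshold $b_0+1/q_0=b_1+1/q_1=0$ via a deeper logarithmic factor that the paper only dismisses as ``analogous''), but these are refinements of the same argument rather than a different method.
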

\begin{proof}
	The if-part is covered by Proposition \ref{RecallEmb**}. So in what follows we restrict ourselves to the only-if-parts. For this purpose we use the lacunary Fourier series $f$ given in (\ref{8.2new}). Firstly, assume that $0 < s _ 0 < s_ 1$. We argue as in the proof of Proposition \ref{RecallEmb*optim} to show that $f$ with $b_j = 2^{-j (s_0 + \varepsilon)}, j \in \mathbb{N}$, where $0 < \varepsilon < s_1 - s_0$, belongs to $\mathbf{B}^{s_0, b_0}_{p,q_0}(\mathbb{R}^d)$ but not to $\mathbf{B}^{s_1, b_1}_{p,q_1}(\mathbb{R}^d)$. Next, we deal with the case $s_0 = 0 < s_1$. We define $b_j = (1+ j)^{-\beta}$ with $\beta > 1/2 + b_0 + 1/q_0$. Then, by Theorem \ref{Theorem 4.2}, we have
	\begin{equation*}
		\|f\|_{\mathbf{B}^{0,b_0}_{p,q_0}(\mathbb{R}^d)}^{q_0} \asymp \sum_{j=3}^\infty (1 + j)^{b_0 q_0} \left(\sum_{k=j}^\infty (1 + k)^{-2 \beta} \right)^{q_0/2} \asymp \sum_{j=3}^\infty (1 + j)^{(b_0 - \beta + 1/2) q_0} < \infty
	\end{equation*}
	and
	\begin{equation*}
		\|f\|_{\mathbf{B}^{s_1,b_1}_{p,q_1}(\mathbb{R}^d)}^{q_1} \asymp \sum_{j=3}^\infty 2^{j s_1 q_1} (1 + j)^{(b_1 - \beta) q_1} = \infty.
	\end{equation*}
	This shows that $f \in \mathbf{B}^{0,b_0}_{p,q_0}(\mathbb{R}^d)$ but $f \not \in \mathbf{B}^{s_1,b_1}_{p,q_1}(\mathbb{R}^d)$. Consequently, if (\ref{8.3new2}) holds then $s_0 \geq s_1 \geq 0$.
	
	Suppose now that $s_0 = s_1 = s \geq 0$. If $s=0$, we distinguish two possible cases. Assume first that $s=0$ and $b_0 + 1/q_0 < b_1 + 1/q_1$. We put $b_j = (1+ j)^{-\beta}$ where $1/2 + b_0 + 1/q_0 < \beta < 1/2 + b_1 + 1/q_1$. Using again Theorem \ref{Theorem 4.2}, we obtain
	\begin{equation*}
		\|f\|_{\mathbf{B}^{0,b_0}_{p,q_0}(\mathbb{R}^d)}^{q_0} \asymp \sum_{j=3}^\infty (1 + j)^{b_0 q_0} \left(\sum_{k=j}^\infty (1 + k)^{-2 \beta} \right)^{q_0/2} \asymp \sum_{j=3}^\infty (1 + j)^{(b_0 - \beta + 1/2) q_0} < \infty
	\end{equation*}
	and
		\begin{equation*}
		\|f\|_{\mathbf{B}^{0,b_1}_{p,q_1}(\mathbb{R}^d)}^{q_1} \asymp \sum_{j=3}^\infty (1 + j)^{(b_1 - \beta + 1/2) q_1} = \infty.
	\end{equation*}
	These estimates give us the desired counterexample.
	
	Secondly, let $s_0 = s_1= s=0$ and $b_0 + 1/q_0 = b_1 + 1/q_1 > 0$. Then the boundedness of (\ref{8.3new2}) implies $q_0 \leq q_1$. Indeed, if $q_0 > q_1$, we can define the lacunary Fourier series $f$ (see (\ref{8.2new})) with Fourier coefficients
	\begin{equation*}
	b_j = (1 + j)^{-(1/2 + b_0 + 1/q_0)} (1 + \log j)^{-\beta}, \quad \text{where} \quad 1/q_0 < \beta < 1/q_1.
	\end{equation*}
	Then, Theorem \ref{Theorem 4.2} yields that
	\begin{align*}
		\|f\|_{\mathbf{B}^{0,b_0}_{p,q_0}(\mathbb{R}^d)}^{q_0} & \asymp \sum_{j=3}^\infty (1 + j)^{b_0 q_0} \left(\sum_{k=j}^\infty (1 + k)^{-2 (1/2 + b_0 + 1/q_0)} (1 + \log k)^{-\beta 2} \right)^{q_0/2} \\
		& \asymp \sum_{j=3}^\infty (1 + \log j)^{-\beta q_0} \frac{1}{1+j} < \infty
	\end{align*}
	where we have used that $b_0 + 1/q_0 > 0$ in the second estimate. Analogously, we have
	\begin{equation*}
		\|f\|_{\mathbf{B}^{0,b_1}_{p,q_1}(\mathbb{R}^d)}^{q_1}  \asymp \sum_{j=3}^\infty (1 + \log j)^{-\beta q_1} \frac{1}{1+j} = \infty.
	\end{equation*}
	This contradicts our assumption. Hence, we have shown that $q_0 \leq q_1$. Furthermore, the case $s_0 = s_1 = 0$ and $b_0 + 1/q_0 = b_1 + 1/q_1 = 0$ can be treated analogously.

	
	Finally, if $s_0 = s_1 =s > 0$ then we may proceed similarly to the proof of Proposition \ref{RecallEmb*optim} to show that (ii) and (iii) are necessary conditions.
\end{proof}

\begin{rem}
	Our approach can also be applied to derive the optimality of further embedding results. For instance, invoking Theorem \ref{Theorem 3.6} we can easily obtain the characterization of continuous embeddings between Besov spaces with different metrics and constant differential dimension (i.e., $s_0 - \frac{d}{p_0} = s_1 -\frac{d}{p_1}$). More precisely, let $\frac{2d}{d+1} < p_0 \leq p_1 < \infty, 0 < q_0, q_1 \leq \infty, -\infty < s_0, s_1, b_0, b_1 < \infty$. Suppose that $s_0 - \frac{d}{p_0} = s_1 -\frac{d}{p_1}$. Then, we have
	\begin{equation}\label{8.4}
		B^{s_0,b_0}_{p_0,q_0}(\mathbb{R}^d) \hookrightarrow B^{s_1,b_1}_{p_1,q_1}(\mathbb{R}^d)
	\end{equation}
	if and only if one of the following conditions is valid
	\begin{enumerate}[\upshape(i)]
		\item $q_0 \leq q_1$ and $b_0 \geq b_1$,
		\item $q_0 > q_1$ and $b_0 + \frac{1}{q_0} > b_1 + \frac{1}{q_1}$.
	\end{enumerate}
	The same result also holds for periodic Besov spaces. Note that (\ref{8.4}) consists of a generalization of the Sobolev embedding given in Proposition \ref{RecallEmb2}(i). Further, in the special case where $p_0 = p_1$ and so, $s_0=s_1$, the previous characterization covers Proposition \ref{RecallEmb*optim}. A completely different method characterizing (\ref{8.4}) was previously proposed by Leopold \cite[Theorem 1]{Leopold98}. In this paper, the assumption $\frac{2d}{d+1} < p_0$ can be relaxed to cover the full range of parameters.
	
	Further, we can apply Theorem \ref{Theorem 3.9} to obtain the sharpness of the Sobolev embedding theorem for Sobolev spaces. More specifically, let $\frac{2d}{d+1} < p_0 < p_1 < \infty, -\infty < s_0, s_1, b_0, b_1 < \infty$. Suppose that $s_0 - \frac{d}{p_0} = s_1 -\frac{d}{p_1}$. Then, we have
	\begin{equation*}
		H^{s_0,b_0}_{p_0}(\mathbb{R}^d) \hookrightarrow H^{s_1,b_1}_{p_1}(\mathbb{R}^d) \quad \iff \quad  b_0 \geq b_1.
	\end{equation*}
	See also \cite[Proposition 5.3]{CaetanoHaroske}.
	
\end{rem}

\newpage
\section{Optimality of embeddings between Sobolev and Besov spaces with smoothness close to zero}\label{SectionOptimalityWB}

In Theorem \ref{Theorem 3.10} and Corollary \ref{Theorem 4.5}, we have shown  that ($b > -1/q$)
\begin{equation}\label{5.1new}
	 \widehat{GM}^d \cap H^{0,b+1/q}_p(\mathbb{R}^d) \hookrightarrow
        \mathbf{B}^{0,b}_{p,q}(\mathbb{R}^d) \quad \text{if} \quad q \geq p
\end{equation}
and
\begin{equation}\label{5.2new}
	  \mathfrak{L} \cap H^{0,b+1/q}_p(\mathbb{R}^d) \hookrightarrow \mathbf{B}^{0,b}_{p,q}(\mathbb{R}^d) \quad \text{if} \quad q \geq 2.
\end{equation}
 Let us prove that  $\xi = b + 1/q$ is the best possible logarithmic smoothness parameter in Sobolev spaces $H^{0,\xi}_p(\mathbb{R}^d)$
for embeddings  (\ref{5.1new}) and (\ref{5.2new}) to hold.

\begin{prop}\label{Proposition 5.1}
	Let $\frac{2d}{d+1} < p < \infty, p \leq q \leq \infty$, and $b > -1/q$. For any $\varepsilon > 0$, there is a function $f \in \widehat{GM}^d \cap H^{0,b + 1/q - \varepsilon}_p(\mathbb{R}^d)$ such that $f \not \in \mathbf{B}^{0,b}_{p,q}(\mathbb{R}^d)$.
\end{prop}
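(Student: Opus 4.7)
The plan is to produce a single explicit $F_0 \ge 0$ of class $GM$ satisfying \eqref{3.4new}, so that the radial function $f(x) = f_0(|x|)$ defined through \eqref{3.4new+} belongs to $H^{0, b + 1/q - \varepsilon}_p(\mathbb{R}^d)$ but not to $\mathbf{B}^{0,b}_{p,q}(\mathbb{R}^d)$. The idea is that the Sobolev and Besov norms of such an $f$ are reduced by Theorems~\ref{Theorem 3.9} and \ref{Theorem 3.2} to integrals over $F_0$ that differ, at the critical scale, by a Hardy-type exponent $1/p$; the slack $\varepsilon > 0$ in the Sobolev parameter is exactly what is needed to straddle this convergence boundary.

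I would take
\begin{equation*}
F_0(t) = \begin{cases} 1 & \text{if } 0 < t \le 1, \\ t^{-d + d/p}(1+\log t)^{-\beta} & \text{if } t > 1, \end{cases}
\end{equation*}
with $\beta > 0$ to be fixed. Being continuous and non-increasing, $F_0$ lies in $GM$, and condition \eqref{3.4new} holds by direct computation, the behaviour at infinity requiring precisely the assumption $p > 2d/(d+1)$. Theorem~\ref{Theorem 3.9} then gives
\begin{equation*}
\|f\|^p_{H^{0,b+1/q-\varepsilon}_p(\mathbb{R}^d)} \asymp 1 + \int_1^\infty t^{-1}(1+\log t)^{(b+1/q-\varepsilon-\beta)p}\,dt,
\end{equation*}
which is finite iff $\beta > b + 1/p + 1/q - \varepsilon$. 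Whenever $\beta p > 1$, the substitution $v = \log u$ gives $\int_t^\infty u^{-1}(1+\log u)^{-\beta p}\,du \asymp (1+\log t)^{-\beta p + 1}$, so Theorem~\ref{Theorem 3.2} yields
\begin{equation*}
\|f\|^q_{\mathbf{B}^{0,b}_{p,q}(\mathbb{R}^d)} \asymp 1 + \int_1^\infty (1+\log t)^{(b-\beta+1/p)q}\,\frac{dt}{t},
\end{equation*}
with the natural sup-modification when $q = \infty$.

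For $q < \infty$ I would set $\beta = b + 1/p + 1/q$: the Sobolev integral is then finite (since $\varepsilon > 0$), while the Besov integrand becomes $(1+\log t)^{-1}$, giving the borderline logarithmic divergence; the side condition $\beta p > 1$ reduces exactly to $b > -1/q$. For $q = \infty$ I would take $\beta = b + 1/p - \varepsilon/2$; both required strict inequalities are immediate, and $\beta p > 1$ holds after shrinking $\varepsilon$ below $2b$, which is harmless since $b > 0$ is imposed when $q = \infty$, and a counterexample for one value of $\varepsilon$ is automatically one for every larger value through the trivial embedding $H^{0,\xi_0}_p(\mathbb{R}^d) \hookrightarrow H^{0,\xi_1}_p(\mathbb{R}^d)$ for $\xi_0 > \xi_1$. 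The only delicate point is this calibration of $\beta$: the Sobolev exponent must land barely on the convergent side of the critical scale and the Besov one barely on the divergent side, and the role of $\varepsilon$ is precisely to provide the gap that separates them.
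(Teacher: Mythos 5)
Your proposal is correct and follows essentially the same route as the paper's own proof: a radial $\widehat{GM}^d$ function with tail $F_0(t)\asymp t^{-d+d/p}(1+\log t)^{-\beta}$, with Theorems~\ref{Theorem 3.9} and \ref{Theorem 3.2} reducing everything to one-dimensional log-weighted integrals. The only cosmetic differences are that the paper takes the same power-log profile near $0$ and $\infty$ and chooses $\beta$ in an open interval strictly below $b+1/p+1/q$, whereas you flatten $F_0$ near $0$ and hit the critical exponent $\beta=b+1/p+1/q$ exactly (for $q<\infty$), exploiting the borderline logarithmic divergence $\int_1^\infty(1+\log t)^{-1}\,dt/t=\infty$; both choices are valid, and your handling of $q=\infty$ by shrinking $\varepsilon$ below $2b$ is a legitimate extra step the paper leaves implicit.
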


\begin{proof}
	 Set
\begin{equation*}
	F_0 (t) = t^{-d + d/p} (1 + |\log t|)^{-\beta}, \quad t > 0,
\end{equation*}
where $\beta$ is chosen such that $\max\{1/p, b + 1/q + 1/p - \varepsilon\} < \beta < b + 1/q + 1/p$.
Define
the function
$f(x)=f_{0}(|x|)$, where $f_{0}$ is given by (\ref{3.4new+}).
On the one hand,
noting that $F_0 \in GM$ and
$$
	\left(\int_0^1 t^{d p - d - 1} F_0^p(t) dt \right)^{1/p} + \left(\int_1^\infty t^{d p -d-1} (1 + \log t)^{(b+1/q-\varepsilon) p} F_0^p(t) dt \right)^{1/p} 
 < \infty,
$$
Theorem \ref{Theorem 3.9} implies that $f \in H^{0,b+1/q-\varepsilon}_p(\mathbb{R}^d)$.
On the other hand, since
$$
	\int_1^\infty (1 + \log t)^{b q} \left(\int_t^\infty u^{d p -d -1} F_0^p(u) du \right)^{q/p} \frac{dt}{t} \asymp \int_1^\infty (1 + \log t)^{(b - \beta +1/p) q} \frac{dt}{t} = \infty,
$$
  (\ref{3.3.3}) yields that $f \not \in \mathbf{B}^{0,b}_{p,q}(\mathbb{R}^d)$.

\end{proof}

\begin{prop}\label{Proposition 5.2}
	Let $1 < p < \infty, 2 \leq q \leq \infty$, and $b > -1/q$. For any $\varepsilon > 0$, there is a function $f \in \mathfrak{L} \cap H^{0,b + 1/q - \varepsilon}_p(\mathbb{R}^d)$ such that $f \not \in \mathbf{B}^{0,b}_{p,q}(\mathbb{R}^d)$.
\end{prop}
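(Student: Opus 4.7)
\textbf{Proof proposal for Proposition \ref{Proposition 5.2}.} The plan is to mirror the strategy of Proposition \ref{Proposition 5.1}, but now replacing the $\widehat{GM}^d$ class and the Fourier-transform characterizations by the $\mathfrak L$ class together with the characterizations of $H^{0,\xi}_p$ and $\mathbf B^{0,b}_{p,q}$ for lacunary Fourier series supplied by \eqref{4.4new} and \eqref{4.6}. Concretely, I fix $\psi\in\mathcal S(\mathbb R^d)\setminus\{0\}$ with $\operatorname{supp}\widehat\psi\subset\{|\xi|\le 2\}$ and consider a candidate
\begin{equation*}
	f(x)\sim \psi(x)\sum_{j=3}^\infty b_j e^{i(2^j-2)x_1},\qquad x\in\mathbb R^d,
\end{equation*}
so that $f\in\mathfrak L$, and then the problem reduces to choosing a nonnegative sequence $\{b_j\}$ such that
\begin{equation*}
	\sum_{j=3}^\infty (1+j)^{2(b+1/q-\varepsilon)}|b_j|^2<\infty \quad\text{while}\quad \sum_{j=3}^\infty (1+j)^{bq}\Big(\sum_{k=j}^\infty|b_k|^2\Big)^{q/2}=\infty.
\end{equation*}

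For the main case $q<\infty$, I would take $b_j=(1+j)^{-(b+1/q+1/2)}$. The hypothesis $b+1/q>0$ ensures $2b+2/q+1>1$, so an elementary integral comparison yields $\sum_{k\ge j}(1+k)^{-(2b+2/q+1)}\asymp (1+j)^{-(2b+2/q)}$. Plugging this into \eqref{4.4new} and \eqref{4.6} gives the Sobolev sum $\sum_j(1+j)^{-1-2\varepsilon}$, which converges, and the Besov sum $\sum_j(1+j)^{bq-(b+1/q)q}=\sum_j(1+j)^{-1}$, which diverges, exactly as desired.

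The slightly delicate point is $q=\infty$, for which $b>0$ and the Besov quantity becomes $\sup_j(1+j)^b\bigl(\sum_{k\ge j}|b_k|^2\bigr)^{1/2}$: the naive choice $b_j=(1+j)^{-(b+1/2)}$ makes this supremum \emph{finite} (it equals a constant), so a logarithmic refinement is needed. I would instead set $|b_j|^2=(1+j)^{-(2b+1)}\log(2+j)$. The Sobolev sum then becomes $\sum_j(1+j)^{-1-2\varepsilon}\log(2+j)<\infty$, while integration by parts gives $\sum_{k\ge j}(1+k)^{-(2b+1)}\log(2+k)\asymp (1+j)^{-2b}\log(2+j)$ (using $b>0$), so the Besov supremum is $\sup_j(\log(2+j))^{1/2}=\infty$.

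The only substantive obstacle is the balancing of exponents so that $H^{0,b+1/q-\varepsilon}_p$ is finite while $\mathbf B^{0,b}_{p,q}$ is infinite; the algebra works out neatly because of the shift $1/q$ between the two conditions, and the endpoint $q=\infty$ requires introducing a logarithmic factor to prevent a telescoping cancellation in the supremum. Everything else is a direct application of the characterizations \eqref{4.4new} and \eqref{4.6} together with standard asymptotics for tails of convergent series.
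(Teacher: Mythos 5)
Your proposal is correct and follows essentially the same strategy as the paper: take a lacunary series $f = \psi W$ with power-decaying coefficients and apply the characterizations (\ref{4.4new}) and (\ref{4.6}). The only difference is a minor implementation choice. The paper picks $b_j = (1+j)^{-\delta}$ with $\delta$ anywhere in the open interval $\bigl(\max\{1/2,\,b+1/q+1/2-\varepsilon\},\, b+1/q+1/2\bigr)$; then the Besov sum diverges polynomially and the argument works \emph{uniformly} for all $2\le q\le\infty$, with no special case at the endpoint. You instead take the critical exponent $\delta=b+1/q+1/2$ exactly, which for $q<\infty$ turns the Besov sum into a divergent harmonic series; but at $q=\infty$ that borderline choice makes the supremum finite, forcing you to introduce a logarithmic factor. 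Your fix for $q=\infty$ is correct (the integration-by-parts asymptotic $\sum_{k\ge j}(1+k)^{-(2b+1)}\log(2+k)\asymp(1+j)^{-2b}\log(2+j)$ holds since $b>0$), so the proof is complete; it simply trades the paper's one-line uniform choice for an extra endpoint case.
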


\begin{proof}
	We choose $\delta$ satisfying that $\max\{1/2, b + 1/q + 1/2 - \varepsilon\} < \delta < b +1/q + 1/2$ and put $\{b_j\}_{j \in \mathbb{N}} = \{(1 + j)^{-\delta}\}_{j \in \mathbb{N}}$. We define
	\begin{equation*}
		W(x) \sim \sum_{j=3}^\infty b_j e^{i (2^j -2) x_1},\quad  x \in \mathbb{R}^d,
	\end{equation*}
	and $f = \psi W$, where the function $\psi \in \mathcal{S}(\mathbb{R}^d) \backslash \{0\}$ fulfills (\ref{4.2}). It follows from (\ref{4.4new}) that $f \in H^{0, b+1/q -\varepsilon}_p(\mathbb{R}^d)$ because
\begin{equation*}
	\sum_{j=3}^\infty ((1 + j)^{b + 1/q - \varepsilon} |b_j|)^2 = \sum_{j=3}^\infty (1 + j)^{(b + 1/q - \delta - \varepsilon) 2} < \infty.
\end{equation*}
However, $f \not \in \mathbf{B}^{0,b}_{p,q}(\mathbb{R}^d)$ since
\begin{align*}
	\|f\|^q_{\mathbf{B}^{0,b}_{p,q}(\mathbb{R}^d)} &\asymp \sum_{j=3}^\infty (1 + j)^{b q} \left(\sum_{k=j}^\infty |b_k|^2\right)^{q/2}
	 \asymp \sum_{j=3}^\infty (1 + j)^{(b - \delta + 1/2)q} = \infty,
\end{align*}
where we have used (\ref{4.6}).
\end{proof}

It is worth mentioning that Propositions \ref{Proposition 5.1} and \ref{Proposition 5.2}  also yield the optimality of the embeddings (\ref{5.1new}) and (\ref{5.2new}) in the setting of generalized Sobolev spaces $H^{s,b,\Psi}_p(\mathbb{R}^d)$. Let us recall the definition of the spaces $H^{s,b,\Psi}_p(\mathbb{R}^d)$.
	
	 A function $\Psi$ is said to be \emph{slowly varying} on $(0,\infty)$ if for each $\varepsilon > 0$, the function $t^\varepsilon \Psi(t)$ is equivalent to an increasing function and the function $t^{-\varepsilon} \Psi(t)$ is equivalent to a decreasing function. We denote by $SV$\index{\bigskip\textbf{Sets}!$SV$}\label{SV} the class formed by all slowly varying functions on $(0,\infty)$.
	
	Examples of $SV$ functions are powers of logarithms $\ell^\alpha(t), \alpha \in \mathbb{R}$, compositions of appropriate $\log$ functions, the $\ell^{\mathbb{A}}(t)$ functions introduced in (\ref{brokenlog}), $e^{|\log t|^\alpha}$ with $\alpha \in (0,1)$, etc. For further examples of $SV$ functions, as well as some of their properties, we refer the interested reader to \cite{BinghamGoldieTeugels} and \cite{EdmundsEvans}.
	
	Let $1 < p < \infty, -\infty < s , b < \infty$, and $\Psi \in SV$. The Sobolev space
$H^{s,b,\Psi}_p(\mathbb{R}^d)$\index{\bigskip\textbf{Spaces}!$H^{s,b,\Psi}_p(\mathbb{R}^d)$}\label{SOBGENERAL} consists of all $f \in \mathcal{S}'(\mathbb{R}^d)$ such that
\begin{equation*}
    \|f\|_{H^{s,b,\Psi}_p(\mathbb{R}^d)} = \|((1 + |x|^2)^{s/2} (1 + \log (1 + |x|^2))^b \Psi(1 + \log (1 + |x|^2)) \widehat{f})^\vee\|_{L_p(\mathbb{R}^d)} < \infty.
\end{equation*}
In particular, if $\Psi \equiv 1$ we recover the spaces $H^{s,b}_p(\mathbb{R}^d)$ (cf. (\ref{def:SobolevSpace})).

The following sharpness assertions hold.

\begin{prop}\label{Proposition 5.1 SV}
	Let $\frac{2d}{d+1} < p < \infty, p \leq q \leq \infty$, and $b > -1/q$. Let $\Psi \in SV$. For any $\varepsilon > 0$, there is a function $f \in \widehat{GM}^d \cap H^{0,b + 1/q - \varepsilon, \Psi}_p(\mathbb{R}^d)$ such that $f \not \in \mathbf{B}^{0,b}_{p,q}(\mathbb{R}^d)$.
\end{prop}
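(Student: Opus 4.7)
The plan is to mimic the construction in the proof of Proposition \ref{Proposition 5.1}: I will exhibit a radial $\widehat{GM}^d$ function via its Fourier--Hankel transform $F_0(t)=t^{-d+d/p}(1+|\log t|)^{-\beta}$ and select $\beta$ in a short open interval so that the positive integrability condition (placing $f$ in $H^{0,b+1/q-\varepsilon,\Psi}_p$) is satisfied while the negative condition (keeping $f$ outside $\mathbf{B}^{0,b}_{p,q}$) is violated. The slowly varying factor $\Psi$ should contribute only a subpolynomial correction at infinity, so the admissible range for $\beta$ should remain essentially the one used in Proposition \ref{Proposition 5.1}; concretely, I expect
\begin{equation*}
\max\{1/p,\,b+1/q+1/p-\varepsilon\}<\beta<b+1/q+1/p
\end{equation*}
(with the natural modification for $q=\infty$) to do the job. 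Since $b>-1/q$, this interval is nonempty for every $\varepsilon>0$.

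The first step is to establish the $\widehat{GM}^d$-analog of Theorem \ref{Theorem 3.9} with the slowly varying weight built in. Specifically, for $f\in\widehat{GM}^d$ one should show
\begin{equation*}
\|f\|_{H^{s,b,\Psi}_p(\mathbb{R}^d)}^p \asymp \int_0^1 t^{dp-d-1}F_0^p(t)\,dt + \int_1^\infty t^{sp+dp-d-1}(1+\log t)^{bp}\Psi^p(1+\log t)F_0^p(t)\,dt.
\end{equation*}
This follows from the definition of $H^{s,b,\Psi}_p$ combined with the Hardy--Littlewood equivalence (\ref{HL}), provided the multiplier property of $GM$ (Lemma \ref{Lemma 3.1}) can be applied. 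For that one needs the radial weight $(1+|x|^2)^{s/2}(1+\log(1+|x|^2))^b\Psi(1+\log(1+|x|^2))$ to have a radial profile belonging to $GM$, which holds because every slowly varying function is $GM$ up to equivalence (via Karamata's representation or by passing to an equivalent smooth representative).

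Once the characterization above is available, the verification is a short calibration. Substituting the test $F_0$ and changing variables $u=\log t$, the Sobolev integral at infinity becomes $\int_0^\infty (1+u)^{(b+1/q-\varepsilon-\beta)p}\Psi^p(1+u)\,du$; since $\beta>b+1/q+1/p-\varepsilon$ forces the exponent to be strictly less than $-1$, Karamata's theorem on integrals of regularly varying functions gives convergence, so $f\in H^{0,b+1/q-\varepsilon,\Psi}_p(\mathbb{R}^d)$. For the negative assertion I apply (\ref{3.3.3}) (or (\ref{3.3.4}) if $q=\infty$), compute $\int_t^\infty u^{dp-d-1}F_0^p(u)\,du\asymp(1+\log t)^{1-\beta p}$, and conclude that the outer integral $\int_1^\infty(1+\log t)^{(b-\beta+1/p)q}\,dt/t$ diverges because $\beta<b+1/q+1/p$ makes the exponent no smaller than $-1$.

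The main obstacle is the very first step, namely establishing the $GM$-characterization of $\|\cdot\|_{H^{s,b,\Psi}_p}$ with a slowly varying twist. Once one verifies that the slowly varying factor $\Psi(1+\log(1+|x|^2))$ sits in $GM$, Lemma \ref{Lemma 3.1} closes the relevant product class and the rest is bookkeeping; slow variation perturbs the integrability thresholds only by lower-order logarithmic factors, so the very same open interval for $\beta$ that worked when $\Psi\equiv 1$ remains available, which is precisely why the passage from Proposition \ref{Proposition 5.1} to the generalized statement costs nothing.
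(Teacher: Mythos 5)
Your proposal is correct in its conclusion and in its choice of test function, but the route is considerably longer than the one the paper takes and it introduces a delicate technical point that the paper deliberately avoids. What you do is re-establish a $\Psi$-version of Theorem \ref{Theorem 3.9} for $\widehat{GM}^d$ functions and then re-run the calibration from Proposition \ref{Proposition 5.1} with the extra slowly varying factor carried along. The point you flag as the ``main obstacle'' is indeed the weak link: to apply Lemma \ref{Lemma 3.1} and (\ref{HL}) to the product $w F_0$, where $w(t)=(1+\log(1+t^2))^{b}\Psi(1+\log(1+t^2))$, you need $w$ itself to satisfy the $GM$ condition, and for an arbitrary $\Psi\in SV$ this is not automatic. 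Passing to an equivalent smooth representative $\tilde\Psi$ gives a $GM$ weight, but it does not immediately identify $\|g\|_{L_p}$ for the original $\Psi$ with $\|\tilde g\|_{L_p}$ for the representative when $p\neq 2$, so the equivalence needs to be argued (or the class $SV$ implicitly restricted). Your Karamata-style bookkeeping for the convergence/divergence thresholds, on the other hand, is fine and matches the interval used in the paper.

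The paper avoids all of this with a two-line reduction that you should compare against: it first notes the trivial embedding
\begin{equation*}
H^{s,b_0}_p(\mathbb{R}^d)\hookrightarrow H^{s,b_1,\Psi}_p(\mathbb{R}^d)\quad\text{whenever }b_0>b_1,
\end{equation*}
which holds because the Fourier multiplier $(1+\log(1+|\xi|^2))^{b_1-b_0}\Psi(1+\log(1+|\xi|^2))$ is a regularly varying function of negative index of $1+\log(1+|\xi|^2)$, hence tends to zero and is a benign multiplier. It then applies Proposition \ref{Proposition 5.1} with a strictly smaller parameter, say $\varepsilon/2$, to produce $f\in\widehat{GM}^d\cap H^{0,b+1/q-\varepsilon/2}_p(\mathbb{R}^d)\setminus\mathbf{B}^{0,b}_{p,q}(\mathbb{R}^d)$; since $b+1/q-\varepsilon/2>b+1/q-\varepsilon$, this $f$ lands in $H^{0,b+1/q-\varepsilon,\Psi}_p(\mathbb{R}^d)$ by the embedding, and the proof is over. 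In short: no new characterization of the $\Psi$-twisted Sobolev norm is needed, no new test function is constructed, and the $GM$-ness of the slowly varying weight never has to be discussed. Your proof works, but you would do well to see that the generalized statement costs literally nothing once one exploits monotonicity of the scale $H^{s,\cdot,\Psi}_p$ in the logarithmic index.
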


\begin{prop}\label{Proposition 5.2 SV}
	Let $1 < p < \infty, 2 \leq q \leq \infty$, and $b > -1/q$. Let $\Psi \in SV$. For any $\varepsilon > 0$, there is a function $f \in \mathfrak{L} \cap H^{0,b + 1/q - \varepsilon, \Psi}_p(\mathbb{R}^d)$ such that $f \not \in \mathbf{B}^{0,b}_{p,q}(\mathbb{R}^d)$.
\end{prop}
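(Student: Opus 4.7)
The plan is to mimic the proof of Proposition \ref{Proposition 5.2}, first establishing a lacunary characterization of the generalized Sobolev norm $\|\cdot\|_{H^{0,b+1/q-\varepsilon,\Psi}_p(\mathbb{R}^d)}$ and then choosing coefficients that incorporate the slowly varying factor $\Psi$ appropriately.

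The first step is to verify the analogue of (\ref{4.4new}) in this setting: for any $f \in \mathfrak{L}$ with Fourier expansion given by (\ref{4.1}) and for all $s,b \in \mathbb{R}$,
\begin{equation*}
\|f\|_{H^{s,b,\Psi}_p(\mathbb{R}^d)} \asymp \left(\sum_{j=3}^\infty (2^{js}(1+j)^b \Psi(1+j)\,|b_j|)^2\right)^{1/2}.
\end{equation*}
This follows by combining (\ref{4.2new}) with the Littlewood-Paley characterization (\ref{LP}): the $F^{s,b,\Psi}_{p,2}$-type description of $H^{s,b,\Psi}_p(\mathbb{R}^d)$ (obtained via the standard lifting of the multiplier $(1+|\xi|^2)^{s/2}(1+\log(1+|\xi|^2))^b\Psi(1+\log(1+|\xi|^2))$ together with (\ref{LPgeneral})) reduces the norm to a mixed $L_p(\ell_2)$-quantity, and on a lacunary series the inner $\ell_2$-sum factors out of $L_p$ because $|e^{i\lambda_j x_1}|=1$ and each Littlewood-Paley block is supported in $\psi \in \mathcal{S}(\mathbb{R}^d)\setminus\{0\}$.

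With this characterization at hand, the counterexample is constructed as follows. Fix $\delta$ satisfying
\begin{equation*}
\max\{1/2,\, b+1/q+1/2-\varepsilon\} < \delta < b+1/q+1/2,
\end{equation*}
and set $b_j = (1+j)^{-\delta}\Psi(1+j)^{-1}$. Let $f = \psi W$ with $W(x) \sim \sum_{j=3}^\infty b_j e^{i(2^j-2)x_1}$ and $\psi \in \mathcal{S}(\mathbb{R}^d)\setminus\{0\}$ satisfying (\ref{4.2}). The lacunary description of $H^{0,b+1/q-\varepsilon,\Psi}_p(\mathbb{R}^d)$ above gives
\begin{equation*}
\|f\|_{H^{0,b+1/q-\varepsilon,\Psi}_p(\mathbb{R}^d)}^2 \asymp \sum_{j=3}^\infty (1+j)^{2(b+1/q-\varepsilon-\delta)} < \infty,
\end{equation*}
since $\delta > b+1/q+1/2-\varepsilon$. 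In particular the factor $\Psi$ cancels out, which is exactly why this choice of $b_j$ is natural.

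For the second half of the argument, by (\ref{4.6}) we have
\begin{equation*}
\|f\|_{\mathbf{B}^{0,b}_{p,q}(\mathbb{R}^d)}^q \asymp \sum_{j=3}^\infty (1+j)^{bq}\Bigl(\sum_{k=j}^\infty (1+k)^{-2\delta}\Psi(1+k)^{-2}\Bigr)^{q/2}.
\end{equation*}
Since $\delta > 1/2$ and $\Psi^{-2}$ is slowly varying, the standard tail estimate for slowly varying functions yields $\sum_{k \geq j}(1+k)^{-2\delta}\Psi(1+k)^{-2} \asymp (1+j)^{1-2\delta}\Psi(1+j)^{-2}$, so
\begin{equation*}
\|f\|_{\mathbf{B}^{0,b}_{p,q}(\mathbb{R}^d)}^q \asymp \sum_{j=3}^\infty (1+j)^{(b+1/2-\delta)q}\,\Psi(1+j)^{-q}.
\end{equation*}
The exponent satisfies $(b+1/2-\delta)q > -1$ by the upper bound $\delta < b+1/q+1/2$, so the sum diverges regardless of $\Psi^{-q}$ (since $\sum j^{\alpha}L(j) = \infty$ for any slowly varying $L$ whenever $\alpha > -1$). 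Hence $f \notin \mathbf{B}^{0,b}_{p,q}(\mathbb{R}^d)$, completing the construction. The main obstacle is really just the first step—bootstrapping the lacunary characterization of the Sobolev norm to the generalized scale—since the subsequent counterexample is forced once one has the right cancellation of $\Psi$ in Condition~1.
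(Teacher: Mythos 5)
Your construction would work in principle, but it takes a far more elaborate path than the paper and introduces a real technical burden that the paper sidesteps entirely. The paper's proof is a one-line reduction: for any $b_0 > b_1$ and $\Psi \in SV$, one has the trivial embedding $H^{s,b_0}_p(\mathbb{R}^d) \hookrightarrow H^{s,b_1,\Psi}_p(\mathbb{R}^d)$ (the ratio of multipliers is $(1+\log(1+|\xi|^2))^{b_1-b_0}\Psi(1+\log(1+|\xi|^2))$, a negative log-power damping a slowly varying factor). Proposition \ref{Proposition 5.2}, applied with $\varepsilon/2$ in place of $\varepsilon$, then produces $f\in\mathfrak{L}\cap H^{0,b+1/q-\varepsilon/2}_p(\mathbb{R}^d)\subset H^{0,b+1/q-\varepsilon,\Psi}_p(\mathbb{R}^d)$ with $f\notin\mathbf{B}^{0,b}_{p,q}(\mathbb{R}^d)$. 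No new construction is needed.

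Your route instead replays the proof of Proposition \ref{Proposition 5.2} with $\Psi$ baked into the coefficients. The arithmetic in your second, third, and fourth steps is correct: the $\Psi$ factor cancels in the $H^{0,b+1/q-\varepsilon,\Psi}_p$-norm of $f$, the Karamata tail estimate applies since $\delta>1/2$ and $\Psi^{-2}\in SV$, and the divergence criterion $\sum j^\alpha L(j)=\infty$ for $\alpha>-1$ and slowly varying $L$ finishes the argument. The gap is the first step, which you yourself flag as the main obstacle: the claimed lacunary characterization
\begin{equation*}
\|f\|_{H^{s,b,\Psi}_p(\mathbb{R}^d)}\asymp\Bigl(\sum_{j=3}^\infty(2^{js}(1+j)^b\Psi(1+j)|b_j|)^2\Bigr)^{1/2}
\end{equation*}
amounts to a Littlewood-Paley theorem for $H^{s,b,\Psi}_p(\mathbb{R}^d)$, and (\ref{LPgeneral}) only covers the $\Psi\equiv 1$ case. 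Extending it requires uniform-in-$j$ Fourier multiplier estimates on the dyadic blocks and hence some smoothness of $\Psi$ (attainable via the Karamata representation, but genuine work the paper never undertakes), which you gesture at under ``standard lifting'' rather than carry out. So while the plan is sound, you would be proving a harder intermediate lemma than the final statement needs; the paper's monotonicity trick makes the whole detour unnecessary.
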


Propositions \ref{Proposition 5.1 SV} and \ref{Proposition 5.2 SV} are simple consequences of Propositions \ref{Proposition 5.1} and \ref{Proposition 5.2}, respectively, due to the trivial embedding
\begin{equation*}
	H^{s,b_0}_p(\mathbb{R}^d) \hookrightarrow H^{s,b_1,\Psi}_p(\mathbb{R}^d) \quad \text{whenever} \quad b_0 > b_1.
\end{equation*}

Similarly one can deal with the sharpness of the logarithmic parameter of Sobolev spaces in the converse  embeddings
\begin{equation*}
	 \widehat{GM}^d  \cap  \mathbf{B}^{0,b}_{p,q}(\mathbb{R}^d) \hookrightarrow
        H^{0,b+1/q}_p(\mathbb{R}^d) \quad \text{if} \quad q \leq p
\end{equation*}
and
\begin{equation*}
	  \mathfrak{L} \cap \mathbf{B}^{0,b}_{p,q}(\mathbb{R}^d) \hookrightarrow H^{0,b+1/q}_p(\mathbb{R}^d)  \quad \text{if} \quad q \leq 2,
\end{equation*}
obtained in Theorem \ref{Theorem 3.10} and Corollary \ref{Theorem 4.5}, respectively. 

\begin{prop}\label{Proposition 5.4}
	Let $\frac{2d}{d+1} < p < \infty, q \leq p$, and $b > -1/q$. For any $\varepsilon > 0$, there is a function $f \in \widehat{GM}^d \cap \mathbf{B}^{0,b}_{p,q}(\mathbb{R}^d)$ such that $f \not \in H^{0,b + 1/q + \varepsilon}_p(\mathbb{R}^d)$.
\end{prop}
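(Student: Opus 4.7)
The plan is to mirror the counterexample strategy used in Proposition \ref{Proposition 5.1}, but with a different choice of the decay exponent so that the roles of the $\mathbf{B}$-norm and $H$-norm are exchanged. Specifically, I would construct a radial function $f(x)=f_0(|x|)$ whose Fourier--Hankel transform is the single power-times-logarithm profile
\begin{equation*}
F_0(t) = t^{-d+d/p}(1+|\log t|)^{-\beta}, \qquad t>0,
\end{equation*}
with the exponent chosen as $\beta = b + \tfrac{1}{p} + \tfrac{1}{q} + \varepsilon$. The integrand for $\int_0^1 u^{d-1}F_0(u)\,du$ is of the form $u^{d/p-1}(1+|\log u|)^{-\beta}$ and is integrable, while the integrand for $\int_1^\infty u^{(d-1)/2}|dF_0(u)|$ is, up to a constant, $u^{d/p-(d+1)/2-1}(1+\log u)^{-\beta}$, which is integrable precisely because of the standing hypothesis $p>\tfrac{2d}{d+1}$. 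Together with the product rule for $GM$ functions (Lemma \ref{Lemma 3.1}), this gives $F_0\in GM$ and verifies condition \eqref{3.4new}, so $f\in\widehat{GM}^d$.

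Next, using the characterization \eqref{3.3.3} of the $\mathbf{B}^{0,b}_{p,q}$-norm, I would compute the inner tail integral via the substitution $v=\log u$:
\begin{equation*}
\int_t^\infty u^{dp-d-1}F_0^p(u)\,du = \int_{\log t}^\infty (1+v)^{-\beta p}\,dv \asymp (1+\log t)^{1-\beta p},
\end{equation*}
which is finite since $\beta p > 1$. Substituting back into the outer integral of \eqref{3.3.3} and letting $u=\log t$, the Besov quasi-norm is controlled by
\begin{equation*}
\int_0^\infty (1+u)^{(b+1/p-\beta)q}\,du,
\end{equation*}
which converges because $(b+\tfrac{1}{p}-\beta)q = -1-\varepsilon q<-1$. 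A parallel check gives finiteness of $\int_0^1 t^{dp-d-1}F_0^p(t)\,dt$, and therefore $f\in\mathbf{B}^{0,b}_{p,q}(\mathbb{R}^d)$.

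Finally, applying the Sobolev-norm characterization \eqref{W} with $\xi = b+1/q+\varepsilon$, the relevant integral reduces to
\begin{equation*}
\int_1^\infty t^{-1}(1+\log t)^{(\xi-\beta)p}\,dt = \int_0^\infty (1+u)^{-1}\,du = \infty,
\end{equation*}
since the definition of $\beta$ gives exactly $(\xi-\beta)p=-1$. This forces $f\notin H^{0,b+1/q+\varepsilon}_p(\mathbb{R}^d)$, finishing the construction. There is no substantive obstacle beyond making sure the single power-log profile simultaneously achieves the required cut-off: the choice $\beta=b+\tfrac{1}{p}+\tfrac{1}{q}+\varepsilon$ is the unique value (up to the trivial window $\beta\in(b+\tfrac{1}{p}+\tfrac{1}{q},\,b+\tfrac{1}{p}+\tfrac{1}{q}+\varepsilon]$) that places $F_0$ on the divergent side of the $H^{0,b+1/q+\varepsilon}_p$ threshold and the convergent side of the $\mathbf{B}^{0,b}_{p,q}$ threshold, and the assumption $q\le p$ is consistent with this admissible window (i.e., it does not obstruct the construction, it merely corresponds to the regime in which the forward embedding of Theorem \ref{Theorem 3.10} holds so that the sharpness statement is meaningful).
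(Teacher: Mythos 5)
Your proof is correct and follows essentially the same approach as the paper: both construct the same radial counterexample with Fourier–Hankel transform $F_0(t)=t^{-d+d/p}(1+|\log t|)^{-\beta}$ and verify the two norm conditions via the $\widehat{GM}^d$-characterizations of Theorems \ref{Theorem 3.2} and \ref{Theorem 3.9}; the only cosmetic difference is that the paper states the admissible window $\beta\in(b+\tfrac1p+\tfrac1q,\,b+\tfrac1p+\tfrac1q+\varepsilon]$, whereas you commit to the endpoint, as you yourself note.
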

\begin{proof}
	Let
	\begin{equation*}
		F_0(t) = t^{-d + d/p} (1 + |\log t|)^{-\beta},\quad t > 0,
	\end{equation*}
	where $b + 1/q + 1/p < \beta \leq b + 1/q +1/p + \varepsilon$.
 Define
the function
$f(x)=f_{0}(|x|)$, where $f_{0}$ is given by (\ref{3.4new+}).
 Applying Theorems \ref{Theorem 3.2} and \ref{Theorem 3.9},
 we obtain that $f \in\mathbf{B}^{0,b}_{p,q}(\mathbb{R}^d)$ but  $f \notin H^{0,b + 1/q + \varepsilon}_p(\mathbb{R}^d)$.
\end{proof}

\begin{prop}\label{Proposition 5.5}
	Let $1 < p < \infty, q \leq 2$, and $b > -1/q$. For any $\varepsilon > 0$, there is a function $f \in \mathfrak{L} \cap \mathbf{B}^{0,b}_{p,q}(\mathbb{R}^d)$ such that $f \not \in H^{0,b + 1/q + \varepsilon}_p(\mathbb{R}^d)$.
\end{prop}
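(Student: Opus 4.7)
The plan is to mirror the strategy used for Proposition \ref{Proposition 5.2}, now producing a lacunary series whose Sobolev norm with a slightly larger logarithmic weight diverges while its $\mathbf{B}^{0,b}_{p,q}$-norm remains finite. The key tools will be the characterizations of Besov and Sobolev norms on the $\mathfrak{L}$-class, namely (\ref{4.6}) from Theorem \ref{Theorem 4.2} together with (\ref{4.4new}) from Proposition \ref{Proposition 4.1}.

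Concretely, I will fix $\psi\in\mathcal{S}(\mathbb{R}^d)\setminus\{0\}$ satisfying (\ref{4.2}), pick a parameter
\begin{equation*}
b+\tfrac{1}{q}+\tfrac{1}{2}<\delta\le b+\tfrac{1}{q}+\tfrac{1}{2}+\varepsilon,
\end{equation*}
which is possible for any $\varepsilon>0$, and set $b_j=(1+j)^{-\delta}$ for $j\ge 3$. Then I take
\begin{equation*}
f(x)\sim \psi(x)\sum_{j=3}^\infty b_j\, e^{i(2^j-2)x_1},\qquad x\in\mathbb{R}^d.
\end{equation*}
Since $\delta>1/2$, elementary estimates yield $\sum_{k=j}^\infty|b_k|^2\asymp(1+j)^{1-2\delta}$.

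Using (\ref{4.6}), this gives
\begin{equation*}
\|f\|_{\mathbf{B}^{0,b}_{p,q}(\mathbb{R}^d)}^{q}\asymp \sum_{j=3}^\infty (1+j)^{bq}\Bigl(\sum_{k=j}^\infty|b_k|^2\Bigr)^{q/2}\asymp \sum_{j=3}^\infty (1+j)^{(b+1/2-\delta)q},
\end{equation*}
which converges thanks to $\delta>b+1/2+1/q$. Hence $f\in\mathfrak{L}\cap\mathbf{B}^{0,b}_{p,q}(\mathbb{R}^d)$. On the other hand, (\ref{4.4new}) yields
\begin{equation*}
\|f\|_{H^{0,b+1/q+\varepsilon}_{p}(\mathbb{R}^d)}^{2}\asymp \sum_{j=3}^\infty (1+j)^{2(b+1/q+\varepsilon-\delta)},
\end{equation*}
and the choice $\delta\le b+1/q+1/2+\varepsilon$ guarantees that the exponent is $\ge -1$, so the sum diverges and $f\notin H^{0,b+1/q+\varepsilon}_{p}(\mathbb{R}^d)$.

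No step here is genuinely hard; the content is almost entirely the compatibility of the two explicit power exponents. The only minor subtlety is the choice of $\delta$ in the half-open window $(b+1/q+1/2,\,b+1/q+1/2+\varepsilon]$, which works simultaneously for both the convergence of the Besov norm and the divergence of the Sobolev norm, and does not require the restriction $q\le 2$ (the assumption $q\le 2$ in the statement just reflects the regime where embedding (\ref{4.18}) is relevant). Note that, as in Proposition \ref{Proposition 5.2}, one automatically obtains the sharpness in the generalized setting: a simple modification of the slowly-varying factor in the weight, via the trivial embedding $H^{0,b_0}_p\hookrightarrow H^{0,b_1,\Psi}_p$ for $b_0>b_1$, upgrades the counterexample to rule out $H^{0,b+1/q,\Psi}_p(\mathbb{R}^d)\hookrightarrow\mathbf{B}^{0,b}_{p,q}(\mathbb{R}^d)$ for any $\Psi\in SV$ with $\Psi(t)\to 0$.
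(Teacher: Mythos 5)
Your construction coincides with the paper's proof: the same lacunary series $f=\psi W$ with $b_j=(1+j)^{-\delta}$, the same half-open window $b+\tfrac1q+\tfrac12<\delta\le b+\tfrac1q+\tfrac12+\varepsilon$, and the same invocation of (\ref{4.6}) for the $\mathbf{B}$-norm and (\ref{4.4new}) for the $H$-norm. One small slip in your closing aside: the embedding that Proposition~\ref{Proposition 5.5} (and its $SV$-analogue) rules out is $\mathbf{B}^{0,b}_{p,q}(\mathbb{R}^d)\hookrightarrow H^{0,b+1/q+\varepsilon,\Psi}_p(\mathbb{R}^d)$, not $H^{0,b+1/q,\Psi}_p(\mathbb{R}^d)\hookrightarrow\mathbf{B}^{0,b}_{p,q}(\mathbb{R}^d)$ (the latter is the Proposition~\ref{Proposition 5.2} direction), and the relevant trivial inclusion is then $H^{s,b_0,\Psi}_p(\mathbb{R}^d)\hookrightarrow H^{s,b_1}_p(\mathbb{R}^d)$ for $b_0>b_1$, which also holds by slow variation of $\Psi$.
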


\begin{proof}
	Put
	\begin{equation*}
		\{b_j\}_{j \in \mathbb{N}} = \{(1 + j)^{-\delta}\}_{j \in \mathbb{N}}
	\end{equation*}
	with $b + 1/q + 1/2 < \delta \leq b + 1/q + 1/2 + \varepsilon$. Let $f = \psi W$, where $\psi \in \mathcal{S}(\mathbb{R}^d)\backslash \{0\}$ satisfies (\ref{4.2})  and the Fourier series of $W$ is given by
	\begin{equation*}
		 \sum_{j=3}^\infty b_j e^{i(2^j - 2) x_1},\quad x \in \mathbb{R}^d.
	\end{equation*}
	Therefore, Proposition \ref{Proposition 4.1} and Theorem \ref{Theorem 4.2} imply that $f \in \mathbf{B}^{0,b}_{p,q}(\mathbb{R}^d) \backslash H^{0,b+1/q+\varepsilon}_p(\mathbb{R}^d)$.
\end{proof}

Analogously to Propositions \ref{Proposition 5.1 SV} and \ref{Proposition 5.2 SV}, we write down the corresponding results to Propositions \ref{Proposition 5.4} and \ref{Proposition 5.5}.

\begin{prop}\label{Proposition 5.4 SV}
	Let $\frac{2d}{d+1} < p < \infty, q \leq p$, and $b > -1/q$. Let $\Psi \in SV$. For any $\varepsilon > 0$, there is a function $f \in \widehat{GM}^d \cap \mathbf{B}^{0,b}_{p,q}(\mathbb{R}^d)$ such that $f \not \in H^{0,b + 1/q + \varepsilon,\Psi}_p(\mathbb{R}^d)$.
\end{prop}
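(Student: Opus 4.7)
The plan is to deduce Proposition \ref{Proposition 5.4 SV} from Proposition \ref{Proposition 5.4} by exactly the same device that yielded Propositions \ref{Proposition 5.1 SV} and \ref{Proposition 5.2 SV}, namely a trivial embedding between generalized Sobolev spaces, only now in the opposite direction. Concretely, I would first establish the companion of the embedding recorded right after Proposition \ref{Proposition 5.2 SV}:
$$H^{s,b_1,\Psi}_p(\mathbb{R}^d) \hookrightarrow H^{s,b_0}_p(\mathbb{R}^d) \quad \text{whenever } b_1 > b_0.$$
This rests only on the definition of $SV$: picking $\eta = (b_1-b_0)/2 > 0$, the function $t^{\eta}\Psi(t)$ is equivalent to an increasing function on $(0,\infty)$, hence bounded below by a positive constant for $t \geq 1$, which gives $\Psi(t) \gtrsim t^{-\eta}$ for $t \geq 1$. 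Consequently
$$(1+\log(1+|\xi|^2))^{b_1}\Psi(1+\log(1+|\xi|^2)) \gtrsim (1+\log(1+|\xi|^2))^{b_0+(b_1-b_0)/2} \geq (1+\log(1+|\xi|^2))^{b_0}$$
for all $\xi \in \mathbb{R}^d$, and a routine Fourier-multiplier argument of Mikhlin--H\"ormander type (applied to the radial symbol $(1+\log(1+|\xi|^2))^{-(b_1-b_0)/2}/\Psi(1+\log(1+|\xi|^2))$, whose derivatives satisfy the usual decay bounds because of the slow variation of $\Psi$) upgrades this pointwise comparison to the claimed $L_p$-embedding for $1 < p < \infty$.

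Given this embedding the statement is immediate. I would apply Proposition \ref{Proposition 5.4} with $\varepsilon/2$ in place of $\varepsilon$ to produce $f \in \widehat{GM}^d \cap \mathbf{B}^{0,b}_{p,q}(\mathbb{R}^d)$ with $f \notin H^{0,b+1/q+\varepsilon/2}_p(\mathbb{R}^d)$. Taking $b_1 = b+1/q+\varepsilon$ and $b_0 = b+1/q+\varepsilon/2$, the embedding just established reads $H^{0,b+1/q+\varepsilon,\Psi}_p(\mathbb{R}^d) \hookrightarrow H^{0,b+1/q+\varepsilon/2}_p(\mathbb{R}^d)$, and its contrapositive gives $f \notin H^{0,b+1/q+\varepsilon,\Psi}_p(\mathbb{R}^d)$, which is exactly what is required. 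Note that no new counterexample construction is needed, since the $\widehat{GM}^d \cap \mathbf{B}^{0,b}_{p,q}$-membership is furnished directly by Proposition \ref{Proposition 5.4}.

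The only genuinely nontrivial step is the Fourier-multiplier part of the embedding; however, since the correcting symbol is radial, smooth away from the origin and logarithmically decaying, its $L_p$-boundedness is standard. As an alternative route that avoids Mikhlin-type multiplier theorems altogether, one could reprove the counterexample directly by following the argument of Proposition \ref{Proposition 5.4} verbatim with the same test function $F_0(t) = t^{-d+d/p}(1+|\log t|)^{-\beta}$, replacing Theorem \ref{Theorem 3.9} by its obvious $\Psi$-weighted analogue for $\widehat{GM}^d$ functions (which follows from the same scheme of proof, using Lemma \ref{Lemma 3.1} together with the fact that radial slowly varying factors are harmless in the Hardy-Littlewood-type equivalence \eqref{HL}) and invoking $\Psi(t) \gtrsim t^{-\eta}$ to force divergence of the resulting integral at the critical choice $\beta = b+1/q+1/p+\varepsilon$.
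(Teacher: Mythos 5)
Your argument is correct and matches the paper's intended route: the paper derives Proposition~\ref{Proposition 5.4 SV} from Proposition~\ref{Proposition 5.4} ``analogously'' to Propositions~\ref{Proposition 5.1 SV} and \ref{Proposition 5.2 SV}, i.e.\ via a trivial embedding between generalized Sobolev spaces, and you have correctly identified the reverse embedding $H^{s,b_1,\Psi}_p(\mathbb{R}^d) \hookrightarrow H^{s,b_0}_p(\mathbb{R}^d)$ for $b_1 > b_0$ that is actually needed here (the paper only records the other direction explicitly). Your Mikhlin--H\"ormander justification of this embedding is precisely the routine verification that underlies what the paper calls a trivial embedding, so the deduction from Proposition~\ref{Proposition 5.4} with $\varepsilon/2$ goes through exactly as you describe.
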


\begin{prop}\label{Proposition 5.5 SV}
	Let $1 < p < \infty, q \leq 2$, and $b > -1/q$. Let $\Psi \in SV$. For any $\varepsilon > 0$, there is a function $f \in \mathfrak{L} \cap \mathbf{B}^{0,b}_{p,q}(\mathbb{R}^d)$ such that $f \not \in H^{0,b + 1/q + \varepsilon,\Psi}_p(\mathbb{R}^d)$.
\end{prop}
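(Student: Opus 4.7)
The plan is to reduce Proposition \ref{Proposition 5.5 SV} to Proposition \ref{Proposition 5.5} by means of a continuous embedding between a generalized and a classical Sobolev space, in exact analogy to the argument the authors use (just before the statement) to derive Propositions \ref{Proposition 5.1 SV} and \ref{Proposition 5.2 SV}. Note however that the embedding runs in the opposite direction from the one quoted in the text: here I need to pass from the larger logarithmic exponent with $\Psi$-weight to a smaller logarithmic exponent without weight.

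I would proceed as follows. Fix $\varepsilon > 0$ and choose any $\varepsilon' \in (0,\varepsilon)$, for instance $\varepsilon' = \varepsilon/2$. By Proposition \ref{Proposition 5.5} applied with $\varepsilon'$ there is $f \in \mathfrak{L} \cap \mathbf{B}^{0,b}_{p,q}(\mathbb{R}^d)$ such that $f \notin H^{0,b+1/q+\varepsilon'}_p(\mathbb{R}^d)$. It then suffices to verify the continuous embedding
\begin{equation*}
H^{0,b+1/q+\varepsilon,\Psi}_p(\mathbb{R}^d)\;\hookrightarrow\; H^{0,b+1/q+\varepsilon'}_p(\mathbb{R}^d),
\end{equation*}
for the contrapositive then immediately yields $f \notin H^{0,b+1/q+\varepsilon,\Psi}_p(\mathbb{R}^d)$, completing the proof.

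The embedding above reduces, via the very definition (\ref{def:SobolevSpace}) of the generalized Sobolev norm and a standard Fourier multiplier argument, to the pointwise lower bound
\begin{equation*}
(1+\log(1+|x|^2))^{b+1/q+\varepsilon}\,\Psi\bigl(1+\log(1+|x|^2)\bigr) \;\gtrsim\; (1+\log(1+|x|^2))^{b+1/q+\varepsilon'},\qquad x \in \mathbb{R}^d,
\end{equation*}
or equivalently, setting $t=1+\log(1+|x|^2)\ge 1$, the bound $\Psi(t) \gtrsim t^{-(\varepsilon-\varepsilon')}$ for all $t\ge 1$. This is a direct consequence of slow variation of $\Psi$: by definition the function $t^{\varepsilon-\varepsilon'}\Psi(t)$ is equivalent to an increasing function on $(0,\infty)$ and therefore bounded below by a positive constant for $t\ge 1$.

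The only step requiring care is the passage from the pointwise multiplier estimate to the norm embedding, since the Sobolev spaces $H^{s,b,\Psi}_p(\mathbb{R}^d)$ are defined through the $L_p$-norm of an inverse Fourier transform. This is however routine: the ratio of the two multipliers is a smooth radial function whose derivatives satisfy Mihlin-type estimates (the $\Psi$ factor is of slow variation and acts on $\log(1+|x|^2)$, so every differentiation gains a factor of at most $(1+|x|^2)^{-1}$ up to slowly varying corrections), so the ratio is an $L_p$-multiplier for $1<p<\infty$. This gives the required embedding and closes the argument.
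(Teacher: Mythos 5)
Your argument is correct and is precisely the paper's intended route: the authors prove Propositions \ref{Proposition 5.1 SV} and \ref{Proposition 5.2 SV} via the embedding $H^{s,b_0}_p(\mathbb{R}^d) \hookrightarrow H^{s,b_1,\Psi}_p(\mathbb{R}^d)$ for $b_0 > b_1$ and then state that Propositions \ref{Proposition 5.4 SV} and \ref{Proposition 5.5 SV} follow ``analogously,'' which means invoking the reverse embedding $H^{s,b_0,\Psi}_p(\mathbb{R}^d) \hookrightarrow H^{s,b_1}_p(\mathbb{R}^d)$ ($b_0 > b_1$), coming from $\Psi(t)\gtrsim t^{-(b_0-b_1)}$, exactly as you do. The multiplier step you flag as the one point requiring care is glossed over by the paper as well (the forward embedding is simply called ``trivial''), so your proof sits at the same level of rigor as the source.
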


Propositions \ref{Proposition 5.1}, \ref{Proposition 5.2}, \ref{Proposition 5.4}, and \ref{Proposition 5.5}
 imply the following optimality result, which complements Corollary \ref{Corollary 2.3}.

\begin{thm}\label{Proposition 5.3}
	Let $\frac{2d}{d+1} < p < \infty, 0 < q \leq \infty$, and $b > -1/q$.
\begin{enumerate}[\upshape(i)]
\item If $\max\{p,2\} \leq q$, then
	\begin{equation*}
	H^{0,\xi}_p(\mathbb{R}^d) \hookrightarrow \mathbf{B}^{0,b}_{p,q}(\mathbb{R}^d) \text{ \qquad if and only if  \qquad } \xi \geq b + \frac{1}{q}.
\end{equation*}

\item If $q \leq \min\{p,2\}$, then
	\begin{equation*}
	\mathbf{B}^{0,b}_{p,q}(\mathbb{R}^d) \hookrightarrow H^{0,\xi}_p(\mathbb{R}^d) \text{ \qquad  if and only if  \qquad } \xi \leq b + \frac{1}{q}.
\end{equation*}
\end{enumerate}

\end{thm}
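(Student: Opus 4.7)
The proof will be straightforward assembly, since the key counterexamples have already been constructed in the preceding Propositions \ref{Proposition 5.1}--\ref{Proposition 5.5}. My plan is to treat sufficiency and necessity separately in each part and, for necessity, to invoke the sharpness propositions already established.

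For sufficiency in (i), assume $q \geq \max\{p,2\}$ and $\xi \geq b+1/q$. Since
\[
	H^{0,\xi}_p(\mathbb{R}^d) \hookrightarrow H^{0,b+1/q}_p(\mathbb{R}^d)
\]
whenever $\xi \geq b+1/q$ (this is immediate from the definition \eqref{def:SobolevSpace} of the Sobolev norm), the embedding
$H^{0,\xi}_p(\mathbb{R}^d) \hookrightarrow \mathbf{B}^{0,b}_{p,q}(\mathbb{R}^d)$ follows by composing with \eqref{2.2} from Corollary \ref{Corollary 2.3}. Similarly for sufficiency in (ii), assuming $q \leq \min\{p,2\}$ and $\xi \leq b+1/q$, we chain \eqref{2.3} with the trivial embedding $H^{0,b+1/q}_p(\mathbb{R}^d) \hookrightarrow H^{0,\xi}_p(\mathbb{R}^d)$.

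For necessity in (i), I argue by contradiction. Suppose $H^{0,\xi}_p(\mathbb{R}^d) \hookrightarrow \mathbf{B}^{0,b}_{p,q}(\mathbb{R}^d)$ holds but $\xi < b+1/q$. Set $\varepsilon = b + 1/q - \xi > 0$. Under the hypothesis $\max\{p,2\} \leq q$ we have in particular $p \leq q$, so Proposition \ref{Proposition 5.1} applies and furnishes a function $f \in \widehat{GM}^d \cap H^{0,b+1/q-\varepsilon}_p(\mathbb{R}^d) = \widehat{GM}^d \cap H^{0,\xi}_p(\mathbb{R}^d)$ with $f \notin \mathbf{B}^{0,b}_{p,q}(\mathbb{R}^d)$, contradicting the assumed embedding. (Alternatively, since also $2 \leq q$, one may use the lacunary counterexample of Proposition \ref{Proposition 5.2}.)

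For necessity in (ii), suppose $\mathbf{B}^{0,b}_{p,q}(\mathbb{R}^d) \hookrightarrow H^{0,\xi}_p(\mathbb{R}^d)$ holds but $\xi > b+1/q$. Set $\varepsilon = \xi - b - 1/q > 0$. The hypothesis $q \leq \min\{p,2\}$ ensures $q \leq p$, so Proposition \ref{Proposition 5.4} provides a function $f \in \widehat{GM}^d \cap \mathbf{B}^{0,b}_{p,q}(\mathbb{R}^d)$ such that $f \notin H^{0,b+1/q+\varepsilon}_p(\mathbb{R}^d) = H^{0,\xi}_p(\mathbb{R}^d)$, again contradicting the assumption. (Proposition \ref{Proposition 5.5}, using $q \leq 2$, gives an alternative lacunary counterexample.) There is no real obstacle here: the entire machinery of Sections \ref{section4} and \ref{section5} was built precisely so that these one-line sharpness arguments go through, and the only thing to verify is that the parameter ranges in the two sharpness propositions cover the full ranges $\max\{p,2\} \leq q$ and $q \leq \min\{p,2\}$, respectively, which they do.
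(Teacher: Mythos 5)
Your proof is correct and follows essentially the same route as the paper's one-line proof, which simply states that part (i) follows from Propositions \ref{Proposition 5.1} and \ref{Proposition 5.2} and part (ii) from Propositions \ref{Proposition 5.4} and \ref{Proposition 5.5}. You've spelled out the sufficiency direction (composition with Corollary \ref{Corollary 2.3} via the monotone embedding of Sobolev spaces in the logarithmic parameter) and you correctly observe that under the theorem's hypothesis $\frac{2d}{d+1} < p < \infty$, the single $GM$-based counterexample of Proposition \ref{Proposition 5.1} (resp.\ \ref{Proposition 5.4}) already covers necessity, so citing the lacunary propositions is optional.
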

Part (i) follows from Propositions \ref{Proposition 5.1} and \ref{Proposition 5.2}
and part (ii) from
 Propositions \ref{Proposition 5.4} and \ref{Proposition 5.5}.
In particular, we see that  $\xi = b+1/q$ is an optimal parameter in embeddings (\ref{2.2}) and
 (\ref{2.3}).

We finish this section by showing another sharpness assertions (see (\ref{Sharp2.2})--(\ref{SharpBesovSobolev})).

\begin{thm}\label{Theorem 6.6}
	Let $\frac{2d}{d+1} < p < \infty, 0 < q \leq \infty, b > -1/q$, and $\xi \in \mathbb{R}$. Then
		     \begin{equation}\label{Sharp2.2New}
        H^{0,b+1/q}_p(\mathbb{R}^d) \hookrightarrow \mathbf{B}^{0,b}_{p,q}(\mathbb{R}^d) \qquad \text{if and only if} \qquad q \geq \max\{p,2\},
    \end{equation}
        \begin{equation}\label{Sharp2.3New}
        \mathbf{B}^{0,b}_{p,q}(\mathbb{R}^d) \hookrightarrow H^{0,b+1/q}_p(\mathbb{R}^d) \qquad \text{if and only if} \qquad q \leq \min\{p,2\},
    \end{equation}
    and
	\begin{equation}\label{SharpBesovSobolevNew}
		\mathbf{B}^{0,b}_{p,q}(\mathbb{R}^d) = H^{0,\xi}_p(\mathbb{R}^d) \qquad \text{if and only if} \qquad p=q=2 \quad \text{and} \quad \xi = b+1/2.
	\end{equation}
\end{thm}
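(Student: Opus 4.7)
The plan is to prove the three sharpness assertions in the order listed, with (\ref{SharpBesovSobolevNew}) reduced to the first two. The positive directions $\Leftarrow$ in (\ref{Sharp2.2New}) and (\ref{Sharp2.3New}) are precisely (\ref{2.2}) and (\ref{2.3}) from Corollary \ref{Corollary 2.3}, and the positive direction of (\ref{SharpBesovSobolevNew}) is the identity (\ref{SobolevZero}). All the work therefore lies in the three converse implications, which I will settle by testing the hypothetical embeddings against the classes $\widehat{GM}^d$ and $\mathfrak{L}$ analyzed in Sections \ref{section4} and \ref{section5}.

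For the $\Rightarrow$ part of (\ref{Sharp2.2New}), assume $H^{0,b+1/q}_p(\mathbb{R}^d) \hookrightarrow \mathbf{B}^{0,b}_{p,q}(\mathbb{R}^d)$. Restricting the embedding to $\widehat{GM}^d$ and applying the ``only if'' part of (\ref{emb2}) yields $q \geq p$, while restricting it to $\mathfrak{L}$ and applying the ``only if'' part of (\ref{4.17}) gives $q \geq 2$; together these force $q \geq \max\{p,2\}$. The $\Rightarrow$ part of (\ref{Sharp2.3New}) is entirely symmetric, using the ``only if'' parts of (\ref{emb4}) and (\ref{4.18}) to obtain $q \leq p$ and $q \leq 2$, hence $q \leq \min\{p,2\}$.

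For the $\Rightarrow$ part of (\ref{SharpBesovSobolevNew}), suppose $\mathbf{B}^{0,b}_{p,q}(\mathbb{R}^d) = H^{0,\xi}_p(\mathbb{R}^d)$. The first step is to rule out $\xi \neq b + 1/q$. If $\xi > b + 1/q$, then the elementary monotonicity $H^{0,\xi}_p(\mathbb{R}^d) \hookrightarrow H^{0,b+1/q}_p(\mathbb{R}^d)$ combined with the assumed equality yields $\mathbf{B}^{0,b}_{p,q}(\mathbb{R}^d) \hookrightarrow H^{0,b+1/q}_p(\mathbb{R}^d)$, so by (\ref{Sharp2.3New}) we have $q \leq \min\{p,2\}$; but then both Propositions \ref{Proposition 5.4} and \ref{Proposition 5.5} are applicable with $\varepsilon = \xi - b - 1/q > 0$ and produce $f \in \mathbf{B}^{0,b}_{p,q}(\mathbb{R}^d) \setminus H^{0,b+1/q+\varepsilon}_p(\mathbb{R}^d) = \mathbf{B}^{0,b}_{p,q}(\mathbb{R}^d) \setminus H^{0,\xi}_p(\mathbb{R}^d)$, contradicting the equality. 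The case $\xi < b + 1/q$ is handled symmetrically: one first derives $H^{0,b+1/q}_p(\mathbb{R}^d) \hookrightarrow \mathbf{B}^{0,b}_{p,q}(\mathbb{R}^d)$ and thus $q \geq \max\{p,2\}$ from (\ref{Sharp2.2New}), and then invokes Propositions \ref{Proposition 5.1} and \ref{Proposition 5.2} with $\varepsilon = b + 1/q - \xi > 0$. Consequently $\xi = b + 1/q$, and applying (\ref{Sharp2.2New}) and (\ref{Sharp2.3New}) simultaneously to the two inclusions of the equality $\mathbf{B}^{0,b}_{p,q}(\mathbb{R}^d) = H^{0,b+1/q}_p(\mathbb{R}^d)$ gives $\max\{p,2\} \leq q \leq \min\{p,2\}$, forcing $p = q = 2$ and $\xi = b + 1/2$.

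No serious obstacle is anticipated: the heavy lifting---the Fourier-side characterizations of Besov and Sobolev norms on $\widehat{GM}^d$ and $\mathfrak{L}$ together with the explicit counterexamples of Propositions \ref{Proposition 5.1}--\ref{Proposition 5.5}---has already been done in Sections \ref{section4}, \ref{section5}, and at the start of the present section, and the argument is essentially a careful bookkeeping of ``if and only if'' clauses. The one point that requires care is verifying that in each subcase $\xi \neq b + 1/q$ the preliminary reduction via (\ref{Sharp2.2New}) or (\ref{Sharp2.3New}) indeed places $q$ in the range where the relevant counterexample Proposition applies, which is automatic since $q \leq \min\{p,2\}$ implies both $q \leq p$ and $q \leq 2$, and dually for $\max\{p,2\} \leq q$.
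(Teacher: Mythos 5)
Your proof is correct, and the first two parts coincide with the paper's: the $\Leftarrow$ directions come from Corollary \ref{Corollary 2.3}, and the $\Rightarrow$ directions of (\ref{Sharp2.2New}) and (\ref{Sharp2.3New}) follow from the only-if parts of Theorem \ref{Theorem 3.10} (restriction to $\widehat{GM}^d$) and Corollary \ref{Theorem 4.5} (restriction to $\mathfrak{L}$), exactly as you say.

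Where you diverge from the paper is in the step establishing $\xi = b+1/q$ inside (\ref{SharpBesovSobolevNew}). The paper derives it by a limiting-interpolation argument: starting from the assumed equality $\mathbf{B}^{0,b}_{p,q}(\mathbb{R}^d) = H^{0,\xi}_p(\mathbb{R}^d)$ it interpolates both sides with $W^1_p(\mathbb{R}^d)$ via the $(s,q)$-method, invokes the identification of $\mathbf{B}^{0,b}_{p,q}$ as the $((0,b),q)$-space and the reiteration formula of Lemma \ref{PrelimLemma7.2}(i) to rewrite both sides as Besov spaces, and then appeals to Proposition \ref{RecallEmb**optim} to match the logarithmic exponents, after which $q=p$ and $q=2$ come from (\ref{sharpemb4}) and (\ref{LacunaryWB}). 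You instead split into $\xi > b+1/q$ and $\xi < b+1/q$, use the already-proved (\ref{Sharp2.2New}) and (\ref{Sharp2.3New}) to locate $q$ on one side of $\{p,2\}$, and then falsify the equality via the explicit counterexample Propositions \ref{Proposition 5.1}--\ref{Proposition 5.5}. This is a genuinely different route: it trades the interpolation/reiteration machinery for a direct argument built on the same counterexamples that already underpin parts 1 and 2, so the overall proof of (\ref{SharpBesovSobolevNew}) becomes a corollary of the two embedding sharpness statements plus the $\varepsilon$-propositions, with no extra interpolation formulas needed. The one thing worth noting is that your final step (``applying (\ref{Sharp2.2New}) and (\ref{Sharp2.3New}) simultaneously to both inclusions'') and the paper's invocation of (\ref{sharpemb4}), (\ref{LacunaryWB}) are logically equivalent, since the only-if conditions for the equalities on $\widehat{GM}^d$ and $\mathfrak{L}$ are exactly the conjunctions of the only-if conditions for the two one-sided embeddings. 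Both approaches are sound; yours is somewhat more self-contained relative to Section \ref{SectionOptimalityWB}, while the paper's is shorter once the interpolation toolkit is taken for granted.
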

\begin{proof}
	The if-parts of (\ref{Sharp2.2New}), (\ref{Sharp2.3New}) and (\ref{SharpBesovSobolevNew}) were already proved in Corollary \ref{Corollary 2.3}. Then it remains to show their only-if-parts. Concerning (\ref{Sharp2.2New}) and (\ref{Sharp2.3New}), they follow immediately from Theorem \ref{Theorem 3.10} and Corollary \ref{Theorem 4.5}.

 We proceed to the only-if-part in (\ref{SharpBesovSobolevNew}), that is, let us show that if $\mathbf{B}^{0,b}_{p,q}(\mathbb{R}^d) = H^{0,\xi}_p(\mathbb{R}^d)$ then $p=q=2$ and $\xi = b+1/2$. For $0 < s < 1$, applying (\ref{BesovComparison}), (\ref{PrelimInterpolationW}), (\ref{PrelimInterpolationnew}), Lemma \ref{PrelimLemma7.2}(i) (with $\alpha=0$) and (\ref{PrelimInterpolation}), we get
	\begin{align*}
		\mathbf{B}^{s, (1-s) \xi}_{p,q}(\mathbb{R}^d) & = (H^{0,\xi}_p(\mathbb{R}^d), W^1_p(\mathbb{R}^d))_{s,q} =  (\mathbf{B}^{0,b}_{p,q}(\mathbb{R}^d), W^1_p(\mathbb{R}^d))_{s,q} \\
		& \hspace{-2cm}= ((L_p(\mathbb{R}^d), W^1_p(\mathbb{R}^d))_{(0,b),q}, W^1_p(\mathbb{R}^d))_{s,q} = (L_p(\mathbb{R}^d), W^1_p(\mathbb{R}^d))_{s,q;(1-s)(b + 1/q)} \\
		& \hspace{-2cm}= \mathbf{B}^{s, (1-s)(b+1/q)}_{p,q}(\mathbb{R}^d).
	\end{align*}
	This implies that $\xi = b+1/q$ (see Proposition \ref{RecallEmb**optim}). Hence, we have $\mathbf{B}^{0,b}_{p,q}(\mathbb{R}^d) = H^{0,b+1/q}_p(\mathbb{R}^d)$. Applying now (\ref{sharpemb4}) and (\ref{LacunaryWB}) we get $q=p$ and $q=2$, respectively. This finishes the proof.

\end{proof}

\newpage
\section{Comparison between different kinds of smoothness spaces involving only logarithmic smoothness}\label{section7}

Throughout this section we shall assume that $1 < p < \infty$ and $0 < q \leq \infty$, unless otherwise stated. Suppose further that $q \geq \max\{p,2\}$.
It follows from (\ref{1}) and Corollary \ref{Corollary 2.3} that the following embeddings hold
\begin{equation}\label{6.1new}
	H^{0,b+1/q}_p(\mathbb{R}^d)\cup   B^{0,b +
    1/\min\{2,p\}}_{p,q}(\mathbb{R}^d) \hookrightarrow \mathbf{B}^{0,b}_{p,q}(\mathbb{R}^d)  \hookrightarrow B^{0,b+1/q}_{p,q}(\mathbb{R}^d).
\end{equation}
Here, $b > -1/q$.

 Our goal in this section is to prove the sharpness of (\ref{6.1new}). In more detail, we will see that the spaces $H^{0,b+1/q}_p(\mathbb{R}^d) $ and $B^{0,b+1/\min\{2,p\}}_{p,q}(\mathbb{R}^d) $ are not comparable and the embedding $ \mathbf{B}^{0,b}_{p,q}(\mathbb{R}^d)
\hookrightarrow B^{0,b+1/q}_{p,q}(\mathbb{R}^d) $ is strict.
Fig. 3 illustrates the embeddings (\ref{6.1new}) 
and results of Propositions \ref{Proposition 6.1} and \ref{Proposition 6.2} below.

\bigskip

\begin{center}
\begin{tikzpicture}[fill opacity=0.05, ,xscale=0.8,yscale=0.8]

\fill (0,0) ellipse (5.0 and 3.0);
\draw[fill opacity=0.2] (0,0) ellipse (5.0 and 3.0);

\fill (0,0) ellipse (4.0 and 2.2);
\draw[fill opacity=0.2] (0,0) ellipse (4.0 and 2.2);

\fill[white] (-1.8,0) ellipse (2.5 and 1.1);
\draw[fill opacity=0.2](-1.4,0) ellipse (2.5 and 1.1);

\fill[white] (1.8,0) circle (2.55 and 1.1);
\draw[fill opacity=0.2] (1.4,0) circle (2.55 and 1.1);

\node[fill opacity=2,xscale=0.8,yscale=0.8] at (0,1.6) {$\mathbf{B}^{0,b}_{p,q}(\mathbb{R}^d)$};
\node[fill opacity=2,xscale=0.8,yscale=0.8] at (-2.5,0) {$H^{0,b+\frac{1}{q}}_p(\mathbb{R}^d)$};
\node[fill opacity=2,xscale=0.8,yscale=0.8] at (2.5,0) {$B^{0,b + \frac{1}{\min\{2,p\}}}_{p,q}(\mathbb{R}^d)$};
\node[fill opacity=2,xscale=0.8,yscale=0.8] at (0,-2.6) {$B^{0,b + \frac{1}{q}}_{p,q}(\mathbb{R}^d)$};

\end{tikzpicture}
\end{center}

\phantom{qqq}

\begin{center}
{\small \textbf{Fig. 3:} Relationships between the Besov and Sobolev  spaces involving only logarithmic smoothness in the case $q \geq \max\{p,2\}$.}
\end{center}

\bigskip

We start by showing that the spaces $H^{0,b+1/q}_p(\mathbb{R}^d)$ and $B^{0,b+1/\min\{2,p\}}_{p,q}(\mathbb{R}^d)$
are not comparable except the case $p=q=2$, cf. 
(\ref{LPgeneral}) and Proposition \ref{RecallEmb}.

\begin{prop}\label{Proposition 6.1}
	Let $q \geq \max\{p,2\} (q > 2 \text{ if } p=2)$ and $-\infty < b < \infty$. Then
\begin{enumerate}[\upshape (i)]
  \item	$H^{0,b+1/q}_p(\mathbb{R}^d)$ \text{ is not continuously embedded into } $B^{0,b+1/\min\{2,p\}}_{p,q}(\mathbb{R}^d)$,

  \item if $\frac{2 d}{d+1} < p < \infty$ then	 $B^{0,b+1/\min\{2,p\}}_{p,q}(\mathbb{R}^d)$ \text{ is not continuously embedded into } $H^{0,b+1/q}_p(\mathbb{R}^d).$

\end{enumerate}
\end{prop}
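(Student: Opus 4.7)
The strategy is to construct explicit counterexamples for both non-embeddings, relying on the characterizations of Besov and Sobolev norms on the classes $\widehat{GM}^d$ and $\mathfrak{L}$ provided by Theorems \ref{Theorem 3.6}, \ref{Theorem 3.9}, and Proposition \ref{Proposition 4.1}. The construction splits according to the value of $\min\{2,p\}$, so I will treat the cases $p<2$, $p>2$, and the delicate endpoint $p=2$ (with $q>2$) separately.

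For part (i) in the range $p<2$, since $1/p>1/2$ the convergence thresholds for $H^{0,b+1/q}_p$ and $B^{0,b+1/p}_{p,q}$ on lacunary power series are genuinely different, so I will take $f(x)\sim\psi(x)\sum_{j\geq 3}(1+j)^{-\xi}e^{i(2^j-2)x_1}$ with $\psi\in\mathcal{S}(\mathbb{R}^d)\setminus\{0\}$ satisfying (\ref{4.2}) and $\xi\in(b+\tfrac{1}{q}+\tfrac{1}{2},\,b+\tfrac{1}{p}+\tfrac{1}{q})$; applying (\ref{4.4new}) and (\ref{4.3}) places $f$ in $H^{0,b+1/q}_p$ but not in $B^{0,b+1/p}_{p,q}$. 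In the range $p>2$, the condition $p>2d/(d+1)$ is automatic, and I will use $\widehat{GM}^d$ with $F_0(t)=t^{-d+d/p}(1+|\log t|)^{-\beta}$ and $\beta\in(b+\tfrac{1}{q}+\tfrac{1}{p},\,b+\tfrac{1}{q}+\tfrac{1}{2}]$; defining $f$ via (\ref{3.4new+}) and invoking Theorems \ref{Theorem 3.9} and \ref{Theorem 3.6} gives $f\in H^{0,b+1/q}_p\setminus B^{0,b+1/2}_{p,q}$, since the $H$-threshold is $\beta>b+1/q+1/p$ while the $B$-threshold is $\beta>b+1/q+1/2$. For the borderline $p=2$, $q>2$, where all polynomial and iterated-logarithmic weights fail to separate these two thresholds, I will instead use a sparse sub-lacunary construction: fix $M=q/(q-2)$, $N_k=2^{Mk}$, and set $f(x)\sim\psi(x)\sum_k 2^{-k/2}(1+N_k)^{-b-1/q}e^{i(2^{N_k}-2)x_1}$. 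Proposition \ref{Proposition 4.1} then yields $\|f\|_{H^{0,b+1/q}_2}^2\asymp\sum_k 2^{-k}<\infty$ and $\|f\|_{B^{0,b+1/2}_{2,q}}^q\asymp\sum_k 2^{-kq/2}(1+N_k)^{q(1/2-1/q)}\asymp\sum_k 1=\infty$, because $M(q/2-1)=q/2$.

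For part (ii), when $p\geq 2$ I will take lacunary coefficients $|b_j|=(1+j)^{-(b+1/q+1/2)}(1+\log j)^{-\alpha}$ with $\alpha\in(1/q,1/2]$, a non-empty interval thanks to the standing hypothesis that $q\geq\max\{p,2\}$ and $q>2$ when $p=2$; Proposition \ref{Proposition 4.1} shows that $f\in B^{0,b+1/2}_{p,q}$ (threshold $q\alpha>1$) but $f\notin H^{0,b+1/q}_p$ (threshold $2\alpha>1$). When $p<2$, the leading lacunary thresholds coincide, so I will switch to $\widehat{GM}^d$ with a double-logarithmic refinement: $F_0(t)=t^{-d+d/p}(1+|\log t|)^{-(b+1/p+1/q)}(1+\log(1+|\log t|))^{-\gamma}$ with $\gamma\in(1/q,1/p]$, which is non-empty since $1/p>1/2\geq 1/q$. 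The primary logarithmic exponents now match for both spaces, and Theorems \ref{Theorem 3.6}, \ref{Theorem 3.9} reduce convergence to the iterated-logarithm factor: the $B$-side requires $\gamma>1/q$ (satisfied) while the $H$-side requires $\gamma>1/p$ (violated). The monotonicity and integrability conditions (\ref{3.1}), (\ref{3.4new}) defining $\widehat{GM}^d$ are straightforward to check for these $F_0$ under $p>2d/(d+1)$. The delicate step is the endpoint $p=2$, $q>2$ in part (i): the underlying sequence inequality reduces to $(\sum x_j^q(1+j)^{q/2-1})^{1/q}\lesssim(\sum x_j^2)^{1/2}$, which is preserved by any polynomial, single-log, or iterated-log perturbation of a single-variable test sequence; the key observation is that the embedding constants for one-block lacunary series blow up like $(1+N)^{1/2-1/q}$, and spreading the mass across frequencies $N_k=2^{Mk}$ with $M=q/(q-2)$ exactly balances the geometric decay of $c_k$ against this blow-up on the $B$-side while keeping the $H$-side geometrically summable.
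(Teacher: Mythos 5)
Your constructions hit the same thresholds as the paper's proof and are essentially correct for parts (i) with $p<2$ and (ii) with $p\geq 2$ (the lacunary examples) and for (ii) with $p<2$ (the $\widehat{GM}^d$ iterated-log example) — those match the paper almost exactly. The genuine divergence is in part (i) for $p\geq 2$: the paper handles $p>2$ and $p=2$ in one stroke by a soft contradiction argument, testing the putative embedding on the family of $\widehat{GM}^d$ functions $(F_0)_\nu=\chi_{(0,2^\nu)}$ and using Theorems \ref{Theorem 3.6} and \ref{Theorem 3.9} to show the norm ratio blows up like $(1+\nu)^{1/2-1/q}$, which only needs $q>2$ (automatic for $p>2$, hypothesized for $p=2$). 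You instead exhibit explicit single functions in $H^{0,b+1/q}_p\setminus B^{0,b+1/2}_{p,q}$ — the log-power $GM$ example for $p>2$ and, for $p=2$, a sparse sub-lacunary series supported on $N_k=2^{Mk}$ with $M=q/(q-2)$; your calculation that $M(q/2-1)=q/2$ makes that example balance out is correct and clever. Your route is more constructive (an explicit element rather than a blow-up on a family), at the price of bifurcating the $p\geq 2$ case; the paper's route is shorter and uniform.

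One real gap to fix: in the two $\widehat{GM}^d$ constructions (part (i) with $p>2$ and part (ii) with $p<2$) you use a single formula $F_0(t)=t^{-d+d/p}(1+|\log t|)^{-\beta}$ (or with an extra $\log\log$ factor) on all of $(0,\infty)$. Theorems \ref{Theorem 3.9} and \ref{Theorem 3.6} express both norms as $J_f(p)+(\text{nonlocal part})$ with $J_f(p)=\big(\int_0^1 t^{dp-d-1}F_0^p\,dt\big)^{1/p}$, and near $t=0$ your $F_0$ gives $\int_0^1 t^{-1}(1-\log t)^{-\beta p}\,dt$, which is finite only when $\beta p>1$. Since the proposition allows all $-\infty<b<\infty$, the interval you prescribe for $\beta$ (respectively $\gamma$) can lie entirely at or below $1/p$ once $b$ is sufficiently negative, in which case $J_f(p)=\infty$ and $f$ belongs to neither space — the example then proves nothing. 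The paper avoids this by defining $F_0$ piecewise, e.g.\ $F_0(t)=t^{-d+d/p+\varepsilon}$ on $(0,1)$ and the logarithmic profile only on $[1,\infty)$, which makes $J_f(p)$ finite unconditionally; you should do the same. With that modification (and a one-line verification that the piecewise $F_0$ still lies in $GM$ and satisfies (\ref{3.4new}), using $p>2d/(d+1)$), your argument goes through for all $b$.
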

\begin{proof}

Assume first that $p < 2$ and let us prove (i). Let $W$ be the Fourier series given by (\ref{4.1}) with $\{b_j\}_{j \in \mathbb{N}} = \{(1 +
j)^{-b-1/q-\beta}\}_{j \in \mathbb{N}}, 1 / 2 < \beta < 1/p,$ and put $f=\psi W$, where $\psi \in \mathcal{S}(\mathbb{R}^d) \backslash \{0\}$ satisfies (\ref{4.2}). On the one hand, using (\ref{4.4new}), we derive that $f \in H^{0,b+1/q}_p(\mathbb{R}^d)$ because
\begin{equation*}
    \|f\|_{H^{0,b+1/q}_p(\mathbb{R}^d)}^2 \asymp \sum_{j=3}^\infty ((1 + j)^{b+1/q} |b_j|)^2 = \sum_{j=3}^\infty
    (1 + j)^{-2 \beta} < \infty.
\end{equation*}
On the other hand, $f \not \in B^{0,b+1/p}_{p,q}(\mathbb{R}^d)$ since
\begin{equation*}
    \|f\|_{B^{0,b+1/p}_{p,q}(\mathbb{R}^d)}^q \asymp \sum_{j=3}^\infty ((1 + j)^{b+1/p} |b_j|)^q = \sum_{j=3}^\infty
    (1 + j)^{(1/p -1/q - \beta)q} = \infty,
\end{equation*}
where we have used (\ref{4.3}). This shows (i).

Conversely, we construct a function $f$ which belongs to $B^{0,b+1/p}_{p,q}(\mathbb{R}^d)$ but not to $H^{0,b+1/q}_p(\mathbb{R}^d)$. Let $\varepsilon > 0$ and $1/q < \eta \leq 1/p$. For $t >0$, set
	\begin{equation*}
		 F_0(t) = \left\{\begin{array}{lcl}
                            t^{-d + d/p + \varepsilon} & ,  & 0 < t  < 1, \\
                            & & \\
                            t^{-d + d/p} (1 + \log t)^{-(b + 1/p + 1/q)} (1 + \log (1 + \log t))^{-\eta} & , & t \geq 1,
            \end{array}
            \right.
	\end{equation*}
 and $\widehat{f}(x) = F_0(|x|), x \in \mathbb{R}^d$. It is clear that $F_0 \in GM$. Furthermore,
 \begin{align*}
 	\left(\int_0^1 t^{d p-d - 1} F_0^p(t) dt\right)^{1/p} + \left(\int_1^\infty t^{dq -d q/p - 1} (1 + \log t)^{(b + 1/p) q} F_0^q(t) dt\right)^{1/q} 
< \infty
 \end{align*}
and by Theorem \ref{Theorem 3.6} we get that $f \in B^{0,b+1/p}_{p,q}(\mathbb{R}^d)$.
However, in light of
 Theorem \ref{Theorem 3.9} and
\begin{equation*}
	\int_1^\infty t^{d p - d -1} (1 + \log t)^{(b+1/q) p} F_0^p(t) dt= \int_1^\infty (1 + \log (1 + \log t))^{-\eta p} \frac{dt}{t (1 + \log t)} = \infty,
\end{equation*}
 we arrive at $f \not \in H^{0,b+1/q}_p(\mathbb{R}^d)$. Thus, (ii) holds.

Now assume that $p \geq 2$. First, we will show (ii), that is, there exist  functions from the space
$B^{0,b+1/2}_{p,q}(\mathbb{R}^d)$ but not from $H^{0,b+1/q}_p(\mathbb{R}^d)$. As an example take  
 the lacunary Fourier series associated to sequences
\begin{equation*}
\{b_j\}_{j \in \mathbb{N}} = \{(1 + j)^{-b - 1/2 -1/q} (1+ \log j)^{-\delta}\}_{j \in \mathbb{N}}, \quad 1/q < \delta
< 1/2.
\end{equation*}
Indeed, we have
\begin{equation*}
    \sum_{j=3}^\infty ((1+j)^{b+1/2} |b_j|)^q = \sum_{j=3}^\infty (1+ \log j)^{-\delta
    q} \frac{1}{1+j} < \infty
\end{equation*}
and
\begin{equation*}
    \sum_{j=3}^\infty ((1 + j)^{b+1/q} |b_j|)^2 = \sum_{j=3}^\infty (1 + \log
    j)^{-\delta 2} \frac{1}{1+j} = \infty
\end{equation*}
and the result follows from (\ref{4.3}) and (\ref{4.4new}).

Finally, we are going to prove (i), that is,
\begin{equation}\label{6.1}
	H^{0,b+1/q}_p(\mathbb{R}^d) \not \hookrightarrow B^{0,b+1/2}_{p,q}(\mathbb{R}^d).
\end{equation}
This will be done by contradiction. Let us assume that $H^{0,b+1/q}_p(\mathbb{R}^d)$ is continuously embedded into $B^{0,b+1/2}_{p,q}(\mathbb{R}^d)$. For $\nu \in \mathbb{N}_0$, we define $(F_0)_\nu(t) = \chi_{(0,2^\nu)}(t), t > 0$. Then
\begin{align}
	 \left(\int_0^1 t^{d p-d - 1} (F_0)_\nu^p(t) dt\right)^{1/p}  + \left(\int_1^\infty t^{dp-d-1} (1 + \log t)^{(b + 1/q) p} (F_0)_\nu^p(t) dt \right)^{1/p} \nonumber \\
	& \hspace{-10cm}\asymp 2^{\nu d (1-1/p)} (1 + \nu)^{b+1/q} \label{6.2}
\end{align}
and
\begin{align}
\left(\int_0^1 t^{d p-d - 1} (F_0)_\nu^p(t) dt\right)^{1/p}  + \left(\int_1^\infty t^{d q-d q/p-1} (1 + \log t)^{(b + 1/2) q} (F_0)_\nu^q(t) dt \right)^{1/q} \nonumber \\
	& \hspace{-10cm} \asymp  2^{\nu d(1 - 1/p)} (1 + \nu)^{b+1/2}. \label{6.3}
\end{align}
Therefore, applying Theorems \ref{Theorem 3.6} and \ref{Theorem 3.9} and estimates (\ref{6.2}) and (\ref{6.3}) we derive that
\begin{equation*}
	2^{\nu d(1 - 1/p)} (1 + \nu)^{b+1/2} \lesssim 2^{\nu d (1-1/p)} (1 + \nu)^{b+1/q} \quad \text{for all} \quad  \nu \in \mathbb{N}_0,
\end{equation*}
which contradicts the condition $q > 2$. 
\end{proof}

Next we show that the right-hand side embedding in (\ref{6.1new}), that is,
$
	\mathbf{B}^{0,b}_{p,q}(\mathbb{R}^d) \hookrightarrow B^{0,b+1/q}_{p,q}(\mathbb{R}^d),
 $ 
is strict, except the case $q=p=2$ where these  spaces coincide (cf. (\ref{BesovZero})).

\begin{prop}\label{Proposition 6.2}
	Let $\frac{2 d}{d+1} < p < \infty, q \geq \max\{p,2\} (q > 2 \text{ if } p=2)$ and $b > -1/q$. Then  $B^{0,b+1/q}_{p,q}(\mathbb{R}^d)$ is not continuously embedded into $\mathbf{B}^{0,b}_{p,q}(\mathbb{R}^d)$.
\end{prop}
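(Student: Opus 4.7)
The plan is to exhibit, in each of the two subcases allowed by the hypotheses ($p<q$, and $p=q>2$), an explicit $f$ lying in $B^{0,b+1/q}_{p,q}(\mathbb{R}^d)$ but not in $\mathbf{B}^{0,b}_{p,q}(\mathbb{R}^d)$. The machinery is precisely the sharp characterizations established in Section \ref{section4} for the class $\widehat{GM}^d$ and in Section \ref{section5} for the class $\mathfrak{L}$, which convert both Besov norms into elementary integrals or sums.

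\emph{Case 1 ($p<q$).} I would pick $\beta\in(b+2/q,\ b+1/p+1/q]$ (this interval is nonempty precisely because $p<q$), set
$F_0(t)=t^{-d+d/p}(1+|\log t|)^{-\beta}$ for $t>0$, and define the radial function $f(x)=f_0(|x|)$ via the inverse Fourier--Hankel formula (\ref{3.4new+}); observe that $F_0\in GM$. By the characterization (\ref{BesovGM}) in Theorem \ref{Theorem 3.6},
\[
\|f\|_{B^{0,b+1/q}_{p,q}(\mathbb{R}^d)}^{q}\asymp\int_0^1 t^{dp-d-1}F_0^p(t)\,dt+\int_1^\infty t^{-1}(1+\log t)^{(b+1/q-\beta)q}\,dt,
\]
and the second integral converges since $\beta>b+2/q$. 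On the other hand, the tail asymptotic $\int_t^\infty u^{dp-d-1}F_0^p(u)\,du\asymp (1+\log t)^{1-\beta p}$ (using $\beta>1/p$) combined with the characterization (\ref{3.3.3}) in Theorem \ref{Theorem 3.2} gives
\[
\|f\|_{\mathbf{B}^{0,b}_{p,q}(\mathbb{R}^d)}^{q}\asymp\int_0^1 t^{dp-d-1}F_0^p(t)\,dt+\int_1^\infty(1+\log t)^{(b+1/p-\beta)q}\,\frac{dt}{t},
\]
and the second integral diverges since $\beta\leq b+1/p+1/q$. Hence $f\in B^{0,b+1/q}_{p,q}(\mathbb{R}^d)\setminus\mathbf{B}^{0,b}_{p,q}(\mathbb{R}^d)$.

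\emph{Case 2 ($p=q$; the residual hypotheses force $p=q>2$).} Here the $GM$ choice above collapses: an elementary computation shows the two integrals in Case 1 reach their respective thresholds at the same value $\beta=b+2/p$, so no single $\beta$ separates the two norms. I would instead use a lacunary test function: let $\psi\in\mathcal{S}(\mathbb{R}^d)\setminus\{0\}$ satisfy (\ref{4.2}) and set $f=\psi W$ with
\[
W(x)\sim\sum_{j=3}^\infty b_j e^{i(2^j-2)x_1},\qquad b_j=(1+j)^{-(b+1/p+1/2)}(1+\log j)^{-1/p}.
\]
By Proposition \ref{Proposition 4.1},
$\|f\|_{B^{0,b+1/p}_{p,p}(\mathbb{R}^d)}^{p}\asymp\sum_j(1+j)^{-p/2}(1+\log j)^{-1}$, which is finite because $p/2>1$. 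Meanwhile, since $b>-1/p$ we have the tail estimate $\sum_{k=j}^\infty|b_k|^2\asymp (1+j)^{-(2b+2/p)}(1+\log j)^{-2/p}$; feeding this into the characterization (\ref{4.6}) of Theorem \ref{Theorem 4.2} yields
\[
\|f\|_{\mathbf{B}^{0,b}_{p,p}(\mathbb{R}^d)}^{p}\asymp\sum_{j=3}^\infty(1+j)^{-1}(1+\log j)^{-1}=\infty.
\]

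The main obstacle is Case 2: the GM route is unavailable because the $B$- and $\mathbf{B}$-sides degenerate to the same logarithmic threshold when $p=q$, and even inserting an iterated logarithmic factor $(1+\log(1+|\log t|))^{-\eta}$ into $F_0$ does not separate them. The lacunary class is intrinsically finer here because the $\mathbf{B}$-norm (\ref{4.6}) involves the $\ell_2$-tail $(\sum_{k\geq j}|b_k|^2)^{1/2}$ rather than a single-term weight, and for $p>2$ this discrepancy with the $\ell_p$ Besov norm is exactly what allows the decoupling.
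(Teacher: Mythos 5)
Your proof follows the paper's two-pronged strategy exactly: a $\widehat{GM}^d$ counterexample for $q>p$, and a lacunary counterexample for $q=p>2$. Case 2 is correct, and your closing remark about why the $GM$ class degenerates at $q=p$ while the lacunary class decouples only when $q\neq 2$ is accurate — it is precisely the content of the sharpness statements (\ref{EmbBesovGMNew}) and (\ref{LacunaryBB}).

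In Case 1, however, there is a genuine gap. The interval $\beta\in(b+2/q,\,b+1/p+1/q]$ you specify does not guarantee $\beta>1/p$, yet you invoke $\beta>1/p$ both for the tail asymptotic and, more critically, for the convergence of the shared first term $\int_0^1 t^{dp-d-1}F_0^p(t)\,dt = \int_0^1 (1-\log t)^{-\beta p}\,\frac{dt}{t}$: if $\beta\leq 1/p$ this integral diverges, so $f$ does not even lie in $B^{0,b+1/q}_{p,q}(\mathbb{R}^d)$ and the argument collapses. Concretely, take $p=3/2$, $q=2$, $b=-2/5$; then $b+2/q=3/5<1/p=2/3$, and your interval contains values of $\beta$ strictly below $1/p$. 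The fix is simply to require $\max\{1/p,\,b+2/q\}<\beta\leq b+1/p+1/q$, which is still nonempty since $b+1/p+1/q>1/p$ (as $b>-1/q$) and $b+1/p+1/q>b+2/q$ (as $p<q$). The paper avoids the issue altogether by choosing a piecewise $F_0$ that behaves like $t^{-d+d/p+\varepsilon}$ near the origin, making the near-origin integral converge with no constraint on $\beta$; your single-formula $F_0$ is simpler but imports this extra condition, which you need to record explicitly.
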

\begin{proof}
	Let us assume that $q > p$. Let $f$ be the radial function such that $\widehat{f}(x) = F_0(|x|), x \in \mathbb{R}^d$, where
	\begin{equation*}
			 F_0(t) = \left\{\begin{array}{lcl}
                            t^{-d + d/p + \varepsilon} & ,  & 0 < t  < 1, \\
                            & & \\
                            t^{-d + d/p} (1 + \log t)^{-(b + 1/q + \beta)} & , & t \geq 1,
            \end{array}
            \right.
	\end{equation*}
	with $\varepsilon > 0$ and $\max\{1/q, 1/p -b - 1/q\} < \beta < 1/p$. We have
	\begin{equation*}
		\left(\int_0^1 t^{d p-d - 1} F_0^p(t) dt\right)^{1/p} + \left(\int_1^\infty t^{dq -d q/p - 1} (1 + \log t)^{(b + 1/q) q} F_0^q(t) dt\right)^{1/q} < \infty
	\end{equation*}
	and
	\begin{equation*}
		\int_1^\infty (1 + \log t)^{b q} \left(\int_t^\infty u^{d p -d-1} F_0^p(u) du\right)^{q/p}
        \frac{dt}{t} \asymp \int_1^\infty (1 + \log t)^{(1/p - 1/q - \beta) q} \frac{dt}{t} = \infty.
	\end{equation*}
	Then, applying Theorems \ref{Theorem 3.2} and \ref{Theorem 3.6}, we derive that $f \in B^{0,b+1/q}_{p,q}(\mathbb{R}^d)$ and $f \not \in \mathbf{B}^{0,b}_{p,q}(\mathbb{R}^d)$.
	
	In the remaining case where $q=p > 2$ one can proceed as follows. We consider the function
$f=\psi W$, where $\psi \in \mathcal{S}(\mathbb{R}^d) \backslash \{0\}$ with $(\ref{4.2})$ and
	\begin{equation*}
		W(x) \sim \sum_{j=3}^\infty (1 + j)^{-(b + 1/p + \varepsilon)} e^{i (2^j -2) x_1}, \quad x \in \mathbb{R}^d,
	\end{equation*}
	 where $\max\{1/p, 1/2 - 1/p -b \} < \varepsilon \leq 1/2$. Using Proposition \ref{Proposition 4.1} and Theorem \ref{Theorem 4.2} we obtain
	  $\displaystyle 
	 	\|f\|^p_{B^{0,b + 1/p}_{p,p}(\mathbb{R}^d)}   \asymp \sum_{j=3}^\infty (1 + j)^{- \varepsilon p} < \infty
	 $ 
	 and
	 \begin{align*}
	 	\|f\|_{\mathbf{B}^{0,b}_{p,p}(\mathbb{R}^d)}^p &\asymp \sum_{j=3}^\infty (1 + j)^{b p} \left(\sum_{k=j}^\infty (1 + k)^{-(b + 1/p + \varepsilon)2} \right)^{p/2}  
 = \infty.
	 \end{align*}
\end{proof}

The rest of this section refers to relationships between smoothness spaces with classical smoothness zero when $q \leq \min \{p,2\}$. Recall that, under this assumption, taking into account (\ref{1}) and Corollary \ref{Corollary 2.3}, we have that
\begin{equation*}
	B^{0,b + 1/q}_{p,q}(\mathbb{R}^d) \hookrightarrow \mathbf{B}^{0,b}_{p,q}(\mathbb{R}^d) \hookrightarrow B^{0, b+ 1/\max\{2,p\}}_{p,q}(\mathbb{R}^d) \cap H^{0,b+1/q}_p(\mathbb{R}^d), \quad b > -1/q.
\end{equation*}
Applying similar arguments to those used in the proofs of Propositions \ref{Proposition 6.1} and \ref{Proposition 6.2}, one can show that the spaces $H^{0,b+1/q}_p(\mathbb{R}^d)$ and $B^{0,b+1/\max\{2,p\}}_{p,q}(\mathbb{R}^d)$ are not comparable and, moreover, the embedding
 $ B^{0,b + 1/q}_{p,q}(\mathbb{R}^d)\hookrightarrow \mathbf{B}^{0,b}_{p,q}(\mathbb{R}^d) $ is strict.


\bigskip

\begin{center}
\begin{tikzpicture}[fill opacity=0.05, ,xscale=0.8,yscale=0.8]

\fill (-1,0) ellipse (3.9 and 2.1);
\draw[fill opacity=0.2] (-1,0) ellipse (3.9 and 2.1);

\fill (1.55,0) ellipse (4.2 and 2.1);
\draw[fill opacity=0.2] (1.55,0) ellipse (4.2 and 2.1);

\fill[white] (-1.4,0) ellipse (2.6 and 1.1);
\draw[fill opacity=0.2]
(0,0) ellipse (2.6 and 1.1);

\fill[white] (1.4,0) circle (1.0 and 0.8);
\draw[fill opacity=0.2] (0,0) circle (1.0 and 0.8);

\node[fill opacity=2,xscale=0.8,yscale=0.8] at (0,0) {$B^{0,b + \frac{1}{q}}_{p,q}(\mathbb{R}^d)$};
\node[fill opacity=2,xscale=0.8,yscale=0.8] at (-1.8,0) {$\mathbf{B}^{0,b}_{p,q}(\mathbb{R}^d)$};
\node[fill opacity=2,xscale=0.8,yscale=0.8] at (4.3,0) {$B^{0,b + \frac{1}{\max\{2,p\}}}_{p,q}(\mathbb{R}^d)$};
\node[fill opacity=2,xscale=0.8,yscale=0.8] at (-3.7,0) {$H^{0,b + \frac{1}{q}}_p(\mathbb{R}^d)$};

\end{tikzpicture}
\end{center}

\phantom{qqq}

\begin{center}
{\small \textbf{Fig. 4:} Relationships between  the Besov and Sobolev spaces involving only logarithmic smoothness 
  in the case  $q \leq \min \{p,2\}$.}
\end{center}


Analogues of Propositions \ref{Proposition 6.1} and \ref{Proposition 6.2} read as follows.
\begin{prop}\label{Proposition 6.3}
	Let $q \leq \min\{p,2\} (q < 2 \text{ if } p=2)$ and $-\infty < b < \infty$. Then

\begin{enumerate}[\upshape (i)]
  \item 	$	H^{0,b+1/q}_p(\mathbb{R}^d)$ \text{ is not continuously embedded into } $B^{0,b+1/\max\{2,p\}}_{p,q}(\mathbb{R}^d)$,

  \item if $\frac{2 d}{d+1} < p < \infty$ then 	
  	$B^{0,b+1/\max\{2,p\}}_{p,q}(\mathbb{R}^d)$ \text{ is not continuously embedded into } $H^{0,b+1/q}_p(\mathbb{R}^d)$,

  \item 	
if $b > -1/q$,  
 then $\mathbf{B}^{0,b}_{p,q}(\mathbb{R}^d)$ is not continuously embedded into $B^{0,b+1/q}_{p,q}(\mathbb{R}^d)$.

\end{enumerate}


\end{prop}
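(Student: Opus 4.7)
The plan is to establish each of the three non-embeddings by constructing explicit counterexamples in the same spirit as Propositions \ref{Proposition 6.1} and \ref{Proposition 6.2}, using either lacunary Fourier series from Section \ref{section5} or radial $\widehat{GM}^d$ profiles from Section \ref{section4}, with the choice dictated by whether $p\le 2$ or $p>2$. In every case the recipe is: choose a critical power factor times a logarithmic correction, apply the norm characterizations from Theorems \ref{Theorem 3.2}, \ref{Theorem 3.6}, \ref{Theorem 3.9}, \ref{Theorem 4.2} and Proposition \ref{Proposition 4.1} to reduce both norms to elementary sums or integrals at the same critical exponent, and tune one free parameter so that one side converges while the other diverges.

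For part (i), if $p\le 2$ (which forces $q<2$), I would take the lacunary $f=\psi W$ with $b_j=(1+j)^{-(b+1/q+1/2)}(1+\log j)^{-\delta}$ and $\delta\in(1/2,\,1/q]$: by (\ref{4.4new}) and (\ref{4.3}) the Sobolev sum reduces to $\sum(1+j)^{-1}(1+\log j)^{-2\delta}<\infty$ while the Besov sum reduces to $\sum(1+j)^{-1}(1+\log j)^{-\delta q}=\infty$. If $p>2$, lacunary coefficients no longer suffice because the critical sums become polynomially unbalanced, so I would use the radial GM profile
\begin{equation*}
F_0(t)=t^{-d+d/p+\varepsilon}\chi_{(0,1)}(t)+t^{-d+d/p}(1+\log t)^{-(b+1/p+1/q)}(1+\log(1+\log t))^{-\mu}\chi_{[1,\infty)}(t)
\end{equation*}
for small $\varepsilon>0$ and $\mu\in(1/p,\,1/q]$ (nonempty since $p>2\ge q$). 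Theorems \ref{Theorem 3.9} and \ref{Theorem 3.6} reduce both norms to double-log integrals of the form $\int_1^\infty t^{-1}(1+\log t)^{-1}(1+\log(1+\log t))^{-\mu r}dt$, which is finite for $r=p$ (Sobolev) and infinite for $r=q$ (Besov).

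Part (ii) is symmetric. For $p>2$ I would use the lacunary $b_j=(1+j)^{-(b+1/p+1/q)}(1+\log j)^{-\delta}$ with any $\delta>1/q$: the Besov sum converges while the Sobolev sum $\sum(1+j)^{-2/p}(1+\log j)^{-2\delta}$ diverges polynomially since $2/p<1$. For $p<2$ I would choose a GM profile of the same compound-log shape as in part (i) but with middle exponent $-(b+1/q+1/2)$ and any $\mu>1/q$; the Sobolev integral reduces to $\int_1^\infty t^{-1}(1+\log t)^{-p/2}(1+\log(1+\log t))^{-\mu p}dt$, which diverges because $p/2<1$. The remaining boundary case $p=2$, $q<2$ is genuinely different: here both the lacunary and the GM constructions produce perfectly balanced norms and no single test function of the standard shape works. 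I would handle it by appealing to $H^{0,\xi}_2=B^{0,\xi}_{2,2}$ from (\ref{LPgeneral}), reducing to $B^{0,b+1/2}_{2,q}\not\hookrightarrow B^{0,b+1/q}_{2,2}$, which follows from Proposition \ref{RecallEmb*optim} since $q_0=q<2=q_1$ but $b_0=b+1/2<b+1/q=b_1$ violates the monotonicity condition there; the truncated-lacunary sequence $f_N$ from the proof of Proposition \ref{RecallEmb*optim} supplies the quantitative obstruction.

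For part (iii), I would split into $q<p$ and $q=p$ (the latter necessarily has $q=p<2$, since $q=p=2$ is excluded by hypothesis). If $q<p$, use the GM profile $F_0(t)=t^{-d+d/p+\varepsilon}\chi_{(0,1)}+t^{-d+d/p}(1+\log t)^{-(b+1/q+\beta)}\chi_{[1,\infty)}$ with $\beta\in(1/p,\,1/q]$: Theorem \ref{Theorem 3.2} with $s=0$, after evaluating the inner tail integral $\int_t^\infty u^{-1}(1+\log u)^{-(b+1/q+\beta)p}du$, yields $\|f\|_{\mathbf{B}^{0,b}_{p,q}}^q\asymp\int_1^\infty(1+\log t)^{q/p-1-\beta q}\tfrac{dt}{t}<\infty$, while Theorem \ref{Theorem 3.6} yields $\|f\|_{B^{0,b+1/q}_{p,q}}^q\asymp\int_1^\infty t^{-1}(1+\log t)^{-\beta q}dt=\infty$. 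If $q=p<2$ the interval for $\beta$ collapses, so I would switch to lacunary $b_j=(1+j)^{-\alpha}$ with $\alpha\in(b+1/2+1/q,\,b+2/q]$ (nonempty precisely because $q<2$); Theorem \ref{Theorem 4.2} and Proposition \ref{Proposition 4.1} give $\|f\|_{\mathbf{B}^{0,b}_{p,q}}^q\asymp\sum(1+j)^{(b+1/2-\alpha)q}<\infty$ but $\|f\|_{B^{0,b+1/q}_{p,q}}^q\asymp\sum(1+j)^{(b+1/q-\alpha)q}=\infty$. The main technical subtlety throughout is locating each critical threshold so that the two sides coincide at the polynomial level, forcing carefully placed one- or two-level logarithmic corrections whose exponents must land in a nonempty window; verifying that each $F_0$ lies in $\widehat{GM}^d$ and satisfies (\ref{3.4new}) is routine given the monotone decay of the chosen profiles and the assumption $p>2d/(d+1)$, while the only place where the direct-construction strategy breaks entirely is the $p=2$, $q<2$ branch of (ii), where the reduction to Proposition \ref{RecallEmb*optim} is indispensable.
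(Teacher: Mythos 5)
Your overall strategy — alternate between $\widehat{GM}^d$ profiles and lacunary series, balance the polynomial exponents exactly and split convergence at a logarithmic (or iterated-logarithmic) level — is the correct one and matches the paper's own indication that the proof mirrors Propositions~\ref{Proposition 6.1} and~\ref{Proposition 6.2}. Parts (i) and (ii) check out: the $\delta$- and $\mu$-windows you write are nonempty in exactly the cases claimed, the exponent bookkeeping via (\ref{4.3}), (\ref{4.4new}), Theorems~\ref{Theorem 3.2}, \ref{Theorem 3.6}, \ref{Theorem 3.9} is correct, and the reduction of the $p=2$, $q<2$ branch of (ii) through $H^{0,\xi}_2=B^{0,\xi}_{2,2}$ to the only-if part of Proposition~\ref{RecallEmb*optim} is a clean way to handle the one case where no single power-log test function works.

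There is, however, a real gap in your proof of part~(iii). You route the case $q<p$ through a $\widehat{GM}^d$ profile and Theorems~\ref{Theorem 3.2} and~\ref{Theorem 3.6}, and you explicitly invoke ``the assumption $p>2d/(d+1)$'' to justify that the profile lies in $\widehat{GM}^d$ and that the characterizations apply. But part~(iii), unlike part~(ii), carries no such hypothesis: it is asserted for all $1<p<\infty$ (cf.\ the standing assumption at the head of Section~\ref{section7}). For $d\ge 2$ and $1<p\le 2d/(d+1)$ with $q<p$, the GM machinery you rely on is unavailable, and your argument breaks. The fix is easy and highlights why the paper \emph{can} drop the restriction in (iii) but not in Proposition~\ref{Proposition 6.2}: split on $q<2$ versus $q=2$ rather than on $q<p$ versus $q=p$. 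If $q<2$, the lacunary construction you already wrote for $q=p<2$ — namely $b_j=(1+j)^{-\alpha}$ with $\alpha\in(b+1/2+1/q,\,b+2/q]$, evaluated via (\ref{4.3}) and (\ref{4.6}) — works verbatim regardless of whether $q<p$ or $q=p$, and it places no restriction on $p$ beyond $1<p<\infty$. If $q=2$, then $q\le\min\{p,2\}$ together with the exclusion of $p=q=2$ forces $p>2>2d/(d+1)$, so the GM profile is legitimate. With that reorganization, (iii) is covered in full generality; as written, the subrange $q<p\le 2d/(d+1)$ (possible only for $d\ge 2$) is not.
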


\newpage
\section{Optimality of embeddings between Besov spaces}\label{section8}

The goal of the subsequent Sections \ref{Optimality of embeddings with smoothness near zero} and \ref{Optimality of Sobolev embeddings} is to show some sharpness assertions  with respect to smoothness parameters involved in the corresponding embeddings. Section
\ref{Comparison between embeddings with smoothness near zero and Sobolev embeddings} deals  with optimality of embedding theorems for various
Besov spaces, namely $\mathbf{B}^{0,b}_{p,q}(\mathbb{R}^d)$, ${B}^{0,b+\frac12}_{p,q}(\mathbb{R}^d)$, and ${B}^{s,b+\xi(p,q)}_{p,q}(\mathbb{R}^d)$, for  various $p, q$ and $\xi(p,q)$ being either $1/\max(p,q)$ or $1/\min(p,q)$.
The proofs of these results will be given in Sections \ref{section8.5}-\ref{section8.10} and based on
 applications of results obtained in Sections \ref{section4} and \ref{section5} together with new interpolation formulas and duality properties, which will be given in Section \ref{section8.4}.

\subsection{Optimality of embeddings with smoothness near zero}\label{Optimality of embeddings with smoothness near zero}
Let us recall that
\begin{equation}\label{7.1}
    B^{0,b + 1/\min\{2,p,q\}}_{p,q}(\mathbb{R}^d) \hookrightarrow
    \mathbf{B}^{0,b}_{p,q}(\mathbb{R}^d)
\end{equation}
and
\begin{equation}\label{7.2}
	 \mathbf{B}^{0,b}_{p,q}(\mathbb{R}^d) \hookrightarrow
    B^{0,b+1/\max\{2,p,q\}}_{p,q}(\mathbb{R}^d),
\end{equation}
see (\ref{1}). Here, $1 < p < \infty, 0 < q \leq \infty$, and $b > -1/q$.


The next result provides the sharpness of embeddings (\ref{7.1}).

\begin{prop}\label{Proposition 7.1}
	For any $\varepsilon > 0$, there is a function $f \in B^{0,b+1/\min\{2,p,q\} - \varepsilon}_{p,q}(\mathbb{R}^d)$ such that $f \not \in \mathbf{B}^{0,b}_{p,q}(\mathbb{R}^d)$ in any of the following cases:
	\begin{enumerate}[\upshape (a)]
		\item $\frac{2d}{d + 1} < p < \infty, 0 < q \leq \infty, b > -1/q$, and $p= \min \{2,p,q\}$.
		\item $1 < p < \infty, 0 < q \leq \infty, b > -1/q$, and $2 = \min \{2,p,q\}$.
		\item $1 < p < \infty, 0 <  q \leq \infty, b > -1/q$, and $q = \min \{2,p,q\}$.
	\end{enumerate}
\end{prop}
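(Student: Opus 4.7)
The plan is to treat the three cases separately: cases (a) and (b) admit explicit counterexamples from the classes $\widehat{GM}^d$ of Section \ref{section4} and $\mathfrak{L}$ of Section \ref{section5} respectively, whereas case (c) seems to resist a direct construction and will instead be handled by an interpolation-based contradiction. In each case the goal is to engineer a function whose norm in $B^{0,b+1/\min\{2,p,q\}-\varepsilon}_{p,q}(\mathbb{R}^d)$ is finite, while its norm in $\mathbf{B}^{0,b}_{p,q}(\mathbb{R}^d)$ diverges logarithmically at the borderline exponent $b+1/\min\{2,p,q\}$.

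For case (a), where $p=\min\{2,p,q\}$, I would take the radial function $f(x)=f_0(|x|)$ generated via (\ref{3.4new+}) from $F_0(t)=t^{-d+d/p}(1+|\log t|)^{-\beta}$ for $t\geq 1$ (and $F_0\equiv 1$ on $(0,1]$), where $\beta$ lies in the (non-empty) window $(b+1/p+1/q-\varepsilon,\,b+1/p+1/q]$; the hypothesis $p>2d/(d+1)$ ensures that $F_0$ satisfies (\ref{3.4new}) and belongs to $GM$. Theorem \ref{Theorem 3.6} reduces $\|f\|_{B^{0,b+1/p-\varepsilon}_{p,q}}^q$ to $\int_1^\infty t^{-1}(1+\log t)^{(b+1/p-\varepsilon-\beta)q}\,dt<\infty$, while Theorem \ref{Theorem 3.2} gives $\bigl(\int_t^\infty u^{-1}(1+\log u)^{-\beta p}\,du\bigr)^{1/p}\asymp (1+\log t)^{1/p-\beta}$ (using $\beta p>1$, which follows from $b+1/q>0$), so $\|f\|_{\mathbf{B}^{0,b}_{p,q}}^q\asymp \int_1^\infty (1+\log t)^{(b+1/p-\beta)q}\,dt/t$, which diverges at the borderline $\beta=b+1/p+1/q$.

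For case (b), where $2=\min\{2,p,q\}$, I would take $f=\psi W$ with $W(x)\sim\sum_{j=3}^\infty(1+j)^{-\gamma}e^{i(2^j-2)x_1}$ and $\psi\in\mathcal{S}(\mathbb{R}^d)\setminus\{0\}$ satisfying (\ref{4.2}), for $\gamma$ in the (non-empty) window $(b+1/2+1/q-\varepsilon,\,b+1/2+1/q]$. Proposition \ref{Proposition 4.1} then gives $\|f\|_{B^{0,b+1/2-\varepsilon}_{p,q}}^q\asymp\sum_j(1+j)^{(b+1/2-\varepsilon-\gamma)q}<\infty$, whereas Theorem \ref{Theorem 4.2} yields $\|f\|_{\mathbf{B}^{0,b}_{p,q}}^q\asymp\sum_j(1+j)^{bq}\bigl(\sum_{k\geq j}(1+k)^{-2\gamma}\bigr)^{q/2}\asymp\sum_j(1+j)^{(b+1/2-\gamma)q}$, and the latter diverges at $\gamma=b+1/2+1/q$.

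Case (c) is the main obstacle, precisely because the straightforward $\widehat{GM}^d$ and $\mathfrak{L}$ candidates force the compatibility conditions $\varepsilon>1/q-1/p$ or $\varepsilon>1/q-1/2$, both of which are strictly positive when $q<\min\{p,2\}$. To bypass this I would argue by contradiction via interpolation. Assume $B^{0,b+1/q-\varepsilon}_{p,q}\hookrightarrow\mathbf{B}^{0,b}_{p,q}$; pick $s_1>0$ so that, by (\ref{BesovComparison}), also $B^{s_1}_{p,q}=\mathbf{B}^{s_1}_{p,q}$. Fixing $\theta\in(0,1)$, $r\in(0,\infty]$, $\alpha\in\mathbb{R}$ and applying the interpolation method $(\cdot,\cdot)_{\theta,r;\alpha}$, Lemma \ref{PrelimLemmaCF} identifies the left-hand side with $B^{\theta s_1,(1-\theta)(b+1/q-\varepsilon)+\alpha}_{p,r}$, while formula (\ref{PrelimInterpolationnew3}) identifies the right-hand side with $\mathbf{B}^{\theta s_1,(1-\theta)(b+1/q)+\alpha}_{p,r}=B^{\theta s_1,(1-\theta)(b+1/q)+\alpha}_{p,r}$ (using $\theta s_1>0$ and (\ref{BesovComparison}) once more). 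The induced embedding $B^{\theta s_1,\alpha_0}_{p,r}\hookrightarrow B^{\theta s_1,\alpha_1}_{p,r}$ with $\alpha_1-\alpha_0=(1-\theta)\varepsilon>0$ contradicts Proposition \ref{RecallEmb*optim}(ii), finishing the proof.
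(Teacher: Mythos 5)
Your proposal is correct and follows essentially the same three-pronged approach as the paper: an explicit radial $\widehat{GM}^d$ counterexample for case (a), a lacunary $\mathfrak{L}$ counterexample for case (b), and an interpolation-based contradiction against Proposition \ref{RecallEmb*optim} for case (c). The differences are cosmetic (you flatten $F_0$ near the origin and, in case (c), interpolate against $B^{s_1}_{p,q}(\mathbb{R}^d)$ via Lemma \ref{PrelimLemmaCF} and (\ref{PrelimInterpolationnew3}) rather than against $W^1_p(\mathbb{R}^d)$ via (\ref{PrelimInterpolationBWnew}) and (\ref{PrelimInterpolationnew2.3})), though you should state explicitly that $\beta$ and $\gamma$ are chosen close enough to the upper end of your windows to guarantee $\beta p>1$ in (a) and $\gamma>1/2$ in (b), which the paper enforces by writing $\max\{1/p,\cdot\}<\beta$ and $\max\{1/2,\cdot\}<\delta$.
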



The corresponding result for (\ref{7.2}) reads as follows.

\begin{prop}\label{Proposition 7.2}
	Let $1 < p < \infty, 0 < q \leq \infty$, and $b >- 1/q$. For any $\varepsilon > 0$, there is a function $f \in \mathbf{B}^{0,b}_{p,q}(\mathbb{R}^d)$ such that $f \not \in B^{0,b+1/\max\{2,p,q\} + \varepsilon}_{p,q}(\mathbb{R}^d)$.
\end{prop}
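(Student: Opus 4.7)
The plan is to split the proof into three cases according to whether $\max\{2,p,q\}$ equals $p$, $2$, or $q$. In cases (a) and (b) I would exhibit an explicit counterexample via the radial general-monotone class $\widehat{GM}^d$ or the lacunary class $\mathfrak{L}$, exploiting the sharp characterizations of the $\mathbf{B}^{0,b}_{p,q}$ and $B^{0,\xi}_{p,q}$ norms proved in Sections~\ref{section4} and~\ref{section5}. In case (c), where the shift $1/q$ is precisely the reciprocal of the fine index and is the smallest of the three candidate shifts, direct $\widehat{GM}^d$ or $\mathfrak{L}$ constructions are too coarse; instead I would argue by contradiction, using real interpolation with $W^1_p$ to transfer the problem to the positive-smoothness range, where Proposition~\ref{RecallEmb*optim} applies.

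For case (a), $p\ge\max\{2,q\}$ (so $p>2d/(d+1)$ and the forbidden shift is $1/p+\varepsilon$), I would take the radial function $f$ with Fourier--Hankel transform $F_0(t)=t^{-d+d/p}(1+|\log t|)^{-\beta}$ and $\beta\in(b+1/p+1/q,\,b+1/p+1/q+\varepsilon]$; a routine computation using formula~(\ref{3.3.3}) of Theorem~\ref{Theorem 3.2} shows $f\in\mathbf{B}^{0,b}_{p,q}$ exactly when $\beta>b+1/p+1/q$, while formula~(\ref{BesovGM}) of Theorem~\ref{Theorem 3.6} shows $f\in B^{0,b+1/p+\varepsilon}_{p,q}$ exactly when $\beta>b+1/p+1/q+\varepsilon$, and any $\beta$ in the (nonempty) interval above yields the desired counterexample. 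For case (b), $2\ge\max\{p,q\}$, I would take $f=\psi W$ with $W(x)\sim\sum_{j\ge 3}(1+j)^{-\beta}e^{i(2^j-2)x_1}$ and $\psi\in\mathcal{S}(\mathbb{R}^d)\setminus\{0\}$ satisfying~(\ref{4.2}); Theorem~\ref{Theorem 4.2} and Proposition~\ref{Proposition 4.1} reduce the required properties to $\beta>b+1/2+1/q$ (for membership in $\mathbf{B}^{0,b}_{p,q}$) and $\beta\le b+1/2+1/q+\varepsilon$ (to fail $B^{0,b+1/2+\varepsilon}_{p,q}$), and again this interval is nonempty for every $\varepsilon>0$.

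For case (c), $q\ge\max\{p,2\}$, suppose for contradiction that $\mathbf{B}^{0,b}_{p,q}(\mathbb{R}^d)\hookrightarrow B^{0,b+1/q+\varepsilon}_{p,q}(\mathbb{R}^d)$. Applying the real interpolation functor $(\,\cdot\,,W^1_p)_{\theta,q}$ for an arbitrary $\theta\in(0,1)$, formula~(\ref{PrelimInterpolationnew2.3}) (with $k=1$, $r=q$, $\alpha=0$) together with $\mathbf{B}^{s,\cdot}_{p,q}=B^{s,\cdot}_{p,q}$ for $s>0$ (see~(\ref{BesovComparison})) gives on the left
\[
(\mathbf{B}^{0,b}_{p,q},W^1_p)_{\theta,q}=\mathbf{B}^{\theta,(1-\theta)(b+1/q)}_{p,q}=B^{\theta,(1-\theta)(b+1/q)}_{p,q},
\]
while formula~(\ref{PrelimInterpolationBWnew}) (with $W^1_p=H^1_p$, $s_0=0$, $s_1=1$) gives on the right
\[
(B^{0,b+1/q+\varepsilon}_{p,q},W^1_p)_{\theta,q}=B^{\theta,(1-\theta)(b+1/q+\varepsilon)}_{p,q}.
\]
The resulting inclusion $B^{\theta,(1-\theta)(b+1/q)}_{p,q}\hookrightarrow B^{\theta,(1-\theta)(b+1/q+\varepsilon)}_{p,q}$, combined with Proposition~\ref{RecallEmb*optim}(ii), forces $(1-\theta)(b+1/q)\ge(1-\theta)(b+1/q+\varepsilon)$, contradicting $\varepsilon>0$. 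The main obstacle is precisely this case (c): both the $\widehat{GM}^d$ and the lacunary scales have an intrinsic gap of order $1/p-1/q$ (respectively $1/2-1/q$) between the $\mathbf{B}$- and $B$-norms, so for $q>\max\{p,2\}$ neither construction can detect a loss as small as $1/q+\varepsilon$ for arbitrarily small $\varepsilon$; the interpolation step is what lifts the problem from $s=0$, where the subtle $\mathbf{B}/B$-discrepancy lives, to $s=\theta>0$, where the two Besov scales coincide and the classical sharpness of Proposition~\ref{RecallEmb*optim} is available.
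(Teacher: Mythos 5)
Your proposal is correct, and cases (a) and (b) reproduce essentially the same constructions the paper uses (the radial $\widehat{GM}^d$ example with $F_0(t)=t^{-d+d/p}(1+|\log t|)^{-\beta}$ for $p=\max\{2,p,q\}$, and a lacunary series for $2=\max\{2,p,q\}$). Where you genuinely diverge is case (c). The paper dualizes the assumed embedding $\mathbf{B}^{0,b}_{p,q}\hookrightarrow B^{0,b+1/q+\varepsilon}_{p,q}$ via Lemmas \ref{LemmaDual} and \ref{LemmaDual2}, translates it into a bound of $I_{-1}$ from $B^{0,-b-1/q-\varepsilon}_{p',q'}$ to $\text{Lip}^{(1,-b-1)}_{p',q'}$, rewrites this as the embedding $B^{1,-b-1/q-\varepsilon}_{p',q'}\hookrightarrow\text{Lip}^{(1,-b-1)}_{p',q'}$, and only then interpolates to reach a contradiction with Proposition~\ref{RecallEmb*optim}; that route requires $q>1$ and the Lipschitz-dual machinery. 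Your argument instead mirrors, verbatim, the paper's own treatment of case (c) in Proposition~\ref{Proposition 7.1}: apply $(\,\cdot\,,W^1_p)_{\theta,q}$ directly to the assumed embedding, compute $(\mathbf{B}^{0,b}_{p,q},W^1_p)_{\theta,q}=B^{\theta,(1-\theta)(b+1/q)}_{p,q}$ via~(\ref{PrelimInterpolationnew2.3}) (with $r=q$, requiring only $b>-1/q$, which is given) and~(\ref{BesovComparison}), compute the target via~(\ref{PrelimInterpolationBWnew}), and contradict Proposition~\ref{RecallEmb*optim}(ii) since $(1-\theta)(b+1/q)<(1-\theta)(b+1/q+\varepsilon)$. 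This is shorter, needs no duality and no restriction $q>1$ (although $q\ge 2$ holds in case (c) anyway), and is arguably the more natural argument given that the paper already invokes the identical interpolation identity for the other direction. Both approaches are sound; yours buys simplicity and symmetry with Proposition~\ref{Proposition 7.1}, while the paper's duality route has the side benefit of illustrating the Lipschitz dual characterization from Lemma~\ref{LemmaDual}.
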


\begin{rem}\label{Section 11.1: Remark}
(i) Note that Propositions \ref{Proposition 7.1} and \ref{Proposition 7.2} were already obtained in \cite[Remark 5.7]{CobosDominguezTriebel} but only for the case when $p = \min \{2,p,q\}$  and $p = \max \{2,p,q\}$, respectively. The method used there is based on wavelet decompositions for Besov spaces.

(ii)  It follows from Propositions \ref{Proposition 7.1} and \ref{Proposition 7.2} that the embeddings (\ref{7.1}) and (\ref{7.2}) are also optimal in the setting of Besov spaces of generalized smoothness. Before we state the precise sharpness assertions, let us recall the definition of Besov spaces of generalized smoothness (see, e.g., \cite{Moura}, \cite{HaroskeMoura}, \cite{FarkasLeopold}). Let $1 < p < \infty, 0 < q \leq \infty, -\infty< s, b < \infty,$ and $\Psi \in SV$. Then, the space $B^{s,b,\Psi}_{p,q}(\mathbb{R}^d)$\index{\bigskip\textbf{Spaces}!$B^{s,b,\Psi}_{p,q}(\mathbb{R}^d)$}\label{BESOVGENERAL} consists of all $f \in \mathcal{S}'(\mathbb{R}^d)$ for which
	\begin{equation}\label{BesovGeneralSmoothness}
		 \|f\|_{B^{s,b,\Psi}_{p,q}(\mathbb{R}^d)} = \left(\sum_{j=0}^\infty \Big(2^{j s} (1 + j)^b \Psi (1 + j) \|(\varphi_j
    \widehat{f})^\vee\|_{L_p(\mathbb{R}^d)}\Big)^q\right)^{1/q} < \infty.
	\end{equation}
Note that if $\Psi(t) = \ell^\xi(t), \xi \in \mathbb{R},$ then we get the spaces $B^{s,b,\xi}_{p,q}(\mathbb{R}^d)$ (see (\ref{BesovIteratedSmoothness})). In particular, if $\xi=0$ then we recover the spaces $B^{s,b}_{p,q}(\mathbb{R}^d)$ .

Now we are ready to state

\begin{prop}\label{Proposition 7.1 SV}
	Let $\Psi \in SV$. For any $\varepsilon > 0$, there is a function $f \in B^{0,b+1/\min\{2,p,q\} - \varepsilon, \Psi}_{p,q}(\mathbb{R}^d)$ such that $f \not \in \mathbf{B}^{0,b}_{p,q}(\mathbb{R}^d)$ in any of the following cases:
	\begin{enumerate}[\upshape (a)]
		\item $\frac{2d}{d + 1} < p < \infty, 0 < q \leq \infty, b > -1/q$, and $p= \min \{2,p,q\}$.
		\item $1 < p < \infty, 0 < q \leq \infty, b > -1/q$, and $2 = \min \{2,p,q\}$.
		\item $1 < p < \infty, 0 < q \leq \infty, b > -1/q$, and $q = \min \{2,p,q\}$.
	\end{enumerate}
\end{prop}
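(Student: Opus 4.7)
The plan is to deduce the statement directly from its non-generalized counterpart, Proposition \ref{Proposition 7.1}, by inserting an auxiliary power of the logarithm that absorbs the slowly varying factor $\Psi$.

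First, I fix $\varepsilon > 0$ and apply Proposition \ref{Proposition 7.1} with $\varepsilon/2$ in place of $\varepsilon$, in whichever of the three cases (a), (b), (c) is under consideration. This produces a function $f \in B^{0, b+1/\min\{2,p,q\} - \varepsilon/2}_{p,q}(\mathbb{R}^d)$ with $f \notin \mathbf{B}^{0,b}_{p,q}(\mathbb{R}^d)$.

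Next, I would like to show that this same $f$ belongs to the $\Psi$-generalized target $B^{0,b+1/\min\{2,p,q\} - \varepsilon, \Psi}_{p,q}(\mathbb{R}^d)$. Comparing the defining quasi-norms (\ref{BesovGeneralSmoothness}) for the two spaces, the claim reduces to the continuous embedding
\begin{equation*}
B^{0, b + 1/\min\{2,p,q\} - \varepsilon/2}_{p,q}(\mathbb{R}^d) \hookrightarrow B^{0, b + 1/\min\{2,p,q\} - \varepsilon, \Psi}_{p,q}(\mathbb{R}^d),
\end{equation*}
which in turn is equivalent to the inequality $(1+j)^{-\varepsilon/2}\,\Psi(1+j) \lesssim 1$ for all $j \in \mathbb{N}_0$. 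This last inequality is a standard consequence of $\Psi \in SV$: by definition, $t \mapsto t^{-\varepsilon/2}\Psi(t)$ is equivalent to a non-increasing function on $(0,\infty)$, hence bounded on $[1,\infty)$.

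Combining these two steps yields a function $f$ which lies in $B^{0, b+1/\min\{2,p,q\} - \varepsilon, \Psi}_{p,q}(\mathbb{R}^d)$ but not in $\mathbf{B}^{0,b}_{p,q}(\mathbb{R}^d)$, as required. I do not foresee any serious obstacle here: the whole argument is a soft reduction to Proposition \ref{Proposition 7.1}, with the only non-trivial ingredient being the elementary majorization property of slowly varying functions by arbitrarily small powers, which is already used implicitly throughout the paper (e.g., in Propositions \ref{Proposition 5.1 SV}, \ref{Proposition 5.2 SV}, \ref{Proposition 5.4 SV}, and \ref{Proposition 5.5 SV}).
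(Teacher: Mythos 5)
Your argument is correct and matches the paper's proof, which derives the $\Psi$-version from Proposition \ref{Proposition 7.1} via the trivial embedding $B^{s,b_0}_{p,q}(\mathbb{R}^d) \hookrightarrow B^{s,b_1,\Psi}_{p,q}(\mathbb{R}^d)$ for $b_0 > b_1$, i.e., exactly the bound $(1+j)^{-\varepsilon/2}\Psi(1+j)\lesssim 1$ that you invoke from the definition of $SV$. No gaps.
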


\begin{prop}\label{Proposition 7.2 SV}
	Let $1 < p < \infty, 0 < q \leq \infty$, and $b >- 1/q$. Let $\Psi \in SV$. For any $\varepsilon > 0$, there is a function $f \in \mathbf{B}^{0,b}_{p,q}(\mathbb{R}^d)$ such that $f \not \in B^{0,b+1/\max\{2,p,q\} + \varepsilon, \Psi}_{p,q}(\mathbb{R}^d)$.
\end{prop}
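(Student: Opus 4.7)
The plan is to reduce Proposition \ref{Proposition 7.2 SV} to Proposition \ref{Proposition 7.2} by exploiting the defining property of slowly varying functions, exactly as Propositions \ref{Proposition 5.1 SV}, \ref{Proposition 5.2 SV}, \ref{Proposition 5.4 SV}, \ref{Proposition 5.5 SV} were deduced from their non-generalized counterparts. The key observation is that for any $\Psi \in SV$ and any $\delta > 0$, the function $t^{-\delta}\Psi(t)$ is equivalent to a decreasing function on $(0,\infty)$, and in particular $\Psi(t) \gtrsim t^{-\delta}$ uniformly for $t \geq 1$.

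First, given $\varepsilon > 0$, I would apply this property with $\delta = \varepsilon/2$ to obtain $\Psi(1+j) \gtrsim (1+j)^{-\varepsilon/2}$ for all $j \in \mathbb{N}_0$. Inserting this into the definition (\ref{BesovGeneralSmoothness}) yields the trivial continuous embedding
\begin{equation*}
B^{0,\, b + 1/\max\{2,p,q\} + \varepsilon,\, \Psi}_{p,q}(\mathbb{R}^d) \hookrightarrow B^{0,\, b + 1/\max\{2,p,q\} + \varepsilon/2}_{p,q}(\mathbb{R}^d).
\end{equation*}

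Next, I would invoke Proposition \ref{Proposition 7.2} with $\varepsilon/2$ in place of $\varepsilon$: there exists $f \in \mathbf{B}^{0,b}_{p,q}(\mathbb{R}^d)$ with $f \notin B^{0,\, b + 1/\max\{2,p,q\} + \varepsilon/2}_{p,q}(\mathbb{R}^d)$. Combining this with the embedding displayed above, the contrapositive gives $f \notin B^{0,\, b + 1/\max\{2,p,q\} + \varepsilon,\, \Psi}_{p,q}(\mathbb{R}^d)$, which is the desired conclusion.

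There is no genuine obstacle here beyond verifying the elementary $SV$ estimate $\Psi(t) \gtrsim t^{-\delta}$; all the hard work (the explicit construction of counterexamples via $GM$-radial functions or lacunary Fourier series) is already carried out in the proof of Proposition \ref{Proposition 7.2}. Thus the present statement is a short corollary obtained by this one-line weakening of the target quasi-norm.
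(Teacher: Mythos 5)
Your proposal is correct and takes essentially the same route as the paper, which in Remark \ref{Section 11.1: Remark}(ii) deduces Proposition \ref{Proposition 7.2 SV} from Proposition \ref{Proposition 7.2} via the chain of embeddings $B^{s,b_0}_{p,q}(\mathbb{R}^d) \hookrightarrow B^{s,b_1,\Psi}_{p,q}(\mathbb{R}^d) \hookrightarrow B^{s,b_2}_{p,q}(\mathbb{R}^d)$ for $b_0 > b_1 > b_2$. Your explicit derivation of the second embedding from the $SV$ estimate $\Psi(t) \gtrsim t^{-\delta}$ is exactly the verification the paper leaves implicit.
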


Both Propositions \ref{Proposition 7.1 SV} and \ref{Proposition 7.2 SV} are simple corollaries from Propositions \ref{Proposition 7.1} and \ref{Proposition 7.2}, respectively, taking into account that
\begin{equation*}
	B^{s,b_0}_{p,q}(\mathbb{R}^d) \hookrightarrow B^{s,b_1, \Psi}_{p,q}(\mathbb{R}^d) \hookrightarrow B^{s,b_2}_{p,q}(\mathbb{R}^d)  \quad \text{for} \quad b_0 > b_1 > b_2.
\end{equation*}
\end{rem}

Embeddings (\ref{7.1}) and (\ref{7.2}) yield that $\mathbf{B}^{0,b}_{2,2}(\mathbb{R}^d) = B^{0,b+1/2}_{2,2}(\mathbb{R}^d)$; see (\ref{BesovZero}). Our next result establishes that this is indeed the only possible case where both approaches to introduce Besov spaces with smoothness zero coincide. This answers a conjecture of W. Sickel \cite{Sickel}.

\begin{thm}\label{Proposition 7.3}
		Let $\frac{2d}{d+1} < p < \infty, 0 < q \leq \infty, b > -1/q$, and $\xi \in \mathbb{R}$. Then
	\begin{equation*}
		\mathbf{B}^{0,b}_{p,q}(\mathbb{R}^d) = B^{0,\xi}_{p,q}(\mathbb{R}^d) \quad \text{if and only if} \quad p=q=2 \quad \text{and} \quad \xi= b+1/2.
	\end{equation*}
\end{thm}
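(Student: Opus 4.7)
The implication ``$\Longleftarrow$'' is already recorded in \eqref{BesovZero}, so the task is to establish the ``only if'' direction. Suppose that $\mathbf{B}^{0,b}_{p,q}(\mathbb{R}^d)=B^{0,\xi}_{p,q}(\mathbb{R}^d)$ (with equivalence of quasi-norms). The plan is to combine the interpolation machinery of Section~\ref{Preliminaries} with the sharpness assertions for $\widehat{GM}^d$-functions (Theorem~\ref{Section 6.8: Theorem}) and lacunary Fourier series (Theorem~\ref{Theorem 5.10new}).

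First, I would interpolate both spaces with $W^1_p(\mathbb{R}^d)$ by the $(s,q)$-method, with $0<s<1$. On the $\mathbf{B}$-side, formula \eqref{PrelimInterpolationnew2.3} (with $r=q$, $k=1$, $\alpha=0$) gives
\begin{equation*}
(\mathbf{B}^{0,b}_{p,q}(\mathbb{R}^d), W^1_p(\mathbb{R}^d))_{s,q}=\mathbf{B}^{s,(1-s)(b+1/q)}_{p,q}(\mathbb{R}^d).
\end{equation*}
On the $B$-side, using $W^1_p(\mathbb{R}^d)=H^1_p(\mathbb{R}^d)$ and formula \eqref{PrelimInterpolationBWnew} (with $s_0=0$, $b_0=\xi$, $s_1=1$, $b_1=0$, $\theta=s$, $\alpha=0$) yields
\begin{equation*}
(B^{0,\xi}_{p,q}(\mathbb{R}^d), W^1_p(\mathbb{R}^d))_{s,q}=B^{s,(1-s)\xi}_{p,q}(\mathbb{R}^d).
\end{equation*}
By the interpolation property of the real method, the standing assumption forces the two resulting spaces to coincide, and since $s>0$ the space on the left equals $B^{s,(1-s)(b+1/q)}_{p,q}(\mathbb{R}^d)$ by \eqref{BesovComparison}. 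Applying the sharpness part of Proposition~\ref{RecallEmb*optim} (equal Besov spaces with the same $s,p,q$ must share logarithmic smoothness), the equality $(1-s)(b+1/q)=(1-s)\xi$ follows, whence $\xi=b+1/q$.

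Having reduced the identity to $\mathbf{B}^{0,b}_{p,q}(\mathbb{R}^d)=B^{0,b+1/q}_{p,q}(\mathbb{R}^d)$, I would now specialize the functions on both sides to rich test classes. Restricting to $\widehat{GM}^d$ and invoking the sharpness statement \eqref{EmbBesovGMNew} of Theorem~\ref{Section 6.8: Theorem} yields $q=p$. Restricting instead to $\mathfrak{L}$ and invoking \eqref{LacunaryBB} of Theorem~\ref{Theorem 5.10new} gives $q=2$. Combining both, $p=q=2$ and $\xi=b+1/2$, as desired.

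The only delicate step is the first one: ensuring that both interpolation identities are applicable in the range $b>-1/q$ and with the correct identification of $W^1_p$ with $H^1_p$ inside \eqref{PrelimInterpolationBWnew}; apart from bookkeeping, the rest of the argument is a direct application of previously proven results, so the main technical obstacle is verifying that the hypotheses of \eqref{PrelimInterpolationnew2.3} and \eqref{PrelimInterpolationBWnew} are indeed met in this limiting setting.
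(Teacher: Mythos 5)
Your argument is correct, and it follows the same two-stage strategy as the paper: first interpolate both sides with $W^1_p(\mathbb{R}^d)$ (via \eqref{PrelimInterpolationnew2.3} and \eqref{PrelimInterpolationBWnew}) and invoke Proposition~\ref{RecallEmb*optim} to force $\xi=b+1/q$, then extract $p=q=2$. The only difference is in the second stage: the paper re-derives the counterexamples from scratch (a lacunary series to rule out $q\neq 2$, then a radial $\widehat{GM}^d$ function with a log-perturbed power tail to rule out $p\neq 2$), whereas you invoke the prepackaged if-and-only-if statements \eqref{EmbBesovGMNew} and \eqref{LacunaryBB}, which were established precisely from those same test functions. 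Since the standing identity $\mathbf{B}^{0,b}_{p,q}(\mathbb{R}^d)=B^{0,b+1/q}_{p,q}(\mathbb{R}^d)$ restricts to any subclass, intersecting with $\widehat{GM}^d$ and with $\mathfrak{L}$ immediately gives $q=p$ and $q=2$, so the deduction is sound and arguably cleaner. This is in fact the pattern the paper already uses in the proof of Theorem~\ref{Theorem 6.6}, which it cites as the template here. Your bookkeeping concern at the end is also resolved: the hypotheses $b>-1/q$ and $\frac{2d}{d+1}<p<\infty$ (hence $1<p<\infty$) put you squarely inside the parameter range for both \eqref{PrelimInterpolationnew2.3} and \eqref{PrelimInterpolationBWnew}, and $W^1_p=H^1_p$ holds for all such $p$.
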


\subsection{Optimality of Sobolev embeddings for $\mathbf{B}^{0,b}_{p,q}(\mathbb{R}^d)$}\label{Optimality of Sobolev embeddings}

	This subsection deals with the Sobolev embeddings for $\mathbf{B}^{0,b}_{p,q}(\mathbb{R}^d)$ given in Theorem \ref{TheoremSobolevEmb}. That is, let $1 < p_0 < p < p_1 < \infty, -\infty < s_1 < 0 < s_0 < \infty, 0 < q \leq \infty$, and $b > -1/q$. Assume that
	\begin{equation}\label{differentialdimension2}
		s_0 - \frac{d}{p_0} = - \frac{d}{p} = s_1 - \frac{d}{p_1}.
	\end{equation}
	 Then
	\begin{equation}\label{8.3new}
		B^{s_0, b + 1/\min\{p,q\}}_{p_0,q}(\mathbb{R}^d) \hookrightarrow \mathbf{B}^{0,b}_{p,q}(\mathbb{R}^d)
	\end{equation}
	and
	\begin{equation}\label{8.4new}
	\mathbf{B}^{0,b}_{p,q}(\mathbb{R}^d) \hookrightarrow B^{s_1, b + 1/\max\{p,q\}}_{p_1,q}(\mathbb{R}^d).
	\end{equation}
	
	Now we present the following assertions showing that  both (\ref{8.3new}) and (\ref{8.4new}) are sharp.
	
	\begin{prop}\label{Proposition 8.4}
	For any $\varepsilon > 0$, there is a function $f \in B^{s_0,b+1/\min\{p,q\} - \varepsilon}_{p_0,q}(\mathbb{R}^d)$ such that $f \not \in \mathbf{B}^{0,b}_{p,q}(\mathbb{R}^d)$ in any of the following cases:
	\begin{enumerate}[\upshape (a)]
		\item $\frac{2d}{d + 1} < p < \infty, 0 < q \leq \infty, b > -1/q$, and $p= \min \{p,q\}$.
		\item $1 < p < \infty, 0 <  q \leq \infty, b > -1/q$, and $q = \min \{p,q\}$.
	\end{enumerate}
\end{prop}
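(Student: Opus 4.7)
The plan is to produce explicit counterexamples: for case (a) a radial general monotone function from the class $\widehat{GM}^d$ of Section \ref{section4}, and for case (b) a ``super-lacunary'' function combining Littlewood--Paley spectrum support at sparse indices with matching spatial dilations. In both cases a free parameter will be tuned so that the norm on the $B^{s_0,b+1/\min\{p,q\}-\varepsilon}_{p_0,q}$ side just converges while the $\mathbf{B}^{0,b}_{p,q}$ norm just diverges.

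For case (a), where $p \leq q$, I would define the radial Fourier--Hankel profile by $F_0(t) = t^{-d+d/p_0+\delta}$ on $(0,1]$ with a small $\delta > 0$, and $F_0(t) = t^{-d+d/p}(1+\log t)^{-\beta}$ on $[1,\infty)$, where $\beta$ is chosen in the interval $(\max\{1/p,\, b+1/p+1/q-\varepsilon\},\, b+1/p+1/q]$. This interval is non-empty for every $\varepsilon > 0$ because $b > -1/q$. Then $F_0$ is decreasing and positive, so $F_0 \in GM$, and condition \eqref{3.4new} follows from $p > 2d/(d+1)$; the associated radial $f$ lies in $\widehat{GM}^d$. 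The identity $s_0 - d/p_0 = -d/p$ reduces the characterisation of Theorem \ref{Theorem 3.6} to the logarithmic integral $\int_1^\infty t^{-1}(1+\log t)^{(b+1/p-\varepsilon-\beta)q}dt$, finite by the lower bound on $\beta$. Theorem \ref{Theorem 3.2}, after evaluating the inner tail $\int_t^\infty u^{-1}(1+\log u)^{-\beta p}du \asymp (1+\log t)^{1-\beta p}$ (valid because $\beta p > 1$), turns the $\mathbf{B}^{0,b}_{p,q}$-norm into $\int_1^\infty (1+\log t)^{(b+1/p-\beta)q}dt/t$, divergent by the upper bound on $\beta$.

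For case (b), where $q \leq p$, neither the lacunary class $\mathfrak{L}$ (Theorem \ref{Theorem 5.10new} forces the loss on $\mathfrak{L}$ to be only $1/\min\{2,q\}$, strictly less than $1/q$ when $q < 2$) nor $\widehat{GM}^d$ (the analogous radial computation requires $\varepsilon > 1/q - 1/p$) delivers counterexamples for arbitrarily small $\varepsilon$. Instead, fix $\phi \in \mathcal{S}(\mathbb{R}^d)$ with $\widehat{\phi}$ smooth and supported in a small ball around the origin, set $j_\ell := 2^\ell$ for $\ell \geq 1$, and pick integers $N_\ell$ in the middle of the dyadic annulus $\{|\xi|\asymp 2^{j_\ell}\}$ so that
\begin{equation*}
f_\ell(x) := a_\ell\, 2^{j_\ell d/p_0}\,\phi(2^{j_\ell}x)\,e^{iN_\ell x_1},\qquad a_\ell := 2^{-j_\ell s_0 - \ell \gamma},
\end{equation*}
has Fourier transform contained in the $j_\ell$-th Littlewood--Paley block. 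Put $f := \sum_{\ell \geq 1} f_\ell$ and choose $\gamma \in (b+1/q-\varepsilon,\, b+1/q]$. The normalisations give $\|f_\ell\|_{L_{p_0}} \asymp a_\ell$, and since only the sparse indices $\{j_\ell\}$ contribute, $\|f\|_{B^{s_0,b+1/q-\varepsilon}_{p_0,q}}^q \asymp \sum_\ell 2^{\ell(b+1/q-\varepsilon-\gamma)q}$, which is finite.

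For the lower bound I would invoke the square-function representation of Lemma \ref{lem: CDT}. On a gap $j \in (2^{\ell_0}, 2^{\ell_0+1}]$ the tail equals $(\sum_{\ell \geq \ell_0+1}|f_\ell|^2)^{1/2}$, and a dyadic shell decomposition $|x| \asymp 2^{-\mu}$ reveals that $\phi(2^{j_\ell}x)$ is negligible unless $j_\ell \lesssim \mu$, so the term with the largest admissible $\ell^\ast = \lfloor\log_2\mu\rfloor$ dominates pointwise with size $\asymp 2^{-\ell^\ast\gamma}\cdot 2^{j_{\ell^\ast}d/p}$. Integrating against the shell volumes $2^{-\mu d}$ and summing the geometric series in $\ell^\ast$ gives $\|(\sum_{\ell \geq \ell_0+1}|f_\ell|^2)^{1/2}\|_{L_p} \asymp 2^{-\ell_0\gamma}$; summing over $j$-gaps then yields $\|f\|_{\mathbf{B}^{0,b}_{p,q}}^q \asymp \sum_{\ell_0 \geq 1}2^{\ell_0(bq+1-\gamma q)}$, which diverges exactly because $\gamma \leq b+1/q$. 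The hard part will be this shell calculation: one must verify that the contribution from scales where several $\ell$'s are comparable does not obscure the claimed $2^{-\ell_0\gamma}$ rate, and that the geometric summation in $\ell^\ast$ is genuinely dominated by $\ell^\ast = \ell_0+1$. Once these scale identifications are secured, the non-empty choice of $\beta$ (respectively $\gamma$) closes the argument in both cases for every $\varepsilon > 0$.
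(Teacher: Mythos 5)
Your case (a) matches the paper's proof: the same $\widehat{GM}^d$ profile (note $-s_0+d/p_0-d=-d+d/p$ under the differential-dimension constraint, so the two expressions for $F_0$ on $[1,\infty)$ coincide), the same interval for $\beta$, and the two norms are read off from Theorems \ref{Theorem 3.6} and \ref{Theorem 3.2} exactly as you describe.

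Your case (b) takes a genuinely different route. The paper argues by contradiction: interpolating the putative embedding $B^{s_0,b+1/q-\varepsilon}_{p_0,q}(\mathbb{R}^d)\hookrightarrow\mathbf{B}^{0,b}_{p,q}(\mathbb{R}^d)$ with the Franke--Jawerth companion $W^k_{p_0}(\mathbb{R}^d)\hookrightarrow\mathbf{B}^{s'}_{p,p_0}(\mathbb{R}^d)$ by the $(\theta,q)$-method and using (\ref{PrelimInterpolationBWnew}), (\ref{PrelimInterpolationnew3}), one lands on a one-parameter $B$-embedding with equal differential dimension and fine index $q$ but strictly smaller logarithmic smoothness on the source, contradicting the sharp Sobolev characterization (\ref{8.4}). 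Your super-lacunary construction $f=\sum_\ell f_\ell$ with $j_\ell=2^\ell$ is a plausible direct alternative, trading abstract interpolation for an explicit witness; you correctly observe that neither $\widehat{GM}^d$ nor $\mathfrak{L}$ works here for small $\varepsilon$, so a new device is genuinely required.

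But your shell estimate --- the step you yourself flag as the hard part --- is not correct as written. Set $c_\ell:=a_\ell 2^{j_\ell d/p_0}=2^{j_\ell d/p-\ell\gamma}$. On a shell $|x|\asymp 2^{-\mu}$ with $j_m\leq\mu<j_{m+1}$, the term $\ell=m+1$ contributes $c_{m+1}|\phi(2^{j_{m+1}}x)|$; the ratio $c_{m+1}/c_m=2^{j_m d/p-\gamma}$ is doubly exponentially large in $m$ while the Schwartz factor $(2^{j_{m+1}-\mu})^{-N}$ is only $O(1)$ when $\mu$ is near $j_{m+1}$, so the $\ell=m+1$ term dominates pointwise on the outer part of the shell range, contrary to the claim that $\ell^\ast=\lfloor\log_2\mu\rfloor$ always wins. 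Fortunately you do not need any pointwise dominance. By scaling, $\|f_\ell\|_{L_p(\mathbb{R}^d)}=2^{-\ell\gamma}\|\phi\|_{L_p(\mathbb{R}^d)}$, and since $\gamma>0$ the lower bound $\|f_{\ell_0+1}\|_{L_p(\mathbb{R}^d)}$ and the upper bound
$$
\Big\|\Big(\sum_{\ell\geq\ell_0+1}|f_\ell|^2\Big)^{1/2}\Big\|_{L_p(\mathbb{R}^d)}\leq\Big(\sum_{\ell\geq\ell_0+1}\|f_\ell\|_{L_p(\mathbb{R}^d)}^{\min\{p,2\}}\Big)^{1/\min\{p,2\}}
$$
are both comparable to $2^{-\ell_0\gamma}$, giving the required $\|(\sum_{\ell\geq\ell_0+1}|f_\ell|^2)^{1/2}\|_{L_p(\mathbb{R}^d)}\asymp 2^{-\ell_0\gamma}$ without any shell decomposition. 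With this replacement, plus the routine checks that each $\widehat{f_\ell}$ sits inside the $j_\ell$-th Littlewood--Paley annulus (take $\operatorname{supp}\widehat\phi$ of radius $<1/2$ and $N_\ell\approx\tfrac32\,2^{j_\ell}$) and that $|\phi|$ is bounded below on the unit ball, your construction works for every $\varepsilon>0$ and is a valid alternative to the paper's interpolation argument.
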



\begin{prop}\label{Proposition 8.5}
	For any $\varepsilon > 0$, there is a function $f \in \mathbf{B}^{0,b}_{p,q}(\mathbb{R}^d)$ such that $f \not \in B^{s_1,b+1/\max\{p,q\} + \varepsilon}_{p_1,q}(\mathbb{R}^d)$ in any of the following cases:
	\begin{enumerate}[\upshape (a)]
		\item $\frac{2d}{d + 1} < p < \infty, 0 < q \leq \infty, b > -1/q$, and $p= \max \{p,q\}$.
		\item $1  < p < \infty, 0 < q < \infty, b > -1/q$, and $q = \max \{p,q\}$.
	\end{enumerate}
\end{prop}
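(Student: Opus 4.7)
The proof of Proposition \ref{Proposition 8.5} naturally splits along the two cases stated: case (a) $p \geq q$ (so $\max\{p,q\}=p$) and case (b) $q \geq p$ (so $\max\{p,q\}=q$); the two cases require quite different arguments.

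For case (a) I would construct the counterexample explicitly inside $\widehat{GM}^d$. Choose
\[
F_0(t)=t^{-d+d/p}(1+|\log t|)^{-\beta},\qquad t>0,
\]
with $\beta$ in the nonempty window $(b+1/p+1/q,\,b+1/p+1/q+\varepsilon]$, and let $f(x)=f_0(|x|)$ be defined via (\ref{3.4new+}). The hypothesis $p>\tfrac{2d}{d+1}$ ensures that $F_0\in GM$ and that condition (\ref{3.4new}) is satisfied, so $f\in\widehat{GM}^d$. By Theorem \ref{Theorem 3.2}, $f\in \mathbf{B}^{0,b}_{p,q}(\mathbb{R}^d)$ reduces to the convergence of
\[
\int_1^\infty (1+\log t)^{bq}\Bigl(\int_t^\infty u^{dp-d-1}F_0^p(u)\,du\Bigr)^{q/p}\frac{dt}{t}\asymp \int_1^\infty (1+\log t)^{(b+1/p-\beta)q}\,\frac{dt}{t},
\]
which is finite since $\beta>b+1/p+1/q$. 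By Theorem \ref{Theorem 3.6}, $f\in B^{s_1,b+1/p+\varepsilon}_{p_1,q}(\mathbb{R}^d)$ would require the finiteness of
\[
\int_1^\infty t^{-1}(1+\log t)^{(b+1/p+\varepsilon-\beta)q}\,dt,
\]
which diverges because $\beta\leq b+1/p+1/q+\varepsilon$. This produces the desired $f$.

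For case (b) the classes $\widehat{GM}^d$ and $\mathfrak{L}$ cannot produce a sharp counterexample when $q>p$ (the competing integrability conditions end up requiring $\varepsilon>1/p-1/q$, which is too weak), so my plan is to argue by contradiction using real interpolation. Assume
\[
\mathbf{B}^{0,b}_{p,q}(\mathbb{R}^d)\hookrightarrow B^{s_1,b+1/q+\varepsilon}_{p_1,q}(\mathbb{R}^d)
\]
for some $\varepsilon>0$, and pair it with the Sobolev embedding $W^1_p(\mathbb{R}^d)\hookrightarrow B^{s_1+1}_{p_1,p}(\mathbb{R}^d)$ (Proposition \ref{RecallEmb2}(i), valid since $1-d/p=(s_1+1)-d/p_1$). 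Interpolate both by $(\cdot,\cdot)_{\theta,r;\alpha}$ for an arbitrary $\theta\in(0,1)$, $0<r\leq\infty$, $\alpha\in\mathbb{R}$. Using (\ref{PrelimInterpolationnew}), Lemma \ref{PrelimLemma7.2}(i) and (\ref{PrelimInterpolation}) on the left, Lemma \ref{PrelimLemmaCF} on the right, and the identification $\mathbf{B}^{\theta,\xi}_{p,r}=B^{\theta,\xi}_{p,r}$ valid for $\theta>0$ (see (\ref{BesovComparison})), the interpolated embedding reads
\[
B^{\theta,(1-\theta)(b+1/q)+\alpha}_{p,r}(\mathbb{R}^d)\hookrightarrow B^{s_1+\theta,(1-\theta)(b+1/q+\varepsilon)+\alpha}_{p_1,r}(\mathbb{R}^d).
\]
This is a Sobolev embedding between Besov spaces with equal differential dimension ($\theta-d/p=(s_1+\theta)-d/p_1$) and equal fine index $r$, so by the known sharpness of such embeddings (see the remark following Proposition \ref{RecallEmb**optim}, together with \cite{Leopold98} for the full range of $p$) it forces $(1-\theta)(b+1/q)+\alpha\geq (1-\theta)(b+1/q+\varepsilon)+\alpha$, i.e.\ $(1-\theta)\varepsilon\leq 0$, which contradicts $\varepsilon>0$ and $\theta\in(0,1)$. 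Hence the assumed embedding fails, and a standard closed-graph argument delivers a function $f\in \mathbf{B}^{0,b}_{p,q}(\mathbb{R}^d)$ with $f\notin B^{s_1,b+1/q+\varepsilon}_{p_1,q}(\mathbb{R}^d)$.

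The main obstacle is case (b): the explicit constructions developed in Sections \ref{section4}--\ref{section5} cannot separate $b+1/q$ from $b+1/q+\varepsilon$ when $q>p$, so the counterexample can only be produced indirectly. The delicate point is arranging the interpolation so that the extra $\varepsilon$ survives linearly in the target logarithmic exponent on the right and can be canceled against the already established sharp log shift for the constant-differential-dimension Sobolev embedding of Besov spaces with fixed fine index.
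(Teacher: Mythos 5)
Your proof is correct, and for case (b) it follows a genuinely different route from the paper. For case (a) your single-formula choice $F_0(t)=t^{-d+d/p}(1+|\log t|)^{-\beta}$ is essentially the paper's strategy (explicit $\widehat{GM}^d$ counterexample tested against Theorems \ref{Theorem 3.2} and \ref{Theorem 3.6}), just with a cleaner $F_0$ than the paper's two-piece function $t^{-d+d/p_1+\delta}$ / $t^{-d+d/p}(1+\log t)^{-\beta}$; the minor misattribution (the embedding $W^1_p\hookrightarrow B^{s_1+1}_{p_1,p}$ is Proposition \ref{RecallEmb2}(ii), Franke--Jawerth, not \ref{RecallEmb2}(i)) is harmless. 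For case (b), however, the paper argues by duality: it densifies the assumed embedding, passes to dual spaces via Lemmas \ref{LemmaDual2} and \ref{LemmaDual} to land on a boundedness statement $I_{-1}\colon B^{-s_1,-b-1/q-\varepsilon}_{p_1',q'}\to \mathrm{Lip}^{(1,-b-1)}_{p',q'}$, lifts to $B^{-s_1+1,\cdot}_{p_1',q'}\hookrightarrow \mathrm{Lip}^{(1,-b-1)}_{p',q'}$, and only then interpolates, needing the cross-metric interpolation formula of Lemma \ref{PrelimLemmaT2} with carefully chosen auxiliary exponents $u,v$. You instead interpolate the assumed embedding directly against the Franke--Jawerth embedding $W^1_p\hookrightarrow B^{s_1+1}_{p_1,p}$ via $(\cdot,\cdot)_{\theta,r;\alpha}$, compute both sides by (\ref{PrelimInterpolationnew2.3}), (\ref{BesovComparison}) and Lemma \ref{PrelimLemmaCF}, and contradict the sharp constant-differential-dimension Sobolev embedding (\ref{8.4}) (with Leopold's extension for the full range of $p$). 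This mirrors the paper's own proof of Proposition \ref{Proposition 8.4}(b) and is arguably the more uniform approach: it bypasses duality and the dual-space characterizations (Lemmas \ref{LemmaDual2}, \ref{LemmaDual}) entirely, as well as Lemma \ref{PrelimLemmaT2}, at the cost of invoking the known sharpness of (\ref{8.4}) beyond the range $p_0>\tfrac{2d}{d+1}$ where the paper itself proves it, which is precisely why you also cite Leopold. Both arguments exploit the same structural fact — that the $\varepsilon$-shift survives linearly through interpolation and collides with a known optimal logarithmic shift — but arrive at it from opposite sides of the duality pairing.
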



\begin{rem}
	Similarly to Remark \ref{Section 11.1: Remark}(ii), one can establish the corresponding sharpness assertions to Propositions \ref{Proposition 8.4} and \ref{Proposition 8.5} for Besov spaces of generalized smoothness (cf. (\ref{BesovGeneralSmoothness})).
\end{rem}

Another important question is to study the limiting case of (\ref{8.3new}) when $p=\infty$, which corresponds to the critical smoothness $s_0=d/{p_0}$ (cf. (\ref{differentialdimension2})).
In particular, in \cite[Remark 9.11]{HaroskeBook}, Haroske posed the following question: Let $0 < {p_0} \leq \infty$ and $0 < q \leq 1$. Is it true that there exists $0 < r < \infty$ such that
 \begin{equation}\label{11}
 	B^{d/{p_0}}_{{p_0},q}(\mathbb{R}^d) \hookrightarrow \mathbf{B}^{0,0}_{\infty,r}(\mathbb{R}^d) ?
 \end{equation}
 Note that if $r=\infty$ the answer is affirmative by using
  $\mathbf{B}^{0,0}_{\infty,\infty}(\mathbb{R}^d) = L_\infty(\mathbb{R}^d)$
   and the
well-known fact that
\begin{equation}\label{10}
 	B^{d/{p_0}}_{{p_0},q}(\mathbb{R}^d) \hookrightarrow L_\infty(\mathbb{R}^d) \quad \text{if and only if} \quad 0 < {p_0} \leq \infty, 0 < q \leq 1;
 \end{equation}
see Lemma \ref{LemBianchiniContinuous} below.

  However, if $r < \infty$ then (\ref{11}) fails to hold as it was proved in \cite[Theorem 3.5]{MouraNevesPiotrowski} with the help of atomic decompositions.

 As can be seen in the book by Haroske \cite[Section 9.2]{HaroskeBook}, a lift argument is useful to achieve sharp embeddings for Besov spaces in the borderline case $s_0=d/p_0 + 1$. However, this approach does not work for the case $s_0=d/{p_0}$.
 Hence, it would be desirable to give a new lifting argument that can be applied to investigate embedding (\ref{11}) and complements the decomposition method given in \cite{MouraNevesPiotrowski}. Accordingly, we propose below such a lifting approach based on limiting interpolation which allows us to switch from $s_0=d/{p_0}$ to the well-known case $d/{p_0} < s_0 < d/{p_0} +1$.

 Let us assume that there is $0 < r < \infty$ such that
 \begin{equation}\label{12}
 	B^{d/{p_0}}_{{p_0},q}(\mathbb{R}^d) \hookrightarrow \mathbf{B}^{0,0}_{\infty,r}(\mathbb{R}^d) .
 \end{equation}
 Let $d/{p_0} < s_0 < d/{p_0} +1$. Then, there exists $\theta \in (0,1)$ such that $s_0 = (1-\theta) \frac{d}{{p_0}} + \theta \Big(\frac{d}{{p_0}} + 1\Big)$. Since the following embedding holds (see \cite[Proposition 7.15(i)]{HaroskeBook})
 \begin{equation*}
 	B^{d/{p_0}+1}_{{p_0},q}(\mathbb{R}^d) \hookrightarrow \text{Lip}(\mathbb{R}^d)\equiv \text{Lip}^{(1,0)}_{\infty,\infty}(\mathbb{R}^d),
 \end{equation*}
 one can apply the interpolation property to derive
 \begin{equation}\label{13}
 	(B^{d/{p_0}}_{{p_0},q}(\mathbb{R}^d), B^{d/{p_0}+1}_{{p_0},q}(\mathbb{R}^d))_{\theta,q} \hookrightarrow (\mathbf{B}^{0,0}_{\infty,r}(\mathbb{R}^d), \text{Lip}(\mathbb{R}^d))_{\theta,q}.
 \end{equation}
 By Lemma \ref{PrelimLemmaCF} we have $(B^{d/{p_0}}_{{p_0},q}(\mathbb{R}^d), B^{d/{p_0}+1}_{{p_0},q}(\mathbb{R}^d))_{\theta,q} = B^s_{{p_0},q}(\mathbb{R}^d)$. Concerning the target space in (\ref{13}), we use (\ref{PrelimInterpolationnew}), (\ref{interpolationLipschitz1}), (\ref{PrelimInterpolation}) and (\ref{BesovComparison}) to get
 \begin{align*}
 	(\mathbf{B}^{0,0}_{\infty,r}(\mathbb{R}^d), \text{Lip}(\mathbb{R}^d))_{\theta,q} & = ((L_\infty(\mathbb{R}^d), W^1_\infty(\mathbb{R}^d))_{(0,0),r}, (L_\infty(\mathbb{R}^d), W^1_\infty(\mathbb{R}^d))_{(1,0),\infty})_{\theta,q} \\
	& \hspace{-2.5cm}= (L_\infty(\mathbb{R}^d), W^1_\infty(\mathbb{R}^d))_{\theta,q;\frac{1-\theta}{r}} = B^{s_0-d/{p_0}, (1-\theta)/r}_{\infty,q}(\mathbb{R}^d),
 \end{align*}
 where we have applied \cite[Theorem 5.8]{Fernandez-MartinezSignes} in the second equivalence. Therefore, we obtain that
 \begin{equation}
 	B^{s_0}_{{p_0},q}(\mathbb{R}^d) \hookrightarrow B^{{s_0}-d/{p_0}, (1-\theta)/r}_{\infty,q}(\mathbb{R}^d),
 \end{equation}
 which contradicts \cite[Theorem 9.2]{HaroskeBook} because $r \neq \infty$ (see also (\ref{8.4})). Consequently, embedding (\ref{12}) does not hold true.

\subsection{Comparison between embeddings with smoothness near zero and Sobolev embeddings}\label{Comparison between embeddings with smoothness near zero and Sobolev embeddings}

Combining (\ref{7.1}) and (\ref{8.3new}) we arrive at
\begin{equation}\label{ComparisonNew}
	 B^{0,b + 1/\min\{2,p,q\}}_{p,q}(\mathbb{R}^d) \cup B^{s_0, b + 1/\min\{p,q\}}_{p_0,q}(\mathbb{R}^d) \hookrightarrow \mathbf{B}^{0,b}_{p,q}(\mathbb{R}^d).
\end{equation}
Here, $1 < p_0 < p < \infty, 0 < q \leq \infty, b > -1/q$, and $0 < s_0 < \infty$ satisfying $s_0 - d/p_0 = -d/p$.
It is worth mentioning that if $2 \ge \min\{p,q\}$, then (\ref{ComparisonNew}) is equivalent to
$$	 B^{0,b + 1/\min\{2,p,q\}}_{p,q}(\mathbb{R}^d) \hookrightarrow \mathbf{B}^{0,b}_{p,q}(\mathbb{R}^d).
$$ This follows from Remark \ref{Remark 3.5}(iv).
 Therefore, here we consider the nontrivial case when $2 < \min\{p,q\}$.
  Under this assumption, we will show that the spaces $ B^{0,b + 1/2}_{p,q}(\mathbb{R}^d) $ and $B^{s_0, b + 1/\min\{p,q\}}_{p_0,q}(\mathbb{R}^d)$ are not comparable and their union 
   does not coincide with $\mathbf{B}^{0,b}_{p,q}(\mathbb{R}^d)$. Fig. 5 shows the sharpness assertions given in Proposition \ref{Proposition 8.6new}.

\bigskip

\begin{center}
\begin{tikzpicture}[fill opacity=0.05, ,xscale=0.8,yscale=0.8]


\fill (0,0) ellipse (3.6 and 2.0);
\draw[fill opacity=0.2] (0,0) ellipse (3.6 and 2.0);

\fill[white] (-1.8,0) ellipse (2.0 and 1.0);
\draw[fill opacity=0.2](-1.6,0) ellipse (2.0 and 1.0);

\fill[white] (1.9,0) circle (2.1 and 1.0);
\draw[fill opacity=0.2] (1.4,0) circle (2.1 and 1.0);

\node[fill opacity=2,xscale=0.8,yscale=0.8] at (0,1.3) {$\mathbf{B}^{0,b}_{p,q}(\mathbb{R}^d)$};
\node[fill opacity=2,xscale=0.8,yscale=0.8] at (2.0,0) {$B^{s_0,b+\frac{1}{\min\{p,q\}}}_{p_0,q}(\mathbb{R}^d)$};
\node[fill opacity=2,xscale=0.8,yscale=0.8] at (-2.2,0) {$B^{0,b + \frac{1}{2}}_{p,q}(\mathbb{R}^d)$};

\end{tikzpicture}
\end{center}

\phantom{qqq}

\begin{center}
{\small \textbf{Fig. 5:} Relationships between the Besov spaces $\mathbf{B}^{0,b}_{p,q}(\mathbb{R}^d), B^{0,b+1/2}_{p,q}(\mathbb{R}^d)$, and $B^{s_0,b+1/\min\{p,q\}}_{p_0,q}(\mathbb{R}^d)$ in the case $2 < \min\{p,q\}$.}
\end{center}

\bigskip


\begin{prop}\label{Proposition 8.6new}
	Let $1 < p_0 < p < \infty, 0 < q \leq \infty, b > -1/q$, and $0 < s_0 < \infty$ with $s_0 - d/p_0 = -d/p$. Assume $2 < \min\{p,q\}$. The following assertions hold
\begin{enumerate}[\upshape (i)]
  \item $B^{s_0,b+1/\min\{p,q\}}_{p_0,q}(\mathbb{R}^d)$ \quad \text{is not continuously embedded into}
  \quad
       $B^{0,b+1/2}_{p,q}(\mathbb{R}^d)$,

  \item	$B^{0,b+1/2}_{p,q}(\mathbb{R}^d)$ \quad \text{is not continuously embedded into} \quad
   $B^{s_0,b+1/\min\{p,q\}}_{p_0,q}(\mathbb{R}^d)$,

  \item  $B^{0,b + 1/2}_{p,q}(\mathbb{R}^d) \cup B^{s_0, b + 1/\min\{p,q\}}_{p_0,q}(\mathbb{R}^d) \neq \mathbf{B}^{0,b}_{p,q}(\mathbb{R}^d).$

\end{enumerate}
\end{prop}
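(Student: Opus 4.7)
The strategy is to rely on the Fourier-analytic characterizations of Besov norms for the class $\widehat{GM}^d$, namely Theorems \ref{Theorem 3.2} and \ref{Theorem 3.6}, which reduce each membership question to the convergence/divergence of concrete weighted integrals of a GM profile $F_0$. Throughout one uses the differential dimension relation $s_0-d/p_0=-d/p$, so that the large--$t$ weight in the GM characterization of $B^{s_0,b+1/\min\{p,q\}}_{p_0,q}(\mathbb{R}^d)$ and of $B^{0,b+1/2}_{p,q}(\mathbb{R}^d)$ coincides up to the power of $(1+\log t)$, and the assumption $\min\{p,q\}>2$ ensures $1/2 - 1/\min\{p,q\} > 0$.

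For (i) the plan is to kill the small--$t$ contribution and force the distinction to occur only through the log exponent of the large--$t$ integral. Take
\[
F_0(t)=\begin{cases} t^{\varepsilon-d+d/p}, & 0<t<1,\\[1mm] t^{-d+d/p}(1+\log t)^{-(b+1/\min\{p,q\})-1/q}(1+\log(1+\log t))^{-\delta}, & t\ge 1,\end{cases}
\]
with $\varepsilon>0$ small and $\delta>1/q$; then (\ref{BesovGM}) gives $f\in B^{s_0,b+1/\min\{p,q\}}_{p_0,q}(\mathbb{R}^d)$ because the large--$t$ integral (after the substitution $u=\log t$) reduces to $\int_0^\infty(1+u)^{-1}(1+\log(1+u))^{-\delta q}du<\infty$, while the analogous integral with the heavier weight $(1+\log t)^{(b+1/2)q}$ becomes $\int_0^\infty (1+u)^{(1/2-1/\min\{p,q\})q-1}(1+\log(1+u))^{-\delta q}du$, which diverges since $(1/2-1/\min\{p,q\})q-1>-1$.

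For (ii) the plan is to exploit the different integrability scales, putting a log-corrected borderline singularity of $L_{p_0}$ type near $t=0$ that is still $L_p$-integrable. Take
\[
F_0(t)=\begin{cases} t^{-d+d/p_0}(1+|\log t|)^{-1/p_0}, & 0<t\le 1,\\[1mm] t^{-d+d/p}(1+\log t)^{-(b+1/2)-1/q}, & t\ge 1.\end{cases}
\]
Then $\int_0^1 t^{dp_0-d-1}F_0^{p_0}(t)dt\asymp \int_0^1 t^{-1}(1+|\log t|)^{-1}dt=\infty$, but $\int_0^1 t^{dp-d-1}F_0^p(t)dt\asymp \int_0^1 t^{d(p-p_0)/p_0-1}(1+|\log t|)^{-p/p_0}dt<\infty$ because $p>p_0$ makes the exponent positive; the large--$t$ contributions are harmless thanks to the factor $t^{-s_0 q}$ built into the difference of the two weights. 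Applying (\ref{BesovGM}) concludes (ii). Part (iii) is obtained by merging the two constructions: keep the $L_{p_0}$--divergent small--$t$ profile of (ii) and the large--$t$ profile of (i) with the extra iterated--log factor. The GM characterization (\ref{3.3.3}) of $\mathbf{B}^{0,b}_{p,q}(\mathbb{R}^d)$ then yields finiteness, since after integration in $u$ the exponent $(1/p-1/2-1/q)q$ of $(1+\log t)$ is strictly below $-1$ precisely when $p>2$, and the iterated log gives the remaining convergence.

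The main obstacle is twofold. First, one must check that each piecewise profile genuinely belongs to $\widehat{GM}^d$: local bounded variation and (\ref{3.1}) are immediate from unimodality of $F_0$, but the tail condition $\int_1^\infty u^{(d-1)/2}|dF_0(u)|<\infty$ in (\ref{3.4new}) is what forces the ambient restriction $p>2d/(d+1)$ (and similarly $p_0>2d/(d+1)$ for applying Theorem \ref{Theorem 3.6} to the $B^{s_0,\cdot}_{p_0,q}$--norm). Second, in (iii) the delicate balance is quantitative: one must simultaneously arrange the two divergences (small--$t$ for $L_{p_0}$, and large--$t$ for $(b+1/2)q$) without breaking finiteness of the inner tail $\int_t^\infty u^{dp-d-1}F_0^p(u)du$ in (\ref{3.3.3}); this inner integral's decay rate in $\log t$ must be such that, raised to the power $q$ and integrated against $(1+\log t)^{bq}\,dt/t$, one recovers the exponent $(1/p-1/2-1/q)q<-1$, and here the hypothesis $p>2$ (coming from $\min\{p,q\}>2$) is used in a sharp way.
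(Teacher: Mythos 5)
Your overall strategy --- producing explicit $\widehat{GM}^d$ profiles and using Theorems \ref{Theorem 3.2} and \ref{Theorem 3.6} --- is a reasonable route and does differ from the paper, which disposes of (i) by invoking the general sharpness result (\ref{8.4}) and then handles (ii) and (iii) with lacunary series via Proposition \ref{Proposition 4.1} and Theorem \ref{Theorem 4.2}. However, there are two genuine problems with what you wrote.

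The first is a real error in part (ii). You claim ``the large-$t$ contributions are harmless thanks to the factor $t^{-s_0 q}$ built into the difference of the two weights.'' There is no such factor. By (\ref{BesovGM}), the large-$t$ weight for $\|\cdot\|_{B^{s_0,b+1/\min\{p,q\}}_{p_0,q}}$ is $t^{s_0 q + dq - dq/p_0 -1}(1+\log t)^{(b+1/\min\{p,q\})q}$ and for $\|\cdot\|_{B^{0,b+1/2}_{p,q}}$ it is $t^{dq - dq/p -1}(1+\log t)^{(b+1/2)q}$. The differential dimension relation $s_0 - d/p_0 = -d/p$ makes the powers of $t$ identical; the weights differ only through the logarithmic exponents, and since $\min\{p,q\} > 2$ the exponent $(b+1/2)q$ is the \emph{larger} one. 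With your choice $F_0(t)=t^{-d+d/p}(1+\log t)^{-(b+1/2)-1/q}$ for $t\ge 1$, the relevant integral is
\begin{equation*}
\int_1^\infty t^{dq-dq/p-1}(1+\log t)^{(b+1/2)q}\,F_0^q(t)\,dt
=\int_1^\infty t^{-1}(1+\log t)^{-1}\,dt=\infty,
\end{equation*}
so this $F_0$ produces $f\notin B^{0,b+1/2}_{p,q}(\mathbb{R}^d)$, exactly the opposite of what (ii) needs. The fix is to make $F_0$ decay slightly faster for $t\ge 1$, e.g.\ replace the exponent $-(b+1/2)-1/q$ by $-\beta$ with $\beta>b+1/2+1/q$; then both tail integrals are finite and only the small-$t$ divergence at the $p_0$-scale matters. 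But as written the construction fails. Similarly, in (iii) you assert that the outer exponent equals $(1/p-1/2-1/q)q$; with the large-$t$ profile you actually specified (carrying the factor $(1+\log t)^{-(b+1/\min\{p,q\})-1/q}$) it is $q/p - q/\min\{p,q\} - 1$, which is $-1$ if $p\le q$ (and then you really do need the iterated-log gain $\delta>1/q$) and $<-1$ only if $p>q$; the claim that ``$p>2$'' is what makes this $<-1$ is wrong. The construction still works once the exponent is tracked correctly, so this is a bookkeeping slip, but it needs fixing.

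The second problem is a scope gap you flag yourself but do not resolve. Theorem \ref{Theorem 3.6} applied to the $B^{s_0,\cdot}_{p_0,q}$-norm requires $p_0>\frac{2d}{d+1}$, and Proposition \ref{Proposition 8.6new} assumes only $p_0>1$. Since $\frac{2d}{d+1}\to 2$ as $d\to\infty$, for large $d$ there is a genuine range $1<p_0\le \frac{2d}{d+1}$ that your GM argument simply does not reach. The paper sidesteps this in (ii) and (iii) by using lacunary Fourier series: Proposition \ref{Proposition 4.1} characterizes $\|f\|_{B^{s,b}_{p,q}}$ for $f\in\mathfrak{L}$ entirely independently of $p$ (as $\bigl(\sum_j (2^{js}(1+j)^b|b_j|)^q\bigr)^{1/q}$), so the presence of $s_0>0$ kills $\|f\|_{B^{s_0,\cdot}_{p_0,q}}$ through the geometric factor $2^{js_0q}$, and no restriction on $p_0$ is needed. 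Also, for (i) the choice ``$\varepsilon>0$ small'' does not suffice: you need $\varepsilon>s_0=d/p_0-d/p$ to make the small-$t$ integral in the $p_0$-norm converge (this is forced by the exponent $-d-1+\varepsilon p_0+dp_0/p$). That is a minor fix, but the statement ``small'' points the wrong way.

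In short: the GM route can be made to work in the restricted range $p_0>\frac{2d}{d+1}$, but (ii) is currently wrong as stated, the exponent computations in (i) and (iii) need correcting, and either you add the hypothesis $p_0>\frac{2d}{d+1}$ or you switch (at least for (ii) and (iii)) to the $p$-independent lacunary characterization as the paper does.
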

\begin{rem}
	Proposition \ref{Proposition 8.6new}(iii) is not necessarily true if we restrict to the $GM$ class. For example, if $2 < q=p$, then
	\begin{equation*}
		\widehat{GM}^d \cap (B^{0,b + 1/2}_{p,p}(\mathbb{R}^d) \cup B^{s_0, b + 1/p}_{p_0,p}(\mathbb{R}^d)) = \widehat{GM}^d \cap  B^{0, b + 1/p}_{p,p}(\mathbb{R}^d) = \widehat{GM}^d \cap  \mathbf{B}^{0,b}_{p,p}(\mathbb{R}^d),
	\end{equation*}
	where we have applied Theorem \ref{SobolevEmbeddingImprovement} and (\ref{EmbBesovGMNew}).
\end{rem}

On the other hand, according to (\ref{7.2}) and (\ref{8.4new}) we have
\begin{equation}\label{ComparisonNew2}
	\mathbf{B}^{0,b}_{p,q}(\mathbb{R}^d) \hookrightarrow  B^{0,b + 1/\max\{2,p,q\}}_{p,q}(\mathbb{R}^d) \cap B^{s_1, b + 1/\max\{p,q\}}_{p_1,q}(\mathbb{R}^d) .
\end{equation}
Here, $1 < p< p_1 < \infty, 0 < q \leq \infty, b > -1/q$, and $-\infty < s_1 < 0$ satisfying $s_1 - d/p_1 = -d/p$.
If $2 \le  \max\{p,q\}$, then, by Remark \ref{Remark 3.5}(iv), (\ref{ComparisonNew2}) is equivalent to
$$
	\mathbf{B}^{0,b}_{p,q}(\mathbb{R}^d) \hookrightarrow  B^{0,b + 1/\max\{2,p,q\}}_{p,q}(\mathbb{R}^d).
$$

Below we consider the most interesting case $2 > \max\{p,q\}$.
 We will show that 
  the spaces $ B^{0,b + 1/2}_{p,q}(\mathbb{R}^d) $ and $B^{s_1, b + 1/\max\{p,q\}}_{p_1,q}(\mathbb{R}^d)$ are not comparable and their intersection 
   differs from $\mathbf{B}^{0,b}_{p,q}(\mathbb{R}^d)$. Fig. 6 illustrates the sharpness assertions given in Proposition \ref{Proposition 8.7new}.

\bigskip

\begin{center}
\begin{tikzpicture}[fill opacity=0.05, ,xscale=0.8,yscale=0.8]

\fill (-1.4,0) ellipse (2.6 and 1.5);
\draw[fill opacity=0.2] (-1.4,0) ellipse (2.6 and 1.5);

\fill (1.5,0) ellipse (2.8 and 1.5);
\draw[fill opacity=0.2] (1.5,0) ellipse (2.8 and 1.5);

\fill[white] (0.0,0) circle (1.0 and 0.8);
\draw[fill opacity=0.2]
 (0,0) circle (1.0 and 0.8);

\node[fill opacity=2,xscale=0.8,yscale=0.8] at (0,0) {$\mathbf{B}^{0,b}_{p,q}(\mathbb{R}^d)$};
\node[fill opacity=2,xscale=0.8,yscale=0.8] at (2.7,0) {$B^{s_1,b + \frac{1}{\max\{p,q\}}}_{p_1,q}(\mathbb{R}^d)$};
\node[fill opacity=2,xscale=0.8,yscale=0.8] at (-2.6,0) {$B^{0,b + \frac{1}{2}}_{p,q}(\mathbb{R}^d)$};

\end{tikzpicture}
\end{center}

\phantom{qqq}

\begin{center}
{\small \textbf{Fig. 6:} Relationships between the Besov spaces $\mathbf{B}^{0,b}_{p,q}(\mathbb{R}^d), B^{0,b+1/2}_{p,q}(\mathbb{R}^d)$, and $B^{s_1,b+1/\max\{p,q\}}_{p_1,q}(\mathbb{R}^d)$ in the case $2 > \max\{p,q\}$.}
\end{center}

\bigskip

\begin{prop}\label{Proposition 8.7new}
	Let $1 < p< p_1 < \infty, 0 < q \leq \infty, b > -1/q$, and $-\infty < s_1 < 0$ with $s_1 - d/p_1 = -d/p$. Assume $2 > \max\{p,q\}$. The following assertions hold
\begin{enumerate}[\upshape (i)]
  \item	$B^{s_1,b+1/\max\{p,q\}}_{p_1,q}(\mathbb{R}^d)$ \quad \text{is not continuously embedded into} \quad $B^{0,b+1/2}_{p,q}(\mathbb{R}^d)$,

  \item if $\frac{2 d}{d+1} < p < \infty$ then $B^{0,b+1/2}_{p,q}(\mathbb{R}^d)$ \quad \text{is not continuously embedded}
   \\
  \text{ into} \quad $B^{s_1,b+1/\max\{p,q\}}_{p_1,q}(\mathbb{R}^d)$,

  \item  $B^{0,b + 1/2}_{p,q}(\mathbb{R}^d) \cap B^{s_1, b + 1/\max\{p,q\}}_{p_1,q}(\mathbb{R}^d) \neq \mathbf{B}^{0,b}_{p,q}(\mathbb{R}^d).$

\end{enumerate}
\end{prop}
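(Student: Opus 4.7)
The plan is to prove each of (i)--(iii) by exhibiting an explicit counterexample, exploiting the characterizations of Besov norms for lacunary series (Proposition~\ref{Proposition 4.1}, Theorem~\ref{Theorem 4.2}) and for radial $\widehat{GM}^d$ functions (Theorems~\ref{Theorem 3.2}, \ref{Theorem 3.6}). Under the standing assumption $\max\{p,q\}<2$, the logarithmic parameter $b+1/\max\{p,q\}$ in $B^{s_1,\cdot}_{p_1,q}$ strictly exceeds $b+1/2$, and the relation $s_1-d/p_1=-d/p$ makes the polynomial weights in the two target characterizations coincide on GM-type $F_0$; the only genuine obstruction is thus in the logarithmic smoothness.

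For (i), I take $f=\psi W$ with constant coefficients $b_j\equiv 1$ in \eqref{4.1}. By Proposition~\ref{Proposition 4.1}, $\|f\|_{B^{s_1,b+1/\max\{p,q\}}_{p_1,q}}^q\asymp\sum_j 2^{js_1q}(1+j)^{(b+1/\max\{p,q\})q}<\infty$ (exponential decay from $s_1<0$), whereas $\|f\|_{B^{0,b+1/2}_{p,q}}^q\asymp\sum_j (1+j)^{(b+1/2)q}=\infty$, since $b>-1/q$ forces $(b+1/2)q>-1$. For (ii), I invoke Theorem~\ref{Theorem 3.6}, which requires $p>2d/(d+1)$. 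I fix $\beta$ with
\[
b+\tfrac{1}{2}+\tfrac{1}{q}<\beta\le b+\tfrac{1}{\max\{p,q\}}+\tfrac{1}{q},
\]
a non-empty interval since $\max\{p,q\}<2$, and take $F_0(t)=t^{-d+d/p+\varepsilon}$ for $0<t<1$, $F_0(t)=t^{-d+d/p}(1+\log t)^{-\beta}$ for $t\ge 1$, with $\varepsilon>0$ small; then $f(x)=f_0(|x|)$ is defined via \eqref{3.4new+}. The cancellation of polynomial weights reduces membership in $B^{0,b+1/2}_{p,q}$ (resp.\ $B^{s_1,b+1/\max\{p,q\}}_{p_1,q}$) to the condition $\beta>b+1/2+1/q$ (resp.\ $\beta>b+1/\max\{p,q\}+1/q$); by the choice of $\beta$ the first holds and the second fails.

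Part (iii) is the most delicate, for the reason that the inclusion $\mathbf{B}^{0,b}_{p,q}\subseteq B^{0,b+1/2}_{p,q}\cap B^{s_1,b+1/\max\{p,q\}}_{p_1,q}$ already holds from \eqref{7.2} and \eqref{8.4new}. Straightforward power-logarithmic GM examples fail when $q\le p<2$, since in that sub-case the characterization of $\mathbf{B}^{0,b}_{p,q}$ in Theorem~\ref{Theorem 3.2} gives the same convergence threshold as the coefficient-wise condition for $B^{s_1,b+1/p}_{p_1,q}$. To bypass this uniformly in $p,q$, I subsample the lacunary frequencies: set $b_{2^n}=a_n:=2^{-n(b+1/q)}(1+n)^{-1/q}$ and $b_j=0$ otherwise, and let $f=\psi W$. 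Proposition~\ref{Proposition 4.1} gives $\|f\|_{B^{s_1,b+1/\max\{p,q\}}_{p_1,q}}<\infty$ from the double-exponential factor $2^{2^n s_1 q}$ (since $s_1<0$), and $\|f\|_{B^{0,b+1/2}_{p,q}}^q\asymp\sum_n 2^{nq(1/2-1/q)}(1+n)^{-1}<\infty$ because $q<2$ forces $1/2-1/q<0$. By Theorem~\ref{Theorem 4.2}, however, $\|f\|_{\mathbf{B}^{0,b}_{p,q}}^q\asymp\sum_{j\ge 3}(1+j)^{bq}(\sum_{k\ge j}|b_k|^2)^{q/2}$; evaluating the inner tail on blocks $j\in(2^N,2^{N+1}]$ gives $\sum_{k\ge j}|b_k|^2\asymp 2^{-2N(b+1/q)}(1+N)^{-2/q}$, and summing $(1+j)^{bq}$ over each block as $\asymp 2^{N(bq+1)}$ (valid since $b>-1/q$) collapses this to $\sum_N(1+N)^{-1}=\infty$.

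The main obstacle is precisely the construction in (iii): both target spaces see only individual Fourier coefficients, whereas $\mathbf{B}^{0,b}_{p,q}$ measures the $L_2$-tail $(\sum_{k\ge j}|b_k|^2)^{1/2}$ (Theorem~\ref{Theorem 4.2}), which for sparsely supported sequences is much larger than any single coefficient. Subsampling lacunary frequencies is the device that inflates this tail relative to the individual coefficients, creating the discrepancy needed to escape $\mathbf{B}^{0,b}_{p,q}$ while remaining in the intersection, uniformly in the sub-cases $q\le p$ and $q>p$ of the range $\max\{p,q\}<2$.
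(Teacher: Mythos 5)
Your proof is correct, and while (i) and (ii) are essentially in the spirit the paper indicates (adapted from Proposition~\ref{Proposition 8.6new}), in (iii) you take a genuinely different and more explicit route. Some detailed comparisons:

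For (i), the paper handles the analogous part of Proposition~\ref{Proposition 8.6new} by citing the Sobolev-embedding characterization~(\ref{8.4}), but that result is stated for $p_0\le p_1$, i.e.\ with the source having the \emph{smaller} integrability index; in the present statement the source $B^{s_1,\cdot}_{p_1,q}$ has the larger one ($p_1>p$), so (\ref{8.4}) is not directly applicable. Your constant-coefficient lacunary counterexample ($b_j\equiv1$), checked via Proposition~\ref{Proposition 4.1}, cleanly handles this. For (ii), you correctly observe that a lacunary example cannot work here (since $s_1<0$ makes $\mathfrak{L}\cap B^{s_1,\cdot}_{p_1,q}\supset\mathfrak{L}\cap B^{0,b+1/2}_{p,q}$), so you switch to a $\widehat{GM}^d$ radial function. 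The threshold computations via Theorems~\ref{Theorem 3.2} and~\ref{Theorem 3.6} are right, the chosen interval for $\beta$ is non-empty precisely because $\max\{p,q\}<2$, and the restriction $p>\tfrac{2d}{d+1}$ is exactly what is needed for the $\widehat{GM}^d$ characterization to be available — consistent with the hypothesis of (ii).

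For (iii), the paper's intended argument (mirroring Proposition~\ref{Proposition 8.6new}(iii)) is by contradiction: reduce to $\mathfrak{L}\cap B^{0,b+1/2}_{p,q}=\mathfrak{L}\cap\mathbf{B}^{0,b}_{p,q}$ and then compare norms on the single-frequency functions $f_n\sim\psi\,e^{i(2^n-2)x_1}$, whose $B^{0,b+1/2}_{p,q}$- and $\mathbf{B}^{0,b}_{p,q}$-norms scale like $(1+n)^{b+1/2}$ and $(1+n)^{b+1/q}$ respectively; that argument infers the spaces differ via equivalence of quasi-norms (closed graph), without ever exhibiting an element of the gap. Your sub-sampled lacunary construction, supported on the doubly-exponential frequency set $\{2^{2^n}-2\}$, produces an \emph{explicit} element of $B^{0,b+1/2}_{p,q}\cap B^{s_1,b+1/\max\{p,q\}}_{p_1,q}$ lying outside $\mathbf{B}^{0,b}_{p,q}$. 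You correctly pinpoint the mechanism: Theorem~\ref{Theorem 4.2} shows the $\mathbf{B}^{0,b}_{p,q}$-quasi-norm sees the $\ell_2$-tail $\big(\sum_{k\ge j}|b_k|^2\big)^{1/2}$, which for a sparsely supported sequence is of the order of a fixed coefficient over an entire dyadic block of $j$'s, inflating the norm relative to the coefficient-wise $B$-quantities. Your block-by-block calculation giving the harmonic sum is correct, and your diagnosis that power–log $\widehat{GM}^d$ examples fail for $q\le p$ (because the thresholds in Theorems~\ref{Theorem 3.2} and~\ref{Theorem 3.6} coincide there) is also right; the sparse-lacunary device indeed works uniformly in the two sub-cases $q\le p$ and $q>p$.

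One small cosmetic remark: the parenthetical ``(valid since $b>-1/q$)'' attached to $\sum_{j\in(2^N,2^{N+1}]}(1+j)^{bq}\asymp2^{N(bq+1)}$ is superfluous — that estimate for a single dyadic block holds for every $b$ — though $b>-1/q$ is of course used elsewhere (for $a_n$ to decay geometrically, making the tail sum comparable to its leading term).
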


\begin{rem}
	Proposition \ref{Proposition 8.7new}(iii) is not necessarily true if we restrict to the $GM$ class. For example, if $2 > q=p$, then
	\begin{equation*}
		\widehat{GM}^d \cap (B^{0,b + 1/2}_{p,p}(\mathbb{R}^d) \cap B^{s_1, b + 1/p}_{p_1,p}(\mathbb{R}^d)) = \widehat{GM}^d \cap  B^{0, b + 1/p}_{p,p}(\mathbb{R}^d) = \widehat{GM}^d \cap  \mathbf{B}^{0,b}_{p,p}(\mathbb{R}^d),
	\end{equation*}
	where we have used Theorem \ref{SobolevEmbeddingImprovement} and (\ref{EmbBesovGMNew}).
\end{rem}


\subsection{New interpolation formulas and duality theorems}\label{section8.4}

To prove Propositions \ref{Proposition 7.1}, \ref{Proposition 7.2}, \ref{Proposition 8.4} and \ref{Proposition 8.5} and Theorem \ref{Proposition 7.3} we need to establish new interpolation formulas (see Lemma \ref{PrelimLemmaT2} below). These formulas  are interesting in themselves. For the sake of completeness, we also recall some auxiliary results, 
such as the characterizations of the dual spaces of $B^{s,b}_{p,q}(\mathbb{R}^d)$ and $\mathbf{B}^{0,b}_{p,q}(\mathbb{R}^d)$.

Some interpolation formulae for Besov spaces were already collected in Section \ref{Section:Interpolation methods}. In particular, Lemma \ref{PrelimLemmaT} concerns with interpolation of classical Besov spaces with different integrability. Namely, let $1 < p_0, p_1 < \infty, p_0 \neq p_1, 1 \leq q_0, q_1 < \infty, -\infty < s_0, s_1 < \infty,$ and $0 < \theta < 1$. Assume that
\begin{equation}\label{zv}
			s = (1-\theta) s_0 + \theta s_1 \quad \text{and} \quad \frac{1-\theta}{p_0} + \frac{\theta}{p_1} = \frac{1}{p} = \frac{1-\theta}{q_0} + \frac{\theta}{q_1}.
		\end{equation}
		Then we have
\begin{equation}\label{8.6new}
(B^{s_0}_{p_0,q_0}(\mathbb{R}^d), B^{s_1}_{p_1,q_1}(\mathbb{R}^d))_{\theta,p} = B^{s}_{p,p}(\mathbb{R}^d).
\end{equation}

We are going to extend the interpolation formula (\ref{8.6new}) to Besov spaces of generalized smoothness with the help of the lifting operator $\mathcal{J}_b$\index{\bigskip\textbf{Operators}!$\mathcal{J}_b$}\label{LIFTLOG} given by
\begin{equation}\label{LiftLog}
	\mathcal{J}_b  f = \Big(\sum_{j=0}^\infty (1 + j)^b \varphi_j (\xi)\widehat{f}\Big)^\vee, \quad f \in \mathcal{S}'(\mathbb{R}^d),
\end{equation}
and its inverse $(\mathcal{J}_b)^{-1}$.  Here $-\infty < b < \infty$ and $\{\varphi_j\}_{j \in \mathbb{N}_0}$ is given by (\ref{SmoothFunction}), (\ref{resolution}). According to \cite[Proposition 3.2]{CaetanoMoura},
\begin{equation}\label{8.7new}
	\mathcal{J}_b \quad \text{is an isomorphism from} \quad B^{s,b}_{p,q}(\mathbb{R}^d) \quad \text{onto} \quad B^{s}_{p,q}(\mathbb{R}^d)
\end{equation}
for any $1 \leq p \leq \infty, 0 < q \leq \infty$ and $-\infty < s < \infty$.

\begin{lem}\label{PrelimLemmaT2}
		Let $1 < p_0, p_1 < \infty, p_0 \neq p_1, 1 \leq q_0, q_1 < \infty, -\infty < s_0, s_1, b < \infty,$ and $0 < \theta < 1$.
Assume that (\ref{zv}) holds.
		Then we have
\begin{equation*}
(B^{s_0,b}_{p_0,q_0}(\mathbb{R}^d), B^{s_1,b}_{p_1,q_1}(\mathbb{R}^d))_{\theta,p} = B^{s,b}_{p,p}(\mathbb{R}^d).
\end{equation*}
\end{lem}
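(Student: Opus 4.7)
\smallskip
\noindent
\textbf{Proof proposal.} The plan is to reduce the lemma to the classical interpolation formula in Lemma \ref{PrelimLemmaT} (that is, (\ref{8.6new})) by means of the logarithmic lifting operator $\mathcal{J}_b$ introduced in (\ref{LiftLog}). By (\ref{8.7new}), for each of the indices $i=0,1$ the operator $\mathcal{J}_b$ is an isomorphism from $B^{s_i,b}_{p_i,q_i}(\mathbb{R}^d)$ onto $B^{s_i}_{p_i,q_i}(\mathbb{R}^d)$, and its inverse $(\mathcal{J}_b)^{-1} = \mathcal{J}_{-b}$ has the same lifting property with opposite sign.

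First I would view both Banach couples
\begin{equation*}
(B^{s_0,b}_{p_0,q_0}(\mathbb{R}^d), B^{s_1,b}_{p_1,q_1}(\mathbb{R}^d)) \quad \text{and} \quad (B^{s_0}_{p_0,q_0}(\mathbb{R}^d), B^{s_1}_{p_1,q_1}(\mathbb{R}^d))
\end{equation*}
as sitting inside the common Hausdorff space $\mathcal{S}'(\mathbb{R}^d)$, where $\mathcal{J}_b$ is a well-defined continuous bijection. Since $\mathcal{J}_b$ and $\mathcal{J}_{-b}$ restrict to bounded operators between the endpoint spaces of the two couples, the interpolation property of the real $K$-method (cf. the comments following (\ref{PrelimInterpolationEmb})) yields bounded restrictions
\begin{equation*}
\mathcal{J}_b: (B^{s_0,b}_{p_0,q_0}(\mathbb{R}^d), B^{s_1,b}_{p_1,q_1}(\mathbb{R}^d))_{\theta,p} \longrightarrow (B^{s_0}_{p_0,q_0}(\mathbb{R}^d), B^{s_1}_{p_1,q_1}(\mathbb{R}^d))_{\theta,p}
\end{equation*}
and $\mathcal{J}_{-b}$ in the opposite direction, so that $\mathcal{J}_b$ is in fact an isomorphism between the two interpolation spaces.

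Next I would invoke Lemma \ref{PrelimLemmaT}, whose hypotheses (\ref{zv}) are exactly the ones we impose, to identify the target interpolation space as $B^{s}_{p,p}(\mathbb{R}^d)$. Applying $\mathcal{J}_{-b}$ one more time and using (\ref{8.7new}) with parameters $(s,-b,p,p)$ transforms this back into $B^{s,b}_{p,p}(\mathbb{R}^d)$, which gives the desired conclusion with equivalence of quasi-norms.

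The only potentially delicate point is verifying that $\mathcal{J}_b$ indeed acts as a topological isomorphism of couples, so that the interpolation functor can be transported through it; this amounts to the standard observation that an isomorphism $T: A_0 \to B_0$ which restricts to an isomorphism $A_1 \to B_1$ satisfies $K(t, Tf; B_0, B_1) \asymp K(t, f; A_0, A_1)$, and is a routine consequence of (\ref{8.7new}). No further ingredients are required, so I do not expect any substantial obstacle beyond keeping track of this lifting bookkeeping.
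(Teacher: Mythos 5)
Your proposal is correct and follows essentially the same route as the paper's proof: lift via $\mathcal{J}_b$ using (\ref{8.7new}), interpolate, apply Lemma \ref{PrelimLemmaT}, and lift back with the inverse. One small imprecision: since the $\varphi_j$ have overlapping supports, $(\mathcal{J}_b)^{-1}$ is not literally equal to $\mathcal{J}_{-b}$; however, your argument only needs that $\mathcal{J}_b$ is an isomorphism with a bounded inverse between the relevant spaces (which is exactly what (\ref{8.7new}) gives), so this does not affect the validity of the proof.
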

\begin{proof}
By (\ref{8.7new}), we have
\begin{equation*}
	\mathcal{J}_b : B^{s_i,b}_{p_i,q_i}(\mathbb{R}^d) \longrightarrow B^{s_i}_{p_i,q_i}(\mathbb{R}^d), \quad i=0,1.
\end{equation*}
Hence, using the interpolation property and (\ref{8.6new}), we get
\begin{equation}\label{8.8new}
	\mathcal{J}_b : (B^{s_0,b}_{p_0,q_0}(\mathbb{R}^d), B^{s_1,b}_{p_1,q_1}(\mathbb{R}^d))_{\theta,p} \longrightarrow (B^{s_0}_{p_0,q_0}(\mathbb{R}^d), B^{s_1}_{p_1,q_1}(\mathbb{R}^d))_{\theta,p}  = B^{s}_{p,p}(\mathbb{R}^d).
\end{equation}
Since $(\mathcal{J}_b)^{-1}$ maps boundedly $B^{s}_{p,p}(\mathbb{R}^d)$ into $B^{s,b}_{p,p}(\mathbb{R}^d)$, it follows from (\ref{8.8new}) that
\begin{equation*}
	(B^{s_0,b}_{p_0,q_0}(\mathbb{R}^d), B^{s_1,b}_{p_1,q_1}(\mathbb{R}^d))_{\theta,p} \hookrightarrow B^{s,b}_{p,p}(\mathbb{R}^d).
\end{equation*}
Analogously, one can prove the reverse embedding, which completes the proof.
\end{proof}

As usual, we denote by $A'$ the dual space of a Banach space $A$\index{\bigskip\textbf{Spaces}!$A'$}\label{DUAL}.

The characterization of the dual spaces of $B^{s,b}_{p,q}(\mathbb{R}^d)$ was studied in \cite[Theorem 2.11.2]{Triebel1} and \cite[Theorem 3.1.10]{FarkasLeopold} (see also \cite[p. 78]{CobosDominguez3}). It reads as follows.

\begin{lem}\label{LemmaDual2}
	Let $1 < p < \infty, 1 \leq q < \infty$, and $-\infty < s, b< \infty$. Then
	\begin{equation*}
		(B^{s,b}_{p,q}(\mathbb{R}^d))' = B^{-s,-b}_{p',q'}(\mathbb{R}^d).
	\end{equation*}
\end{lem}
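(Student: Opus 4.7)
My plan is to reduce the result to the well-known duality theorem for classical Besov spaces, namely $(B^{s}_{p,q}(\mathbb{R}^d))' = B^{-s}_{p',q'}(\mathbb{R}^d)$ for $1 < p < \infty$ and $1 \leq q < \infty$ (see \cite[Theorem 2.11.2]{Triebel1}), by exploiting the logarithmic lifting operator $\mathcal{J}_b$ introduced in (\ref{LiftLog}). The key observation is that $\mathcal{J}_b$ is essentially multiplication by $(1+j)^b$ on the $j$-th Fourier block, which makes it a formally self-adjoint operator with respect to the standard dual pairing inherited from $L_2(\mathbb{R}^d)$, while simultaneously being an isomorphism between spaces that differ only in the logarithmic exponent.

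More precisely, by (\ref{8.7new}) the map $\mathcal{J}_b:B^{s,b}_{p,q}(\mathbb{R}^d)\to B^{s}_{p,q}(\mathbb{R}^d)$ is an isomorphism with inverse $\mathcal{J}_{-b}$, and likewise $\mathcal{J}_{-b}:B^{-s}_{p',q'}(\mathbb{R}^d)\to B^{-s,-b}_{p',q'}(\mathbb{R}^d)$ is an isomorphism. Since multiplication by $(1+j)^b$ against the resolution $\{\varphi_j\}$ is symmetric on the Fourier side, a direct computation shows that for $f \in \mathcal{S}(\mathbb{R}^d)$ and $g\in \mathcal{S}'(\mathbb{R}^d)$ one has $\langle \mathcal{J}_b f, g\rangle = \langle f, \mathcal{J}_b g\rangle$ (where $\langle\cdot,\cdot\rangle$ denotes the standard dual pairing). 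Consequently, the adjoint $\mathcal{J}_b^\ast$ coincides with $\mathcal{J}_b$ on tempered distributions, so that the diagram
$$
(B^{s,b}_{p,q}(\mathbb{R}^d))' \xleftarrow{\mathcal{J}_b^\ast=\mathcal{J}_b}(B^{s}_{p,q}(\mathbb{R}^d))' = B^{-s}_{p',q'}(\mathbb{R}^d) \xrightarrow{\mathcal{J}_{-b}} B^{-s,-b}_{p',q'}(\mathbb{R}^d)
$$
consists of isomorphisms, which yields the desired identification.

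The proof will thus proceed as follows. First, I recall the classical duality for $B^{s}_{p,q}(\mathbb{R}^d)$. Next, I invoke (\ref{8.7new}) to express every element of $B^{s,b}_{p,q}(\mathbb{R}^d)$ uniquely as $\mathcal{J}_{-b}h$ with $h\in B^{s}_{p,q}(\mathbb{R}^d)$ and equivalence of quasi-norms. Then, given any continuous linear functional $\Phi$ on $B^{s,b}_{p,q}(\mathbb{R}^d)$, I transport it to a functional $\widetilde{\Phi}(h)=\Phi(\mathcal{J}_{-b}h)$ on $B^{s}_{p,q}(\mathbb{R}^d)$, which by the classical duality can be represented as pairing with some $g\in B^{-s}_{p',q'}(\mathbb{R}^d)$. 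Finally, I re-express $\Phi(f) = \widetilde{\Phi}(\mathcal{J}_b f) = \langle \mathcal{J}_b f, g\rangle = \langle f, \mathcal{J}_b g\rangle$, identifying $\Phi$ with the element $\mathcal{J}_b g \in B^{-s,-b}_{p',q'}(\mathbb{R}^d)$, and conversely every such element defines a continuous functional.

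The main obstacle is the rigorous justification of the formal self-adjointness of $\mathcal{J}_b$ in the distributional pairing, together with the density statement $\mathcal{S}(\mathbb{R}^d)\hookrightarrow B^{s,b}_{p,q}(\mathbb{R}^d)$ densely (needed because $q<\infty$); density can be obtained either directly from Fourier-analytic approximation or by transporting through $\mathcal{J}_b$ the corresponding fact for classical Besov spaces in \cite{Triebel1}. Once these two ingredients are in place, the argument reduces to a short chain of isomorphisms and a symbolic identity for $\mathcal{J}_b$.
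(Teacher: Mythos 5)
The paper does not prove this lemma; it simply cites Triebel's classical duality theorem and its generalized-smoothness analogue in Farkas--Leopold. Your plan of reducing the weighted case to the classical case via the logarithmic lift $\mathcal{J}_b$ is a genuinely different (and arguably cleaner) route, so the structure of the argument is worth pursuing; however, there is one concrete slip that needs repair before the argument closes.

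You assert that $\mathcal{J}_b$ ``is an isomorphism with inverse $\mathcal{J}_{-b}$'' and build the back-and-forth transport of functionals on this. This is false: $\mathcal{J}_b\mathcal{J}_{-b}$ is the Fourier multiplier with symbol $\bigl(\sum_j(1+j)^b\varphi_j\bigr)\bigl(\sum_k(1+k)^{-b}\varphi_k\bigr)$, which, because consecutive $\varphi_j,\varphi_{j\pm1}$ overlap, contains cross-terms $(1+j)^b(1+k)^{-b}\varphi_j\varphi_k$ with $j\ne k$ and therefore does not collapse to $\sum_j\varphi_j=1$; one only gets an operator comparable to the identity, not the identity itself. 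The repair is to use the genuine Fourier-multiplier inverse $\mathcal{J}_b^{-1}$, whose symbol $m_b(\xi)^{-1}=\bigl(\sum_j(1+j)^b\varphi_j(\xi)\bigr)^{-1}$ is smooth, real, strictly positive, and comparable to $(1+j)^{-b}$ on the $j$-th annulus, so it enjoys exactly the same mapping property $\mathcal{J}_b^{-1}\colon B^{\sigma,c}_{\pi,\rho}(\mathbb{R}^d)\to B^{\sigma,c+b}_{\pi,\rho}(\mathbb{R}^d)$ (in particular $\mathcal{J}_b^{-1}\colon B^{s}_{p,q}\to B^{s,b}_{p,q}$) and is self-adjoint for the same reason as $\mathcal{J}_b$. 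With this substitution the transport of the functional $\Phi\mapsto\widetilde{\Phi}(h)=\Phi(\mathcal{J}_b^{-1}h)$, its representation by some $g\in B^{-s}_{p',q'}$, and the final identity $\Phi(f)=\langle\mathcal{J}_bf,g\rangle=\langle f,\mathcal{J}_bg\rangle$ with $\mathcal{J}_bg\in B^{-s,-b}_{p',q'}$ go through. A second, more minor point: self-adjointness of $\mathcal{J}_b$ in the \emph{bilinear} $\mathcal{S}/\mathcal{S}'$-pairing used for Besov duality requires the symbol to be \emph{even}, not merely real, which holds only after choosing the resolution of unity with $\varphi_0$ even (equivalently radial); this is available since the spaces are independent of the choice of $\varphi_0$, but it should be stated. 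Finally, note that your opening sentence should read ``$\mathcal{J}_b\colon B^{-s}_{p',q'}\to B^{-s,-b}_{p',q'}$'' rather than ``$\mathcal{J}_{-b}\colon B^{-s}_{p',q'}\to B^{-s,-b}_{p',q'}$''; your concluding computation already has the correct index, so this is just a typo, but it is worth fixing to keep the chain of isomorphisms consistent.
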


On the other hand, the dual space of $\mathbf{B}^{0,b}_{p,q}(\mathbb{R}^d)$ was characterized in \cite[Theorem 4.3]{CobosDominguez3} with the help of the logarithmic Lipschitz spaces $\text{Lip}^{(1,-b)}_{p,q}(\mathbb{R}^d)$ (see (\ref{Lipschitz})) and the lifting operator $I_\sigma$ given in (\ref{liftdef}).

\begin{lem}\label{LemmaDual}
	Let $1 < p < \infty, 1 \leq q < \infty$, and $b > -1/q$. Then $f$ belongs to $(\mathbf{B}^{0,b}_{p,q}(\mathbb{R}^d))'$ if and only if $I_{-1}f$ belongs to $\emph{Lip}^{(1,-b-1)}_{p',q'}(\mathbb{R}^d)$. Furthermore, we have
	\begin{equation*}
		\|f\|_{(\mathbf{B}^{0,b}_{p,q}(\mathbb{R}^d))'} \asymp \|I_{-1}f\|_{\emph{Lip}^{(1,-b-1)}_{p',q'}(\mathbb{R}^d)}.
	\end{equation*}
\end{lem}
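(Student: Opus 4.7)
The strategy combines three ingredients: the interpolation characterizations of $\mathbf{B}^{0,b}_{p,q}(\mathbb{R}^d)$ and $\text{Lip}^{(1,-b-1)}_{p',q'}(\mathbb{R}^d)$ in terms of the couple $(L_p, W^1_p)$ and its dual, a duality theorem for the limiting real method of type $(0,b),q$, and the lifting property of the Bessel potential operator $I_{-1}$.

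My plan is as follows. First, by (\ref{PrelimInterpolationnew}) one has
$$\mathbf{B}^{0,b}_{p,q}(\mathbb{R}^d) = (L_p(\mathbb{R}^d), W^1_p(\mathbb{R}^d))_{(0,b),q}.$$
Under the conditions $1 < p < \infty$, $1 \leq q < \infty$, and $b > -1/q$, together with the density of $W^1_p$ in $L_p$, one verifies (as the limiting analogue of (\ref{PrelimInterpolationEmbDense})) that $W^1_p$ is also dense in the limiting interpolation space, which is the input needed for any real-method duality theorem.

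Second, I would invoke a duality theorem for the limiting interpolation method. Under the above density, this theorem identifies $(\mathbf{B}^{0,b}_{p,q})'$ with a limiting interpolation space of type $(1,-b-1),q'$ on an appropriately re-indexed dual couple. The parameter shift $(0,b) \mapsto (1,-b-1)$ and $q \mapsto q'$ originates from the K-functional identity $K(t,a;A_0,A_1) = t\,K(1/t, a; A_1, A_0)$ combined with the change of variable $t \mapsto 1/t$ in the defining integrals, and is proved via the J-representation of the limiting method together with a Hahn–Banach argument on a sequence space of the form $\ell_{q'}$ with logarithmic weights. Third, identify the dual couple using $L_p' = L_{p'}$ and the fact that $I_1: W^1_p \to L_p$ is an isomorphism (Lemma \ref{LemmaLift}), whose adjoint realizes the identification $(W^1_p)' = H^{-1}_{p'}$. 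Fourth, apply Lemma \ref{LemmaLift} again: the operator $I_{-1}$ is simultaneously an isomorphism $H^{-1}_{p'} \to L_{p'}$ and $L_{p'} \to W^1_{p'}$, so it maps the dual couple isomorphically onto the standard Sobolev couple $(L_{p'}, W^1_{p'})$. By the interpolation property, $I_{-1}$ transfers $(\mathbf{B}^{0,b}_{p,q})'$ isomorphically onto $(L_{p'}, W^1_{p'})_{(1,-b-1),q'}$. Finally, (\ref{interpolationLipschitz}) identifies this latter space with $\text{Lip}^{(1,-b-1)}_{p',q'}(\mathbb{R}^d)$, where the admissibility condition $b+1 > 1/q'$ is precisely $b > -1/q$.

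The main obstacle will be step two, the duality theorem for the limiting interpolation method. Unlike the classical Lions–Peetre duality for $0 < \theta < 1$, the endpoint case $\theta = 0$ requires careful verification that the canonical duality pairing extends continuously and surjectively to the limiting interpolation space, together with a precise computation of how the logarithmic weight transforms under $t \mapsto 1/t$. These technical points — verifying density, tracking the edge behavior of the K-functional as $t \to 0^+$, and managing the shift of the logarithmic parameter from $b$ to $-b-1$ — constitute the essential content of the argument and match the hypothesis $b > -1/q$ and $q < \infty$ imposed in the statement.
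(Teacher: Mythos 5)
The paper does not give its own proof of this lemma; it simply cites \cite[Theorem 4.3]{CobosDominguez3}, so a direct comparison is not possible, but the outline you propose is very likely the intended one given the tools that paper and this one both emphasize (limiting interpolation formulae of the form (\ref{PrelimInterpolationnew}) and (\ref{interpolationLipschitz}), the lifting isomorphism of Lemma \ref{LemmaLift}, and duality). Your identification of the dual couple as $(H^{-1}_{p'}, L_{p'})$, the observation that $I_{-1}$ carries it isomorphically onto $(L_{p'}, W^1_{p'})$, and the computation that admissibility $b+1>1/q'$ for $\mathrm{Lip}^{(1,-b-1)}_{p',q'}$ is precisely $b>-1/q$ are all correct and exactly account for the appearance of $I_{-1}$ in the statement.

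The genuine gap is the step you yourself flag: the duality theorem identifying $\bigl((A_0,A_1)_{(0,b),q}\bigr)'$ with a $(1,-b-1),q'$ limiting space of the dual couple. You assert the parameter shift and gesture at the mechanism (the $J$-representation, the identity $K(t,a;A_0,A_1)=tK(1/t,a;A_1,A_0)$, a Hahn--Banach argument on weighted $\ell_{q'}$), but none of this is carried out, and in the limiting regime the usual Lions--Peetre duality does not apply off the shelf: the $J$- and $K$-characterizations of the limiting spaces need not coincide without a logarithmic correction, and it is exactly that correction which produces the extra $-1$ shift you need. Moreover, since the integrals in (\ref{limitinginterpolation}) run only over $(0,1)$, the truncation introduces boundary terms that must be controlled in the duality pairing; this is part of what makes the limiting-case duality a theorem in its own right rather than a corollary of the classical one. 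Without either a precise reference for a limiting duality theorem with this parametrization or a proof of it, the argument does not close. Everything else in your plan is sound, so repairing it amounts to supplying (or citing, as the paper does) the limiting duality statement with the exact $(0,b),q\mapsto(1,-b-1),q'$ correspondence.
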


\subsection{Proof of Proposition \ref{Proposition 7.1}}
\label{section8.5}
Assume first that $p = \min\{2,p,q\}$ and $p > \frac{2 d}{d+1}$. Let $f$ be such that $\widehat{f}(\xi)=F_0(|\xi|)$ with
\begin{equation*}
	F_0(t) = t^{-d + d/p} (1 + |\log t|)^{-\beta}, \quad t > 0,
\end{equation*}
where $\max \{1/p, b + 1/q + 1/p - \varepsilon\} < \beta < b + 1/q + 1/p$. We have
\begin{align*}
	\left(\int_0^1 t^{d p - d - 1} F_0^p(t) dt\right)^{1/p} + \left(\int_1^\infty t^{d q - d q/p - 1} (1 + \log t)^{(b + 1/p - \varepsilon) q} F_0^q(t) d t\right)^{1/q} \\
	& \hspace{-11cm}\asymp \left(\int_0^1 (1 - \log t)^{- \beta p} \frac{dt}{t}\right)^{1/p} + \left(\int_1^\infty (1 + \log t)^{(b + 1/p - \varepsilon - \beta) q} \frac{d t}{t}\right)^{1/q} < \infty
\end{align*}
which means, according to Theorem \ref{Theorem 3.6}, that $f \in B^{0,b+1/p-\varepsilon}_{p,q}(\mathbb{R}^d)$. However, using (\ref{3.3.3}) and
\begin{align*}
	\int_1^\infty (1 + \log t)^{b q} \left(\int_t^\infty u^{d p - d-1} F_0^p(u) d u\right)^{q/p} \frac{dt}{t} \\
	& \hspace{-5cm}= \int_1^\infty (1 + \log t)^{b q} \left(\int_t^\infty (1 + \log u)^{- \beta p} \frac{du}{u}\right)^{q/p} \frac{dt}{t} \\
	& \hspace{-5cm} \asymp \int_1^\infty (1 + \log t)^{(b - \beta + 1/p) q} \frac{dt}{t} = \infty
\end{align*}
we derive that $f \not \in \mathbf{B}^{0,b}_{p,q}(\mathbb{R}^d)$. This proves the case (a).

Suppose now that $2 = \min \{2,p,q\}$. Set
\begin{equation*}
	\{b_j\}_{j \in \mathbb{N}_0} = \{(1 + j)^{- \delta}\}_{j \in \mathbb{N}_0}, \quad \max\{1/2, b+1/q+1/2-\varepsilon\} < \delta < b + 1/q +1/2,
\end{equation*}
and
\begin{equation*}
	W(x) \sim \sum_{j=3}^\infty b_j e^{i (2^j - 2) x_1}, \quad x \in \mathbb{R}^d.
\end{equation*}
Let $f = \psi W$ with $\psi \in \mathcal{S}(\mathbb{R}^d) \backslash \{0\}$ satisfying (\ref{4.2}). Applying (\ref{4.3}), we obtain
\begin{align*}
	\|f\|^q_{B^{0,b+1/2-\varepsilon}_{p,q}(\mathbb{R}^d)} & \asymp \sum_{j=3}^\infty (1 + j)^{(b+1/2 - \varepsilon) q} |b_j|^q \\
	& = \sum_{j=3}^\infty (1 + j)^{(b + 1/2 - \varepsilon - \delta) q} < \infty
\end{align*}
and by (\ref{4.6}) we get
\begin{align*}
	\|f\|^q_{\mathbf{B}^{0,b}_{p,q}(\mathbb{R}^d)} & \asymp \sum_{j=3}^\infty (1 + j)^{b q} \left(\sum_{k=j}^\infty |b_k|^2\right)^{q/2} \\
	& = \sum_{j=3}^\infty (1 + j)^{b q} \left(\sum_{k=j}^\infty (1 + k)^{- 2 \delta} \right)^{q/2} \\
	& \asymp \sum_{j=3}^\infty (1 + j)^{(b - \delta +1/2) q} = \infty.
\end{align*}
So, $f \in B^{0,b+1/2-\varepsilon}_{p,q}(\mathbb{R}^d) \backslash \mathbf{B}^{0,b}_{p,q}(\mathbb{R}^d)$. The case (b) is shown.


It remains to show the case (c). This will be done by contradiction. Let us assume that there is $\varepsilon > 0$ such that the embedding
\begin{equation}\label{7.5}
	B^{0,b+1/q-\varepsilon}_{p,q}(\mathbb{R}^d) \hookrightarrow \mathbf{B}^{0,b}_{p,q}(\mathbb{R}^d)
\end{equation}
holds. Let $\theta \in (0,1)$. Then, by (\ref{PrelimInterpolationBWnew}), (\ref{7.5}), (\ref{PrelimInterpolationnew2.3}) and (\ref{BesovComparison}), we have
\begin{align*}
	B^{\theta, (1-\theta)(b+1/q-\varepsilon)}_{p,q}(\mathbb{R}^d) & = (B^{0,b+1/q-\varepsilon}_{p,q}(\mathbb{R}^d), W^1_p)_{\theta,q}  \\
	&\hookrightarrow (\mathbf{B}^{0,b}_{p,q}(\mathbb{R}^d), W^1_p)_{\theta,q}
	= B^{\theta, (1-\theta)(b+1/q)}_{p,q}(\mathbb{R}^d),
\end{align*}
which is not true (see Proposition \ref{RecallEmb*optim}).

\qed

\subsection{Proof of Proposition \ref{Proposition 7.2}}

Similar ideas to those given in the proof of Proposition \ref{Proposition 7.1} yield the desired results in the cases $p=\max\{2,p,q\}$ and $2 = \max\{2,p,q\}$. For instance, if $p=\max\{2,p,q\}$ and the function $f(x)$ is such that
$\widehat{f}(\xi)=F_{0}(|\xi|)$ with
\begin{equation*}
	F_0(t) = t^{-d+d/p} (1 + |\log t|)^{-\beta},\quad  t > 0,
\end{equation*}
where $b + 1/q + 1/p < \beta < b + 1/q + 1/p + \varepsilon$, then
 $f \in \mathbf{B}^{0,b}_{p,q}(\mathbb{R}^d) \backslash B^{0,b+1/p+\varepsilon}_{p,q}(\mathbb{R}^d)$.


If $q = \max\{2,p,q\}$ (and so, $q > 1$), let us assume that there exists $\varepsilon > 0$ such that
\begin{equation}\label{7.13}
	\mathbf{B}^{0,b}_{p,q}(\mathbb{R}^d) \hookrightarrow B^{0,b+1/q+\varepsilon}_{p,q}(\mathbb{R}^d).
\end{equation}
Then, we note that $\mathbf{B}^{0,b}_{p,q}(\mathbb{R}^d)$ is densely embedded into $B^{0,b+1/q+\varepsilon}_{p,q}(\mathbb{R}^d)$. Indeed, it is clear that
\begin{equation*}
	\mathbf{B}^s_{p,q}(\mathbb{R}^d) \hookrightarrow \mathbf{B}^{0,b}_{p,q}(\mathbb{R}^d) \quad \text{for any} \quad  s > 0
\end{equation*}
and, by (\ref{PrelimInterpolationEmbDense}), $\mathbf{B}^s_{p,q}(\mathbb{R}^d)=B^s_{p,q}(\mathbb{R}^d)$ (see (\ref{BesovComparison})) is dense in $B^{0,b+1/q+\varepsilon}_{p,q}(\mathbb{R}^d)$. Remark also that  $B^{0,b+1/q+\varepsilon}_{p,q}(\mathbb{R}^d)=(B^{-s}_{p,q}(\mathbb{R}^d), B^s_{p,q}(\mathbb{R}^d))_{1/2,q;b+1/q+\varepsilon}$ (cf. Lemma \ref{PrelimLemmaCF}).

Taking duals in (\ref{7.13}) and applying Lemmas \ref{LemmaDual} and \ref{LemmaDual2}, we derive that
\begin{equation}\label{7.14}
	\|I_{-1} f\|_{\text{Lip}^{(1,-b-1)}_{p',q'}(\mathbb{R}^d)} \asymp \|f\|_{(\mathbf{B}^{0,b}_{p,q}(\mathbb{R}^d))'} \lesssim \|f\|_{(B^{0,b+1/q+\varepsilon}_{p,q}(\mathbb{R}^d))'} \asymp \|f\|_{B^{0,-b-1/q-\varepsilon}_{p',q'}(\mathbb{R}^d)},
\end{equation}
for all $f \in B^{0,-b-1/q-\varepsilon}_{p',q'}(\mathbb{R}^d)$, where $I_{-1} f$ is the lifting operator given by
(\ref{liftdef}).
Hence, one can rewrite (\ref{7.14}) as
\begin{equation}\label{7.15}
	I_{-1} : B^{0,-b-1/q-\varepsilon}_{p',q'}(\mathbb{R}^d) \longrightarrow \text{Lip}^{(1,-b-1)}_{p',q'}(\mathbb{R}^d).
\end{equation}
Since $\text{id}_{\mathbb{R}^d} = I_{-1} \circ I_1$, using Lemma \ref{LemmaLift}, we obtain the embedding
\begin{equation}\label{7.16}
	 B^{1,-b-1/q-\varepsilon}_{p',q'}(\mathbb{R}^d) \hookrightarrow \text{Lip}^{(1,-b-1)}_{p',q'}(\mathbb{R}^d).
\end{equation}
Let us show that  (\ref{7.16}) does not hold. Indeed, for $0 < s < 1$, according to (\ref{PrelimInterpolationnew4}) we have
\begin{align*}
	\text{Lip}^{(1,-b-1)}_{p',q'}(\mathbb{R}^d) = (\mathbf{B}^s_{p',q'}(\mathbb{R}^d), W^1_{p'} (\mathbb{R}^d))_{(1,-b-1),q'}.
\end{align*}
Then, by the interpolation property, (\ref{7.16}) and (\ref{BesovComparison}), we get
\begin{equation}\label{7.17}
	(B^s_{p',q'}(\mathbb{R}^d), B^{1,-b-1/q-\varepsilon}_{p',q'}(\mathbb{R}^d))_{\theta,q'} \hookrightarrow (B^s_{p',q'}(\mathbb{R}^d), (B^s_{p',q'}(\mathbb{R}^d), W^1_{p'} (\mathbb{R}^d))_{(1,-b-1),q'})_{\theta,q'}
\end{equation}
for any $0 < \theta < 1$.

The left-hand side space in (\ref{7.17}) can be computed by using Lemma \ref{PrelimLemmaCF}. We have
\begin{equation*}
	(B^s_{p',q'}(\mathbb{R}^d), B^{1,-b-1/q-\varepsilon}_{p',q'}(\mathbb{R}^d))_{\theta,q'}  = B^{\xi, \theta (-b-1/q-\varepsilon)}_{p',q'}(\mathbb{R}^d)
\end{equation*}
where $\xi = (1-\theta) s + \theta$.

Next we deal with the right-hand side space in (\ref{7.17}).
Applying Lemma \ref{PrelimLemma7.2}(ii) with $\alpha = 0$ and (\ref{PrelimInterpolationBWnew}), we obtain
\begin{align*}
	(B^s_{p',q'}(\mathbb{R}^d), (B^s_{p',q'}(\mathbb{R}^d), W^1_{p'} (\mathbb{R}^d))_{(1,-b-1),q'})_{\theta,q'}  \\
	& \hspace{-5cm}= (B^s_{p',q'}(\mathbb{R}^d), W^1_{p'} (\mathbb{R}^d))_{\theta, q'; \theta (-b-1+1/q')} \\
	& \hspace{-5cm} = B^{\xi, \theta (-b -1/q)}_{p',q'}(\mathbb{R}^d)
\end{align*}
where we have also used the fact that $W^1_{p'}(\mathbb{R}^d) = H^1_{p'}(\mathbb{R}^d)$.

Hence, we conclude that
\begin{equation*}
	B^{\xi, \theta (-b-1/q-\varepsilon)}_{p',q'}(\mathbb{R}^d) \hookrightarrow B^{\xi, \theta (-b -1/q)}_{p',q'}(\mathbb{R}^d)
\end{equation*}
which is not true (see Proposition \ref{RecallEmb*optim}). Consequently, (\ref{7.16}) and, therefore, (\ref{7.13}) do not hold.
 \qed

\subsection{Proof of Theorem \ref{Proposition 7.3}}
We concentrate on the only-if-part. This essentially follows the proof of Theorem \ref{Theorem 6.6}. Let us assume that $\mathbf{B}^{0,b}_{p,q}(\mathbb{R}^d) = B^{0,\xi}_{p,q}(\mathbb{R}^d)$. For $0 < s < 1$, applying (\ref{PrelimInterpolationBWnew}) and (\ref{PrelimInterpolationnew2.3}), we have
	\begin{equation*}
		B^{s, (1-s) \xi}_{p,q}(\mathbb{R}^d)  = (B^{0,\xi}_{p,q}(\mathbb{R}^d), W^1_p(\mathbb{R}^d))_{s,q}  =  (\mathbf{B}^{0,b}_{p,q}(\mathbb{R}^d), W^1_p(\mathbb{R}^d))_{s,q}  = B^{s, (1-s)(b+1/q)}_{p,q}(\mathbb{R}^d),
	\end{equation*}
	which yields, by Proposition \ref{RecallEmb*optim}, that $\xi = b+1/q$.
	
	In order to show that $q=2$ we proceed by contradiction. Suppose that, e.g., $2 < q$. Let $\max\{b+2/q, 1/2\} < \beta < b + 1/q+ 1/2$,
	\begin{equation*}
		W (x) \sim \sum_{j=3}^\infty (1 + j)^{-\beta} e^{i (2^j - 2) x_1}, \quad x \in \mathbb{R}^d,
	\end{equation*}
	and $f = \psi W$ with $\psi \in \mathcal{S}(\mathbb{R}^d) \backslash \{0\}$ such that (\ref{4.2}) holds. According to (\ref{4.3}) and (\ref{4.6}), one can easily check that $f \in B^{0,b+1/q}_{p,q}(\mathbb{R}^d) \backslash \mathbf{B}^{0,b}_{p,q}(\mathbb{R}^d)$. Using similar ideas one can show that $q < 2$ is not possible. Hence, $q=2$ and so, $\xi = b+1/2$.
	
	Arguing again by contradiction we show that $p = 2$. Suppose first that $p > 2$. Take any $\delta$ such that $b + 1/p + 1/2 < \delta < b + 1$. Set
	\begin{equation*}
		F_0(t) = t^{-(d - d/p)} (1 + |\log t|)^{- \delta}, \quad t > 0,
	\end{equation*}
	and $\widehat{f}(\xi) = F_0(|\xi|), \xi \in \mathbb{R}^d$. Applying Theorems \ref{Theorem 3.6} and \ref{Theorem 3.2}, we derive that $f \in \mathbf{B}^{0,b}_{p,2}(\mathbb{R}^d) \backslash B^{0,b+1/2}_{p,2}(\mathbb{R}^d)$, which is not possible. Analogously, if $p < 2$ we arrive at a contradiction. Thus $p=2$, which  completes the proof. 	\qed
	
\subsection{Proof of Proposition \ref{Proposition 8.4}}

Assume that the conditions given in (a) hold. Let $\beta$ be such that $\max\{b + 1/p +1/q - \varepsilon, 1/p\} < \beta < b +1/p + 1/q$. We define
\begin{equation*}
		 F_0(t) = \left\{\begin{array}{lcl}
                            t^{-d + d/p_0 + \delta} & ,  & 0 < t  < 1, \\
                            & & \\
                            t^{-s_0 + d/p_0 -d} (1 + \log t)^{-\beta} & , & t \geq 1,
            \end{array}
            \right.
	\end{equation*}
where $\delta > 0$, and $\widehat{f}(\xi) = F_0(|\xi|), \xi \in \mathbb{R}^d$. Then, taking into account (\ref{differentialdimension2}), Theorems \ref{Theorem 3.2} and \ref{Theorem 3.6} imply that $f \in B^{s_0, b+1/p - \varepsilon}_{p_0,q}(\mathbb{R}^d) \backslash \mathbf{B}^{0,b}_{p,q}(\mathbb{R}^d)$. This proves (a).


To prove (b), suppose that there is $\varepsilon > 0$ such that the following embedding holds
\begin{equation}\label{8.31}
	B^{s_0, b+1/q - \varepsilon}_{p_0,q} (\mathbb{R}^d) \hookrightarrow \mathbf{B}^{0,b}_{p,q}(\mathbb{R}^d).
\end{equation}
Let $k \in \mathbb{N}$ be such that $k > s_0 = d/p_0-d/p$. Let $s' = k -s_0 =k -d/p_0+d/p > 0$. First, we note that $k > s'$ because $s_0 > 0$. Therefore, by (\ref{PrelimInterpolationBWnew}), (\ref{8.31}), (\ref{e0*}), (\ref{BesovComparison}), (\ref{PrelimInterpolationnew3}), we get
\begin{align*}
	B^{(1-\theta) s_0 + \theta k, (1-\theta)(b+1/q-\varepsilon)}_{p_0,q}(\mathbb{R}^d) &= (B^{s_0, b+1/q - \varepsilon}_{p_0,q} (\mathbb{R}^d) , W^k_{p_0}(\mathbb{R}^d))_{\theta, q} \\
	& \hspace{-3cm}\hookrightarrow ( \mathbf{B}^{0,b}_{p,q}(\mathbb{R}^d), \mathbf{B}^{s'}_{p,p_0}(\mathbb{R}^d))_{\theta,q} = B^{\theta s', (1-\theta)(b+1/q)}_{p,q}(\mathbb{R}^d)
\end{align*}
with $\theta s' - d/p = (1-\theta) s_0 + \theta k -d/p_0$. This contradicts (\ref{8.4}) because $\varepsilon > 0$. 	\qed

\subsection{Proof of Proposition \ref{Proposition 8.5}}

Using similar ideas to those given in the proof of Proposition \ref{Proposition 8.4}, we can show that the function
\begin{equation*}
		 F_0(t) = \left\{\begin{array}{lcl}
                            t^{-d + d/p_1 + \delta} & ,  & 0 < t  < 1, \\
                            & & \\
                            t^{-s_1 + d/p_1 -d} (1 + \log t)^{-\beta} & , & t \geq 1,
            \end{array}
            \right.
	\end{equation*}
	where $\delta > 0$ and $b + 1/q + 1/p < \beta < b + 1/q + 1/p + \varepsilon$, allows us to construct a desired counterexample, which implies (a).
	
We now proceed to prove (b). This will be done by contradiction. Let us assume that there is $\varepsilon > 0$ for which the embedding
\begin{equation}\label{8.34}
	\mathbf{B}^{0,b}_{p,q}(\mathbb{R}^d) \hookrightarrow B^{s_1, b+1/q+\varepsilon}_{p_1,q}(\mathbb{R}^d)
\end{equation}
holds. In fact, the previous embedding is dense because $\mathcal{S}(\mathbb{R}^d)$ is dense in $B^{s_1, b+1/q+\varepsilon}_{p_1,q}(\mathbb{R}^d)$ (see \cite[Theorem 2.3.3]{Triebel1} and \cite[p. 13]{FarkasLeopold}) and, obviously, $\mathcal{S}(\mathbb{R}^d) \subset \mathbf{B}^{0,b}_{p,q}(\mathbb{R}^d)$. Noting that, by assumption, $q > 1$ and then, applying a duality argument similar to that given in the proof of Proposition \ref{Proposition 7.2}, which relies on Lemmas \ref{LemmaDual2} and \ref{LemmaDual}, we derive that
\begin{equation*}
	I_{-1} : B^{-s_1, -b-1/q-\varepsilon}_{p_1',q'}(\mathbb{R}^d) \longrightarrow \text{Lip}^{(1,-b-1)}_{p',q'}(\mathbb{R}^d).
\end{equation*}
On the other hand, by Lemma \ref{LemmaLift}, we know that $I_1 : B^{-s_1 + 1, -b-1/q-\varepsilon}_{p_1',q'}(\mathbb{R}^d) \longrightarrow B^{-s_1, -b-1/q-\varepsilon}_{p_1',q'}(\mathbb{R}^d)$, which results in
\begin{equation}\label{8.35}
	B^{-s_1 + 1, -b-1/q-\varepsilon}_{p_1',q'}(\mathbb{R}^d) \hookrightarrow \text{Lip}^{(1,-b-1)}_{p',q'}(\mathbb{R}^d).
\end{equation}

Take any $p_1' < u < p'$ with $u \neq q'$. In particular, there is $\theta \in (0,1)$ such that
\begin{equation}\label{8.36}
 \frac{1-\theta}{p'} + \frac{\theta}{p_1'} = \frac{1}{u}.
 \end{equation}
 We may assume, without loss of generality, that $u < q'$ and so, we define $v$ given by the equation
 \begin{equation}\label{8.37}
 	\frac{1-\theta}{v} + \frac{\theta}{q'} = \frac{1}{u}.
 \end{equation}
Let $\delta = -b-1/q-\varepsilon$ and $0 < s < 1$. According to (\ref{PrelimInterpolationnew4}), we have
\begin{equation*}
	\text{Lip}^{(1,-b-1)}_{p',q'}(\mathbb{R}^d) = (B^{s,\delta}_{p',v}(\mathbb{R}^d), W^1_{p'}(\mathbb{R}^d))_{(1,-b-1),q'}
\end{equation*}
where we have also applied (\ref{BesovComparison}).
Hence, one may rewrite (\ref{8.35}) as
\begin{equation}\label{8.38}
	B^{-s_1 + 1, \delta}_{p_1',q'}(\mathbb{R}^d) \hookrightarrow (B^{s,\delta}_{p',v}(\mathbb{R}^d), W^1_{p'}(\mathbb{R}^d))_{(1,-b-1),q'}.
\end{equation}
Interpolating with parameters $(\theta,u)$ the embedding (\ref{8.38}) and the trivial embedding $B^{s,\delta}_{p',v}(\mathbb{R}^d) \hookrightarrow B^{s,\delta}_{p',v}(\mathbb{R}^d)$ implies
\begin{equation}\label{8.39}
(B^{s,\delta}_{p',v}(\mathbb{R}^d), B^{-s_1 + 1, \delta}_{p_1',q'}(\mathbb{R}^d))_{\theta,u} \hookrightarrow (B^{s,\delta}_{p',v}(\mathbb{R}^d), (B^{s,\delta}_{p',v}(\mathbb{R}^d), W^1_{p'}(\mathbb{R}^d))_{(1,-b-1),q'})_{\theta,u}.
\end{equation}

Next we identify the interpolation spaces in (\ref{8.39}) with Besov spaces. Concerning the target space, by Lemma \ref{PrelimLemma7.2}(ii) with $\alpha = 0$ and (\ref{PrelimInterpolationBWnew}), we have
\begin{align}
	(B^{s,\delta}_{p',v}(\mathbb{R}^d), (B^{s,\delta}_{p',v}(\mathbb{R}^d), W^1_{p'}(\mathbb{R}^d))_{(1,-b-1),q'})_{\theta,u} & = (B^{s,\delta}_{p',v}(\mathbb{R}^d), W^1_{p'}(\mathbb{R}^d))_{\theta, u; \theta (-b-1/q)} \nonumber \\
	&\hspace{-5cm} = B^{(1-\theta) s + \theta, \delta + \varepsilon \theta}_{p',u}(\mathbb{R}^d). \label{8.40}
\end{align}
Regarding the source space in (\ref{8.39}), using Lemma \ref{PrelimLemmaT2} with (\ref{8.36}) and (\ref{8.37}), we obtain
\begin{equation}\label{8.42}
	(B^{s,\delta}_{p',v}(\mathbb{R}^d), B^{-s_1 + 1, \delta}_{p_1',q'}(\mathbb{R}^d))_{\theta,u} = B^{(1-\theta) s + \theta(-s_1+1), \delta}_{u,u}(\mathbb{R}^d).
\end{equation}

Finally, by (\ref{8.39}), (\ref{8.40}) and (\ref{8.42}), we have
\begin{equation}\label{8.43}
	B^{(1-\theta) s + \theta(-s_1+1), \delta}_{u,u}(\mathbb{R}^d) \hookrightarrow B^{(1-\theta) s + \theta, \delta + \varepsilon \theta}_{p',u}(\mathbb{R}^d)
\end{equation}
satisfying that
\begin{equation*}
	(1-\theta) s + \theta (-s_1 + 1) -\frac{d}{u} = (1-\theta)s + \theta - \frac{d}{p'}.
\end{equation*}
However, since $\varepsilon > 0$, the embedding (\ref{8.43}) contradicts (\ref{8.4}).
Therefore, (\ref{8.34}) does not hold.		\qed

\subsection{Proofs of Proposition \ref{Proposition 8.6new} and \ref{Proposition 8.7new}}\label{section8.10} Firstly we prove Proposition \ref{Proposition 8.6new}. The statement (i) follows from (\ref{8.4}).


In order to prove (ii), we consider the function
$f=\psi W$, where $\psi \in \mathcal{S}(\mathbb{R}^d) \backslash \{0\}$ with (\ref{4.2}) and
	\begin{equation*}
		W(x) \sim \sum_{j=3}^\infty (1 + j)^{-\beta_1} e^{i (2^j -2) x_1}, \quad x \in \mathbb{R}^d,
	\end{equation*}
	 where $\beta_1 > b + 1/2 + 1/q$. Clearly, Proposition \ref{Proposition 4.1} yields that $f \not \in B^{s_0,b+1/\min\{p,q\}}_{p_0,q}(\mathbb{R}^d)$ because $s_0 > 0$, but $f \in B^{0,b+1/2}_{p,q}(\mathbb{R}^d)$.

Finally we show (iii). We proceed by contradiction and assume that
\begin{equation}\label{8.45new}
B^{0,b + 1/2}_{p,q}(\mathbb{R}^d) \cup B^{s_0, b + 1/\min\{p,q\}}_{p_0,q}(\mathbb{R}^d) = \mathbf{B}^{0,b}_{p,q}(\mathbb{R}^d).
\end{equation}
In particular, we have that
\begin{equation}\label{8.46new}
(\mathfrak{L} \cap B^{0,b + 1/2}_{p,q}(\mathbb{R}^d)) \cup (\mathfrak{L} \cap B^{s_0, b + 1/\min\{p,q\}}_{p_0,q}(\mathbb{R}^d)) = \mathfrak{L}  \cap \mathbf{B}^{0,b}_{p,q}(\mathbb{R}^d).
\end{equation}
Since $s_0 > 0$, it is clear from Proposition \ref{Proposition 4.1} that the left-hand side in (\ref{8.46new}) coincides with $\mathfrak{L} \cap B^{0,b + 1/2}_{p,q}(\mathbb{R}^d)$ and then,
\begin{equation}\label{8.47new}
\mathfrak{L} \cap B^{0,b + 1/2}_{p,q}(\mathbb{R}^d) = \mathfrak{L}  \cap \mathbf{B}^{0,b}_{p,q}(\mathbb{R}^d).
\end{equation}
However, (\ref{8.47new}) is no longer true. Indeed, let us consider the sequence of Fourier series
 \begin{equation*}
 	f_n(x) \sim \psi (x) e^{i (2^n - 2) x_1},  \quad x \in \mathbb{R}^d, \quad  n \geq 3.
\end{equation*}
Here, $\psi \in \mathcal{S}(\mathbb{R}^d) \backslash \{0\}$ such that (\ref{4.2}) holds. Then, applying Proposition \ref{Proposition 4.1} and Theorem \ref{Theorem 4.2}, we obtain
\begin{equation*}
	\|f_n\|_{B^{0,b+1/2}_{p,q}(\mathbb{R}^d)} \asymp (1 + n)^{b+1/2} \quad \text{and} \quad \|f_n\|_{\mathbf{B}^{0,b}_{p,q}(\mathbb{R}^d)} \asymp (1 + n)^{b+1/q}.
\end{equation*}
In light of the condition $2 < q$, this implies that (\ref{8.47new}) is not valid. Hence, (\ref{8.45new}) is not true and the proof of Proposition \ref{Proposition 8.6new} is completed.

The proof of Proposition \ref{Proposition 8.7new} follows the same lines as the proof for Proposition  \ref{Proposition 8.6new}. The details are left to the reader.

\qed

\newpage
\section{Various characterizations of Besov  spaces}\label{kfunctional}

Let $1 \leq p \leq \infty$ and $s > 0$. The Riesz potential space $\dot{\mathscr{L}}^{s}_p(\mathbb{R}^d)$\index{\bigskip\textbf{Spaces}!$\dot{\mathscr{L}}^{s}_p(\mathbb{R}^d)$}\label{RIESZ} is defined by
\begin{equation}\label{RieszSpace}
\dot{\mathscr{L}}^{s}_p(\mathbb{R}^d)=\{f\in L_p(\mathbb{R}^d): \|f\|_{\dot{\mathscr{L}}^{s}_p(\mathbb{R}^d)}=\|J_s f\|_{L_p(\mathbb{R}^d)}<\infty \},
\end{equation}
where $J_s f$\index{\bigskip\textbf{Operators}!$J_s$}\label{RIESZOPERATOR} denotes the Riesz potential of a function $f$, i.e.,
\begin{equation}\label{RieszPotential}
	J_s f= f*g_s, \quad \widehat{g_s}(\xi)=|\xi|^s.
\end{equation}
For any $s\in\mathbb{R}$,
 one can introduce the spaces $\dot{\mathscr{L}}^{s}_p(\mathbb{R}^d)$ as subspaces of $\mathcal{S}'(\mathbb{R}^d)$ with $\|J_s f\|_{L_p(\mathbb{R}^d)}<\infty$  modulo polynomials. We refer, e.g., to \cite[Chapter 3]{Peetre} and \cite[Chapter 5]{Triebel1} for further details.

Let $1\leq p \leq \infty$ and $k \in \mathbb{N}$. The (homogeneous) Sobolev space $\dot{W}^k_p(\mathbb{R}^d)$\index{\bigskip\textbf{Spaces}!$\dot{W}^k_p(\mathbb{R}^d)$}\label{SOBHOM} is formed by all $f \in L_p(\mathbb{R}^d)$ for which
\begin{equation*}
	  \|f\|_{\dot{W}^{k}_p(\mathbb{R}^d)} = \sum_{|\alpha| = k} \|D^\alpha f\|_{L_p(\mathbb{R}^d)} < \infty.
\end{equation*}
It is well known that
\begin{equation}\label{LPHom}
 \dot{\mathscr{L}}^{k}_p(\mathbb{R}^d) = \dot{W}^k_p(\mathbb{R}^d), \quad 1 < p < \infty,
 \end{equation}
with equivalence of (semi-) norms.

For future use, we recall two basic properties of homogeneous Besov spaces. The lifting property of the homogeneous Besov spaces $\dot{B}^s_{p,q}(\mathbb{R}^d)$\index{\bigskip\textbf{Spaces}!$\dot{B}^s_{p,q}(\mathbb{R}^d)$}\label{BESOVHOM} (see \cite[Theorem 1, Section 5.2.3, p. 242]{Triebel1}) claims that
	\begin{equation}\label{5.5new3}
		\|J_{s_0} f\|_{\dot{B}^{s_1}_{p, q} (\mathbb{R}^d)} \asymp \|f\|_{\dot{B}^{s_0 + s_1}_{p, q} (\mathbb{R}^d)}, \quad \text{for} \quad -\infty < s_0, s_1 < \infty \quad \text{and}  \quad 0 < p, q \leq \infty.
	\end{equation}
	 One can compare homogeneous Besov spaces with their nonhomogeneous counterparts (cf. \cite[p. 148]{BerghLofstrom} and \cite[Remark 3, Section 5.2.3, p. 242]{Triebel1}). More specifically, we have
	\begin{equation}\label{5.5new2}
	B^s_{p,q}(\mathbb{R}^d) = L_p(\mathbb{R}^d) \cap \dot{B}^s_{p,q}(\mathbb{R}^d) \quad \text{and} \quad \|f\|_{B^s_{p,q}(\mathbb{R}^d)} \asymp \|f\|_{L_p(\mathbb{R}^d) } + \|f\|_{\dot{B}^s_{p,q}(\mathbb{R}^d) }
	\end{equation}
	if $s > 0, 1 \leq p \leq \infty$, and $1 \leq q \leq \infty$.

\subsection{Characterizations involving Riesz potential space}
The goal of this section is to provide an abstract characterization of the spaces $\mathbf{B}^{s,b}_{p,q}(\mathbb{R}^d)$ in terms of the $K$-functional (\ref{Peetre}) 
 for the couple
 $(L_p(\mathbb{R}^d),\dot{\mathscr{L}}^{s}_p(\mathbb{R}^d))$, paying special attention to the delicate cases $p=1, \infty$ and $s=0$. As applications, we will obtain descriptions of the spaces $\mathbf{B}^{s,b}_{p,q}(\mathbb{R}^d)$ defined by the moduli of smoothness in terms of other means such as ball and sphere averages (Subsection \ref{balls1}), Bochner-Riesz means (Subsection \ref{balls2}), heat kernels or thermic extensions (Subsection \ref{balls3}).

\begin{thm}\label{Theorem $K$-functional}
Assume that $s \geq 0, 1 \le p \le \infty,$ $0 < q \leq \infty,$ and $-\infty < b < \infty$. Let $\alpha> s$. Then
\begin{equation}\label{$K$ functional characterization 1}
\|f\|_{\mathbf{B}^{s,b}_{p,q}(\mathbb{R}^d)}
\asymp \|f\|_{L_p(\mathbb{R}^d)} + \left(\int_0^1 t^{-s q} (1-\log t)^{b q}
K(t^{\alpha},f;L_p(\mathbb{R}^d), \dot{\mathscr{L}}^{\alpha}_p(\mathbb{R}^d))^q
 \frac{dt}{t}\right)^{1/q}.
 \end{equation}
The corresponding result for periodic spaces also holds true.
\end{thm}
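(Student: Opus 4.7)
The plan is to establish the pointwise two-sided estimate
\[
\omega_k(f,t)_p \;\lesssim\; K(t^\alpha,f;L_p(\mathbb{R}^d),\dot{\mathscr{L}}^\alpha_p(\mathbb{R}^d)) \;\lesssim\; \omega_k(f,t)_p + t^\alpha\|f\|_{L_p(\mathbb{R}^d)}, \quad 0<t\le 1,
\]
for any integer $k>\alpha$, and then to substitute it into the $L_q$ norm with weight $t^{-sq}(1-\log t)^{bq}\frac{dt}{t}$. Because $\alpha>s$, the integral $\int_0^1 t^{(\alpha-s)q-1}(1-\log t)^{bq}dt$ is finite, so the term $t^\alpha\|f\|_{L_p(\mathbb{R}^d)}$ contributes only a multiple of $\|f\|_{L_p(\mathbb{R}^d)}$, matching the extra summand in (\ref{$K$ functional characterization 1}). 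For $s>0$ the resulting integral is the defining seminorm (\ref{norm1}) of $\mathbf{B}^{s,b}_{p,q}(\mathbb{R}^d)$; for $s=0$ it coincides with the limiting interpolation space $(L_p,W^k_p)_{(0,b),q}=\mathbf{B}^{0,b}_{p,q}(\mathbb{R}^d)$ recorded in (\ref{PrelimInterpolationnew}).

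\textbf{The left-hand inequality} reduces to the fractional Jackson estimate $\omega_k(g,t)_p\lesssim t^\alpha\|\Delta^{\alpha/2}g\|_{L_p(\mathbb{R}^d)}$, valid for $k>\alpha$ and $1\le p\le\infty$. I would prove this by analysing the Fourier multiplier $m_h(\xi)=(e^{ih\cdot\xi}-1)^k|\xi|^{-\alpha}$: the condition $k>\alpha$ kills the singularity at $\xi=0$, and a direct computation shows that $m_h=\widehat{K_h}$ with $\|K_h\|_{L_1(\mathbb{R}^d)}\lesssim |h|^\alpha$, so that $m_h$ defines an operator bounded on every $L_p(\mathbb{R}^d)$ uniformly. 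Decomposing $f=(f-g)+g$ and using the triangle inequality for $\omega_k$ then yields $\omega_k(f,t)_p\lesssim \|f-g\|_{L_p(\mathbb{R}^d)}+t^\alpha\|\Delta^{\alpha/2}g\|_{L_p(\mathbb{R}^d)}$, and the infimum over $g$ gives the left inequality without any $L_p$ correction.

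\textbf{The right-hand inequality} is obtained by using $g_t:=\eta_{1/t}f$ as a trial function, where $\eta_R$ is the de la Vall\'ee-Poussin operator (\ref{eta}). The approximation error satisfies $\|f-\eta_{1/t}f\|_{L_p(\mathbb{R}^d)}\lesssim E_{1/t}(f)_{L_p(\mathbb{R}^d)}\lesssim\omega_k(f,t)_p$ by (\ref{4.7}) and Jackson's theorem. The delicate term is $t^\alpha\|\Delta^{\alpha/2}\eta_{1/t}f\|_{L_p(\mathbb{R}^d)}$: a single Bernstein bound would give only $\|f\|_{L_p(\mathbb{R}^d)}$, which is not integrable against $t^{-sq-1}(1-\log t)^{bq}dt$ when $s>0$. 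The remedy is a dyadic telescoping $\eta_{1/t}f=\eta_1 f+\sum_{j=1}^J(\eta_{2^j}-\eta_{2^{j-1}})f$ with $J\asymp\log_2(1/t)$; applying Bernstein blockwise and bounding $\|(\eta_{2^j}-\eta_{2^{j-1}})f\|_{L_p(\mathbb{R}^d)}\lesssim\omega_k(f,2^{-j})_p$ via (\ref{4.7}) gives
\[
t^\alpha\|\Delta^{\alpha/2}\eta_{1/t}f\|_{L_p(\mathbb{R}^d)}\lesssim t^\alpha\|f\|_{L_p(\mathbb{R}^d)}+t^\alpha\sum_{j=1}^J 2^{j\alpha}\omega_k(f,2^{-j})_p.
\]
Inserting this into the weighted $L_q$ norm and applying the Hardy inequality (\ref{HardyInequal2}), whose applicability rests precisely on $\alpha-s>0$, absorbs the sum into $\int_0^1 t^{-sq}(1-\log t)^{bq}\omega_k(f,t)_p^q\frac{dt}{t}$, completing the upper bound.

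\textbf{Periodic case and main obstacle.} The periodic version follows by the same route, with $\eta_R$ replaced by the periodic de la Vall\'ee-Poussin summation kernel and the trigonometric Jackson--Bernstein inequalities in place of their $\mathbb{R}^d$ counterparts; formula (\ref{3.18}) identifies the resulting integral with the periodic Besov norm. The principal technical obstacle is the fractional Jackson inequality at the endpoints $p=1,\infty$, where Mikhlin's theorem does not directly apply: establishing the $L_1$ bound on the kernel of $m_h$, which makes the argument work uniformly across $1\le p\le\infty$, is the one step where the strict inequality $k>\alpha$ is genuinely used.
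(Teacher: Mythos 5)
Your argument is sound in substance and reaches the right conclusion, but it takes a genuinely different route from the paper and contains one misleading claim that I'd correct. The paper's proof splits cases: for $1<p<\infty$ it applies the sharp two-sided characterization $K(t^\alpha,f;L_p,\dot{\mathscr{L}}^\alpha_p)\asymp\omega_\alpha(f,t)_p$ from Lemma \ref{LemmaModuli} and then appeals to Lemma \ref{LemmaFractionalBesov}; for $p\in\{1,\infty\}$ it sidesteps the multiplier question entirely by sandwiching $L_p\cap\dot{\mathscr{L}}^\alpha_p$ between $\mathbf{B}^\alpha_{p,1}$ and $\mathbf{B}^\alpha_{p,\infty}$ and applying a soft limiting-interpolation argument via (\ref{PrelimInterpolationnew2}) and (\ref{PrelimInterpolationnew2.2}). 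Your proof is unified across $1\le p\le\infty$ and is more hands-on: the lower bound rests on the fractional Jackson estimate $\omega_k(g,t)_p\lesssim t^\alpha\|J_\alpha g\|_{L_p}$ for $k>\alpha$ (equivalently, the $L_1$ bound on the kernel of $m_h(\xi)=(e^{ih\cdot\xi}-1)^k|\xi|^{-\alpha}$), while the upper bound is achieved by a de la Vall\'ee-Poussin telescoping plus Bernstein plus Hardy. This is in the spirit of the paper's alternative proof (Remark 10.3, which invokes Wilmes's one-sided estimates (\ref{aux4.12})), but your treatment of the upper bound is more elementary and avoids the Wilmes bound $K\lesssim\omega_\gamma$ with $\gamma<\alpha$. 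What the paper's main proof buys for $p=1,\infty$ is freedom from the multiplier lemma, which is nontrivial — calling it a ``direct computation'' undersells it; the $L_1$ integrability of $m_h^\vee$ for $k>\alpha$ is exactly Wilmes's theorem, and its verification away from the origin requires some care with integration by parts.

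The one concrete error is in your Strategy paragraph. The asserted pointwise upper bound $K(t^\alpha,f;L_p,\dot{\mathscr{L}}^\alpha_p)\lesssim\omega_k(f,t)_p+t^\alpha\|f\|_{L_p}$ for $k>\alpha$ is false in general: for a single Littlewood--Paley block $f$ with spectrum in an annulus $\{|\xi|\asymp N\}$, one has $K(t^\alpha,f)\asymp\min\{1,(Nt)^\alpha\}\|f\|_{L_p}$ while $\omega_k(f,t)_p\asymp\min\{1,(Nt)^k\}\|f\|_{L_p}$, and for $t\sim N^{-1-\varepsilon}$ with $N$ large the left side $(Nt)^\alpha\|f\|_p$ overwhelms the right side $(Nt)^k\|f\|_p+t^\alpha\|f\|_p$ because $k>\alpha$. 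Your own implementation in the ``Right-hand inequality'' paragraph in fact proves the weaker pointwise estimate $K(t^\alpha,f)\lesssim\omega_k(f,t)_p+t^\alpha\|f\|_{L_p}+t^\alpha\sum_{j\le\log_2(1/t)}2^{j\alpha}\omega_k(f,2^{-j})_p$, and then Hardy's inequality (\ref{HardyInequal2}) with $\lambda=\alpha-s>0$ absorbs the sum after integration — so the final conclusion is correct, but the opening claim should be replaced by what you actually establish. You should also note that applying Hardy for $0<q<1$ uses the clause about $\psi(t)=t^{\alpha'-1}\varphi(t)$ with $\varphi$ decreasing, which is where the spare room $k>\alpha$ is used a second time.
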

\begin{rem}
 Note that if $s =0$ and $b < -1/q$ then the characterization (\ref{$K$ functional characterization 1}) holds trivially because both left- and right-hand sides are equivalent to $ \|f\|_{L_p(\mathbb{R}^d)}$ (see Section \ref{Preliminaries}). Hence, (\ref{$K$ functional characterization 1}) with $s=0$ is interesting if $b \geq -1/q$.
\end{rem}

Before proving  Theorem \ref{Theorem $K$-functional}, let us recall the characterization of the $K$-functional for the couple $(L_p(\mathbb{R}^d), \dot{\mathscr{L}}^{\alpha}_p(\mathbb{R}^d))$ in terms of the modulus of smoothness $\omega_\alpha (f,t)_p$\index{\bigskip\textbf{Functionals and functions}!$\omega_\alpha (f,t)_p$}\label{MODA} of fractional order $\alpha > 0$ of a function $f \in L_p(\mathbb{R}^d)$, given by (cf. \cite[p. 788]{ButzerDyckhoffGorlichStens})
\begin{equation*}
	\omega_\alpha(f,t)_p = \sup_{|h| \leq t} \|\Delta^\kappa_h f\|_{L_p(\mathbb{R}^d)}, \quad \Delta^\kappa_h f (x) = \sum_{\nu=0}^\infty (-1)^\nu {{\kappa} \choose {\nu}} f(x + \nu h).
\end{equation*}
Note that $\Delta^\kappa_h f$\index{\bigskip\textbf{Functionals and functions}!$\Delta^\kappa_h$}\label{DELTAFRAC} is well defined for $f\in L_p$ since $\sum_{\nu=0}^\infty | {{\kappa} \choose {\nu}}|\le C(\kappa)$, $\kappa>0$.
If $\alpha = k \in \mathbb{N}$, then we obtain the classical modulus of smoothness $\omega_k(f,t)_p$ given in (\ref{modulus}). Analogously, one can introduce the periodic counterpart for functions $f \in L_p(\mathbb{T})$.

\begin{lem}[\cite{ButzerDyckhoffGorlichStens, Wilmes, Wilmes2}]\label{LemmaModuli}
Let $1 < p < \infty,$ and $\alpha >0$. Then,
\begin{equation}\label{8+}
	K(t^\alpha,f;L_p(\mathbb{R}^d), \dot{\mathscr{L}}^{\alpha}_p(\mathbb{R}^d))
\asymp \omega_{\alpha}(f,t)_p.
\end{equation}
The periodic counterpart also holds true.
\end{lem}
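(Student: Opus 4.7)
The plan is to establish the two inequalities separately, combining the multiplier identity $\widehat{\Delta^\alpha_h f}(\xi)=(e^{ih\cdot\xi}-1)^\alpha \widehat{f}(\xi)$ with Mikhlin-type arguments that are uniform in the translation parameter $h$. For the upper bound $K(t^\alpha,f)\lesssim \omega_\alpha(f,t)_p$, I would take an arbitrary admissible decomposition $f=f_0+f_1$ with $f_0\in L_p(\mathbb{R}^d)$ and $f_1\in \dot{\mathscr{L}}^\alpha_p(\mathbb{R}^d)$, use the trivial bound $\|\Delta^\alpha_h f_0\|_{L_p}\lesssim\|f_0\|_{L_p}$ (since $\sum_\nu|\binom{\alpha}{\nu}|<\infty$), and prove the fractional Jackson inequality
\[
\|\Delta^\alpha_h f_1\|_{L_p(\mathbb{R}^d)}\lesssim |h|^\alpha \|J_\alpha f_1\|_{L_p(\mathbb{R}^d)}.
\]
This reduces to verifying that the symbol $m_h(\xi)=|h|^{-\alpha}(e^{ih\cdot\xi}-1)^\alpha/|\xi|^\alpha$ is a Mikhlin multiplier on $L_p(\mathbb{R}^d)$, $1<p<\infty$, with constant independent of $h$; a direct computation of $\xi^\beta \partial^\beta m_h(\xi)$ shows that each derivative is bounded by a constant depending only on $\alpha$ and $|\beta|$, uniformly in $h$. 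Taking $\sup_{|h|\le t}$ and then the infimum over decompositions finishes this direction.

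For the lower bound $\omega_\alpha(f,t)_p\lesssim K(t^\alpha,f)$, I would construct a near-optimal decomposition by spectral regularization. Fix $\psi\in C_c^\infty(\mathbb{R}^d)$ with $\psi\equiv 1$ near the origin and supported in $\{|\xi|\le 1\}$, set $\psi_t(\xi)=\psi(t\xi)$, and define $\eta_t f=(\psi_t\widehat{f})^\vee$. Writing $f=(f-\eta_t f)+\eta_t f$, I would establish:
\begin{enumerate}[(i)]
\item the Jackson estimate $\|f-\eta_t f\|_{L_p(\mathbb{R}^d)}\lesssim \omega_\alpha(f,t)_p$, obtained by representing $\mathrm{id}-\eta_t$ as a suitable average of fractional differences (which amounts to checking that $(1-\psi(t\xi))$ can be factored as $(e^{ih\cdot\xi}-1)^\alpha\cdot k(h,\xi)$ with $k$ integrable in $h$ and a uniform Mikhlin multiplier in $\xi$);
\item the Bernstein estimate $t^\alpha\|J_\alpha \eta_t f\|_{L_p(\mathbb{R}^d)}\lesssim \omega_\alpha(f,t)_p$, for which I would choose $h$ with $|h|\asymp t$ so that $(e^{ih\cdot\xi}-1)^\alpha$ is bounded below on the support of $\psi_{2t}$, and then write $J_\alpha\eta_t f = T_{h,t}(\Delta^\alpha_h f)$ where $T_{h,t}$ has symbol $|\xi|^\alpha\psi_t(\xi)(e^{ih\cdot\xi}-1)^{-\alpha}$, which is again a uniform Mikhlin multiplier on the localized frequency range.
\end{enumerate}
Combining (i) and (ii) gives $K(t^\alpha,f)\le \|f-\eta_t f\|_{L_p}+t^\alpha\|J_\alpha\eta_t f\|_{L_p}\lesssim \omega_\alpha(f,t)_p$.

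The main obstacle is the fractional nature of $\alpha$: for $\alpha\in\mathbb{N}$ one can appeal directly to the Johnen–Scherer realization used in (\ref{KFunctional}), but for non-integer $\alpha$ the inversion step in the Bernstein estimate requires a careful branch analysis of $(e^{ih\cdot\xi}-1)^\alpha$ and control of the zero set $\{h\cdot\xi\in 2\pi\mathbb{Z}\}$. This is handled by restricting to a frequency ball where $|h\cdot\xi|\le \pi/2$ (guaranteed by the cutoff $\psi_t$ and the choice $|h|\asymp t$), so that the inverse symbol is smooth and satisfies Mikhlin bounds uniform in $h$. A secondary subtlety is the treatment of the homogeneous space $\dot{\mathscr{L}}^\alpha_p(\mathbb{R}^d)$ modulo polynomials, which is circumvented by working with $J_\alpha f_1\in L_p$ directly in the definition of the $K$-functional. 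The periodic case follows by replacing Mikhlin by the Marcinkiewicz multiplier theorem on $\mathbb{T}$ and running the same decomposition with the de la Vallée-Poussin kernel in place of $\eta_t$.
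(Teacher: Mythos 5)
First, note that the paper itself does not prove this lemma --- it cites the result from Butzer--Dyckhoff--G\"orlich--Stens and Wilmes --- so there is no in-paper argument to compare with; what follows is an assessment of your sketch on its own terms. The central multiplier assertions do not hold. For $\|\Delta^\alpha_h f_1\|_{L_p}\lesssim|h|^\alpha\|J_\alpha f_1\|_{L_p}$ you assert that $m_h(\xi)=|h|^{-\alpha}(e^{ih\cdot\xi}-1)^\alpha/|\xi|^\alpha$ satisfies Mikhlin bounds uniformly in $h$; it does not when $\alpha$ is non-integral. Taking $h=|h|e_1$, the factor $(1-e^{i|h|\xi_1})^\alpha$ is only $\alpha$-H\"older across the hyperplanes $|h|\xi_1\in2\pi\mathbb{Z}$, so for $\alpha<1$ one has $|\xi|\,|\partial_{\xi_1}m_h(\xi)|\sim|\xi|^{1-\alpha}\operatorname{dist}(|h|\xi_1,2\pi\mathbb{Z})^{\alpha-1}$, which is unbounded near those hyperplanes; the ``direct computation'' you invoke does not close. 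The $L_p$-boundedness of this anisotropic, non-smooth symbol is exactly the non-trivial content of the cited Wilmes reference, and is obtained there by averaging over the step $h$ or via a hypersingular-integral representation, not from pointwise Mikhlin derivative bounds.

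The gap in your Bernstein step (ii) is fatal for $d\ge2$. You claim that choosing $|h|\asymp t$ makes $(e^{ih\cdot\xi}-1)^\alpha$ bounded below on $\operatorname{supp}\psi_{2t}$, so that its reciprocal, cut off by $\psi_t$, is a uniform Mikhlin multiplier. But the restriction $|h\cdot\xi|\le\pi/2$ only excludes the \emph{non-zero} roots of $e^{ih\cdot\xi}-1$; the symbol vanishes identically on the full hyperplane $\{h\cdot\xi=0\}$, which meets the ball $\operatorname{supp}\psi_t$ in a codimension-one set. The estimate genuinely fails for a fixed $h$: take $g$ band-limited with $\widehat g$ supported in a slab $\{|\xi|\asymp 1/t,\;|h\cdot\xi|\le\varepsilon/t\}$; then $\|\Delta^\alpha_h g\|_{L_p}\lesssim\varepsilon^\alpha\|g\|_{L_p}$ while $t^\alpha\|J_\alpha g\|_{L_p}\asymp\|g\|_{L_p}$, so $t^\alpha\|J_\alpha\eta_t f\|_{L_p}\lesssim\omega_\alpha(f,t)_p$ cannot be derived from any single direction $h$. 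The missing idea is to average the difference operator over a $d$-parameter family of steps, e.g.\ via $J_\alpha g=c_{d,\alpha,k}\int_{\mathbb{R}^d}|h|^{-d-\alpha}\Delta^k_h g\,dh$ for an integer $k>\alpha$, whose integrated symbol no longer has a hyperplane of zeros; a single translate cannot do the work of the Riesz potential in $d\ge2$.
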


Applying the Marchaud inequality for the modulus of smoothness $\omega_\alpha(f,t)_p$ (see, e.g.,
 \cite[Corollary 2]{ButzerDyckhoffGorlichStens} and \cite[Theorem 4.4]{Kol})
\begin{equation}\label{Kol}
	\omega_\alpha(f,t)_p \lesssim t^\alpha \left(\|f\|_{L_p(\mathbb{R}^d)} + \int_t^1 u^{-\alpha} \omega_\beta(f,u)_p \frac{du}{u}\right), \quad \beta > \alpha, \quad 0 < t <1,
\end{equation}
it is readily seen that Besov spaces can also be introduced through  $\omega_\alpha(f,t)_p$. For the sake of completeness, we give below the proof of this fact.

\begin{lem}\label{LemmaFractionalBesov}
	Let $1 \leq p \leq \infty, 0 < q \leq \infty, -\infty < b < \infty,$ and $s \geq 0$. Assume that $\alpha\in \mathbb{R}$ and  $\alpha > s$. Then
	\begin{equation*}
		\|f\|_{\mathbf{B}^{s,b}_{p,q}(\mathbb{R}^d)} \asymp \|f\|_{L_p(\mathbb{R}^d)} +  \left(\int_0^1 (t^{-s} (1 - \log t)^b \omega_\alpha(f,t)_p)^q
    \frac{dt}{t}\right)^{1/q} .
	\end{equation*}
	The periodic counterpart also holds true.
\end{lem}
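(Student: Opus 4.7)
The strategy is to reduce everything to the integer case by exploiting two classical facts for fractional differences: the semigroup identity $\Delta_h^\beta = \Delta_h^{\beta-\alpha}\Delta_h^\alpha$ and the Marchaud-type inequality (\ref{Kol}). Fix an integer $k$ with $k > \alpha > s$. Since the definition (\ref{norm1}) is independent of the choice of integer $k > s$ (a standard fact, see \cite{Triebel1, BennettSharpley}),
\begin{equation*}
|f|_{\mathbf{B}^{s,b}_{p,q}(\mathbb{R}^d)} \asymp \left(\int_0^1 \left(t^{-s}(1-\log t)^b \omega_k(f,t)_p\right)^q \frac{dt}{t}\right)^{1/q},
\end{equation*}
so it suffices to compare this expression with the corresponding one for $\omega_\alpha$.

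For the easy direction, the semigroup property together with the uniform bound $\sum_{\nu=0}^{\infty}\bigl|\binom{k-\alpha}{\nu}\bigr| \lesssim 1$ (valid since $k-\alpha > 0$) yields $\|\Delta_h^k f\|_{L_p(\mathbb{R}^d)} \lesssim \|\Delta_h^\alpha f\|_{L_p(\mathbb{R}^d)}$ and hence $\omega_k(f,t)_p \lesssim \omega_\alpha(f,t)_p$ uniformly in $t$. Inserting this pointwise estimate into the integer Besov semi-norm gives the bound $|f|_{\mathbf{B}^{s,b}_{p,q}(\mathbb{R}^d)} \lesssim \bigl(\int_0^1(t^{-s}(1-\log t)^b\omega_\alpha(f,t)_p)^q \tfrac{dt}{t}\bigr)^{1/q}$.

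For the reverse direction, apply the Marchaud inequality (\ref{Kol}) with $\beta = k > \alpha$ to get, for $0<t<1$,
\begin{equation*}
\omega_\alpha(f,t)_p \lesssim t^\alpha\|f\|_{L_p(\mathbb{R}^d)} + t^\alpha \int_t^1 u^{-\alpha}\omega_k(f,u)_p\,\frac{du}{u}.
\end{equation*}
Splitting the quasi-norm into the two corresponding pieces, the first integrates up because $\alpha > s$ and contributes at most $C\|f\|_{L_p(\mathbb{R}^d)}$. For the second, when $q \geq 1$ one applies Hardy's inequality (\ref{HardyInequal2}) with $\lambda = \alpha-s > 0$ and $\psi(u)=u^{-\alpha-1}\omega_k(f,u)_p$, which after the cancellation of powers of $t$ gives exactly $\bigl(\int_0^1(t^{-s}(1-\log t)^b\omega_k(f,t)_p)^q\tfrac{dt}{t}\bigr)^{1/q}$, hence at most $|f|_{\mathbf{B}^{s,b}_{p,q}(\mathbb{R}^d)}$.

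The main obstacle is the range $0<q<1$, where (\ref{HardyInequal2}) is not directly available (the auxiliary monotonicity hypothesis of the remark after it fails for a fractional power times $\omega_k$). One overcomes it by discretization. Since $\omega_\alpha(f,t)_p$ and $\omega_k(f,t)_p$ are non-decreasing in $t$, the continuous quasi-norms are equivalent to the dyadic sums $\sum_n (2^{ns}(1+n)^b \omega_\alpha(f,2^{-n})_p)^q$ and $\sum_n (2^{ns}(1+n)^b \omega_k(f,2^{-n})_p)^q$. Writing the Marchaud inequality in discrete form, using the $q$-subadditivity $(a+b)^q\le a^q+b^q$ for $0<q\le 1$, and swapping the order of summation, one is left with $\sum_j 2^{j\alpha q}\omega_k(f,2^{-j})_p^q\sum_{n>j}2^{n(s-\alpha)q}(1+n)^{bq}$, and the inner geometric-type sum is bounded by its first term $2^{j(s-\alpha)q}(1+j)^{bq}$ because $s-\alpha<0$. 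The resulting bound collapses to $\sum_j 2^{jsq}(1+j)^{bq}\omega_k(f,2^{-j})_p^q \asymp |f|_{\mathbf{B}^{s,b}_{p,q}(\mathbb{R}^d)}^q$, completing the argument. The case $q=\infty$ is analogous with suprema, and the periodic statement is obtained by the same proof since the Marchaud inequality and Hardy-type inequalities admit identical periodic versions.
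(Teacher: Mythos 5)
Your proof follows the same route as the paper's: reduce to the integer modulus $\omega_k$ with $k>\alpha$, use $\omega_k(f,t)_p\lesssim\omega_\alpha(f,t)_p$ for the easy bound, and the Marchaud inequality (\ref{Kol}) combined with Hardy's inequality for the converse. The only divergence is your treatment of $0<q<1$, where you assert that the monotonicity hypothesis in the supplementary clause of the Hardy lemma fails and substitute a discretized sum-swap argument. That substitute argument is correct, but the detour is unnecessary and stems from a misreading of (\ref{HardyInequal2}): taking $\psi(u)=u^{-\alpha-1}\omega_k(f,u)_p$ and writing it as $u^{(k-\alpha)-1}\cdot\bigl(u^{-k}\omega_k(f,u)_p\bigr)$ exhibits it precisely in the required form $t^{\alpha'-1}\varphi(t)$ with $\alpha'=k-\alpha>0$ and $\varphi(u)=u^{-k}\omega_k(f,u)_p$ equivalent to a decreasing function, so (\ref{HardyInequal2}) applies directly for all $0<q\le\infty$; this is exactly what the paper uses, after noting that $\omega_k(f,u)_p/u^k$ is equivalent to a decreasing function.
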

\begin{proof}
	Let $k \in \mathbb{N}$ be such that $k > \alpha$. Then,
		\begin{equation}\label{aux4.9}
		\|f\|_{\mathbf{B}^{s,b}_{p,q}(\mathbb{R}^d)} \asymp \|f\|_{L_p(\mathbb{R}^d)} +  \left(\int_0^1 (t^{-s} (1 - \log t)^b \omega_k(f,t)_p)^q
    \frac{dt}{t}\right)^{1/q} .
	\end{equation}
	Since $\omega_k(f,t)_p \lesssim \omega_\alpha(f,t)_p$ (see \cite[Lemma 6]{ButzerDyckhoffGorlichStens}), it is clear that
	\begin{equation*}
		\|f\|_{\mathbf{B}^{s,b}_{p,q}(\mathbb{R}^d)} \lesssim \|f\|_{L_p(\mathbb{R}^d)} +  \left(\int_0^1 (t^{-s} (1 - \log t)^b \omega_\alpha(f,t)_p)^q
    \frac{dt}{t}\right)^{1/q} .
	\end{equation*}
	
	To prove the converse estimate, we will make use of (\ref{Kol}) together with the Hardy's inequality (\ref{HardyInequal2}) (noting that $\omega_k(f,u)_p/u^k$ is equivalent to a decreasing function) to get
	\begin{align*}
		 \left(\int_0^1 (t^{-s} (1 - \log t)^b \omega_\alpha(f,t)_p)^q \frac{dt}{t} \right)^{1/q} & \\
		 &\hspace{-4cm}\lesssim \|f\|_{L_p(\mathbb{R}^d)} +  \left(\int_0^1 \left(t^{\alpha-s} (1 - \log t)^b \int_t^1 \frac{\omega_k(f,u)_p}{u^\alpha} \frac{du}{u}\right)^q \frac{dt}{t}\right)^{1/q} \\
		 &\hspace{-4cm} \lesssim \|f\|_{L_p(\mathbb{R}^d)} +  \left(\int_0^1 (t^{-s} (1 - \log t)^b \omega_k(f,t)_p)^q  \frac{dt}{t}\right)^{1/q} .
	\end{align*}
	The result now follows from (\ref{aux4.9}).
\end{proof}

We are now ready to give the

\begin{proof}[Proof of Theorem \ref{Theorem $K$-functional}]

If $1 < p < \infty$, the result follows immediately from Lemmas \ref{LemmaModuli} and \ref{LemmaFractionalBesov}.

	  Let now $p=1, \infty$. It follows from the trivial embeddings
	\begin{equation*}
		\dot{B}^0_{p,1}(\mathbb{R}^d) \hookrightarrow L_p(\mathbb{R}^d) \hookrightarrow \dot{B}^0_{p, \infty} (\mathbb{R}^d)
	\end{equation*}
	that
	\begin{equation*}
		\|J_\alpha f\|_{\dot{B}^0_{p, \infty} (\mathbb{R}^d)} \lesssim \|f\|_{\dot{\mathscr{L}}^{\alpha}_p(\mathbb{R}^d)} \lesssim \|J_\alpha f\|_{\dot{B}^0_{p, 1} (\mathbb{R}^d)}.
	\end{equation*}
	Applying the lifting property (\ref{5.5new3}) in Besov spaces, the above estimates are equivalent to the chain of embeddings
	\begin{equation*}
		\dot{B}^\alpha_{p, 1} (\mathbb{R}^d) \hookrightarrow \dot{\mathscr{L}}^{\alpha}_p(\mathbb{R}^d) \hookrightarrow \dot{B}^\alpha_{p, \infty} (\mathbb{R}^d),
	\end{equation*}
	which yields to its non-homogeneous counterpart
		\begin{equation}\label{5.6new}
		\mathbf{B}^\alpha_{p, 1} (\mathbb{R}^d) \hookrightarrow L_p(\mathbb{R}^d) \cap \dot{\mathscr{L}}^{\alpha}_p(\mathbb{R}^d) \hookrightarrow \mathbf{B}^\alpha_{p, \infty} (\mathbb{R}^d),
	\end{equation}
	where we have used (\ref{5.5new2}) and (\ref{BesovComparison}).
	
	We now distinguish two possible cases. Assume first that $s=0$. Then, by (\ref{PrelimInterpolationnew2}) (with $\xi=0$), we get
	\begin{align*}
		\mathbf{B}^{0,b}_{p,q}(\mathbb{R}^d) & =  (L_p(\mathbb{R}^d), \mathbf{B}^{\alpha}_{p,1}(\mathbb{R}^d))_{(0,b),q} \hookrightarrow (L_p(\mathbb{R}^d), L_p(\mathbb{R}^d) \cap \dot{\mathscr{L}}^{\alpha}_p(\mathbb{R}^d))_{(0,b),q}  \\
		& \hookrightarrow  (L_p(\mathbb{R}^d), \mathbf{B}^{\alpha}_{p,\infty}(\mathbb{R}^d))_{(0,b),q} =  \mathbf{B}^{0,b}_{p,q}(\mathbb{R}^d),
	\end{align*}
	that is, $\mathbf{B}^{0,b}_{p,q}(\mathbb{R}^d) =  (L_p(\mathbb{R}^d), L_p(\mathbb{R}^d) \cap \dot{\mathscr{L}}^{\alpha}_p(\mathbb{R}^d))_{(0,b),q}$. Since
	\begin{equation}\label{5.8new}
		K(t,f; L_p(\mathbb{R}^d), L_p(\mathbb{R}^d) \cap \dot{\mathscr{L}}^{\alpha}_p(\mathbb{R}^d)) \asymp t \|f\|_{ L_p(\mathbb{R}^d)} + K(t,f; L_p(\mathbb{R}^d), \dot{\mathscr{L}}^{\alpha}_p(\mathbb{R}^d)), \quad 0 < t \leq 1,
	\end{equation}
	 (see \cite[(6.4), p. 4415]{CobosDominguezTriebel}), we obtain that
	 \begin{align*}
	 	\|f\|_{\mathbf{B}^{0,b}_{p,q}(\mathbb{R}^d)} & \asymp \|f\|_{(L_p(\mathbb{R}^d), L_p(\mathbb{R}^d) \cap \dot{\mathscr{L}}^{\alpha}_p(\mathbb{R}^d))_{(0,b),q}} \\
		& \asymp  \|f\|_{L_p(\mathbb{R}^d)} + \left(\int_0^1 (1-\log t)^{b q}
K(t^{\alpha},f;L_p(\mathbb{R}^d), \dot{\mathscr{L}}^{\alpha}_p(\mathbb{R}^d))^q
 \frac{dt}{t}\right)^{1/q}.
	 \end{align*}
	 This gives the desired characterization for $s=0$.
	
	 If $s > 0$ and $\alpha > s$, we can apply (\ref{PrelimInterpolationnew2.2}) (with $\xi=0$) and (\ref{5.6new})  to derive
		\begin{align*}
		\mathbf{B}^{s,b}_{p,q}(\mathbb{R}^d) & =  (L_p(\mathbb{R}^d), \mathbf{B}^{\alpha}_{p,1}(\mathbb{R}^d))_{s/\alpha,q;b} \hookrightarrow (L_p(\mathbb{R}^d), L_p(\mathbb{R}^d) \cap \dot{\mathscr{L}}^{\alpha}_p(\mathbb{R}^d))_{s/\alpha,q;b}   \\
		& \hookrightarrow  (L_p(\mathbb{R}^d), \mathbf{B}^{\alpha}_{p,\infty}(\mathbb{R}^d))_{s/\alpha,q;b}  =  \mathbf{B}^{s,b}_{p,q}(\mathbb{R}^d),
	\end{align*}
	that is, $\mathbf{B}^{s,b}_{p,q}(\mathbb{R}^d) =  (L_p(\mathbb{R}^d), L_p(\mathbb{R}^d) \cap \dot{\mathscr{L}}^{\alpha}_p(\mathbb{R}^d))_{s/\alpha,q;b}$. Then, by (\ref{5.8new}),
	 \begin{equation*}
	 	\|f\|_{\mathbf{B}^{s,b}_{p,q}(\mathbb{R}^d)}  \asymp  \|f\|_{L_p(\mathbb{R}^d)} + \left(\int_0^1 t^{-\frac{s q}{\alpha}}(1-\log t)^{b q}
K(t,f;L_p(\mathbb{R}^d), \dot{\mathscr{L}}^{\alpha}_p(\mathbb{R}^d))^q
 \frac{dt}{t}\right)^{1/q}.
	 \end{equation*}
	 A simple change of variables completes the proof.
\end{proof}

\begin{rem} Here
	we give an alternative proof of Theorem \ref{Theorem $K$-functional}. Since
	\begin{equation}\label{aux4.12}
		\omega_\beta(f,t)_p \lesssim K(t^\alpha, f: L_p(\mathbb{R}^d),  \dot{\mathscr{L}}^{\alpha}_p(\mathbb{R}^d)) \lesssim \omega_\gamma(f,t)_p, \quad 0 < \gamma < \alpha < \beta,
	\end{equation}
	(see \cite[Theorem 7, (4.3)]{Wilmes}), the equivalence (\ref{$K$ functional characterization 1}) follows now from (\ref{aux4.12}) and Lemma \ref{LemmaFractionalBesov}.

In the periodic setting, the proof follows the same lines because the periodic counterparts of the estimates given in (\ref{aux4.12}) hold; see \cite[Theorem 3, (22)]{Wilmes2}.
	
	\end{rem}

\subsection{Characterizations via averages on balls}\label{balls1}

For $t >0$ and $x \in \mathbb{R}^d$ (or $\mathbb{T}$), let $B_t(x)$\index{\bigskip\textbf{Sets}!$B_t(x)$}\label{BALL} denote the ball of radius $t$ centered at $x$. For a locally integrable function $f$ on $\mathbb{R}^d$ (or $\mathbb{T}$), we define the averaging operator $B_t f$\index{\bigskip\textbf{Operators}!$B_t$}\label{AVERAGE} by
\begin{equation*}
	B_t f (x) = \frac{1}{|B_t(x)|_d} \int_{B_t(x)} f(y) dy
\end{equation*}
and the $l$-th order average, $l \geq 1$, by
\begin{equation*}
	B_{l,t} f(x) = \frac{-2}{{{2l} \choose{l}}} \sum_{j=1}^l (-1)^j {{2l} \choose {l-j}} B_{j t} f(x).
\end{equation*}
Obviously, $B_{1,t} f = B_t f$.

Recently, there has been a growing interest in obtaining characterizations of smoothness spaces in terms of the operators $f-B_{l,t}f$; see, e.g., \cite{AlabernMateuVerdera, DaiGogatishviliYangYuan2, DaiGogatishviliYangYuan, ZhuoSickelYangYuan}. Besides their intrinsic interest, such characterizations allow us to introduce smoothness spaces on metric measure spaces because the means $f-B_{l,t}f$ only depend on the metric of $\mathbb{R}^d$ and the  measure. In particular, the following result was shown in \cite[Theorem 3.1] {DaiGogatishviliYangYuan}: Let $s > 0, 1 < p \leq \infty$, and $0 < q \leq \infty$. Then
\begin{equation}\label{DGYY}
	\|f\|_{\mathbf{B}^s_{p,q}(\mathbb{R}^d)} \asymp  \|f\|_{L_p(\mathbb{R}^d)} + \left(\int_0^1 t^{-s q}  \|f - B_{l,t} f\|_{L_p(\mathbb{R}^d)}^q \frac{dt}{t}\right)^{1/q}.
\end{equation}
Here, $l \in \mathbb{N}$ such that $l > s/2$. The proof of the above characterization given in \cite{DaiGogatishviliYangYuan} relies on maximal functions and vector-valued maximal inequalities. However, this method does not cover the interesting case when $p=1$ and/or $s=0$ in (\ref{DGYY}). Further, it is worth mentioning that the characterization of the space $\mathbf{B}^0_{1,1}(\mathbb{T})$ in terms of ball averages plays a crucial role in order to formulate the Bressan's mixing problem, as can be seen in \cite{Bianchini, HadzicSeegerSmartStreet}. In particular, Bianchini \cite{Bianchini} showed that
 \begin{equation}\label{BianchiniMeasurable}
		\|\chi_A\|_{\mathbf{B}^{0,0}_{1,1}(\mathbb{R}^d)} \asymp \|\chi_A\|_{L_1(\mathbb{R}^d)} + \int_0^1 \left\|\chi_A - B_t \chi_A \right\|_{L_1(\mathbb{R}^d)} \frac{dt}{t}
	\end{equation}
	for any measurable subset $A \subset \mathbb{R}^d$.
	
In this section, we propose a completely different approach to the one given in \cite{DaiGogatishviliYangYuan} to obtain the characterization of the spaces $\mathbf{B}^{s,b}_{p,q}(\mathbb{R}^d)$ via ball averages.
We extend  (\ref{BianchiniMeasurable}) to any locally integrable function
for all ranges of parameters $s, b, p, q$, including the important cases $p=1$ and $s=0$, which were left open in  \cite{DaiGogatishviliYangYuan}.

\begin{thm}
Let $s \geq 0, 1 \le p \le \infty,$ $0 < q \leq \infty$, and $-\infty < b < \infty$. Assume that $l \in \mathbb{N}$ such that $l > s/2$. Then
\begin{equation*}
	\|f\|_{\mathbf{B}^{s,b}_{p,q}(\mathbb{R}^d)} \asymp  \|f\|_{L_p(\mathbb{R}^d)} + \left(\int_0^1 t^{-s q} (1 - \log t)^{b q}  \|f - B_{l,t} f\|_{L_p(\mathbb{R}^d)}^q \frac{dt}{t}\right)^{1/q}.
\end{equation*}
 The corresponding result for periodic spaces also holds true.
\end{thm}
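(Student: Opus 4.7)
The plan is to reduce the statement to the $K$-functional characterization in Theorem \ref{Theorem $K$-functional}. Since $l > s/2$ we may take $\alpha = 2l > s$ there; it then suffices to prove that, up to an $\|f\|_{L_p(\mathbb{R}^d)}$ correction, the quantity $\|f - B_{l,t}f\|_{L_p(\mathbb{R}^d)}$ is interchangeable with $K(t^{2l}, f; L_p(\mathbb{R}^d), \dot{\mathscr{L}}^{2l}_p(\mathbb{R}^d))$ inside the weighted integral $\int_0^1 t^{-sq}(1-\log t)^{bq}(\cdot)^q\, dt/t$.

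First I would establish the pointwise upper bound
$$\|f - B_{l,t}f\|_{L_p(\mathbb{R}^d)} \lesssim K\bigl(t^{2l},f;L_p(\mathbb{R}^d),\dot{\mathscr{L}}^{2l}_p(\mathbb{R}^d)\bigr).$$
The operator $T_t := I - B_{l,t}$ is a uniformly $L_p$-bounded convolution (each $B_{jt}$ is an $L_p$-contraction since its kernel is a probability measure, and $T_t$ is a fixed linear combination). The crucial decay $\|T_t g\|_{L_p(\mathbb{R}^d)} \lesssim t^{2l}\|g\|_{\dot{\mathscr{L}}^{2l}_p(\mathbb{R}^d)}$ is precisely the reason for the choice of coefficients defining $B_{l,t}$: symmetrizing each $B_{jt}$ (pairing $y$ with $-y$) and using a Taylor expansion in which all moments up to order $2l-1$ cancel, one rewrites $T_t g$ as an average of $2l$-th order symmetric differences, whose $L_p$-norm is controlled by $t^{2l}$ times $\|(-\Delta)^{l} g\|_{L_p(\mathbb{R}^d)}$; this in turn is equivalent to $\|g\|_{\dot{\mathscr{L}}^{2l}_p(\mathbb{R}^d)}$ by (\ref{LPHom}) for $1<p<\infty$ and by direct verification at the endpoints. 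Taking the infimum over decompositions $f = f_0 + f_1 \in L_p + \dot{\mathscr{L}}^{2l}_p$ gives the upper bound, and thence the pointwise-in-$t$ inequality feeds into Theorem \ref{Theorem $K$-functional} to yield one direction of the asserted equivalence.

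For the reverse direction, a pointwise lower bound is delicate, so I would instead argue by interpolation. Introduce
$$Y_l^p := \Bigl\{f \in L_p(\mathbb{R}^d) : |f|_{Y_l^p} := \sup_{0<t\leq 1} t^{-2l}\|f - B_{l,t}f\|_{L_p(\mathbb{R}^d)} <\infty\Bigr\},$$
and show the chain
$$\mathbf{B}^{2l}_{p,1}(\mathbb{R}^d) \hookrightarrow L_p(\mathbb{R}^d) \cap Y_l^p \hookrightarrow \mathbf{B}^{2l}_{p,\infty}(\mathbb{R}^d).$$
The first embedding is the previous step combined with (\ref{5.6new}). The second is a Jackson-type inequality $\omega_{2l}(f,t)_p \lesssim t^{2l}(|f|_{Y_l^p} + \|f\|_{L_p(\mathbb{R}^d)})$, which I would prove by expressing a $2l$-th order symmetric difference as a telescoping sum over dyadic scales $t/2^k$ of residuals $B_{l,t/2^k}f - B_{l,t/2^{k+1}}f$ and controlling each residual by $|f|_{Y_l^p}$; the embedding into $\mathbf{B}^{2l}_{p,\infty}(\mathbb{R}^d)$ then follows from Lemma \ref{LemmaFractionalBesov}. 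Applying the limiting interpolation identity (\ref{PrelimInterpolationnew}) (its $\mathbf{B}$-version (\ref{PrelimInterpolationnew2})) for $s = 0$ and the reiteration (\ref{PrelimInterpolationnew2.2}) for $s > 0$ to this chain yields
$$(L_p(\mathbb{R}^d), L_p(\mathbb{R}^d) \cap Y_l^p)_{(0,b),q} = \mathbf{B}^{0,b}_{p,q}(\mathbb{R}^d),\qquad (L_p(\mathbb{R}^d), L_p(\mathbb{R}^d) \cap Y_l^p)_{s/(2l),q;b} = \mathbf{B}^{s,b}_{p,q}(\mathbb{R}^d).$$
Combined with (\ref{5.8new}) and a realization argument identifying $K(t, f; L_p, Y_l^p)$ with $t\sup_{0<\tau\leq t^{1/(2l)}}\tau^{-2l}\|f - B_{l,\tau}f\|_{L_p(\mathbb{R}^d)}$ modulo an $L_p$-term, and finally exchanging the supremum for the ambient $t$-integration via a standard monotonicity trick (since the map $\tau \mapsto \|f-B_{l,\tau}f\|_{L_p}$ is essentially monotone up to constants, by the symmetric-difference representation), we arrive at the claimed norm equivalence.

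The principal obstacle is the Jackson-type step, namely bounding $\omega_{2l}(f,t)_p$ by $|f|_{Y_l^p} t^{2l}$ at the endpoints $p = 1,\infty$, where the Fourier-multiplier tools that work for $1<p<\infty$ are unavailable; the dyadic telescoping and careful bookkeeping of the commutators $B_{l,s}B_{l,\tau}$ provide the required substitute. The periodic version follows \emph{mutatis mutandis}, replacing $\mathbb{R}^d$ by $\mathbb{T}$, the Riesz potential space by its periodic counterpart, and using the periodic $K$-functional characterization in Theorem \ref{Theorem $K$-functional}.
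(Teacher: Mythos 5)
Your reduction to Theorem \ref{Theorem $K$-functional} with $\alpha = 2l$ is the right starting move, and the upper-bound argument is sound: up to a constant, $f - B_{l,t}f$ is the average of $\Delta^{2l}_{ty}f$ over $|y|\le 1$, so $\|f - B_{l,t}f\|_{L_p} \lesssim K(t^{2l}, f; L_p(\mathbb{R}^d), \dot{\mathscr{L}}^{2l}_p(\mathbb{R}^d))$ follows cleanly. The paper's actual proof, however, is a two-line affair: it quotes a result of Dai and Wang asserting the \emph{two-sided} equivalence $\|f - B_{l,t}f\|_{L_p(\mathbb{R}^d)} \asymp K(t^{2l}, f; L_p(\mathbb{R}^d), \dot{\mathscr{L}}^{2l}_p(\mathbb{R}^d))$ and then applies Theorem \ref{Theorem $K$-functional}, with no interpolation needed. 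Where your proposal tries to manufacture the missing reverse inequality by interpolating around the auxiliary space $Y_l^p$, it runs into a genuine gap.

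The circularity is in the monotonicity claim. You invoke that $\tau \mapsto \|f - B_{l,\tau}f\|_{L_p(\mathbb{R}^d)}$ is essentially monotone ``by the symmetric-difference representation,'' but that representation only yields the upper bound $\|f - B_{l,\tau}f\|_{L_p} \lesssim K(\tau^{2l}, f)$; concluding near-monotonicity of the left-hand side from this would require $K(\tau^{2l}, f) \lesssim \|f - B_{l,\tau}f\|_{L_p}$, which is exactly the reverse inequality you are trying to avoid proving directly. The same problem afflicts the two other steps you defer: the Jackson-type inequality $\omega_{2l}(f,t)_p \lesssim t^{2l}\bigl(|f|_{Y_l^p} + \|f\|_{L_p(\mathbb{R}^d)}\bigr)$, which your telescoping sketch does not establish (symmetric differences $\Delta^{2l}_h f$ are not obtained by telescoping the residuals $B_{l,t/2^k}f - B_{l,t/2^{k+1}}f$), and the realization of $K(t, f; L_p(\mathbb{R}^d), L_p(\mathbb{R}^d)\cap Y_l^p)$ in terms of $\|f - B_{l,\tau}f\|_{L_p(\mathbb{R}^d)}$, which (\ref{5.8new}) does not supply since $Y_l^p$ is not a potential space. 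Each of these, if carried out, would amount to re-deriving the Dai--Wang equivalence from scratch. The economical fix is to cite it and then invoke Theorem \ref{Theorem $K$-functional}, which is precisely what the paper does.
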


\begin{proof}
The proof is based on the following result proved in \cite[Theorem 1]{DaiWang}. For $f$ sufficiently regular, we define\index{\bigskip\textbf{Operators}!$\Delta^l$}\label{LAPLACEP}
\begin{equation}\label{Laplacian}
\Delta f = \frac{\partial^2 f}{\partial x_1^2} + \cdots +
\frac{\partial^2 f}{\partial x_d^2},\qquad \Delta^l f = \Delta (\Delta^{l-1} f), \quad  l > 1.
\end{equation}
Note that (see (\ref{RieszPotential}))
\begin{equation}\label{LaplacianLift}
 \widehat{\Delta^l f}(\xi) = (-1)^l |\xi|^{2 l} \widehat{f}(\xi) = (-1)^l  \widehat{J_{2l} f}(\xi).
 \end{equation}
  This latter formula allows us to introduce the $l$th order Laplacian in the distributional sense.

\begin{lem}
	Let $l \in \mathbb{N}$ and $1 \leq p \leq \infty$. Then
	\begin{equation}\label{KfunctionalBallAverage}
		\|f - B_{l,t} f\|_{L_p(\mathbb{R}^d)} \asymp \inf_{\Delta^l g \in L_p(\mathbb{R}^d)} \Big(\|f-g\|_{L_p(\mathbb{R}^d)} + t^{2 l} \|\Delta^l g\|_{L_p(\mathbb{R}^d)}\Big).
	\end{equation}
	The corresponding result for periodic functions also holds true.
\end{lem}

We point out that (\ref{KfunctionalBallAverage}) can be written in terms of the spaces $\dot{\mathscr{L}}^{2 l}_p(\mathbb{R}^d)$ (cf. (\ref{RieszSpace})) and the $K$-functional (cf. (\ref{Peetre})), that is,
	\begin{equation*}
		\|f - B_{l,t} f\|_{L_p(\mathbb{R}^d)} \asymp K(t^{2 l}, f ; L_p(\mathbb{R}^d), \dot{\mathscr{L}}^{2 l}_p(\mathbb{R}^d)).
	\end{equation*}
 Now the proof is a simple application of Theorem \ref{Theorem $K$-functional}.

\end{proof}

\begin{rem}
This method can also be  applied to characterize the spaces $\mathbf{B}^{s,b}_{p,q}(\mathbb{R}^d)$ in terms of different kinds of averages that satisfy the estimates corresponding to (\ref{KfunctionalBallAverage}). For instance, one may consider the averages on a sphere given by\index{\bigskip\textbf{Operators}!$V_t$}\label{AVERAGESPH}
\begin{equation*}
	V_t f (x) = \frac{1}{m_t} \int_{|y-x| = t} f(y) dy, \quad t > 0,
\end{equation*}
and, for $l \in \mathbb{N}$,
\begin{equation*}
	V_{l,t} f(x) = \frac{-2}{{{2l} \choose{l}}} \sum_{j=1}^l (-1)^j {{2l} \choose {l-j}} V_{j t} f(x)
\end{equation*}
(see \cite{DaiDitzian}). Here $m_t$\index{\bigskip\textbf{Numbers, relations}!$m_t$}\label{MEAS} is the (surface) measure of the sphere of radius $t$ in $\mathbb{R}^d, d >2$. Further examples are averages on the box with center at $x$ studied in \cite{DitzianIvanov}.
\end{rem}

	\subsection{Characterizations in terms of differences} In this section we establish several descriptions of the space $\mathbf{B}^{s, b}_{p,q}(\mathbb{R}^d)$ in terms of $k$-th differences.
	
	\begin{thm}\label{ThmRonStin}
		 Let $0 \leq s < k \in \mathbb{N}, -\infty < b < \infty,1 \leq p \leq \infty$, and $0 < q \leq \infty$. Then,
		 \begin{equation}\label{BesovDiff*General}
		 	\|f\|_{\mathbf{B}^{s, b}_{p,q}(\mathbb{R}^d)} \asymp \|f\|_{L_p(\mathbb{R}^d)} + \left( \int_{B_1(0)} \frac{(1 - \log |h|)^{b q} \|\Delta^k_h f\|_{L_p(\mathbb{R}^d)}^q}{|h|^{d + s q}} d h  \right)^{1/q}.
		 \end{equation}
		 The corresponding result for periodic spaces also holds true.
	\end{thm}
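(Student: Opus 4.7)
\bigskip

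\noindent\emph{Proof proposal.} The plan is to reduce (\ref{BesovDiff*General}) to a weighted equivalence between the $L^q$-integral of the $k$-th modulus of smoothness $\omega_k(f,t)_p$ and an integral of spherical $L^q$-averages of $\|\Delta_h^k f\|_{L_p}$, using the already-established modulus-of-smoothness description of $\mathbf{B}^{s,b}_{p,q}(\mathbb{R}^d)$ (Lemma \ref{LemmaFractionalBesov}). Passing to polar coordinates $h=r\omega$ with $r=|h|\in (0,1)$, $\omega\in S^{d-1}$, the right-hand side of (\ref{BesovDiff*General}) becomes
$$\int_{B_1(0)} \frac{(1-\log|h|)^{bq}\,\|\Delta_h^k f\|_{L_p}^q}{|h|^{d+sq}}\,dh = \int_0^1 r^{-sq-1}(1-\log r)^{bq}\,S_k(f,r)_p^q\,dr,$$
where $S_k(f,r)_p^q:=\int_{S^{d-1}}\|\Delta_{r\omega}^k f\|_{L_p}^q\,d\omega$ (with the usual sup modification for $q=\infty$). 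Since $\|\Delta_{r\omega}^k f\|_{L_p}\le\omega_k(f,r)_p$ uniformly in $\omega$, the bound $S_k(f,r)_p\le|S^{d-1}|^{1/q}\omega_k(f,r)_p$ combined with Lemma \ref{LemmaFractionalBesov} (with integer $\alpha=k>s$) immediately gives the easy direction ``RHS of (\ref{BesovDiff*General})$\,\lesssim\,\|f\|_{\mathbf{B}^{s,b}_{p,q}(\mathbb{R}^d)}$''.

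For the reverse direction it is enough, again by Lemma \ref{LemmaFractionalBesov}, to establish
$$\int_0^1 t^{-sq-1}(1-\log t)^{bq}\,\omega_k(f,t)_p^q\,dt \lesssim \int_0^1 t^{-sq-1}(1-\log t)^{bq}\,S_k(f,t)_p^q\,dt + \|f\|_{L_p}^q.$$
I would achieve this via a sharp Marchaud-type reverse inequality
$$\omega_k(f,r)_p \lesssim r^k\|f\|_{L_p} + r^k\left(\int_r^1 u^{-kq}\,S_k(f,u)_p^q\,\frac{du}{u}\right)^{\!1/q},$$
followed by multiplying by $t^{-s}(1-\log t)^b$, raising to the $q$-th power, integrating over $t\in(0,1)$, and applying the weighted Hardy inequalities (\ref{HardyInequal1})--(\ref{HardyInequal4}) from Section \ref{Preliminaries} to absorb the logarithmic weight. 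The proof of the Marchaud-type inequality itself rests on two ingredients: (a) the representation of $\Delta_h^k f$ as an iterated first-order difference together with the $L_p$-continuity of translations (for $1\le p<\infty$), which controls the variation of $h\mapsto\|\Delta_h^k f\|_{L_p}$ over small angular windows on $S^{d-1}$; and (b) a bounded covering of $S^{d-1}$ by rotated copies of a fixed direction, allowing one to bound the supremum of $\|\Delta_{r\omega}^k f\|_{L_p}$ over $\omega\in S^{d-1}$ by a constant multiple of the $L^q$-average $S_k(f,r)_p$ after integration against a slowly varying weight in $r$.

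The cases $q=\infty$ and $p=\infty$ are handled by the standard modifications (suprema instead of integrals, $L_p$-approximation and limit $p\to\infty$), and the degenerate sub-cases for $s=0$ with $b<-1/q$ reduce to $\mathbf{B}^{0,b}_{p,q}(\mathbb{R}^d)=L_p(\mathbb{R}^d)$ by the observation in Section \ref{Preliminaries}, in which case both sides of (\ref{BesovDiff*General}) are equivalent to $\|f\|_{L_p(\mathbb{R}^d)}$. The periodic counterpart follows mutatis mutandis, with $h$ ranging in $(-\pi,\pi)^d$ (or $(-\pi,\pi)$ in the one-dimensional case) and the spherical integration adapted accordingly.

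The main obstacle is the sharp Marchaud-type reverse inequality controlling $\omega_k(f,r)_p$ by spherical $L^q$-averages of $\|\Delta_h^k f\|_{L_p}$: the classical Marchaud inequality gives control by the higher-order modulus $\omega_{k+1}(f,r)_p$, which is the supremum of $\|\Delta_h^{k+1} f\|_{L_p}$ over $|h|\le r$, not an $L^q$-average over directions. The passage from supremum to $L^q$-average in the spherical variable, uniformly in $r$ and respecting the slowly varying logarithmic weight $(1-\log r)^{bq}$, is the technical heart of the argument; all other steps are bookkeeping with Hardy's inequality and the interpolation formulas of Section \ref{Preliminaries}.
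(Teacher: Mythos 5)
Your reduction to showing that the supremum defining $\omega_k(f,t)_p$ can be replaced by an $L^q$-average of $\|\Delta_h^k f\|_{L_p}$ over directions and step-sizes is the right one, and your ``easy'' direction is fine. But you have located the heart of the matter and then left it open: the Marchaud-type inequality
\begin{equation*}
\omega_k(f,r)_p \lesssim r^k\|f\|_{L_p} + r^k\Big(\int_r^1 u^{-kq}\,S_k(f,u)_p^q\,\tfrac{du}{u}\Big)^{1/q}
\end{equation*}
is not the classical Marchaud inequality (which compares $\omega_k$ to $\omega_{k+1}$, a supremum, not to an angular $L^q$-average), and the two ingredients you propose for it do not deliver a proof. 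In particular, $L_p$-continuity of translations is purely qualitative: knowing $\|T_h f-f\|_{L_p}\to 0$ gives no rate at which $\|\Delta_h^k f\|_{L_p}$ varies in $h$, so it cannot be used to pass from a supremum over $\omega\in S^{d-1}$ to a quantitatively comparable $L^q$-average, and the bounded covering of $S^{d-1}$ you invoke does not repair this. What is actually needed, and what your writeup correctly flags as ``the technical heart'' without supplying, is the pointwise (in $t$) equivalence
\begin{equation*}
\omega_k(f,t)_p \asymp \Big(t^{-d}\int_{|h|\le t}\|\Delta^k_h f\|_{L_p(\mathbb{R}^d)}^q\,dh\Big)^{1/q},
\end{equation*}
which is a genuine theorem about moduli of smoothness (it holds with no Marchaud tail, no $\|f\|_{L_p}$ correction, and for every $0<q<\infty$) and whose proof rests on combinatorial difference identities, not on translation continuity.

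The paper's argument is entirely different and much shorter: it cites this last equivalence from the literature (Kolyada--Lerner, eq.\ (10); Karadzhov--Milman--Xiao, Appendix A), plugs it into the integral defining $|f|_{\mathbf{B}^{s,b}_{p,q}}$, and interchanges the order of integration in $t$ and $h$; the inner $t$-integral $\int_{|h|}^1 t^{-sq-d-1}(1-\log t)^{bq}\,dt$ is then comparable to $|h|^{-sq-d}(1-\log|h|)^{bq}$ because $sq+d>0$, which gives (\ref{BesovDiff*General}) in one line. No Hardy inequality, no Marchaud inequality, no angular covering, and no separate treatment of the logarithmic weight are required. So the correct fix for your proposal is to replace the attempted Marchaud-type step with a citation of (or a proof of) the averaged--modulus equivalence, after which the remainder of the computation is the Fubini reordering rather than the Hardy-inequality bookkeeping you outline.
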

	
	\begin{rem}
	It is easy to see that both sides in (\ref{BesovDiff*General}) are equivalent to $\|f\|_{L_p(\mathbb{R}^d)}$ if $s=0$ and $b < -1/q \, (b \leq 0 \text{ if } q = \infty)$.
	\end{rem}
	
	\begin{proof}[Proof of Theorem \ref{ThmRonStin}]
	Let $q < \infty$. It is not hard to see that
\begin{equation*}
	\omega_k(f,t)_p \asymp \left(t^{-d} \int_{|h| \leq t} \|\Delta^k_h f\|^q_{L_p(\mathbb{R}^d)} d h \right)^{1/q}, \quad k \in \mathbb{N},
\end{equation*}
(see \cite[(10), p. 4]{KolyadaLerner} and \cite[Appendix A]{KaradzhovMilmanXiao}), and hence,
\begin{align*}
	|f|_{\mathbf{B}^{s, b}_{p,q}(\mathbb{R}^d)}^q &\asymp \int_0^1 t^{-d -s q} (1 - \log t)^{b q}  \int_{|h| \leq t} \|\Delta^k_h f\|^q_{L_p(\mathbb{R}^d)} d h \frac{dt}{t} \\
	& \asymp \int_{B_1(0)}  \|\Delta^k_h f\|^q_{L_p(\mathbb{R}^d)} \frac{(1 - \log |h|)^{b q}}{|h|^{d + s q}} dh
\end{align*}
which yields (\ref{BesovDiff*General}). If $q=\infty$ the proof of (\ref{BesovDiff*General}) is easier and we omit the details.
	\end{proof}
	
	Putting $p=q$ in Theorem \ref{ThmRonStin} we obtain the following

\begin{cor}\label{CorRonStin}
		 Let $0 \leq s < k \in \mathbb{N}, -\infty < b < \infty,$ and $1 \leq p \leq \infty$. Then,
		 \begin{equation}\label{BesovDiff*}
		 	\|f\|_{\mathbf{B}^{s, b}_{p,p}(\mathbb{R}^d)} \asymp \|f\|_{L_p(\mathbb{R}^d)} + \left(\int_{\mathbb{R}^d} \int_{B_1(0)} \frac{(1 - \log |h|)^{b p}|\Delta^k_h f (x)|^p}{|h|^{d + s p}} d h d x \right)^{1/p}, \quad p < \infty,
		 \end{equation}
		 and
		 	 \begin{equation}\label{BesovDiff**}
		 	\|f\|_{\mathbf{B}^{s, b}_{\infty,\infty}(\mathbb{R}^d)} \asymp \|f\|_{L_\infty(\mathbb{R}^d)} + \sup_{\substack{x, h \in \mathbb{R}^d \\ 0 < |h| < 1}} \frac{(1 - \log |h|)^{b}|\Delta^k_h f (x)|}{|h|^{s}}.
		 \end{equation}
		 In particular, if $0 \leq s < 1$, then
		 \begin{equation}\label{BesovDiff}
	\|f\|_{\mathbf{B}^{s, b}_{p,p}(\mathbb{R}^d)} \asymp  \|f\|_{L_p(\mathbb{R}^d)} +  \left(\int_{\mathbb{R}^d} \int_{B_1(0)} \frac{(1 - \log |x-y|)^{b p}|f(x) - f(y)|^p}{|x-y|^{d + s p}} d y d x \right)^{1/p}, \quad p < \infty,
\end{equation}
and
		 \begin{equation*}
	\|f\|_{\mathbf{B}^{s, b}_{\infty,\infty}(\mathbb{R}^d)} \asymp  \|f\|_{L_\infty(\mathbb{R}^d)} +  \sup_{\substack{x, y \in \mathbb{R}^d \\ 0 < |x-y| < 1}} \frac{(1 - \log |x-y|)^b | f(x)- f(y)|}{|x-y|^s}.
\end{equation*}
\end{cor}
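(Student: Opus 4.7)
The plan is to derive Corollary \ref{CorRonStin} directly from Theorem \ref{ThmRonStin} by specializing to $q=p$ and applying Fubini's theorem. There is no substantive analytical difficulty here; the work is essentially bookkeeping once Theorem \ref{ThmRonStin} is in hand.

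First, for $1 \leq p < \infty$, I would apply Theorem \ref{ThmRonStin} with $q=p$ to obtain
\begin{equation*}
\|f\|_{\mathbf{B}^{s, b}_{p,p}(\mathbb{R}^d)} \asymp \|f\|_{L_p(\mathbb{R}^d)} + \left( \int_{B_1(0)} \frac{(1 - \log |h|)^{b p} \|\Delta^k_h f\|_{L_p(\mathbb{R}^d)}^p}{|h|^{d + s p}} d h  \right)^{1/p}.
\end{equation*}
Since the integrand
\begin{equation*}
(x,h) \mapsto \frac{(1-\log|h|)^{bp} |\Delta_h^k f(x)|^p}{|h|^{d+sp}} \chi_{B_1(0)}(h)
\end{equation*}
is non-negative and measurable on $\mathbb{R}^d \times \mathbb{R}^d$, Tonelli's theorem lets me unfold $\|\Delta_h^k f\|_{L_p}^p = \int_{\mathbb{R}^d} |\Delta_h^k f(x)|^p\, dx$ and swap the order of integration. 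This yields (\ref{BesovDiff*}) immediately.

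For $p = q = \infty$, Theorem \ref{ThmRonStin} (with its natural $q=\infty$ interpretation of the outer quasi-norm) gives
\begin{equation*}
\|f\|_{\mathbf{B}^{s, b}_{\infty,\infty}(\mathbb{R}^d)} \asymp \|f\|_{L_\infty(\mathbb{R}^d)} + \sup_{0 < |h| < 1} \frac{(1 - \log |h|)^{b} \|\Delta^k_h f\|_{L_\infty(\mathbb{R}^d)}}{|h|^{s}}.
\end{equation*}
Since $\|\Delta^k_h f\|_{L_\infty(\mathbb{R}^d)} = \sup_{x \in \mathbb{R}^d} |\Delta_h^k f(x)|$ (for representatives chosen appropriately) and the weight $(1-\log|h|)^b |h|^{-s}$ is independent of $x$, the two suprema combine into the joint supremum over $(x,h) \in \mathbb{R}^d \times (B_1(0)\setminus\{0\})$, giving (\ref{BesovDiff**}). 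The specialization to $0 \leq s < 1$ with $k=1$ is then just the substitution $\Delta^1_h f(x) = f(x+h) - f(x)$ followed by the change of variables $y = x + h$ (so $|h| = |x-y|$ and $dh = dy$), noting that $|h| < 1$ is equivalent to $|x-y| < 1$. The main obstacle, if any, is purely notational: confirming that the $q=\infty$ case of Theorem \ref{ThmRonStin} is indeed covered by its proof (which it is, since $\omega_k(f,t)_p = \sup_{|h|\le t}\|\Delta_h^k f\|_{L_p}$ translates the integral characterization into a sup characterization in a straightforward way). The periodic versions follow by the identical argument, replacing $\mathbb{R}^d$ by $\mathbb{T}$ throughout.
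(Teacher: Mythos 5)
Your proposal is correct and follows the same route as the paper: the paper obtains Corollary \ref{CorRonStin} directly by setting $p=q$ in Theorem \ref{ThmRonStin}, with Tonelli's theorem (for $p<\infty$) and the joint supremum (for $p=q=\infty$) unfolding $\|\Delta_h^k f\|_{L_p}$, and the change of variables $y=x+h$ giving the $k=1$ specialization. Nothing more is required.
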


\begin{rem}
Using the fact that $\omega_k(f,t)_p\lesssim\|f\|_p$, it is easy to see that for $s > 0$
 \begin{equation}\label{14.23**}
 	\|f\|_{\mathbf{B}^{s,b}_{p,q}(\mathbb{R}^d)} \asymp \|f\|_{L_p(\mathbb{R}^d)} + \left(\int_0^\infty (t^{-s} (1 + |\log t|)^b \omega_k(f,t)_p)^q
    \frac{dt}{t}\right)^{1/q}, \quad -\infty < b<  \infty.
 \end{equation}
 However, this is not true if $s=0$ because the right-hand side of (\ref{14.23**}) becomes trivial (recall that $b \geq -1/q$ and $\omega_k(f,t)_p$ is an increasing function). Taking into account (\ref{14.23**}) and following the proof of Theorem \ref{ThmRonStin}, one can show that if $s > 0$ then the formula (\ref{BesovDiff*General}) can be replaced by
 	 \begin{equation*}
		 	\|f\|_{\mathbf{B}^{s, b}_{p,q}(\mathbb{R}^d)} \asymp \|f\|_{L_p(\mathbb{R}^d)} + \left( \int_{\mathbb{R}^d} \frac{(1 + |\log |h||)^{b q} \|\Delta^k_h f\|_{L_p(\mathbb{R}^d)}^q}{|h|^{d + s q}} d h  \right)^{1/q}.
		 \end{equation*}
 In particular, if $q = p$ then we have
 		 \begin{equation}\label{BesovDiff*2}
		 	\|f\|_{\mathbf{B}^{s, b}_{p,p}(\mathbb{R}^d)} \asymp \|f\|_{L_p(\mathbb{R}^d)} + \left(\int_{\mathbb{R}^d} \int_{\mathbb{R}^d} \frac{(1 + |\log |h||)^{b p}|\Delta^k_h f (x)|^p}{|h|^{d + s p}} d h d x \right)^{1/p}, \quad p < \infty,
		 \end{equation}
		 and
		  \begin{equation}\label{BesovDiff**2}
		 	\|f\|_{\mathbf{B}^{s, b}_{\infty,\infty}(\mathbb{R}^d)} \asymp \|f\|_{L_\infty(\mathbb{R}^d)} + \sup_{x, h \in \mathbb{R}^d} \frac{(1 +| \log |h||)^{b}|\Delta^k_h f (x)|}{|h|^{s}},
		 \end{equation}
		 which complement (\ref{BesovDiff*}) and (\ref{BesovDiff**}),
		 respectively. Note that if $0 < s < k=1$ then the previous characterizations can be written as
		 		 \begin{equation}\label{BesovDiff*3}
	\|f\|_{\mathbf{B}^{s, b}_{p,p}(\mathbb{R}^d)} \asymp  \|f\|_{L_p(\mathbb{R}^d)} +  \left(\int_{\mathbb{R}^d} \int_{\mathbb{R}^d} \frac{(1 + |\log |x-y||)^{b p}|f(x) - f(y)|^p}{|x-y|^{d + s p}} d x d y \right)^{1/p}, \quad p < \infty,
\end{equation}
and
		 \begin{equation}\label{BesovDiff**3}
	\|f\|_{\mathbf{B}^{s, b}_{\infty,\infty}(\mathbb{R}^d)} \asymp  \|f\|_{L_\infty(\mathbb{R}^d)} +  \sup_{x, y \in \mathbb{R}^d} \frac{(1 +| \log |x-y||)^b | f(x)- f(y)|}{|x-y|^s}.
\end{equation}
The family of norms given in the right-hand side of (\ref{BesovDiff*2})--(\ref{BesovDiff**3}) is widely used in the study of regularity problems of PDE's. See the survey of Di Nezza, Palatucci and Valdinoci \cite{DiNPalVald} and the references therein. Sometimes, these norms are called Aronszajn-Slobodeckij-Sobolev norms, but the space $\dot{\mathbf{B}}^{s}_{p,p}(\mathbb{R}^d)$ should be carefully distinguished from the Sobolev space $\dot{\mathscr{L}}^{s}_p(\mathbb{R}^d)$ introduced in (\ref{RieszSpace}). In fact, if $s > 0$ then
\begin{equation}\label{BesovDiff2}
	\dot{\mathbf{B}}^{s}_{p,q}(\mathbb{R}^d) = \dot{\mathscr{L}}^{s}_p(\mathbb{R}^d) \iff p=q = 2.
\end{equation}
See \cite[Chapter V]{Stein}.
\end{rem}

\begin{rem}
	Further characterizations of Besov-type spaces in terms of differences can be found in \cite{BesovIlinNikolskii}.
\end{rem}

\subsection{Characterizations via approximation processes}\label{balls2}

Besov norms can also be characterized in terms of various approximation methods.
We start with the classical characterization 
 in terms of best approximations.

Let $E_n(f)_{L_p(\mathbb{R}^d)}$ 
be the best approximation of $f \in {L_p(\mathbb{R}^d)}$ by  entire functions of
spherical exponential type $n$. Following the characterization of the classical Besov space (see \cite[Section 5.6]{Nikolskii}) one can prove the following result.
 \begin{cor}[\bf{Approximation description of $\B^{s,b}_{p,q}(\mathbb{R}^d)$}]\label{cor4.8*}
Assume that $s \geq 0, 1 \le p \le \infty,$ $0 < q \leq \infty$, and $-\infty < b < \infty$. Then
$$
\|f\|_{\mathbf{B}^{s,b}_{p,q}(\mathbb{R}^d)}
\asymp \|f\|_{L_p(\mathbb{R}^d)} + \left(
\sum\limits_{\nu=0}^\infty 2^{\nu s q} (1+\nu)^{b q} E_{2^\nu}^q(f)_{L_p(\mathbb{R}^d)} \right)^{1/q}.
$$
\end{cor}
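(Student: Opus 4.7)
The plan is to reduce the desired equivalence to the classical ($b=0$) approximation characterization by (i) discretizing the integral in the definition of $|f|_{\mathbf{B}^{s,b}_{p,q}(\mathbb{R}^d)}$ dyadically, (ii) applying Jackson's direct theorem to handle one direction, and (iii) applying a Bernstein--Stechkin-type inverse estimate together with the dyadic Hardy inequality to handle the other direction. Throughout, fix $k\in\mathbb{N}$ with $k>s$ and assume $q<\infty$ (the case $q=\infty$ requires only notational changes).

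First I would use that $\omega_k(f,t)_p$ is non-decreasing in $t$ and $(1-\log t)\asymp 1+\nu$ for $t\in(2^{-\nu-1},2^{-\nu}]$ to discretize (\ref{norm1}):
\begin{equation*}
|f|_{\mathbf{B}^{s,b}_{p,q}(\mathbb{R}^d)}\asymp \left(\sum_{\nu=0}^\infty \bigl(2^{\nu s}(1+\nu)^b \omega_k(f,2^{-\nu})_p\bigr)^q\right)^{1/q}.
\end{equation*}
For the lower bound, Jackson's theorem for best approximation by entire functions of spherical exponential type (see \cite[Section 5.2]{Nikolskii}) gives $E_{2^\nu}(f)_{L_p(\mathbb{R}^d)}\lesssim \omega_k(f,2^{-\nu})_p$, which immediately yields
\begin{equation*}
\left(\sum_{\nu=0}^\infty \bigl(2^{\nu s}(1+\nu)^b E_{2^\nu}(f)_{L_p(\mathbb{R}^d)}\bigr)^q\right)^{1/q}\lesssim |f|_{\mathbf{B}^{s,b}_{p,q}(\mathbb{R}^d)}.
\end{equation*}

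For the converse direction I would invoke the classical Bernstein--Stechkin inverse inequality
\begin{equation*}
\omega_k(f,2^{-\nu})_p\lesssim 2^{-\nu k}\sum_{\mu=0}^\nu 2^{\mu k} E_{2^\mu}(f)_{L_p(\mathbb{R}^d)},
\end{equation*}
together with $E_1(f)_{L_p(\mathbb{R}^d)}\leq \|f\|_{L_p(\mathbb{R}^d)}$. Setting $d_\mu=2^{\mu k}E_{2^\mu}(f)_{L_p(\mathbb{R}^d)}$ and multiplying by $2^{\nu s}(1+\nu)^b$, one obtains
\begin{equation*}
2^{\nu s}(1+\nu)^b \omega_k(f,2^{-\nu})_p \lesssim 2^{-\nu(k-s)}(1+\nu)^b \sum_{\mu=0}^\nu d_\mu.
\end{equation*}
Since $k-s>0$, the dyadic Hardy inequality for sequences (the version in the Preliminaries with $\lambda=k-s$) yields
\begin{equation*}
\left(\sum_{\nu=0}^\infty \left(2^{-\nu(k-s)}(1+\nu)^b\sum_{\mu=0}^\nu d_\mu\right)^{q}\right)^{1/q} \lesssim \left(\sum_{\nu=0}^\infty \bigl(2^{-\nu(k-s)}(1+\nu)^b d_\nu\bigr)^q\right)^{1/q},
\end{equation*}
and the right-hand side is exactly the desired weighted $\ell_q$-sum of $2^{\nu s}(1+\nu)^b E_{2^\nu}(f)_{L_p(\mathbb{R}^d)}$.

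The main obstacle, and the only point that goes beyond the $b=0$ case treated in \cite[Section 5.6]{Nikolskii}, is verifying that the logarithmic weight $(1+\nu)^b$ with arbitrary $b\in\mathbb{R}$ is compatible with the Hardy-type sum estimate above. This is handled cleanly by the positivity of the geometric factor $2^{-\nu(k-s)}$, which dominates any polynomial-logarithmic factor in $\nu$ regardless of the sign of $b$; this is precisely why the discrete Hardy inequalities in the Preliminaries are stated with the weight $2^{-j\lambda}(1+j)^b$. Combining the two directions and absorbing the $\nu=0$ term into $\|f\|_{L_p(\mathbb{R}^d)}$ completes the proof.
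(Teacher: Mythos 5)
Your proposal follows the route that the paper itself points to: the paper does not give a proof but says the result can be obtained ``following the characterization of the classical Besov space'' in Nikolskii's book, and that classical proof is exactly your Jackson direct estimate plus the Stechkin-type inverse inequality plus a discrete Hardy argument. The dyadic discretization of the modulus, the use of $E_{2^\nu}(f)_{L_p}\lesssim \omega_k(f,2^{-\nu})_p$, and the reduction to the weighted $\ell_q$-sum all match the intended argument, and your observation that the geometric factor $2^{-\nu(k-s)}$ absorbs the logarithmic weight $(1+\nu)^b$ is the right reason the generalization to $b\neq 0$ is painless.

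There is one genuine gap in what you wrote: the discrete Hardy inequality you invoke from the Preliminaries is stated there only for $1\leq q\leq\infty$, yet the Corollary allows all $0<q\leq\infty$. For $0<q<1$ you must either use the elementary $q$-power inequality
\[
\left(\sum_{\mu=0}^{\nu}d_\mu\right)^{q}\leq \sum_{\mu=0}^{\nu}d_\mu^{q}
\]
followed by interchanging the order of summation and summing the resulting geometric factor in $\nu$, or else argue as the paper does in its proof of Lemma~\ref{LemmaFractionalBesov} for the continuous Hardy inequality, namely note that $d_\mu 2^{-\mu k}=E_{2^\mu}(f)_{L_p}$ is non-increasing, which makes the extension of Hardy's inequality to $q<1$ available. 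As stated, your appeal to the lemma ``with $\lambda=k-s$'' does not cover that range. This is a routine fix, but it should be spelled out, since $0<q<1$ is precisely one of the limiting cases the paper takes care to treat in its Hardy-inequality preliminaries.
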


We also consider the approximation behavior of the generalized Bochner--Riesz means defined by \index{\bigskip\textbf{Operators}!$S^{\lambda,\alpha}_t$}\label{BOCHNERRIESZ}
$$S^{\lambda,\alpha}_t f(x)= \int_{|\xi|\le t} \widehat{f}(\xi)\Big(1-\Big(\frac{|\xi|}{t}\Big)^\alpha\Big)^{\lambda}e^{ix \cdot \xi}d\xi,\quad\alpha>0.
$$
If $\alpha =2$, then we get the classical Bochner-Riesz operator that has been widely used in harmonic analysis. See \cite{Bourgain, Christ, LeeSeeger, LuYan, Tao}, to mention just a few references. It is known that if $\lambda>\frac{d-1}2$, then the Bochner--Riesz means are bounded in $L_p(\mathbb{R}^d)$ and
\begin{equation*}
	\|S^{\lambda,\alpha}_t f\|_{L_p(\mathbb{R}^d)} \leq C \|f\|_{L_p(\mathbb{R}^d)}, \quad f \in L_p(\mathbb{R}^d),
\end{equation*}
 where $C$ is a positive constant which is independent of $f$ and $t$.

 It is known that the approximation method given by $S^{\lambda, \alpha}_t$ in $L_p(\mathbb{R}^d)$ can be compared with the one given by Weierstrasss means $W^\alpha_t$. Let\index{\bigskip\textbf{Operators}!$W^\alpha_t$}\label{WEIERSTRASS}
 \begin{equation}\label{WeierstrassMeans}
 	W^\alpha_t f (x) = \int_{\mathbb{R}^d} \widehat{f}(\xi) e^{-(t |\xi|)^\alpha} e^{ix \cdot \xi}d\xi, \quad \alpha > 0.
 \end{equation}
 If $\lambda > \frac{d-1}{2}$, then (cf. \cite[Theorem 4.6]{Trebels})
 \begin{equation}\label{5.12new}
 	\|f - S^{\lambda, \alpha}_{1/t} f\|_{L_p(\mathbb{R}^d)} \asymp \|f - W^{\alpha}_t f\|_{L_p(\mathbb{R}^d)},
 \end{equation}
where  the  constants in this equivalence are independent of $f$ and $t$. As a consequence, the approximation behavior of the Bochner-Riesz means only depends on the index $\alpha$, that is,
$$\|f-S^{\lambda_1,\alpha}_t f\|_{L_p(\mathbb{R}^d)}\asymp \|f-S^{\lambda_2,\alpha}_t f\|_{L_p(\mathbb{R}^d)},\quad \lambda_1,\lambda_2> \frac{d-1}2.$$

It is folklore that the degree of approximation by Weierstrass means can be expressed in terms of the $K$-functional. Namely,
\begin{equation}\label{5.13new2}
\|f-W^{\alpha}_{t} f\|_{L_p(\mathbb{R}^d)} \asymp
K(t^\alpha,f;L_p(\mathbb{R}^d), \dot{\mathscr{L}}^{\alpha}_p(\mathbb{R}^d)),
\quad 1\le p  \le\infty,
\end{equation}
and so, by (\ref{5.12new}),
\begin{equation}\label{5.14new2}
\|f-S^{\lambda,\alpha}_{1/t} f\|_{L_p(\mathbb{R}^d)}\asymp
K(t^\alpha,f;L_p(\mathbb{R}^d), \dot{\mathscr{L}}^{\alpha}_p(\mathbb{R}^d)),
\qquad 1\le p  \le\infty, \quad \lambda > \frac{d-1}{2}.
\end{equation}
See also \cite{Trebels-lecture} and \cite[Theorem 2]{Ditzian}.

As a immediate consequence of Theorem \ref{Theorem $K$-functional}, (\ref{5.13new2}), and (\ref{5.14new2}), we obtain the following.

 \begin{cor}\label{Corollary 5.8new}
Assume that $s \geq 0, 1 \le p \le \infty,$ $0 < q \leq \infty$, and $-\infty < b < \infty$. Let $\alpha > s$. Then
$$
\|f\|_{\mathbf{B}^{s,b}_{p,q}(\mathbb{R}^d)}
\asymp \|f\|_{L_p(\mathbb{R}^d)} + \left(\int_0^1 t^{-s q} (1-\log t)^{b q}
\|f-S^{\lambda,\alpha}_{1/t} f\|_{L_p(\mathbb{R}^d)}^q \frac{dt}{t}\right)^{1/q}, \quad \lambda > \frac{d-1}{2},
$$
and
\begin{equation}\label{BesovWeierstrass}
\|f\|_{\mathbf{B}^{s,b}_{p,q}(\mathbb{R}^d)}
\asymp \|f\|_{L_p(\mathbb{R}^d)} + \left(\int_0^1 t^{-s q} (1-\log t)^{b q}
\|f-W^{\alpha}_{t} f\|_{L_p(\mathbb{R}^d)}^q \frac{dt}{t}\right)^{1/q}.
\end{equation}
\end{cor}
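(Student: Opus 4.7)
The proof is essentially immediate given the tools assembled just before the statement. The plan is to start from the abstract $K$-functional characterization in Theorem \ref{Theorem $K$-functional}, which asserts
\[
\|f\|_{\mathbf{B}^{s,b}_{p,q}(\mathbb{R}^d)} \asymp \|f\|_{L_p(\mathbb{R}^d)} + \left(\int_0^1 t^{-s q} (1-\log t)^{b q} K(t^{\alpha},f;L_p(\mathbb{R}^d), \dot{\mathscr{L}}^{\alpha}_p(\mathbb{R}^d))^q \frac{dt}{t}\right)^{1/q},
\]
and then simply replace the $K$-functional by the two approximation quantities that are known to be equivalent to it. Specifically, the Weierstrass means satisfy the realization-type identity (\ref{5.13new2}), namely $\|f - W^{\alpha}_{t} f\|_{L_p(\mathbb{R}^d)} \asymp K(t^\alpha, f; L_p(\mathbb{R}^d), \dot{\mathscr{L}}^{\alpha}_p(\mathbb{R}^d))$, so substituting this directly into the integral above yields the second equivalence in the corollary.

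For the first equivalence, I would invoke (\ref{5.14new2}), which asserts $\|f - S^{\lambda,\alpha}_{1/t} f\|_{L_p(\mathbb{R}^d)} \asymp K(t^\alpha, f; L_p(\mathbb{R}^d), \dot{\mathscr{L}}^{\alpha}_p(\mathbb{R}^d))$ whenever $\lambda > (d-1)/2$. (Recall that this equivalence is itself obtained by combining the Trebels-type comparison (\ref{5.12new}) between Bochner--Riesz and Weierstrass means with the Weierstrass realization (\ref{5.13new2}).) Substituting this equivalence into the $K$-functional characterization produces the first asserted formula.

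Since both substitutions are pointwise-in-$t$ equivalences with constants independent of $f$ and $t$, they can be carried inside the $L_q$-quasi-norm with logarithmic weight without any further work, and no endpoint or integrability condition on $s,b,p,q$ is triggered beyond those already imposed in Theorem \ref{Theorem $K$-functional}. There is, in truth, no real obstacle here; the content of the corollary lies entirely in the preparatory results, and the present step is a direct chain of substitutions. The only mild point to verify is that the hypothesis $\alpha > s$ is exactly the one required to apply Theorem \ref{Theorem $K$-functional}, while $\lambda > (d-1)/2$ is exactly the hypothesis required to apply (\ref{5.14new2}); both are built into the statement, so the argument closes cleanly.
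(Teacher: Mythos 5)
Your argument is correct and is exactly the paper's own: the paper states that Corollary~\ref{Corollary 5.8new} is an immediate consequence of Theorem~\ref{Theorem $K$-functional} combined with the equivalences (\ref{5.13new2}) and (\ref{5.14new2}), which is precisely the chain of substitutions you describe.
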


\begin{rem}
(i) Similar results can also be written for different moduli of smoothness given in \cite{Kol, run}.

(ii) The periodic counterparts of the characterizations given in Corollary \ref{Corollary 5.8new} can also be established. We leave the details to the reader. Further characterizations of periodic Besov spaces with positive smoothness in terms of approximation processes can be found in \cite{SchmeisserSickel}.
\end{rem}

\subsection{Characterizations by means of heat and Poisson kernels}\label{balls3}

Let\index{\bigskip\textbf{Operators}!$W_t$}\label{GAUSSWEIERSTRASS}
\begin{equation}\label{WeierstrassSemiGroup}
	W_t f (x) = \frac{1}{(4 \pi t)^{d/2}} \int_{\mathbb{R}^d} e^{- \frac{|x-y|^2}{4 t}} f(y) dy = \Big(e^{- t |\xi|^2} \widehat{f} (\xi)\Big)^\vee (x), \qquad t > 0,  \quad x \in \mathbb{R}^d,
\end{equation}
be the Gauss-Weierstrass semi-group and $W_0 = \text{id}_{\mathbb{R}^d}$ (identity). We mention that $u(x,t) = W_t f (x)$ is a solution of the heat equation in the upper half-space $\mathbb{R}^{d+1}_+ = \{(x,t): x \in \mathbb{R}^d, t > 0\}$\index{\bigskip\textbf{Sets}!$\mathbb{R}^{d+1}_+$}\label{HALFSPACE}, with $u(x,0) = f(x)$. In other words, $W_t f$ is a thermic (caloric) extension of $f$ from $\mathbb{R}^d$ to $\mathbb{R}^{d+1}_+$.

The Cauchy-Poisson semi-group is given by\index{\bigskip\textbf{Operators}!$P_t$}\label{POISSON}
\begin{equation}\label{PoissonSemiGroup}
	P_t f (x) = c_d \int_{\mathbb{R}^d} \frac{t}{(|x-y|^2 + t^2)^{\frac{d+1}{2}}} f(y) dy = \Big(e^{- t |\xi|} \widehat{f} (\xi)\Big)^\vee (x), \qquad t > 0,  \quad x \in \mathbb{R}^d,
\end{equation}
with $c_d \Big\|(1 + |x|^2)^{-\frac{d+1}{2}}\Big\|_{L_1(\mathbb{R}^d)} = 1$, and complemented by $P_0 = \text{id}_{\mathbb{R}^d}$. We have that $u(x,t) = P_t f (x)$ is a harmonic function in $\mathbb{R}^{d+1}_+$, that is,
\begin{equation*}
\frac{\partial^2 u(x,t)}{\partial t^2} + \Delta u(x,t) = 0 \quad \text{in} \quad \mathbb{R}^{d+1}_+,
\end{equation*}
with $u(x,0) = f(x)$. In other words, $P_t f$ is a harmonic extension of $f$ from $\mathbb{R}^d$ to $\mathbb{R}^{d+1}_+$.

Characterizations of smoothness spaces in terms of heat and Poisson kernels have a long history, which traced back to the works by Taibleson \cite{Taibleson1, Taibleson2}. They are intimately connected to the theory of semi-groups and PDEs (see, e.g., \cite{ButzerBerens, Triebel}). The following descriptions of Besov spaces with positive smoothness are well known: Let $1 \leq p \leq \infty, 0 < q \leq \infty$, and $s > 0$. Let $m \in \mathbb{N}$ with $m > s/2$. Then,
\begin{equation}\label{5.15new}
	\|f\|_{\mathbf{B}^s_{p,q}(\mathbb{R}^d)} \asymp \|f\|_{L_p(\mathbb{R}^d)} + \left(\int_0^1 t^{-\frac{s q}{2}} \| [W_t - \text{id}_{\mathbb{R}^d}]^m f\|_{L_p(\mathbb{R}^d)}^q \frac{dt}{t}\right)^{1/q}
\end{equation}
(see \cite[Theorem 3.4.6, p. 198]{ButzerBerens} and \cite[Section 1.13.2, pp. 76--81]{Triebel}).

In this section, we are mainly interested in the extreme case $s=0$ in (\ref{5.15new}). In particular, it was recently proved in \cite[Theorem 6.2]{CobosDominguezTriebel} that if $1 < p < \infty, 0 < q \leq \infty, b \geq -1/q$, and $m \in \mathbb{N}$, then,
\begin{equation}\label{5.16new2'}
	\|f\|_{\mathbf{B}^{0,b}_{p,q}(\mathbb{R}^d)} \asymp \|f\|_{L_p(\mathbb{R}^d)} + \left(\int_0^1 (1-\log t)^{b q} \| [W_t - \text{id}_{\mathbb{R}^d}]^m f\|_{L_p(\mathbb{R}^d)}^q \frac{dt}{t}\right)^{1/q}.
\end{equation}
However, the method given in \cite{CobosDominguezTriebel} does not work with the extreme cases $p=1$ or $p=\infty$ in (\ref{5.16new2'}). Next, we are able to overcome this obstruction and extend (\ref{5.16new2'}) to the full range of parameters.

\begin{thm}\label{Theorem Heat Kernels}
Let $s \geq 0, 1 \leq p \leq \infty, 0 < q \leq \infty,$ and $-\infty < b < \infty$.
\begin{enumerate}[\upshape(i)]
\item Let $m \in \mathbb{N}$ with $m > s/2$. Then,
\begin{equation}\label{BesovHeat}
	\|f\|_{\mathbf{B}^{s,b}_{p,q}(\mathbb{R}^d)} \asymp \|f\|_{L_p(\mathbb{R}^d)} + \left(\int_0^1 t^{-\frac{s q}{2}}(1-\log t)^{b q} \| [W_t - \emph{id}_{\mathbb{R}^d}]^m f\|_{L_p(\mathbb{R}^d)}^q \frac{dt}{t}\right)^{1/q}.
\end{equation}
\item Let $m \in \mathbb{N}$ with $m > s$. Then,
\begin{equation}\label{BesovPoisson}
	\|f\|_{\mathbf{B}^{s,b}_{p,q}(\mathbb{R}^d)} \asymp \|f\|_{L_p(\mathbb{R}^d)} + \left(\int_0^1 t^{- s q}(1-\log t)^{b q} \| [P_t - \emph{id}_{\mathbb{R}^d}]^m f\|_{L_p(\mathbb{R}^d)}^q \frac{dt}{t}\right)^{1/q}.
\end{equation}
\end{enumerate}
\end{thm}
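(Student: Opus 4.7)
The plan is to deduce both characterizations from Theorem~\ref{Theorem $K$-functional} by identifying the semigroup differences $[W_t-\mathrm{id}_{\mathbb{R}^d}]^m f$ and $[P_t-\mathrm{id}_{\mathbb{R}^d}]^m f$ with $K$-functionals for appropriate couples $(L_p(\mathbb{R}^d),\dot{\mathscr{L}}^{\alpha}_p(\mathbb{R}^d))$. The key analytic ingredient, which generalizes the case $m=1$ already recorded in (\ref{5.13new2}), is the pair of equivalences
\begin{equation}\label{heatK}
\|[W_t-\mathrm{id}_{\mathbb{R}^d}]^m f\|_{L_p(\mathbb{R}^d)}\asymp K\bigl(t^{m},f;L_p(\mathbb{R}^d),\dot{\mathscr{L}}^{2m}_p(\mathbb{R}^d)\bigr),\qquad 0<t<1,
\end{equation}
\begin{equation}\label{poissonK}
\|[P_t-\mathrm{id}_{\mathbb{R}^d}]^m f\|_{L_p(\mathbb{R}^d)}\asymp K\bigl(t^{m},f;L_p(\mathbb{R}^d),\dot{\mathscr{L}}^{m}_p(\mathbb{R}^d)\bigr),\qquad 0<t<1,
\end{equation}
valid for every $1\le p\le\infty$ and $m\in\mathbb{N}$.

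For the upper estimate in \eqref{heatK} I would write, using $(1-e^{-t|\xi|^{2}})=\int_0^t |\xi|^{2}e^{-s|\xi|^{2}}\,ds$, the representation
\[
[W_t-\mathrm{id}_{\mathbb{R}^d}]^m h=(-1)^m\int_0^t\!\!\cdots\!\int_0^t \Delta^{m}W_{s_1+\cdots+s_m}h\,ds_1\cdots ds_m
\]
for smooth $h$, combine it with the $L_p$-contractivity $\|W_s\|_{p\to p}\le 1$, and use $\|\Delta^{m}h\|_{L_p(\mathbb{R}^d)}=\|J_{2m}h\|_{L_p(\mathbb{R}^d)}\asymp\|h\|_{\dot{\mathscr{L}}^{2m}_p(\mathbb{R}^d)}$ (valid for every $1\le p\le\infty$ because $|\xi|^{2m}$ is a polynomial and no Riesz transform is needed). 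Splitting $f=g+h$ and bounding $\|[W_t-\mathrm{id}_{\mathbb{R}^d}]^m g\|_{L_p(\mathbb{R}^d)}\le 2^{m}\|g\|_{L_p(\mathbb{R}^d)}$ by the binomial expansion and contractivity, and taking the infimum over admissible decompositions, yields the $\lesssim$ direction in \eqref{heatK}. The reverse direction is the classical semigroup result of Butzer--Berens \cite[Theorem~3.4.6]{ButzerBerens} (see also Trebels \cite{Trebels,Trebels-lecture}), which constructs a near-optimal decomposition using the resolvent of the generator of $\{W_t\}_{t\ge 0}$ and remains valid at the endpoints $p=1,\infty$. The proof of \eqref{poissonK} is entirely analogous, with the generator $-(-\Delta)^{1/2}$ replacing $\Delta$, so that one $\Delta$-power is traded for one Riesz power and the exponent $2m$ becomes $m$.

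Granted \eqref{heatK} and \eqref{poissonK}, the theorem follows quickly. For part~(ii), Theorem~\ref{Theorem $K$-functional} applied with $\alpha=m>s$ gives
\[
\|f\|_{\mathbf{B}^{s,b}_{p,q}(\mathbb{R}^d)}\asymp\|f\|_{L_p(\mathbb{R}^d)}+\left(\int_0^1 t^{-sq}(1-\log t)^{bq}K\bigl(t^{m},f;L_p(\mathbb{R}^d),\dot{\mathscr{L}}^{m}_p(\mathbb{R}^d)\bigr)^q\frac{dt}{t}\right)^{1/q},
\]
and substituting \eqref{poissonK} produces (\ref{BesovPoisson}) directly. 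For part~(i), apply Theorem~\ref{Theorem $K$-functional} with $\alpha=2m>s$, obtaining
\[
\|f\|_{\mathbf{B}^{s,b}_{p,q}(\mathbb{R}^d)}\asymp\|f\|_{L_p(\mathbb{R}^d)}+\left(\int_0^1 \tau^{-sq}(1-\log \tau)^{bq}K\bigl(\tau^{2m},f;L_p(\mathbb{R}^d),\dot{\mathscr{L}}^{2m}_p(\mathbb{R}^d)\bigr)^q\frac{d\tau}{\tau}\right)^{1/q},
\]
and perform the change of variable $t=\tau^{2}$ so that $\tau^{2m}=t^{m}$, $\tau^{-sq}=t^{-sq/2}$, and $1-\log\tau\asymp 1-\log t$ on $(0,1)$; the resulting integral matches (\ref{BesovHeat}) after invoking \eqref{heatK}.

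The main obstacle is establishing the two-sided estimates \eqref{heatK} and \eqref{poissonK} uniformly at the endpoints $p=1$ and $p=\infty$, where Mikhlin-type multiplier arguments are unavailable. I would circumvent this by arguing entirely at the level of the contraction semigroups $\{W_t\}$ and $\{P_t\}$ and their generators, where \eqref{heatK} and \eqref{poissonK} are standard consequences of the Butzer--Berens semigroup theory of higher-order differences and hold on any Banach space on which the semigroup acts as a strongly continuous contraction, in particular on $L_p(\mathbb{R}^d)$ for every $1\le p\le\infty$. Everything else in the argument is a transparent change of variable coupled with Theorem~\ref{Theorem $K$-functional}.
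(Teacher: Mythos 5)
Your proof of part (i) is correct and follows the same route as the paper: apply Theorem~\ref{Theorem $K$-functional} with $\alpha = 2m$, then substitute $t = \tau^2$ and invoke the Ditzian--Ivanov/Trebels equivalence (Lemma~\ref{LemmaK-FunctionalDI}) for the heat semi-group with $D(\Delta^m) = \dot{\mathscr{L}}^{2m}_p(\mathbb{R}^d)$. The sign in your integral representation of $[W_t-\mathrm{id}_{\mathbb{R}^d}]^m h$ is off (there is no $(-1)^m$), but this is harmless for the norm estimate.

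Your proof of part (ii), however, departs from the paper and rests on a claim you do not adequately justify. You assert the two-sided estimate \eqref{poissonK}, i.e.\ $\|[P_t-\mathrm{id}_{\mathbb{R}^d}]^m f\|_{L_p} \asymp K(t^m,f;L_p,\dot{\mathscr{L}}^m_p)$, for \emph{every} $m\in\mathbb{N}$ and $1\le p\le\infty$, and then apply Theorem~\ref{Theorem $K$-functional} with $\alpha = m$ directly. But the abstract semi-group result (Lemma~\ref{LemmaK-FunctionalDI}) only gives $K(t^m,f;L_p,D(\Lambda_P^m))\asymp \|[P_t-\mathrm{id}_{\mathbb{R}^d}]^m f\|_{L_p}$ with the \emph{abstract} domain $D(\Lambda_P^m)$; it is not a ``standard consequence holding on any Banach space'' that $D(\Lambda_P^m)$ coincides with $\dot{\mathscr{L}}^m_p(\mathbb{R}^d)$. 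For the heat semi-group the identification $D(\Delta^m)=\dot{\mathscr{L}}^{2m}_p$ is straightforward because $\Delta^m$ is a local operator with polynomial symbol; for the Poisson generator $\Lambda_P=-(-\Delta)^{1/2}$, odd powers $\Lambda_P^m$ involve the non-local operator $(-\Delta)^{m/2}$, and the identification $D(\Lambda_P^m)=\dot{\mathscr{L}}^m_p$ for odd $m$ at $p=1,\infty$ is precisely the delicate point. The reference in the paper (Triebel, Section~2.5.3) records only $D(\Lambda_P^{2m})=\dot{\mathscr{L}}^{2m}_p$, and the paper's proof is built to avoid the odd case: it establishes the result for the even powers $[P_t-\mathrm{id}_{\mathbb{R}^d}]^{2m}$ and then extends to arbitrary powers via the elementary monotonicity inequality $\|[P_t-\mathrm{id}_{\mathbb{R}^d}]^{m+1}f\|_{L_p}\lesssim\|[P_t-\mathrm{id}_{\mathbb{R}^d}]^m f\|_{L_p}$ and the Marchaud-type estimate of Lemma~\ref{LemmaMarchaud} combined with a Hardy inequality. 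Your shortcut would be valid if you supplied a verification of $D(\Lambda_P^m)=\dot{\mathscr{L}}^m_p$ for odd $m$ and $p=1,\infty$ (e.g.\ by showing the multiplier $(1-e^{-t|\xi|})/(t|\xi|)$ is uniformly $L_p$-bounded and converges strongly, as can be done using $\phi(r)=\int_0^1 e^{-sr}\,ds$ and the Poisson kernel), but as written, calling the argument ``entirely analogous'' hides exactly the step the paper takes pains to circumvent.
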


	Before we show Theorem \ref{Theorem Heat Kernels} we collect some basic definitions on semi-groups of operators. We essentially follow \cite{ButzerBerens, Triebel}. Let $A$ be a Banach space and let $\{T_t\}_{t \geq 0}$ be an \emph{equibounded semi-group of class $C_0$} of linear operators from $A$ into itself, that is,
	\begin{eqnarray*}
		&T_{t_1} T_{t_2} = T_{t_1 + t_2} \quad &\text{for} \quad t_1, t_2 \geq 0, \quad T_0 = \text{id}_A,
\\	
		&\|T_t a\|_A \leq C \|a\| \quad &\text{for all} \quad t \geq 0, \quad C \,\,\, \text{being a constant},
\\	
		&\lim_{t \downarrow 0} \|T_t a - a\|_A = 0 \quad &\text{for all} \quad a \in A.
	\end{eqnarray*}
	The infinitesimal generator $\Lambda$ of the semi-group is defined by\index{\bigskip\textbf{Operators}!$\Lambda$}\label{GEN}
	\begin{equation}\label{5.17new2}
		\Lambda a = \lim_{t \downarrow 0} t^{-1} (T_t - \text{id}_A) a
	\end{equation}
	whenever the limit exists. The domain $D(\Lambda)$\index{\bigskip\textbf{Spaces}!$D(\Lambda)$}\label{DOMGEN} of $\Lambda$ is formed by all those $a \in A$ for which (\ref{5.17new2}) exists. For every $m \in \mathbb{N}$, the domain of $\Lambda^m$ is endowed with the (semi-)norm
	\begin{equation*}
		\|a\|_{D(\Lambda^m)} = \|\Lambda^m a\|_A, \quad a \in D(\Lambda^m).
	\end{equation*}
	Such a semi-group is called \emph{analytic} if $T_t$ maps $A$ into $D(\Lambda)$ and if $\sup_{t > 0} t \|\Lambda T_t a\|_A < \infty$.
	
	We write down two auxiliary results that will be useful in later considerations.
	
	\begin{lem}[\cite{DitzianIvanov, trebels}]\label{LemmaK-FunctionalDI}
	Let $A$ be a Banach space and $\{T_t\}_{t \geq 0}$ be an analytic semi-group of operators in $A$. For $m \in \mathbb{N}$, we have
	\begin{equation*}
		K(t^m,a; A, D(\Lambda^m)) \asymp \|[T_t - \emph{id}_{A}]^m a\|_A.
	\end{equation*}
	\end{lem}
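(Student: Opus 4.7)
The plan is to prove the two inequalities separately, since neither direction is self-dual. Throughout I shall use that $\{T_t\}_{t\ge 0}$ is equibounded, analytic, with infinitesimal generator $\Lambda$, and that consequently the Bernstein-type inequality
\begin{equation*}
\|\Lambda^k T_s\|_{A\to A}\le C\,s^{-k},\qquad s>0,\;k\in\mathbb{N},
\end{equation*}
is at our disposal (this is the defining feature of analyticity together with equiboundedness and iteration in the semigroup property).

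First I would establish the upper bound $\|[T_t-\text{id}_A]^m a\|_A\lesssim K(t^m,a;A,D(\Lambda^m))$. Fix any admissible decomposition $a=a_0+a_1$ with $a_1\in D(\Lambda^m)$. Expanding $(T_t-\text{id}_A)^m=\sum_{k=0}^m\binom{m}{k}(-1)^{m-k}T_{kt}$ and using the equiboundedness of the semigroup gives at once $\|(T_t-\text{id}_A)^m a_0\|_A\le C\|a_0\|_A$. For $a_1$ I would iterate the fundamental identity $(T_s-\text{id}_A)b=\int_0^s T_u\Lambda b\,du$, valid on $D(\Lambda)$, to obtain
\begin{equation*}
(T_t-\text{id}_A)^m a_1=\int_0^t\!\!\cdots\!\int_0^t T_{s_1+\cdots+s_m}\Lambda^m a_1\,ds_1\cdots ds_m,
\end{equation*}
so that $\|(T_t-\text{id}_A)^m a_1\|_A\le C\,t^m\|\Lambda^m a_1\|_A$. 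Adding the two estimates and taking the infimum over admissible decompositions yields the upper bound.

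Next I would turn to the reverse inequality, which is the heart of the matter. The candidate near-minimizer is the Steklov-type averaging operator
\begin{equation*}
B_t a:=\frac{1}{t^m}\int_0^t\!\!\cdots\!\int_0^t T_{s_1+\cdots+s_m}a\,ds_1\cdots ds_m.
\end{equation*}
Equiboundedness gives $\|B_t a\|_A\le C\|a\|_A$, and by analyticity $B_t a\in D(\Lambda^m)$ for every $a\in A$. The crucial exact identity is
\begin{equation*}
\Lambda^m B_t a=\frac{1}{t^m}(T_t-\text{id}_A)^m a,
\end{equation*}
which follows by applying $\Lambda^m$ under the integral and recognizing $\Lambda^m T_{s_1+\cdots+s_m}=\partial_{s_1}\cdots\partial_{s_m}T_{s_1+\cdots+s_m}$; the resulting iterated difference evaluates to $(T_t-\text{id}_A)^m$. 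Taking $a_1:=B_t a$ and $a_0:=a-B_t a$, one immediately gets $t^m\|\Lambda^m a_1\|_A=\|(T_t-\text{id}_A)^m a\|_A$.

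The main obstacle is to control the residual $\|a-B_t a\|_A$ by $\|(T_t-\text{id}_A)^m a\|_A$; this is where analyticity and the Bernstein inequality are indispensable. The plan is to run a Marchaud-type iteration: by applying the identity for $\Lambda^m B_s$ together with the Bernstein inequality $\|\Lambda^m T_s\|_{A\to A}\le C s^{-m}$, one obtains a pointwise-in-$s$ estimate of $\|a-B_s a\|_A$ in terms of $\|(T_s-\text{id}_A)^m a\|_A$, and then a standard dyadic summation using the iterated inequality $\|(T_{s}-\text{id}_A)^m a\|_A\le C\|(T_t-\text{id}_A)^m a\|_A$ for $0<s\le t$ (which itself is derived via the Bernstein estimate combined with the upper bound proved above) collapses everything to $\|a-B_t a\|_A\le C\|(T_t-\text{id}_A)^m a\|_A$. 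Inserting this bound together with the exact identity for $\Lambda^m B_t a$ into the definition of $K(t^m,a;A,D(\Lambda^m))$ finishes the proof. The obstacle is genuinely in this last step, since the naive ``$a_1=B_t a$'' only gives $t^m\|\Lambda^m a_1\|_A$ of the right size but leaves the $A$-norm of $a_0$ a priori comparable to $\|a\|_A$; the Marchaud-type iteration is what converts this into the sharp bound.
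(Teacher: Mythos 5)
The paper does not prove Lemma \ref{LemmaK-FunctionalDI}: it is cited from Ditzian--Ivanov and Trebels--Westphal, so there is no in-text argument to compare against. That said, your proposal contains a genuine gap in the hard direction, and it is worth pinning down where.

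Your upper bound is fine. The problem is the lower bound, specifically the claim that $\|a-B_t a\|_A\lesssim\|(T_t-\mathrm{id}_A)^m a\|_A$ where $B_t a=\frac{1}{t^m}\int_0^t\cdots\int_0^t T_{s_1+\cdots+s_m}a\,ds_1\cdots ds_m$. This inequality is false for $m\geq 2$, and no Marchaud-type iteration can rescue it, because it already fails in the scalar case. Take $A=\mathbb{C}$, $T_t=e^{-\lambda t}$ with $\lambda>0$ fixed, so $\Lambda=-\lambda$ and $D(\Lambda^m)=A$. Writing $x=\lambda t$, one has $\|(T_t-\mathrm{id})^m a\|=(1-e^{-x})^m|a|\sim x^m|a|$ for small $x$, and $K(t^m,a)\asymp\min(1,x^m)|a|\sim x^m|a|$, consistent with the lemma. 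But your $B_t a=\phi(x)^m a$ with $\phi(x)=(1-e^{-x})/x=1-\tfrac{x}{2}+O(x^2)$, so $\|a-B_ta\|=|1-\phi(x)^m|\,|a|\sim\tfrac{m}{2}x|a|$ for small $x$. Thus $\|a-B_t a\|/\|(T_t-\mathrm{id})^m a\|\sim x^{1-m}\to\infty$ as $x\to 0$ when $m\geq 2$: the residual is first-order in $t$, not $m$-th order, so $B_t a$ is not a near-minimizer and the decomposition you propose over-estimates $K$ by an unbounded factor. The direction of Marchaud's inequality is also wrong for your purposes: it bounds lower-order moduli by integrals of higher-order ones (with a global term), which would give you $\|a-B_t a\|$ controlled by first-order differences $\|(T_s-\mathrm{id})a\|$, not by $\|(T_t-\mathrm{id})^m a\|$.

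What is actually needed is a near-minimizer whose residual matches the $m$-th order Taylor behavior of the semigroup. The identity $\Lambda^m B_t a=t^{-m}(T_t-\mathrm{id}_A)^m a$ that you derived is correct and useful, but it must be paired with a different residual operator. The standard device (appearing in both cited references) is to replace $B_t$ by a finite linear combination of iterated Steklov averages, e.g.\ the Boolean-sum operator $\mathrm{id}_A-(\mathrm{id}_A-\tilde B_t)^m$ with $\tilde B_t=\frac{1}{t}\int_0^t T_s\,ds$, or an explicitly designed combination $\sum_{j=1}^m c_j T_{jt}$ smoothed by averaging, chosen so that the residual has a zero of order $m$ at $t=0$; analyticity and the Bernstein estimate $\|\Lambda^k T_s\|\lesssim s^{-k}$ are then used to verify membership in $D(\Lambda^m)$ and to estimate the two pieces of the $K$-functional. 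As written, your construction skips the step where those lower-order cancellations are engineered in, and that is exactly the step that makes the strong-converse inequality nontrivial.
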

	
	\begin{lem}[\cite{DeVoreRiemenschneiderSharpley}]\label{LemmaMarchaud}
		Let $A$ be a Banach space and $\{T_t\}_{t \geq 0}$ be an equibounded $C_0$ semi-group of operators in $A$. The following Marchaud-type inequality holds
		\begin{equation*}
			K(t^m, a; A, D(\Lambda^m)) \lesssim t^m \int_t^\infty \frac{K(u^{m+r}, a; A, D(\Lambda^{m+r}))}{u^m} \frac{du}{u}
		\end{equation*}
		for $m, r \in \mathbb{N}$.
	\end{lem}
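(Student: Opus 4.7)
The statement is the abstract counterpart for generators of $C_0$ semi-groups of the classical Marchaud inequality $\omega_m(f,t)_p \lesssim t^m \int_t^\infty u^{-m}\omega_{m+r}(f,u)_p\,du/u$, and my plan is to adapt its standard dyadic-telescoping proof to this framework. At scales $u_j = 2^j t$ ($j\geq 0$), I would pick near-optimal $K$-functional decompositions $a=\alpha_j+\beta_j$ with $\beta_j\in D(\Lambda^{m+r})$ satisfying
$$\|\alpha_j\|_A + u_j^{m+r}\|\Lambda^{m+r}\beta_j\|_A \leq 2K(u_j^{m+r}, a; A, D(\Lambda^{m+r})).$$
A single decomposition suited to $K(t^m, a; A, D(\Lambda^m))$ is then constructed by the telescope $\beta_0 = \sum_{j=0}^{N-1}(\beta_j-\beta_{j+1}) + \beta_N$, whose increments $c_j:=\beta_j-\beta_{j+1}=\alpha_{j+1}-\alpha_j$ all lie in $D(\Lambda^{m+r})\subset D(\Lambda^m)$; the tail $\beta_N$ will be controlled as $N\to\infty$ under the assumption that the right-hand side is finite (the infinite case being vacuous).

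The central analytical ingredient is a Landau--Kolmogorov interpolation inequality for the generator,
$$\|\Lambda^m c\|_A \lesssim \|c\|_A^{r/(m+r)}\,\|\Lambda^{m+r}c\|_A^{m/(m+r)}, \qquad c\in D(\Lambda^{m+r}),$$
valid for any equibounded $C_0$ semi-group. I would obtain it by first deriving the base case $\|\Lambda c\|_A^2 \lesssim \|c\|_A\,\|\Lambda^2 c\|_A$ from the Taylor identity $T_sc - c = s\Lambda c + \int_0^s(s-\tau)T_\tau\Lambda^2 c\,d\tau$ combined with the equiboundedness of $\{T_\tau\}_{\tau\geq 0}$ and optimization in $s>0$, and then bootstrapping to arbitrary $m,r\in\mathbb{N}$ by iterating this base case along the scale of powers of $\Lambda$. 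Plugging the $K$-functional bounds $\|c_j\|_A \lesssim K(u_j^{m+r}, a;A,D(\Lambda^{m+r}))$ and $\|\Lambda^{m+r}c_j\|_A \lesssim u_j^{-(m+r)}K(u_j^{m+r}, a;A,D(\Lambda^{m+r}))$ into the interpolation bound yields
$$\|\Lambda^m c_j\|_A \lesssim u_j^{-m}\,K(u_j^{m+r}, a; A, D(\Lambda^{m+r})).$$

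Summing over $j$ and passing from the dyadic sum to the integral (which is legitimate because $K(s,a)$ is concave in $s$, so $K(u^{m+r},a)$ varies by at most a factor $2^{m+r}$ on each block $[u_j, u_{j+1}]$) produces the announced Marchaud bound, with $\|\alpha_0\|_A \leq 2K(t^{m+r}, a;A,D(\Lambda^{m+r}))$ absorbed into the first dyadic term of the integral. The main obstacle I expect is the rigorous handling of the telescope remainder $\beta_N$: one only has $u_N^{m+r}\|\Lambda^{m+r}\beta_N\|_A \leq 2K(u_N^{m+r}, a)\leq 2\|a\|_A$, so $\|\Lambda^{m+r}\beta_N\|_A\to 0$ but $\beta_N$ itself need not vanish in $A$. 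The cleanest fix is to observe that the contribution of $\beta_N$ to $t^m\|\Lambda^m b_1\|_A$ (via the Landau--Kolmogorov inequality once more) is bounded by $t^m\int_{u_N}^\infty u^{-m}K(u^{m+r},a;A,D(\Lambda^{m+r}))\,du/u$, a quantity that tends to zero precisely under the finiteness hypothesis on the right-hand side of the Marchaud inequality.
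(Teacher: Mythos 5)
Your proof is correct, and since the paper simply cites \cite{DeVoreRiemenschneiderSharpley} without giving an argument, the right comparison is with the source: the route you describe --- near-optimal $K$-functional decompositions at dyadic scales, the telescope $\beta_0 = \sum_{j<N}(\beta_j - \beta_{j+1}) + \beta_N$, and the Landau--Kolmogorov (moment) inequality $\|\Lambda^m c\|_A \lesssim \|c\|_A^{r/(m+r)}\|\Lambda^{m+r}c\|_A^{m/(m+r)}$ for the infinitesimal generator of an equibounded $C_0$ semi-group, proved from the Taylor identity for $T_s$ and iterated via log-convexity of $j\mapsto\|\Lambda^j c\|_A$ --- is precisely the classical DeVore--Riemenschneider--Sharpley argument. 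Your treatment of the remainder $\beta_N$ (applying the same interpolation inequality to $\beta_N$ with $\|\beta_N\|_A\lesssim\|a\|_A$ and $\|\Lambda^{m+r}\beta_N\|_A\lesssim u_N^{-(m+r)}\|a\|_A$, so that $t^m\|\Lambda^m\beta_N\|_A\lesssim 2^{-Nm}\|a\|_A\to 0$) closes the one point that is often left vague, so nothing further is needed.
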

	
	\begin{proof}[Proof of Theorem \ref{Theorem Heat Kernels}]
		We know that $\Lambda = \Delta$ is the infinitesimal generator relative to the analytic semi-group $\{W_t\}_{t \geq 0}$ in $L_p(\mathbb{R}^d)$ (see \cite[Theorem 4.3.11, p. 261]{ButzerBerens} and \cite[Section 2.5.2, pp. 190--192]{Triebel}), and so, $\Lambda^m = \Delta^m$ (cf. (\ref{Laplacian})). Thus, $D(\Lambda^m) = D(\Delta^m) = \dot{\mathscr{L}}^{2 m}_p(\mathbb{R}^d)$ (cf. (\ref{LaplacianLift}) and (\ref{RieszSpace})).  Hence, in virtue of Lemma \ref{LemmaK-FunctionalDI}, we have
		\begin{equation*}
			K(t^m, f; L_p(\mathbb{R}^d), \dot{\mathscr{L}}^{2 m}_p(\mathbb{R}^d)) \asymp  \|[W_t - \text{id}_{\mathbb{R}^d}]^m f\|_{L_p(\mathbb{R}^d)}.
		\end{equation*}
	We conclude the proof of (i) by invoking Theorem \ref{Theorem $K$-functional}.
	
	Next we proceed with the proof of (ii). We have that $\{P_t\}_{t \geq 0}$ is an analytic semi-group of operators in $L_p(\mathbb{R}^d)$ with
\begin{equation*}
	\Lambda^{2 m} = (- \Delta)^m, \quad D(\Lambda^{2 m}) =  \dot{\mathscr{L}}^{2 m}_p(\mathbb{R}^d).
\end{equation*}
See \cite[Section 2.5.3, pp. 192--196]{Triebel}. According to Lemma \ref{LemmaK-FunctionalDI},
		\begin{equation*}
			K(t^{2m}, f; L_p(\mathbb{R}^d), \dot{\mathscr{L}}^{2 m}_p(\mathbb{R}^d)) \asymp \|[P_t - \text{id}_{\mathbb{R}^d}]^{2 m} f\|_{L_p(\mathbb{R}^d)}
		\end{equation*}
		and then, by Theorem \ref{Theorem $K$-functional}, we get
		\begin{equation}\label{5.16new2}
\|f\|_{\mathbf{B}^{s,b}_{p,q}(\mathbb{R}^d)}
\asymp \|f\|_{L_p(\mathbb{R}^d)} + \left(\int_0^1 t^{-s q} (1-\log t)^{b q}
\|[P_t - \text{id}_{\mathbb{R}^d}]^{2 m} f\|_{L_p(\mathbb{R}^d)}^q
 \frac{dt}{t}\right)^{1/q}.
 \end{equation}
 This shows (ii) for even powers of $P_t - \text{id}_{\mathbb{R}^d}$. It remains to prove that (\ref{5.16new2}) holds for every natural number. This is a simple consequence of the elementary inequality
 \begin{equation*}
 	\|[P_t - \text{id}_{\mathbb{R}^d}]^{m + 1} f\|_{L_p(\mathbb{R}^d)} \lesssim \|[P_t - \text{id}_{\mathbb{R}^d}]^{m} f\|_{L_p(\mathbb{R}^d)},
 \end{equation*}
 and the converse estimate
 \begin{equation*}
 	\|[P_t - \text{id}_{\mathbb{R}^d}]^{m} f\|_{L_p(\mathbb{R}^d)} \lesssim t^m \int_t^\infty \frac{\|[P_u - \text{id}_{\mathbb{R}^d}]^{m + 1} f\|_{L_p(\mathbb{R}^d)}}{u^m} \frac{du}{u},
 \end{equation*}
 which follows from Lemmas \ref{LemmaK-FunctionalDI} and \ref{LemmaMarchaud} and the Hardy's inequality (\ref{HardyInequal2}) (noting that, by Lemma \ref{LemmaK-FunctionalDI}, $\frac{\|[P_t - \text{id}_{\mathbb{R}^d}]^{m} f\|_{L_p(\mathbb{R}^d)}}{t^m} \asymp \frac{K(t^m, f;  L_p(\mathbb{R}^d), D(\Lambda^{m}))}{t^m}$ and the latter is a decreasing function).
	\end{proof}
	
	\begin{rem}
		Note that the heat kernel $W_t$ (see (\ref{WeierstrassSemiGroup})) and the Poisson kernel $P_t$ (see (\ref{PoissonSemiGroup})) are particular cases of Weierstrass means (see (\ref{WeierstrassMeans})). More precisely, we have $W_t  = W^2_{t^{1/2}}$ and $P_t = W^1_t$. Consequently, if $m=1$ and $\alpha =2$ (respectively, $\alpha = 1$) then the characterizations given in (\ref{BesovWeierstrass}) and (\ref{BesovHeat}) (respectively, (\ref{BesovPoisson})) coincide.
	\end{rem}

\newpage
\section{Besov and  Bianchini norms}\label{section3.5}
%

The following characterization of $\mathbf{B}^{0,0}_{1,1}(\mathbb{R}^d)$ in terms of minimization problems for bounded variation functions plays
 a key role  to establish new results related to Bressan's mixing problem in the paper by Bianchini \cite{Bianchini}. Namely,
\begin{equation}\label{1+}
	\|f\|_{\mathbf{B}^{0,0}_{1,1}(\mathbb{R}^d)} \asymp \int_0^1  \inf_{g \in \text{BV}(\mathbb{R}^d)} \Big\{\|f-g\|_{L_1(\mathbb{R}^d)}+t \|g\|_{\text{BV}(\mathbb{R}^d)} \Big\} \frac{dt}{t}.
\end{equation}
Note that the above expression can be understood in terms of the $K$-functional of $(L_1(\mathbb{R}^d), \text{BV}(\mathbb{R}^d))$ (see (\ref{Peetre})) and interpolation spaces with $\theta=0$ (cf. (\ref{limitinginterpolation})), that is,
\begin{equation*}
	\|f\|_{\mathbf{B}^{0,0}_{1,1}(\mathbb{R}^d)} \asymp \int_0^1  K(t,f; L_1(\mathbb{R}^d), \text{BV}(\mathbb{R}^d)) \frac{dt}{t} = \|f\|_{(L_1(\mathbb{R}^d), \text{BV}(\mathbb{R}^d))_{(0,0),1}}.
\end{equation*}
In particular, this formula shows that there is a relationship between the measures of smoothness $\omega_k(f,t)_1$ and $K(t,f;L_1(\mathbb{R}^d), \text{BV}(\mathbb{R}^d))$.

Our first goal in this section is to describe this relationship explicitly and
 obtain sharp estimates between $\omega_k(f,t)_p$ and $K(t,f;L_p(\mathbb{R}^d), \text{Lip}^{(k,-\alpha)}_{p,r}(\mathbb{R}^d))$ for $1 \leq p \leq \infty$.
 This can be viewed as
 a quantitative version of (\ref{1+});  recall that $\text{Lip}^{(1,0)}_{1,\infty}(\mathbb{R}^d) = \text{BV}(\mathbb{R}^d)$ (cf. (\ref{def:BV})).

 Moreover, our approach can also be applied to obtain characterizations of $K$-functionals
  for another important function couples such as, e.g., $(\mathbf{B}^{0,b}_{p,q}(\mathbb{R}^d), W^k_p(\mathbb{R}^d))$ and $(\mathbf{B}^{0,b}_{p,q}(\mathbb{R}^d), \text{Lip}^{(k,-\alpha)}_{p,r}(\mathbb{R}^d))$, in terms of $\omega_k(f,t)_p$. These characterizations will complement the well-known estimate (see (\ref{KFunctional}))
\begin{equation}\label{aux5.2}
		K(t^k,f;L_p(\mathbb{R}^d), W^k_p(\mathbb{R}^d)) \asymp  t^k  \|f\|_{L_p(\mathbb{R}^d)} + \omega_k(f,t)_p, \quad 0 < t < 1.
\end{equation}

\subsection{Characterizations of various $K$-functionals}
 \begin{thm}\label{thmKfunctionalBV}
 	Let $k \in \mathbb{N}, 1 \leq p \leq \infty,$ and $0 < q, r \leq \infty$.
	\begin{enumerate}[\upshape(i)]
	\item Assume $0 < s < k$ and $-\infty < b < \infty$. Then,
	\begin{align}
		K(t^{s/k}(1-\log t)^{-b}, f: L_p(\mathbb{R}^d), \mathbf{B}^{s,b}_{p,q}(\mathbb{R}^d)) & \asymp t^{s/k}(1-\log t)^b \|f\|_{L_p(\mathbb{R}^d)} \nonumber \\
		&  \hspace{-5cm}+ t^{s/k}(1-\log t)^b \left(\int_{t^{1/k}}^1 (u^{-s} (1 - \log u)^b \omega_k(f,u)_p)^q \frac{du}{u}\right)^{1/q} \label{KFunctLpBp}
	\end{align}
	for all $0 < t < 1$.
	\item Assume $0 < s < k$ and $-\infty < b < \infty$. Then,
	\begin{align}
		K(t^{1- s/k}(1-\log t)^{b}, f: \mathbf{B}^{s,b}_{p,q}(\mathbb{R}^d), W^k_p(\mathbb{R}^d)) & \asymp t^{1- s/k}(1-\log t)^b \|f\|_{L_p(\mathbb{R}^d)} \nonumber \\
		&  \hspace{-5cm}+ \left(\int^{t^{1/k}}_0 (u^{-s} (1 - \log u)^b \omega_k(f,u)_p)^q \frac{du}{u}\right)^{1/q} \label{KFunctBpWp}
	\end{align}
	for all $0 < t < 1$.
	\item Assume $\alpha > 1/r \, (\alpha \geq 0 \text{ if } r = \infty)$. Then,
	\begin{align}
		K(t^k (1-\log t)^{\alpha -\frac{1}{r}}, f ; L_p(\mathbb{R}^d), \emph{Lip}^{(k,-\alpha)}_{p,r}(\mathbb{R}^d)) & \asymp t^k (1-\log t)^{\alpha -\frac{1}{r}} \|f\|_{L_p(\mathbb{R}^d)} \nonumber \\
		 &  \hspace{-7cm}	+ \omega_k(f,t)_p
	 + t^k (1-\log t)^{\alpha -\frac{1}{r}} \left(\int_t^1 (u^{-k} (1-\log u)^{-\alpha} \omega_k(f,u)_p)^r \frac{du}{u}\right)^{\frac{1}{r}} \label{5.3new+}
	\end{align}
	for all $0 < t < 1$.
	\item Assume $b > -1/q$. Then,
	\begin{align}
	K(t^k(1-\log t)^{b + \frac{1}{q}}, f; \mathbf{B}^{0,b}_{p,q}(\mathbb{R}^d), W^k_p(\mathbb{R}^d)) &  \asymp t^k (1-\log t)^{b + \frac{1}{q}} \|f\|_{L_p(\mathbb{R}^d)} \nonumber \\
	& \hspace{-6.5cm} + (1 - \log t)^{b+\frac{1}{q}} \omega_k(f,t)_p + \left(\int_0^t ((1-\log u)^b \omega_k(f,u)_p)^q \frac{du}{u}\right)^{\frac{1}{q}} \label{5.4new+}
	\end{align}
	for all $0 < t < 1$.
	\item  Assume $b > -1/q$. Then,
	\begin{align}
	K((1-\log t)^{-b - \frac{1}{q}}, f; L_p(\mathbb{R}^d), \mathbf{B}^{0,b}_{p,q}(\mathbb{R}^d)) &  \asymp  (1-\log t)^{-b - \frac{1}{q}} \|f\|_{L_p(\mathbb{R}^d)} \nonumber \\
	& \hspace{-5.5cm} + (1 - \log t)^{-b-\frac{1}{q}}  \left(\int_t^1 ((1-\log u)^b \omega_k(f,u)_p)^q \frac{du}{u}\right)^{\frac{1}{q}} \label{5.4new++}
	\end{align}
	for all $0 < t < 1$.
	\item Assume $\alpha > 1/r \, (\alpha \geq 0 \text{ if } r = \infty)$. Then,
	\begin{align}
		K( (1-\log t)^{-\alpha +\frac{1}{r}}, f ; \emph{Lip}^{(k,-\alpha)}_{p,r}(\mathbb{R}^d), W^k_p(\mathbb{R}^d)) & \asymp (1-\log t)^{-\alpha +\frac{1}{r}} \|f\|_{L_p(\mathbb{R}^d)} \nonumber \\
		 &  \hspace{-7cm}	+ \left(\int_0^t (u^{-k} (1-\log u)^{-\alpha} \omega_k(f,u)_p)^r \frac{du}{u}\right)^{\frac{1}{r}} \label{5.3new++}
	\end{align}
	for all $0 < t < 1$.
	\item Assume $b > -1/q$ and $\alpha > 1/r$. Then,
	\begin{align}
		K(t^k (1-\log t)^{b+\frac{1}{q} +\alpha -\frac{1}{r}}, f ; \mathbf{B}^{0,b}_{p,q}(\mathbb{R}^d), \emph{Lip}^{(k,-\alpha)}_{p,r}(\mathbb{R}^d) ) \nonumber \\
		& \hspace{-6.5cm}\asymp t^k  (1-\log t)^{b+\frac{1}{q} +\alpha -\frac{1}{r}} \|f\|_{L_p(\mathbb{R}^d)} \nonumber \\
		&\hspace{-6.25cm}+ (1-\log t)^{b+\frac{1}{q}} \omega_k(f,t)_p + \left(\int_0^t ((1-\log u)^b \omega_k(f,u)_p)^q \frac{du}{u}\right)^{\frac{1}{q}} \label{5.5new+} \\
		& \hspace{-6.25cm}+ t^k (1-\log t)^{b + \frac{1}{q} + \alpha -\frac{1}{r}} \left(\int_t^1 (u^{-k} (1-\log u)^{-\alpha} \omega_k(f,u)_p)^r \frac{du}{u}\right)^{\frac{1}{r}} \nonumber
	\end{align}
	for all $0 < t < 1$.
	\end{enumerate}
	The same estimates also hold for periodic functions.
 \end{thm}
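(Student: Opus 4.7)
The approach is purely interpolation-theoretic: I realize each of the smoothness spaces appearing in (i)--(vii) as a (limiting) real interpolation space of the base couple $(L_p(\mathbb{R}^d), W^k_p(\mathbb{R}^d))$ and then reduce each identity to the corresponding Holmstedt-type reiteration formula in Lemma \ref{PrelimHolmstedt}. By (\ref{PrelimInterpolation}), (\ref{PrelimInterpolationnew}), and (\ref{interpolationLipschitz}),
\begin{equation*}
\mathbf{B}^{s,b}_{p,q}(\mathbb{R}^d) = (L_p(\mathbb{R}^d), W^k_p(\mathbb{R}^d))_{s/k,q;b}\quad (0<s<k),
\end{equation*}
\begin{equation*}
\mathbf{B}^{0,b}_{p,q}(\mathbb{R}^d) = (L_p(\mathbb{R}^d), W^k_p(\mathbb{R}^d))_{(0,b),q},\qquad \mathrm{Lip}^{(k,-\alpha)}_{p,r}(\mathbb{R}^d) = (L_p(\mathbb{R}^d), W^k_p(\mathbb{R}^d))_{(1,-\alpha),r}.
\end{equation*}
Substituting these identities into the $K$-functionals of the theorem puts us exactly in the setting of Lemma \ref{PrelimHolmstedt}: (i) and (ii) follow from parts (i) and (ii) of the lemma with $\theta=s/k$ and logarithmic exponent $b$; (iii) from part (iii) with $c=-\alpha$, $q=r$ (so the admissibility $c<-1/r$ becomes $\alpha>1/r$); (iv) from part (iv) with $\alpha=b$; (v) from part (v); (vi) from part (vi) with the sign of $\alpha$ flipped and $q=r$; and (vii) from part (vii) with $\alpha=b$, $\beta=-\alpha$ and $(p,q)=(r,q)$.

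After this substitution, every right-hand side is an integral involving $K(u,f;L_p(\mathbb{R}^d),W^k_p(\mathbb{R}^d))$. The Peetre--Johnen identity (\ref{KFunctional}) converts this $K$-functional into the modulus of smoothness,
\begin{equation*}
K(u^k, f; L_p(\mathbb{R}^d), W^k_p(\mathbb{R}^d)) \asymp \min\{1,u^k\}\,\|f\|_{L_p(\mathbb{R}^d)} + \omega_k(f,u)_p.
\end{equation*}
Hence the change of variable $u\mapsto u^k$ turns the integrals produced by Lemma \ref{PrelimHolmstedt} into the integrals $\int (u^{-s}(1-\log u)^b\omega_k(f,u)_p)^q \tfrac{du}{u}$ that appear in (\ref{KFunctLpBp})--(\ref{5.5new+}). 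The $\min\{1,u^k\}\,\|f\|_{L_p(\mathbb{R}^d)}$-part of the $K$-functional, after a direct evaluation of the resulting power--logarithmic integrals, produces precisely the explicit $\|f\|_{L_p(\mathbb{R}^d)}$ terms on each right-hand side; when Holmstedt's formula gives an integration range extending beyond $1$, the ordering $W^k_p\hookrightarrow L_p$ trivially truncates the tail at $u=1$ at the cost of the same $\|f\|_{L_p(\mathbb{R}^d)}$ contribution.

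The main bookkeeping obstacle will be keeping track of the $\|f\|_{L_p(\mathbb{R}^d)}$-contribution and the isolated boundary terms $K(t,\cdot)$ that appear in parts (iii)--(vii) of Lemma \ref{PrelimHolmstedt}; via (\ref{KFunctional}), these translate into the separated $\omega_k(f,t)_p$ summands visible in (\ref{5.3new+}), (\ref{5.4new+}) and (\ref{5.5new+}). Case (vii) is the most delicate, since it inherits both a $(0,b)$-type and a $(1,-\alpha)$-type endpoint, so a single application of Lemma \ref{PrelimHolmstedt}(vii) must simultaneously generate the three pieces on the right-hand side of (\ref{5.5new+}); the factor $(1-\log t)^{b+1/q}\omega_k(f,t)_p$ there is exactly the image, under (\ref{KFunctional}) and the change of variable $t\mapsto t^k$, of the $(1-\log t)^{\alpha+1/q}K(t,a)$ term in Lemma \ref{PrelimHolmstedt}(vii). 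The periodic statements are obtained by the same scheme, since every interpolation identification used here and the identity (\ref{KFunctional}) both admit periodic analogues with identical constants.
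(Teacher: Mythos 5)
Your proposal is correct and follows exactly the paper's own method: realize each space as a (limiting) real interpolation space of $(L_p(\mathbb{R}^d),W^k_p(\mathbb{R}^d))$ via (\ref{PrelimInterpolation}), (\ref{PrelimInterpolationnew}), and (\ref{interpolationLipschitz}), invoke the corresponding Holmstedt formula from Lemma \ref{PrelimHolmstedt}, and then convert the inner $K$-functionals into moduli of smoothness through (\ref{KFunctional}) together with the change of variable $u\mapsto u^k$. The paper writes out exactly parts (iii) and (vii) in this way and leaves the remaining cases to the reader, noting they are analogous under Lemma \ref{PrelimHolmstedt}(i), (ii), (iv), (v), (vi), which is precisely the reduction and parameter bookkeeping you describe.
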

 \begin{rem}
 	(i) The estimate (\ref{KFunctLpBp}) with $b=0$ can also be found in \cite[(4.9)]{Nilsson} and \cite[Theorem 3.1]{DeVoreYu}.
	
	
	(ii) The homogeneous counterpart of (\ref{aux5.2}) reads as follows
	\begin{equation}\label{aux5.2hom}
		K(t^k,f;L_p(\mathbb{R}^d), \dot{W}^k_p(\mathbb{R}^d)) \asymp \omega_k(f,t)_p, \quad 0 < t < 1,
\end{equation}
see (\ref{8+}) and (\ref{LPHom}). Using (\ref{aux5.2hom}) one can also establish the homogeneous version of the formulas (\ref{KFunctLpBp})-(\ref{5.5new+}). In such cases the terms involving $\|f\|_{L_p(\mathbb{R}^d)}$ in (\ref{KFunctLpBp})-(\ref{5.5new+}) can be dropped. For example, we have
	\begin{align*}
		K(t^{s/k}(1-\log t)^{-b}, f: L_p(\mathbb{R}^d), \dot{\mathbf{B}}^{s,b}_{p,q}(\mathbb{R}^d)) & \\
		& \hspace{-3cm}\asymp
		 t^{s/k}(1-\log t)^b \left(\int_{t^{1/k}}^1 (u^{-s} (1 - \log u)^b \omega_k(f,u)_p)^q \frac{du}{u}\right)^{1/q} .
	\end{align*}
	Here, $1 \leq p \leq \infty, 0 < q \leq \infty, 0 < s < k \in \mathbb{N}$ and $-\infty < b < \infty$.
 \end{rem}

 Before we show Theorem \ref{thmKfunctionalBV}, let us write down some special cases.
 \begin{cor}\label{corKfunctionalBV}
 	The following estimates hold for any $0 < t < 1$,
	\begin{enumerate}[\upshape(i)]
		\item $K(t,f; L_1(\mathbb{R}^d), \emph{BV}(\mathbb{R}^d)) \asymp t \|f\|_{L_1(\mathbb{R}^d)} + \omega_1(f,t)_1 \asymp K(t,f;L_1(\mathbb{R}^d), W^1_1(\mathbb{R}^d))$,
		\item $K(t,f; L_\infty(\mathbb{R}^d), \emph{Lip}(\mathbb{R}^d)) \asymp t \|f\|_{L_\infty(\mathbb{R}^d)} + \omega_1(f,t)_\infty \asymp K(t,f;L_\infty(\mathbb{R}^d), W^1_\infty(\mathbb{R}^d))$,

%
%

		\item \begin{align*}
		K(t(1-\log t), f; \mathbf{B}^{0,0}_{1,1}, W^1_1(\mathbb{R}^d))& \asymp t (1-\log t) \|f\|_{L_1(\mathbb{R}^d)} + (1-\log t) \omega_1(f,t)_1 \\
		& \hspace{1cm}+ \int_0^t \omega_1(f,u)_1 \frac{du}{u},
		\end{align*}
		
		
		\item Let $b > 0$. Then,
		
		 \begin{align*}
			K(t(1-\log t)^b, f ; \mathbf{B}^{0,b}_{\infty,\infty}(\mathbb{R}^d), W^1_\infty(\mathbb{R}^d)) & \asymp  t (1-\log t)^b \|f\|_{L_\infty(\mathbb{R}^d)} + (1-\log t)^b \omega_1(f,t)_\infty \\
		& \hspace{1cm}+ \sup_{0 \leq u \leq t} (1-\log u)^b \omega_1(f,u)_\infty .
		\end{align*}
	\end{enumerate}
 \end{cor}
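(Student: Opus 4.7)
The plan is to match each of the seven estimates in Theorem \ref{thmKfunctionalBV} with the corresponding Holmstedt-type formula in Lemma \ref{PrelimHolmstedt}, applied to the couple $(L_p(\mathbb{R}^d),W^k_p(\mathbb{R}^d))$. The bridge is provided by the interpolation identifications (\ref{PrelimInterpolation}), (\ref{PrelimInterpolationnew}), and (\ref{interpolationLipschitz}): every Besov or Lipschitz space appearing in the theorem is a (limiting) real interpolation space built on this couple, namely
\begin{equation*}
\mathbf{B}^{s,b}_{p,q}=(L_p,W^k_p)_{s/k,q;b},\quad \mathbf{B}^{0,b}_{p,q}=(L_p,W^k_p)_{(0,b),q},\quad \text{Lip}^{(k,-\alpha)}_{p,r}=(L_p,W^k_p)_{(1,-\alpha),r}.
\end{equation*}
Thus parts (i) and (ii) of the theorem correspond to items (i) and (ii) of Lemma \ref{PrelimHolmstedt} with $\theta=s/k$ and $\alpha=b$; the limiting cases (iii)--(vi) match items (iii)--(vi) of the lemma (with a sign flip $\alpha\mapsto-\alpha$ whenever the Lipschitz endpoint is involved, since $\text{Lip}^{(k,-\alpha)}_{p,r}$ carries logarithmic exponent $-\alpha$ at $\theta=1$); and part (vii) corresponds to item (vii) of the lemma with one endpoint at $\theta=0$ (yielding $\mathbf{B}^{0,b}_{p,q}$) and the other at $\theta=1$ (yielding the Lipschitz space).

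Once the Holmstedt formula is applied, the right-hand side is a weighted integral of $K(u,f;L_p,W^k_p)$. I would then substitute the Johnen--Scherer equivalence (\ref{KFunctional}),
\begin{equation*}
K(u^k,f;L_p(\mathbb{R}^d),W^k_p(\mathbb{R}^d))\asymp \min\{1,u^k\}\,\|f\|_{L_p(\mathbb{R}^d)}+\omega_k(f,u)_p,
\end{equation*}
and change variables $u=v^k$ (so $du/u=k\,dv/v$), which maps the range $(t,1)$ or $(0,t)$ in $u$ onto $(t^{1/k},1)$ or $(0,t^{1/k})$ in $v$. This yields precisely the modulus-of-smoothness integrals appearing in (\ref{KFunctLpBp})--(\ref{5.5new+}). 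In parts (i)--(ii), where the Holmstedt integral extends to $u=\infty$, the tail $u\ge 1$ (where $K(u,f;L_p,W^k_p)\asymp\|f\|_{L_p}$) is computed explicitly and absorbed into the $\|f\|_{L_p}$ term; convergence of that tail rests on the hypothesis $s<k$. In the limiting cases (iii)--(vii) the Holmstedt integrals are already supported on $(0,1)$, and the hypotheses $b>-1/q$, $\alpha>1/r$ ensure integrability of the relevant logarithmic weights. The corollary then follows by specialization: parts (i) and (ii) use item (iii) of the theorem with $k=1$, $r=\infty$, $\alpha=0$, together with the identifications $\text{Lip}^{(1,0)}_{1,\infty}=\text{BV}$ and $\text{Lip}^{(1,0)}_{\infty,\infty}=\text{Lip}$ from (\ref{def:BV}) and (\ref{interpolationLipschitz1}), and the quasi-monotonicity of $u\mapsto\omega_1(f,u)_p/u$, which collapses the supremum term to a multiple of $\omega_1(f,t)_p$ and recovers the classical formula (\ref{aux5.2}); parts (iii) and (iv) of the corollary are direct instances of item (iv) of the theorem with $k=1$, specialized to $p=q=1$, $b=0$ and to $p=q=\infty$, $b>0$ respectively.

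The main obstacle is parameter bookkeeping. The dictionary is delicate: the Lipschitz space carries $-\alpha$ where the abstract limiting interpolation space carries $+\alpha$, the exponent $k$ must be propagated through every weight $t^{\cdots}(1-\log t)^{\cdots}$ via the change of variable $u=v^k$, and in the non-limiting cases (i)--(ii) the Holmstedt integral must be split at $u=1$ to separate the contribution producing the $\omega_k$-integral from the one producing the $\|f\|_{L_p}$ term. Once this is laid out carefully, each of the seven estimates in the theorem, and each of the four estimates in the corollary, becomes a direct consequence of Lemma \ref{PrelimHolmstedt}, (\ref{KFunctional}), and the change of variables $u=v^k$; the periodic versions follow verbatim using the periodic counterpart of (\ref{KFunctional}).
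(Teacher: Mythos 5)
Your proposal is correct and takes essentially the same route as the paper: Corollary \ref{corKfunctionalBV} is stated as a direct specialization of Theorem \ref{thmKfunctionalBV}, whose proof in the paper is exactly the Holmstedt-reiteration argument you describe, using (\ref{PrelimInterpolation}), (\ref{PrelimInterpolationnew}), (\ref{interpolationLipschitz}) to identify the endpoint spaces, Lemma \ref{PrelimHolmstedt} for the $K$-functional formulas, and the Johnen--Scherer equivalence (\ref{aux5.2}) (equivalently (\ref{KFunctional})) to convert to moduli of smoothness. Your derivation of (i)--(ii) from Theorem \ref{thmKfunctionalBV}(iii) with $k=1$, $r=\infty$, $\alpha=0$ plus quasi-monotonicity of $\omega_1(f,u)_p/u$, and of (iii)--(iv) from part (iv) of the theorem, matches how the paper reads the corollary off the theorem.
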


 \begin{rem}
 	The homogeneous counterpart of Corollary \ref{corKfunctionalBV} also holds. For instance, we have
	\begin{equation*}
		K(t,f; L_1(\mathbb{R}^d), \dot{\text{BV}}(\mathbb{R}^d)) \asymp \omega_1(f,t)_1 \asymp K(t,f;L_1(\mathbb{R}^d), \dot{W}^1_1(\mathbb{R}^d))
	\end{equation*}
	and
	\begin{equation*}
		K(t,f; L_\infty(\mathbb{R}^d), \dot{\text{Lip}}(\mathbb{R}^d)) \asymp  \omega_1(f,t)_\infty \asymp K(t,f;L_\infty(\mathbb{R}^d), \dot{W}^1_\infty(\mathbb{R}^d)).
	\end{equation*}
 \end{rem}

 \begin{proof}[Proof of Theorem \ref{thmKfunctionalBV}]
 	By (\ref{interpolationLipschitz}), we have
	\begin{equation*}
		K(t, f; L_p(\mathbb{R}^d), \text{Lip}^{(k,-\alpha)}_{p,r}(\mathbb{R}^d)) \asymp K(t, f;L_p(\mathbb{R}^d),  (L_p(\mathbb{R}^d), W^k_p(\mathbb{R}^d))_{(1,-\alpha),r}).
	\end{equation*}
	Thus, according to Lemma \ref{PrelimHolmstedt}(iii) and (\ref{aux5.2}),
	\begin{align*}
		K(t^k (1-\log t)^{\alpha -\frac{1}{r}}, f; L_p(\mathbb{R}^d), \text{Lip}^{(k,-\alpha)}_{p,r}(\mathbb{R}^d)) \asymp t^k \|f\|_{L_p(\mathbb{R}^d)} + \omega_k(f,t)_p \\
		& \hspace{-10cm} + t^k (1-\log t)^{\alpha -\frac{1}{r}} \left(\int_t^1 (1-\log u)^{-\alpha r} \frac{du}{u}\right)^{\frac{1}{r}} \|f\|_{L_p(\mathbb{R}^d)} \\
		& \hspace{-10cm}+ t^k (1-\log t)^{\alpha -\frac{1}{r}}  \left(\int_t^1 (u^{-k} (1-\log u)^{-\alpha} \omega_k(f,u)_p)^r \frac{du}{u}\right)^{\frac{1}{r}}.
	\end{align*}
	Since $\int_0^1 (1-\log u)^{-\alpha r} \frac{du}{u} < \infty \, (\alpha > 1/r)$, we obtain (iii).
	
	Next we show (vii). Applying (\ref{PrelimInterpolationnew2}) and (\ref{interpolationLipschitz}), one gets
	\begin{align*}
		K(t, f ; \mathbf{B}^{0,b}_{p,q}(\mathbb{R}^d), \text{Lip}^{(k,-\alpha)}_{p,r}(\mathbb{R}^d) )  \\
		& \hspace{-4cm} \asymp K(t, f;(L_p(\mathbb{R}^d), W^k_p(\mathbb{R}^d)_{(0,b),q},  (L_p(\mathbb{R}^d), W^k_p(\mathbb{R}^d))_{(1,-\alpha),r}).
	\end{align*}
	Then, using Lemma \ref{PrelimHolmstedt}(vii) and (\ref{aux5.2}), we have
	\begin{align*}
		K(t^k (1-\log t)^{b+1/q+\alpha-1/r}, f ; \mathbf{B}^{0,b}_{p,q}(\mathbb{R}^d), \text{Lip}^{(k,-\alpha)}_{p,r}(\mathbb{R}^d) ) & \asymp t^k (1-\log t)^{b+1/q} \|f\|_{L_p(\mathbb{R}^d)} \\
		& \hspace{-7cm}+ (1-\log t)^{b+1/q} \omega_k(f,t)_p + \left(\int_0^t (u^k (1-\log u)^b)^q \frac{du}{u}\right)^{1/q} \|f\|_{L_p(\mathbb{R}^d)} \\
		& \hspace{-7cm} + \left(\int_0^t ((1-\log u)^b \omega_k(f,u)_p)^q \frac{du}{u}\right)^{1/q} \\
		& \hspace{-7cm} + t^k (1-\log t)^{b+1/q+\alpha-1/r} \left(\int_t^1 (1-\log u)^{-\alpha r} \frac{du}{u}\right)^{1/r} \|f\|_{L_p(\mathbb{R}^d)} \\
		& \hspace{-7cm} + t^k (1-\log t)^{b+1/q+\alpha-1/r} \left(\int_t^1 (u^{-k} (1-\log u)^{-\alpha} \omega_k(f,u)_p)^r \frac{du}{u}\right)^{1/r} \\
		& \hspace{-7cm} \asymp t^k (1-\log t)^{b+1/q+\alpha-1/r} \|f\|_{L_p(\mathbb{R}^d)} + (1-\log t)^{b+1/q} \omega_k(f,t)_p  \\
		& \hspace{-6.5cm}  + \left(\int_0^t ((1-\log u)^b \omega_k(f,u)_p)^q \frac{du}{u}\right)^{1/q} \\
		& \hspace{-6.5cm} + t^k (1-\log t)^{b+1/q+\alpha-1/r} \left(\int_t^1 (u^{-k} (1-\log u)^{-\alpha} \omega_k(f,u)_p)^r \frac{du}{u}\right)^{1/r}.
	\end{align*}
	
	The proofs of (i), (ii), (iv), (v), and (vi) follow similar ideas as those of (iii) and (vii) but now applying Lemma \ref{PrelimHolmstedt}(i), (ii), (iv), (v) and (vi), respectively, and are left to the reader.
 \end{proof}

\begin{rem}\label{Remark 5.3}
 	The terms involving the moduli of smoothness on the right-hand side of (\ref{5.3new+}) are independent each other. Let
	\begin{equation*}
		I(f, t) = t^k (1-\log t)^{\alpha -\frac{1}{r}} \left(\int_t^1 (u^{-k} (1-\log u)^{-\alpha} \omega_k(f,u)_p)^r \frac{du}{u}\right)^{\frac{1}{r}}.
	\end{equation*}
	There exists a sufficiently regular $f$ such that $\omega_k(f,t)_p \asymp t^\beta$ with $0 < \beta < k$, see \cite{Tikhonov-real}.
Elementary computations yield that $I(f,t) \asymp t^\beta (1-\log t)^{-1/r}$. On the other hand, if $\omega_k(f,t)_p \asymp t^k (1-\log t)^{\alpha -1/r}$, then $I(f,t) \asymp t^k (1-\log t)^{\alpha-1/r} \log(1-\log t) \gtrsim \omega_k(f,t)_p$. Analogously, one can show the independence of the corresponding terms in (\ref{5.4new+}) and (\ref{5.5new+}). For instance, consider $\omega_k(f,t)_p \asymp t^k$ and $\omega_k(f,t)_p \asymp (1-\log t)^{-b-1/q} (1 + \log (1-\log t))^{-\delta}, \delta > 1/q,$ to deal with the terms
	\begin{equation*}
		\left(\int_0^t ((1-\log u)^b \omega_k(f,u)_p)^q \frac{du}{u}\right)^{\frac{1}{q}}
	\end{equation*}
	and
	\begin{equation*}
		t^k (1-\log t)^{b + \frac{1}{q} + \alpha -\frac{1}{r}} \left(\int_t^1 (u^{-k} (1-\log u)^{-\alpha} \omega_k(f,u)_p)^r \frac{du}{u}\right)^{\frac{1}{r}}
	\end{equation*}
	on the right-hand side of (\ref{5.5new+}).
\end{rem}

\subsection{Equivalences between  Besov and  Bianchini norms}
As application of Theorem \ref{thmKfunctionalBV} we can extend the Bianchini's characterization (\ref{1+}) for $\mathbf{B}^{0,0}_{1,1}(\mathbb{R}^d)$ to the full range of parameters of the scale $\mathbf{B}^{s,b}_{p,q}(\mathbb{R}^d)$. We will use Lipschitz-type spaces as well as Besov spaces with logarithmic smoothness.
We give a rather elementary proof based on estimates of the Peetre's $K$-functional.
It is different from that given by Bianchini for (\ref{1+}) (\cite[Proposition 4.2]{Bianchini}), which was relied on min max principle and the description of $\text{BV}(\mathbb{R}^d)$ as a dual space.

%


\begin{thm}\label{Thm 5.6new}
	Let $1 \leq p \leq \infty, 0 < q \leq \infty, s \geq 0$, and $-\infty < b < \infty$.
		\begin{enumerate}[\upshape(i)]
	\item Let $k \in \mathbb{N}, 0 < r \leq \infty$, and $\alpha - 1/r > 0 \, (\alpha \geq 0 \text{ if } r = \infty).$ The following characterization holds
	 \begin{multline}
	\|f\|_{\mathbf{B}^{s,b}_{p,q}(\mathbb{R}^d)}	
\\
\asymp \left(\int_0^1 t^{-\frac{s q}{k}} (1 - \log t)^{b q+ \frac{s q}{k} (\alpha -\frac{1}{r})} \inf_{g \in \emph{Lip}^{(k,-\alpha)}_{p,r}(\mathbb{R}^d)} \Big\{ \|f-g\|_{L_p(\mathbb{R}^d)} + t \|g\|_{\emph{Lip}^{(k,-\alpha)}_{p,r}(\mathbb{R}^d)} \Big\}^q \frac{dt}{t}\right)^{1/q} \label{Bianchini 4.2}
\end{multline}
if and only if $s < k$.

	\item Let $k \in \mathbb{N}, 0 < r \leq \infty$ and $\alpha > -1/r$. The following characterization holds
	 \begin{multline}
	\|f\|_{\mathbf{B}^{s,b}_{p,q}(\mathbb{R}^d)}
\asymp \left(\int_0^1 t^{-\frac{s q}{k}} (1 - \log t)^{b q- (1-\frac{s}{k}) (\alpha +\frac{1}{r})q} \right.
\\
\left.
\inf_{g \in W^{k}_{p}(\mathbb{R}^d)} \Big\{ \|f-g\|_{\mathbf{B}^{0,\alpha}_{p,r}(\mathbb{R}^d)} + t \|g\|_{W^k_p(\mathbb{R}^d)} \Big\}^q \frac{dt}{t}\right)^{1/q} \label{Bianchini 4.3new}
\end{multline}
if and only if one of the following conditions is satisfied
\begin{enumerate}[\upshape(a)]
\item
	$0 < s < k,$
\item
	$s=0$, and $\alpha + 1/r < b + 1/q$.
\end{enumerate}

\item Let $ k \in \mathbb{N}, 0 < r_1, r_2 \leq \infty, \alpha_1 > -1/r_1$, and $\alpha_2 > 1/r_2$. The following characterization holds

 \begin{multline*}
	\|f\|_{\mathbf{B}^{s,b}_{p,q}(\mathbb{R}^d)} 
\asymp \left(\int_0^1 t^{-\frac{s q}{k}} (1 - \log t)^{b q - (1-\frac{s}{k}) (\alpha_1 +\frac{1}{r_1})q+ \frac{s q}{k} (\alpha_2 -\frac{1}{r_2})} \right.
\\
\left.\inf_{g \in \emph{Lip}^{(k,-\alpha_2)}_{p,r_2}(\mathbb{R}^d)} \Big\{ \|f-g\|_{\mathbf{B}^{0,\alpha_1}_{p,r_1}(\mathbb{R}^d)} + t \|g\|_{\emph{Lip}^{(k,-\alpha_2)}_{p,r_2}(\mathbb{R}^d)} \Big\}^q \frac{dt}{t}\right)^{1/q}
 \end{multline*}
	if and only if one of the following conditions is satisfied
\begin{enumerate}[\upshape(a)]
\item
	$0 < s < k,$
\item
	$s=0$, and $\alpha_1 + 1/r_1 < b + 1/q$.
\end{enumerate}

\end{enumerate}

	The previous characterizations also hold true in the periodic setting.
\end{thm}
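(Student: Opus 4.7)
The right-hand sides in all three parts can be interpreted as norms of logarithmic interpolation spaces. In part (i), since the integrand contains $K(t, f; L_p(\mathbb{R}^d), \text{Lip}^{(k,-\alpha)}_{p,r}(\mathbb{R}^d))$ up to the rescaling of the weight $t^{-s/k}(1-\log t)^{b+(s/k)(\alpha-1/r)}$, the RHS equals the norm of
\[
(L_p(\mathbb{R}^d), \text{Lip}^{(k,-\alpha)}_{p,r}(\mathbb{R}^d))_{s/k, q;\, b+(s/k)(\alpha-1/r)}
\]
when $s>0$, and of $(L_p(\mathbb{R}^d), \text{Lip}^{(k,-\alpha)}_{p,r}(\mathbb{R}^d))_{(0, b), q}$ when $s=0$. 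Part (ii) corresponds analogously to the couple $(\mathbf{B}^{0,\alpha}_{p,r}(\mathbb{R}^d), W^k_p(\mathbb{R}^d))$, and part (iii) to $(\mathbf{B}^{0,\alpha_1}_{p,r_1}(\mathbb{R}^d), \text{Lip}^{(k,-\alpha_2)}_{p,r_2}(\mathbb{R}^d))$, with logarithmic exponents dictated by the normalization of the weights.

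The main strategy is to exploit the identities $\text{Lip}^{(k,-\alpha)}_{p,r}(\mathbb{R}^d) = (L_p(\mathbb{R}^d), W^k_p(\mathbb{R}^d))_{(1,-\alpha),r}$ from \eqref{interpolationLipschitz} and $\mathbf{B}^{0,\alpha}_{p,r}(\mathbb{R}^d) = (L_p(\mathbb{R}^d), W^k_p(\mathbb{R}^d))_{(0,\alpha),r}$ from \eqref{PrelimInterpolationnew}, so that each of the outer interpolations becomes a reiteration of the single couple $(L_p(\mathbb{R}^d), W^k_p(\mathbb{R}^d))$. For $0 < s < k$, I would then apply Lemma~\ref{PrelimLemma7.2}(ii) to part (i), Lemma~\ref{PrelimLemma7.2}(i) to part (ii), and a combination of both to part (iii); the delicate point is that the logarithmic exponents have been arranged on the RHS so that, after reiteration, the inner shifts $\pm(\alpha\mp 1/r)$ cancel and what remains is exactly $(L_p(\mathbb{R}^d), W^k_p(\mathbb{R}^d))_{s/k, q; b}$. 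By \eqref{PrelimInterpolation} this equals $\mathbf{B}^{s,b}_{p,q}(\mathbb{R}^d)$, proving the if-part in the regime $0<s<k$.

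The case $s=0$ in (ii) and (iii) reduces to iterated \emph{limiting} interpolation of the form $((L_p(\mathbb{R}^d), W^k_p(\mathbb{R}^d))_{(0,\alpha),r}, \cdot)_{(0, \beta), q}$, which lies outside Lemma~\ref{PrelimLemma7.2}. Here I would compute directly, using the explicit $K$-functional characterizations obtained in Theorem~\ref{thmKfunctionalBV}(iv)--(vii), and then apply the Hardy inequalities \eqref{HardyInequal1}--\eqref{HardyInequal4}. The hypothesis $\alpha + 1/r < b + 1/q$ (respectively $\alpha_1 + 1/r_1 < b + 1/q$ in (iii)) is precisely what is needed to make the Hardy manipulation work: it guarantees that the $\omega_k(f,\cdot)_p$-terms coming from the inner limiting interpolation are absorbed by the outer $(0,b)$-weight, leaving $(L_p(\mathbb{R}^d), W^k_p(\mathbb{R}^d))_{(0,b),q} = \mathbf{B}^{0,b}_{p,q}(\mathbb{R}^d)$. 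This computation is the main technical obstacle, and I anticipate that the proof of the upper bound is straightforward once the Hardy inequality is correctly applied, while the lower bound may require a more delicate decomposition of the norm into independent blocks reminiscent of Remark~\ref{Remark 5.3}.

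The only-if directions are established by counterexamples constructed with prescribed moduli of smoothness. When $s \geq k$, test functions with $\omega_k(f,t)_p \asymp t^k$ force the three independent components of the $K$-functional expressions (\ref{5.3new+}), (\ref{5.4new+}), (\ref{5.5new+}) to stay bounded, while any genuine smoothness of order $s\geq k$ cannot be detected; thus the RHS is finite but the Besov norm is not, ruling out $s \geq k$. When $s=0$ and the log-condition in (ii)(b) or (iii)(b) fails, the three independent contributions identified in Remark~\ref{Remark 5.3} can again be exploited: choosing $\omega_k(f,t)_p \asymp (1-\log t)^{-b-1/q}(1+\log(1-\log t))^{-\delta}$ with $\delta$ carefully placed relative to $1/q$, $1/r$ and between $b$ and $\alpha$, produces functions that lie in the RHS space but not in $\mathbf{B}^{0,b}_{p,q}(\mathbb{R}^d)$, or vice versa. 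The periodic case requires no separate argument since \eqref{interpolationLipschitz}, \eqref{PrelimInterpolation}, \eqref{PrelimInterpolationnew}, all the reiteration formulas, and Theorem~\ref{thmKfunctionalBV} hold identically on $\mathbb{T}$.
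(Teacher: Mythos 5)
Your if-part argument for $0<s<k$ is a valid and cleaner route than the paper's. The paper plugs Theorem~\ref{thmKfunctionalBV}(iii)--(vii) into the integral, splits into three terms $I_1,I_2,I_3$ (resp. $J_1,J_2,J_3$), and checks directly via Hardy's inequality and discretization that $I_1,I_3\lesssim I_2\asymp|f|^q_{\mathbf{B}^{s,b}_{p,q}}$. You instead recognize the right-hand side as $(L_p,\text{Lip}^{(k,-\alpha)}_{p,r})_{s/k,q;\,b+(s/k)(\alpha-1/r)}$ (resp. $(\mathbf{B}^{0,\alpha}_{p,r},W^k_p)_{s/k,q;\,b-(1-s/k)(\alpha+1/r)}$) and apply Lemma~\ref{PrelimLemma7.2}(ii) (resp. (i)), after which the logarithmic shifts cancel exactly and $(L_p,W^k_p)_{s/k,q;b}=\mathbf{B}^{s,b}_{p,q}(\mathbb{R}^d)$ drops out; the hypotheses $\alpha>1/r$ (resp. $\alpha>-1/r$) are precisely the $b>-1/q>c$ constraints of that lemma. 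This abstracts away the three-term bookkeeping. However, for part (iii) Lemma~\ref{PrelimLemma7.2} does \emph{not} apply -- both endpoint spaces $\mathbf{B}^{0,\alpha_1}_{p,r_1}$ and $\text{Lip}^{(k,-\alpha_2)}_{p,r_2}$ are themselves limiting interpolation spaces, neither of which is $A_0$ or $A_1$; the paper handles this via the genuinely three-term Holmstedt formula of Lemma~\ref{PrelimHolmstedt}(vii), and your ``combination of both'' would need to be fleshed out, as no reiteration formula for $((A_0,A_1)_{(0,\alpha_1),r_1},(A_0,A_1)_{(1,-\alpha_2),r_2})_{\theta,q;\xi}$ is stated in the paper. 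Your $s=0$ argument is, as you say, the same as the paper's.

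The only-if part has a genuine gap. You claim that for $s\geq k$ a test function with $\omega_k(f,t)_p\asymp t^k$ yields ``RHS finite but Besov norm not.'' This is backwards. If $s>k$, the term
\[
 I_1=\int_0^1 t^{(k-s)q}(1-\log t)^{(b+\alpha-1/r)q}\,\frac{dt}{t}\,\|f\|_{L_p}^q
\]
diverges for every $f\neq 0$, so the right-hand side is infinite for every nonzero $f$; no function with finite RHS and infinite Besov norm exists. If $s=k$ and the right-hand side is nontrivial (i.e. $b+\alpha-1/r+1/q<0$), then $I_2<\infty$ implies $\|f\|_{\mathbf{B}^{k,b}_{p,q}}<\infty$ because the latter is defined with $\omega_{k+1}\lesssim\omega_k$, so again you cannot produce the counterexample you describe. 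The correct direction, and the nontrivial point, is to exhibit $f\in\mathbf{B}^{k,b}_{p,q}(\mathbb{R}^d)$ with infinite $I_2$; since $I_2$ is built from $\omega_k$ while the Besov norm is controlled via $\omega_{k+1}$, you need a function for which $\omega_{k+1}(f,t)_p\asymp t^k(1-\log t)^{-\varepsilon}$ is small but $\omega_k(f,t)_p$ is provably \emph{large}, and the paper achieves this via the sharp Jackson inequality (\ref{SharpJackson}) relating $\omega_k$ from below to integral means of $\omega_{k+1}$. Your sketch omits this ingredient entirely. The $s=0$ necessity is closer (your ``or vice versa'' concedes uncertainty about the direction); there the paper uses $\omega_k(f,t)_p\asymp(1-\log t)^{-\varepsilon}$ to make $J_3=\infty$ while $f\in\mathbf{B}^{0,b}_{p,q}(\mathbb{R}^d)$, again the direction opposite to the one you state.
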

%
\begin{proof}
	(i)  By a change of variables and (\ref{5.3new+}), we have
	\begin{align}
		\int_0^1 t^{-\frac{s q}{k}} (1 - \log t)^{b q+ \frac{s q}{k} (\alpha -\frac{1}{r})} \inf_{g \in \text{Lip}^{(k,-\alpha)}_{p,r}(\mathbb{R}^d)} \Big\{ \|f-g\|_{L_p(\mathbb{R}^d)} + t \|g\|_{\text{Lip}^{(k,-\alpha)}_{p,r}(\mathbb{R}^d)} \Big\}^q \frac{dt}{t} & \nonumber \\
		&\hspace{-12 cm}=  \int_0^1 t^{-\frac{s q}{k}} (1 - \log t)^{b q+ \frac{s q}{k} (\alpha -\frac{1}{r})} K(t,f; L_p(\mathbb{R}^d), \text{Lip}^{(k,-\alpha)}_{p,r}(\mathbb{R}^d))^q \frac{dt}{t} \nonumber \\
		& \hspace{-12cm} \asymp \int_0^1 t^{-s q} (1 - \log t)^{b q} K(t^k (1-\log t)^{\alpha -1/r},f; L_p(\mathbb{R}^d), \text{Lip}^{(k,-\alpha)}_{p,r}(\mathbb{R}^d))^q \frac{dt}{t} \nonumber \\
		& \hspace{-12cm} \asymp I_1 + I_2 + I_3,\label{5.7new+}
	\end{align}
	where
	\begin{equation}\label{I}
	I_1 := \int_0^1 t^{(k-s) q} (1-\log t)^{(b+\alpha-1/r) q} \frac{dt}{t} \|f\|_{L_p(\mathbb{R}^d)}^q ,
	\end{equation}
	\begin{equation}\label{5.8new2}
		I_2 :=  \int_0^1 t^{-s q} (1-\log t)^{b q} \omega_k(f,t)_p^q \frac{dt}{t},
	\end{equation}
	and
	\begin{equation*}
		I_3:= \int_0^1 t^{-(s-k) q} (1-\log t)^{(b + \alpha -1/r) q} \left(\int_t^1 (u^{-k} (1-\log u)^{-\alpha} \omega_k(f,u)_p)^r \frac{du}{u}\right)^{q/r} \frac{dt}{t}.
	\end{equation*}
	
	Suppose that $s < k$. Then, $I_1 \asymp \|f\|_{L_p(\mathbb{R}^d)}^q$ and $I_2 \asymp  |f|_{\mathbf{B}^{s,b}_{p,q}(\mathbb{R}^d)}^q$. Further, we claim that
	\begin{equation}\label{5.9new2}
	I_3 \lesssim I_2.
	\end{equation}
	Once this has been proved, the proof of (\ref{Bianchini 4.2}) follows from (\ref{5.7new+}).
	
	It remains to show (\ref{5.9new2}). We distinguish two possible cases. First, assume $q \geq r$. Applying Hardy's inequality ($s < k$) given in (\ref{HardyInequal2}), we get
	\begin{equation*}
		I_3 \lesssim  \int_0^1 t^{-s q} (1-\log t)^{(b-1/r) q} \omega_k(f,t)_p^q \frac{dt}{t} \leq I_2.
	\end{equation*}
	Second, if $q < r$ then
	\begin{align*}
		I_3 & \asymp \sum_{\nu=0}^\infty 2^{\nu(s-k) q} (1 + \nu)^{(b+\alpha-1/r)q} \left(\sum_{\mu=0}^\nu 2^{\mu k r} (1 + \mu)^{-\alpha r} \omega_k(f,2^{-\mu})_p^r\right)^{q/r} \\
		& \leq  \sum_{\nu=0}^\infty 2^{\nu(s-k) q} (1 + \nu)^{(b+\alpha-1/r)q} \sum_{\mu=0}^\nu 2^{\mu k q} (1 + \mu)^{-\alpha q} \omega_k(f,2^{-\mu})_p^q \\
		& \asymp \sum_{\mu=0}^\infty 2^{\mu s q} (1 + \mu)^{(b-1/r) q} \omega_k(f,2^{-\mu})_p^q  \lesssim  \int_0^1 t^{-s q} (1-\log t)^{b q} \omega_k(f,t)_p^q \frac{dt}{t} \leq I_2.
	\end{align*}
	
	Conversely, let us prove that if (\ref{Bianchini 4.2}) holds then $s < k$. We will proceed by contradiction. Assume first that $s=k$. Note that if $b + \alpha -1/r + 1/q \geq 0$ then the right-hand side space of (\ref{Bianchini 4.2}) is trivial because
	\begin{equation*}
	  \inf_{g \in \text{Lip}^{(k,-\alpha)}_{p,r}(\mathbb{R}^d)} \left\{\frac{\|f-g\|_{L_p(\mathbb{R}^d)}}{t} + \|g\|_{\text{Lip}^{(k,-\alpha)}_{p,r}(\mathbb{R}^d)}\right\} \gtrsim \|f\|_{L_p(\mathbb{R}^d)}, \quad 0 < t < 1.
	 \end{equation*}
	 In case $b + \alpha -1/r + 1/q < 0$ we will invoke the sharp Jackson inequality (see \cite[Theorem 2.1]{Trebels} and \cite[(2.15)]{DaiDitzianTikhonov})
	 \begin{equation}\label{SharpJackson}
	 	\left(\int_t^1 (u^{-k} \omega_{k+m}(f,u)_p)^{\max\{p,2\}} \frac{du}{u}\right)^{1/\max\{p,2\}} \lesssim t^{-k} \omega_k(f,t)_p, \quad 0 < t < 1.
	 \end{equation}
	 Here, $k, m \in \mathbb{N}$ and $1 < p < \infty$. Choose $f \in L_p(\mathbb{R}^d)$ such that $\omega_{k+1}(f,t)_p \asymp t^k (1-\log t)^{-\varepsilon}$ where $b + 1/q < \varepsilon < b + 1/q + 1/\max\{p,2\}$, see \cite{Tikhonov-real}. Applying (\ref{SharpJackson}) (with $m=1$) we get $\omega_k(f,t)_p \gtrsim t^k (1-\log t)^{-\varepsilon + 1/\max\{p,2\}}$ because $\varepsilon < 1/\max\{p,2\}$ (noting that $b+1/q < 1/r -\alpha < 0$). Therefore, $f$ belongs to $\mathbf{B}^{k,b}_{p,q}(\mathbb{R}^d)$ but not to the right-hand side space of (\ref{Bianchini 4.2}) because the term $I_2$ defined in (\ref{5.8new2}) is not finite.
	
	 If $s > k$ then $I_1 = \infty$ (see (\ref{I})).
	 Hence, (\ref{5.7new+}) yields that the right-hand side space of (\ref{Bianchini 4.2}) is trivial.
	
	(ii) Using (\ref{5.4new+}) we have
	\begin{align}
		\int_0^1 t^{-\frac{s q}{k}} (1 - \log t)^{b q- (1-\frac{s}{k}) (\alpha +\frac{1}{r})q} \inf_{g \in W^{k}_{p}(\mathbb{R}^d)} \Big\{ \|f-g\|_{\mathbf{B}^{0,\alpha}_{p,r}(\mathbb{R}^d)} + t \|g\|_{W^k_p(\mathbb{R}^d)} \Big\}^q \frac{dt}{t} \nonumber \\
		&\hspace{-10cm} \asymp J_1 + J_2+ J_3, \label{J*}
	\end{align}
	where
	\begin{equation}\label{J}
		J_1 := \int_0^1 t^{(k-s) q} (1-\log t)^{b q} \frac{dt}{t} \|f\|_{L_p(\mathbb{R}^d)}^q,
	\end{equation}
	\begin{equation}\label{JJ}
		J_2:= \int_0^1 t^{-s q} (1-\log t)^{b q} \omega_k(f,t)_p^q \frac{dt}{t},
	\end{equation}
	and
	\begin{equation}\label{JJJ}
		J_3:= \int_0^1 t^{-s q} (1-\log t)^{(b - \alpha -1/r) q} \left(\int^t_0 ( (1-\log u)^{\alpha} \omega_k(f,u)_p)^r \frac{du}{u}\right)^{q/r} \frac{dt}{t}.
	\end{equation}
	
	Under the stated assumptions (a) the proof of (\ref{Bianchini 4.3new}) is similar to that given in (i) to show (\ref{Bianchini 4.2}). The rigorous details can be safely left to the reader. Assume now that (b) holds. Clearly, we have $J_1 \asymp \|f\|_{L_p(\mathbb{R}^d)}^q$ and $J_2 = |f|_{\mathbf{B}^{0,b}_{p,q}(\mathbb{R}^d)}$.
To complete the proof of (\ref{Bianchini 4.3new}),
it is enough to show that
		\begin{equation}\label{Bianchini 5.12+}
		J_3 \lesssim  \|f\|_{\mathbf{B}^{0,b}_{p,q}(\mathbb{R}^d)}^q.
	\end{equation}


	 If $r \leq q$,  (\ref{Bianchini 5.12+}) easily follows from the Hardy's inequality with logarithmic weights ($\alpha +1/r < b+1/q$) (see (\ref{HardyInequal3})). Suppose $r > q$. Then,
	\begin{align*}
		J_3 & \asymp \sum_{j=0}^\infty 2^{j (b-\alpha -1/r +1/q)q} \left(\sum_{\nu=j}^\infty 2^{\nu \alpha r} \omega_k(f,2^{-2^\nu})_p^r 2^{\nu}\right)^{q/r} \\
		& \leq  \sum_{j=0}^\infty 2^{j (b-\alpha -1/r +1/q)q} \sum_{\nu=j}^\infty 2^{\nu \alpha q} \omega_k(f,2^{-2^\nu})_p^q 2^{\nu q/r} \\
		& \asymp \sum_{\nu=0}^\infty  2^{\nu (b +1/q)q} \omega_k(f,2^{-2^\nu})_p^q  \asymp \|f\|_{\mathbf{B}^{0,b}_{p,q}(\mathbb{R}^d)}^q
	\end{align*}
	where we have used $\alpha +1/r < b+1/q$ in the penultimate estimate.
	
	Conversely, suppose that (\ref{Bianchini 4.3new}) holds. Then, $s < k$. Indeed, if $s > k$ then by (\ref{J}) and (\ref{J*}) the right-hand side space of (\ref{Bianchini 4.3new}) is trivial. This also happens if $s=k$ and $b + 1/q \geq 0$ because
	\begin{equation*}
		 \inf_{g \in W^{k}_{p}(\mathbb{R}^d)} \Big\{ \frac{\|f-g\|_{\mathbf{B}^{0,\alpha}_{p,r}(\mathbb{R}^d)}}{t} +  \|g\|_{W^k_p(\mathbb{R}^d)} \Big\} \gtrsim \|f\|_{\mathbf{B}^{0,\alpha}_{p,r}(\mathbb{R}^d)}, \quad 0 < t < 1.
	\end{equation*}
	On the other hand, if $s=k$ and $b+1/q < 0$ we may follow the same approach  as above, based on (\ref{SharpJackson}), to disprove (\ref{Bianchini 4.3new}). Finally, we show that (\ref{Bianchini 4.3new}) with $s=0$, that is,
	\begin{equation}\label{Bianchini 5.13}
	\|f\|_{\mathbf{B}^{0,b}_{p,q}(\mathbb{R}^d)} \asymp \left(\int_0^1 (1 - \log t)^{(b -\alpha -1/r)q} \inf_{g \in W^{k}_{p}(\mathbb{R}^d)} \Big\{ \|f-g\|_{\mathbf{B}^{0,\alpha}_{p,r}(\mathbb{R}^d)} + t \|g\|_{W^{k}_{p}(\mathbb{R}^d)} \Big\}^q \frac{dt}{t}\right)^{1/q}
	\end{equation}
	 is not true if $\alpha + 1/r \geq b + 1/q$. In light of (\ref{J*}), it suffices to construct a function $f \in \mathbf{B}^{0,b}_{p,q}(\mathbb{R}^d)$ such that $J_3 = \infty$, where $J_3$ is given by (\ref{JJJ}) with $s=0$. For instance, if $\alpha + 1/r > b + 1/q$, we put $f\in L_p(\mathbb{R}^d)$ with $\omega_k(f,t)_p \asymp (1-\log t)^{-\varepsilon}, \, 0 < b + 1/q < \varepsilon < \alpha + 1/r$. It is readily seen that $f \in  \mathbf{B}^{0,b}_{p,q}(\mathbb{R}^d)$, but
	 \begin{equation*}
	 	J_3 \gtrsim \left(\int_0^{1/2} ((1-\log t)^\alpha \omega_k(f,t)_p)^r \frac{dt}{t}\right)^{1/r} = \infty.
	 \end{equation*}
	 If $\alpha + 1/r = b + 1/q$ , consider $\omega_k(f,t)_p \asymp (1-\log t)^{-b-1/q} (1 + \log (1-\log t))^{-\delta}$ where $ \max\{1/q,1/r\} < \delta < 1/q + 1/r$.
	
	 The proof of (iii) is similar to that of (ii) (with the only difference that we use (\ref{5.5new+}) rather than (\ref{5.4new+})).
\end{proof}

\begin{cor}
	Let $0 \leq s < 1, 0 < q \leq \infty$, and $-\infty < b < \infty$. Then
	\begin{equation}\label{Bianchini 4.3}
	\|f\|_{\mathbf{B}^{s,b}_{1,q}(\mathbb{R}^d)} \asymp \left(\int_0^1  t^{-s q} (1 - \log t)^{b q} \inf_{g \in \emph{BV}(\mathbb{R}^d)} \Big\{\|f-g\|_{L_1(\mathbb{R}^d)} + t \|g\|_{\emph{BV}(\mathbb{R}^d)} \Big\}^q \frac{dt}{t}\right)^{1/q}
\end{equation}
and
	\begin{equation*}
	\|f\|_{\mathbf{B}^{s,b}_{\infty,q}(\mathbb{R}^d)} \asymp \left(\int_0^1  t^{-s q} (1 - \log t)^{b q} \inf_{g \in \emph{Lip}(\mathbb{R}^d)} \Big\{\|f-g\|_{L_\infty(\mathbb{R}^d)} + t \|g\|_{\emph{Lip}(\mathbb{R}^d)}  \Big\}^q \frac{dt}{t}\right)^{1/q}.
	\end{equation*}
\end{cor}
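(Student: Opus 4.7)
\medskip

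\noindent\textbf{Proof plan.} The Corollary is an immediate specialization of Theorem~\ref{Thm 5.6new}(i), so the plan is simply to identify the correct choice of parameters and invoke that theorem. I would take $k=1$, $\alpha=0$, $r=\infty$ (so that $\alpha\geq 0$ when $r=\infty$, which is the condition required by part~(i)) and let $p$ be either $1$ or $\infty$. Recall that by (\ref{interpolationLipschitz1}) and (\ref{0}) we have
\begin{equation*}
\mathrm{Lip}^{(1,0)}_{1,\infty}(\mathbb{R}^d)=\mathrm{BV}(\mathbb{R}^d),\qquad \mathrm{Lip}^{(1,0)}_{\infty,\infty}(\mathbb{R}^d)=\mathrm{Lip}(\mathbb{R}^d),
\end{equation*}
so the $K$-functional inside the integral in (\ref{Bianchini 4.2}) reduces in these two cases to exactly the Bianchini-type expressions appearing in the statement.

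\medskip

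\noindent With these choices the weight in (\ref{Bianchini 4.2}) becomes
\begin{equation*}
t^{-sq/k}(1-\log t)^{bq+\frac{sq}{k}(\alpha-1/r)}=t^{-sq}(1-\log t)^{bq},
\end{equation*}
which matches the weight in the Corollary. The hypothesis $s<k$ in Theorem~\ref{Thm 5.6new}(i) is guaranteed by the assumption $0\leq s<1$. Substituting $p=1$ yields the first equivalence, and $p=\infty$ yields the second.

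\medskip

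\noindent\textbf{Possible subtleties.} Theorem~\ref{Thm 5.6new}(i) is stated for $1\leq p\leq\infty$, so both endpoints $p=1$ and $p=\infty$ are already allowed; therefore no extra density or approximation argument is needed. The only point that deserves a line of comment is the convention $1/r=0$ when $r=\infty$, which is used throughout the paper (cf.\ the logarithmic Lipschitz norm (\ref{Lipschitz}) and the statement of part~(i) of Theorem~\ref{Thm 5.6new} where the bracket ``$\alpha\geq 0$ if $r=\infty$'' is made explicit). Since this is the only nontrivial observation, the proof is essentially a one-line specialization and I anticipate no genuine obstacle; the work has already been done in Theorem~\ref{Thm 5.6new}(i) via the $K$-functional estimate (\ref{5.3new+}).
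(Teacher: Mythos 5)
Your proposal is correct and takes the same approach the paper intends: the corollary is stated without proof immediately after Theorem~\ref{Thm 5.6new} precisely because it is the specialization $k=1$, $\alpha=0$, $r=\infty$, $p\in\{1,\infty\}$ of part~(i), using $\text{Lip}^{(1,0)}_{1,\infty}(\mathbb{R}^d)=\text{BV}(\mathbb{R}^d)$ and $\text{Lip}^{(1,0)}_{\infty,\infty}(\mathbb{R}^d)=\text{Lip}(\mathbb{R}^d)$. You have correctly verified that the parameter restriction $\alpha\geq 0$ for $r=\infty$ is met, that the weight collapses to $t^{-sq}(1-\log t)^{bq}$, and that $0\leq s<1$ matches the condition $s<k=1$.
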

\subsection{Besov norm and functions of bounded p-variation}
In the rest of this section, we provide another  extension of (\ref{1+}) in the one dimensional setting given in terms of minimization problems for bounded $p$-variation functions. For $1 \leq p < \infty$, we denote by $V_p(\mathbb{R})$\index{\bigskip\textbf{Spaces}!$V_p(\mathbb{R})$}\label{pVAR} the set formed by all $p$-integrable functions $f : \mathbb{R} \longrightarrow \mathbb{R}$ such that there exists $c > 0$ such that
\begin{equation*}
	 \sum_{k=1}^N |f(x_k) - f(x_{k-1})|^p < c
\end{equation*}
for some $c>0$ and for all finite sequences $x_0 < x_1 < \cdots < x_N$. Set
\begin{equation*}
	\|f\|_{V_p(\mathbb{R})} = \|f\|_{L_p(\mathbb{R})} + \sup_{x_0 < x_1 < \cdots < x_N} \Big( \sum_{k=1}^N |f(x_k) - f(x_{k-1})|^p\Big)^{1/p}.
\end{equation*}
Analogously, $V_p(\mathbb{T})$\index{\bigskip\textbf{Spaces}!$V_p(\mathbb{T})$}\label{pVARPER} is formed by all periodic functions $f$ with the period 1 on the real line $\mathbb{R}$ for which
\begin{equation*}
	\|f\|_{V_p(\mathbb{T})} = \|f\|_{L_p(\mathbb{T})} + \sup_{\substack{x_0 < x_1 < \cdots < x_N \\ x_N = x_0 +1}} \Big( \sum_{k=1}^N |f(x_k) - f(x_{k-1})|^p\Big)^{1/p}
\end{equation*}
is finite.

As usual, we denote by $\dot{V}_p(\mathbb{R})$\index{\bigskip\textbf{Spaces}!$\dot{V}_p(\mathbb{R})$}\label{pVARHOM} (respectively, $\dot{V}_p(\mathbb{T})$\index{\bigskip\textbf{Spaces}!$\dot{V}_p(\mathbb{T})$}\label{pVARPERHOM}) the homogeneous counterpart of $V_p(\mathbb{R})$ (respectively, $V_p(\mathbb{T})$).

The study of the space $V_p(\mathbb{R})$, as well as its periodic counterpart $V_p(\mathbb{T})$,  has a long history. It
was initiated by Wiener \cite{Wiener} and nowadays it is of considerable importance in probability, harmonic analysis and PDEs. For further details and basic properties on spaces of $p$-bounded variation functions, as well as some applications, we refer to the monograph \cite{KochTataruVisan}.
If $p=1$, Hardy and Littlewood \cite{HardyLittlewood}  showed that an integrable function $f$ is equivalent to a function in $V_1(\mathbb{T})$ if and only if $f \in \text{BV}(\mathbb{T})$. For similar result regarding the spaces 
$ V_1(\mathbb{R})$ and $\text{BV}(\mathbb{R})$ see \cite[Theorem 3.27, p. 135]{AmbrosioFuscoPallara}.


Next we obtain two-sided estimates for the $K$-functional for the couple  $(L_p(\mathbb{R}), V_p(\mathbb{R}))$ in terms of classical  moduli of continuity $\omega_1(f,t)_p$ which complement those obtained in Corollary \ref{corKfunctionalBV} for $p=1$. Besides their intrinsic interest, these estimates will enable us to establish characterizations of Besov spaces through minimization problems for $V_p(\mathbb{R})$ spaces (cf. Theorem \ref{Theorem Bianchini 4.3} below).

\begin{thm}\label{BianchiniKfunctVp}
	
	\begin{enumerate}[\upshape(i)]
	\item Let $1< p < \infty$. Then,
	\begin{align*}
		\|f\|_{L_p(\mathbb{R})} + \sup_{t^p < u < 1} u^{-1/p} \omega_1(f,u)_p& \lesssim \frac{K(t, f; L_p(\mathbb{R}), V_p(\mathbb{R}))}{t} \\
		&\hspace{-4cm}\lesssim \|f\|_{L_p(\mathbb{R})} + \int_{t^p}^1 u^{-1/p} \omega_1(f,u)_p \frac{du}{u}
	\end{align*}
	for all $0 < t < 1$.
	\item Let $1 < p < \infty, 0 < q \leq \infty$, and $b > -1/q$. Then,
	\begin{align*}
	t (1-\log t)^{b+1/q} \|f\|_{L_p(\mathbb{R})} + t (1-\log t)^{b+1/q} \sup_{t^p < u < 1} u^{-1/p} \omega_1(f,u)_p & \\
		&\hspace{-7.5cm}+ \left(\int_0^{t^p} ((1-\log u)^b \omega_1(f,u)_p)^q \frac{du}{u}\right)^{1/q}\\
		& \hspace{-8.5cm} \lesssim K(t(1-\log t)^{b+1/q}, f; \mathbf{B}^{0,b}_{p,q}(\mathbb{R}), V_p(\mathbb{R})) \\
		& \hspace{-8.5cm}\lesssim t (1-\log t)^{b+1/q} \|f\|_{L_p(\mathbb{R})} + t (1-\log t)^{b+1/q} \int_{t^p}^1 u^{-1/p} \omega_1(f,u)_p \frac{du}{u} \\
		&\hspace{-7.5cm}+ \left(\int_0^{t^p} ((1-\log u)^b \omega_1(f,u)_p)^q \frac{du}{u}\right)^{1/q}
	\end{align*}
	for all $0 < t < 1$.
	
		\item Let $1 < p < \infty, 0 < q \leq \infty, 0 < s < 1/p$, and $b \in \mathbb{R}$. Then,
	\begin{align*}
	t^{1-s p} (1-\log t)^{b} \|f\|_{L_p(\mathbb{R})} + t^{1 - s p} (1-\log t)^{b} \sup_{t^p < u < 1} u^{-1/p} \omega_1(f,u)_p & \\
		&\hspace{-7.5cm}+ \left(\int_0^{t^p} (u^{-s} (1-\log u)^b \omega_1(f,u)_p)^q \frac{du}{u}\right)^{1/q}\\
		& \hspace{-8.5cm} \lesssim K(t^{1- s p}(1-\log t)^{b}, f; \mathbf{B}^{s,b}_{p,q}(\mathbb{R}), V_p(\mathbb{R})) \\
		& \hspace{-8.5cm}\lesssim t^{1-s p} (1-\log t)^{b} \|f\|_{L_p(\mathbb{R})} + t^{1 - s p} (1-\log t)^{b} \int_{t^p}^1 u^{-1/p} \omega_1(f,u)_p \frac{du}{u} \\
		&\hspace{-7.5cm}+ \left(\int_0^{t^p} (u^{-s} (1-\log u)^b \omega_1(f,u)_p)^q \frac{du}{u}\right)^{1/q}
	\end{align*}
	for all $0 < t < 1$.
	\end{enumerate}
	
		The corresponding estimates for periodic spaces also hold true.

\end{thm}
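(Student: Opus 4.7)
The plan rests on the classical sandwich
\begin{equation*}
\mathbf{B}^{1/p}_{p,1}(\mathbb{R}) \hookrightarrow V_p(\mathbb{R}) \hookrightarrow \mathbf{B}^{1/p}_{p,\infty}(\mathbb{R}), \quad 1 < p < \infty,
\end{equation*}
which locates $V_p$ between the ``finest'' and ``coarsest'' Besov spaces of the critical smoothness $1/p$. Combined with the elementary monotonicity of the $K$-functional in its second argument, this yields
\begin{equation*}
K(t,f; A_0, \mathbf{B}^{1/p}_{p,\infty}(\mathbb{R})) \lesssim K(t, f; A_0, V_p(\mathbb{R})) \lesssim K(t,f; A_0, \mathbf{B}^{1/p}_{p,1}(\mathbb{R}))
\end{equation*}
for any base space $A_0 \hookrightarrow L_p(\mathbb{R})$. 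Once the two bounding $K$-functionals are computed in terms of $\omega_1(f,u)_p$, the three estimates in the theorem follow, with the $r=1$ upper bound producing the integral and the $r=\infty$ lower bound producing the supremum over $(t^p,1)$.

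For part (i), I would apply Theorem \ref{thmKfunctionalBV}(i) with $k=1$, $s=1/p$, $b=0$, and $q \in \{1,\infty\}$, which gives
\begin{equation*}
K(t^{1/p}, f; L_p, \mathbf{B}^{1/p}_{p,1}) \asymp t^{1/p}\|f\|_{L_p} + t^{1/p}\int_t^1 u^{-1/p}\omega_1(f,u)_p\,\tfrac{du}{u}
\end{equation*}
and the analogous formula with the integral replaced by $\sup_{t<u<1}u^{-1/p}\omega_1(f,u)_p$ when $q = \infty$. Substituting $t \mapsto t^p$, dividing by $t$, and inserting into the sandwich above yields the asserted two-sided bounds.

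For parts (ii) and (iii), the first task is to obtain Holmstedt-type representations of $K(\cdot, f; A_0, \mathbf{B}^{1/p}_{p,r})$ for $A_0 \in \{\mathbf{B}^{0,b}_{p,q},\mathbf{B}^{s,b}_{p,q}\}$ and $r \in \{1,\infty\}$. Using the interpolation identifications (\ref{PrelimInterpolationnew2.2}), (\ref{PrelimInterpolationnew2.3}), or (\ref{PrelimInterpolationnew3}) in combination with Lemma \ref{PrelimLemma7.2}(i), one can realise
\begin{equation*}
\mathbf{B}^{1/p}_{p,r} = (A_0, W^1_p)_{\theta, r;\alpha}
\end{equation*}
for suitable $\theta \in (0,1)$ and $\alpha \in \mathbb{R}$ (depending on $s,b,q$), after which Lemma \ref{PrelimHolmstedt}(i) rewrites $K(\cdot, f; A_0, \mathbf{B}^{1/p}_{p,r})$ as an integral (respectively supremum for $r=\infty$) of $K(u,f; A_0, W^1_p)$ over $(t,1)$. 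Substituting the explicit formula (\ref{5.4new+}) for part (ii) or (\ref{KFunctBpWp}) followed by (\ref{aux5.2}) for part (iii) and performing the change of variables $t \mapsto t^p$ leads to the terms on the right-hand sides. The periodic variants are handled identically since every auxiliary result cited has a periodic counterpart.

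The hard part will be twofold. First, the sandwich embedding must be established with the correct normalization; while the homogeneous version is classical, one has to check that the $L_p$ term in $\|\cdot\|_{V_p}$ is compatible with the inhomogeneous Besov norm. Second, in parts (ii)--(iii) the bookkeeping of weights arising from iterated limiting interpolation is delicate: the powers of $(1-\log t)$ generated by $\alpha$ must combine with those coming from $K(u,f; \mathbf{B}^{0,b}_{p,q}, W^1_p)$ to yield exactly the weights $(1-\log t)^{b+1/q}$ and the splitting into the three terms on the right-hand side. Managing the transition region $u \asymp t^p$ in the resulting integrals and invoking the Hardy-type inequalities of Section \ref{Preliminaries} to absorb the crossterms is the technical crux.
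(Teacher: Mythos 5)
Your argument for part (i) matches the paper's: combine the sandwich $B^{1/p}_{p,1}(\mathbb{R}) \hookrightarrow V_p(\mathbb{R}) \hookrightarrow B^{1/p}_{p,\infty}(\mathbb{R})$ (giving the monotonicity of the $K$-functional in the second entry) with the two extreme cases $q=1,\infty$ of Theorem~\ref{thmKfunctionalBV}(i) at $s=1/p$, $k=1$, $b=0$; that part is fine.

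For parts (ii) and (iii), however, your route does not land where the theorem does. You propose to realise $\mathbf{B}^{1/p}_{p,r} = (A_0, W^1_p)_{\theta,r;\alpha}$ for $A_0 = \mathbf{B}^{0,b}_{p,q}$ (or $\mathbf{B}^{s,b}_{p,q}$) and then compute $K(\cdot,f;A_0,\mathbf{B}^{1/p}_{p,r})$ from Lemma~\ref{PrelimHolmstedt}(i). In part (ii) the only admissible choice is $\theta=1/p$ and $\alpha = -\tfrac{1}{p'}(b+\tfrac{1}{q})$, because the logarithmic index of the target $\mathbf{B}^{1/p}_{p,r}$ must vanish. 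But then Lemma~\ref{PrelimHolmstedt}(i) evaluates the $K$-functional at $\tau^{1/p}\ell^{-\alpha}(\tau) = \tau^{1/p}(1-\log\tau)^{(b+1/q)/p'}$, which after $\tau\mapsto t^p$ becomes $t(1-\log t)^{(b+1/q)/p'}$, \emph{not} $t(1-\log t)^{b+1/q}$: since $1/p' < 1$ and $b + 1/q > 0$ the two logarithmic scales are genuinely different. Your two-sided bounds would therefore be for the $K$-functional at the wrong argument, and the reparametrisation needed to pass from one scale to the other shifts the endpoints $t^p$ in all the integrals and suprema by slowly varying factors, which is a nontrivial step not supplied in the plan. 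The analogous mismatch occurs in part (iii): one is forced into $\theta = (1/p-s)/(1-s)$ and $\alpha = -\tfrac{1/p'}{1-s}\,b$, which leads to the logarithmic weight $(1-\log t)^{b/p'(1-s)}$ rather than $(1-\log t)^b$.

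What your outline omits is the structural step that avoids the scale mismatch entirely: the limiting interpolation identities $\mathbf{B}^{0,b}_{p,q}(\mathbb{R}) = (L_p(\mathbb{R}), V_p(\mathbb{R}))_{(0,b),q}$ and $\mathbf{B}^{s,b}_{p,q}(\mathbb{R}) = (L_p(\mathbb{R}), V_p(\mathbb{R}))_{sp,q;b}$. The sandwich from Lemma~\ref{Peetre1} is used here at the level of \emph{interpolation spaces}, not directly on $K$-functionals: since $(L_p, B^{1/p}_{p,1})_{(0,b),q} = (L_p, B^{1/p}_{p,\infty})_{(0,b),q} = \mathbf{B}^{0,b}_{p,q}$ by (\ref{PrelimInterpolationnew2}), the same identity holds with $V_p$ in the place of the endpoint. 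With this identity in hand, Lemma~\ref{PrelimHolmstedt}(iv) (resp.\ Lemma~\ref{PrelimHolmstedt}(ii) for part (iii)) applied to the couple $(L_p,V_p)$ produces the $K$-functional at exactly the asserted argument $t(1-\log t)^{b+1/q}$ (resp.\ $t^{1-sp}(1-\log t)^b$), written in terms of $K(u,f;L_p,V_p)$; this is then controlled by part (i) and a Hardy inequality. This interpolation identity is the missing ingredient in your plan.
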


\begin{rem}
(i) Theorem \ref{BianchiniKfunctVp} can also be formulated (with obvious modifications) for the corresponding homogeneous spaces. In particular, we have
\begin{equation*}
		\sup_{t^p < u < 1} u^{-1/p} \omega_1(f,u)_p \lesssim \frac{K(t, f; L_p(\mathbb{R}), \dot{V}_p(\mathbb{R}))}{t} \lesssim \int_{t^p}^1 u^{-1/p} \omega_1(f,u)_p \frac{du}{u}, \quad 0 < t < 1.
	\end{equation*}
	Here, $1 < p < \infty$.
	
	(ii) Similar arguments to those given in Remark \ref{Remark 5.3} allow us to observe that both terms involving the moduli of continuity in part (ii) of Theorem \ref{BianchiniKfunctVp}, namely,
	\begin{equation*}
	 t (1-\log t)^{b+1/q} \sup_{t^p < u < 1} u^{-1/p} \omega_1(f,u)_p \quad \text{and} \quad t (1-\log t)^{b+1/q} \int_{t^p}^1 u^{-1/p} \omega_1(f,u)_p \frac{du}{u}
	\end{equation*}
  are independent of
	\begin{equation*}
		\left(\int_0^{t^p} ((1-\log u)^b \omega_1(f,u)_p)^q \frac{du}{u}\right)^{1/q}.
	\end{equation*}
A similar comment applies to Theorem \ref{BianchiniKfunctVp}(iii). More precisely, the terms
	\begin{equation*}
	 t^{1-sp} (1-\log t)^{b} \sup_{t^p < u < 1} u^{-1/p} \omega_1(f,u)_p \quad \text{and} \quad t^{1- s p} (1-\log t)^{b} \int_{t^p}^1 u^{-1/p} \omega_1(f,u)_p \frac{du}{u}
	\end{equation*}
	are independent of
	\begin{equation*}
		\left(\int_0^{t^p} (u^{-s} (1-\log u)^b \omega_1(f,u)_p)^q \frac{du}{u}\right)^{1/q}.
	\end{equation*}
\end{rem}

Before we prove Theorem \ref{BianchiniKfunctVp}, let us recall the relationships between $V_p(\mathbb{R})$ and Besov spaces due to Peetre \cite[Theorem 7, Chapter 5, p. 112]{Peetre} (cf. also \cite[Theorem 5]{BourdaudLanzaDeCristoforisSickel}, \cite{Terehin}, \cite{Terehin2}, and \cite{KolyadaLind}).

\begin{lem}\label{Peetre1}
	Let $1 \leq p < \infty$. Then, we have
	\begin{equation*}
		B^{1/p}_{p,1}(\mathbb{R}) \hookrightarrow V_p(\mathbb{R}) \hookrightarrow B^{1/p}_{p,\infty}(\mathbb{R}).
	\end{equation*}
	
	The corresponding embeddings for periodic spaces and homogeneous spaces also hold true.
\end{lem}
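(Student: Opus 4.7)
My plan is to establish both embeddings directly, combining a Fubini-type inequality and the moduli characterization of Besov spaces on one side, and a Littlewood--Paley decomposition together with a Bernstein-type inequality for $V_p$ on the other.

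For $V_p(\mathbb{R}) \hookrightarrow B^{1/p}_{p,\infty}(\mathbb{R})$, the cornerstone is the identity
\begin{equation*}
\|f(\cdot+h)-f(\cdot)\|_{L_p(\mathbb{R})}^p \;=\; \int_0^h \sum_{k\in\mathbb{Z}} |f(y+(k+1)h)-f(y+kh)|^p\,dy,\qquad h>0.
\end{equation*}
For each fixed $y$ the arithmetic progression $\{y+kh\}_{k\in\mathbb{Z}}$ is an admissible (doubly-infinite) partition, so the inner sum is bounded by $\|f\|_{\dot V_p(\mathbb{R})}^p$; integrating in $y$ over $[0,h]$ yields $\omega_1(f,h)_p \le h^{1/p}\|f\|_{\dot V_p(\mathbb{R})}$. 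Since $\|g\|_{B^{1/p}_{p,\infty}(\mathbb{R})} \asymp \|g\|_{L_p(\mathbb{R})} + \sup_{0<t<1} t^{-1/p}\omega_1(g,t)_p$ by Lemma~\ref{LemmaFractionalBesov} (here $s=1/p<1=k$), combined with $\|f\|_{L_p(\mathbb{R})} \le \|f\|_{V_p(\mathbb{R})}$, this gives the embedding.

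For $B^{1/p}_{p,1}(\mathbb{R}) \hookrightarrow V_p(\mathbb{R})$, I will decompose $f = \sum_{j\ge 0} f_j$ with $f_j = (\varphi_j \widehat f)^\vee$ and use the triangle inequality $\|f\|_{\dot V_p(\mathbb{R})} \le \sum_j \|f_j\|_{\dot V_p(\mathbb{R})}$. The task thus reduces to the Bernstein-type bound
\begin{equation*}
\|g\|_{\dot V_p(\mathbb{R})} \;\lesssim\; R^{1/p}\|g\|_{L_p(\mathbb{R})}
\end{equation*}
for any $g$ with Fourier support in $\{|\xi|\le R\}$. Applied with $R\asymp 2^{j}$ to $f_j$ and summing yields $\|f\|_{\dot V_p(\mathbb{R})} \lesssim \sum_j 2^{j/p}\|f_j\|_{L_p(\mathbb{R})} = \|f\|_{\dot B^{1/p}_{p,1}(\mathbb{R})}$, and adjoining the trivial control $\|f\|_{L_p(\mathbb{R})} \lesssim \|f\|_{B^{1/p}_{p,1}(\mathbb{R})}$ finishes the proof.

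The hard step is the Bernstein-type inequality for $V_p$, which I plan to prove by splitting an arbitrary partition $x_0<\cdots<x_N$ into \emph{short} intervals with $x_k-x_{k-1}<1/R$ and \emph{long} intervals with $x_k-x_{k-1}\ge 1/R$. On short intervals Hölder's inequality yields $|g(x_k)-g(x_{k-1})|^p \le (x_k-x_{k-1})^{p-1}\int_{x_{k-1}}^{x_k}|g'|^p$, whose sum is dominated by $R^{-(p-1)}\|g'\|_{L_p(\mathbb{R})}^p \lesssim R\|g\|_{L_p(\mathbb{R})}^p$ by the classical Bernstein inequality $\|g'\|_{L_p}\lesssim R\|g\|_{L_p}$. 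On long intervals the endpoints form a $1/R$-separated set, so the Plancherel--P\'olya estimate $\sum_k |g(y_k)|^p \lesssim R\|g\|_{L_p(\mathbb{R})}^p$ for band-limited $g$ gives $\sum_{\text{long}} |g(x_k)-g(x_{k-1})|^p \le 2^{p+1}\sum_k |g(x_k)|^p \lesssim R\|g\|_{L_p(\mathbb{R})}^p$. Adding the two contributions and taking the supremum over partitions gives the required bound. Finally, the periodic and homogeneous statements follow \emph{mutatis mutandis}: the shift-partition identity works verbatim on $\mathbb{T}$ (partitioning a fundamental period by $\{y+kh\}$ for $y\in[0,h]$), the Littlewood--Paley decomposition is available both in $\mathcal{S}'(\mathbb{R})$ and $\mathcal{D}'(\mathbb{T})$, and the Bernstein and Plancherel--P\'olya inequalities admit standard periodic analogues, while the homogeneous versions simply omit the $L_p$-term without affecting the core argument.
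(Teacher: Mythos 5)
The paper does not prove this lemma at all: it is quoted from the literature (Peetre's book, with Bourdaud--Lanza de Cristoforis--Sickel, Terehin and Kolyada--Lind as further references), so there is no in-paper argument to compare against. Your proof is a self-contained reconstruction that is essentially the classical one: the translation-averaging identity $\|\Delta_h f\|_{L_p}^p=\int_0^h\sum_k|f(y+(k+1)h)-f(y+kh)|^p\,dy$ giving $\omega_1(f,t)_p\le t^{1/p}\|f\|_{\dot V_p}$ for the right-hand embedding, and a Littlewood--Paley decomposition combined with a Bernstein-type bound $\|g\|_{\dot V_p}\lesssim R^{1/p}\|g\|_{L_p}$ for band-limited $g$ for the left-hand one. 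Both halves are sound, and the short/long gap splitting with H\"older--Bernstein on short gaps and Plancherel--P\'olya on long gaps is the right mechanism; the uniform convergence of $\sum_j f_j$ (via Nikolskii, since $\sum_j 2^{j/p}\|f_j\|_p<\infty$) justifies working with the continuous representative whose pointwise variation you estimate, which you should state explicitly.

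Two small inaccuracies need repair, though neither affects the outcome. First, the endpoints of the \emph{long} gaps are not, as written, a single $1/R$-separated set: a long gap, a short gap, and another long gap produce endpoints $x_1,x_2$ arbitrarily close. The fix is to observe that the left endpoints $\{x_{k-1}:k\ \text{long}\}$ and the right endpoints $\{x_k:k\ \text{long}\}$ are each $1/R$-separated (if $k<k'$ are long, then $x_{k'-1}\ge x_k\ge x_{k-1}+1/R$), apply Plancherel--P\'olya to the two families separately, and absorb the factor $2$. Second, your appeal to Lemma \ref{LemmaFractionalBesov} with ``$s=1/p<1=k$'' fails at $p=1$, where $s=1$; there one must take a higher-order modulus, which costs nothing since $\omega_2(f,t)_1\lesssim\omega_1(f,t)_1\le t\,\|f\|_{\dot V_1}$, so the $B^1_{1,\infty}$-norm (characterized via $\omega_\alpha$ with $\alpha>1$) is still controlled. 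With these two one-line corrections, and a word on the wrap-around term when transferring the averaging identity to $\mathbb{T}$ (any increasing finite sequence inside a period extends to an admissible periodic partition, so the partial sums are still dominated by the periodic variation), the argument is complete.
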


\begin{proof}[Proof of Theorem \ref{BianchiniKfunctVp}]
	(i) In view of (\ref{BesovComparison}) and (\ref{KFunctLpBp}), we have
	\begin{equation}
		K(t^{1/p}, f; L_p(\mathbb{R}), B^{1/p}_{p,1}(\mathbb{R})) \asymp t^{1/p} \left(\|f\|_{L_p(\mathbb{R})} + \int_t^1 u^{-1/p} \omega_1(f,u)_p \frac{du}{u}\right) \label{Bianchini 5.16}
	\end{equation}
	and
	\begin{equation}\label{Bianchini 5.17}
		K(t^{1/p}, f; L_p(\mathbb{R}), B^{1/p}_{p,\infty}(\mathbb{R})) \asymp  t^{1/p} \left(\|f\|_{L_p(\mathbb{R})} + \sup_{t < u < 1} u^{-1/p} \omega_1(f,u)_p  \right).
	\end{equation}
	On the other hand, by Lemma \ref{Peetre1} we derive
	\begin{equation*}
		K(t^{1/p}, f; L_p(\mathbb{R}), B^{1/p}_{p,\infty}(\mathbb{R})) \lesssim K(t^{1/p}, f; L_p(\mathbb{R}), V_p(\mathbb{R})) \lesssim K(t^{1/p}, f; L_p(\mathbb{R}), B^{1/p}_{p,1}(\mathbb{R})),
	\end{equation*}
	and thus, (\ref{Bianchini 5.16}) and (\ref{Bianchini 5.17}) yield the desired estimates.
	
	(ii) Firstly we claim that the following interpolation formula holds
\begin{equation}\label{Bianchini 4.5}
 \mathbf{B}^{0,b}_{p,q}(\mathbb{R}) = (L_p(\mathbb{R}), V_p(\mathbb{R}))_{(0,b),q}.
 \end{equation}
 Indeed, applying the embeddings given in Lemma \ref{Peetre1}, (\ref{BesovComparison}) and (\ref{PrelimInterpolationnew2}), we have
 \begin{align*}
 	 \mathbf{B}^{0,b}_{p,q}(\mathbb{R}) &= 	 (L_p(\mathbb{R}), B^{1/p}_{p,1}(\mathbb{R}))_{(0,b),q} \hookrightarrow (L_p(\mathbb{R}), V_p(\mathbb{R}))_{(0,b),q} \\
	 & \hookrightarrow (L_p(\mathbb{R}), B^{1/p}_{p,\infty}(\mathbb{R}))_{(0,b),q} = \mathbf{B}^{0,b}_{p,q}(\mathbb{R}).
 \end{align*}
This proves (\ref{Bianchini 4.5}).

 By (\ref{Bianchini 4.5}), Lemma \ref{PrelimHolmstedt}(iv), and the inequality obtained in part (i), we get
 \begin{align}
 	K(t(1-\log t)^{b+1/q}, f; \mathbf{B}^{0,b}_{p,q}(\mathbb{R}), V_p(\mathbb{R})) \nonumber \\
	& \hspace{-6cm}\asymp K(t(1-\log t)^{b+1/q}, f; (L_p(\mathbb{R}), V_p(\mathbb{R}))_{(0,b),q}, V_p(\mathbb{R}))  \nonumber  \\
	& \hspace{-6cm} \asymp (1-\log t)^{b+1/q} K(t,f; L_p(\mathbb{R}), V_p(\mathbb{R}))  \nonumber \\
	& \hspace{-5cm}+ \left(\int_0^t ((1-\log u)^b K(u,f; L_p(\mathbb{R}), V_p(\mathbb{R})))^q \frac{du}{u}\right)^{1/q}  \nonumber \\
	& \hspace{-6cm} \lesssim t (1 - \log t)^{b+1/q} \|f\|_{L_p(\mathbb{R})} + t (1 -\log t)^{b+1/q} \int_{t^p}^1 u^{-1/p} \omega_1(f,u)_p \frac{du}{u}  \nonumber \\
	& \hspace{-5cm} + \left(\int_0^t  \left(u (1-\log u)^b \int_u^t v^{-1} \omega_1(f,v^p)_p \frac{dv}{v}\right)^q \frac{du}{u}\right)^{1/q}. \label{5.19 Bianchini new}
 \end{align}
 Further, applying Hardy's inequality (\ref{HardyInequal2}) (note that it can also be applied if $q < 1$ taking into account the monotonicity properties of $\omega_1(f,v^p)_p$), we have
 \begin{align*}
 	 \left(\int_0^t  \left(u (1-\log u)^b \int_u^t v^{-1} \omega_1(f,v^p)_p \frac{dv}{v}\right)^q \frac{du}{u}\right)^{1/q} \\
	 & \hspace{-6cm}\lesssim \left(\int_0^{t^p} ((1-\log u)^b \omega_1(f,u)_p)^q \frac{du}{u}\right)^{1/q}.
 \end{align*}
 Plugging this estimate into (\ref{5.19 Bianchini new}), we obtain
 \begin{align*}
 	K(t(1-\log t)^{b+1/q}, f; \mathbf{B}^{0,b}_{p,q}(\mathbb{R}), V_p(\mathbb{R})) & \lesssim t (1 - \log t)^{b+1/q} \|f\|_{L_p(\mathbb{R})}  \\
	& \hspace{-6cm}+ t (1 -\log t)^{b+1/q} \int_{t^p}^1 u^{-1/p} \omega_1(f,u)_p \frac{du}{u} +  \left(\int_0^{t^p} ((1-\log u)^b \omega_1(f,u)_p)^q \frac{du}{u}\right)^{1/q}.
 \end{align*}
A similar argument gives the proof of the first estimate of (ii).

(iii) Let $\theta = s p$. Then, we have
\begin{equation}\label{Bianchini 4.5new}
 \mathbf{B}^{s,b}_{p,q}(\mathbb{R}) = (L_p(\mathbb{R}), V_p(\mathbb{R}))_{\theta,q;b}.
 \end{equation}
 The proof of (\ref{Bianchini 4.5new}) can be carried out in a similar way as the proof of (\ref{Bianchini 4.5}) but now applying (\ref{PrelimInterpolationnew2.2}) rather than (\ref{PrelimInterpolationnew2}).

The estimates given in (iii) can be obtained by using similar arguments as in the proof of (ii) with the help of (\ref{Bianchini 4.5new}) and Lemma \ref{PrelimHolmstedt}(ii).
\end{proof}

\begin{rem}
	The estimates obtained in Theorem \ref{BianchiniKfunctVp}(i) are optimal, in the sense that,
	\begin{equation}\label{5.18Bianchini2}
		\frac{K(t, f; L_p(\mathbb{R}), V_p(\mathbb{R}))}{t} \lesssim \|f\|_{L_p(\mathbb{R})} + \left(\int_{t^p}^1 (u^{-1/p} \omega_1(f,u)_p)^q \frac{du}{u}\right)^{1/q} \iff 0 < q \leq 1
	\end{equation}
	and
	\begin{equation}\label{5.18Bianchini2*}
		\|f\|_{L_p(\mathbb{R})} + \left(\int_{t^p}^1 (u^{-1/p} \omega_1(f,u)_p)^q \frac{du}{u} \right)^{1/q} \lesssim \frac{K(t, f; L_p(\mathbb{R}), V_p(\mathbb{R}))}{t} \iff q = \infty.
	\end{equation}
	
	To prove (\ref{5.18Bianchini2}), we will make use of the following well-known results.
	
	\begin{lem}[\bf{$K$-functional for $(L_p(\mathbb{R}^d), L_\infty(\mathbb{R}^d))$}; \cite{BerghLofstrom}]\label{5.19Bianchini}
		Let $0 < p < \infty$. Then, we have
		\begin{equation*}
		K(t, f; L_p(\mathbb{R}^d), L_\infty(\mathbb{R}^d)) \asymp \left(\int_0^{t^p} f^\ast(s)^p ds \right)^{1/p}
	\end{equation*}
	where the equivalence constants depend only on $p$ and $d$. As usual, $f^\ast$ is the non-increasing rearrangement of
$f$.
	\end{lem}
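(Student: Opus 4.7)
This is the classical Peetre--Sparr identification of the $K$-functional for the couple $(L_p(\mathbb{R}^d), L_\infty(\mathbb{R}^d))$, and I would prove it using Hardy--Littlewood rearrangement theory together with an explicit near-optimal splitting of $f$. The plan is to establish the upper bound by exhibiting a concrete decomposition and to match it from below via the subadditivity of the non-increasing rearrangement.

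For the upper bound I would take the decomposition $f = f_0 + f_1$ with $f_0(x) = (|f(x)| - f^\ast(t^p))_+\, \mathrm{sign}(f(x))$ and $f_1 = f - f_0$. By construction $\|f_1\|_{L_\infty} \le f^\ast(t^p)$. Since $(|f| - a)_+$ is a monotone function of $|f|$, rearrangement commutes with the truncation, which gives the identity $f_0^\ast(s) = (f^\ast(s) - f^\ast(t^p))_+$, supported in $[0, t^p]$. Hence $\|f_0\|_{L_p}^p = \int_0^{t^p}(f^\ast(s) - f^\ast(t^p))^p\, ds$. Combining with $t^p \|f_1\|_{L_\infty}^p \le t^p f^\ast(t^p)^p = \int_0^{t^p} f^\ast(t^p)^p\, ds$ and the elementary inequality $(a - b)^p + b^p \lesssim a^p$ valid for $0 \le b \le a$, one obtains the upper estimate $K(t, f; L_p, L_\infty) \lesssim \left(\int_0^{t^p} f^\ast(s)^p\, ds\right)^{1/p}$.

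For the lower bound, given any admissible decomposition $f = g_0 + g_1$ with $g_0 \in L_p$ and $g_1 \in L_\infty$, I would invoke the subadditivity relation $f^\ast(s) \le g_0^\ast(s) + g_1^\ast(0) \le g_0^\ast(s) + \|g_1\|_{L_\infty}$, which implies $(f^\ast(s) - \|g_1\|_{L_\infty})_+ \le g_0^\ast(s)$. Integrating over $(0, t^p)$ and using $t^p \|g_1\|_{L_\infty}^p = \int_0^{t^p} \|g_1\|_{L_\infty}^p\, ds$, the converse elementary inequality $a^p \lesssim (a-b)_+^p + b^p$ produces $\int_0^{t^p} f^\ast(s)^p\, ds \lesssim \|g_0\|_{L_p}^p + t^p \|g_1\|_{L_\infty}^p$, and taking the infimum over all decompositions yields the required lower bound.

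The main technical point will be the rigorous justification of the identity $f_0^\ast = (f^\ast - f^\ast(t^p))_+$ at the endpoint, since if $f^\ast$ has a jump at $s = t^p$ the super-level set $\{|f| > f^\ast(t^p)\}$ may have measure strictly less than $t^p$ and the formula holds only up to a boundary set. A simple limiting argument, approximating $f^\ast(t^p)$ from above by values of $f^\ast$ at continuity points and passing to the limit in the two estimates, repairs this without affecting the equivalence constants (which depend only on $p$ and $d$).
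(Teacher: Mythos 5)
Your argument is correct and is essentially the standard Peetre truncation proof that appears in the cited reference (Bergh--L\"ofstr\"om, Theorem~5.2.1); the paper itself does not reprove this lemma but simply cites it. One small remark: the endpoint concern you raise is actually unnecessary, since the identity $\bigl(\phi(|f|)\bigr)^\ast = \phi(f^\ast)$ holds for \emph{any} nondecreasing $\phi$ with $\phi(0)=0$ -- in particular for $\phi(u)=(u-f^\ast(t^p))_+$ -- with no continuity hypothesis on $f^\ast$, so no limiting argument is needed.
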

	
	\begin{lem}[\bf{Characterization of embeddings of Besov spaces into $L_\infty(\mathbb{R}^d)$}; \cite{SickelTriebel}]\label{LemBianchiniContinuous}
		Let $s \in \mathbb{R}, 0 < p \leq \infty$ and $0 < q \leq \infty$. Then, the following statements are equivalent:
		\begin{enumerate}[\upshape(i)]
		\item $B^{s}_{p,q}(\mathbb{R}^d) \hookrightarrow L_\infty(\mathbb{R}^d)$,
		\item either $s > d/p$ or $s=d/p$ and $q \leq 1$.
		\end{enumerate}
	\end{lem}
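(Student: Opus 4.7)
The plan is to prove the two implications separately, treating the sufficiency in two cases and the necessity in two cases, since the borderline $s=d/p$ behaves differently from $s>d/p$. Throughout, the key input will be the Littlewood--Paley characterization together with Nikolski's inequality (which asserts $\|(\varphi_j\widehat f)^\vee\|_{L_\infty(\mathbb{R}^d)}\lesssim 2^{jd/p}\|(\varphi_j\widehat f)^\vee\|_{L_p(\mathbb{R}^d)}$, a consequence of the fact that each $(\varphi_j\widehat f)^\vee$ is an entire function of exponential type $\sim 2^j$), and scaling/lacunary constructions in the reverse direction.

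For the sufficiency in the critical case $s=d/p$, $q\leq 1$, I would first reduce to $q=1$ using the trivial monotonicity $B^{d/p}_{p,q}(\mathbb{R}^d)\hookrightarrow B^{d/p}_{p,1}(\mathbb{R}^d)$ for $q\leq 1$ and then apply Nikolski's inequality block by block, writing $f=\sum_{j=0}^\infty(\varphi_j\widehat f)^\vee$ (convergence in $\mathcal S'$) and estimating
\[
\|f\|_{L_\infty(\mathbb{R}^d)}\leq \sum_{j=0}^\infty\|(\varphi_j\widehat f)^\vee\|_{L_\infty(\mathbb{R}^d)}\lesssim \sum_{j=0}^\infty 2^{jd/p}\|(\varphi_j\widehat f)^\vee\|_{L_p(\mathbb{R}^d)}=\|f\|_{B^{d/p}_{p,1}(\mathbb{R}^d)}.
\]
For the sufficiency in the case $s>d/p$, I would use the classical Sobolev embedding at fine index infinity, namely $B^{s}_{p,q}(\mathbb{R}^d)\hookrightarrow B^{s}_{p,\infty}(\mathbb{R}^d)\hookrightarrow B^{s-d/p}_{\infty,\infty}(\mathbb{R}^d)$ (cf.\ Proposition \ref{RecallEmb2}(i) together with the trivial embedding in $q$), and then observe that $B^{\sigma}_{\infty,\infty}(\mathbb{R}^d)\hookrightarrow L_\infty(\mathbb{R}^d)$ for any $\sigma>0$, because each Littlewood--Paley block is uniformly bounded and the series $\sum_j 2^{-j\sigma}(2^{j\sigma}\|(\varphi_j\widehat f)^\vee\|_{L_\infty})$ converges.

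For the necessity when $s<d/p$, I would use a dilation argument: fix $\psi\in\mathcal S(\mathbb{R}^d)$ with $\psi(0)=1$ and supp$\,\widehat\psi\subset\{1\leq|\xi|\leq 2\}$, and set $g_\lambda(x)=\lambda^{d/p-s}\psi(\lambda x)$ for $\lambda=2^N$, $N\to\infty$. A direct Littlewood--Paley calculation shows $\|g_\lambda\|_{B^s_{p,q}(\mathbb{R}^d)}$ is uniformly bounded in $\lambda$, while $\|g_\lambda\|_{L_\infty(\mathbb{R}^d)}=\lambda^{d/p-s}\|\psi\|_{L_\infty(\mathbb{R}^d)}\to\infty$, contradicting (i).

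The main obstacle, as expected, is necessity in the delicate borderline $s=d/p$, $q>1$. Here I would construct an explicit counterexample of lacunary type: pick $\psi\in\mathcal S(\mathbb{R}^d)$ with $\widehat\psi$ supported in an annulus around $|\xi|\sim 1$ and $\psi(0)\neq 0$, and set
\[
f(x)=\sum_{j=3}^\infty a_j\,2^{jd}\psi(2^j x),\qquad a_j=2^{-jd}(1+j)^{-\beta},\quad 1/q<\beta\leq 1.
\]
Since $\widehat{2^{jd}\psi(2^j\,\cdot)}(\xi)=\widehat\psi(2^{-j}\xi)$ is supported near $|\xi|\sim 2^j$, the Littlewood--Paley blocks decouple and one computes
\[
\|(\varphi_j\widehat f)^\vee\|_{L_p(\mathbb{R}^d)}\asymp a_j\,2^{jd}\cdot 2^{-jd/p}=a_j\,2^{jd/p'},
\]
hence $\|f\|_{B^{d/p}_{p,q}(\mathbb{R}^d)}^q\asymp\sum_j(1+j)^{-\beta q}<\infty$ because $\beta q>1$. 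On the other hand, evaluating (formally) at $x=0$ gives $f(0)=\psi(0)\sum_j(1+j)^{-\beta}=+\infty$ because $\beta\leq 1$; the rigorous version is to look at partial sums $f_N$ and show $\|f_N\|_{L_\infty(\mathbb{R}^d)}\to\infty$ while the Besov norms stay bounded, so no continuous embedding into $L_\infty(\mathbb{R}^d)$ can exist. The trickiest point is justifying the pointwise divergence at $0$ from a pure Littlewood--Paley series, which is handled by exploiting that $\psi(0)\neq 0$ and that the dilates $2^{jd}\psi(2^j\cdot)$ all take the value $2^{jd}\psi(0)$ at the origin, so the partial sums $f_N(0)$ form a divergent series, and that $f_N$ remains uniformly bounded in the Besov quasi-norm.
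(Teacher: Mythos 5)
The paper does not actually prove this lemma; it is quoted with a citation to \cite{SickelTriebel}, so there is no in-paper proof to compare against. Your blind argument is essentially the standard one for this result and is correct in substance.

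A few remarks on the details. Your sufficiency argument in the critical case via Nikolski's inequality is exactly right, and in fact the same Nikolski estimate $\|(\varphi_j\widehat f)^\vee\|_{L_\infty}\lesssim 2^{jd/p}\|(\varphi_j\widehat f)^\vee\|_{L_p}$ already handles the supercritical case $s>d/p$ in one stroke: summing $2^{j(d/p-s)}\cdot 2^{js}\|(\varphi_j\widehat f)^\vee\|_{L_p}$ against a geometric series gives $B^s_{p,q}\hookrightarrow B^s_{p,\infty}\hookrightarrow L_\infty$ without any detour through $B^{s-d/p}_{\infty,\infty}$. The detour you take is fine, but be aware that Proposition \ref{RecallEmb2}(i) as stated in the paper is only formulated for $1\leq p_0<p_1\leq\infty$, whereas the lemma covers $0<p\leq\infty$; the Sobolev embedding $B^s_{p,\infty}\hookrightarrow B^{s-d/p}_{\infty,\infty}$ is of course valid for all $0<p\leq\infty$, but you would need to quote a statement (e.g.\ from Triebel's book) that covers the subunit range, or fall back on the direct Nikolski computation which is uniform in $p$. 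Your dilation argument for $s<d/p$ and your lacunary construction for $s=d/p$, $q>1$, are both correct: the blocks decouple because $\widehat\psi$ is supported in a fixed annulus, $\|f_N\|_{B^{d/p}_{p,q}}$ stays bounded because $\beta q>1$, and $f_N(0)=\psi(0)\sum_{j\leq N}(1+j)^{-\beta}\to\infty$ because $\beta\leq 1$, which is possible precisely when $q>1$. Your construction, a superposition of dilates of a fixed bump, differs from the $\mathfrak L$-class examples in Section \ref{section5} (which are a fixed $\psi$ times a lacunary exponential sum), but it is closer to the atomic-decomposition counterexamples (cf.\ the reference to \cite{MouraNevesPiotrowski} elsewhere in the paper) and works just as well here.
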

	
	Assume that
	\begin{equation*}
		\frac{K(t, f; L_p(\mathbb{R}), V_p(\mathbb{R}))}{t} \lesssim \|f\|_{L_p(\mathbb{R})} + \left(\int_{t^p}^1 (u^{-1/p} \omega_1(f,u)_p)^q \frac{du}{u}\right)^{1/q}.
	\end{equation*}
	In particular, since $V_p(\mathbb{R}) \hookrightarrow L_\infty(\mathbb{R})$, we get
		\begin{equation*}
		\frac{K(t, f; L_p(\mathbb{R}), L_\infty(\mathbb{R}))}{t} \lesssim \|f\|_{L_p(\mathbb{R})} + \left(\int_{t^p}^1 (u^{-1/p} \omega_1(f,u)_p)^q \frac{du}{u}\right)^{1/q}.
	\end{equation*}
	By Lemma \ref{5.19Bianchini}, we can rewrite the previous estimate as
	\begin{equation}\label{5.20Bianchini}
		\left(\frac{1}{t^p} \int_0^{t^p} f^\ast(s)^p ds \right)^{1/p} \lesssim  \|f\|_{L_p(\mathbb{R})} + \left(\int_{t^p}^1 (u^{-1/p} \omega_1(f,u)_p)^q \frac{du}{u}\right)^{1/q}.
	\end{equation}
	Passing to the limit $t \downarrow 0$, (\ref{5.20Bianchini}) shows that
	\begin{equation*}
		\|f\|_{L_\infty(\mathbb{R})} \asymp f^\ast(0) \lesssim \|f\|_{\mathbf{B}^{1/p}_{p,q}(\mathbb{R})} \asymp  \|f\|_{B^{1/p}_{p,q}(\mathbb{R})}
	\end{equation*}
	where we used (\ref{BesovComparison}). Therefore, Lemma \ref{LemBianchiniContinuous} implies that $q \leq 1$.
	
	The converse implication of (\ref{5.18Bianchini2}) follows easily from Theorem \ref{BianchiniKfunctVp}.
	
	Suppose now that the inequality stated in (\ref{5.18Bianchini2*}) holds for some $q < \infty$. In particular, we have
	\begin{equation*}
		\|f\|_{L_p(\mathbb{R})} + \left(\int_{t^p}^1 (u^{-1/p} \omega_1(f,u)_p)^q \frac{du}{u} \right)^{1/q} \lesssim \|f\|_{V_p(\mathbb{T})}, \quad 0 < t < 1,
	\end{equation*}
	which yields that $V_p(\mathbb{R}) \hookrightarrow \mathbf{B}^{1/p}_{p,q}(\mathbb{R})$. On the other hand, we have $L_p(\mathbb{R}) \cap \mathbf{B}^{1/p}_{\infty,\infty}(\mathbb{R}) \hookrightarrow V_p(\mathbb{R})$ because
	\begin{equation*}
		\|f\|_{\mathbf{B}^{1/p}_{\infty,\infty}(\mathbb{R}) } \asymp \|f\|_{L_\infty(\mathbb{R})} + \sup_{|x-y| \leq 1} \frac{|f(x)-f(y)|}{|x-y|^{1/p}},
	\end{equation*}
	see, e.g., \cite[Theorem 2.5.12, p. 110]{Triebel1}. Consequently, we derive
	\begin{equation*}
		L_p(\mathbb{R}) \cap \mathbf{B}^{1/p}_{\infty,\infty}(\mathbb{R}) \hookrightarrow   \mathbf{B}^{1/p}_{p,q}(\mathbb{R}).
	\end{equation*}
	However, this embedding is not true. Take the lacunary Fourier series
	\begin{equation*}
		f(x) \sim \sum_{j=3}^\infty 2^{-j/p} e^{i(2^j-2)x_1}\psi(x), \quad x = (x_1, \ldots, x_d) \in \mathbb{R}^d,
	\end{equation*}
	where $\psi \in \mathcal{S}(\mathbb{R}^d) \backslash \{0\}$ with $ \text{supp } \widehat{\psi} \subset \{\xi : |\xi| \leq 2\}$. Using the characterization given in Proposition \ref{Proposition 4.1} below and (\ref{BesovComparison}), one can see that $f \in L_p(\mathbb{R}) \cap \mathbf{B}^{1/p}_{\infty,\infty}(\mathbb{R})$ but $f \not \in \mathbf{B}^{1/p}_{p,q}(\mathbb{R})$ because $q \neq \infty$. This proves the sharpness assertion (\ref{5.18Bianchini2*}).
	
\end{rem}

Next we establish the counterpart of Theorem \ref{Thm 5.6new} for the spaces $V_p(\mathbb{R})$.

\begin{thm}\label{Theorem Bianchini 4.3}
	Let $1 < p < \infty, 0 < q < \infty, s \geq 0,$ and $-\infty < b < \infty$.
	\begin{enumerate}[\upshape(i)]
	 \item The following characterization holds
	\begin{equation}\label{Bianchini 4.4}
	\|f\|_{\mathbf{B}^{s,b}_{p,q}(\mathbb{R})} \asymp \left(\int_0^1 t^{-s p q}(1 - \log t)^{b q} \inf_{g \in V_p(\mathbb{R})} \Big\{\|f-g\|_{L_p(\mathbb{R})}  + t \|g\|_{V_{p}(\mathbb{R})} \Big\}^q \frac{dt}{t}\right)^{1/q}
\end{equation}
if and only if $s < 1/p$.
\item Let $0 < r \leq \infty$, and $\alpha > -1/r$. The following characterization holds
	\begin{equation}\label{5.23 Bianchini}
	\|f\|_{\mathbf{B}^{s,b}_{p,q}(\mathbb{R})} \asymp \left(\int_0^1 t^{-s p q}(1 - \log t)^{b q-(1-s p)(\alpha + \frac{1}{r})q} \inf_{g \in V_p(\mathbb{R})} \Big\{\|f-g\|_{\mathbf{B}^{0,\alpha}_{p,r}(\mathbb{R})}  + t \|g\|_{V_{p}(\mathbb{R})} \Big\}^q \frac{dt}{t}\right)^{1/q}
\end{equation}
if and only if
\begin{enumerate}[\upshape(a)]
\item
	$0 < s < 1/p$
\item
	$s=0$, and $ \alpha +1/r < b+1/q$.
\end{enumerate}
\item Let $0 < r \leq \infty, 0 < \lambda < 1/p$, and $-\infty < \alpha < \infty$. The following characterization holds
	\begin{equation}\label{5.23 Bianchini*}
	\|f\|_{\mathbf{B}^{s,b}_{p,q}(\mathbb{R})} \asymp \left(\int_0^1 t^{-\frac{(s - \lambda)p q}{1 - \lambda p}}(1 - \log t)^{b q- \frac{1 - sp}{1 - \lambda p} \alpha q} \inf_{g \in V_p(\mathbb{R})} \Big\{\|f-g\|_{\mathbf{B}^{\lambda,\alpha}_{p,r}(\mathbb{R})}  + t \|g\|_{V_{p}(\mathbb{R})} \Big\}^q \frac{dt}{t}\right)^{1/q}
\end{equation}
if and only if $ \lambda < s < 1/p$.

\end{enumerate}

The previous characterizations also hold true in the periodic setting.
\end{thm}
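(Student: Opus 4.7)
The plan is to recognize each infimum appearing in \eqref{Bianchini 4.4}, \eqref{5.23 Bianchini} and \eqref{5.23 Bianchini*} as the Peetre $K$-functional $K(t,f;X,V_p(\mathbb{R}))$ with, respectively, $X=L_p(\mathbb{R})$, $X=\mathbf{B}^{0,\alpha}_{p,r}(\mathbb{R})$ and $X=\mathbf{B}^{\lambda,\alpha}_{p,r}(\mathbb{R})$, and to identify the weighted integrals on the right-hand sides with (limiting) interpolation norms for the couple $(L_p(\mathbb{R}),V_p(\mathbb{R}))$. The engine is the pair of identifications $\mathbf{B}^{0,\alpha}_{p,r}(\mathbb{R})=(L_p,V_p)_{(0,\alpha),r}$ from \eqref{Bianchini 4.5} and $\mathbf{B}^{\lambda,\alpha}_{p,r}(\mathbb{R})=(L_p,V_p)_{\lambda p,r;\alpha}$ from \eqref{Bianchini 4.5new}, already established in the proof of Theorem \ref{BianchiniKfunctVp}. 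The whole scheme then runs in close parallel to the proof of Theorem \ref{Thm 5.6new}, with $V_p(\mathbb{R})$ now playing the role that $W^k_p(\mathbb{R}^d)$ and $\text{Lip}^{(k,-\alpha)}_{p,r}(\mathbb{R}^d)$ played there.

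Part (i) follows at once from the identifications above: for $0<s<1/p$ it is \eqref{Bianchini 4.5new} with $\theta=sp$ together with \eqref{inter}, while for $s=0$ it is \eqref{Bianchini 4.5} combined with the definition \eqref{limitinginterpolation}. For (ii) case (a), I apply the reiteration formula Lemma \ref{PrelimLemma7.2}(i) to obtain $(\mathbf{B}^{0,\alpha}_{p,r},V_p)_{sp,q;\beta}=(L_p,V_p)_{sp,q;(1-sp)(\alpha+1/r)+\beta}$; the choice $\beta=b-(1-sp)(\alpha+1/r)$ produces exactly $\mathbf{B}^{s,b}_{p,q}(\mathbb{R})$, with the weight exponents in the corresponding interpolation integral matching those in \eqref{5.23 Bianchini}. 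For (iii), Holmstedt's formula Lemma \ref{PrelimHolmstedt}(ii) applied to $\mathbf{B}^{\lambda,\alpha}_{p,r}=(L_p,V_p)_{\lambda p,r;\alpha}$ gives an explicit expression for $K(\cdot,f;\mathbf{B}^{\lambda,\alpha}_{p,r},V_p)$; substituting into the right-hand side of \eqref{5.23 Bianchini*}, performing the change of variables $\tau=t^{1-\lambda p}(1-\log t)^\alpha$ and invoking a Hardy-type inequality reduces the expression to the interpolation norm $(L_p,V_p)_{sp,q;b}=\mathbf{B}^{s,b}_{p,q}(\mathbb{R})$, with interpolation parameter $\eta=(s-\lambda)p/(1-\lambda p)\in(0,1)$ precisely when $\lambda<s<1/p$.

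The main obstacle is sufficiency of (ii) in case (b): when $s=0$ both interpolation steps are limiting, and this reiteration is not directly covered by Lemma \ref{PrelimLemma7.2}. To bypass it I use Holmstedt's Lemma \ref{PrelimHolmstedt}(iv) in the form
\[
K(t(1-\log t)^{\alpha+1/r},f;\mathbf{B}^{0,\alpha}_{p,r},V_p)\asymp(1-\log t)^{\alpha+1/r}K(t,f)+\Big(\int_0^t((1-\log u)^\alpha K(u,f))^r\tfrac{du}{u}\Big)^{1/r},
\]
with $K(u,f):=K(u,f;L_p,V_p)$. Substituting into the right-hand side of \eqref{5.23 Bianchini} with $s=0$ and changing variables via $\tau=t(1-\log t)^{\alpha+1/r}$, the first contribution produces $\int_0^1((1-\log u)^b K(u,f))^q du/u\asymp\|f\|_{\mathbf{B}^{0,b}_{p,q}(\mathbb{R})}^q$ directly from \eqref{Bianchini 4.5}, while the second is absorbed into the same quantity by the logarithmic Hardy inequality \eqref{HardyInequal3} (in the regime $q\ge r$) or by a discretization argument (in the regime $q<r$), in the same spirit as the bound $J_3\lesssim\|f\|_{\mathbf{B}^{0,b}_{p,q}}^q$ in the proof of Theorem \ref{Thm 5.6new}. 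The strict inequality $\alpha+1/r<b+1/q$ is exactly what guarantees the convergence of the auxiliary logarithmic integral that appears in this Hardy step, and this is where condition (b) enters.

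The necessity of the parameter restrictions in all three parts is shown by counterexamples of the same flavour as in the proof of Theorem \ref{Thm 5.6new}. For $s\ge 1/p$ in (i), the elementary lower bound $K(t,f;L_p,V_p)\ge t\|f\|_{L_p(\mathbb{R})}$ for $0<t\le 1$ (from $V_p\hookrightarrow L_p$ and the triangle inequality) forces the right-hand side of \eqref{Bianchini 4.4} to be infinite on every nonzero element of $\mathbf{B}^{s,b}_{p,q}(\mathbb{R})$, contradicting the claimed equivalence. For (ii)(b) with $\alpha+1/r\ge b+1/q$, I take $f\in\mathbf{B}^{0,b}_{p,q}(\mathbb{R})$ with $\omega_1(f,t)_p\asymp(1-\log t)^{-\varepsilon}$ for $b+1/q<\varepsilon<\alpha+1/r$ (with an extra iterated logarithm in the borderline equality case), so that the right-hand side of \eqref{5.23 Bianchini} diverges while $f$ still lies in $\mathbf{B}^{0,b}_{p,q}(\mathbb{R})$. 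Finally, in (iii) the degenerate ranges $s\le\lambda$ and $s\ge 1/p$ push the interpolation parameter $\eta$ out of $(0,1)$, which likewise invalidates the integral characterization on nontrivial elements of $\mathbf{B}^{s,b}_{p,q}(\mathbb{R})$.
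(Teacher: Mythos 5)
Your if-parts are essentially sound: identifying the infima with $K$-functionals for $(L_p,V_p)$, using the identities $\mathbf{B}^{0,\alpha}_{p,r}(\mathbb{R})=(L_p,V_p)_{(0,\alpha),r}$ and $\mathbf{B}^{\lambda,\alpha}_{p,r}(\mathbb{R})=(L_p,V_p)_{\lambda p,r;\alpha}$, and then either reiteration (Lemma \ref{PrelimLemma7.2}(i)) or Holmstedt + Hardy gives the characterizations. The paper instead plugs in the explicit two-sided moduli-of-smoothness bounds from Theorem \ref{BianchiniKfunctVp}(i)--(iii), but that is just a concrete rewording of the same interpolation identities; both routes are valid. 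Your treatment of sufficiency in (ii)(b) via Holmstedt(\ref{PrelimHolmstedt})(iv) and the logarithmic Hardy inequality is also correct, and your counterexample for the ``only if'' direction in (ii)(b) matches the paper's.

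However, your necessity argument for $s<1/p$ has a genuine gap. You claim that for all $s\ge 1/p$ the lower bound $K(t,f;L_p,V_p)\ge t\|f\|_{L_p}$ on $0<t<1$ forces the right-hand side of \eqref{Bianchini 4.4} to diverge. This works for $s>1/p$ and for $s=1/p$ with $b\ge -1/q$, because then $\int_0^1 t^{(1-sp)q}(1-\log t)^{bq}\,\tfrac{dt}{t}=\infty$. But when $s=1/p$ and $b<-1/q$ that integral is finite, and in fact the right-hand side of \eqref{Bianchini 4.4} is finite on all of $V_p(\mathbb{R})$ (take $g=f$), so it is a non-trivial quantity. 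This boundary case requires a different argument: the paper assumes the equivalence $\mathbf{B}^{1/p,b}_{p,q}=(L_p,V_p)_{(1,b),q}$, interpolates with $L_p$ using Lemma \ref{PrelimLemma7.2}(ii), and derives $\mathbf{B}^{\theta/p,\theta b}_{p,q}=\mathbf{B}^{\theta/p,\theta(b+1/q)}_{p,q}$, which forces $q=\infty$ and contradicts $q<\infty$. Your argument simply does not reach this case. The same gap propagates into parts (ii) and (iii): there, too, the boundary case $s=1/p$, $b<-1/q$ needs a reiteration contradiction (now with $(\mathbf{B}^{0,\alpha}_{p,r},V_p)_{(1,b),q}$ or $(\mathbf{B}^{\lambda,\alpha}_{p,r},V_p)_{(1,b),q}$), and in part (iii) the boundary $s=\lambda$ further splits into two sub-cases, one requiring a direct coincidence $\mathbf{B}^{s,b}_{p,q}=\mathbf{B}^{\lambda,\alpha}_{p,r}$ (impossible by Proposition \ref{RecallEmb**optim}) when $b-\alpha<-1/q$, and one requiring a bespoke counterexample (choosing $\omega_1(f,t)_p\asymp t^\lambda(1-\log t)^{-\varepsilon}$) when $b-\alpha\ge-1/q$. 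Your sentence ``pushes the interpolation parameter $\eta$ out of $(0,1)$, which likewise invalidates the integral characterization'' asserts a conclusion but supplies no argument for these borderline values.
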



\begin{rem}
(i) The formula (\ref{Bianchini 4.4}) can be considered as an extension of (\ref{Bianchini 4.3}) for $p > 1$. Recall that $\text{BV}(\mathbb{R}) = V_1(\mathbb{R})$.

(ii) All if-parts of Theorem \ref{Theorem Bianchini 4.3} still hold true in the limiting case $q=\infty$.

(iii) The condition $\alpha > -1/r$ in Theorem \ref{Theorem Bianchini 4.3}(ii) is necessary in order to get (\ref{5.23 Bianchini}). To be more precise, one can show that if $\alpha = -1/r$, then the corresponding characterization to (\ref{5.23 Bianchini}) involves iterated logarithmic weights.
\end{rem}

	\begin{proof}[Proof of Theorem \ref{Theorem Bianchini 4.3}]
	(i) The proof of (\ref{Bianchini 4.4}) under the condition $s < 1/p$ is a simple consequence of Theorem \ref{BianchiniKfunctVp}(i).
	
	Next we show that the formula (\ref{Bianchini 4.4}) does not hold for $s \geq 1/p$. Assume first that either $s > 1/p$ or $s=1/p$ and $b \geq -1/q$. Then,
	\begin{align*}
		\int_0^1 t^{(1-s p) q}(1 - \log t)^{b q} \inf_{g \in V_p(\mathbb{R})} \Big\{\frac{\|f-g\|_{L_p(\mathbb{R})}}{t}  +  \|g\|_{V_{p}(\mathbb{R})} \Big\}^q \frac{dt}{t} & \\
		& \hspace{-8cm}\geq  \inf_{g \in V_p(\mathbb{R})} \Big\{ \|f-g\|_{L_p(\mathbb{R})}  +  \|g\|_{V_{p}(\mathbb{R})}  \Big\}^q \int_0^1 t^{(1-s p) q}(1 - \log t)^{b q} \frac{dt}{t} \\
		& \hspace{-8cm} \gtrsim \|f\|_{L_p(\mathbb{R})}^q \int_0^1 t^{(1-s p) q}(1 - \log t)^{b q} \frac{dt}{t} = \infty,
	\end{align*}
	which implies that the right-hand side space of  (\ref{Bianchini 4.4}) is trivial. Consequently, (\ref{Bianchini 4.4}) is not true.
	
	Suppose now that $s=1/p$ and $b < -1/q$. First we observe that the right-hand side space of (\ref{Bianchini 4.4}) is not trivial. Indeed, it contains the space $V_p(\mathbb{R})$ because
	\begin{align}
		\int_0^1 t^{-q}(1 - \log t)^{b q} \inf_{g \in V_p(\mathbb{R})} \Big\{\|f-g\|_{L_p(\mathbb{R})}  + t \|g\|_{V_{p}(\mathbb{R})} \Big\}^q \frac{dt}{t} & \label{5.24 Bianchini new}\\
		& \hspace{-6cm} \leq \|f\|_{V_p(\mathbb{R})}^q \int_0^1 (1 - \log t)^{b q} \frac{dt}{t} \lesssim  \|f\|_{V_p(\mathbb{R})}^q \nonumber.
	\end{align}
 To disprove (\ref{Bianchini 4.4}), we will proceed by contradiction. Assume that
		\begin{equation}\label{5.26 Bianchini}
	\|f\|_{\mathbf{B}^{1/p,b}_{p,q}(\mathbb{R})} \asymp \left(\int_0^1 t^{- q}(1 - \log t)^{b q} \inf_{g \in V_p(\mathbb{R})} \Big\{\|f-g\|_{L_p(\mathbb{R})}  + t \|g\|_{V_{p}(\mathbb{R})} \Big\}^q \frac{dt}{t}\right)^{1/q},
\end{equation}
that is, $\mathbf{B}^{1/p,b}_{p,q}(\mathbb{R}) = (L_p(\mathbb{R}), V_{p}(\mathbb{R}))_{(1,b),q}$. Then, for $0 < \theta < 1$, using (\ref{PrelimInterpolationnew2.2}), Lemma \ref{PrelimLemma7.2}(ii) ($b < -1/q$) and (\ref{Bianchini 4.5new}), we have
\begin{align*}
	\mathbf{B}^{\theta/p, \theta b}_{p,q}(\mathbb{R}) & = (L_p(\mathbb{R}), \mathbf{B}^{1/p,  b}_{p,q}(\mathbb{R}))_{\theta, q} = (L_p(\mathbb{R}), (L_p(\mathbb{R}), V_{p}(\mathbb{R}))_{(1,b),q})_{\theta, q} \\
	& = (L_p(\mathbb{R}), V_p(\mathbb{R}))_{\theta,q;\theta (b+1/q)} = \mathbf{B}^{\theta/p, \theta (b + 1/q)}_{p,q}(\mathbb{R})
\end{align*}
which implies that $q=\infty$, a contradiction. Hence, (\ref{Bianchini 4.4}) does not hold when $s=1/p$ and $b < -1/q$.

	(ii) Assume that either (a) or (b) holds. Then, (\ref{5.23 Bianchini}) is a simple application of Theorem \ref{BianchiniKfunctVp}(ii).
	
	Suppose that (\ref{5.23 Bianchini}) holds. Then, a similar argument to that given in the proof of (i) but now replacing $L_p(\mathbb{R})$ by $\mathbf{B}^{0,\alpha}_{p,r}(\mathbb{R})$ leads to a contradiction, either to $s > 1/p$ or to $s=1/p$ and $b \geq -1/q$. Suppose that $s=1/p$ and $b < -1/q$. Hence, (\ref{5.23 Bianchini}) is equivalent to the fact that $\mathbf{B}^{1/p,b}_{p,q}(\mathbb{R}) = (\mathbf{B}^{0,\alpha}_{p,r}(\mathbb{R}), V_p(\mathbb{R}))_{(1,b),q}$. Let $\theta \in (0,1)$. Then, by (\ref{PrelimInterpolationnew3}) and Lemma \ref{PrelimLemma7.2}(ii) ($b < -1/q$), we have
	\begin{align}
		\mathbf{B}^{\theta/p, (1-\theta)(\alpha + 1/r) + \theta b}_{p,q}(\mathbb{R}) & = (\mathbf{B}^{0,\alpha}_{p,r}(\mathbb{R}), \mathbf{B}^{1/p,b}_{p,q}(\mathbb{R}))_{\theta, q}\nonumber \\
		& \hspace{-3cm} = (\mathbf{B}^{0,\alpha}_{p,r}(\mathbb{R}), (\mathbf{B}^{0,\alpha}_{p,r}(\mathbb{R}), V_p(\mathbb{R}))_{(1,b),q})_{\theta, q} = (\mathbf{B}^{0,\alpha}_{p,r}(\mathbb{R}), V_p(\mathbb{R}))_{\theta, q; \theta (b+1/q)} . \label{aux5.40}
	\end{align}
	Further, it follows from Lemma \ref{Peetre1}, (\ref{BesovComparison}), and (\ref{PrelimInterpolationnew3}) that
	\begin{align}
		(\mathbf{B}^{0,\alpha}_{p,r}(\mathbb{R}), V_p(\mathbb{R}))_{\theta, q; \theta (b+1/q)} & \hookrightarrow (\mathbf{B}^{0,\alpha}_{p,r}(\mathbb{R}), \mathbf{B}^{1/p}_{p,\infty}(\mathbb{R}))_{\theta, q; \theta (b+1/q)} \nonumber\\
		&  = \mathbf{B}^{\theta/p, (1-\theta)(\alpha + 1/r) + \theta (b+1/q)}_{p,q}(\mathbb{R}). \label{aux5.42}
	\end{align}
	Therefore, combining (\ref{aux5.40}) and (\ref{aux5.42}) we arrive at
	\begin{equation*}
	\mathbf{B}^{\theta/p, (1-\theta)(\alpha + 1/r) + \theta b}_{p,q}(\mathbb{R}) \hookrightarrow \mathbf{B}^{\theta/p, (1-\theta)(\alpha + 1/r) + \theta (b+1/q)}_{p,q}(\mathbb{R}).
	\end{equation*}
	This is a contradiction because $q < \infty$ and $\theta > 0$.

	It remains to disprove that  (\ref{5.23 Bianchini}) does not hold for the case $s=0$ and $\alpha +1/r \geq b+1/q$. According to Theorem \ref{BianchiniKfunctVp}(ii), we have
	\begin{equation*}
		 \int_0^1 (1 - \log t)^{b q-(\alpha + \frac{1}{r})q} \inf_{g \in V_p(\mathbb{R})} \Big\{\|f-g\|_{\mathbf{B}^{0,\alpha}_{p,r}(\mathbb{R})}  + t \|g\|_{V_{p}(\mathbb{R})} \Big\}^q \frac{dt}{t} \gtrsim J_3,
	\end{equation*}
	where $J_3$ is defined by (\ref{JJJ}) with $s=0$ and $k=1$. Therefore, we can construct a function $f \in \mathbf{B}^{0,b}_{p,q}(\mathbb{R})$ satisfying $J_3 = \infty$ by repeating the arguments given in the proof of Theorem \ref{Thm 5.6new}(ii) \emph{mutatis mutandis}.
	
	(iii) Let  $\lambda < s < 1/p$. The proof of (\ref{5.23 Bianchini*}) can be easily derived from Theorem \ref{BianchiniKfunctVp}(iii). Further details are left to the reader.
	
	Conversely, suppose that (\ref{5.23 Bianchini*}) holds. Then, we have $s < 1/p$. Indeed, if either $s > 1/p$ or $s=1/p$ and $b \geq -1/q$ holds then we arrive at a contradiction applying the same argument as in the proof of (i), but now $L_p(\mathbb{R})$ should be replaced by $\mathbf{B}^{\lambda,\alpha}_{p,r}(\mathbb{R})$. Suppose that $s=1/p$ and $b < -1/q$. Hence, (\ref{5.23 Bianchini*}) can be rewritten as $\mathbf{B}^{1/p,b}_{p,q}(\mathbb{R}) = (\mathbf{B}^{\lambda,\alpha}_{p,r}(\mathbb{R}), V_p(\mathbb{R}))_{(1,b),q}$. Let $\theta \in (0,1)$. Using Lemma \ref{PrelimLemma7.2}(ii) ($b < -1/q$), we have
	\begin{align}
		(\mathbf{B}^{\lambda,\alpha}_{p,r}(\mathbb{R}), \mathbf{B}^{1/p,b}_{p,q}(\mathbb{R}))_{\theta, q} &= (\mathbf{B}^{\lambda,\alpha}_{p,r}(\mathbb{R}), (\mathbf{B}^{\lambda,\alpha}_{p,r}(\mathbb{R}), V_p(\mathbb{R}))_{(1,b),q})_{\theta, q} \nonumber \\
		& = (\mathbf{B}^{\lambda,\alpha}_{p,r}(\mathbb{R}), V_p(\mathbb{R}))_{\theta, q; \theta (b+1/q)} . \label{aux5.40*}
	\end{align}
	By Lemma \ref{PrelimLemmaCF} and (\ref{BesovComparison}), we get
	\begin{equation}\label{aux5.41*}
		(\mathbf{B}^{\lambda,\alpha}_{p,r}(\mathbb{R}), \mathbf{B}^{1/p,b}_{p,q}(\mathbb{R}))_{\theta, q}    = \mathbf{B}^{(1-\theta) \lambda + \theta/p, (1-\theta)\alpha + \theta  b}_{p,q}(\mathbb{R}).
	\end{equation}
	On the other hand, according to Lemma \ref{Peetre1} and (\ref{BesovComparison}) we have
	\begin{align}
		(\mathbf{B}^{\lambda,\alpha}_{p,r}(\mathbb{R}), V_p(\mathbb{R}))_{\theta, q; \theta (b+1/q)} & \hookrightarrow (\mathbf{B}^{\lambda,\alpha}_{p,r}(\mathbb{R}), \mathbf{B}^{1/p}_{p,\infty}(\mathbb{R}))_{\theta, q; \theta (b+1/q)}\nonumber \\
		& = \mathbf{B}^{(1-\theta) \lambda + \theta/p, (1-\theta)\alpha + \theta (b+1/q)}_{p,q}(\mathbb{R}) \label{aux5.42*}
	\end{align}
	where we have also used Lemma \ref{PrelimLemmaCF} in the last estimate.
	Therefore, combining (\ref{aux5.40*})-(\ref{aux5.42*}) we obtain
	\begin{equation*}
	\mathbf{B}^{(1-\theta) \lambda + \theta/p, (1-\theta)\alpha + \theta  b}_{p,q}(\mathbb{R}) \hookrightarrow  \mathbf{B}^{(1-\theta) \lambda + \theta/p, (1-\theta)\alpha + \theta (b+1/q)}_{p,q}(\mathbb{R})
	\end{equation*}
	which is not true because $q < \infty$ and $\theta > 0$.

 In order to show that $s > \lambda$ is necessary we will again argue by contradiction, that is, we assume that if (\ref{5.23 Bianchini*}) is true for some $s \leq \lambda$ then we arrive at a contradiction. If either $s < \lambda$ or $s=\lambda$ and $b-\alpha < -1/q$, then the right-hand side of (\ref{5.23 Bianchini*}) can be estimated from above by
	\begin{equation*}
		\left(\int_0^1 t^{-\frac{(s - \lambda)p q}{1 - \lambda p}}(1 - \log t)^{b q- \frac{1 - sp}{1 - \lambda p} \alpha q}  \frac{dt}{t}\right)^{1/q} \|f\|_{\mathbf{B}^{\lambda,\alpha}_{p,r}(\mathbb{R})} \asymp  \|f\|_{\mathbf{B}^{\lambda,\alpha}_{p,r}(\mathbb{R})}
	\end{equation*}
	and, in addition, from below by (note that $s < 1/p$)
		\begin{equation*}
		\left(\int_0^1 t^{q -\frac{(s - \lambda)p q}{1 - \lambda p}}(1 - \log t)^{b q- \frac{1 - sp}{1 - \lambda p} \alpha q}  \frac{dt}{t}\right)^{1/q} \|f\|_{\mathbf{B}^{\lambda,\alpha}_{p,r}(\mathbb{R})} \asymp  \|f\|_{\mathbf{B}^{\lambda,\alpha}_{p,r}(\mathbb{R})}.
	\end{equation*}
	 Hence, by (\ref{5.23 Bianchini*}), we derive that $\mathbf{B}^{s,b}_{p,q}(\mathbb{R}) = \mathbf{B}^{\lambda,\alpha}_{p,r}(\mathbb{R})$, which is not possible (see Proposition \ref{RecallEmb**optim} below). Next we suppose that $s=\lambda$ and $b - \alpha \geq -1/q$. Then, by Theorem \ref{BianchiniKfunctVp}(iii) (recall that $0 < \lambda < 1/p$), we obtain the following estimate of the right-hand side of (\ref{5.23 Bianchini*})
	\begin{align}
		 \left(\int_0^1 (1 - \log t)^{(b - \alpha) q} \inf_{g \in V_p(\mathbb{R})} \Big\{\|f-g\|_{\mathbf{B}^{\lambda,\alpha}_{p,r}(\mathbb{R})}  + t \|g\|_{V_{p}(\mathbb{R})} \Big\}^q \frac{dt}{t}\right)^{1/q} & \nonumber \\
		 & \hspace{-9cm}\gtrsim \left(\int_0^1 (1 - \log t)^{(b - \alpha) q} \left(\int_0^t (u^{-\lambda} (1-\log u)^\alpha \omega_1(f,u)_p)^r \frac{du}{u}\right)^{q/r}\right)^{1/q}. \label{5.40}
	\end{align}
	Now we can consider a function $f \in L_p(\mathbb{R})$ such that $f \in \mathbf{B}^{\lambda,b}_{p,q}(\mathbb{R})$ but (\ref{5.40}) is not finite and thus we arrive at a contradiction. In fact, if $b - \alpha > -1/q$, then we take $f \in L_p(\mathbb{R})$ satisfying $\omega_1(f,t)_p \asymp t^\lambda (1-\log t)^{-\varepsilon}$ where $\max\{b+1/q, \alpha + 1/r\} < \varepsilon < b + 1/q + 1/r$. If $b - \alpha = -1/q$, we consider $f \in L_p(\mathbb{R})$ such that $\omega_1(f,t)_p \asymp t^\lambda (1-\log t)^{-(b+1/q+1/r)} (1 + \log (1 - \log t))^{-\delta}$, where $1/r < \delta < 1/r + 1/q$.
	
	\end{proof}

\newpage
\section{Functions and their derivatives in Besov spaces}\label{section9}

 This section deals with the relationships between the smoothness properties of functions  and their derivatives.
  More specifically, in the first part of this section we will establish sharp inequalities between moduli of smoothness of the derivatives of a function and those of  function itself. As application, in Section \ref{Section: Sobolev norms} we will shed some lights on Sobolev-type characterizations of Besov spaces.

 \subsection{Inequalities for moduli of smoothness of functions and their derivatives}

The following estimate of the moduli of smoothness for derivatives of $f$ was obtained  in \cite[Theorem 2.3]{Trebels2}.

 \begin{prop}\label{Prop Trebels}
 	Let $1 < p < \infty$ and $k, m \in \mathbb{N}$. Let $f \in L_p(\mathbb{R}^d)$. Then,
	\begin{equation}\label{inequalTrebels}
		\omega_k(D^\beta f, t)_p \lesssim \left(\int_0^t (u^{-m} \omega_{k+m}(f,u)_p)^{\min\{p,2\}} \frac{du}{u}\right)^{1/\min\{p,2\}}
	\end{equation}
	for all $\beta \in \mathbb{N}_0^d$ with $|\beta| =m$.
 \end{prop}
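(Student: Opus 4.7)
The plan is to reduce the statement to a sharp Marchaud-type inequality for the fractional Laplacian and then exploit the Littlewood--Paley theory, which is precisely the source of the $\min\{p,2\}$ exponent.

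First, I would observe that for any multi-index $\beta$ with $|\beta|=m$ one can write $D^\beta f = R_\beta (-\Delta)^{m/2} f$, where $R_\beta$ is a composition of Riesz transforms, hence a Fourier multiplier of type $(p,p)$ for $1<p<\infty$ which commutes with translations. Since Riesz transforms commute with the difference operators $\Delta^k_h$, this yields
$$\omega_k(D^\beta f, t)_p \lesssim \omega_k((-\Delta)^{m/2} f, t)_p,$$
so it suffices to establish the inequality with $D^\beta$ replaced by $(-\Delta)^{m/2}$. Using the $K$-functional characterization of the modulus (Lemma \ref{LemmaModuli}) one can rephrase both sides as
$$\omega_k((-\Delta)^{m/2} f, t)_p \asymp K(t^k, (-\Delta)^{m/2} f; L_p, \dot{\mathscr{L}}^k_p), \qquad \omega_{k+m}(f, u)_p \asymp K(u^{k+m}, f; L_p, \dot{\mathscr{L}}^{k+m}_p).$$

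Next, I would fix $t$, choose $N$ with $2^{-N}\sim t$, and split $f = f^{\mathrm{low}} + f^{\mathrm{high}}$ along the Littlewood--Paley decomposition $f_j = (\varphi_j \widehat f)^\vee$, taking $f^{\mathrm{low}} = \sum_{j<N} f_j$ and $f^{\mathrm{high}} = \sum_{j\geq N} f_j$. Then
$$\omega_k((-\Delta)^{m/2} f, t)_p \lesssim t^k \bigl\| (-\Delta)^{(k+m)/2} f^{\mathrm{low}} \bigr\|_{L_p} + \bigl\| (-\Delta)^{m/2} f^{\mathrm{high}} \bigr\|_{L_p},$$
and on each dyadic piece the Bernstein inequality gives $\|(-\Delta)^{\sigma/2} f_j\|_{L_p} \lesssim 2^{j\sigma}\|f_j\|_{L_p}$. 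The main technical input is the Littlewood--Paley theorem (\ref{LP}): invoking the embeddings $\ell_{\min\{p,2\}}(L_p) \hookrightarrow L_p(\ell_2)$ (which is just Minkowski's inequality for $1<p\leq 2$ and the reverse Minkowski for the $\ell_2$ norm when $2\leq p<\infty$) one obtains
$$\Bigl\| \sum_{j\geq N} g_j \Bigr\|_{L_p} \lesssim \Bigl(\sum_{j\geq N} \|g_j\|_{L_p}^{\min\{p,2\}}\Bigr)^{1/\min\{p,2\}},$$
whenever the $g_j$ are spectrally supported in disjoint dyadic annuli; this is precisely the step that produces the exponent $\min\{p,2\}$.

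Applying this to $g_j = (-\Delta)^{m/2} f_j$ for $j\geq N$ and combining with the elementary lower bound $\|f_j\|_{L_p} \lesssim \omega_{k+m}(f, 2^{-j})_p$ (which follows from the Littlewood--Paley projection being controlled by finite differences, via the Plancherel--Polya / Nikolski inequality for band-limited functions) produces the sum $\sum_{j\geq N} 2^{jm\min\{p,2\}} \omega_{k+m}(f,2^{-j})_p^{\min\{p,2\}}$, which discretizes the integral $\int_0^t (u^{-m}\omega_{k+m}(f,u)_p)^{\min\{p,2\}}\,du/u$. For the low-frequency part, an analogous dyadic estimate together with a Hardy inequality (\ref{HardyInequal2}) applied to the additional factor $t^k 2^{j(k+m)}$ ($j<N$) folds that contribution into the same integral. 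The delicate point, and the main obstacle, will be justifying the sharpness of the dyadic summation step: one must verify carefully that the Littlewood--Paley exponent $\min\{p,2\}$ indeed survives passage from the spectral square function to the moduli, and one must treat the lowest block $f_0$ separately (using a homogeneous-realization argument) so that it does not obstruct the integral form on the right-hand side.
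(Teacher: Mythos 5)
Your plan is a genuinely different route from the paper's. The paper proves this result (Theorem \ref{Thm Trebels2}(i)) by an interpolation-theoretic argument: it reformulates both sides as $K$-functionals, uses the boundedness of $D^\beta$ from $\dot{W}^{k+m}_p$ to $\dot{W}^k_p$ and from $\dot{B}^m_{p,q}$ to $L_p$ (the latter via the lifting property and the embedding $\dot{B}^0_{p,q}\hookrightarrow L_p$ for $q\leq\min\{p,2\}$, which is where Littlewood--Paley enters), applies the interpolation property of the $K$-functional, and identifies $K(t^k,f;\dot{B}^m_{p,q},\dot{W}^{k+m}_p)$ by Holmstedt's formula. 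Your proposal instead runs the Littlewood--Paley decomposition by hand, reducing to $(-\Delta)^{m/2}$ via Riesz transforms, splitting into low and high frequencies at scale $2^N\sim 1/t$, and summing Bernstein and Nikolskii bounds. Both approaches are correct in spirit and both are driven by the same source of the $\min\{p,2\}$ exponent (the Littlewood--Paley theorem), but they package the low-frequency contribution very differently, and that is exactly where your proposal has a real gap.

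The high-frequency estimate in your proposal is correct as stated. The gap is in the low-frequency piece $t^k\|(-\Delta)^{(k+m)/2} f^{\mathrm{low}}\|_{L_p}$. Applying the embedding $\ell_{\min\{p,2\}}(L_p)\hookrightarrow L_p(\ell_2)$, then Bernstein, then the termwise bound $\|f_j\|_{L_p}\lesssim \omega_{k+m}(f,2^{-j})_p$, produces
\begin{equation*}
t^k\Bigl(\sum_{j<N}\bigl(2^{j(k+m)}\omega_{k+m}(f,2^{-j})_p\bigr)^{q}\Bigr)^{1/q},\qquad q=\min\{p,2\},
\end{equation*}
and since $u\mapsto u^{-(k+m)}\omega_{k+m}(f,u)_p$ is essentially decreasing, the sequence $2^{j(k+m)}\omega_{k+m}(f,2^{-j})_p$ is essentially increasing; taking $f$ with $\omega_{k+m}(f,u)_p\asymp u^{k+m}$ gives $N$ comparable terms and hence an extra factor $N^{1/q}\asymp(1+|\log t|)^{1/q}$ that the right-hand side does not have. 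The Jackson bound $\|f_j\|_{L_p}\lesssim\omega_{k+m}(f,2^{-j})_p$ is sharp for a single $j$ but not for all $j$ simultaneously, so estimating each block this way is lossy. The Hardy inequality (\ref{HardyInequal2}) does not rescue this: the low-frequency sum involves $\omega_{k+m}(f,u)_p$ at scales $u>t$, while the target integral is over $u<t$, so there is no Hardy-type inequality of the required form. What does close the gap is bypassing the termwise Littlewood--Paley bound on $f^{\mathrm{low}}$ entirely and invoking the Bernstein--Jackson estimate for the de la Vall\'ee-Poussin means, namely the realization of the $K$-functional (cf.\ (\ref{KfunctEta})): $t^{k+m}\|\eta_{1/t}f\|_{\dot{\mathscr{L}}^{k+m}_p(\mathbb{R}^d)}\lesssim\omega_{k+m}(f,t)_p$, which gives $t^k\|\eta_{1/t}f\|_{\dot{\mathscr{L}}^{k+m}_p}\lesssim t^{-m}\omega_{k+m}(f,t)_p$, and this is dominated by the target integral over $[t/2,t]$ using only that $\omega_{k+m}$ is essentially increasing. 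The paper's interpolation route via Holmstedt absorbs this low-frequency subtlety automatically, which is what makes it cleaner; your hands-on route is viable but requires that realization result as an explicit additional ingredient, and this is where the Trebels source also works.
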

More delicate estimates involving the mixed derivatives were derived in \cite{pst, Potapov}. For the case $0<p\le\infty$ see also \cite{DitzianTikhonov}.
 Note that (\ref{inequalTrebels}) consists of a refinement of the well-known Marchaud inequality (see \cite[Chapter 6, Theorem 3.1, p. 178]{DeVoreLorentz})
 \begin{equation}\label{MarchaudClassic}
		\omega_k(D^\beta f, t)_p \lesssim \int_0^t u^{-m} \omega_{k+m}(f,u)_p \frac{du}{u}, \quad 1 \leq p \leq \infty.
	\end{equation}
For reasons of completeness we will give the proof of (\ref{inequalTrebels}) in Theorem \ref{Thm Trebels2} below following \cite{Trebels2}.
 Furthermore, we will also prove that (\ref{inequalTrebels}) is optimal in the sense of integrability parameter $\min\{p,2\}$.

  Concerning converse estimates of (\ref{inequalTrebels}), we mention the 
  classical inequality
  \begin{equation}\label{convTrebels}
  	 t^{-m} \omega_{k+m}(f,t)_p \lesssim \sup_{|\beta| =m} \omega_k(D^\beta f, t)_p, \quad 1 \leq p \leq \infty.
  \end{equation}
  See, e.g., \cite[(7.13), p. 46]{DeVoreLorentz}.
  For sharper results see \cite[p. 1398]{Simonov} (for functions from $L_p(\T)$) and \cite{pst, Potapov} for mixed derivatives and moduli of smoothness in $L_p(\mathbb{T}^d)$.
  In Theorem \ref{Thm Trebels2}, we obtain the sharp version of (\ref{convTrebels}) which complements the inequality  (\ref{inequalTrebels}).

 \begin{thm}\label{Thm Trebels2}
 	Let $1 < p < \infty, 0 < q \leq \infty$ and $k, m \in \mathbb{N}$.
	\begin{enumerate}[\upshape(i)]
	\item Let $f \in L_p(\mathbb{R}^d)$. Then,
	\begin{equation*}
		\sup_{|\beta| =m} \omega_k(D^\beta f, t)_p \lesssim \left(\int_0^t (u^{-m} \omega_{k+m}(f,u)_p)^{q} \frac{du}{u}\right)^{1/q} \iff q \leq \min\{p,2\}.
	\end{equation*}
	\item Let $f$ be such that $\frac{\partial^m f}{\partial x_j^m} \in L_p(\mathbb{R}^d)$ for $j=1, \ldots, d$. Then,
	\begin{equation}\label{inequalTrebels2}
		  \left(\int_0^t (u^{-m} \omega_{k+m}(f,u)_p)^q \frac{du}{u}\right)^{1/q} \lesssim \sup_{j=1, \ldots, d} \omega_k\left(\frac{\partial^m f}{\partial x_j^m}, t\right)_p \iff q \geq \max\{p,2\}.
	\end{equation}
	\end{enumerate}
	
	In particular, we have
	\begin{equation*}
		\left(\int_0^t (u^{-m} \omega_{k+m}(f,u)_2)^2 \frac{du}{u}\right)^{1/2} \asymp \sup_{|\beta| =m} \omega_k(D^\beta f, t)_2 \asymp  \sup_{j=1, \ldots, d} \omega_k\left(\frac{\partial^m f}{\partial x_j^m}, t\right)_2.
	\end{equation*}
	
	The corresponding result for periodic functions also holds true.
 \end{thm}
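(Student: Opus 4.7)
The plan is to derive both sufficiency statements from the corresponding inequality at the ``critical'' value of $q$ and then extend to all admissible $q$ by an $\ell_q$-monotonicity argument, while the necessity statements will be obtained from counterexamples built from lacunary Fourier series as in Section \ref{section5}. For part (i) the critical value is $q=\min\{p,2\}$, which is exactly Proposition \ref{Prop Trebels}; I will include a self-contained proof of this following \cite{Trebels2}, based on a Littlewood--Paley decomposition $f=\sum_j f_j$ with $f_j=(\varphi_j\widehat{f})^{\vee}$, Bernstein's inequality $\|D^\beta f_j\|_{L_p(\mathbb{R}^d)}\lesssim 2^{jm}\|f_j\|_{L_p(\mathbb{R}^d)}$, and the Lizorkin-type bound $\bigl\|\sum_j g_j\bigr\|_{L_p(\mathbb{R}^d)}^{\min(p,2)}\lesssim\sum_j\|g_j\|_{L_p(\mathbb{R}^d)}^{\min(p,2)}$ valid for families with dyadic Fourier supports. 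For part (ii) the critical value is $q=\max\{p,2\}$, and its proof runs along the same lines with the dual Paley-type lower bound and the classical converse Marchaud inequality (\ref{convTrebels}) applied coordinatewise.

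The extension to other values of $q$ is elementary. Discretizing on dyadic intervals $u\asymp 2^{-j}t$ and using that $\omega_{k+m}(f,u)_p$ is essentially constant on each such interval, one checks that $\bigl(\int_0^t(u^{-m}\omega_{k+m}(f,u)_p)^q\,du/u\bigr)^{1/q}$ is equivalent, up to constants independent of $q$ on any bounded range, to the $\ell_q$-norm of the sequence $\bigl\{2^{jm}\omega_{k+m}(f,2^{-j}t)_p\bigr\}_{j\ge 0}$. The inclusion $\ell_{q_1}\hookrightarrow\ell_{q_2}$ for $q_1\le q_2$ then yields that this quantity is non-increasing in $q$. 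Applying this with $q_1=\min\{p,2\}$ promotes the base case of part (i) to all $q\le\min\{p,2\}$, and applying it with $q_2=q\ge\max\{p,2\}$ promotes the base case of part (ii) to all $q\ge\max\{p,2\}$.

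For necessity I work within the lacunary class, taking $f=\psi\sum_{j\ge 3}b_j e^{i(2^j-2)x_1}$ where $\psi\in\mathcal{S}(\mathbb{R}^d)\setminus\{0\}$ satisfies (\ref{4.2}). Combining Proposition \ref{Proposition 4.1} (which gives $\|D^\beta f\|_{L_p(\mathbb{R}^d)}\asymp(\sum_j 2^{2jm}|b_j|^2)^{1/2}$ together with an analogous discrete expression for $\omega_k(D^\beta f,t)_p$) with Theorem \ref{Theorem 4.2} reduces each of the two inequalities to a discrete Hardy-type comparison between weighted $\ell_q$- and $\ell_2$-norms of the sequence $\{2^{jm}b_j\}$. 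Choosing $b_j=2^{-jm}(1+j)^{-\beta}$ with $\beta$ in the narrow window $1/\min\{p,2\}<\beta\le 1/q$ (respectively $1/q\le\beta<1/\max\{p,2\}$) then produces functions for which the inequality of part (i) fails once $q>\min\{p,2\}$ (respectively the inequality of part (ii) fails once $q<\max\{p,2\}$). The periodic assertions are obtained by the same scheme using the lacunary trigonometric class $\mathfrak{l}$ together with Proposition \ref{Proposition Lac} and Theorem \ref{Theorem 4.4}.

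The principal obstacle is that the critical exponents $\min\{p,2\}$ and $\max\{p,2\}$ are dictated by vector-valued Khintchine--Paley phenomena underlying the Littlewood--Paley theorem, not by the moduli of smoothness themselves; making this visible on a concrete family requires the counterexample coefficients $b_j$ to carry a slowly-varying logarithmic factor precisely calibrated so that the relevant $\ell_{q_0}$ sum converges while $\ell_q$ diverges at the borderline. Once the right family is identified, the verification reduces to standard manipulations with dyadic sums and the Hardy-type inequalities recorded in Section \ref{Preliminaries}.
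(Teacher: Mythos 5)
Your \emph{sufficiency} strategy -- establish the critical case $q_0=\min\{p,2\}$ via a Littlewood--Paley argument (this is Proposition~\ref{Prop Trebels}), respectively $q_0=\max\{p,2\}$, and then extend to all admissible $q$ by the $\ell_q$-monotonicity of the dyadic discretization -- is a viable alternative to the paper's route. The paper works instead through $K$-functionals: $\omega_k(D^\beta f,t)_p\asymp K(t^k,D^\beta f;L_p,\dot W^k_p)$, boundedness of $D^\beta:\dot B^m_{p,q}\to L_p$ from $\dot B^0_{p,q}\hookrightarrow L_p$ (which is what pins down $q\le\min\{p,2\}$), and Holmstedt's identification of $K(t^k,f;\dot B^m_{p,q},\dot W^{k+m}_p)$ with the integral; part (ii) uses the realization (\ref{KfunctEta}) together with $L_p\hookrightarrow\dot B^0_{p,q}$. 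This handles all admissible $q$ in one stroke, with no separate monotonicity step. Be aware, though, that your sketch of the base case for (ii) is thin: the converse Marchaud inequality (\ref{convTrebels}) is only the $q=\infty$ endpoint, and the step down to $q=\max\{p,2\}$ is precisely where the exponent enters; you would need to make the ``dual Paley-type lower bound'' argument precise, in parallel with the paper's realization argument.

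The \emph{necessity} part has a genuine gap. You use only lacunary counterexamples, but for $f\in\mathfrak L$ the relevant quantities are $p$-independent: $\|D^\beta f\|_{L_p}\asymp\bigl(\sum_j 2^{2jm}|b_j|^2\bigr)^{1/2}$ and the Besov norms $\bigl(\sum_j(2^{jm}|b_j|)^q\bigr)^{1/q}$, from (\ref{4.4}), (\ref{4.5}), (\ref{4.6}). Hence a lacunary family can only detect the $2$-constraint (failure of (i) for $q>2$, of (ii) for $q<2$). When $p<2$ the sharp threshold in (i) is $p$, not $2$; when $p>2$ the threshold in (ii) is $p$; lacunary series are blind to these. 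You would also need functions in the general-monotone class $\widehat{GM}^d$ from Section~\ref{section4} (compare Theorem~\ref{Theorem 3.10new}, whose $\widehat{GM}^d$ side pins down exactly $q\le p$). Separately, your stated $\beta$-window $1/\min\{p,2\}<\beta\le 1/q$ is empty for $q>\min\{p,2\}$, which is exactly the regime you want to exclude; the family $b_j=2^{-jm}(1+j)^{-\beta}$ with $\beta>1/2$ does produce the $q>2$ obstruction, but your parameter constraint is written backwards.

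Finally, you miss the much simpler necessity argument the paper actually uses: take $\sup_{t>0}$ of both sides. By (\ref{AsympMod}) the left-hand side of (i) becomes $\asymp\|D^\beta f\|_{L_p}$, and the right-hand side becomes $\|f\|_{\dot B^m_{p,q}}$, so the assumed inequality is equivalent to the embedding $\dot B^m_{p,q}\hookrightarrow\dot W^m_p$, known to hold exactly for $q\le\min\{p,2\}$ (cf.\ \cite{HansenSickel}); similarly (ii) reduces to $\dot W^m_p\hookrightarrow\dot B^m_{p,q}$. No explicit counterexample construction is needed at all.
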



 \begin{rem}
 	 (i) The inequality (\ref{convTrebels}) follows from (\ref{inequalTrebels2}) with $q=\infty$.
	
	 (ii) The inequality given in (\ref{inequalTrebels2}) improves (\ref{convTrebels}) for $1 < p < \infty$ in several ways: firstly, one can refine the behavior of the moduli of smoothness of $f$ in terms of its derivatives with the help of the fine index $q$; and secondly, we observe that it is possible to bound the behavior of moduli of smoothness of $f$ using only the directional derivatives $\frac{\partial^m f}{\partial x_j^m}, j = 1, \ldots, d$, rather than all partial derivatives $D^\beta f$ with $|\beta| = m$.
 \end{rem}

 \begin{proof}[Proof of Theorem \ref{Thm Trebels2}]
	(i) Assume that $q \leq \min\{p,2\}$. Take $\beta \in \mathbb{N}_0^d$ such that $|\beta| = m$. Then, it is clear that
	\begin{equation}\label{12.7new2}
	D^\beta : \dot{W}^{k+m}_p(\mathbb{R}^d) \longrightarrow \dot{W}^{k}_p(\mathbb{R}^d).
	\end{equation}
	 Furthermore, by Lemma \ref{CharSobolevNorm}(ii) below we have $D^\beta : \dot{B}^{m}_{p,q}(\mathbb{R}^d) \longrightarrow \dot{B}^{0}_{p,q}(\mathbb{R}^d)$, and thus
	 \begin{equation}\label{12.8new}
	  D^\beta : \dot{B}^{m}_{p,q}(\mathbb{R}^d) \longrightarrow L_{p}(\mathbb{R}^d)
	  \end{equation}
	   because $\dot{B}^{0}_{p,q}(\mathbb{R}^d) \hookrightarrow L_p(\mathbb{R}^d)$. According to Lemma \ref{LemmaModuli}, (\ref{LPHom}), (\ref{12.7new2}), and (\ref{12.8new}), we derive
	   \begin{equation}\label{12.9new2}
	   	\omega_k(D^\beta f, t)_p \asymp K(t^k, D^\beta f; L_p(\mathbb{R}^d), \dot{W}^{k}_p(\mathbb{R}^d)) \lesssim K(t^k, f; \dot{B}^{m}_{p,q}(\mathbb{R}^d), \dot{W}^{k+m}_p(\mathbb{R}^d)).
	   \end{equation}
	  On the other hand, the well-known interpolation formula (cf. \cite[Theorem 6.3.1, p. 147]{BerghLofstrom}; see also (\ref{PrelimInterpolation}))
	\begin{equation*}
		 \dot{B}^m_{p,q}(\mathbb{R}^d) = (L_p(\mathbb{R}^d), \dot{W}^{k + m}_p(\mathbb{R}^d))_{\frac{m}{k+m},q},
	\end{equation*}
 in combination with Lemmas \ref{PrelimHolmstedt}(ii) and \ref{LemmaModuli}, gives that
	\begin{align}
		K(t^k,f;  \dot{B}^m_{p,q}(\mathbb{R}^d), \dot{W}^{k + m}_p(\mathbb{R}^d)) &\asymp \left(\int_0^t (u^{-m} K(u^{k+m},f;L_p(\mathbb{R}^d), \dot{W}^{k + m}_p(\mathbb{R}^d)))^q \frac{du}{u}\right)^{1/q}\nonumber \\
		& \asymp \left(\int_0^t (u^{-m} \omega_{k+m}(f,u)_p)^q \frac{du}{u}\right)^{1/q} \label{KfunctBWHom}.
	\end{align}
	Inserting this estimate into (\ref{12.9new2}) we arrive at
	\begin{equation}\label{12.11new2}
		\omega_k(D^\beta f, t)_p  \lesssim  \left(\int_0^t (u^{-m} \omega_{k+m}(f,u)_p)^q \frac{du}{u}\right)^{1/q} .
	\end{equation}
	
	Conversely, assume that (\ref{12.11new2}) holds for all $|\beta| = m$ and let $f \in \dot{B}^m_{p,q}(\mathbb{R}^d)$. Then,
	\begin{equation}\label{12.12new2}
		\sup_{t > 0} \omega_k(D^\beta f, t)_p \lesssim \left(\int_0^\infty (u^{-m} \omega_{k+m}(f,u)_p)^q \frac{du}{u}\right)^{1/q} \asymp \|f\|_{\dot{B}^m_{p,q}(\mathbb{R}^d)}, \quad |\beta| = m,
	\end{equation}
	where the last equivalence follows from the homogeneous counterpart of (\ref{BesovComparison}); see \cite[Section 6.3]{BerghLofstrom} for further details. Further, we remark that
	\begin{equation}\label{AsympMod}
	 \sup_{t > 0} \omega_k(f, t)_p \asymp \|f\|_{L_p(\mathbb{R}^d)} \quad \text{for}  \quad 1 \leq p < \infty
	 \end{equation}
	  (see, e.g., \cite[(3.6)]{Milman2}). Hence, by (\ref{12.12new2}) we obtain $\dot{B}^m_{p,q}(\mathbb{R}^d) \hookrightarrow \dot{W}^m_p(\mathbb{R}^d)$ which implies $q \leq \min\{p,2\}$ (cf. \cite[Lemma 1 and Remark 1]{HansenSickel}; see also Proposition \ref{RecallEmb}(ii) for the inhomogeneous case).

 	(ii) Let $ q \geq \max\{p,2\}$. We first remark that $\eta_R \frac{\partial^m  f}{\partial x_j^m} = \frac{\partial^m (\eta_R f)}{\partial x_j^m}$ where $\eta_R$ is the de la Vall\'ee-Poussin operator introduced in (\ref{eta}). This is a simple consequence of the fact that
	\begin{equation}\label{FourierDerivative}
	 \widehat{\frac{\partial^m f}{\partial x_j^m}}(\xi) = \xi_j^m \widehat{f}(\xi), \quad  \xi = (\xi_1, \ldots, \xi_d) \in \mathbb{R}^d.
	 \end{equation}
	 Further, we will make use of the following realization result (cf. \cite[Lemma 1.4]{GogatishviliOpicTikhonovTrebels})
	\begin{equation}\label{KfunctEta}
		K(t^\alpha,f;L_p(\mathbb{R}^d), \dot{\mathscr{L}}^{\alpha}_p(\mathbb{R}^d)) \asymp \|f-\eta_{1/t}f\|_{L_p(\mathbb{R}^d)} + t^\alpha \|\eta_{1/t}f\|_{\dot{\mathscr{L}}^{\alpha}_p(\mathbb{R}^d)}.
	\end{equation}

	By Lemma \ref{LemmaModuli} (with $\alpha = k$) and (\ref{KfunctEta}), we have for each $j=1, \ldots, d$,
	\begin{align*}
		\omega_k\left(\frac{\partial^m f}{\partial x_j^m}, t\right)_p &\asymp \left\|\frac{ \partial^m (f- \eta_{1/t} f)}{\partial x_j^m}\right\|_{L_p(\mathbb{R}^d)} + t^k \left\|\frac{\partial^m \eta_{1/t}f}{\partial x_j^m}\right\|_{\dot{\mathscr{L}}^{k}_p(\mathbb{R}^d)} \\
		& \gtrsim  \left\|\frac{ \partial^m (f- \eta_{1/t} f)}{\partial x_j^m}\right\|_{\dot{B}_{p,q}^0(\mathbb{R}^d)} + t^k \left\|\frac{\partial^m \eta_{1/t}f}{\partial x_j^m}\right\|_{\dot{\mathscr{L}}^{k}_p(\mathbb{R}^d)}
	\end{align*}
	because $L_p(\mathbb{R}^d) \hookrightarrow \dot{B}_{p,q}^0(\mathbb{R}^d)$ (see \cite{Jawerth}; this is the homogeneous counterpart of the embedding given in Proposition \ref{RecallEmb}(i) with $s=b=0$ and $r=2$). Taking the supremum over all $j=1, \ldots, d$ and then applying Lemma \ref{CharSobolevNorm}(ii) below and (\ref{KfunctBWHom}) we get
	\begin{align*}
		 \sup_{j=1, \ldots, d} \omega_k\left(\frac{\partial^m f}{\partial x_j^m}, t\right)_p & \gtrsim  \sum_{j=1}^d  \left\|\frac{ \partial^m (f- \eta_{1/t} f)}{\partial x_j^m}\right\|_{\dot{B}_{p,q}^0(\mathbb{R}^d)} + t^k \sum_{j=1}^d  \left\|\frac{\partial^m \eta_{1/t}f}{\partial x_j^m}\right\|_{\dot{W}^{k}_p(\mathbb{R}^d)} \nonumber \\
		 &  \asymp \|f-\eta_{1/t} f\|_{\dot{B}^m_{p,q}(\mathbb{R}^d)} + t^k \|\eta_{1/t} f\|_{\dot{W}^{k + m}_p(\mathbb{R}^d)}\nonumber \\
		 & \geq K(t^k, f; \dot{B}^m_{p,q}(\mathbb{R}^d), \dot{W}^{k + m}_p(\mathbb{R}^d)) \\
		 & \asymp  \left(\int_0^t (u^{-m} \omega_{k+m}(f,u)_p)^q \frac{du}{u}\right)^{1/q}
	\end{align*}
	where we also used (\ref{LPHom}) in the first estimate.
	
	 Conversely, assume that the inequality given in (\ref{inequalTrebels2}) is fulfilled. In particular, one has
	 \begin{equation*}
	 	 \left(\int_0^t (u^{-m} \omega_{k+m}(f,u)_p)^q \frac{du}{u}\right)^{1/q} \lesssim \sup_{j=1, \ldots, d} \left\|\frac{\partial^m f}{\partial x_j^m}\right\|_{L_p(\mathbb{R}^d)} \leq \|f\|_{\dot{W}^m_p(\mathbb{R}^d)}, \quad t > 0,
	 \end{equation*}
	 and taking the supremum over all $t > 0$, we obtain
	 \begin{equation*}
	  \|f\|_{\dot{B}^m_{p,q}(\mathbb{R}^d)} \asymp  \left(\int_0^\infty (u^{-m} \omega_{k+m}(f,u)_p)^q \frac{du}{u}\right)^{1/q} \lesssim \|f\|_{\dot{W}^m_p(\mathbb{R}^d)}.
	  \end{equation*}
	  Note that the latter embedding implies $q \geq \max\{p,2\}$ (cf. \cite[Lemma 1 and Remark 1]{HansenSickel}; see also Proposition \ref{RecallEmb}(i) for the inhomogeneous case).
	
	  The proofs of (i) and (ii) given above carry over verbatim to the periodic case.
 \end{proof}

 \subsection{Sobolev-type characterizations of Besov spaces}\label{Section: Sobolev norms}
 We start by recalling the characterizations of classical (homogenous and inhomogenous) Besov spaces  in terms of Sobolev-type norms.

 \begin{lem}\label{CharSobolevNorm}
 \begin{enumerate}[\upshape(i)]
 \item Let $s \in \mathbb{R}, 1 \leq  p \leq \infty, 0 < q \leq \infty$, and $m \in \mathbb{N}$. Then,
  \begin{equation*}
 	\|f\|_{B^s_{p,q}(\mathbb{R}^d)} \asymp \sum_{|\beta| \leq m} \|D^\beta f\|_{B^{s-m}_{p,q}(\mathbb{R}^d)} \asymp \|f\|_{B^{s-m}_{p,q}(\mathbb{R}^d)} + \sum_{j=1}^d \Big\|\frac{\partial^m f}{\partial x_j^m}\Big\|_{B^{s-m}_{p,q}(\mathbb{R}^d)}.
 \end{equation*}
 \item Let $s \in \mathbb{R}, 1 <  p < \infty, 0 < q \leq \infty$, and $m \in \mathbb{N}$. Then,
   \begin{equation*}
 	\|f\|_{\dot{B}^s_{p,q}(\mathbb{R}^d)} \asymp \sum_{|\beta| = m} \|D^\beta f\|_{\dot{B}^{s-m}_{p,q}(\mathbb{R}^d)} \asymp \sum_{j=1}^d \Big\|\frac{\partial^m f}{\partial x_j^m}\Big\|_{\dot{B}^{s-m}_{p,q}(\mathbb{R}^d)}.
 \end{equation*}
 \end{enumerate}
 \end{lem}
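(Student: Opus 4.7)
\medskip

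The plan is to prove Lemma~\ref{CharSobolevNorm} by a Fourier multiplier argument applied to the Littlewood--Paley decomposition defining the Besov norm. Writing
$$A=\|f\|_{B^s_{p,q}(\mathbb{R}^d)},\qquad B=\sum_{|\beta|\le m}\|D^\beta f\|_{B^{s-m}_{p,q}(\mathbb{R}^d)},\qquad C=\|f\|_{B^{s-m}_{p,q}(\mathbb{R}^d)}+\sum_{j=1}^d\Big\|\tfrac{\partial^m f}{\partial x_j^m}\Big\|_{B^{s-m}_{p,q}(\mathbb{R}^d)},$$
I would establish the chain $A\gtrsim B\gtrsim C\gtrsim A$; since $B\gtrsim C$ is trivial (the right-hand side of $C$ corresponds to the choices $\beta=0$ and $\beta=me_j$ in the sum defining $B$), the real content lies in $A\gtrsim B$ and $C\gtrsim A$.

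For $A\gtrsim B$, I would fix $\beta$ with $|\beta|\le m$ and note that $\widehat{D^\beta f}(\xi)=(i\xi)^\beta\widehat f(\xi)$. On the support of $\varphi_j$ one has $|\xi|\asymp 2^j$, and the factorization $(i\xi)^\beta\varphi_j(\xi)=2^{j|\beta|}\,\eta_\beta(\xi/2^j)\tilde\varphi_j(\xi)$ with a fixed Schwartz bump $\eta_\beta(\eta)=(i\eta)^\beta$ and $\tilde\varphi_j=\varphi_{j-1}+\varphi_j+\varphi_{j+1}$ gives, by Young's inequality for convolution,
$$\|(\varphi_j\widehat{D^\beta f})^{\vee}\|_{L_p(\mathbb{R}^d)}\lesssim 2^{j|\beta|}\|(\tilde\varphi_j\widehat f)^{\vee}\|_{L_p(\mathbb{R}^d)}$$
uniformly in $j\ge 1$; the case $j=0$ is handled directly since $(i\xi)^\beta\varphi_0$ is Schwartz. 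Multiplying by $2^{j(s-m)q}$ and summing over $j$ converts this into $\|D^\beta f\|_{B^{s-m}_{p,q}}\lesssim\|f\|_{B^s_{p,q}}$, and summing over $\beta$ yields $A\gtrsim B$.

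For the harder direction $C\gtrsim A$, the low-frequency piece $\varphi_0\widehat f$ is absorbed into $\|f\|_{B^{s-m}_{p,q}}$. For $j\ge 1$ I would construct a smooth partition of unity $\varphi_j=\sum_{i=1}^d\chi_{i,j}$ subordinate to the conic regions $\{|\xi_i|\ge c|\xi|\}$, which is possible because $\max_i|\xi_i|\ge|\xi|/\sqrt d$ on $\mathrm{supp}(\varphi_j)$. Taking $\chi_{i,j}(\xi)=\chi_i(\xi/2^j)$ for fixed Schwartz bumps $\chi_i$ makes the scaling self-similar. On $\mathrm{supp}(\chi_{i,j})$ the function $(i\xi_i)^{-m}\chi_{i,j}(\xi)$ is smooth of scale $2^{-jm}$ and its inverse Fourier transform has $L_1$-norm $\lesssim 2^{-jm}$, so that
$$\|(\chi_{i,j}\widehat f)^{\vee}\|_{L_p(\mathbb{R}^d)}=\big\|\big((i\xi_i)^{-m}\chi_{i,j}\cdot(i\xi_i)^m\widehat f\big)^{\vee}\big\|_{L_p(\mathbb{R}^d)}\lesssim 2^{-jm}\Big\|\big(\tilde\varphi_j\,\widehat{\partial_i^m f}\big)^{\vee}\Big\|_{L_p(\mathbb{R}^d)}.$$
Summing over $i$, multiplying by $2^{js}=2^{j(s-m)}\cdot 2^{jm}$, raising to the $q$-th power and summing over $j$ yields $A\lesssim C$. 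Part~(ii) is proved by the same construction applied to the homogeneous Littlewood--Paley blocks (indexed over all $j\in\mathbb{Z}$), using the Riesz lifting (\ref{5.5new3}) in place of Lemma~\ref{LemmaLift}; the absence of a low-frequency block is exactly why the $\|f\|_{B^{s-m}_{p,q}}$ term is not needed and the sum can be restricted to $|\beta|=m$. The restriction $1<p<\infty$ in~(ii) ensures the homogeneous Besov spaces are well-defined modulo polynomials and that the vector-valued multiplier estimates work at all frequencies.

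The main technical obstacle is the verification that the multipliers $(i\xi_i)^{-m}\chi_{i,j}(\xi)$ have convolution kernels with uniformly controlled $L_1$-norms across all dyadic scales $j$. This is why I would choose the $\chi_{i,j}$ to be exact dilates of fixed bumps $\chi_i$: the scaling then makes the estimate $\|((i\xi_i)^{-m}\chi_{i,j})^{\vee}\|_{L_1(\mathbb{R}^d)}\lesssim 2^{-jm}$ automatic from the $j=0$ case. A subtle point here is that the conic partition $\{\chi_i\}$ must be chosen independently of the annulus $\mathrm{supp}(\varphi_j)$, which forces $\chi_i$ to live on the unit annulus and to satisfy $\sum_i\chi_i(\xi)=1$ on $\mathrm{supp}(\varphi_0(\cdot/2)-\varphi_0(\cdot))$; once this is set up, the remainder of the argument is a routine application of Young's inequality and the Hardy-type estimates already compiled in Section~\ref{Preliminaries}.
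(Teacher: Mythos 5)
The paper gives no proof of this lemma; it simply cites Stein (Prop.~9, p.~147) and Triebel (Thm.~2.3.8) for part (i) and remarks that part (ii) follows by the same Fourier-multiplier machinery in the homogeneous setting. Your proposal is a self-contained execution of exactly that standard Littlewood--Paley-plus-Young's-inequality argument, so it is essentially the same approach as the cited references, and it is correct in substance. Two cosmetic slips worth cleaning up: $\eta_\beta(\eta)=(i\eta)^\beta$ is a polynomial, not a ``Schwartz bump'' --- what is Schwartz is the product $(i\eta)^\beta\varphi_1(\eta)$ coming from the annular support --- and the displayed ``factorization'' $(i\xi)^\beta\varphi_j(\xi)=2^{j|\beta|}\eta_\beta(\xi/2^j)\tilde\varphi_j(\xi)$ is not an identity (its right-hand side equals $(i\xi)^\beta\tilde\varphi_j(\xi)$, not $(i\xi)^\beta\varphi_j(\xi)$); the correct statement is that $\varphi_j(\xi)(i\xi)^\beta = 2^{j|\beta|}\psi_\beta(2^{-j+1}\xi)$ with $\psi_\beta$ Schwartz, and $\tilde\varphi_j$ is inserted separately as a cutoff of $\widehat f$. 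Neither slip affects the inequality you actually use, and the rest of the argument, including the conic partition for the converse direction and the homogeneous adaptation for (ii), is sound.
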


 The proof of Lemma \ref{CharSobolevNorm} can be found in \cite[Proposition 9, p. 147]{Stein}, \cite[Theorem 2.3.8]{Triebel1} and \cite[Proposition 2.18]{Triebel15}. We mention that we have not found an explicit formulation of the second equivalence in Lemma \ref{CharSobolevNorm}(ii), but its proof carries over from the inhomogeneous case (see, e.g., \cite[Theorem 2.3.8]{Triebel1}) with appropriate modifications based on the Fourier multiplier theorem stated in \cite[Theorem 5.2.2]{Triebel1}.

 With the help of the lift operator $\mathcal{J}_b$ given in (\ref{LiftLog}) we can extend Lemma \ref{CharSobolevNorm}(i) to generalized smoothness. This result seems to be new.

  \begin{lem}\label{CharSobolevNorm2}
 Let $s, b \in \mathbb{R}, 1 \leq  p \leq \infty, 0 < q \leq \infty$, and $m \in \mathbb{N}$. Then,
  \begin{equation}\label{11.1i}
 	\|f\|_{B^{s,b}_{p,q}(\mathbb{R}^d)} \asymp \sum_{|\beta| \leq m} \|D^\beta f\|_{B^{s-m,b}_{p,q}(\mathbb{R}^d)} \asymp \|f\|_{B^{s-m,b}_{p,q}(\mathbb{R}^d)} + \sum_{j=1}^d \Big\|\frac{\partial^m f}{\partial x_j^m}\Big\|_{B^{s-m,b}_{p,q}(\mathbb{R}^d)}.
 \end{equation}
 \end{lem}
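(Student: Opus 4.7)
The plan is to reduce to the classical case handled by Lemma \ref{CharSobolevNorm}(i) by means of the logarithmic lifting operator $\mathcal{J}_b$ introduced in (\ref{LiftLog}).

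First, I would verify that $\mathcal{J}_b$ commutes with the partial differential operator $D^\beta$. Since $\mathcal{J}_b$ is a Fourier multiplier with symbol $\sum_{j=0}^\infty (1+j)^b \varphi_j(\xi)$ and $D^\beta$ is a Fourier multiplier with symbol $(i\xi)^\beta$, both operators act on the Fourier side as multiplications by scalar-valued symbols depending only on $\xi$. Therefore they commute, that is,
\begin{equation*}
D^\beta \mathcal{J}_b f = \mathcal{J}_b D^\beta f \quad \text{for every} \quad f \in \mathcal{S}'(\mathbb{R}^d),
\end{equation*}
with equality being understood in the sense of tempered distributions.

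Second, by the isomorphism property (\ref{8.7new}), $\mathcal{J}_b$ maps $B^{\sigma,b}_{p,q}(\mathbb{R}^d)$ isomorphically onto $B^{\sigma}_{p,q}(\mathbb{R}^d)$ for any $\sigma \in \mathbb{R}$. Applying this with $\sigma = s$ and $\sigma = s-m$, together with the commutation property above, yields
\begin{equation*}
\|f\|_{B^{s,b}_{p,q}(\mathbb{R}^d)} \asymp \|\mathcal{J}_b f\|_{B^{s}_{p,q}(\mathbb{R}^d)},
\end{equation*}
\begin{equation*}
\|D^\beta f\|_{B^{s-m,b}_{p,q}(\mathbb{R}^d)} \asymp \|\mathcal{J}_b D^\beta f\|_{B^{s-m}_{p,q}(\mathbb{R}^d)} = \|D^\beta \mathcal{J}_b f\|_{B^{s-m}_{p,q}(\mathbb{R}^d)}
\end{equation*}
for every multi-index $\beta$ with $|\beta| \leq m$.

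Third, I would apply Lemma \ref{CharSobolevNorm}(i) to the function $\mathcal{J}_b f \in B^{s}_{p,q}(\mathbb{R}^d)$, obtaining
\begin{equation*}
\|\mathcal{J}_b f\|_{B^{s}_{p,q}(\mathbb{R}^d)} \asymp \sum_{|\beta| \leq m} \|D^\beta \mathcal{J}_b f\|_{B^{s-m}_{p,q}(\mathbb{R}^d)} \asymp \|\mathcal{J}_b f\|_{B^{s-m}_{p,q}(\mathbb{R}^d)} + \sum_{j=1}^d \Bigl\|\frac{\partial^m \mathcal{J}_b f}{\partial x_j^m}\Bigr\|_{B^{s-m}_{p,q}(\mathbb{R}^d)}.
\end{equation*}
Combining this chain of equivalences with the two displayed estimates of the preceding paragraph produces (\ref{11.1i}).

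There is no genuine obstacle here; the entire argument is a transfer via the lift $\mathcal{J}_b$. The only point deserving care is the commutation $D^\beta \mathcal{J}_b = \mathcal{J}_b D^\beta$, which must be justified at the level of tempered distributions so that all quantities are well defined regardless of the sign of $s-m$; this is immediate from the definition of $\mathcal{J}_b$ via the Fourier transform and the fact that $D^\beta$ corresponds to multiplication by the polynomial $(i\xi)^\beta$ on the Fourier side.
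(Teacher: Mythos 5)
Your proof is correct and coincides with the paper's own argument: both transfer the classical equivalence of Lemma \ref{CharSobolevNorm}(i) to the logarithmic scale via the isomorphism $\mathcal{J}_b$ of (\ref{8.7new}) and the commutation $D^\beta \mathcal{J}_b = \mathcal{J}_b D^\beta$, which follows from both operators being Fourier multipliers. The paper states the commutation identity with the same brevity, citing (\ref{FourierDerivative}).
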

 \begin{proof}
 	Let $\beta = (\beta_1, \ldots, \beta_d) \in \mathbb{N}_0^d$. Using (\ref{FourierDerivative}),
	  it is clear that $D^\beta \mathcal{J}_b f = \mathcal{J}_b D^\beta f.$ Thus, applying (\ref{8.7new}) twice and Lemma \ref{CharSobolevNorm}, we get
	\begin{equation*}
		\|f\|_{B^{s,b}_{p,q}(\mathbb{R}^d)}   \asymp \|\mathcal{J}_b f\|_{B^{s}_{p,q}(\mathbb{R}^d)} \asymp  \sum_{|\beta| \leq m} \|\mathcal{J}_b D^\beta f\|_{B^{s-m}_{p,q}(\mathbb{R}^d)} \asymp  \sum_{|\beta| \leq m} \|D^\beta f\|_{B^{s-m,b}_{p,q}(\mathbb{R}^d)}.
	\end{equation*}
	This shows the left-hand side equivalence in (\ref{11.1i}). Analogously, one can show the right-hand side equivalence in (\ref{11.1i}).
 \end{proof}

 Let $m$ be a positive integer and let $s > m$. According to Lemma \ref{CharSobolevNorm2} and (\ref{BesovComparison}), we have
 \begin{equation}\label{11.1new}
\text{if} \qquad f
 \in\mathbf{B}^{s,b}_{p,q}(\mathbb{R}^d),\qquad\mbox{ then }
  \qquad D^\beta f\in \mathbf{B}^{s-m,b}_{p,q}(\mathbb{R}^d),
\end{equation}
 where $\beta \in \mathbb{N}_0^d$ with $|\beta| = m$. At the endpoint case $s=m$, there is a loss of logarithmic smoothness of $f$ in order to get that $D^\beta f \in \mathbf{B}^{0,b}_{p,q}(\mathbb{R}^d)$. More precisely, DeVore, Riemenschneider and Sharpley \cite[p. 70]{DeVoreRiemenschneiderSharpley} showed that
 \begin{equation}\label{11.2new}
 	 \text{if} \qquad f
 \in\mathbf{B}^{m,b+1}_{p,q}(\mathbb{R}^d)\qquad\mbox{ then}
  \qquad D^\beta f \in \mathbf{B}^{0,b}_{p,q}(\mathbb{R}^d).
 \end{equation}
 Here, $1 \leq p \leq \infty, 1 \leq q \leq \infty, b > -1/q,$ and $|\beta| = m$.

 Our next result improves (\ref{11.2new}) with respect to the logarithmic factor.

  \begin{thm}\label{Theorem 11.1}
 		Let $m \in \mathbb{N}, 0 < q \leq \infty$, and $b > -1/q$.
		
		\begin{enumerate}[\upshape(i)]
		\item Assume that $1 < p < \infty$. We have
		\begin{equation*}
			\|f\|_{L_p(\mathbb{R}^d)} + \sum_{|\beta| = m} \|D^\beta f\|_{\mathbf{B}^{0,b}_{p,q}(\mathbb{R}^d)} \lesssim \|f\|_{ \mathbf{B}^{m,b+1/\min\{2,p,q\}}_{p,q}(\mathbb{R}^d)}.
		\end{equation*}
		In particular, if $f \in \mathbf{B}^{m,b+1/\min\{2,p,q\}}_{p,q}(\mathbb{R}^d)$, then $D^\beta f \in \mathbf{B}^{0,b}_{p,q}(\mathbb{R}^d)$ for any multi-index $\beta$ with $|\beta| = m$.
		
		\item Let $p=1, \infty$. We have
		\begin{equation*}
			\|f\|_{L_p(\mathbb{R}^d)} + \sum_{|\beta| = m} \|D^\beta f\|_{\mathbf{B}^{0,b}_{p,q}(\mathbb{R}^d)} \lesssim \|f\|_{ \mathbf{B}^{m,b+1/\min\{1,q\}}_{p,q}(\mathbb{R}^d)}.
		\end{equation*}
		 In particular, if $f \in \mathbf{B}^{m,b+1/\min\{1,q\}}_{p,q}(\mathbb{R}^d)$, then $D^\beta f \in \mathbf{B}^{0,b}_{p,q}(\mathbb{R}^d)$ for any multi-index $\beta$ with $|\beta| = m$.
		\end{enumerate}
		
		The same results also hold in the periodic setting.
 \end{thm}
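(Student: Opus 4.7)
The embedding into $L_p(\mathbb{R}^d)$ is immediate from the definition of $\mathbf{B}^{m,\xi}_{p,q}(\mathbb{R}^d)$ in \eqref{norm}, so in both (i) and (ii) the content of the theorem lies in the estimate $\sum_{|\beta|=m}\|D^\beta f\|_{\mathbf{B}^{0,b}_{p,q}(\mathbb{R}^d)}\lesssim \|f\|_{\mathbf{B}^{m,\xi}_{p,q}(\mathbb{R}^d)}$ with $\xi=b+1/\min\{2,p,q\}$ in (i) and $\xi=b+1/\min\{1,q\}$ in (ii). Since $m>0$, invoking \eqref{BesovComparison} lets us pass freely between $\mathbf{B}^{m,\xi}_{p,q}(\mathbb{R}^d)$ and $B^{m,\xi}_{p,q}(\mathbb{R}^d)$, so the task is to transfer smoothness from the source norm, through the operator $D^\beta$, to the target $\mathbf{B}$-norm.

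For part (i) with $1<p<\infty$, the plan is to chain three results. First, \eqref{BesovComparison} identifies $\mathbf{B}^{m,\xi}_{p,q}(\mathbb{R}^d)=B^{m,\xi}_{p,q}(\mathbb{R}^d)$. Second, Lemma \ref{CharSobolevNorm2} (applied at logarithmic level $\xi$) yields $\|D^\beta f\|_{B^{0,\xi}_{p,q}(\mathbb{R}^d)}\lesssim \|f\|_{B^{m,\xi}_{p,q}(\mathbb{R}^d)}$ for every $|\beta|\leq m$. Third, the sharp embedding \eqref{1} — valid precisely because $b>-1/q$ and $1<p<\infty$ — shows $B^{0,b+1/\min\{2,p,q\}}_{p,q}(\mathbb{R}^d)\hookrightarrow\mathbf{B}^{0,b}_{p,q}(\mathbb{R}^d)$, which closes the chain with $\xi=b+1/\min\{2,p,q\}$.

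For part (ii), the embedding \eqref{1} is unavailable for $p=1,\infty$, so I will argue directly with moduli of smoothness. Choose $k\in\mathbb{N}$ large enough (say $k>m$) and apply the classical Marchaud inequality \eqref{MarchaudClassic}, which holds at $p=1,\infty$: $\omega_k(D^\beta f,t)_p\lesssim \int_0^t u^{-m}\omega_{k+m}(f,u)_p\,\frac{du}{u}$. When $q\geq 1$, substituting $t=1/r$ recasts this convolution as an $\int_r^\infty$-integral and the weighted Hardy inequality \eqref{HardyInequal5} — whose hypothesis $b+1/q>0$ is exactly our assumption $b>-1/q$ — gives $\|D^\beta f\|_{\mathbf{B}^{0,b}_{p,q}}\lesssim\|f\|_{\mathbf{B}^{m,b+1}_{p,q}}$. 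Since $1/\min\{1,q\}=1$ for $q\geq1$, this is the claimed bound. When $q<1$, a Hardy inequality of this form fails, so I will discretize: the Marchaud bound at $t_j=2^{-j}$ reads $\omega_k(D^\beta f,2^{-j})_p\lesssim \sum_{i\geq j}2^{im}\omega_{k+m}(f,2^{-i})_p$; raising to the $q$-th power, using the subadditivity $(\sum a_i)^q\leq \sum a_i^q$ valid for $q\leq 1$, and then exchanging the order of summation yields
\begin{equation*}
\sum_j (1+j)^{bq}\omega_k(D^\beta f,2^{-j})_p^q\lesssim \sum_i 2^{imq}\Big(\sum_{j\leq i}(1+j)^{bq}\Big)\omega_{k+m}(f,2^{-i})_p^q.
\end{equation*}
Since $b+1/q>0$, the inner sum behaves like $(1+i)^{bq+1}$, producing exactly $\|f\|_{\mathbf{B}^{m,b+1/q}_{p,q}}^q$, which is the case $1/\min\{1,q\}=1/q$. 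Standard $\omega_k\asymp\omega_{k'}$ comparisons and \eqref{aux4.9} make the discrete sum equivalent to the integral defining the $\mathbf{B}$-norm.

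The periodic versions are obtained by replacing every ingredient with its periodic counterpart, each of which is recorded earlier: the interpolation formula \eqref{PrelimInterpolation} and the identities \eqref{BesovComparison} hold on $\mathbb{T}$, Lemma \ref{CharSobolevNorm2} is stated on $\mathbb{R}^d$ but the Fourier-analytic and lifting arguments behind it transfer verbatim using $\mathfrak{J}_{s,b}$, the embedding \eqref{1} has the same range of validity on $\mathbb{T}$, and the Marchaud inequality together with the Hardy and discretization arguments is a local statement. The main obstacle I anticipate is the $q<1$, $p\in\{1,\infty\}$ endpoint of (ii): one must be careful that the only-if-sharp integrability $\min\{p,2\}$ of Theorem \ref{Thm Trebels2}(i) plays no role here — instead the crude Marchaud bound combined with dyadic subadditivity is what delivers the exponent $1/q$, and one must verify that the summation-by-parts step does not lose the sharp exponent $b+1/q$.
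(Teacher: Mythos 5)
Your proof is correct, and it takes a genuinely different route from the paper's. The paper proves both parts by a single limiting-interpolation argument: starting from the boundedness of $D^\beta$ on $W^m_p\to L_p$ and on $\mathbf{B}^s_{p,q}\to\mathbf{B}^{s-m}_{p,q}$ for $s>m$, it interpolates with the $((0,b),q)$-method, identifies the target as $\mathbf{B}^{0,b}_{p,q}$ via \eqref{PrelimInterpolationnew2}, and then shows $\mathbf{B}^{m,b+1/\min\{2,p,q\}}_{p,q}\hookrightarrow(W^m_p,\mathbf{B}^s_{p,q})_{(0,b),q}$ (respectively with $\min\{1,q\}$) via Lemma~\ref{PrelimLemmaCF} and Lemma~\ref{PrelimLemma7.2}(vi). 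Your part~(i) instead chains three explicit facts — \eqref{BesovComparison}, Lemma~\ref{CharSobolevNorm2}, and the sharp embedding \eqref{1} — which is shorter but imports \eqref{1} as a black box rather than rederiving it inside the interpolation framework; what the paper's route buys is uniformity (the same reiteration skeleton does $1<p<\infty$ and $p\in\{1,\infty\}$ with only the one ingredient $\mathbf{B}^m_{p,\min\{p,2\}}\hookrightarrow W^m_p$ versus $\mathbf{B}^m_{p,1}\hookrightarrow W^m_p$ swapped, and it also yields the $b=-1/q$ endpoint in Remark~\ref{Remark 12.8}(ii) with no extra work), whereas your route must change strategy entirely when \eqref{1} is unavailable at $p=1,\infty$. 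Your part~(ii) argument via the classical Marchaud inequality \eqref{MarchaudClassic} combined with Hardy \eqref{HardyInequal3} (for $q\geq1$) or dyadic $q$-subadditivity (for $q<1$) is correct — the inner sum $\sum_{j\le i}(1+j)^{bq}\asymp(1+i)^{bq+1}$ because $b>-1/q$, which gives exactly the exponent $b+1/q$ — and is very close in spirit to the alternative proof of part~(i) the paper already records in Remark~\ref{Remark 12.8}(iii), except that remark uses Trebels' sharpened Marchaud inequality from Theorem~\ref{Thm Trebels2}(i) (integrability $\min\{p,2\}$, only valid for $1<p<\infty$) while you correctly fall back to the crude exponent-$1$ Marchaud at $p\in\{1,\infty\}$. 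One small loose end you should state explicitly: for part~(ii) the claimed bound includes the term $\sum_{|\beta|=m}\|D^\beta f\|_{L_p}$, which your Marchaud computation does not produce directly; it follows from $\mathbf{B}^{m,b+1/\min\{1,q\}}_{p,q}\hookrightarrow\mathbf{B}^m_{p,1}\hookrightarrow W^m_p$ (the first inclusion using $b>-1/q$, the second using $B^0_{p,1}\hookrightarrow L_p$ for $p=1,\infty$), exactly as the paper notes in Remark~\ref{Remark 12.8}(iii) for its chain.
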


  \begin{rem}
Let us compare the results given in Theorem \ref{Theorem 11.1} for Besov spaces defined by differences with those given in Lemma \ref{CharSobolevNorm2} for their Fourier-analytically defined counterparts. We observe that the additional shifts $1/\min\{2,p,q\}$ and $1/\min\{1,q\}$ in the logarithmic smoothness given in Theorem \ref{Theorem 11.1} are not needed  when dealing with the Besov spaces $B^{s,b}_{p,q}(\mathbb{R}^d)$ and, in particular, the space $B^{0,b}_{p,q}(\mathbb{R}^d)$. More precisely,
 \begin{equation*}
 	 \text{if} \qquad f
 \in B^{m,b}_{p,q}(\mathbb{R}^d),\qquad\mbox{ then}
  \qquad D^\beta f \in B^{0,b}_{p,q}(\mathbb{R}^d), \quad |\beta| = m.
 \end{equation*}
 This shows another important distinction between the spaces $B^{0,b}_{p,q}(\mathbb{R}^d)$ and $\mathbf{B}^{0,b}_{p,q}(\mathbb{R}^d)$.
 \end{rem}

 \begin{proof}[Proof of Theorem \ref{Theorem 11.1}]
 	Let $s > m$. We know that (cf. (\ref{11.1new}))
	\begin{equation*}
		D^\beta : W^m_p(\mathbb{R}^d) \longrightarrow L_p(\mathbb{R}^d) \quad \text{and} \quad D^\beta : \mathbf{B}^s_{p,q}(\mathbb{R}^d) \longrightarrow \mathbf{B}^{s-m}_{p,q}(\mathbb{R}^d).
	\end{equation*}
	Applying the interpolation property and (\ref{PrelimInterpolationnew2}) (with $\xi=0$), we obtain the boundedness of the operator
	\begin{equation}\label{11.1}
		D^\beta : (W^m_p(\mathbb{R}^d), \mathbf{B}^s_{p,q}(\mathbb{R}^d))_{(0,b),q} \longrightarrow (L_p(\mathbb{R}^d), \mathbf{B}^{s-m}_{p,q}(\mathbb{R}^d))_{(0,b),q}= \mathbf{B}^{0,b}_{p,q}(\mathbb{R}^d).
	\end{equation}
	 Let $s_0 > 0$ be such that $m > s_0$. Then, there exists $\theta \in (0,1)$ with $m = (1-\theta) s_0 + \theta s$. Assume first that $1 < p < \infty$. Since $W^m_p(\mathbb{R}^d) = H^m_p(\mathbb{R}^d)  \hookleftarrow \mathbf{B}^m_{p, \min\{p,2\}}(\mathbb{R}^d)$ (cf. Proposition \ref{RecallEmb}(vi)), we have
	\begin{align}
		(W^m_p(\mathbb{R}^d), \mathbf{B}^s_{p,q}(\mathbb{R}^d))_{(0,b),q}  & \hookleftarrow  (\mathbf{B}^m_{p, \min\{p,2\}}(\mathbb{R}^d), \mathbf{B}^s_{p,q}(\mathbb{R}^d))_{(0,b),q} \nonumber \\
		& \hspace{-3cm}= ((\mathbf{B}^{s_0}_{p,q}(\mathbb{R}^d), \mathbf{B}^s_{p,q}(\mathbb{R}^d))_{\theta,\min\{p,2\}} , \mathbf{B}^s_{p,q}(\mathbb{R}^d))_{(0,b),q} \nonumber\\
		& \hspace{-3cm} \hookleftarrow (\mathbf{B}^{s_0}_{p,q}(\mathbb{R}^d), \mathbf{B}^s_{p,q}(\mathbb{R}^d))_{\theta, q ; b + 1/\min\{2,p,q\}} = \mathbf{B}^{m, b + 1/\min\{2,p,q\}}_{p,q}(\mathbb{R}^d). \label{11.4}
	\end{align}
	where we have applied (\ref{BesovComparison}), Lemma \ref{PrelimLemmaCF} twice and Lemma \ref{PrelimLemma7.2}(vi).
	
	Let $p=1, \infty$. Then, $W^m_p(\mathbb{R}^d) \hookleftarrow \mathbf{B}^m_{p,1}(\mathbb{R}^d) $. This embedding follows easily from the known embedding $L_p(\mathbb{R}^d) \hookleftarrow B^0_{p,1}(\mathbb{R}^d)$ (see \cite[(3.1.3), (3.1.6), p. 112]{SickelTriebel}), (\ref{11.1i}), and (\ref{BesovComparison}). Applying now the same argument as above gives
	\begin{equation}\label{11.5}
	(W^m_p(\mathbb{R}^d), \mathbf{B}^s_{p,q}(\mathbb{R}^d))_{(0,b),q} \hookleftarrow (\mathbf{B}^m_{p,1}(\mathbb{R}^d), \mathbf{B}^s_{p,q}(\mathbb{R}^d))_{(0,b),q} \hookleftarrow \mathbf{B}^{m,b+1/\min\{1,q\}}_{p,q}(\mathbb{R}^d).
	\end{equation}
	
	Finally, the proof follows from (\ref{11.1}), (\ref{11.4}) and (\ref{11.5}).
	
	The same method of proof also works when $\mathbb{R}^d$ is replaced by $\mathbb{T}^d$.
 \end{proof}

\begin{rem}\label{Remark 12.8}
(i) Working with periodic functions, Theorem \ref{Theorem 11.1}(i) was already shown in \cite[Theorem 4.7] {CobosDominguez2} by using the approximation structure of $\mathbf{B}^{0,b}_{p,q}(\mathbb{T})$ given in Lemma \ref{LemmaBesovApprox}.

(ii) If $b=-1/q$ the corresponding result to Theorem \ref{Theorem 11.1} reads as follows: Let $m \in \mathbb{N}, 0 < q \leq \infty$ and $\beta \in \mathbb{N}_0^d$ with $|\beta| = m$. If $1 < p < \infty$ then
\begin{equation*}
	D^\beta f \in \mathbf{B}^{0,-1/q}_{p,q}(\mathbb{R}^d) \quad \text{whenever} \quad f \in \mathbf{B}^{m,-1/q+1/\min\{2,p,q\}, 1/\min\{2,p,q\}}_{p,q}(\mathbb{R}^d),
\end{equation*}
and, if $p=1, \infty$ then
\begin{equation*}
	D^\beta f \in \mathbf{B}^{0,-1/q}_{p,q}(\mathbb{R}^d) \quad \text{whenever} \quad f \in \mathbf{B}^{m,-1/q+1/\min\{1,q\}, 1/\min\{1,q\}}_{p,q}(\mathbb{R}^d).
\end{equation*}
	Here, $\mathbf{B}^{s,b,\xi}_{p,q}(\mathbb{R}^d)$\index{\bigskip\textbf{Spaces}!$\mathbf{B}^{s,b,\xi}_{p,q}(\mathbb{R}^d)$}\label{BESOVDLOG} is equipped with the quasi-norm
	\begin{equation*}
		 \|f\|_{\mathbf{B}^{s,b,\xi}_{p,q}(\mathbb{R}^d)} =  \|f\|_{L_p(\mathbb{R}^d)} + \left(\int_0^1 (t^{-s} (1 - \log t)^b (1 + \log (1 - \log t))^\xi \omega_l(f,t)_p)^q
    \frac{dt}{t}\right)^{1/q}
	\end{equation*}
	where $l \in \mathbb{N}$ with $l > s$.
	The proof is similar to that of Theorem \ref{Theorem 11.1} but now using the reiteration formula (\ref{limiting interpolation reiteration formula}).
	
	(iii) Alternatively we may use Theorem \ref{Thm Trebels2} to show Theorem \ref{Theorem 11.1}. For example, the proof of Theorem \ref{Theorem 11.1}(i) can be addressed as follows. First, we observe that $\|D^\beta f\|_{L_p(\mathbb{R}^d)} \lesssim \|f\|_{\mathbf{B}^{m,b+1/\min\{2,p,q\}}_{p,q}(\mathbb{R}^d)}$ for any $|\beta|=m$ because the following chain of embeddings hold
	\begin{equation*}
		\mathbf{B}^{m,b+1/\min\{2,p,q\}}_{p,q}(\mathbb{R}^d) \hookrightarrow \mathbf{B}^m_{p,\min\{p,2\}}(\mathbb{R}^d) \hookrightarrow W^m_p(\mathbb{R}^d).
	\end{equation*}
	See Propositions \ref{RecallEmb**} and \ref{RecallEmb}(vi) (with $s=m$ and $b=0$). On the other hand, Theorem \ref{Thm Trebels2}(i) yields that
	\begin{equation}\label{12.18new+}
		|D^\beta f|^q_{\mathbf{B}^{0,b}_{p,q}(\mathbb{R}^d)}  \lesssim \int_0^1 (1-\log t)^{b q}  \left(\int_0^t (u^{-m} \omega_{k+m}(f,u)_p)^{\min\{p,2\}} \frac{du}{u}\right)^{q/\min\{p,2\}} \frac{dt}{t}.
	\end{equation}
	Let $q \geq \min\{p,2\}$. Thus applying the Hardy's inequality (\ref{HardyInequal3}) in the previous estimate we arrive at
	\begin{equation*}
		 |D^\beta f|^q_{\mathbf{B}^{0,b}_{p,q}(\mathbb{R}^d)} \lesssim \int_0^1 (t^{-m} (1-\log t)^{b+ 1/\min\{p,2\}}
		  \omega_{k+m}(f,t)_p)^q \frac{dt}{t} \leq \|f\|_{\mathbf{B}^{m,b+1/\min\{p,2\}}_{p,q}(\mathbb{R}^d)}^q.
	\end{equation*}
	If $q \leq \min\{p,2\}$ then the right-hand side of (\ref{12.18new+}) can be estimated by
	\begin{align*}
		\sum_{j=0}^\infty (1 + j)^{b q} \left(\sum_{\nu=j}^\infty (2^{\nu m} \omega_{k+m}(f,2^{-\nu})_p)^{\min\{p,2\}}\right)^{q/\min\{p,2\}} \\
		& \hspace{-7cm}\leq \sum_{\nu =0}^\infty (2^{\nu m} \omega_{k+m}(f,2^{-\nu})_p)^q \sum_{j=0}^\nu (1 + j)^{b q} \\
		& \hspace{-7cm}\lesssim \sum_{\nu =0}^\infty (2^{\nu m} (1 + \nu)^{b + 1/q} \omega_{k+m}(f,2^{-\nu})_p)^q \\
		& \hspace{-7cm}\leq \|f\|_{\mathbf{B}^{m,b+1/q}_{p,q}(\mathbb{R}^d)}^q.
	\end{align*}
	Hence, we have shown that $|D^\beta f|_{\mathbf{B}^{0,b}_{p,q}(\mathbb{R}^d)} \lesssim  \|f\|_{\mathbf{B}^{m,b+1/q}_{p,q}(\mathbb{R}^d)}$.
\end{rem}

Next we establish the sharpness of Theorem \ref{Theorem 11.1}(i).

 \begin{prop}\label{Proposition 11.4}
 	Let $m \in \mathbb{N}, 1 < p < \infty, 0 < q \leq \infty$, and $b > -1/q$. Given any $\varepsilon > 0$, there exists $f \in \mathbf{B}^{m,b+1/\min\{2,p,q\}-\varepsilon}_{p,q}(\mathbb{T})$ such that $D^m f  \not \in \mathbf{B}^{0,b}_{p,q}(\mathbb{T})$.
 \end{prop}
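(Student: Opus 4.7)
The plan is to derive Proposition \ref{Proposition 11.4} from the periodic analog of the sharpness assertion in Proposition \ref{Proposition 7.1}, combined with the intrinsic derivative characterization of $B^{m,\cdot}_{p,q}(\mathbb{T})$ supplied by Lemma \ref{CharSobolevNorm2}, via an explicit antiderivative construction. Set $\sigma = 1/\min\{2,p,q\}$. By the periodic counterpart of Proposition \ref{Proposition 7.1}---obtained by repeating cases (a)--(c) of its proof with the periodic characterizations from Theorems \ref{TheoremBesovGMPer}, \ref{Theorem 3.4}, and \ref{Theorem 4.4}, and Proposition \ref{Proposition Lac} replacing their real-line counterparts---there exists
\[
g \in B^{0,b+\sigma-\varepsilon}_{p,q}(\mathbb{T}) \setminus \mathbf{B}^{0,b}_{p,q}(\mathbb{T}).
\]
Since constants belong to both spaces, we may subtract $\widehat g(0)$ and assume $\widehat g(0) = 0$.

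Next, define $f \in \mathcal{D}'(\mathbb{T})$ by
\[
\widehat f(k) = \begin{cases}(ik)^{-m}\widehat g(k), & k \neq 0, \\ 0, & k=0, \end{cases}
\]
so that $D^m f = g$, whence in particular $D^m f \notin \mathbf{B}^{0,b}_{p,q}(\mathbb{T})$. It remains to verify $f \in \mathbf{B}^{m,b+\sigma-\varepsilon}_{p,q}(\mathbb{T})$. By the periodic version of Lemma \ref{CharSobolevNorm2} applied with $s=m$,
\[
\|f\|_{B^{m,b+\sigma-\varepsilon}_{p,q}(\mathbb{T})} \asymp \|f\|_{B^{0,b+\sigma-\varepsilon}_{p,q}(\mathbb{T})} + \|D^m f\|_{B^{0,b+\sigma-\varepsilon}_{p,q}(\mathbb{T})}.
\]
The second summand equals $\|g\|_{B^{0,b+\sigma-\varepsilon}_{p,q}(\mathbb{T})}$, which is finite by construction. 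For the first, factor $f$ as the composition of the periodic lifting operator $\mathfrak{J}_{-m,0}$ (an isomorphism of $B^{0,b+\sigma-\varepsilon}_{p,q}(\mathbb{T})$ onto $B^{m,b+\sigma-\varepsilon}_{p,q}(\mathbb{T})$) with the Fourier multiplier whose symbol is $\tau(k) = (1+|k|^2)^{m/2}(ik)^{-m}$ for $k \neq 0$ and $\tau(0)=0$; since $\tau$ is bounded with bounded iterated differences on $\{|k|\geq 1\}$, it acts boundedly on every periodic Besov space (following the approach of \cite[Theorem 2.3.8]{Triebel1}). Consequently $f \in B^{m,b+\sigma-\varepsilon}_{p,q}(\mathbb{T})$, which coincides with $\mathbf{B}^{m,b+\sigma-\varepsilon}_{p,q}(\mathbb{T})$ by (\ref{BesovComparison}) since $m \geq 1$, finishing the construction.

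The main technical point is establishing the periodic counterpart of Proposition \ref{Proposition 7.1} across all three subcases of $\min\{2,p,q\}$. The subcase $\min\{2,p,q\}=2$ transfers verbatim via a lacunary Fourier series with coefficients $(1+k)^{-\delta}$ for suitable $\delta$, using the characterizations in Theorems \ref{Theorem 4.2} and \ref{Theorem 4.4}. The subcases $\min\{2,p,q\}=p<2$ and $\min\{2,p,q\}=q$ require a general-monotone cosine series $\sum_n a_n \cos nx$ with $a_n = n^{-\beta}$, where $\beta$ is chosen so that Theorems \ref{TheoremBesovGMPer} and \ref{Theorem 3.4} yield both the membership in $B^{0,b+\sigma-\varepsilon}_{p,q}(\mathbb{T})$ and the failure of the $\mathbf{B}^{0,b}_{p,q}(\mathbb{T})$ norm to be finite. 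Alternatively, one may bypass the appeal to Proposition \ref{Proposition 7.1} and construct $f$ directly: in the subcase $\min\{2,p,q\}=2$, take
\[
f(x) \sim \sum_{k \geq 3}(1+k)^{-\delta}(i 2^k)^{-m} e^{i 2^k x}
\]
with $\delta \in \bigl(b+\tfrac12+\tfrac1q-\varepsilon,\, b+\tfrac12+\tfrac1q\bigr]$, and verify the two required properties directly from Proposition \ref{Proposition Lac} and Theorem \ref{Theorem 4.4}, noting that $D^m f(x) \sim i^m\sum_{k \geq 3}(1+k)^{-\delta} e^{i 2^k x}$; the remaining subcases admit analogous general-monotone constructions.
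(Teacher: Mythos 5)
Your main argument --- reduce to a periodic analogue of Proposition~\ref{Proposition 7.1} and lift the counterexample through the antiderivative map $\widehat f(k)=(ik)^{-m}\widehat g(k)$ --- is a genuine alternative to the paper's proof. The paper instead reruns the full case analysis directly on the antiderivative series: explicit general-monotone and lacunary series in the cases $p=\min\{2,p,q\}$ and $2=\min\{2,p,q\}$ (where the factor $(ik)^{-m}$ preserves the GM/lacunary structure, so Theorems~\ref{Theorem 3.4} and~\ref{Theorem 4.4} apply directly without any multiplier lemma), and an interpolation-plus-contradiction argument in the case $q=\min\{2,p,q\}$. Your route buys modularity, at the price of invoking a periodic Besov multiplier theorem and a periodic form of Lemma~\ref{CharSobolevNorm2} that the paper never records. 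Note also that once you have the factorization $f=T_\tau\,\mathfrak{J}_{-m,0}\,g$, it already places $f$ in $B^{m,b+\sigma-\varepsilon}_{p,q}(\mathbb{T})$ directly, so the preliminary appeal to Lemma~\ref{CharSobolevNorm2} is superfluous.

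There is, however, a concrete gap in your explanatory paragraph on how to obtain the periodic Proposition~\ref{Proposition 7.1}. You assert that the subcase $\min\{2,p,q\}=q$ is handled by an explicit general-monotone cosine series, and the direct-construction variant at the end repeats this. That fails when $q<\min\{p,2\}$ and $\varepsilon$ is small: for a GM series with $a_n=n^{-1+1/p}(1+\log n)^{-\beta}$, Theorem~\ref{TheoremBesovGMPer} gives membership in $B^{0,b+1/q-\varepsilon}_{p,q}(\mathbb{T})$ iff $\beta>b+2/q-\varepsilon$, while Theorem~\ref{Theorem 3.4} gives non-membership in $\mathbf{B}^{0,b}_{p,q}(\mathbb{T})$ iff $\beta\le b+1/p+1/q$, and these are compatible only when $\varepsilon>1/q-1/p$ --- a strictly positive threshold whenever $q<p$. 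Iterated-log refinements run into the same obstruction, which is consistent with Theorem~\ref{Section 6.8: Theorem}: inside the GM class the two Besov scales are too close when $q<p$. This is exactly why the paper switches to an interpolation argument for that subcase, both in Proposition~\ref{Proposition 7.1}(c) and in the corresponding part of Proposition~\ref{Proposition 11.4}. Your headline claim survives provided you retain the interpolation argument for $q=\min\{2,p,q\}$ instead of an explicit series, but the explanatory text needs to be corrected. Relatedly, in the case $p=\min\{2,p,q\}$ the coefficients need a logarithmic factor --- the paper uses $n^{-1+1/p}(1+\log n)^{-\beta}$, whereas a pure power $a_n=n^{-\beta}$ cannot separate the two norms for small $\varepsilon$.
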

 \begin{proof}
 If $p=\min\{2,p,q\}$ we consider the function $f$ with
 the Fourier series
 \begin{equation*}
 \sum_{n=1}^\infty n^{-m-1+1/p} (1 + \log n)^{-\beta} \cos nx, \quad x \in \mathbb{T},
 \end{equation*}
 where $\max\{b+1/q + 1/p - \varepsilon,1/p\} < \beta \leq b+1/q+1/p$. By Theorem \ref{Theorem 3.4}, we have
 \begin{equation*}
 	\|f\|_{\mathbf{B}^{m,b+1/p-\varepsilon}_{p,q}(\mathbb{T})}^q \asymp \sum_{n=1}^\infty (1 + \log n)^{(b + 1/p - \varepsilon -\beta) q}  \frac{1}{n} < \infty
 \end{equation*}
 and
 \begin{align*}
 	\|D^m f\|^q_{\mathbf{B}^{0,b}_{p,q}(\mathbb{T})} &\asymp \sum_{n=1}^\infty (1 + \log n)^{b q} \left(\sum_{\nu=n}^\infty (1 + \log \nu)^{-\beta p} \frac{1}{\nu}\right)^{q/p} \frac{1}{n} \\
	& \asymp \sum_{n=1}^\infty (1 + \log n)^{(b-\beta +1/p) q} \frac{1}{n} = \infty.
 \end{align*}
 Therefore, $f \in \mathbf{B}^{m,b+1/p-\varepsilon}_{p,q}(\mathbb{T})$ but $D^m f \not \in \mathbf{B}^{0,b}_{p,q}(\mathbb{T})$.

 Assume that $2 = \min\{2,p,q\}$. Let
 \begin{equation*}
 	f(x) \sim \sum_{j=0}^\infty 2^{-j m} (1 + j)^{-\delta} \cos 2^j x, \quad x \in \mathbb{T},
 \end{equation*}
 where $\max\{b+1/q + 1/2-\varepsilon, 1/2\} < \delta \leq b + 1/q + 1/2$. Applying Theorem \ref{Theorem 4.4} we derive
  \begin{equation*}
 	\|f\|_{\mathbf{B}^{m,b+1/2-\varepsilon}_{p,q}(\mathbb{T})}^q \asymp \sum_{j=0}^\infty (1 + j)^{(b+1/2 -\varepsilon - \delta) q} < \infty
 \end{equation*}
 and
 \begin{equation*}
 	\|D^m f\|^q_{\mathbf{B}^{0,b}_{p,q}(\mathbb{T})} \asymp \sum_{j=0}^\infty (1 + j)^{b q} \Big(\sum_{\nu=j}^\infty (1 + \nu)^{-2 \delta}\Big)^{q/2} \asymp\sum_{j=0}^\infty (1 + j)^{(b-\delta + 1/2) q} = \infty.
 \end{equation*}
 This shows that $f \in \mathbf{B}^{m,b+1/2-\varepsilon}_{p,q}(\mathbb{T})$ and $D^m f \not \in \mathbf{B}^{0,b}_{p,q}(\mathbb{T})$.

 Finally we deal with the case $q= \min\{2,p,q\}$. We proceed by contradiction. Let us suppose that
 \begin{equation}\label{15}
 	D^m : \mathbf{B}^{m,b+1/q-\varepsilon}_{p,q}(\mathbb{T}) \longrightarrow  \mathbf{B}^{0,b}_{p,q}(\mathbb{T}).
 \end{equation}
 Let $s$ be such that $m < s < m+1$. Since $D^m : W^{m+1}_p(\mathbb{T}) \longrightarrow W^1_p(\mathbb{T})$, it follows from (\ref{15}) and the interpolation property that
 \begin{equation}\label{16}
 	D^m : (\mathbf{B}^{m,b+1/q-\varepsilon}_{p,q}(\mathbb{T}), W^{m+1}_p(\mathbb{T}))_{s-m,q} \longrightarrow (\mathbf{B}^{0,b}_{p,q}(\mathbb{T}), W^1_p(\mathbb{T}))_{s-m,q}.
 \end{equation}
 According to (\ref{PrelimInterpolationBWnew}) and (\ref{BesovComparison}), $(\mathbf{B}^{m,b+1/q-\varepsilon}_{p,q}(\mathbb{T}), W^{m+1}_p(\mathbb{T}))_{s-m,q}  = B^{s, (1-s+m)(b+1/q-\varepsilon)}_{p,q}(\mathbb{T})$. Concerning the target space in (\ref{16}), by (\ref{PrelimInterpolationnew2.3}) and (\ref{BesovComparison}), we have
 \begin{equation*}
 	 (\mathbf{B}^{0,b}_{p,q}(\mathbb{T}), W^1_p(\mathbb{T}))_{s-m,q}
	  = B^{s-m, (1-s+m)(b+1/q)}_{p,q}(\mathbb{T}).
 \end{equation*}
 Hence, we have shown that
 \begin{equation}\label{17}
 	D^m : B^{s, (1-s+m)(b+1/q-\varepsilon)}_{p,q}(\mathbb{T}) \longrightarrow B^{s-m, (1-s+m)(b+1/q)}_{p,q}(\mathbb{T}).
 \end{equation}
 However, the operator $D^m$ does not fulfil (\ref{17}). Indeed, in virtue of Theorem \ref{TheoremBesovGMPer}, the function $f$ with  the Fourier series
 $$ \sum_{n=1}^\infty n^{-s - 1 + 1/p} (1 + \log n)^{-(1-s+m)(b+1/q-\varepsilon/2) - 1/q} \cos nx, \quad x \in \mathbb{T},$$ belongs to $B^{s, (1-s+m)(b+1/q-\varepsilon)}_{p,q}(\mathbb{T})$ because
 \begin{equation*}
 	\|f\|_{B^{s, (1-s+m)(b+1/q-\varepsilon)}_{p,q}(\mathbb{T})}^q \asymp \sum_{n=1}^\infty (1 + \log n)^{-\frac{\varepsilon}2 (1-s+m)q -1} \frac{1}{n} < \infty
 \end{equation*}
 but $D^m f \not \in B^{s-m, (1-s+m)(b+1/q)}_{p,q}(\mathbb{T})$ since
  \begin{equation*}
 	\|D^m f\|_{B^{s-m, (1-s+m)(b+1/q)}_{p,q}(\mathbb{T})}^q \asymp \sum_{n=1}^\infty (1 + \log n)^{\frac{\varepsilon}2 (1-s+m)q-1} \frac{1}{n} = \infty.
 \end{equation*}
 Therefore, (\ref{17}) and, therefore, (\ref{15}) are not valid.
 \end{proof}

 \begin{rem}
 In the particular case of $m=1$ and $p=2 \leq q < \infty$, a different approach to show Proposition \ref{Proposition 11.4} was given in \cite[Proposition 4.3]{CobosDominguez4}.
 \end{rem}

 In the rest of this section we study the converse to Theorem \ref{Theorem 11.1}.


   \begin{thm}\label{Theorem 11.1Conv}
 		Let $m \in \mathbb{N}, 0 < q \leq \infty$, and $b > -1/q$.
		
		\begin{enumerate}[\upshape(i)]
		\item Assume that $1 < p < \infty$. We have
		\begin{equation}\label{aux12.24}
			 \|f\|_{ \mathbf{B}^{m,b+1/\max\{2,p,q\}}_{p,q}(\mathbb{R}^d)} \lesssim \|f\|_{L_p(\mathbb{R}^d)} + \sum_{j = 1}^d \Big\|\frac{\partial^m f}{\partial x_j^m} \Big\|_{\mathbf{B}^{0,b}_{p,q}(\mathbb{R}^d)}.
		\end{equation}
		
		\item Let $p=1, \infty$. We have
		\begin{equation*}
			\|f\|_{ \mathbf{B}^{m,b}_{p,q}(\mathbb{R}^d)} \lesssim \|f\|_{L_p(\mathbb{R}^d)} + \sum_{|\beta| = m} \|D^\beta f\|_{\mathbf{B}^{0,b}_{p,q}(\mathbb{R}^d)}.
		\end{equation*}
		\end{enumerate}
		
		The same results also hold in the periodic setting.
 \end{thm}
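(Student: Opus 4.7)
My plan is to treat the two cases (i) and (ii) separately, since the tools needed differ at the Lebesgue endpoints.

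For \textbf{case (ii)}, $p\in\{1,\infty\}$, I would invoke the classical converse Marchaud inequality (valid for all $1\le p\le\infty$, cf.\ \cite[(7.13), p.~46]{DeVoreLorentz})
\begin{equation*}
t^{-m}\omega_{k+m}(f,t)_p \lesssim \sup_{|\beta|=m}\omega_{k}(D^{\beta}f,t)_p,
\end{equation*}
multiply by $(1-\log t)^{bq}$, integrate over $(0,1)$ with respect to $dt/t$, and recognize the left-hand side as (a constant multiple of) $|f|_{\mathbf{B}^{m,b}_{p,q}(\mathbb{R}^d)}^{q}$, using the modulus of order $k+m>m$ in the defining quasi-norm, and the right-hand side as $\sum_{|\beta|=m}|D^{\beta}f|_{\mathbf{B}^{0,b}_{p,q}(\mathbb{R}^d)}^{q}$. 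Adding $\|f\|_{L_p}$ on both sides concludes the proof of (ii), and the same argument transfers to the periodic setting.

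For \textbf{case (i)}, $1<p<\infty$, my plan is to bypass the modulus characterization in favor of a Littlewood--Paley block analysis. Since $(\varphi_{j}\widehat{f})^{\vee}$ has Fourier support in the dyadic annulus $\{|\xi|\asymp 2^{j}\}$ for $j\ge 1$, a reverse Bernstein-type inequality, valid for $1<p<\infty$, yields
\begin{equation*}
2^{jm}\,\|(\varphi_{j}\widehat{f})^{\vee}\|_{L_p(\mathbb{R}^d)} \lesssim \sum_{l=1}^{d}\|(\varphi_{j}\widehat{\partial^{m}_{x_l}f})^{\vee}\|_{L_p(\mathbb{R}^d)},
\end{equation*}
while for $j=0$ one has simply $\|(\varphi_{0}\widehat{f})^{\vee}\|_{L_p}\lesssim \|f\|_{L_p}$. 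Raising to the $q$-th power, weighting by $(1+j)^{\xi q}$, summing in $j$, and using $(\sum_{l}a_{l})^{q}\lesssim \sum_{l}a_{l}^{q}$ (with a constant depending only on $d$ and $q$), I obtain for every $\xi\in\mathbb{R}$ the key estimate
\begin{equation*}
\|f\|_{B^{m,\xi}_{p,q}(\mathbb{R}^d)} \lesssim \|f\|_{L_p(\mathbb{R}^d)} + \sum_{l=1}^{d}\|\partial^{m}_{x_l}f\|_{B^{0,\xi}_{p,q}(\mathbb{R}^d)}.
\end{equation*}
I then specialize to $\xi=b+1/\max\{2,p,q\}$, apply the embedding $\mathbf{B}^{0,b}_{p,q}(\mathbb{R}^d)\hookrightarrow B^{0,b+1/\max\{2,p,q\}}_{p,q}(\mathbb{R}^d)$ from (\ref{1}) to each derivative term, and finally use (\ref{BesovComparison}) (valid since $m>0$) to identify $\mathbf{B}^{m,\xi}_{p,q}(\mathbb{R}^d)=B^{m,\xi}_{p,q}(\mathbb{R}^d)$. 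Note that this route avoids the problematic intermediate embedding $L_p\hookrightarrow B^{0,\xi}_{p,q}$, which in general fails when $\xi>0$; it is replaced by the stronger block-wise reverse Bernstein bound.

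The main obstacle I expect is the justification of the reverse Bernstein inequality on each dyadic block. After a scaling reduction to a function with Fourier support in a fixed annulus, I would write $\widehat{g}(\xi)=\sum_{l}\widetilde{\eta}_{l}(\xi)\,\xi_{l}^{m}\widehat{g}(\xi)$ using a smooth partition of unity $\{\widetilde{\eta}_{l}\}$ adapted to the subregions of the annulus where some $|\xi_{l}|$ is bounded away from zero, so that each $\widetilde{\eta}_{l}$ is smooth and compactly supported; the Mikhlin multiplier theorem then delivers the desired $L_p$-bound with constants independent of the dyadic scale. This is precisely the step where the restriction $1<p<\infty$ enters, and the reason case (ii) must be handled by the modulus-based converse Marchaud inequality rather than by the Littlewood--Paley decomposition.
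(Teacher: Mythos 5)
Your argument for part (ii) coincides with the paper's: both amount to integrating the classical converse estimate $t^{-m}\omega_{k+m}(f,t)_p\lesssim\sup_{|\beta|=m}\omega_k(D^\beta f,t)_p$ (see (\ref{convTrebels})) against the logarithmic weight. For part (i) your proof is correct but follows a genuinely different route. The paper first establishes the sharp $t$-wise inequality (\ref{inequalTrebels2}),
$\bigl(\int_0^t(u^{-m}\omega_{k+m}(f,u)_p)^{\max\{p,2\}}\,\frac{du}{u}\bigr)^{1/\max\{p,2\}}\lesssim\sup_{j}\omega_k\bigl(\tfrac{\partial^m f}{\partial x_j^m},t\bigr)_p$,
proved via $K$-functionals for homogeneous couples and the embedding $L_p(\mathbb{R}^d)\hookrightarrow\dot B^0_{p,\max\{p,2\}}(\mathbb{R}^d)$, and then converts the exponent $\max\{p,2\}$ into the logarithmic shift $1/\max\{2,p,q\}$ by a Hardy/H\"older summation over dyadic scales. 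You instead argue block-wise on the Littlewood--Paley decomposition: your reverse Bernstein inequality is in substance the second equivalence of Lemma \ref{CharSobolevNorm2} with $s=m$, except that you control the zeroth block directly by $\|f\|_{L_p(\mathbb{R}^d)}$ — a necessary precaution, since $L_p\hookrightarrow B^{0,\xi}_{p,q}$ indeed fails for $\xi>0$, and you correctly flag this. The logarithmic shift then comes for free from the right-hand embedding of (\ref{1}) applied to each $\partial^m_{x_l}f$, together with (\ref{BesovComparison}). Given that (\ref{1}) is already available, your argument is shorter; the paper's detour through Theorem \ref{Thm Trebels2} additionally produces the quantitative moduli-of-smoothness inequality, which is of independent interest and is reused elsewhere. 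One small correction: the restriction $1<p<\infty$ does not really enter at the multiplier step — your symbols are smooth and compactly supported, so their inverse Fourier transforms are integrable and they act on $L_p$ for all $1\le p\le\infty$; what genuinely forces $1<p<\infty$ in your route is the embedding $\mathbf{B}^{0,b}_{p,q}(\mathbb{R}^d)\hookrightarrow B^{0,b+1/\max\{2,p,q\}}_{p,q}(\mathbb{R}^d)$ from (\ref{1}), whose proof rests on the Littlewood--Paley characterization of $\mathbf{B}^{0,b}_{p,q}(\mathbb{R}^d)$ in Lemma \ref{lem: CDT}.
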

 \begin{proof}
 	Let $k \in \mathbb{N}$. Applying elementary estimates and (\ref{inequalTrebels2}), we obtain
	\begin{align}
		\sum_{j = 1}^d \Big|\frac{\partial^m f}{\partial x_j^m} \Big|^q_{\mathbf{B}^{0,b}_{p,q}(\mathbb{R}^d)} & \asymp \sum_{i=0}^\infty (1 + i)^{b q} \sup_{j= 1, \ldots, d} \omega_k\Big(\frac{\partial^m f}{\partial x_j^m}, 2^{-i}\Big)_p^q \nonumber \\
		& \gtrsim  \sum_{i=0}^\infty (1 + i)^{b q} \left(\sum_{\nu=i}^\infty (2^{\nu m} \omega_{k+m}(f, 2^{-\nu})_p)^{\max\{p,2\}} \right)^{q/\max\{p,2\}}. \label{12.21new}
	\end{align}
	We distinguish two possible cases. First, suppose that $q \geq \max\{p,2\}$. Then, changing the order of summation we derive
	\begin{equation*}
			\sum_{j = 1}^d \Big|\frac{\partial^m f}{\partial x_j^m} \Big|^q_{\mathbf{B}^{0,b}_{p,q}(\mathbb{R}^d)} \gtrsim \sum_{\nu=0}^\infty (2^{\nu m} (1 + \nu)^{b+ 1/q}  \omega_{k+m}(f, 2^{-\nu})_p)^q \asymp |f|_{\mathbf{B}^{m,b+1/q}_{p,q}(\mathbb{R}^d)}^q.
	\end{equation*}
	This proves (i) for $q \geq \max\{p,2\}$.
	
	Next we deal with the case $q < \max\{p,2\}$. Let $\varepsilon > 1/q -1/\max\{p,2\}$. Applying H\"older's inequality,
	\begin{align*}
		\sum_{\nu =i}^\infty (2^{\nu m} (1 + \nu)^{-\varepsilon} \omega_{k+m}(f,2^{-\nu})_p)^q \\
		&\hspace{-4cm} \lesssim (1 + i)^{-\varepsilon q + 1 - q/\max\{p,2\}} \left(\sum_{\nu=i}^\infty (2^{\nu m} \omega_{k+m}(f, 2^{-\nu})_p)^{\max\{p,2\}} \right)^{q/\max\{p,2\}}.
	\end{align*}
	Using this estimate, we derive from (\ref{12.21new}) that
	\begin{align*}
			\sum_{j = 1}^d \Big|\frac{\partial^m f}{\partial x_j^m} \Big|^q_{\mathbf{B}^{0,b}_{p,q}(\mathbb{R}^d)} &\gtrsim \sum_{\nu=0}^\infty  (2^{\nu m} (1 + \nu)^{-\varepsilon} \omega_{k+m}(f,2^{-\nu})_p)^q \sum_{i=0}^\nu (1 + i)^{(b + \varepsilon -1/q + 1/\max\{p,2\})q} \\
		& \hspace{-1.5cm}\asymp  \sum_{\nu=0}^\infty  (2^{\nu m} (1 + \nu)^{b + 1/\max\{p,2\}} \omega_{k+m}(f,2^{-\nu})_p)^q \asymp |f|^q_{\mathbf{B}^{m,b+1/\max\{p,2\}}_{p,q}(\mathbb{R}^d)}.
	\end{align*}
This concludes the proof  of (i).
	
	The proof of (ii) is an immediate consequence of (\ref{convTrebels}).
 \end{proof}

 \begin{rem}
 Analogously to Remark \ref{Remark 12.8}(ii) we write down the extreme case $b=-1/q$ of Theorem \ref{Theorem 11.1Conv}: Let $m \in \mathbb{N}$ and $0 < q \leq \infty$. If $1 < p < \infty$, then
\begin{equation*}
		 \|f\|_{ \mathbf{B}^{m,-1/q+1/\max\{2,p,q\}, 1/\max\{2,p,q\}}_{p,q}(\mathbb{R}^d)} \lesssim \|f\|_{L_p(\mathbb{R}^d)} +  \sum_{j = 1}^d \Big\|\frac{\partial^m f}{\partial x_j^m} \Big\|_{\mathbf{B}^{0,b}_{p,q}(\mathbb{R}^d)}
\end{equation*}
and, if $p=1, \infty$, then
\begin{equation*}
	\|f\|_{ \mathbf{B}^{m,-1/q}_{p,q}(\mathbb{R}^d)} \lesssim \|f\|_{L_p(\mathbb{R}^d)} + \sum_{|\beta| = m} \|D^\beta f\|_{\mathbf{B}^{0,-1/q}_{p,q}(\mathbb{R}^d)}.
\end{equation*}
 \end{rem}

 Theorem \ref{Theorem 11.1Conv}(i) is also shown to be sharp in the following sense.

  \begin{prop}\label{Proposition 11.4new}
 	Let $m \in \mathbb{N}, 1 < p < \infty, 0 < q \leq \infty$, and $b > -1/q$.
 Given any $\varepsilon > 0$, there exists $f \in L_p(\mathbb{T})$ such that $D^m f \in \mathbf{B}^{0,b}_{p,q}(\mathbb{T})$ and $f  \not \in \mathbf{B}^{m,b+ 1/\max\{2,p,q\} + \varepsilon}_{p,q}(\mathbb{T})$.
 \end{prop}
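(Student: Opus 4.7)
The plan is to reduce, via $m$-fold antiderivative on the Fourier side, to the sharpness of the embedding $\mathbf{B}^{0,b}_{p,q}(\mathbb{T}) \hookrightarrow B^{0,b + 1/\max\{2,p,q\}}_{p,q}(\mathbb{T})$---that is, to the periodic counterpart of Proposition~\ref{Proposition 7.2}. Given $\varepsilon > 0$, the first step is to produce, via the periodic version of Proposition~\ref{Proposition 7.2}, a function $g \in \mathbf{B}^{0,b}_{p,q}(\mathbb{T})$ with $\widehat{g}(0) = 0$ and $g \notin B^{0, b + 1/\max\{2,p,q\} + \varepsilon}_{p,q}(\mathbb{T})$.

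Next, define $f$ by $\widehat{f}(n) = \widehat{g}(n)/(in)^m$ for $n \neq 0$ and $\widehat{f}(0) = 0$, so that $D^m f = g$. Since $\mathbf{B}^{0,b}_{p,q}(\mathbb{T}) \hookrightarrow L_p(\mathbb{T})$ and the $m$-fold antiderivative $D^{-m}$ is a bounded Fourier multiplier from the mean-zero subspace of $L_p(\mathbb{T})$ into $W^m_p(\mathbb{T})$ for $1 < p < \infty$, we have $f \in L_p(\mathbb{T})$; trivially $D^m f = g \in \mathbf{B}^{0,b}_{p,q}(\mathbb{T})$. To rule out $f \in \mathbf{B}^{m, b + 1/\max\{2,p,q\}+\varepsilon}_{p,q}(\mathbb{T})$, note that since $m > 0$ equation (\ref{BesovComparison}) gives $\mathbf{B}^{m, c}_{p,q}(\mathbb{T}) = B^{m, c}_{p,q}(\mathbb{T})$ with equivalent quasi-norms for any $c$, and on each Littlewood-Paley block supported at frequencies of size $2^j$ the multiplier $(in)^{-m}$ is comparable to $2^{-jm}$, whence
\begin{equation*}
\|f\|_{B^{m, b + 1/\max\{2,p,q\}+\varepsilon}_{p,q}(\mathbb{T})} \asymp \|g\|_{B^{0, b + 1/\max\{2,p,q\}+\varepsilon}_{p,q}(\mathbb{T})} = \infty
\end{equation*}
by the choice of $g$.

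The main obstacle is thus the periodic counterpart of Proposition~\ref{Proposition 7.2}. It splits naturally into three cases according to whether $\max\{2,p,q\}$ equals $p$, $2$, or $q$. When $\max\{2,p,q\} \in \{p, 2\}$, one exhibits an explicit counterexample as a GM cosine series or as a lacunary trigonometric series, exploiting the periodic characterizations of Theorems~\ref{Theorem 3.4} and~\ref{Theorem 4.4}, respectively. The delicate case $q = \max\{2,p,q\}$ requires a duality and interpolation argument mirroring the corresponding step in the proof of Proposition~\ref{Proposition 7.2}, which rests on Lemmas~\ref{LemmaDual2} and~\ref{LemmaDual}, Lemma~\ref{PrelimLemmaT2}, and Proposition~\ref{RecallEmb*optim}; since all the interpolation and duality machinery of Section~\ref{Preliminaries} has an immediate periodic analogue, this step transfers verbatim.
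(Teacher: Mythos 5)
Your proposal is correct in its overall logic, but it takes a genuinely different route from the paper's proof, and the difference is worth spelling out. The paper argues by contradiction: one assumes the inequality $\|f\|_{\mathbf{B}^{m,b+1/q+\varepsilon}_{p,q}(\mathbb{T})} \lesssim \|f\|_{L_p(\mathbb{T})} + \|D^m f\|_{\mathbf{B}^{0,b}_{p,q}(\mathbb{T})}$ holds (only the case $q=\max\{2,p,q\}$ is written out; the cases $p=\max$ and $2=\max$ go via the GM cosine series and lacunary series as in Proposition \ref{Proposition 11.4}), deduces from it that the antiderivative operator $I^m$ maps $\mathbf{B}^{0,b}_{p,q}(\mathbb{T})$ boundedly into $\mathbf{B}^{m,b+1/q+\varepsilon}_{p,q}(\mathbb{T})$, interpolates against the trivial bound $I^m:W^1_p(\mathbb{T})\to W^{m+1}_p(\mathbb{T})$ using (\ref{PrelimInterpolationnew2.3}) and (\ref{PrelimInterpolationBWnew}), and then kills the resulting putative embedding $B^{s-m,(1-s+m)(b+1/q)}_{p,q}(\mathbb{T})\to B^{s,(1-s+m)(b+1/q+\varepsilon)}_{p,q}(\mathbb{T})$ with an explicit GM cosine series and Theorem \ref{TheoremBesovGMPer}. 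In particular the paper never invokes duality.

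Your proof instead is a direct construction: antidifferentiate a witness to the failure of $\mathbf{B}^{0,b}_{p,q}\hookrightarrow B^{0,b+1/\max\{2,p,q\}+\varepsilon}_{p,q}$. The reduction step itself (the multiplier argument that $\|f\|_{B^{m,c}_{p,q}(\mathbb{T})}\asymp\|g\|_{B^{0,c}_{p,q}(\mathbb{T})}$ when $\widehat{g}(0)=0$ and $\widehat{f}(n)=(in)^{-m}\widehat{g}(n)$, combined with $\mathbf{B}^{m,c}_{p,q}=B^{m,c}_{p,q}$ from (\ref{BesovComparison})) is correct and cleanly separates the role of the derivative from the sharpness of the zero-smoothness embedding. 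What this route buys is transparency: it replaces the contradiction argument by an explicit construction and makes it obvious that the statement is \emph{equivalent} to the periodic sharpness of (\ref{7.2}). The cost is that it imports the periodic counterpart of Proposition \ref{Proposition 7.2}, which the paper never states. In the subcases $p=\max$ and $2=\max$ the explicit GM and lacunary examples transfer immediately via Theorems \ref{Theorem 3.4} and \ref{Theorem 4.4}, as you say, so no issue arises there. But in the subcase $q=\max\{2,p,q\}$ your claim that the duality argument "transfers verbatim" is thinner than it looks: Lemma \ref{LemmaDual2} has a standard periodic analogue, but Lemma \ref{LemmaDual} --- the identification of $(\mathbf{B}^{0,b}_{p,q})'$ as a shifted Lipschitz space --- is cited only for $\mathbb{R}^d$ (from \cite{CobosDominguez3}) and is not an off-the-shelf fact for $\mathbb{T}$; it would need its own proof. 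The paper's contradiction-via-$I^m$ route deliberately sidesteps this, since the Sobolev estimate (\ref{12.24new}), the boundedness of $I^m$ on $L_p(\mathbb{T})$, the interpolation formulae and the GM characterization are all available periodically in Section \ref{Preliminaries}. So: same case split and same explicit counterexamples in two of the three cases, but in the delicate $q$-case you use duality where the paper uses interpolation of the assumed operator bound, and your version leaves a small but real lemma to establish.
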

 \begin{proof}
 	The proof follows essentially the same steps as the proof of Proposition \ref{Proposition 11.4}. We only show the case $q= \max\{2,p,q\}$. We argue by contradiction. We  assume that there is $\varepsilon > 0$ such that the following inequality
	\begin{equation}\label{12.23new}
	 \|f\|_{ \mathbf{B}^{m,b+1/q + \varepsilon}_{p,q}(\mathbb{T})} \lesssim \|f\|_{L_p(\mathbb{T})} +  \|D^m f\|_{\mathbf{B}^{0,b}_{p,q}(\mathbb{T})}
	\end{equation}
holds.
	Moreover, it is well known that
	\begin{equation}\label{12.24new}
		\|f\|_{W^{m+1}_p(\mathbb{T})} \asymp \|f\|_{L_p(\mathbb{T})} + \|D^m f\|_{W^1_p(\mathbb{T})}.
	\end{equation}
	Since $f \in L_p(\mathbb{T})$ with $D^m f \in L_p(\mathbb{T})$, there exists $g \in L_p(\mathbb{T})$ such that $(i k)^{-m} \widehat{g}(k) = \widehat{f}(k)$ for all $k \in \mathbb{Z}, k\neq 0$. Put $I^m g(x) \sim \sum_{k=-\infty}^\infty  \widehat{\Psi_m}(k) \widehat{g}(k) e^{ikx}$ where $\Psi_m \in L_1(\mathbb{T})$ with
	\begin{equation*}
		\widehat{\Psi_m}(k)  =  \left\{\begin{array}{lc} (i k)^{-m}, &k \in \mathbb{Z}, k \neq 0, \\
		&  \\
		0 , & k = 0.
		       \end{array}
                        \right.
	\end{equation*}
	Further, note that $D^m f = D^m(I^m g) = g-\widehat{g}(0)$ and $I^m$ acts boundedly on $L_p(\mathbb{T})$. Therefore, we derive from (\ref{12.23new}) and (\ref{12.24new}) that
	\begin{equation}\label{12.23new2}
	 \|I^m g\|_{ \mathbf{B}^{m,b+1/q + \varepsilon}_{p,q}(\mathbb{T})} \lesssim \| g\|_{\mathbf{B}^{0,b}_{p,q}(\mathbb{T})} \quad \text{and
} \quad \|I^m g\|_{W^{m+1}_p(\mathbb{T})} \asymp \|g\|_{W^1_p(\mathbb{T})}.
	\end{equation}
	Take any $m < s < m+1$. Thus, by (\ref{12.23new2}) we obtain
	\begin{equation}\label{12.27new}
		I^m : (\mathbf{B}^{0,b}_{p,q}(\mathbb{T}), W^1_p(\mathbb{T}))_{s-m,q} \longrightarrow (\mathbf{B}^{m,b+1/q + \varepsilon}_{p,q}(\mathbb{T}), W^{m+1}_p(\mathbb{T}))_{s-m,q}.
	\end{equation}
	By (\ref{PrelimInterpolationnew2.3}) and (\ref{BesovComparison}), we have $ (\mathbf{B}^{0,b}_{p,q}(\mathbb{T}), W^1_p(\mathbb{T}))_{s-m,q}  =  B^{s-m, (1-s+m)(b+1/q)}_{p,q}(\mathbb{T})$. On the other hand, by (\ref{PrelimInterpolationBWnew}) and (\ref{BesovComparison}) we have $(\mathbf{B}^{m,b+1/q+\varepsilon}_{p,q}(\mathbb{T}), W^{m+1}_p(\mathbb{T}))_{s-m,q}  = B^{s, (1-s+m)(b+1/q+\varepsilon)}_{p,q}(\mathbb{T})$. Hence, (\ref{12.27new}) claims that
	\begin{equation*}
		I^m : B^{s-m, (1-s+m)(b+1/q)}_{p,q}(\mathbb{T}) \longrightarrow B^{s, (1-s+m)(b+1/q+\varepsilon)}_{p,q}(\mathbb{T}).
	\end{equation*}
	However, the latter is not true as can be checked by taking the Fourier series
	\begin{equation*}
		\sum_{n=1}^\infty n^{-s+m-1+1/p} (1 + \log n)^{-(1-s+m)(b+1/q + \varepsilon/2) -1/q} \cos nx, \quad x \in \mathbb{T},
	\end{equation*}
	and applying Theorem \ref{TheoremBesovGMPer}.
 \end{proof}

\newpage

 \section{Lifting operators in Besov spaces}\label{Lifting Estimates}

Recall the definitions of the Riesz potential (cf. (\ref{RieszPotential}))
\begin{equation}\label{RieszPotential2}
	J_\sigma f = (|\xi|^\sigma \widehat{f})^\vee
\end{equation}
and the Bessel potential (cf. (\ref{liftdef}))
\begin{equation}\label{BesselPotential2}
	I_\sigma f = ((1 + |\xi|^2)^{\sigma/2} \widehat{f})^\vee.
\end{equation}
The study of these operators is a central issue in potential theory and they have important applications to PDE's, harmonic analysis and function spaces. See \cite{AdamsHedberg}, \cite{Landkof}, \cite{Stein}, and \cite{Triebel1}, and the references therein.

Before proceeding further, there are some technical issues that we would like to clarify. Namely, since $|\xi|^\sigma$ has a singularity at $\xi =0$ if $\sigma < 0$, the operator $J_\sigma$ (see (\ref{RieszPotential2})) is not well defined on $\mathcal{S}(\mathbb{R}^d)$. In fact, it is plain to check that $|\xi|^\sigma$ is not a tempered distribution for $\sigma \leq -d$.  To overcome this obstacle, following \cite{Peetre} and \cite{Triebel1}, we deal with the subclass $\dot{\mathcal{S}}(\mathbb{R}^d)$ of $\mathcal{S}(\mathbb{R}^d)$ given by\index{\bigskip\textbf{Spaces}!$\dot{\mathcal{S}}(\mathbb{R}^d)$}\label{SCLASS}
\begin{equation*}
	\dot{\mathcal{S}}(\mathbb{R}^d) = \{f \in \mathcal{S}(\mathbb{R}^d) : (D^\beta \widehat{f})(0) = 0 \text{ for } \beta \in \mathbb{N}_0^d\}.
\end{equation*}
Then $J_\sigma$ given by (\ref{RieszPotential2}) is well defined for any $f \in \dot{\mathcal{S}}(\mathbb{R}^d)$ and any $\sigma \in \mathbb{R}$. Let $\dot{\mathcal{S}}'(\mathbb{R}^d)$ be the dual space of $\dot{\mathcal{S}}(\mathbb{R}^d)$. It is easy to see that $\dot{\mathcal{S}}'(\mathbb{R}^d)$\index{\bigskip\textbf{Spaces}!$\dot{\mathcal{S}}'(\mathbb{R}^d)$}\label{S'CLASS} can be identified with the space $\mathcal{S}'(\mathbb{R}^d)$ modulo polynomials. By duality, one can extend $J_\sigma$ in the class $\dot{\mathcal{S}}'(\mathbb{R}^d)$ and $J_\sigma: \dot{\mathcal{S}}'(\mathbb{R}^d) \longrightarrow \dot{\mathcal{S}}'(\mathbb{R}^d)$.

We also mention that a different approach to deal with homogeneity has been recently proposed by Triebel \cite{Triebel15}. It is based on the so-called tempered homogeneous spaces within the classical framework of the dual pairing $(\mathcal{S}(\mathbb{R}^d), \mathcal{S}'(\mathbb{R}^d))$.

The goal of this section is twofold. First, we investigate the smoothness properties of Riesz and Bessel potentials by showing sharp estimates for their moduli of smoothness. Secondly, we will show a rather surprising result: contrary to the case of $B^{0,b}_{p,q}(\mathbb{R}^d)$ spaces, the operator $I_\sigma$ does not act as a lift in the scale of $\mathbf{B}^{0,b}_{p,q}(\mathbb{R}^d)$ spaces.

\subsection{Inequalities for moduli of smoothness of Riesz and Bessel potentials}

We establish the following sharp estimates for the moduli of smoothness of Riesz potential.

\begin{thm}\label{Thm Trebels2Lift}
 	Let $0 < q \leq \infty$ and $\sigma, \alpha > 0$.
	\begin{enumerate}
	\item[\upshape($i_a$)] Let $1 < p < \infty$ and $f \in L_p(\mathbb{R}^d)$. Then,
	\begin{equation}\label{QLift}
		 \omega_\alpha(J_\sigma f, t)_p \lesssim \left(\int_0^t (u^{-\sigma} \omega_{\alpha + \sigma}(f,u)_p)^{q} \frac{du}{u}\right)^{1/q} \iff q \leq \min\{p,2\}.
	\end{equation}
	
	\item[\upshape($i_b$)] Let $p=1, \infty$ and $f \in L_p(\mathbb{R}^d)$. Then,
	\begin{equation}\label{Qlift2}
		\|J_\sigma f - W^\alpha_t J_\sigma f\|_{L_p(\mathbb{R}^d)} \lesssim \left(\int_0^t (u^{-\sigma} \|f- W^{\alpha + \sigma}_u f\|_{L_p(\mathbb{R}^d)})^{q} \frac{du}{u}\right)^{1/q} \iff q \leq 1.
	\end{equation}

	\item[\upshape($ii_a$)] Let $1 < p < \infty$ and $f \in \dot{\mathscr{L}}^{\sigma}_p(\mathbb{R}^d)$. Then,
	\begin{equation}\label{Qlift3}
		  \left(\int_0^t (u^{-\sigma} \omega_{\alpha + \sigma}(f,u)_p)^q \frac{du}{u}\right)^{1/q} \lesssim  \omega_\alpha(J_\sigma f, t)_p \iff q \geq \max\{p,2\}.
	\end{equation}

\item[\upshape($ii_b$)]
	Let $p=1, \infty$ and $f \in \dot{\mathscr{L}}^{\sigma}_p(\mathbb{R}^d)$. Then,
	\begin{equation}\label{Qlift4}
		\left(\int_0^t (u^{-\sigma} \|f- W^{\alpha + \sigma}_u f\|_{L_p(\mathbb{R}^d)})^{q} \frac{du}{u}\right)^{1/q}  \lesssim  \|J_\sigma f - W^\alpha_t J_\sigma f\|_{L_p(\mathbb{R}^d)} \iff q = \infty.
	\end{equation}

	\end{enumerate}
 \end{thm}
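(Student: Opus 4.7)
The strategy is to reduce everything to $K$-functional estimates for the couple $(L_p(\mathbb{R}^d), \dot{\mathscr{L}}^\alpha_p(\mathbb{R}^d))$ via Lemma \ref{LemmaModuli} (for $1<p<\infty$) and equivalence (\ref{5.13new2}) (for $p=1,\infty$, where the modulus is replaced by $\|f-W^\alpha_t f\|_{L_p}$), and then exploit two key properties of the Riesz potential: first, by definition, $J_\sigma$ is an isometric lift $\dot{\mathscr{L}}^{\alpha+\sigma}_p(\mathbb{R}^d)\to\dot{\mathscr{L}}^{\alpha}_p(\mathbb{R}^d)$; second, by (\ref{5.5new3}), $J_\sigma$ is an isomorphism $\dot{B}^\sigma_{p,q}(\mathbb{R}^d)\to\dot{B}^0_{p,q}(\mathbb{R}^d)$. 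The scheme of proof closely parallels that of Theorem \ref{Thm Trebels2}, replacing the differential operator $D^\beta$ by $J_\sigma$ and the Sobolev spaces $\dot{W}^k_p$ by $\dot{\mathscr{L}}^\alpha_p$.

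\textbf{If-parts.} For parts $(i_a)$ and $(i_b)$, under $q\le\min\{p,2\}$ (resp.\ $q\le 1$) we invoke the embedding $\dot{B}^0_{p,q}(\mathbb{R}^d)\hookrightarrow L_p(\mathbb{R}^d)$ (homogeneous analogue of Proposition \ref{RecallEmb}(ii)) together with the lift to deduce that $J_\sigma:\dot{B}^\sigma_{p,q}(\mathbb{R}^d)\to L_p(\mathbb{R}^d)$ and $J_\sigma:\dot{\mathscr{L}}^{\alpha+\sigma}_p(\mathbb{R}^d)\to\dot{\mathscr{L}}^\alpha_p(\mathbb{R}^d)$ are bounded. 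By the interpolation property,
\[
K(t^\alpha, J_\sigma f; L_p(\mathbb{R}^d), \dot{\mathscr{L}}^\alpha_p(\mathbb{R}^d))\lesssim K(t^\alpha, f; \dot{B}^\sigma_{p,q}(\mathbb{R}^d), \dot{\mathscr{L}}^{\alpha+\sigma}_p(\mathbb{R}^d)).
\]
Using the homogeneous interpolation formula $\dot{B}^\sigma_{p,q}(\mathbb{R}^d)=(L_p(\mathbb{R}^d),\dot{\mathscr{L}}^{\alpha+\sigma}_p(\mathbb{R}^d))_{\sigma/(\alpha+\sigma),q}$ and Lemma \ref{PrelimHolmstedt}(ii), the right-hand side is equivalent to $\bigl(\int_0^t (u^{-\sigma} K(u^{\alpha+\sigma},f;L_p,\dot{\mathscr{L}}^{\alpha+\sigma}_p))^q\,du/u\bigr)^{1/q}$, which by Lemma \ref{LemmaModuli} (resp.\ (\ref{5.13new2})) gives the claimed bound. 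Parts $(ii_a)$ and $(ii_b)$ are handled dually through the realization formula (\ref{KfunctEta}): since the de la Vall\'ee--Poussin operator $\eta_R$ commutes with the Fourier multiplier $J_\sigma$,
\[
\|J_\sigma f-\eta_{1/t}J_\sigma f\|_{L_p(\mathbb{R}^d)}=\|f-\eta_{1/t}f\|_{\dot{\mathscr{L}}^\sigma_p(\mathbb{R}^d)},\quad t^\alpha\|\eta_{1/t}J_\sigma f\|_{\dot{\mathscr{L}}^\alpha_p(\mathbb{R}^d)}=t^\alpha\|\eta_{1/t}f\|_{\dot{\mathscr{L}}^{\alpha+\sigma}_p(\mathbb{R}^d)}.
\]
Under $q\ge\max\{p,2\}$ (resp.\ $q=\infty$), the embedding $L_p(\mathbb{R}^d)\hookrightarrow\dot{B}^0_{p,q}(\mathbb{R}^d)$ lifts to $\dot{\mathscr{L}}^\sigma_p(\mathbb{R}^d)\hookrightarrow\dot{B}^\sigma_{p,q}(\mathbb{R}^d)$, whence the first term bounds from below by $\|f-\eta_{1/t}f\|_{\dot{B}^\sigma_{p,q}}$. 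Summing yields $\omega_\alpha(J_\sigma f,t)_p\gtrsim K(t^\alpha,f;\dot{B}^\sigma_{p,q},\dot{\mathscr{L}}^{\alpha+\sigma}_p)$, and the same Holmstedt computation as above produces the integral expression on the left of (\ref{Qlift3})/(\ref{Qlift4}).

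\textbf{Only-if parts and main obstacle.} For the only-if direction in $(i_a)$, passing $t\to\infty$ in the assumed inequality yields $\sup_t \omega_\alpha(J_\sigma f,t)_p\asymp\|J_\sigma f\|_{L_p}\asymp\|f\|_{\dot{\mathscr{L}}^\sigma_p}$ on the left and $\|f\|_{\dot{B}^\sigma_{p,q}}$ on the right, reducing the question to the homogeneous embedding $\dot{B}^\sigma_{p,q}(\mathbb{R}^d)\hookrightarrow\dot{\mathscr{L}}^\sigma_p(\mathbb{R}^d)$, which by the lift is equivalent to $\dot{B}^0_{p,q}\hookrightarrow L_p$ and thus forces $q\le\min\{p,2\}$; the analogous argument handles $(i_b)$, $(ii_a)$, $(ii_b)$. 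The sharpness condition $q\le 1$ in $(i_b)$ (and $q=\infty$ in $(ii_b)$) can alternatively be witnessed by explicit lacunary examples from Section \ref{section5}. The main technical obstacle is to work rigorously in the homogeneous setting modulo polynomials on $\dot{\mathcal{S}}'(\mathbb{R}^d)$: one must justify the interpolation formula $\dot{B}^\sigma_{p,q}=(L_p,\dot{\mathscr{L}}^{\alpha+\sigma}_p)_{\sigma/(\alpha+\sigma),q}$ and the embeddings between $\dot{B}^0_{p,q}$ and $L_p$ in the homogeneous scale, where ambient completeness and density issues are more delicate than in the inhomogeneous case treated in Proposition \ref{RecallEmb}; the secondary obstacle is that for $p=1,\infty$ the $K$-functional description via $\omega_\alpha$ (Lemma \ref{LemmaModuli}) fails and must be replaced throughout by the Weierstrass-mean substitute (\ref{5.13new2}), which is why parts $(i_b)$ and $(ii_b)$ have a different form and sharpness threshold.
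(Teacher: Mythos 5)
Your proposal is correct, and for parts $(i_a)$, $(i_b)$, and all the only-if directions it reproduces the paper's argument: bound $J_\sigma:\dot B^\sigma_{p,q}\to L_p$ via $\dot B^0_{p,q}\hookrightarrow L_p$ (plus the Weierstrass substitute (\ref{5.13new2}) for $p=1,\infty$), compute $K(t^\alpha,f;\dot B^\sigma_{p,q},\dot{\mathscr{L}}^{\alpha+\sigma}_p)$ by Holmstedt, and obtain the necessity by passing $t\to\infty$ and invoking the known characterization of when $\dot B^0_{p,q}$ and $L_p$ embed one into the other.

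For the if-parts of $(ii_a)$ and $(ii_b)$, however, you take a genuinely different (and also valid) route. You use the de la Vall\'ee-Poussin realization (\ref{KfunctEta}) and the commutation $\eta_R\,J_\sigma=J_\sigma\,\eta_R$ to rewrite $K(t^\alpha,J_\sigma f;L_p,\dot{\mathscr{L}}^\alpha_p)\asymp\|f-\eta_{1/t}f\|_{\dot{\mathscr{L}}^\sigma_p}+t^\alpha\|\eta_{1/t}f\|_{\dot{\mathscr{L}}^{\alpha+\sigma}_p}$, then apply the embedding $\dot{\mathscr{L}}^\sigma_p\hookrightarrow\dot B^\sigma_{p,q}$ (valid precisely under $q\ge\max\{p,2\}$ for $1<p<\infty$, and $q=\infty$ for $p=1,\infty$) and recognize an admissible decomposition to dominate $K(t^\alpha,f;\dot B^\sigma_{p,q},\dot{\mathscr{L}}^{\alpha+\sigma}_p)$. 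This mirrors the paper's proof of Theorem~\ref{Thm Trebels2}(ii) for derivatives. The paper's proof of the \emph{present} theorem instead deploys the inverse lift $J_{-\sigma}$: from $J_{-\sigma}:L_p\to\dot{\mathscr{L}}^\sigma_p\hookrightarrow\dot B^\sigma_{p,q}$, $J_{-\sigma}:\dot{\mathscr{L}}^\alpha_p\to\dot{\mathscr{L}}^{\alpha+\sigma}_p$, and $J_{-\sigma}J_\sigma=\mathrm{id}$, the interpolation property of the $K$-functional applied to $J_\sigma f$ gives $K(t^\alpha,f;\dot B^\sigma_{p,q},\dot{\mathscr{L}}^{\alpha+\sigma}_p)\lesssim K(t^\alpha,J_\sigma f;L_p,\dot{\mathscr{L}}^\alpha_p)$ directly, with no realization needed. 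Both methods land on the same Holmstedt computation. The paper's $J_{-\sigma}$ route is slightly slicker here because it bypasses the realization formula entirely; your realization route gives an explicit near-optimal decomposition and more transparently parallels the derivative case, but for $p=1,\infty$ you should note that the modulus-of-smoothness form of the realization must be replaced by its Weierstrass-mean substitute, which you do correctly flag elsewhere.
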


 \begin{proof}
	{\it (i)} It is clear that
	\begin{equation}\label{12.7new2*}
	J_\sigma : \dot{\mathscr{L}}^{\alpha+\sigma}_p(\mathbb{R}^d) \longrightarrow \dot{\mathscr{L}}^{\alpha}_p(\mathbb{R}^d).
	\end{equation}
	 Furthermore, by Lemma \ref{LemmaLift2}(ii) below, we have $J_\sigma : \dot{B}^{\sigma}_{p,q}(\mathbb{R}^d) \longrightarrow \dot{B}^{0}_{p,q}(\mathbb{R}^d)$. Suppose that either $1 < p < \infty, q \leq \min\{p,2\}$ or $p=1, \infty, q \leq 1$. Then
	 \begin{equation}\label{12.8new*}
	  J_\sigma : \dot{B}^{\sigma}_{p,q}(\mathbb{R}^d) \longrightarrow L_{p}(\mathbb{R}^d)
	  \end{equation}
	   because $\dot{B}^{0}_{p,q}(\mathbb{R}^d) \hookrightarrow L_p(\mathbb{R}^d)$ (see (\ref{Prelim7.18}) with $s=b=0$). According to (\ref{12.7new2*}), and (\ref{12.8new*}), we obtain
	   \begin{equation}\label{12.9new2*}
	   	K(t^\alpha, J_\sigma f; L_p(\mathbb{R}^d),  \dot{\mathscr{L}}^{\alpha}_p(\mathbb{R}^d)) \lesssim K(t^\alpha, f; \dot{B}^{\sigma}_{p,q}(\mathbb{R}^d), \dot{\mathscr{L}}^{\alpha+\sigma}_p(\mathbb{R}^d)).
	   \end{equation}
	  Since (cf. \cite[Theorem 6.3.1, p. 147]{BerghLofstrom})
	\begin{equation*}
		 \dot{B}^\sigma_{p,q}(\mathbb{R}^d) = (L_p(\mathbb{R}^d), \dot{\mathscr{L}}^{\alpha+\sigma}_p(\mathbb{R}^d))_{\frac{\sigma}{\alpha+\sigma},q},
	\end{equation*}
 we can apply Lemma \ref{PrelimHolmstedt}(ii) to get
	\begin{align}
		K(t^\alpha, f; \dot{B}^{\sigma}_{p,q}(\mathbb{R}^d), \dot{\mathscr{L}}^{\alpha+\sigma}_p(\mathbb{R}^d))&\asymp \left(\int_0^t (u^{-\sigma} K(u^{\alpha+\sigma},f;L_p(\mathbb{R}^d), \dot{\mathscr{L}}^{\alpha+\sigma}_p(\mathbb{R}^d)))^q \frac{du}{u}\right)^{1/q} \label{KfunctBWHom2}.
	\end{align}
	Inserting this estimate into (\ref{12.9new2*}) we arrive at
	\begin{equation*}
	K(t^\alpha, J_\sigma f; L_p(\mathbb{R}^d),  \dot{\mathscr{L}}^{\alpha}_p(\mathbb{R}^d))  \lesssim  \left(\int_0^t (u^{-\sigma} K(u^{\alpha+\sigma},f;L_p(\mathbb{R}^d), \dot{\mathscr{L}}^{\alpha+\sigma}_p(\mathbb{R}^d)))^q \frac{du}{u}\right)^{1/q}.
	\end{equation*}
	Hence the estimate given in (\ref{QLift}) (respectively, (\ref{Qlift2})) follows from Lemma \ref{LemmaModuli} (respectively, (\ref{5.13new2})).
	
	Conversely, assume that $1 < p < \infty$ and the inequality stated in (\ref{QLift}) holds. Let $f \in \dot{B}^\sigma_{p,q}(\mathbb{R}^d)$. Then,
	\begin{equation}\label{12.12new2*}
		 \omega_\alpha(J_\sigma f, t)_p \lesssim \left(\int_0^\infty (u^{-\sigma} \omega_{\alpha+\sigma}(f,u)_p)^q \frac{du}{u}\right)^{1/q} \asymp \|f\|_{\dot{B}^\sigma_{p,q}(\mathbb{R}^d)}.
	\end{equation}
	Further, it follows from (\ref{AsympMod}) and the trivial inequalities $\omega_\alpha(f,t)_p \lesssim \omega_k(f,t)_p \lesssim \omega_\tau(f,t)_p, 0 < \tau < k < \alpha, k \in \mathbb{N}$, that
	\begin{equation}\label{12.12new2**}
	 \sup_{t > 0} \omega_\alpha(f, t)_p \asymp \|f\|_{L_p(\mathbb{R}^d)}, \quad 1 \leq p < \infty, \alpha > 0.
	 \end{equation}
	 Therefore, (\ref{12.12new2*}) implies that $\dot{B}^\sigma_{p,q}(\mathbb{R}^d) \hookrightarrow \dot{\mathscr{L}}^{\sigma}_p(\mathbb{R}^d)$, or equivalently, $\dot{B}^0_{p,q}(\mathbb{R}^d) \hookrightarrow L_p(\mathbb{R}^d)$ (see Lemma \ref{LemmaLift2}(ii) below). The latter embedding implies $q \leq \min\{p,2\}$ and the part $(i_a)$ is proved.

	 Next we show that if $p=1, \infty$ and the inequality given in (\ref{Qlift2}) holds then it is necessary that $q \leq 1$.
 Indeed, by the homogeneous counterparts of (\ref{BesovWeierstrass}) with $b=0$ and (\ref{BesovComparison}), we observe that
	 \begin{equation}\label{BesovWeierstrassHom}
	 	\|f\|_{\dot{B}^{s}_{p,q}(\mathbb{R}^d)}
\asymp \left(\int_0^\infty t^{-s q}
\|f-W^{\alpha}_{t} f\|_{L_p(\mathbb{R}^d)}^q \frac{dt}{t}\right)^{1/q}, \quad \alpha > s > 0.
	 \end{equation}
	  Therefore, we have
	 \begin{equation}\label{12.12new2***}
	 \|J_\sigma f - W^\alpha_t J_\sigma f\|_{L_p(\mathbb{R}^d)} \lesssim \left(\int_0^\infty (u^{-\sigma} \|f- W^{\alpha + \sigma}_u f\|_{L_p(\mathbb{R}^d)})^{q} \frac{du}{u}\right)^{1/q} \asymp \|f\|_{\dot{B}^\sigma_{p,q}(\mathbb{R}^d)}.
	 \end{equation}
	 On the other hand, combining (\ref{5.13new2}) and (\ref{aux4.12}) we have $ \|J_\sigma f - W^\alpha_t J_\sigma f\|_{L_p(\mathbb{R}^d)} \asymp K(t^\alpha,J_\sigma f;L_p(\mathbb{R}^d), \dot{\mathscr{L}}^{\alpha}_p(\mathbb{R}^d)) \gtrsim \omega_\beta (J_\sigma f, t)_p$ for $\beta > \alpha$ and thus, by (\ref{12.12new2***}), the estimate
$$\omega_\beta (J_\sigma f, t)_p \lesssim  \|f\|_{\dot{B}^\sigma_{p,q}(\mathbb{R}^d)}$$ holds. Letting $t \to \infty$ and using (\ref{12.12new2**}), we find $\dot{B}^\sigma_{p,q}(\mathbb{R}^d) \hookrightarrow \dot{\mathscr{L}}^{\sigma}_p(\mathbb{R}^d)$, or equivalently, $\dot{B}^0_{p,q}(\mathbb{R}^d) \hookrightarrow L_p(\mathbb{R}^d)$ which gives $q \leq 1$.

 	{\it (ii)} Obviously we have
	\begin{equation}\label{12.7new2**}
	J_{-\sigma} : \dot{\mathscr{L}}^{\alpha}_p(\mathbb{R}^d) \longrightarrow \dot{\mathscr{L}}^{\alpha + \sigma}_p(\mathbb{R}^d)
	\end{equation}
	and
	\begin{equation}\label{12.7new2***}
	J_{-\sigma} : L_{p}(\mathbb{R}^d) \longrightarrow \dot{\mathscr{L}}^{\sigma}_p(\mathbb{R}^d).
	\end{equation}
	Assume that one of the following conditions hold: $1 < p < \infty$ and $q \geq \max\{p,2\}$, or $p=1, \infty$ and $q=\infty$. Then, the estimate (\ref{12.7new2***}) implies
	 \begin{equation}\label{12.8new**}
	  J_{-\sigma} : L_{p}(\mathbb{R}^d) \longrightarrow \dot{B}^{\sigma}_{p,q}(\mathbb{R}^d)
	  \end{equation}
	 because $\dot{\mathscr{L}}^{\sigma}_p(\mathbb{R}^d) \hookrightarrow  \dot{B}^{\sigma}_{p,q}(\mathbb{R}^d)$ (see (\ref{Prelim7.18}) if $1 < p < \infty$). Hence, by (\ref{12.7new2**}), (\ref{12.8new**}) and (\ref{KfunctBWHom2}), one gets
	   \begin{align}
	    K(t^\alpha, J_\sigma f; L_p(\mathbb{R}^d),  \dot{\mathscr{L}}^{\alpha}_p(\mathbb{R}^d)) & \gtrsim K(t^\alpha, f; \dot{B}^{\sigma}_{p,q}(\mathbb{R}^d), \dot{\mathscr{L}}^{\alpha+\sigma}_p(\mathbb{R}^d)) \nonumber \\
	    & \asymp \left(\int_0^t (u^{-\sigma} K(u^{\alpha+\sigma},f;L_p(\mathbb{R}^d), \dot{\mathscr{L}}^{\alpha+\sigma}_p(\mathbb{R}^d)))^q \frac{du}{u}\right)^{1/q} \label{12.9new2**}
	   \end{align}
	   where we have also used the fact that $J_{-\sigma} J_{\sigma} f = f$. Note that (\ref{12.9new2**}) gives the desired estimate in (\ref{Qlift3}) by applying Lemma \ref{LemmaModuli} and the corresponding one in (\ref{Qlift4}) as a consequence of (\ref{5.13new2}).
	
	 Next we proceed with the converse statement, that is, we show that if the inequality given in (\ref{Qlift3}) (respectively, (\ref{Qlift4})) holds then $q \geq \max\{p,2\}$ (respectively, $q=\infty$). Assume first that $1< p < \infty$ and the estimate (\ref{Qlift3}) is true. Since $\omega_\alpha(f,t)_p \lesssim \|f\|_{L_p(\mathbb{R}^d)}$, we obtain
	 \begin{equation*}
	 	\left(\int_0^t (u^{-\sigma} \omega_{\alpha + \sigma}(f,u)_p)^q \frac{du}{u}\right)^{1/q} \lesssim \|J_\sigma f\|_{L_p(\mathbb{R}^d)} = \|f\|_{\dot{\mathscr{L}}^\sigma_p(\mathbb{R}^d)}, \quad t > 0,
	 \end{equation*}
	 and letting $t  \to \infty$ we derive $\|f\|_{\dot{B}^\sigma_{p,q}(\mathbb{R}^d)} \lesssim \|f\|_{\dot{\mathscr{L}}^{\sigma}_p(\mathbb{R}^d)}.$ The latter embedding yields that $q \geq \max\{p,2\}$.
	
	 Let $p=1, \infty$. We first remark that $\|f - W^\alpha_t f\|_{L_p(\mathbb{R}^d)} \lesssim \|f\|_{L_p(\mathbb{R}^d)}$ for all $t > 0$. Then, taking into account (\ref{BesovWeierstrassHom}), one can follow the approach given above in order to show that the validity of the inequality in (\ref{Qlift4}) implies $q=\infty$.
	
 \end{proof}

 In the special case $p=2$ in Theorem \ref{Thm Trebels2Lift} we derive the following equivalence result.

 \begin{cor}\label{cor EquivRiesz}
 	Let $\sigma, \alpha > 0$. Then,
	\begin{equation*}
		\omega_\alpha(J_\sigma f, t)_2 \asymp \left(\int_0^t (u^{-\sigma} \omega_{\alpha + \sigma}(f,u)_2)^{2} \frac{du}{u}\right)^{1/2}.
	\end{equation*}
 \end{cor}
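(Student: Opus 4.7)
The plan is to deduce this equivalence directly from Theorem~\ref{Thm Trebels2Lift}, specializing to the self-dual case $p=2$. Since $\min\{p,2\}=\max\{p,2\}=2$ when $p=2$, the critical index $q$ in both parts $(i_a)$ and $(ii_a)$ collapses to the same value $q=2$, so the two one-sided inequalities can be applied simultaneously.

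More precisely, I would proceed as follows. First, I would invoke part $(i_a)$ of Theorem~\ref{Thm Trebels2Lift} with $p=2$ and $q=2$: the condition $q \leq \min\{p,2\} = 2$ is satisfied, so for every $f \in L_2(\mathbb{R}^d)$ we obtain the upper estimate
\begin{equation*}
\omega_\alpha(J_\sigma f, t)_2 \lesssim \left(\int_0^t (u^{-\sigma} \omega_{\alpha+\sigma}(f,u)_2)^{2}\, \frac{du}{u}\right)^{1/2}.
\end{equation*}
Next, I would invoke part $(ii_a)$ of the same theorem with $p=2$ and $q=2$: now the condition $q \geq \max\{p,2\} = 2$ is satisfied, so for every $f \in \dot{\mathscr{L}}^{\sigma}_2(\mathbb{R}^d)$ we obtain the reverse estimate
\begin{equation*}
\left(\int_0^t (u^{-\sigma} \omega_{\alpha+\sigma}(f,u)_2)^{2}\, \frac{du}{u}\right)^{1/2} \lesssim \omega_\alpha(J_\sigma f, t)_2.
\end{equation*}
Combining the two estimates yields the claimed equivalence. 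Note that no obstacle regarding the domain of $f$ arises: if the right-hand side is finite, then $f$ automatically belongs to $\dot{\mathscr{L}}^{\sigma}_2(\mathbb{R}^d)$ by the Littlewood--Paley-type identification $\dot{B}^\sigma_{2,2}(\mathbb{R}^d) = \dot{\mathscr{L}}^{\sigma}_2(\mathbb{R}^d)$ underlying the proof of Theorem~\ref{Thm Trebels2Lift}, so both sides make sense simultaneously.

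Since the work has already been carried out in Theorem~\ref{Thm Trebels2Lift}, there is no genuine obstacle here; the corollary is a clean one-line consequence of the fact that $p=2$ is the unique value for which the upper and lower sharpness thresholds coincide. The only minor point worth spelling out is that both estimates hold with the same constant $q=2$ in the integral, which is precisely what forces $p=2$.
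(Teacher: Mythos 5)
Your proposal is correct and follows exactly the same route as the paper: the corollary is stated in the paper as an immediate specialization of Theorem~\ref{Thm Trebels2Lift} to $p=2$, where the thresholds $\min\{p,2\}$ and $\max\{p,2\}$ coincide at $q=2$ so that parts $(i_a)$ and $(ii_a)$ can be combined. Your extra remark about the domain via $\dot{B}^\sigma_{2,2}(\mathbb{R}^d)=\dot{\mathscr{L}}^\sigma_2(\mathbb{R}^d)$ is a sound clarification but does not alter the argument.
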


 The corresponding result for Bessel potentials reads as follows.
  \begin{thm}\label{Thm Trebels2Lift*}
 	Let $0 < q \leq \infty$ and $\sigma, \alpha > 0$.
	\begin{enumerate}
	\item[\upshape($i_a$)] Let $1 < p < \infty$ and $f \in L_p(\mathbb{R}^d)$. Then,
	\begin{equation*}
		 \min\{1, t^\alpha\} \|I_\sigma f\|_{L_p(\mathbb{R}^d)} + \omega_\alpha(I_\sigma f, t)_p \lesssim  \min\{1, t^\alpha\} \|f\|_{L_p(\mathbb{R}^d)} + \left(\int_0^t (u^{-\sigma} \omega_{\alpha + \sigma}(f,u)_p)^{q} \frac{du}{u}\right)^{1/q}
	\end{equation*}
	if and only if $q \leq \min\{p,2\}$.
	
		\item[\upshape($i_b$)] Let $p=1, \infty$ and $f \in L_p(\mathbb{R}^d)$. Then,
	\begin{align*}
		\min\{1, t^\alpha\} \|I_\sigma f\|_{L_p(\mathbb{R}^d)} +  \|I_\sigma f - W^\alpha_t I_\sigma f\|_{L_p(\mathbb{R}^d)} \\
		& \hspace{-6cm}\lesssim \min\{1, t^\alpha\} \| f\|_{L_p(\mathbb{R}^d)} +  \left(\int_0^t (u^{-\sigma} \|f- W^{\alpha + \sigma}_u f\|_{L_p(\mathbb{R}^d)})^{q} \frac{du}{u}\right)^{1/q}
	\end{align*}
	if and only if $q \leq 1$.

	\item[\upshape($ii_a$)] Let $1 < p < \infty$ and $f \in H^{\sigma}_p(\mathbb{R}^d)$. Then,
	\begin{equation*}
		 \min\{1, t^\alpha\} \| f\|_{L_p(\mathbb{R}^d)}+ \left(\int_0^t (u^{-\sigma} \omega_{\alpha + \sigma}(f,u)_p)^q \frac{du}{u}\right)^{1/q} \lesssim \min\{1, t^\alpha\} \|I_\sigma f\|_{L_p(\mathbb{R}^d)} + \omega_\alpha(I_\sigma f, t)_p
	\end{equation*}
	if and only if $q \geq \max\{p,2\}.$
	
	\item[\upshape($ii_b$)] Let $p=1, \infty$ and $f \in H^{\sigma}_p(\mathbb{R}^d)$. Then,
	\begin{align*}
		 \min\{1, t^\alpha\} \| f\|_{L_p(\mathbb{R}^d)} + \left(\int_0^t (u^{-\sigma} \|f- W^{\alpha + \sigma}_u f\|_{L_p(\mathbb{R}^d)})^{q} \frac{du}{u}\right)^{1/q}  \\
		 & \hspace{-8.5cm} \lesssim   \min\{1, t^\alpha\} \|I_\sigma f\|_{L_p(\mathbb{R}^d)} +  \|I_\sigma f - W^\alpha_t I_\sigma f\|_{L_p(\mathbb{R}^d)}
	\end{align*}
	 if and only if $q = \infty.$
	\end{enumerate}
 \end{thm}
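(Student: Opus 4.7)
The plan is to mirror the K-functional/interpolation proof of Theorem \ref{Thm Trebels2Lift}, now working with the Bessel potential $I_\sigma$ and inhomogeneous spaces. The first step is to recognize each side of every estimate as a Peetre K-functional. For $1 < p < \infty$, combining Lemma \ref{LemmaModuli}, (\ref{LPHom}), the identity $H^\alpha_p(\mathbb{R}^d) \asymp L_p(\mathbb{R}^d) \cap \dot{\mathscr{L}}^\alpha_p(\mathbb{R}^d)$, and (\ref{5.8new}) yields
$$
\min\{1, t^\alpha\}\|g\|_{L_p(\mathbb{R}^d)} + \omega_\alpha(g, t)_p \asymp K(t^\alpha, g; L_p(\mathbb{R}^d), H^\alpha_p(\mathbb{R}^d));
$$
for $p = 1, \infty$ the analogue with $\|g - W^\alpha_t g\|_{L_p(\mathbb{R}^d)}$ in place of $\omega_\alpha(g,t)_p$ and $L_p \cap \dot{\mathscr{L}}^\alpha_p$ in place of $H^\alpha_p$ follows from (\ref{5.13new2}) in place of Lemma \ref{LemmaModuli}.

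For the upper bounds $(i_a), (i_b)$, I would use that $I_\sigma : H^{\alpha+\sigma}_p \to H^\alpha_p$ is an isometric isomorphism (directly from the definition (\ref{BesselPotential2}) via $\|I_\sigma f\|_{H^\alpha_p} = \|f\|_{H^{\alpha+\sigma}_p}$), and under the asserted hypothesis on $q$ the embedding $\mathbf{B}^\sigma_{p,q} \hookrightarrow H^\sigma_p$ (Proposition \ref{RecallEmb}(vi) for $1 < p < \infty$; and (\ref{5.6new}) for $p = 1, \infty$ with $q \leq 1$) produces the second bounded map $I_\sigma : \mathbf{B}^\sigma_{p,q} \to L_p$. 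The interpolation property of the K-functional then gives
$$
K(t^\alpha, I_\sigma f; L_p, H^\alpha_p) \lesssim K(t^\alpha, f; \mathbf{B}^\sigma_{p,q}, H^{\alpha+\sigma}_p),
$$
and combining the identity $\mathbf{B}^\sigma_{p,q} = (L_p, H^{\alpha+\sigma}_p)_{\sigma/(\alpha+\sigma), q}$ (which follows from the K-functional characterization in step one together with Lemma \ref{LemmaFractionalBesov}) with Holmstedt's reiteration formula Lemma \ref{PrelimHolmstedt}(ii), the right-hand side becomes
$$
\left(\int_0^t (u^{-\sigma} K(u^{\alpha+\sigma}, f; L_p, H^{\alpha+\sigma}_p))^q \frac{du}{u}\right)^{1/q}.
$$
Inserting the characterization from step one and splitting $\min\{1, u^{\alpha+\sigma}\}\|f\|_{L_p} + \omega_{\alpha+\sigma}(f, u)_p$ reproduces exactly the asserted right-hand side, with the first summand integrating to $\asymp \min\{1, t^\alpha\}\|f\|_{L_p}$. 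The case $p = 1, \infty$ is handled by the same scheme with $L_p \cap \dot{\mathscr{L}}^\cdot_p$ and $\|\cdot - W^\cdot_t \cdot\|_{L_p}$ throughout.

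The lower bounds $(ii_a), (ii_b)$ are dual. Replacing $I_\sigma$ by $I_{-\sigma}$, the scheme applies because $I_{-\sigma}: H^\alpha_p \to H^{\alpha+\sigma}_p$ is an isometric isomorphism and, under $q \geq \max\{p,2\}$ (resp.\ $q = \infty$ for $p = 1, \infty$), the embedding $H^\sigma_p \hookrightarrow \mathbf{B}^\sigma_{p,q}$ from Proposition \ref{RecallEmb}(iv) (resp.\ (\ref{5.6new})) yields $I_{-\sigma}: L_p \to \mathbf{B}^\sigma_{p,q}$. Setting $g = I_\sigma f$ and running Holmstedt as before produces the reverse inequality. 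For necessity of the $q$-conditions in all four parts, I would let $t \to \infty$ in each inequality: since both $\omega_\alpha(g, t)_p$ and $\|g - W^\alpha_t g\|_{L_p}$ saturate to $\|g\|_{L_p}$ by (\ref{12.12new2**}), the estimate collapses precisely to $\mathbf{B}^\sigma_{p,q} \hookrightarrow H^\sigma_p$ for part~(i) and to $H^\sigma_p \hookrightarrow \mathbf{B}^\sigma_{p,q}$ for part~(ii); the known sharpness of these embeddings then forces $q$ into the asserted range.

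The main obstacle I anticipate is the extreme cases $p = 1, \infty$, where the clean identity $H^s_p = L_p \cap \dot{\mathscr{L}}^s_p$ is no longer available and Proposition \ref{RecallEmb}(iv), (vi) do not apply directly. The remedy is to replace $H^\alpha_p$ throughout by the couple $(L_p, L_p \cap \dot{\mathscr{L}}^\alpha_p)$, use (\ref{5.13new2}) to convert moduli of smoothness into Weierstrass-means differences, and rely on the one-sided chain $\mathbf{B}^\alpha_{p,1} \hookrightarrow L_p \cap \dot{\mathscr{L}}^\alpha_p \hookrightarrow \mathbf{B}^\alpha_{p,\infty}$ from (\ref{5.6new}); these match the sharp ranges $q \leq 1$ in $(i_b)$ and $q = \infty$ in $(ii_b)$, and the K-functional/Holmstedt scheme then proceeds verbatim.
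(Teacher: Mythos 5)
Your proposal is correct and follows exactly the route the paper prescribes: the paper offers no written proof of this theorem, stating only that "our method of proof of Theorem \ref{Thm Trebels2Lift} can be carried over from Riesz potentials to Bessel potentials," and your argument is precisely that carry-over, with $J_\sigma$, $\dot{\mathscr{L}}^\cdot_p$, and $\dot{B}^\cdot_{p,q}$ replaced by $I_\sigma$, $H^\cdot_p \cong L_p \cap \dot{\mathscr{L}}^\cdot_p$, and $\mathbf{B}^\cdot_{p,q}$, and with the analogous K-functional/Holmstedt interpolation steps. The one spot to tidy is the necessity argument: the citation of (\ref{12.12new2**}) for Weierstrass-means differences should go through the intermediate bound $\|g - W^\alpha_t g\|_{L_p(\mathbb{R}^d)} \gtrsim \omega_\beta(g,t)_p$ for $\beta > \alpha$ (from (\ref{5.13new2}) and (\ref{aux4.12})) before invoking (\ref{12.12new2**}), exactly as the paper itself does in the proof of the Riesz version.
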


Our method of proof of Theorem \ref{Thm Trebels2Lift} can be carried over from Riesz potentials to Bessel potentials.
 We leave the details of the proof of Theorem \ref{Thm Trebels2Lift*} to the reader.

We write down the analogue of Corollary \ref{cor EquivRiesz} for Bessel potentials.

 \begin{cor}\label{cor EquivBessel}
 	Let $\sigma, \alpha > 0$. Then,
	\begin{equation*}
		\min\{1, t^\alpha\}\|I_\sigma f\|_{L_2(\mathbb{R}^d)} +  \omega_\alpha(I_\sigma f, t)_2 \asymp \min\{1, t^\alpha\}\|f\|_{L_2(\mathbb{R}^d)} + \left(\int_0^t (u^{-\sigma} \omega_{\alpha + \sigma}(f,u)_2)^{2} \frac{du}{u}\right)^{1/2}.
	\end{equation*}
 \end{cor}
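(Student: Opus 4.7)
The plan is to derive Corollary \ref{cor EquivBessel} as the direct specialization of Theorem \ref{Thm Trebels2Lift*} to the Hilbert space setting $p=2$. The corollary asserts a two-sided estimate, so I would obtain the upper bound ``$\lesssim$'' from part ($i_a$) of Theorem \ref{Thm Trebels2Lift*} and the lower bound ``$\gtrsim$'' from part ($ii_a$) of the same theorem, with the fine parameter set to $q=2$ in both cases.

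First I would observe that with $p=2$ we have $\min\{p,2\}=2$, so the iff-condition $q\leq\min\{p,2\}$ in Theorem \ref{Thm Trebels2Lift*}($i_a$) is met precisely by the choice $q=2$. Hence for every $f\in L_2(\mathbb{R}^d)$ and every $t>0$,
\begin{equation*}
\min\{1,t^\alpha\}\|I_\sigma f\|_{L_2(\mathbb{R}^d)}+\omega_\alpha(I_\sigma f,t)_2\lesssim \min\{1,t^\alpha\}\|f\|_{L_2(\mathbb{R}^d)}+\left(\int_0^t(u^{-\sigma}\omega_{\alpha+\sigma}(f,u)_2)^2\frac{du}{u}\right)^{1/2}.
\end{equation*}

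Second, the condition $q\geq\max\{p,2\}=2$ of Theorem \ref{Thm Trebels2Lift*}($ii_a$) is likewise satisfied for $q=2$. Thus, for every $f\in H^\sigma_2(\mathbb{R}^d)$ and every $t>0$, the reverse inequality holds:
\begin{equation*}
\min\{1,t^\alpha\}\|f\|_{L_2(\mathbb{R}^d)}+\left(\int_0^t(u^{-\sigma}\omega_{\alpha+\sigma}(f,u)_2)^2\frac{du}{u}\right)^{1/2}\lesssim \min\{1,t^\alpha\}\|I_\sigma f\|_{L_2(\mathbb{R}^d)}+\omega_\alpha(I_\sigma f,t)_2.
\end{equation*}
Combining these two estimates yields the equivalence claimed in Corollary \ref{cor EquivBessel}. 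A small bookkeeping remark is that the right-hand side of the equivalence is finite if and only if $f\in H^\sigma_2(\mathbb{R}^d)$ (by the $p=2$ case of (\ref{Prelim7.18}) together with the homogeneous characterization behind Theorem \ref{Thm Trebels2Lift*}), so both sides are to be understood as simultaneously finite or infinite; this resolves the mild asymmetry between the hypotheses of ($i_a$) and ($ii_a$) in Theorem \ref{Thm Trebels2Lift*}.

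There is essentially no obstacle here, since all the work has been done in establishing Theorem \ref{Thm Trebels2Lift*}; the only point that merits care is verifying that the critical thresholds $\min\{p,2\}$ and $\max\{p,2\}$ in that theorem coincide exactly when $p=2$, so that the single choice $q=2$ activates both the direct and reverse inequalities simultaneously, which is precisely the Plancherel/Hilbert-space mechanism underlying the corollary.
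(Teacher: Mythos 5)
Your proposal is correct and takes essentially the same route the paper intends: Corollary~\ref{cor EquivBessel} is simply the specialization of Theorem~\ref{Thm Trebels2Lift*} to $p=2$, where the thresholds $\min\{p,2\}$ and $\max\{p,2\}$ coincide at $2$, so the single choice $q=2$ activates parts $(i_a)$ and $(ii_a)$ simultaneously. The paper states the corollary without written proof for exactly this reason, and your remark that both sides are finite precisely when $f\in H^\sigma_2(\mathbb{R}^d)$ correctly reconciles the nominally different hypotheses in $(i_a)$ and $(ii_a)$.
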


\subsection{Lifting property on $\mathbf{B}^{0,b}_{p,q}(\mathbb{R}^d)$}

The study of the boundedness properties of the operators $J_\sigma$ and $I_\sigma$  plays a key role in the theory of function spaces. For the convenience of the reader we recall how the operators $I_\sigma$ and $J_\sigma$ interact with Besov-norms and Triebel-Lizorkin-norms.

\begin{lem}\label{LemmaLift2}
	Let $0 < q \leq \infty$, and $s, \sigma \in \mathbb{R}$.
	
	\begin{enumerate}[\upshape(i)]
	\item Let $1 \leq p \leq \infty$. Then, $I_\sigma$ maps $B^{s}_{p,q}(\mathbb{R}^d)$ isomorphically onto $B^{s-\sigma}_{p,q}(\mathbb{R}^d)$, and
	\begin{equation}\label{18}
		\|I_\sigma f\|_{B^{s-\sigma}_{p,q}(\mathbb{R}^d)} \asymp \|f\|_{B^{s}_{p,q}(\mathbb{R}^d)}.
	\end{equation}
	\item  Let $1 \leq p < \infty$. Then, $I_\sigma$ maps $F^{s}_{p,q}(\mathbb{R}^d)$ isomorphically onto $F^{s-\sigma}_{p,q}(\mathbb{R}^d)$, and
	\begin{equation}\label{18.2}
		\|I_\sigma f\|_{F^{s-\sigma}_{p,q}(\mathbb{R}^d)} \asymp \|f\|_{F^{s}_{p,q}(\mathbb{R}^d)}.
	\end{equation}
	
	\item	The corresponding estimates also hold for the operator $J_\sigma$ when we replace the spaces $B^{s}_{p,q}(\mathbb{R}^d)$ and $F^{s}_{p,q}(\mathbb{R}^d)$ by their homogeneous counterparts $\dot{B}^{s}_{p,q}(\mathbb{R}^d)$ and $\dot{F}^{s}_{p,q}(\mathbb{R}^d)$, respectively.
	\end{enumerate}
\end{lem}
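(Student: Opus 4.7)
The plan is to reduce the boundedness of $I_\sigma$ (and $J_\sigma$) to a standard Fourier-multiplier estimate on the dyadic blocks of the Littlewood--Paley decomposition, and then invert the operator using $I_{-\sigma}\circ I_\sigma=\mathrm{id}$. Concretely, for (i), given the resolution $\{\varphi_j\}_{j\in\mathbb N_0}$ from \eqref{SmoothFunction}--\eqref{resolution}, introduce the enlarged cut--offs $\widetilde\varphi_j=\varphi_{j-1}+\varphi_j+\varphi_{j+1}$ (with $\varphi_{-1}:=0$), which are identically $1$ on $\mathrm{supp}\,\varphi_j$. Writing $m_\sigma(\xi)=(1+|\xi|^2)^{\sigma/2}$, the identity
$$(\varphi_j\widehat{I_\sigma f})^\vee=K_j*(\widetilde\varphi_j\widehat f)^\vee,\qquad K_j:=(\varphi_j\,m_\sigma)^\vee,$$
holds. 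The whole proof then hinges on the single estimate
$$\|K_j\|_{L_1(\mathbb R^d)}\lesssim 2^{j\sigma},\qquad j\in\mathbb N_0,\quad (\text{with constants independent of } j).$$
This follows from the symbol bound $|D^\alpha m_\sigma(\xi)|\lesssim(1+|\xi|)^{\sigma-|\alpha|}$ together with the Leibniz rule and a standard rescaling $\xi\mapsto 2^{-j}\xi$ on $\mathrm{supp}\,\varphi_j$, which reduces the matter to a uniform $L_1$-bound on a Schwartz family; the $j=0$ case is handled separately but is trivial since the symbol is smooth and compactly supported.

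Combining the kernel bound with Young's convolution inequality gives
$$\|(\varphi_j\widehat{I_\sigma f})^\vee\|_{L_p(\mathbb R^d)}\lesssim 2^{j\sigma}\,\|(\widetilde\varphi_j\widehat f)^\vee\|_{L_p(\mathbb R^d)}$$
uniformly in $j$ and $p\in[1,\infty]$. Multiplying by $2^{j(s-\sigma)}$ and taking the $\ell_q$-quasi-norm in $j$, the three-term sum produced by $\widetilde\varphi_j$ reassembles to $\|f\|_{B^s_{p,q}(\mathbb R^d)}$ up to constants, yielding $\|I_\sigma f\|_{B^{s-\sigma}_{p,q}(\mathbb R^d)}\lesssim\|f\|_{B^s_{p,q}(\mathbb R^d)}$. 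The reverse inequality follows by applying the very same argument to the multiplier $m_{-\sigma}(\xi)=(1+|\xi|^2)^{-\sigma/2}$ (which satisfies symbol bounds of identical shape) and invoking $I_{-\sigma}\circ I_\sigma=\mathrm{id}_{\mathcal S'(\mathbb R^d)}$. This establishes \eqref{18}.

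For (ii), the kernel identity and the convolution structure are the same, but Young's inequality no longer suffices because of the inner $\ell_q$-norm. Instead, one upgrades the convolution bound to the pointwise estimate
$$|(\varphi_j\widehat{I_\sigma f})^\vee(x)|\lesssim 2^{j\sigma}\, \mathcal M((\widetilde\varphi_j\widehat f)^\vee)(x),$$
where $\mathcal M$ is the Hardy--Littlewood (or Peetre) maximal function; this is a consequence of the uniform $L_1$-bound on $K_j$ and the fact that $|K_j|$ is dominated by the dilation of a fixed integrable radially decreasing majorant. Then the vector-valued Fefferman--Stein inequality in $L_p(\ell_q)$, valid for $1<p<\infty$ and $0<q\leq\infty$, yields $\|I_\sigma f\|_{F^{s-\sigma}_{p,q}(\mathbb R^d)}\lesssim\|f\|_{F^s_{p,q}(\mathbb R^d)}$; for $p=1$ one replaces $\mathcal M$ by Peetre's maximal function $\mathcal M_a$ with $a>d/\min\{1,q\}$, whose vector-valued boundedness on $L_1(\ell_q)$ is classical (see \cite[2.3.6]{Triebel1}). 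Inverting with $I_{-\sigma}$ as above gives \eqref{18.2}.

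For (iii), the relevant multiplier is $g_\sigma(\xi)=|\xi|^\sigma$, which is smooth away from the origin and satisfies $|D^\alpha g_\sigma(\xi)|\lesssim|\xi|^{\sigma-|\alpha|}$. Working in $\dot{\mathcal S}'(\mathbb R^d)$ removes the singularity at $\xi=0$ (polynomials are quotiented out), and one uses the homogeneous Littlewood--Paley decomposition $\{\dot\varphi_j\}_{j\in\mathbb Z}$ supported in dyadic annuli $|\xi|\asymp 2^j$. On each such annulus $g_\sigma\asymp 2^{j\sigma}$ and the scaling argument used for $m_\sigma$ applies unchanged (indeed more cleanly, as $g_\sigma$ is exactly homogeneous), giving $\|(\dot\varphi_j\,g_\sigma)^\vee\|_{L_1}\lesssim 2^{j\sigma}$; the rest of the proof proceeds verbatim. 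The main technical obstacle throughout is the uniform kernel bound $\|K_j\|_{L_1}\lesssim 2^{j\sigma}$: once this is available, everything else is bookkeeping with the dyadic decomposition and standard vector-valued maximal inequalities.
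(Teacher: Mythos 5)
Your argument is the standard Fourier-multiplier proof, which is essentially what the paper has in mind when it refers (without proof) to \cite[Sections 2.3.8 and 5.2.3]{Triebel1}; so there is no genuine divergence of approach. Parts (i) and (iii) are correct as written: the uniform kernel bound $\|K_j\|_{L_1}\lesssim 2^{j\sigma}$ follows from the symbol estimates by rescaling, Young's inequality then carries the dyadic pieces into $L_p$ uniformly in $p\in[1,\infty]$, and inverting with $I_{-\sigma}$ (resp.\ $J_{-\sigma}$ in $\dot{\mathcal S}'$) upgrades boundedness to an isomorphism.

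The one slip is in part (ii): you invoke ``the vector-valued Fefferman--Stein inequality in $L_p(\ell_q)$, valid for $1<p<\infty$ and $0<q\leq\infty$.'' That range is wrong; the Fefferman--Stein inequality for the Hardy--Littlewood maximal operator requires $1<q\leq\infty$ (it fails for $q\leq 1$ even when $1<p<\infty$). The fix is exactly the tool you already bring in for $p=1$: one should use Peetre's maximal function (or, equivalently, Triebel's vector-valued Fourier-multiplier theorem in \cite[Theorem 1.6.3 and 2.3.7]{Triebel1}) for the entire quasi-Banach range $0<q\leq\infty$, not only for the endpoint $p=1$. With that correction the argument for (ii) goes through; without it there is a genuine gap precisely in the regime $1<p<\infty$, $0<q\leq 1$. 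Also note that obtaining the pointwise bound $|K_j*g|\lesssim 2^{j\sigma}\mathcal M g$ really rests on a uniform pointwise majorant of the form $|K_j(x)|\lesssim 2^{j\sigma}2^{jd}\Phi(2^jx)$ with $\Phi$ integrable and radially decreasing (obtained from $|D^\alpha(\varphi_j m_\sigma)|$ by integration by parts), not merely on the $L_1$-bound of $K_j$; you state this, but it deserves the emphasis since the $L_1$-bound alone does not imply pointwise domination.
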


For the proof of Lemma \ref{LemmaLift2} we refer to \cite[Sections 2.3.8 and 5.2.3]{Triebel1}. The extensions of (\ref{18}) and (\ref{18.2}) to the function spaces of generalized smoothness $B^{s,b}_{p,q}(\mathbb{R}^d)$ and $F^{s,b}_{p,q}(\mathbb{R}^d)$ were obtained in \cite[Proposition 1.8]{Moura} (cf. Lemma \ref{LemmaLift}).

In this section we focus on the smoothness properties of the Bessel potential $I_\sigma$ for functions in $\mathbf{B}^{0,b}_{p,q}(\mathbb{R}^d)$. In sharp contrast with the case of functions in $B^{0,b}_{p,q}(\mathbb{R}^d)$ (see Lemmas \ref{LemmaLift2} and \ref{LemmaLift}), we will show that $I_\sigma$ does not act as a lift in the setting of the $\mathbf{B}^{0,b}_{p,q}(\mathbb{R}^d)$ spaces.

 \begin{thm}\label{Theorem 12.1}
 	Let $\sigma \in \mathbb{R}, 0 < q \leq \infty$ and $b > -1/q$.
	\begin{enumerate}[\upshape(i)]
	\item Assume that $1 < p < \infty$. Then
	\begin{equation}\label{19}
		I_\sigma: \mathbf{B}^{0,b}_{p,q}(\mathbb{R}^d) \longrightarrow B^{-\sigma, b+1/\max\{2,p,q\}}_{p,q}(\mathbb{R}^d)
	\end{equation}
	and
	\begin{equation}\label{20}
		I_\sigma: B^{\sigma,b+1/\min\{2,p,q\}}_{p,q}(\mathbb{R}^d) \longrightarrow \mathbf{B}^{0, b}_{p,q}(\mathbb{R}^d).
	\end{equation}
	\item Let $p=1, \infty$. Then
	\begin{equation}\label{19new}
		I_\sigma: \mathbf{B}^{0,b}_{p,q}(\mathbb{R}^d) \longrightarrow B^{-\sigma, b}_{p,q}(\mathbb{R}^d)
	\end{equation}
	and
	\begin{equation}\label{20new}
		I_\sigma: B^{\sigma,b+1/\min\{1,q\}}_{p,q}(\mathbb{R}^d) \longrightarrow \mathbf{B}^{0, b}_{p,q}(\mathbb{R}^d).
	\end{equation}
	\end{enumerate}
 \end{thm}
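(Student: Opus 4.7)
The strategy is to factor $I_\sigma$ through a classical Fourier-analytic Besov space, where Lemma \ref{LemmaLift} lifts the smoothness by $\sigma$ without altering the logarithmic parameter, and then to absorb the gap between the $\mathbf{B}$- and $B$-scales using the embeddings of (\ref{1}) (together with their endpoint analogs at $p = 1, \infty$).

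For part (i), with $1 < p < \infty$, to obtain (\ref{19}) I would compose the right-hand embedding in (\ref{1}), namely $\mathbf{B}^{0,b}_{p,q}(\mathbb{R}^d) \hookrightarrow B^{0,b+1/\max\{2,p,q\}}_{p,q}(\mathbb{R}^d)$, with the lift $I_\sigma : B^{0,b+1/\max\{2,p,q\}}_{p,q}(\mathbb{R}^d) \to B^{-\sigma,b+1/\max\{2,p,q\}}_{p,q}(\mathbb{R}^d)$ given by Lemma \ref{LemmaLift}. Symmetrically, for (\ref{20}) I would first use Lemma \ref{LemmaLift} to produce $I_\sigma : B^{\sigma,b+1/\min\{2,p,q\}}_{p,q}(\mathbb{R}^d) \to B^{0,b+1/\min\{2,p,q\}}_{p,q}(\mathbb{R}^d)$, and then apply the left-hand embedding of (\ref{1}) to land in $\mathbf{B}^{0,b}_{p,q}(\mathbb{R}^d)$. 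Observe that the ``loss'' of $1/\max\{2,p,q\}$ in (\ref{19}) and the ``gain'' of $1/\min\{2,p,q\}$ in (\ref{20}) are dictated by (\ref{1}) and cannot be improved by this procedure.

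For part (ii), with $p = 1, \infty$, the ingredient (\ref{1}) is unavailable, so the first task is to establish the endpoint analog
$$B^{0,b+1/\min\{1,q\}}_{p,q}(\mathbb{R}^d) \hookrightarrow \mathbf{B}^{0,b}_{p,q}(\mathbb{R}^d) \hookrightarrow B^{0,b}_{p,q}(\mathbb{R}^d).$$
For the right-hand embedding I would invoke a Jackson-type inequality: since $\varphi_j$ is supported in the dyadic annulus $\{2^{j-1} \leq |\xi| \leq 2^{j+1}\}$, convolution with $\varphi_j^\vee$ annihilates every entire function of spherical exponential type $\leq 2^{j-1}$, so
$\|(\varphi_j \widehat{f})^\vee\|_{L_p(\mathbb{R}^d)} = \|\varphi_j^\vee \ast (f-g)\|_{L_p(\mathbb{R}^d)} \lesssim \|f-g\|_{L_p(\mathbb{R}^d)}$
for every such $g$, hence $\|(\varphi_j \widehat{f})^\vee\|_{L_p(\mathbb{R}^d)} \lesssim E_{2^{j-1}}(f)_{L_p(\mathbb{R}^d)}$ for each $p \in [1,\infty]$; combined with Corollary \ref{cor4.8*} and the equivalence between best approximations and moduli of smoothness this yields $\mathbf{B}^{0,b}_{p,q}(\mathbb{R}^d) \hookrightarrow B^{0,b}_{p,q}(\mathbb{R}^d)$. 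For the left-hand embedding I would use the converse estimate $E_{2^N}(f)_{L_p(\mathbb{R}^d)} \leq \sum_{j > N} \|(\varphi_j \widehat{f})^\vee\|_{L_p(\mathbb{R}^d)}$ together with a discrete Hardy-type inequality (the analog of (\ref{HardyInequal3})), which produces exactly the logarithmic shift $1/\min\{1,q\}$ required; the case $0 < q < 1$ is handled, in the same spirit, by a direct summation in $\ell_q \hookrightarrow \ell_1$. Alternatively, these endpoint embeddings may be derived from the chain $\mathbf{B}^m_{p,1}(\mathbb{R}^d) \hookrightarrow W^m_p(\mathbb{R}^d) \hookrightarrow \mathbf{B}^m_{p,\infty}(\mathbb{R}^d)$ for $p = 1, \infty$ (used already in the proof of Theorem \ref{Theorem 11.1}(ii)) by interpolating with the $(0,b),q$-method and invoking Lemma \ref{PrelimLemma7.2}(vi). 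Once this endpoint analog of (\ref{1}) is in hand, the lift factors exactly as in part (i).

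The main obstacle is precisely the endpoint analog of (\ref{1}) at $p = 1, \infty$: the standard proof in \cite{CobosDominguez3} relies on the Littlewood--Paley square-function characterization (\ref{LP}), which is not available at these endpoints, so the logarithmic shift must be tracked by a more elementary Jackson--Bernstein or interpolation argument. This is also the reason why the shift in part (ii) is $1/\min\{1,q\}$ rather than $1/\min\{2,p,q\}$: the embeddings $B^0_{p,1}(\mathbb{R}^d) \hookrightarrow L_p(\mathbb{R}^d) \hookrightarrow B^0_{p,\infty}(\mathbb{R}^d)$, valid for $p = 1, \infty$, replace the sharper square-function bounds available in the reflexive range.
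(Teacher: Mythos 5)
Your proposal is correct and, in part, anticipated by the paper itself: the remark immediately following Theorem~\ref{Theorem 12.1} explicitly observes that (\ref{19}) and (\ref{20}) ``can be also obtained via the relationships (\ref{1}) and (\ref{18})'', which is exactly your strategy for part~(i). The paper's own proof, however, does not proceed by composing embeddings but by interpolating the operator directly: one starts from the bounds $I_\sigma : L_p(\mathbb{R}^d) \to B^{-\sigma}_{p,\max\{p,2\}}(\mathbb{R}^d)$ (from $L_p \hookrightarrow B^0_{p,\max\{p,2\}}$ and Lemma~\ref{LemmaLift}) and $I_\sigma : B^s_{p,q}(\mathbb{R}^d) \to B^{s-\sigma}_{p,q}(\mathbb{R}^d)$ for some $s>0$, applies the $(0,b),q$-limiting interpolation method, identifies the source as $\mathbf{B}^{0,b}_{p,q}$ via (\ref{PrelimInterpolationnew2}) and the target as $B^{-\sigma,b+1/\max\{2,p,q\}}_{p,q}$ via the reiteration Lemmas~\ref{PrelimLemmaCF} and \ref{PrelimLemma7.2}(vi); for the endpoints $p=1,\infty$ the paper replaces $\max\{p,2\}$ and $\min\{p,2\}$ by $\infty$ and $1$ (using $B^0_{p,1}\hookrightarrow L_p\hookrightarrow B^0_{p,\infty}$) and runs the same computation. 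This is what makes the paper's argument uniform across all four statements. Your approach is more modular: you first transfer between the $\mathbf{B}$- and $B$-scales at zero smoothness, then apply the isomorphism $I_\sigma$. Where you genuinely diverge from the paper is part~(ii): instead of the limiting-interpolation computation, you propose a direct Jackson--Bernstein argument. The two ingredients you need --- $\|(\varphi_j\widehat f)^\vee\|_{L_p} \lesssim E_{2^{j-1}}(f)_{L_p}$ (since $\varphi_j$ vanishes on $\{|\xi|\leq 2^{j-1}\}$ and $\varphi_j^\vee$ has uniformly bounded $L_1$-norm) and $E_{2^{N+1}}(f)_{L_p} \leq \sum_{j>N}\|(\varphi_j\widehat f)^\vee\|_{L_p}$ --- combined with Corollary~\ref{cor4.8*} and a discrete Hardy inequality (for $q\geq 1$) or the inclusion $\ell_q\hookrightarrow\ell_1$ (for $q<1$) indeed produce precisely the endpoint chain $B^{0,b+1/\min\{1,q\}}_{p,q}\hookrightarrow\mathbf{B}^{0,b}_{p,q}\hookrightarrow B^{0,b}_{p,q}$, and the shifts $1$ and $1/q$ are captured exactly by the hypothesis $b>-1/q$. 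This elementary route makes the source of the logarithmic loss transparent; what it trades away is the uniformity of the interpolation argument, which handles all four statements by a single parameter substitution.
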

 \begin{proof}
 We start by showing (\ref{19}). Since $L_p(\mathbb{R}^d) \hookrightarrow B^0_{p, \max\{p,2\}}(\mathbb{R}^d)$ (see (\ref{Prelim7.18}) with $s=b=0$), we get
 \begin{equation*}
 	I_\sigma : L_p(\mathbb{R}^d) \longrightarrow B^{-\sigma}_{p, \max\{p,2\}}(\mathbb{R}^d)
 \end{equation*}
 where we have used (\ref{18}). On the other hand, by (\ref{18}), we know that, in particular, $I_\sigma : B^s_{p,q}(\mathbb{R}^d) \longrightarrow  B^{s-\sigma}_{p,q}(\mathbb{R}^d)$ for $s > 0$. Applying the $((0,b),q)$-interpolation method, we arrive at
 \begin{equation}\label{12.6i}
 	I_\sigma : (L_p(\mathbb{R}^d), B^s_{p,q}(\mathbb{R}^d) )_{(0,b),q} \longrightarrow (B^{-\sigma}_{p, \max\{p,2\}}(\mathbb{R}^d),B^{s-\sigma}_{p,q}(\mathbb{R}^d) )_{(0,b),q}.
 \end{equation}
Concerning the source space in (\ref{12.6i}), we make use of (\ref{BesovComparison}) and the interpolation formula (\ref{PrelimInterpolationnew2}) to get $(L_p(\mathbb{R}^d), B^s_{p,q}(\mathbb{R}^d) )_{(0,b),q} = \mathbf{B}^{0,b}_{p,q}(\mathbb{R}^d)$. As far as the target space in (\ref{12.6i}), it follows from Lemma \ref{PrelimLemmaCF} and Lemma \ref{PrelimLemma7.2}(vi),
\begin{align*}
	(B^{-\sigma}_{p, \max\{p,2\}}(\mathbb{R}^d),B^{s-\sigma}_{p,q}(\mathbb{R}^d) )_{(0,b),q} & = ((B^{-s-\sigma}_{p,q}(\mathbb{R}^d), B^{s-\sigma}_{p,q}(\mathbb{R}^d))_{\frac{1}{2},\max\{p,2\}}, B^{s-\sigma}_{p,q}(\mathbb{R}^d))_{(0,b),q} \\
	& \hspace{-3.5cm}\hookrightarrow (B^{-s-\sigma}_{p,q}(\mathbb{R}^d), B^{s-\sigma}_{p,q}(\mathbb{R}^d))_{1/2,q; b + 1/\max\{p,2,q\}} = B^{-\sigma, b+1/\max\{p,2,q\}}_{p,q}(\mathbb{R}^d).
\end{align*}
This implies that
 \begin{equation*}
 	I_\sigma : \mathbf{B}^{0,b}_{p,q}(\mathbb{R}^d) \longrightarrow B^{-\sigma, b+1/\max\{p,2,q\}}_{p,q}(\mathbb{R}^d).
 \end{equation*}

 The proofs of (\ref{20}), (\ref{19new}) and (\ref{20new}) follow the same steps as above, but now using the embeddings $B^0_{p,\min\{p,2\}}(\mathbb{R}^d) \hookrightarrow L_p(\mathbb{R}^d)$ (see (\ref{Prelim7.18}) with $s=b=0$), and $B^0_{p,1}(\mathbb{R}^d) \hookrightarrow L_p(\mathbb{R}^d) \hookrightarrow B^0_{p,\infty}(\mathbb{R}^d) , p=1, \infty$ (see, e.g., \cite[(3.1.3) and (3.1.6)]{SickelTriebel}).

 \end{proof}

 \begin{rem}
(i) Note that the estimates (\ref{19}) and (\ref{20}) can be also obtained via the relationships (\ref{1}) and (\ref{18}).

(ii) A similar approach to the one given in the proof of Theorem \ref{Theorem 12.1} but using now the reiteration (\ref{limiting interpolation reiteration formula}) allows us to derive the extreme case $b=-1/q$ in Theorem \ref{Theorem 12.1}. Namely, if $1 < p < \infty$, then
\begin{equation*}
		I_\sigma: \mathbf{B}^{0,-1/q}_{p,q}(\mathbb{R}^d) \longrightarrow B^{-\sigma, -1/q+1/\max\{2,p,q\}, 1/\max\{2,p,q\}}_{p,q}(\mathbb{R}^d)
	\end{equation*}
	and
		\begin{equation*}
		I_\sigma: B^{\sigma,-1/q+1/\min\{2,p,q\}, 1/\min\{2,p,q\}}_{p,q}(\mathbb{R}^d) \longrightarrow \mathbf{B}^{0, -1/q}_{p,q}(\mathbb{R}^d).
	\end{equation*}
	If $p=1, \infty$ then
		\begin{equation*}
		I_\sigma: \mathbf{B}^{0,-1/q}_{p,q}(\mathbb{R}^d) \longrightarrow B^{-\sigma, -1/q}_{p,q}(\mathbb{R}^d)
	\end{equation*}
	and
	\begin{equation*}
		I_\sigma: B^{\sigma,-1/q+1/\min\{1,q\}, 1/\min\{1,q\}}_{p,q}(\mathbb{R}^d) \longrightarrow \mathbf{B}^{0, -1/q}_{p,q}(\mathbb{R}^d).
	\end{equation*}
\end{rem}

 The boundedness results given in Theorem \ref{Theorem 12.1} are sharp. More precisely, we have the following

 \begin{prop}\label{Proposition 12.2}
	Let $\sigma \in \mathbb{R}, 1 < p < \infty, 0 < q \leq \infty,$ and $b > -1/q$. Given any $\varepsilon > 0$, there exists $f \in \mathbf{B}^{0,b}_{p,q}(\mathbb{R}^d)$ such that $I_\sigma f \not \in B^{-\sigma, b+1/\max\{2,p,q\}+\varepsilon}_{p,q}(\mathbb{R}^d)$.
 \end{prop}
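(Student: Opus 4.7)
The plan is to reduce the sharpness statement to Proposition \ref{Proposition 7.2} via the lifting property of the Bessel potential. The key observation is that Lemma \ref{LemmaLift} tells us that $I_\sigma$ is an isomorphism between $B^{s,b}_{p,q}(\mathbb{R}^d)$ and $B^{s-\sigma,b}_{p,q}(\mathbb{R}^d)$ with equivalent quasi-norms. In particular, applied at classical smoothness $s=0$ and logarithmic smoothness $b + 1/\max\{2,p,q\} + \varepsilon$, it yields
\begin{equation*}
\|I_\sigma g\|_{B^{-\sigma, b + 1/\max\{2,p,q\} + \varepsilon}_{p,q}(\mathbb{R}^d)} \asymp \|g\|_{B^{0, b + 1/\max\{2,p,q\} + \varepsilon}_{p,q}(\mathbb{R}^d)}
\end{equation*}
for every tempered distribution $g$, with equivalence constants independent of $g$. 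Hence $I_\sigma g$ belongs to the target space $B^{-\sigma, b + 1/\max\{2,p,q\} + \varepsilon}_{p,q}(\mathbb{R}^d)$ if and only if $g$ belongs to $B^{0, b + 1/\max\{2,p,q\} + \varepsilon}_{p,q}(\mathbb{R}^d)$.

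Given $\varepsilon > 0$, I would then invoke Proposition \ref{Proposition 7.2}, which provides a function $f \in \mathbf{B}^{0,b}_{p,q}(\mathbb{R}^d)$ such that $f \notin B^{0, b + 1/\max\{2,p,q\} + \varepsilon}_{p,q}(\mathbb{R}^d)$. Combining this with the equivalence displayed above (applied to $g=f$), we deduce that $\|I_\sigma f\|_{B^{-\sigma, b + 1/\max\{2,p,q\} + \varepsilon}_{p,q}(\mathbb{R}^d)} = \infty$, that is, $I_\sigma f \notin B^{-\sigma, b + 1/\max\{2,p,q\} + \varepsilon}_{p,q}(\mathbb{R}^d)$. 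This yields precisely the counterexample required by the proposition.

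Equivalently, one can phrase this as a proof by contradiction: if the boundedness $I_\sigma : \mathbf{B}^{0,b}_{p,q}(\mathbb{R}^d) \to B^{-\sigma, b + 1/\max\{2,p,q\} + \varepsilon}_{p,q}(\mathbb{R}^d)$ were to hold, then composing with the lifting property we would obtain the continuous embedding $\mathbf{B}^{0,b}_{p,q}(\mathbb{R}^d) \hookrightarrow B^{0, b + 1/\max\{2,p,q\} + \varepsilon}_{p,q}(\mathbb{R}^d)$, which is ruled out by Proposition \ref{Proposition 7.2}. Since no nontrivial construction or interpolation argument is needed beyond these two ingredients, I do not expect any substantial obstacle here; the entire content of the proposition is the (already established) sharpness of the embedding in Proposition \ref{Proposition 7.2} transported by the Bessel lift, and the only point requiring minimal care is that Lemma \ref{LemmaLift} is indeed applicable to the Besov space with Fourier-analytical definition appearing in the target, which is immediate from how $I_\sigma$ is defined in (\ref{liftdef}) via the Fourier transform.
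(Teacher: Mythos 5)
Your proof is correct, and it takes a genuinely more economical route than the paper's. The paper does not make the reduction you propose; instead, inside the proof of Proposition~\ref{Proposition 12.2} it reruns a case analysis parallel to the one in the proof of Proposition~\ref{Proposition 7.2}. For $p = \max\{2,p,q\}$ it constructs a radial $GM$-type counterexample via Theorems~\ref{Theorem 3.2} and \ref{Theorem 3.6}, for $2 = \max\{2,p,q\}$ it builds a lacunary-series counterexample via Proposition~\ref{Proposition 4.1} and Theorem~\ref{Theorem 4.2}, and in both cases it then invokes the lift (\ref{lift}) to transport $f \notin B^{0,b'}_{p,q}(\mathbb{R}^d)$ to $I_\sigma f \notin B^{-\sigma,b'}_{p,q}(\mathbb{R}^d)$. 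For $q = \max\{2,p,q\}$ the paper argues by contradiction: assuming boundedness of $I_\sigma$, it interpolates against $\mathbf{B}^s_{p,q}(\mathbb{R}^d)$, uses (\ref{PrelimInterpolationnew3}) and Lemma~\ref{PrelimLemmaCF} to identify the interpolation spaces, and derives an embedding between $B$-spaces that violates Proposition~\ref{RecallEmb*optim}. Your approach replaces all of this with a single application of the isomorphism $I_{-\sigma}\colon B^{-\sigma,b'}_{p,q}(\mathbb{R}^d)\to B^{0,b'}_{p,q}(\mathbb{R}^d)$ from Lemma~\ref{LemmaLift} (together with $I_{-\sigma}I_\sigma=\text{id}$ on $\mathcal{S}'(\mathbb{R}^d)$), and then quotes Proposition~\ref{Proposition 7.2} wholesale. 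Both proofs ultimately rest on the same sharpness phenomenon, but your version exposes cleanly that Proposition~\ref{Proposition 12.2} is just Proposition~\ref{Proposition 7.2} transported by the Bessel lift, whereas the paper repeats the case-by-case constructions and chooses a different (and somewhat shorter) interpolation argument for the $q=\max\{2,p,q\}$ case than the duality argument used in the proof of Proposition~\ref{Proposition 7.2}. The one phrase in your write-up that is slightly loose is ``for every tempered distribution $g$'' attached to the norm equivalence; Lemma~\ref{LemmaLift} asserts the equivalence for $g$ in the relevant Besov space, and the correct statement outside that space is the equivalence of memberships, which is what you actually use and which follows from the isomorphism property together with $I_{-\sigma}I_\sigma=\text{id}$.
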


  \begin{prop}\label{Proposition 12.3}
	Given any $\varepsilon > 0$, there exists $f \in  B^{\sigma, b+1/\min\{2,p,q\}-\varepsilon}_{p,q}(\mathbb{R}^d)$ such that $I_\sigma f \not \in \mathbf{B}^{0,b}_{p,q}(\mathbb{R}^d)$ in any of the following cases:
		\begin{enumerate}[\upshape (a)]
		\item $\sigma \in \mathbb{R}, \frac{2d}{d + 1} < p < \infty, 0 < q \leq \infty, b > -1/q$, and $p= \min \{2,p,q\}$.
		\item $\sigma \in \mathbb{R}, 1 < p < \infty, 0 < q \leq \infty, b > -1/q$, and $2 = \min \{2,p,q\}$.
		\item $\sigma \in \mathbb{R}, 1 < p < \infty, 0 < q \leq \infty, b > -1/q$, and $q = \min \{2,p,q\}$.
	\end{enumerate}
 \end{prop}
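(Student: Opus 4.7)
The plan is to reduce Proposition \ref{Proposition 12.3} to the already-established sharpness of embeddings with smoothness near zero, namely Proposition \ref{Proposition 7.1}. The key observation is that the lifting operator $I_\sigma$ (see (\ref{liftdef})) is an isomorphism between Fourier-analytically defined Besov spaces whose smoothness parameters differ by $\sigma$, preserving the logarithmic smoothness $b$. More precisely, by Lemma \ref{LemmaLift}, $I_{-\sigma}$ maps $B^{0, b+1/\min\{2,p,q\}-\varepsilon}_{p,q}(\mathbb{R}^d)$ isomorphically onto $B^{\sigma, b+1/\min\{2,p,q\}-\varepsilon}_{p,q}(\mathbb{R}^d)$, with the inverse $I_\sigma$ going back; in particular $I_\sigma \circ I_{-\sigma} = \mathrm{id}$ on $\mathcal{S}'(\mathbb{R}^d)$.

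Concretely, first I would fix $\varepsilon > 0$ and invoke Proposition \ref{Proposition 7.1} in whichever of the three cases (a), (b), (c) is being treated (the three cases correspond respectively to $p = \min\{2,p,q\}$, $2 = \min\{2,p,q\}$, and $q = \min\{2,p,q\}$, which is exactly the trichotomy there). This yields a function
\[
g \in B^{0, b+1/\min\{2,p,q\}-\varepsilon}_{p,q}(\mathbb{R}^d) \quad \text{with} \quad g \notin \mathbf{B}^{0,b}_{p,q}(\mathbb{R}^d).
\]
Then I would define $f := I_{-\sigma} g$. By Lemma \ref{LemmaLift},
\[
\|f\|_{B^{\sigma, b+1/\min\{2,p,q\}-\varepsilon}_{p,q}(\mathbb{R}^d)} = \|I_{-\sigma} g\|_{B^{\sigma, b+1/\min\{2,p,q\}-\varepsilon}_{p,q}(\mathbb{R}^d)} \asymp \|g\|_{B^{0, b+1/\min\{2,p,q\}-\varepsilon}_{p,q}(\mathbb{R}^d)} < \infty,
\]
so that $f \in B^{\sigma, b+1/\min\{2,p,q\}-\varepsilon}_{p,q}(\mathbb{R}^d)$. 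On the other hand, $I_\sigma f = I_\sigma I_{-\sigma} g = g$, which by construction does not belong to $\mathbf{B}^{0,b}_{p,q}(\mathbb{R}^d)$. This completes the counterexample.

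There is essentially no hard step: the construction is a one-line lifting argument once one has Proposition \ref{Proposition 7.1} in hand. The only thing to verify, for completeness, is that the three conditions (a), (b), (c) cover precisely the three cases in which the sharpness of the logarithmic-smoothness shift in Proposition \ref{Proposition 7.1} has been proved, and that in case (a) the restriction $\frac{2d}{d+1} < p$ is inherited directly from there. No delicate interpolation or new construction of counterexamples via $\widehat{GM}^d$ or lacunary Fourier series is needed at this stage, because the work has already been done at the level $s=0$ and the lift merely transports it to level $s=\sigma$.
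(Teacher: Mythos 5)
Your argument is correct and, in fact, is a cleaner route than the one the paper sketches. Where the paper proves Proposition~\ref{Proposition 12.2} by constructing explicit radial/lacunary counterexamples from scratch and computing their norms inline (invoking Theorem~\ref{Theorem 3.2}, Theorem~\ref{Theorem 3.6}, and the lifting identity (\ref{18}) along the way), and then dismisses Proposition~\ref{Proposition 12.3} with ``similar arguments can be carried out\dots the details are left to the reader,'' you instead observe that since $I_{-\sigma}$ is an isomorphism of $B^{0,\xi}_{p,q}(\mathbb{R}^d)$ onto $B^{\sigma,\xi}_{p,q}(\mathbb{R}^d)$ with inverse $I_\sigma$, the statement reduces verbatim to Proposition~\ref{Proposition 7.1}: set $f = I_{-\sigma}g$ for the counterexample $g$ there, and then $I_\sigma f = g$ by construction. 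This sidesteps any need to recompute the $\mathbf{B}^{0,b}_{p,q}$-norm of $I_\sigma f$ (which would otherwise be the more delicate part, since Lemma~\ref{LemmaLift} lifts $B$-norms, not $\mathbf{B}$-norms), because membership of $I_\sigma f$ in $\mathbf{B}^{0,b}_{p,q}(\mathbb{R}^d)$ is precisely the already-settled membership of $g$. The underlying counterexamples are the same ($\widehat{GM}^d$ for case (a), lacunary series for case (b), an interpolation contradiction for case (c)), so in that sense the two proofs share their ingredients; but your factoring of the argument through Proposition~\ref{Proposition 7.1} and the lifting isomorphism is more modular and avoids redundant computation, and it correctly inherits the restriction $p > \tfrac{2d}{d+1}$ in case (a) from the $\widehat{GM}^d$ theory used there. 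No gaps.
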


 \begin{prop}\label{Proposition 12.4}
 	Let $ p=1, \infty, \sigma \in \mathbb{R}, 0 < q \leq \infty,$ and $b > -1/q$. For any $\varepsilon > 0$, there is a function $f \in  B^{\sigma, b+1/\min\{1,q\}-\varepsilon}_{p,q}(\mathbb{R}^d)$ such that $I_\sigma f \not \in \mathbf{B}^{0,b}_{p,q}(\mathbb{R}^d)$.
 \end{prop}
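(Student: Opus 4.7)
The plan is to invoke the lifting isomorphism from Lemma~\ref{LemmaLift} in the form $I_\sigma:B^{\sigma,c}_{p,q}(\mathbb{R}^d)\to B^{0,c}_{p,q}(\mathbb{R}^d)$ to reduce the statement to the embedding claim: for $p\in\{1,\infty\}$ and each $\varepsilon>0$,
\[
B^{0,b+1/\min\{1,q\}-\varepsilon}_{p,q}(\mathbb{R}^d)\not\hookrightarrow \mathbf{B}^{0,b}_{p,q}(\mathbb{R}^d).
\]
The regime $q\le 1$ (where $1/\min\{1,q\}=1/q$) is handled by interpolation, mimicking the proof of Proposition~\ref{Proposition 7.1}(c): assuming the embedding, interpolate with $W^k_p(\mathbb{R}^d)$ by the $(\theta,q)$-method for some $\theta\in(0,1)$. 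The right-hand side becomes $B^{k\theta,(1-\theta)(b+1/q)}_{p,q}(\mathbb{R}^d)$ by \eqref{PrelimInterpolationnew2.3} and \eqref{BesovComparison}; the left-hand side is squeezed between $(B^{0,b+1/q-\varepsilon}_{p,q},B^k_{p,1})_{\theta,q}$ and $(B^{0,b+1/q-\varepsilon}_{p,q},B^k_{p,\infty})_{\theta,q}$ (valid because $B^k_{p,1}\hookrightarrow W^k_p\hookrightarrow B^k_{p,\infty}$ for $p=1,\infty$, via $B^0_{p,1}\hookrightarrow L_p\hookrightarrow B^0_{p,\infty}$ and Lemma~\ref{CharSobolevNorm2}), both of which equal $B^{k\theta,(1-\theta)(b+1/q-\varepsilon)}_{p,q}$ by Lemma~\ref{PrelimLemmaCF}. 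Proposition~\ref{RecallEmb*optim} delivers the contradiction since $-\varepsilon<0$.

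When $q>1$ the same interpolation yields a contradiction only in the restricted range $\varepsilon>1-1/q$, so explicit counterexamples must be constructed for small $\varepsilon$, and they differ sharply between $p=\infty$ and $p=1$. For $p=\infty$, take the lacunary series
\[
f(x)=\psi(x)\sum_{j\ge 3}(1+j)^{-\gamma}e^{i(2^j-2)x_1},\qquad \gamma=b+1+1/q,
\]
with $\psi\in\mathcal{S}(\mathbb{R}^d)$ satisfying \eqref{4.2} and $\psi(0)\ne 0$. Proposition~\ref{Proposition 4.1} immediately yields $\|f\|^q_{B^{0,b+1-\varepsilon}_{\infty,q}(\mathbb{R}^d)}\asymp\sum_j(1+j)^{-\varepsilon q-1}<\infty$. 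For the target norm, the approximation characterization \eqref{4.8} applies; since $f-\eta_{2^n}f=\psi(x)\sum_{j>n}(1+j)^{-\gamma}e^{i(2^j-2)x_1}$ and all exponentials coincide at $x_1=0$, positivity of the coefficients gives $\|f-\eta_{2^n}f\|_{L_\infty}\asymp\sum_{j>n}(1+j)^{-\gamma}\asymp n^{1-\gamma}=n^{-(b+1/q)}$, whence $\|f\|^q_{\mathbf{B}^{0,b}_{\infty,q}}\asymp\sum_n(1+n)^{bq}\,n^{-(b+1/q)q}=\sum_n(1+n)^{-1}=\infty$.

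For $p=1$ with $q>1$ the lacunary approach fails because Zygmund's theorem forces $\|\sum_j c_j e^{i\lambda_j x}\|_{L_1(\mathbb{T})}\asymp (\sum_j|c_j|^2)^{1/2}$, yielding only a shift of $1/2$ rather than the required $1$. I would instead use the atomic construction $f(x)=\sum_{j\ge 3}(1+j)^{-\gamma}\phi_j(x)$ with $\gamma=b+1+1/q$, where $\phi_j(x)=2^{jd}\phi(2^j(x-x_j))$, $\phi\in C^\infty_c(\mathbb{R}^d)$ has $\widehat{\phi}$ supported on an annulus $\{1/2\le|\xi|\le 2\}$, and the centers $x_j=2je_1$ ensure pairwise disjoint spatial supports that remain disjoint under small translations. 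The annular frequency support forces $(\varphi_k\widehat{f})^\vee$ to reduce (up to two boundary terms) to $(1+k)^{-\gamma}(\varphi_k\widehat{\phi_k})^\vee(\cdot-x_k)$, and the spatial separation at all scales $\ge 2^{-k}$ yields $\|(\varphi_k\widehat{f})^\vee\|_{L_1}\asymp(1+k)^{-\gamma}$, giving $\|f\|^q_{B^{0,b+1-\varepsilon}_{1,q}}\asymp\sum_k(1+k)^{-\varepsilon q-1}<\infty$. Disjoint supports also yield $\|f(\cdot+h)-f\|_{L_1}\asymp\sum_j(1+j)^{-\gamma}\min(2^j|h|,1)$ for $|h|$ small, so that $\omega_1(f,2^{-n})_{L_1}\asymp n^{1-\gamma}=n^{-(b+1/q)}$ and consequently $\|f\|^q_{\mathbf{B}^{0,b}_{1,q}}\asymp\sum_n(1+n)^{-1}=\infty$. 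The principal technical obstacle is precisely this $p=1$ construction: one must verify sharp two-sided estimates for both the $L_1$-modulus (matching lower bound from disjointness with upper bound from the triangle inequality, using a direction in which $\nabla\phi$ does not vanish) and for the frequency decomposition (ensuring that neighboring atomic terms within a single dyadic block do not destructively cancel in $L_1$), while simultaneously tuning $\gamma$ to sit at the critical threshold that forces divergence of the $\mathbf{B}$-norm while preserving convergence of the $B$-norm for every $\varepsilon>0$.
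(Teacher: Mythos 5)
Your reduction via the lift, your interpolation argument for $q\le 1$, and your lacunary counterexample for $p=\infty$, $q>1$ are all sound. The last of these is a legitimate self-contained alternative to the paper, which instead quotes the Moura--Neves--Schneider characterization (Lemma \ref{MNS}) of when $B^{0,\xi}_{\infty,q}(\mathbb{R}^d)\hookrightarrow\mathbf{B}^{0,b}_{\infty,q}(\mathbb{R}^d)$ and disposes of $p=\infty$ in one line; your version costs a short computation with \eqref{4.8} but avoids importing that external result.

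The genuine gap is the case $p=1$, $q>1$. Your building block is required to satisfy $\phi\in C^\infty_c(\mathbb{R}^d)$ \emph{and} $\operatorname{supp}\widehat{\phi}\subset\{1/2\le|\xi|\le 2\}$; no nonzero function does both, since a compactly supported Fourier transform forces $\phi$ to be entire. You must drop one of the two localizations, and each choice breaks a different half of your argument: if $\phi\in\mathcal{S}(\mathbb{R}^d)$ is band-limited to an annulus, the spatial supports of the $\phi_j$ are not disjoint, so the exact additivity $\|f(\cdot+h)-f\|_{L_1}=\sum_j(1+j)^{-\gamma}\|\phi_j(\cdot+h)-\phi_j\|_{L_1}$ and the resulting lower bound for $\omega_1(f,t)_1$ require an almost-orthogonality argument you have not supplied; if instead $\phi\in C^\infty_c$, then $(\varphi_k\widehat f)^\vee$ no longer reduces to finitely many terms, and the upper bound $\|(\varphi_k\widehat f)^\vee\|_{L_1}\lesssim(1+k)^{-\gamma}$ needs $\phi$ to have at least one vanishing moment together with a summable off-diagonal decay estimate (recall that atoms for $B^{0,b}_{1,q}$ carry the moment condition $L\ge 0$). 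Both repairs are feasible, but neither is carried out, and the numerology you verified does not substitute for them. For comparison, the paper sidesteps the modulus of smoothness entirely here: assuming the boundedness of $I_\sigma$, it restricts to a bounded domain $\Omega$, maps $\mathbf{B}^{0,b}_{1,q}(\mathbb{R}^d)$ into $(L_1(\Omega),L_\infty(\Omega))_{(0,b),q}$, and refutes the resulting rearrangement inequality \eqref{contradiction2*} with the concrete function $\sum_j a_j2^{jd}(1+j)^{-(b+1-\varepsilon)}\bigl(\Phi(2^jx)-\Phi(2^jx-x^0)\bigr)$, whose $B^{0,b+1-\varepsilon}_{1,q}$-norm is controlled by the atomic decomposition and whose rearrangement is estimated from below directly. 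If you prefer to keep your strategy, the cleanest fix is to take $\phi=\Phi-\Phi(\cdot-x^0)$ with $\Phi\in C^\infty_c$, keep the separated centers $x_j$, and prove the Littlewood--Paley upper bound from the vanishing moment rather than from exact frequency support.
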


 \begin{proof}[Proofs of Propositions \ref{Proposition 12.2} and \ref{Proposition 12.3}]

 Let us show that
  there is $f \in \mathbf{B}^{0,b}_{p,q}(\mathbb{R}^d)$ such that $I_\sigma f \not \in B^{-\sigma, b+1/\max\{2,p,q\}+\varepsilon}_{p,q}(\mathbb{R}^d)$. Assume first that $p=\max\{2,p,q\}$ (and, so $p > \frac{2d}{d+1}$). Let
 \begin{equation*}
 	F_0(t) = t^{-d+d/p} (1 + |\log t|)^{-\beta}, \quad t > 0,
 \end{equation*}
 where $b+1/q+1/p < \beta < b + 1/q+1/p + \varepsilon$. Define $f(x) = f_0(|x|)$, where $f_0$ is given by (\ref{3.4new+}). According to Theorem \ref{Theorem 3.2}, we have
 \begin{align*}
 	\|f\|_{\mathbf{B}^{0,b}_{p,q}(\mathbb{R}^d)} & \asymp \left(\int_0^1 (1-\log t)^{-\beta p} \frac{dt}{t}\right)^{1/p}  \\
	& \hspace{1cm}+ \left(\int_1^\infty (1 + \log t)^{b q} \Big(\int_t^\infty (1 + \log u)^{-\beta p} \frac{du}{u}\Big)^{q/p} \frac{dt}{t}\right)^{1/q} \\
	& \asymp \left(\int_0^1 (1-\log t)^{-\beta p} \frac{dt}{t}\right)^{1/p} + \left(\int_1^\infty (1 + \log t)^{(b - \beta + 1/p)q} \frac{dt}{t}\right)^{1/q} < \infty.
 \end{align*}
 Hence, $f \in \mathbf{B}^{0,b}_{p,q}(\mathbb{R}^d)$. On the other hand, taking into consideration (\ref{18}), Theorem \ref{Theorem 3.6} yields that
 \begin{equation*}
 	\|I_\sigma f\|_{B^{-\sigma,b+1/p+\varepsilon}_{p,q}(\mathbb{R}^d)}^q \gtrsim \int_1^\infty (1 + \log t)^{(b+1/p+\varepsilon-\beta)q} \frac{dt}{t} = \infty.
 \end{equation*}
 Then, $I_\sigma f \not \in B^{-\sigma,b+1/p+\varepsilon}_{p,q}(\mathbb{R}^d)$.

 Suppose now that $2 = \max\{2,p,q\}$. Let
\begin{equation*}
	\{b_j\}_{j \in \mathbb{N}_0} = \{(1 + j)^{- \delta}\}_{j \in \mathbb{N}_0}, \quad  b + 1/q +1/2 < \delta < b + 1/q +1/2+ \varepsilon,
\end{equation*}
and
\begin{equation*}
	W(x) \sim \sum_{j=3}^\infty b_j e^{i (2^j - 2) x_1}, \quad x \in \mathbb{R}^d.
\end{equation*}
Let $f = \psi W$ with $\psi \in \mathcal{S}(\mathbb{R}^d) \backslash \{0\}$ satisfying (\ref{4.2}). Applying (\ref{4.6}) we get
\begin{align*}
	\|f\|^q_{\mathbf{B}^{0,b}_{p,q}(\mathbb{R}^d)} & \asymp \sum_{j=3}^\infty (1 + j)^{b q} \left(\sum_{k=j}^\infty |b_k|^2\right)^{q/2}  = \sum_{j=3}^\infty (1 + j)^{b q} \left(\sum_{k=j}^\infty (1 + k)^{- 2 \delta} \right)^{q/2}\\
	& \asymp \sum_{j=3}^\infty (1 + j)^{(b - \delta +1/2) q} < \infty.
\end{align*}
On the other hand, by (\ref{18}) and (\ref{4.3}), we obtain
\begin{equation*}
	\|I_\sigma f\|^q_{B^{-\sigma,b+1/2+\varepsilon}_{p,q}(\mathbb{R}^d)}  \asymp \sum_{j=3}^\infty (1 + j)^{(b+1/2 + \varepsilon) q} |b_j|^q = \sum_{j=3}^\infty (1 + j)^{(b + 1/2 + \varepsilon - \delta) q} = \infty.
\end{equation*}
Thus, $f \in \mathbf{B}^{0,b}_{p,q}(\mathbb{R}^d)$ but $I_\sigma f \not \in B^{-\sigma, b+1/2+\varepsilon}_{p,q}(\mathbb{R}^d)$.

In order to prove the sharpness assertion under the assumption $q = \max\{2,p,q\}$, we will proceed by contradiction. Assume that
 \begin{equation}\label{12.4new2}	
 I_\sigma: \mathbf{B}^{0,b}_{p,q}(\mathbb{R}^d) \longrightarrow B^{-\sigma, b+1/q+\varepsilon}_{p,q} (\mathbb{R}^d) \quad \text{for some} \quad \varepsilon > 0.
 \end{equation}
Moreover, by (\ref{18}) and (\ref{BesovComparison}), we have $I_\sigma : \mathbf{B}^s_{p,q}(\mathbb{R}^d) \longrightarrow B^{s-\sigma}_{p,q}(\mathbb{R}^d)$ for any $s > 0$. Given $\theta \in (0,1)$, applying the interpolation property we derive the boundedness of the operator
  \begin{equation}\label{12.4}	
 I_\sigma: (\mathbf{B}^{0,b}_{p,q}(\mathbb{R}^d), \mathbf{B}^s_{p,q}(\mathbb{R}^d))_{\theta,q} \longrightarrow (B^{-\sigma, b+1/q+\varepsilon}_{p,q} (\mathbb{R}^d),B^{s-\sigma}_{p,q}(\mathbb{R}^d))_{\theta,q}.
 \end{equation}
 Note that, by Lemma \ref{PrelimLemmaCF}, we have
 \begin{equation}\label{12.5}
  (B^{-\sigma, b+1/q+\varepsilon}_{p,q} (\mathbb{R}^d),B^{s-\sigma}_{p,q}(\mathbb{R}^d))_{\theta,q} = B^{\theta s - \sigma, (1-\theta)(b+1/q+\varepsilon)}_{p,q}(\mathbb{R}^d).
  \end{equation}
On the other hand, by (\ref{PrelimInterpolationnew3}) and (\ref{BesovComparison}), we have
\begin{equation}\label{12.6}
	(\mathbf{B}^{0,b}_{p,q}(\mathbb{R}^d), \mathbf{B}^s_{p,q}(\mathbb{R}^d))_{\theta,q}  = B^{\theta s, (1-\theta)(b+1/q)}_{p,q}(\mathbb{R}^d).
\end{equation}
It follows from (\ref{12.4}),  (\ref{12.5}) and  (\ref{12.6}) that
\begin{equation*}
	I_\sigma : B^{\theta s, (1-\theta)(b+1/q)}_{p,q}(\mathbb{R}^d) \longrightarrow B^{\theta s - \sigma, (1-\theta)(b+1/q+\varepsilon)}_{p,q}(\mathbb{R}^d),
\end{equation*}
and then, by (\ref{18}),  $B^{\theta s - \sigma, (1-\theta)(b+1/q)}_{p,q}(\mathbb{R}^d) \hookrightarrow B^{\theta s - \sigma, (1-\theta)(b+1/q+\varepsilon)}_{p,q}(\mathbb{R}^d)$, which is clearly not true because $\varepsilon > 0$ (see Proposition \ref{RecallEmb*optim}). Hence, (\ref{12.4new2}) does not hold. This finishes the proof of Proposition \ref{Proposition 12.2}.

Similar arguments can be carried out to prove Proposition \ref{Proposition 12.3}. The details are left to the reader.

\end{proof}

The proof of Proposition \ref{Proposition 12.4} given below requires some decomposition methods in Besov spaces. We need some preparation. For $\nu \in \mathbb{N}_0$ and $m \in \mathbb{Z}^d$, we denote by $Q_{\nu m}$\index{\bigskip\textbf{Sets}!$Q_{\nu m}$}\label{CUBE} the cube in $\mathbb{R}^d$ with sides parallel to the coordinate axes, centred at $2^{-\nu} m$, and with side length $2^{-\nu}$. For a cube $Q$ in $\mathbb{R}^d$ and $r > 0$ we shall mean by $r Q$ the cube in $\mathbb{R}^d$ concentric with $Q$ and with side length $r$ times the side length of $Q$.

Let $K \in \mathbb{N}_0$ and $c > 1$. A $K$ times differentiable complex-valued function $a$ on $\mathbb{R}^d$ (continuous if $K=0$) is called an \emph{$1_K$-atom} if
\begin{equation*}
	\text{supp } a \subset c \, Q_{0 m} \text{ for some } m \in \mathbb{Z}^d
\end{equation*}
and
\begin{equation*}
	|D^\alpha a (x)| \leq 1 \text{ for } |\alpha| \leq K.
\end{equation*}
Let $1 \leq p \leq \infty, -\infty < s, b < \infty,$ and $L+ 1 \in \mathbb{N}_0$. A $K$ times differentiable complex-valued function $a$ on $\mathbb{R}^d$ (continuous if $K=0$) is called an \emph{$(s,p,b)_{K,L}$-atom} if for some $\nu \in \mathbb{N}_0$
\begin{equation*}
	\text{supp } a \subset c \, Q_{\nu m} \text{ for some } m \in \mathbb{Z}^d,
\end{equation*}
\begin{equation*}
	|D^\alpha a (x)| \leq 2^{-\nu (s-d/p) + |\alpha| \nu} (1 + \nu)^{-b} \text{ for } |\alpha| \leq K
\end{equation*}
and
\begin{equation}\label{moment}
	\int_{\mathbb{R}^d} x^\beta a(x) \, d x = 0 \text{ if } |\beta| \leq L.
\end{equation}
If the atom $a$ is located on $Q_{\nu m}$, that means
\begin{equation*}
	\text{supp } a \subset c \,Q_{\nu m},
\end{equation*}
then we write $a_{\nu m}$\index{\bigskip\textbf{Functionals and functions}!$a_{\nu m}$}\label{ATOM}. If $L=-1$ in (\ref{moment}) then there are no moment conditions. The atomic characterizations of classical Besov and Triebel-Lizorkin spaces can be found in \cite{FrazierJawerth} and \cite[Section 13]{Triebel97}. For distribution spaces of generalized smoothness, we refer to \cite{Moura, FarkasLeopold}.

Let $0 < q \leq \infty$. Let $K \in \mathbb{N}_0$ and $L+1 \in \mathbb{N}_0$ with
\begin{equation*}
	K \geq \max\{0, 1 + [s]\} \text{ and } L \geq \max\{-1, [- s]\}
\end{equation*}
be fixed. Then $f \in \mathcal{S}'(\mathbb{R}^d)$ belongs to $B^{s,b}_{p,q}(\mathbb{R}^d)$ if, and only if, there is a representation
\begin{equation}\label{atom1}
	f = \sum_{\nu=0}^\infty \sum_{m \in \mathbb{Z}^d} \lambda_{\nu m} a_{\nu m} \text{ (convergence in $\mathcal{S}'(\mathbb{R}^d)$)},
\end{equation}
where $a_{\nu m}$ are $1_K$-atoms ($\nu = 0$) or $(s,p,b)_{K,L}$-atoms ($\nu \in \mathbb{N}$) such that they are located at $Q_{\nu m}$ and
$\sum_{\nu =0}^\infty \left(\sum_{m \in \mathbb{Z}^d} |\lambda_{\nu m}|^p\right)^{q/p}<\infty$. Furthermore, we have
\begin{equation}\label{atom2}
	\|f\|_{B^{s,b}_{p,q}(\mathbb{R}^d)} \asymp \inf \left(\sum_{\nu =0}^\infty \left(\sum_{m \in \mathbb{Z}^d} |\lambda_{\nu m}|^p\right)^{q/p}\right)^{1/q}
\end{equation}
where the infimum is taken over all admissible representations (\ref{atom1}).

\begin{proof}[Proof of Proposition \ref{Proposition 12.4}]
	Let $p=1$. In this case the proof is split into two possible cases. First, let us assume that $q \geq 1$ and
	\begin{equation}\label{contradiction}
		I_\sigma: B^{\sigma, b+1-\varepsilon}_{1,q}(\mathbb{R}^d) \longrightarrow \mathbf{B}^{0,b}_{1,q}(\mathbb{R}^d) \quad \text{for some } \quad \varepsilon > 0.
	\end{equation}
	Let $\Omega$ be a bounded domain in $\mathbb{R}^d$. We denote by $J_\Omega$ the corresponding restriction operator given by $J_\Omega f = f|\Omega$. Note that, by (\ref{PrelimInterpolationnew2}), (\ref{BesovComparison}), and Lemma \ref{LemBianchiniContinuous},
	\begin{equation*}
		J_\Omega : \mathbf{B}^{0,b}_{1,q}(\mathbb{R}^d) = (L_1(\mathbb{R}^d), B^{d}_{1,1}(\mathbb{R}^d))_{(0,b),q} \longrightarrow (L_1(\Omega), L_\infty(\Omega))_{(0,b),q}
	\end{equation*}
	and, by (\ref{18}),
	\begin{equation*}
		I_{-\sigma} : B^{0, b+1-\varepsilon}_{1,q}(\mathbb{R}^d)\longrightarrow B^{\sigma, b+1-\varepsilon}_{1,q}(\mathbb{R}^d).
	\end{equation*}
	Consequently, we have
	\begin{equation}\label{contradiction2}
		 J_\Omega : B^{0, b+1-\varepsilon}_{1,q}(\mathbb{R}^d) \longrightarrow (L_1(\Omega), L_\infty(\Omega))_{(0,b),q}.
	\end{equation}
	Applying the well-known formula (cf. \cite[Theorem 5.2.1, p. 109]{BerghLofstrom})
	\begin{equation*}
		K(t,f; L_1(\Omega), L_\infty(\Omega)) = \int_0^t f^\ast (s) \, ds
	\end{equation*}
	where $f^\ast$ denotes the non-increasing rearrangement of $f$ given by \index{\bigskip\textbf{Functionals and functions}!$f^\ast$}\label{REARRANGEMENT}
	(cf. \cite{ChongRice})
\begin{equation}\label{rearrangement2}
	f^\ast (t) = \sup_{\substack{E \subset \Omega \\ |E|_d = t}} \inf_{x \in E} |f(x)|, \quad 0 < t < 1,
\end{equation}
	we may rewrite (\ref{contradiction2}) as
	\begin{equation}\label{contradiction2*}
		\left(\int_0^1 (1 - \log t)^{b q} \Big(\int_0^t (J_\Omega f)^\ast (s) \, ds\Big)^q \frac{dt}{t}\right)^{1/q} \lesssim \|f\|_{B^{0, b+1-\varepsilon}_{1,q}(\mathbb{R}^d)}.
	\end{equation}
	However, the latter inequality is not true. Indeed, let
	\begin{equation}\label{contradiction3}
		f(x) = \sum_{j=1}^\infty a_j 2^{j d} (1 + j)^{-(b+1 - \varepsilon)} (\Phi(2^j x) - \Phi (2^j x - x^0)), \quad x \in \mathbb{R}^d,
	\end{equation}
	where the function $\Phi$ is given by
		\begin{equation*}
    \Phi(x) = \left\{\begin{array}{lcl}
                            e^{- \frac{1}{1 - |x|^2}} & ,  & |x| < 1, \\
                            & & \\
                            0 & , & |x| \geq 1,
            \end{array}
            \right.
	\end{equation*}
	 and $x^0 \in \mathbb{R}^d$ with $|x^0| > 3$. By construction, $2^{j d} (1 + j)^{-(b + 1 - \varepsilon)} (\Phi(2^j x) - \Phi (2^j x - x^0))$ is an $(0, 1, b + 1 -\varepsilon)_{K,0}$-atom in $B^{0,b+1 -\varepsilon}_{1,q}(\mathbb{R}^d)$, where $K \geq 1$. Since (\ref{contradiction3}) is an atomic decomposition of $f$ in $B^{0,b+1-\varepsilon}_{1,q}(\mathbb{R}^d)$, applying (\ref{atom2}) we find
	\begin{equation*}
		\|f\|_{B^{0,b+1-\varepsilon}_{1,q}(\mathbb{R}^d)}^q \lesssim \sum_{j=1}^\infty |a_j|^q.
	\end{equation*}
	This yields that if
	\begin{equation*}
		a_j = (1 + j)^{-\beta}, \quad \max\{1/q, -b + \varepsilon\} < \beta < 1/q + \varepsilon,
	\end{equation*}
	then the function $f$ belongs to $B^{0,b+1-\varepsilon}_{1,q}(\mathbb{R}^d)$. Moreover, there holds that (see (\ref{rearrangement2}))
	\begin{equation*}
		f^\ast(2^{-k d})  \geq \inf_{x \in Q_{k, (0, \ldots, 0)}} |f(x)| = \inf_{x \in Q_{k, (0, \ldots, 0)} \backslash  Q_{k+1, (0, \ldots, 0)}} |f(x)| \asymp  2^{k d} (1 + k)^{-(b + 1 - \varepsilon)-\beta}
	\end{equation*}
	for $k \in \mathbb{N}.$ Let $\Omega = 2^{-1} [0,1]^d$. By construction, the support of $f$ is contained in $\Omega$, and then $J_\Omega f = f$. Using monotonicity properties of $f^\ast$, we can estimate the left-hand side in (\ref{contradiction2*}) by
	\begin{align*}
		\int_0^1 (1 - \log t)^{b q} \Big(\int_0^t f^\ast (s) \, ds\Big)^q \frac{dt}{t} & \gtrsim \sum_{j=0}^\infty (1 + j)^{b q} \left(\sum_{k=j+1}^\infty f^\ast( 2^{-k d}) 2^{-k d}\right)^q \\
		&\hspace{-4cm} \gtrsim \sum_{j=0}^\infty (1 + j)^{b  q} \left(\sum_{k=j+1}^\infty (1 + k)^{-(b+1-\varepsilon) - \beta}\right)^q \asymp \sum_{j=0}^\infty (1 + j)^{(\varepsilon - \beta)q} = \infty.
	\end{align*}
	This contradicts (\ref{contradiction2*}). Hence, (\ref{contradiction}) is not true.
	
	Secondly, if $q < 1$ then we will proceed by contradiction, that is, the boundedness result
	\begin{equation}\label{contradiction4}
		I_\sigma: B^{\sigma, b+1/q-\varepsilon}_{1,q}(\mathbb{R}^d) \longrightarrow \mathbf{B}^{0,b}_{1,q}(\mathbb{R}^d) \quad \text{for some } \quad \varepsilon > 0
	\end{equation}
	will yield a contradiction. Indeed, by (\ref{18}) and (\ref{BesovComparison}), we have $I_\sigma : B^{s+\sigma}_{1,q}(\mathbb{R}^d) \longrightarrow \mathbf{B}^s_{1,q}(\mathbb{R}^d) $ for any $s > 0$. Given $\theta \in (0,1)$, applying the interpolation property we derive the boundedness of the operator
  \begin{equation}\label{12.4new}	
 I_\sigma: (B^{\sigma, b+1/q-\varepsilon}_{1,q} (\mathbb{R}^d),B^{s+\sigma}_{1,q}(\mathbb{R}^d))_{\theta,q}  \longrightarrow (\mathbf{B}^{0,b}_{1,q}(\mathbb{R}^d), \mathbf{B}^s_{1,q}(\mathbb{R}^d))_{\theta,q} .
 \end{equation}
 By Lemma \ref{PrelimLemmaCF}, we have
 \begin{equation}\label{12.5new}
  (B^{\sigma, b+1/q-\varepsilon}_{1,q} (\mathbb{R}^d),B^{s+\sigma}_{1,q}(\mathbb{R}^d))_{\theta,q} = B^{\theta s + \sigma, (1-\theta)(b+1/q-\varepsilon)}_{1,q}(\mathbb{R}^d).
  \end{equation}
On the other hand, using (\ref{PrelimInterpolationnew3}) and (\ref{BesovComparison}), we have
\begin{equation} \label{12.6new}
	(\mathbf{B}^{0,b}_{1,q}(\mathbb{R}^d), \mathbf{B}^s_{1,q}(\mathbb{R}^d))_{\theta,q}  = B^{\theta s, (1-\theta)(b+1/q)}_{1,q}(\mathbb{R}^d).
\end{equation}
Therefore, (\ref{12.4new}),  (\ref{12.5new}) and  (\ref{12.6new}) imply that
\begin{equation*}
	I_\sigma : B^{\theta s+\sigma, (1-\theta)(b+1/q-\varepsilon)}_{1,q}(\mathbb{R}^d) \longrightarrow B^{\theta s , (1-\theta)(b+1/q)}_{1,q}(\mathbb{R}^d),
\end{equation*}
and then, by (\ref{18}),  $B^{\theta s + \sigma, (1-\theta)(b+1/q - \varepsilon)}_{1,q}(\mathbb{R}^d) \hookrightarrow B^{\theta s + \sigma, (1-\theta)(b+1/q)}_{1,q}(\mathbb{R}^d)$. It is clear that the latter embedding is no longer true because $\varepsilon > 0$ (see Proposition \ref{RecallEmb*optim}). Hence, (\ref{contradiction4}) cannot hold.

It remains to show the sharpness assertion for $p=\infty$. We use again the method of contradiction. Suppose that
		\begin{equation}\label{contradiction5}
		I_\sigma: B^{\sigma, b+1/\min\{1,q\}-\varepsilon}_{\infty,q}(\mathbb{R}^d) \longrightarrow \mathbf{B}^{0,b}_{\infty,q}(\mathbb{R}^d) \quad \text{for some } \quad \varepsilon > 0.
	\end{equation}
	Since $I_{-\sigma} : B^{0, b+1/\min\{1,q\}-\varepsilon}_{\infty,q}(\mathbb{R}^d)  \longrightarrow B^{\sigma, b+1/\min\{1,q\}-\varepsilon}_{\infty,q}(\mathbb{R}^d) $ (see (\ref{18})), we derive
	\begin{equation}\label{contradiction6}
		B^{0, b+1/\min\{1,q\}-\varepsilon}_{\infty,q}(\mathbb{R}^d) \hookrightarrow  \mathbf{B}^{0,b}_{\infty,q}(\mathbb{R}^d) .
	\end{equation}
	Note that the space $ \mathbf{B}^{0,b}_{\infty,q}(\mathbb{R}^d)$ fits into the scale of the so-called generalized H\"older spaces investigated in \cite{MouraNevesSchneider}. In particular, embeddings of Besov spaces of generalized smoothness into generalized H\"older spaces are fully characterized in  \cite[Theorem 3.2]{MouraNevesSchneider}. For sake of completeness, we write down this characterization in the special case of the spaces $B^{0,b}_{\infty,q}(\mathbb{R}^d)$ and $ \mathbf{B}^{0,b}_{\infty,q}(\mathbb{R}^d)$.
	
	\begin{lem}[\cite{MouraNevesSchneider}]\label{MNS}
		Let $0 < q \leq \infty, -\infty < \xi < \infty$ and $b > -1/q$. Then
		\begin{equation*}
			B^{0,\xi}_{\infty,q}(\mathbb{R}^d) \hookrightarrow \mathbf{B}^{0,b}_{\infty,q}(\mathbb{R}^d) \iff   \xi \geq b + \frac{1}{\min\{1,q\}}.
		\end{equation*}
	\end{lem}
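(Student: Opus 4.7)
I would derive the chain
\[
B^{0,\,b+1/\min\{1,q\}}_{\infty,q}(\mathbb{R}^d) \hookrightarrow (B^0_{\infty,1}(\mathbb{R}^d),\,B^s_{\infty,q}(\mathbb{R}^d))_{(0,b),q} \hookrightarrow \mathbf{B}^{0,b}_{\infty,q}(\mathbb{R}^d)
\]
for any fixed $s>0$; together with Proposition \ref{RecallEmb*} this handles all $\xi \geq b + 1/\min\{1,q\}$. The right-hand embedding follows from the endpoint inclusion $B^0_{\infty,1} \hookrightarrow L_\infty$, the identity $B^s_{\infty,q} = \mathbf{B}^s_{\infty,q}$ for $s>0$ from (\ref{BesovComparison}), and the limiting interpolation formula (\ref{PrelimInterpolationnew2}) taken with $p=\infty$. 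For the left-hand embedding the idea is to write $B^0_{\infty,1} = (B^{-s}_{\infty,q},B^s_{\infty,q})_{1/2,1}$ via Lemma \ref{PrelimLemmaCF}, then invoke the reiteration Lemma \ref{PrelimLemma7.2}(vi) with $\theta=1/2$, $p=1$ to obtain
\[
(B^{-s}_{\infty,q},B^s_{\infty,q})_{1/2,q;\,b+1/\min\{1,q\}} \hookrightarrow \bigl((B^{-s}_{\infty,q},B^s_{\infty,q})_{1/2,1},\,B^s_{\infty,q}\bigr)_{(0,b),q},
\]
and finally identify the source as $B^{0,\,b+1/\min\{1,q\}}_{\infty,q}$ by another application of Lemma \ref{PrelimLemmaCF}.

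\textbf{Necessity, case $q \geq 1$.} Here $1/\min\{1,q\}=1$, and I would construct a lacunary counterexample modelled on Section \ref{Section 5.1}. Fix $\psi \in \mathcal{S}(\mathbb{R}^d)\setminus\{0\}$ satisfying (\ref{4.2}) with $\psi(0) \neq 0$, pick $\alpha>1$ to be chosen, and set $f(x) = \psi(x)\sum_{j=3}^\infty (1+j)^{-\alpha} e^{i(2^j-2)x_1}$. Proposition \ref{Proposition 4.1} yields $\|f\|_{B^{0,\xi}_{\infty,q}}^q \asymp \sum_j (1+j)^{(\xi-\alpha)q}$, finite iff $\alpha > \xi + 1/q$. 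For the target norm I combine (\ref{4.8}) with (\ref{4.9}): the pointwise identity $(f-\eta_{2^j}f)(0) = \psi(0)\sum_{k>j}(1+k)^{-\alpha}$ gives the lower bound $\|f-\eta_{2^j}f\|_{L_\infty} \gtrsim (1+j)^{1-\alpha}$, whence $\|f\|_{\mathbf{B}^{0,b}_{\infty,q}}^q \gtrsim \sum_j (1+j)^{(b+1-\alpha)q}$, which diverges when $\alpha \leq b+1+1/q$. Any $\alpha \in (\max\{1,\,\xi+1/q\},\,b+1+1/q]$ — a nonempty interval precisely when $\xi<b+1$ — produces the required $f \in B^{0,\xi}_{\infty,q}\setminus \mathbf{B}^{0,b}_{\infty,q}$.

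\textbf{Necessity, case $q \leq 1$.} Now $1/\min\{1,q\}=1/q$, and since the lacunary construction only excludes the range $\xi < b+1 \leq b+1/q$, I would complement it by an interpolation argument. Assuming the embedding $B^{0,\xi}_{\infty,q} \hookrightarrow \mathbf{B}^{0,b}_{\infty,q}$, choose any $s>0$ and $\theta \in (0,1)$ and interpolate with the trivial $\mathbf{B}^s_{\infty,q} \hookrightarrow \mathbf{B}^s_{\infty,q}$ by the real method $(\cdot,\cdot)_{\theta,q}$. Lemma \ref{PrelimLemmaCF} identifies the source as $B^{\theta s,(1-\theta)\xi}_{\infty,q}$, and formula (\ref{PrelimInterpolationnew3}) identifies the target as $\mathbf{B}^{\theta s,(1-\theta)(b+1/q)}_{\infty,q} = B^{\theta s,(1-\theta)(b+1/q)}_{\infty,q}$ (using (\ref{BesovComparison}) since $\theta s>0$); Proposition \ref{RecallEmb*optim} then forces $(1-\theta)\xi \geq (1-\theta)(b+1/q)$, i.e.\ $\xi \geq b+1/q$.

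\textbf{Main obstacle.} The delicate point is the sharp shift $1/\min\{1,q\}$ in the sufficiency direction: a direct application of the Holmstedt formula Lemma \ref{PrelimLemma7.2}(i) to $(L_\infty,\mathbf{B}^s_{\infty,q})_{(0,b),q}$ would only yield the exponent $1/q$, which is too small when $q>1$. The improvement comes from replacing $L_\infty$ by the smaller space $B^0_{\infty,1}$ inside the interpolation identity (\ref{PrelimInterpolationnew2}) and then invoking the reiteration Lemma \ref{PrelimLemma7.2}(vi) in place of Lemma \ref{PrelimLemma7.2}(i); this mirrors the failure of Littlewood--Paley at $p=\infty$, and explains why the analogue of (\ref{1}) at $p=\infty$ carries the shift $1/\min\{1,q\}$ rather than $1/\min\{2,p,q\}$.
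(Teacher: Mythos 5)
The paper does not prove this lemma; it cites it verbatim to \cite[Theorem 3.2]{MouraNevesSchneider}, whose original argument proceeds through embeddings into generalized H\"older spaces. Your proof is therefore an independent, self-contained derivation using the interpolation toolkit assembled in Section~\ref{Preliminaries}, and it is essentially correct. The sufficiency argument is the same device the paper itself uses at $p=1,\infty$ (e.g.\ in the proofs of Theorem~\ref{Theorem 11.1}(ii) and Theorem~\ref{Theorem 12.1}(ii)): since Littlewood--Paley fails at $p=\infty$, one replaces $L_\infty$ by its largest Besov subspace $B^0_{\infty,1}$ inside the limiting interpolation identity (\ref{PrelimInterpolationnew2}) and then invokes the reiteration Lemma~\ref{PrelimLemma7.2}(vi) with the inner fine index $p=1$; this is exactly what produces $1/\min\{1,q\}$ rather than $1/\min\{2,p,q\}$, and your closing remark correctly explains why. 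The necessity argument correctly splits into the lacunary counterexample (covering $\xi<b+1$, sharp only when $q\geq 1$) and the real-interpolation bootstrap (forcing $\xi\geq b+1/q$, sharp only when $q\leq 1$), which together cover the full threshold.

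Two small points deserve cleanup but do not affect correctness. First, you cite Propositions~\ref{RecallEmb*} and~\ref{RecallEmb*optim} at $p=\infty$, but the paper states both only for $1<p<\infty$. In the sufficiency step the needed embedding $B^{0,\xi_0}_{\infty,q}\hookrightarrow B^{0,\xi_1}_{\infty,q}$ for $\xi_0\geq\xi_1$ is immediate from the definition of the quasi-norm, so no citation is required; in the necessity step the lacunary counterexamples behind Proposition~\ref{RecallEmb*optim} rest on Proposition~\ref{Proposition 4.1}, which is valid for $1\leq p\leq\infty$, so the conclusion survives at $p=\infty$ — you should either say this explicitly or run the lacunary counterexample directly against $B^{\theta s,(1-\theta)\xi}_{\infty,q}\hookrightarrow B^{\theta s,(1-\theta)(b+1/q)}_{\infty,q}$. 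Second, when $q=\infty$ the admissible interval for $\alpha$ in the lacunary construction must be open at the right endpoint (you need $\alpha<b+1$ strictly, since $\sup_j(1+j)^{b+1-\alpha}$ is finite at $\alpha=b+1$); the open interval $(\max\{1,\xi\},\,b+1)$ is still nonempty exactly when $\xi<b+1$, so the conclusion is unaffected, but the half-open bracket as written is slightly off in this sub-case.
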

	
	According to Lemma \ref{MNS}, the embedding (\ref{contradiction6}) does not hold because $\varepsilon > 0$. Consequently, (\ref{contradiction5}) is not valid.

\end{proof}

\newpage
\section{Regularity estimates of the fractional Laplace operator}\label{section-fr-lap}

The fractional Laplace operator $(-\Delta)^s$ for $s \in (0,1)$ is defined by\index{\bigskip\textbf{Operators}!$(-\Delta)^{s}$}\label{FRACTLAPLACE}
\begin{equation}\label{LaplaceRiesz}
	(-\Delta)^{s} f(x)= c_{d,s} \, \text{P.V.} \int_{\mathbb{R}^d} \frac{f(x)-f(y)}{|x-y|^{d + 2 s}} dy, \quad f \in  \mathcal{S}(\mathbb{R}^d),
\end{equation}
where
 $c_{d,s} > 0$ is a normalization constant depending only on $d$ and $s$.


There are several equivalent approaches in the literature
to introduce this operator.  
  For instance, it can also be defined as the pseudodifferential operator (see (\ref{RieszPotential2}))
 $$
 \widehat{(-\Delta)^{s} f}(\xi) = \widehat{J_{2s} f}(\xi) = |\xi|^{2 s} \widehat{f}(\xi) , \quad  f \in  \mathcal{S}(\mathbb{R}^d),
 $$
 see, e.g., \cite{Landkof}. This formula  is useful to derive properties of the fractional Laplacian  on the whole space $\mathbb{R}^d$. However, in order to obtain local estimates for $(-\Delta)^{s}$, it is more convenient to work with the characterization in terms of the heat semi-group $W_t$ (cf. (\ref{WeierstrassSemiGroup})). Namely,
\begin{equation}\label{LaplaceRiesz2}
	(-\Delta)^{s} f(x)= \frac{1}{\Gamma(-s)} \int_0^\infty (W_t f(x)-f(x)) \frac{dt}{t^{1+s}},
\end{equation}
see, e.g., \cite{Stinga, StingaTorrea}.

There are several possible extensions of the fractional Laplace operator to spaces formed by locally integrable functions. Throughout this section we shall deal with the extension of (\ref{LaplaceRiesz}) to the weighted $L_1$-space\index{\bigskip\textbf{Spaces}!$L_1(s)$}\label{WEIGHTEDL1}
\begin{equation}\label{weightedL1}
	L_1(s) =\left\{f : \mathbb{R}^d \longrightarrow \mathbb{R}: \|f\|_{L_1(s)} = \int_{\mathbb{R}^d} \frac{|f(x)|}{1 + |x|^{d+2 s}} d x < \infty \right\}.
\end{equation}
 See the works of Silvestre \cite{Silvestre1, Silvestre} for further details.


\subsection{Function spaces in PDE's}

The goal of this section is to clarify the relationships between the function spaces introduced in Section \ref{Section 2.2} and some function spaces that   arise naturally in the study of regularity problems for the fractional Laplacian. 

Let $-\infty <s,  b < \infty$. The space $\Lambda^{s,b}(\mathbb{R}^d)$ is formed by all $f \in \mathcal{S}'(\mathbb{R}^d)$ such that\index{\bigskip\textbf{Spaces}!$\Lambda^{s,b}(\mathbb{R}^d)$}\label{LIPSCHITZHEAT}
\begin{equation}\label{def:LambdaSpace}
	|f|_{\Lambda^{s,b}(\mathbb{R}^d)} =  \sup_{0 < t < \infty} t^{-s/2} (1+|\log t|)^b \Big\|t^k \frac{\partial^k W_t f}{\partial t^k} \Big\|_{L_\infty(\mathbb{R}^d)} < \infty.
\end{equation}
The space $\Lambda^{s,b}(\mathbb{R}^d)$ is equipped with the norm
\begin{equation}\label{def:LambdaSpace2}
	\|f\|_{\Lambda^{s,b}(\mathbb{R}^d)} = \|f_0\|_{L_\infty(\mathbb{R}^d)} + |f|_{\Lambda^{s,b}(\mathbb{R}^d)}
\end{equation}
where
\begin{equation*}
	f_0 = \left\{\begin{array}{cl}  f & \text{if} \quad  s > 0, \\
	& \\
		I_{-2m}f  & \text{if} \quad s \leq 0.
		       \end{array}
                        \right.
\end{equation*}
Here, $k \in \mathbb{N}_0$ with $k > s/2$ and $m \in \mathbb{N}$ with $m > -s/2$. It will be shown in Proposition \ref{lem:LambdaBesov}(ii) below that different values of $k$ and $m$ give equivalent norms on $\Lambda^{s,b}(\mathbb{R}^d)$.
Note that if $b=0$ in $\Lambda^{s,b}(\mathbb{R}^d)$ then we recover the classical spaces $\Lambda^s(\mathbb{R}^d)$.

Let $0 \leq s \leq 1, -\infty < b < \infty$, and $k \in \mathbb{N}_0$. The H\"older space $C^{k,s,b}(\mathbb{R}^d)$\index{\bigskip\textbf{Spaces}!$C^{k,s,b}(\mathbb{R}^d)$}\label{HOLDER} is the collection of all $f \in C^k (\mathbb{R}^d)$ such that
\begin{equation}\label{def:HolderSpace}
	\sup_{\substack{x, y \in \mathbb{R}^d \\ 0 < |x-y| < 1}} \frac{(1 - \log |x-y|)^b |D^\beta f(x)- D^\beta f(y)|}{|x-y|^s} < \infty
\end{equation}
for all $|\beta| = k$. If $b=0$ we obtain the classical spaces $C^{k,s}(\mathbb{R}^d)$. One can easily check that $C^{k,0,b}(\mathbb{R}^d) = C^k(\mathbb{R}^d)$ if $b \leq 0$ and $C^{k,1,b}(\mathbb{R}^d)=\{0\}$ if $b > 0$.

Let $-\infty < b < \infty$. The Zygmund space $\mathcal{Z}^{1,b}(\mathbb{R}^d)$\index{\bigskip\textbf{Spaces}!$\mathcal{Z}^{1,b}(\mathbb{R}^d)$}\label{ZYGMUND} is defined as the set of all $f \in C(\mathbb{R}^d)$ such that
\begin{equation*}
	\|f\|_{\mathcal{Z}^{1,b}(\mathbb{R}^d)} = \|f\|_{C(\mathbb{R}^d)} + \sup_{\substack{x, h \in \mathbb{R}^d \\ 0 < |h| < 1}} \frac{(1 -\log |h|)^{b}|\Delta^2_h f (x)|}{|h|} < \infty.
\end{equation*}
See \cite[Definition 2.18, p. 23]{HaroskeBook}. Obviously, if $b=0$ we recover the classical space $\mathcal{Z}^{1}(\mathbb{R}^d)$.

Now we study the relationships between Lipschitz, Besov, H\"older, and Zygmund spaces involving logarithmic smoothness. In particular, we complement and extend the assertions stated in \cite[Proposition 4.3]{RoncalStinga} by using the properties of moduli of smoothness studied in previous sections and avoiding the semi-group techniques applied in \cite{RoncalStinga}.

We start with investigating basic  properties of $\Lambda$ spaces.
\begin{prop}\label{lem:LambdaBesov}

	\begin{enumerate}[\upshape(i)]
		\item
	Let $-\infty < s, b < \infty$. Then, we have
	\begin{equation}\label{lem:LambdaBesov*}
		\Lambda^{s,b}(\mathbb{R}^d) = B^{s,b}_{\infty,\infty}(\mathbb{R}^d)
	\end{equation}
	with equivalence of norms.

		\item 	Let $-\infty < s, b < \infty$. The norm  $\|f\|_{\Lambda^{s,b}(\mathbb{R}^d)}$ given by
  (\ref{def:LambdaSpace2})
does not depend on $k \in \mathbb{N}_0$ with $k > s/2$ and $m \in \mathbb{N}$ with $m > -s/2$.
		\item Let $-\infty < s_0, s_1, b_0, b_1 < \infty$. Then,
		\begin{equation*}
			\Lambda^{s_0,b_0}(\mathbb{R}^d) \hookrightarrow \Lambda^{s_1,b_1}(\mathbb{R}^d)
		\end{equation*}
		if and only if one of the following conditions holds:
		\begin{enumerate}[\upshape(a)]
			\item $s_0 > s_1$,
			\item $s_0 = s_1$ and $b_0 \geq b_1$.

		\end{enumerate}
\item The following Sobolev-type characterization for $\Lambda^{s,b}(\mathbb{R}^d)$ holds: Let $-\infty <s, b < \infty$ and $m \in \mathbb{N}$. Then,
			\begin{equation}\label{LambdaDifferences}
				\|f\|_{\Lambda^{s,b}(\mathbb{R}^d)} \asymp \sum_{|\beta| \leq m} \|D^\beta f\|_{\Lambda^{s-m,b}(\mathbb{R}^d)}.
			\end{equation}
	\end{enumerate}

\end{prop}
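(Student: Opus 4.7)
The strategy is to prove (i) first: once $\Lambda^{s,b}(\mathbb{R}^d) = B^{s,b}_{\infty,\infty}(\mathbb{R}^d)$ is known with equivalent norms, parts (ii), (iii), and (iv) will all reduce to results already established in the paper.

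For (i) with $s > 0$, identity (\ref{BesovComparison}) gives $B^{s,b}_{\infty,\infty} = \mathbf{B}^{s,b}_{\infty,\infty}$, and Theorem \ref{Theorem Heat Kernels}(i) specialized at $p = q = \infty$ yields
\begin{equation*}
\|f\|_{B^{s,b}_{\infty,\infty}(\mathbb{R}^d)} \asymp \|f\|_{L_\infty(\mathbb{R}^d)} + \sup_{0 < t < 1} t^{-s/2}(1-\log t)^b \|[W_t - \text{id}_{\mathbb{R}^d}]^k f\|_{L_\infty(\mathbb{R}^d)}
\end{equation*}
for any $k \in \mathbb{N}$ with $k > s/2$. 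The iterated-difference factor is then converted to a time-derivative factor: since $\{W_t\}$ is an analytic semi-group on $L_\infty$ with generator $\Delta$ (so that $\partial_t^k W_t = \Delta^k W_t$), combining Lemma \ref{LemmaK-FunctionalDI} with the standard realization of the $K$-functional via analytic semi-groups (see \cite[Theorem 3.4.6]{ButzerBerens}) gives $\|[W_t - \text{id}_{\mathbb{R}^d}]^k f\|_{L_\infty} \asymp K(t^k, f; L_\infty, D(\Delta^k)) \asymp t^k\|\partial_t^k W_t f\|_{L_\infty}$ for $0 < t < 1$. The contribution of $t \geq 1$ to the $\Lambda^{s,b}$-supremum is harmless because the analyticity estimate $\|\partial_t^k W_t f\|_{L_\infty} \lesssim t^{-k}\|f\|_{L_\infty}$ together with $s > 0$ makes the weight $t^{-s/2}(1+|\log t|)^b$ absorb this tail into $\|f\|_{L_\infty}$. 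For $s \leq 0$, choose $m \in \mathbb{N}$ with $m > -s/2$; then $s + 2m > 0$ and Lemma \ref{LemmaLift} makes $I_{-2m}$ an isomorphism from $B^{s,b}_{\infty,\infty}$ onto $B^{s+2m,b}_{\infty,\infty}$. Because $I_{-2m}$ and $W_t$ commute as Fourier multipliers, one can relate $\partial_t^k W_t f$ with $\partial_t^{k+m} W_t(I_{-2m} f)$ through the multiplier $(|\xi|^2/(1+|\xi|^2))^m$, which admits the binomial expansion $\sum_{j=0}^m \binom{m}{j}(-1)^j (1+|\xi|^2)^{-j}$ into the identity plus convolutions with integrable Bessel kernels $G_{2j}$; this gives bounded maps on $L_\infty$ in both directions and allows the already-proved positive-smoothness case applied to $I_{-2m} f$ to be transferred back to $f$, completing (i).

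Parts (ii)--(iv) now follow easily. For (ii), the right-hand side of (\ref{lem:LambdaBesov*}) does not depend on $k$ or $m$, so any two admissible pairs yield norms both equivalent to $\|f\|_{B^{s,b}_{\infty,\infty}}$. For (iii), (i) reduces the question to whether $B^{s_0,b_0}_{\infty,\infty} \hookrightarrow B^{s_1,b_1}_{\infty,\infty}$, and Proposition \ref{RecallEmb*optim} at $q_0 = q_1 = \infty$ produces exactly the two conditions (a) $s_0 > s_1$ and (b) $s_0 = s_1$, $b_0 \geq b_1$ (case (iii) of that proposition being vacuous since $\infty > \infty$ is false, while case (ii) reduces to $b_0 \geq b_1$). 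For (iv), apply Lemma \ref{CharSobolevNorm2} at $p = q = \infty$ and use (i) on the left at smoothness $s$ and on the right at smoothness $s - m$. The main obstacle is the $s \leq 0$ case of (i): while the lifting reduces matters to the positive-smoothness case, faithfully matching the $\Lambda^{s,b}$-semi-norm of $f$ (which uses $\partial_t^k W_t f$ itself) with that of $I_{-2m} f$ requires careful accounting of the low-frequency Fourier multiplier corrections and of the admissible range of $k$.
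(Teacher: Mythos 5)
Your overall architecture matches the paper's: for $s>0$ pass through the heat-kernel characterization (\ref{BesovHeat}) of $\mathbf{B}^{s,b}_{\infty,\infty}(\mathbb{R}^d)$, for $s\le 0$ lift with $I_{-2m}$ and commute it with $W_t$, and then reduce (ii)--(iv) to Besov-space facts. However, the pivot of your argument for $s>0$ --- the chain $\|[W_t-\mathrm{id}]^k f\|_{L_\infty}\asymp K(t^k,f;L_\infty,D(\Delta^k))\asymp t^k\|\partial_t^kW_tf\|_{L_\infty}$ for each fixed $t$ --- is false in its second link. The inequality $t^k\|\partial_t^kW_tf\|_{L_\infty}\lesssim K(t^k,f;L_\infty,D(\Delta^k))$ does hold (analyticity), but the reverse does not: for $f(x)=e^{iNx_1}\psi(x)$ one has $K(t^k,f)\asymp\min\{1,(tN^2)^k\}\|\psi\|_\infty$, which is of order $\|\psi\|_\infty$ once $tN^2\gtrsim 1$, while $t^k\|\partial_t^kW_tf\|_\infty\asymp (tN^2)^ke^{-tN^2}\|\psi\|_\infty$ decays. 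So your claimed pointwise realization only yields the embedding $B^{s,b}_{\infty,\infty}\hookrightarrow\Lambda^{s,b}$; the converse inclusion is genuinely missing. The correct replacement is the asymmetric pair of inequalities the paper takes from Butzer--Berens: $t^k\|\partial_t^kW_tf\|_\infty\lesssim\|[W_t-\mathrm{id}]^kf\|_\infty$ together with the integrated bound $\|[W_t-\mathrm{id}]^kf\|_\infty\lesssim\int_0^t\|u^k\partial_u^kW_uf\|_\infty\,\tfrac{du}{u}$, after which the weight $t^{-s/2}(1-\log t)^b$ is absorbed because $\int_0^tu^{s/2}(1-\log u)^{-b}\tfrac{du}{u}\lesssim t^{s/2}(1-\log t)^{-b}$ for $s>0$. (Citing Butzer--Berens 3.4.6 does not rescue the step either, since that theorem has no logarithmic weight.)

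Two smaller points. In the $s\le 0$ case you assert that the multiplier $(|\xi|^2/(1+|\xi|^2))^m$ gives bounded maps ``in both directions''; only the forward direction is a bounded operator, and the reverse must be obtained by the binomial identity $\partial_t^kW_tf=\sum_{j=0}^m\binom{m}{j}\partial_t^{k+j}W_t(I_{-2m}f)$, which in turn forces you to control $\Lambda^{s+2m,b}$-type quantities of $I_{-2m}f$ for several orders $k+j$, some of which may fall below $(s+2m)/2$ --- exactly the accounting you flag but do not carry out (the paper's treatment is also terse here, but it only needs the one bounded direction after choosing the order $m+k$). Finally, in (iii) you invoke Proposition \ref{RecallEmb*optim} at $p=\infty$, which lies outside its stated hypothesis $1<p<\infty$; the lacunary counterexamples of that proposition do extend to $p=\infty$ via Proposition \ref{Proposition 4.1}, but you should say so, whereas the paper instead reduces to $s_0>0$ by lifting and builds counterexamples from functions with prescribed moduli of smoothness.
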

\begin{rem}
 If $b=0$, then  (\ref{lem:LambdaBesov*}) 
  is  well known; see, e.g., \cite[Theorem 2.12.2, p. 184]{Triebel1} and \cite[2.6.4, pp. 151--155]{Triebel92}. However the related arguments given in \cite{Triebel1} and \cite{Triebel92} have no immediate log-counterparts. We give an elementary proof of part (i) 
   based on the heat kernel representation for the spaces $\mathbf{B}^{s,b}_{\infty,\infty}(\mathbb{R}^d)$ given by (\ref{BesovHeat}) and lifting properties of Besov spaces.

\end{rem}

\begin{proof}[Proof of Proposition \ref{lem:LambdaBesov}]
(i), (ii):
	Let $k \in \mathbb{N}$. We will make use of the following well-known formulas
	\begin{equation}\label{lem:LambdaBesov1}
		\Big\|t^k \frac{\partial^k W_t f}{\partial t^k} \Big\|_{L_\infty(\mathbb{R}^d)} \lesssim \|[W_t - \text{id}_{\mathbb{R}^d}]^k f\|_{L_\infty(\mathbb{R}^d)}
	\end{equation}
	and
	\begin{equation}\label{lem:LambdaBesov2}
		 \|[W_t - \text{id}_{\mathbb{R}^d}]^k f\|_{L_\infty(\mathbb{R}^d)} \lesssim \int_0^t \Big\|u^k \frac{\partial^k W_u f}{\partial u^k} \Big\|_{L_\infty(\mathbb{R}^d)} \frac{du}{u}.
	\end{equation}
	See \cite[Lemmas 3.5.4 and 3.5.5]{ButzerBerens}.
	
	Assume first that $s > 0$. Let $k \in \mathbb{N}$ with $k >s/2$. By (\ref{def:LambdaSpace}), (\ref{lem:LambdaBesov1}), (\ref{BesovHeat}) and (\ref{BesovComparison}), we get
	\begin{align*}
		\|f\|_{\Lambda^{s,b}(\mathbb{R}^d)} &\lesssim  \|f\|_{ L_\infty(\mathbb{R}^d)} + \sup_{0 < t < \infty} t^{-s/2} (1+|\log t|)^b \|[W_t - \text{id}_{\mathbb{R}^d}]^k f\|_{L_\infty(\mathbb{R}^d)} \\
		& \hspace{-1.5cm}\lesssim  \|f\|_{ L_\infty(\mathbb{R}^d)} + \sup_{0 < t < 1} t^{-s/2} (1-\log t)^b \|[W_t - \text{id}_{\mathbb{R}^d}]^k f\|_{L_\infty(\mathbb{R}^d)} \\
		&  \hspace{-1.5cm}  \asymp \|f\|_{\mathbf{B}^{s,b}_{\infty,\infty}(\mathbb{R}^d)} \asymp \|f\|_{B^{s,b}_{\infty,\infty}(\mathbb{R}^d)}.
	\end{align*}
	
	To show the converse inequality, we apply (\ref{BesovComparison}), (\ref{BesovHeat}), (\ref{lem:LambdaBesov2}), 
  and (\ref{def:LambdaSpace}),
	\begin{align*}
		\|f\|_{B^{s,b}_{\infty,\infty}} & \asymp  \|f\|_{ L_\infty(\mathbb{R}^d)} + \sup_{0 < t < 1} t^{-s/2} (1-\log t)^b \|[W_t - \text{id}_{\mathbb{R}^d}]^k f\|_{L_\infty(\mathbb{R}^d)} \\
		& \lesssim \|f\|_{ L_\infty(\mathbb{R}^d)} + \sup_{0 < t < 1} t^{-s/2} (1-\log t)^b \int_0^t \Big\|u^k \frac{\partial^k W_u f}{\partial u^k} \Big\|_{L_\infty(\mathbb{R}^d)} \frac{du}{u} \\
		& \lesssim \|f\|_{ L_\infty(\mathbb{R}^d)} + \sup_{0 < t < 1} t^{-s/2} (1-\log t)^b \Big\|t^k \frac{\partial^k W_t f}{\partial t^k} \Big\|_{L_\infty(\mathbb{R}^d)}  \leq \|f\|_{\Lambda^{s,b}(\mathbb{R}^d)}.
	\end{align*}
	Furthermore, this also showed (ii) for $\Lambda^{s,b}(\mathbb{R}^d)$ with $s > 0$.
	
	Next we will prove (\ref{lem:LambdaBesov*}) with $s \leq 0$. Let $m \in \mathbb{N}$ with $m > -s/2$ and $k \in \mathbb{N}_0$ with $k > s/2$. For $s + 2 m > 0$  we have already shown that $B^{s+2m,b}_{\infty,\infty}(\mathbb{R}^d) = \Lambda^{s+2m,b}(\mathbb{R}^d)$. Therefore, in virtue of lifting property of Besov spaces (see Lemma \ref{LemmaLift}) and (\ref{def:LambdaSpace}), we have
	\begin{align}
		\|f\|_{B^{s,b}_{\infty,\infty}(\mathbb{R}^d)} & \asymp \|I_{-2 m} f\|_{B^{s+2m,b}_{\infty,\infty}(\mathbb{R}^d)} \asymp \|I_{-2 m} f\|_{\Lambda^{s+2m,b}(\mathbb{R}^d)} \nonumber\\
		&  = \|I_{-2 m}f\|_{ L_\infty(\mathbb{R}^d)} + \sup_{0 < t < \infty} t^{-\frac{s}{2}} (1+|\log t|)^b \Big\|t^{k} \frac{\partial^{m+k} W_t I_{-2 m} f}{\partial t^{m+k}} \Big\|_{L_\infty(\mathbb{R}^d)} \label{Heat*}
	\end{align}
	because $m+k >  m + s/2$. On the other hand, since $W_t f$ is a solution of the heat equation, that is, $\frac{\partial W_t f}{\partial t} = \Delta W_t f$ (see Section \ref{balls3}), using the formula $\Delta W_t f = W_t \Delta f$, it is plain to see that $\frac{\partial^{m+k} W_t f}{\partial t^{m+k}} = \Delta^{m+k} W_t f$, which allows us to derive
	\begin{equation}\label{Heat**}
		 \frac{\partial^{m+k} W_t I_{-2 m} f}{\partial t^{m+k}} = \Delta^{m+k} W_t I_{-2m}f = \Delta^{m+k} I_{-2m} W_t f = \Delta^{m} ( \text{id}_{\mathbb{R}^d} + \Delta)^{-m}  \frac{\partial^{k} W_t f}{\partial t^{k}}.
	\end{equation}
	Finally, making use of the fact that $ \Delta^{m} ( \text{id}_{\mathbb{R}^d} + \Delta)^{-m}$ is a bounded operator on $L_\infty(\mathbb{R}^d)$ (see \cite[Chapter V, Section 3.2, page 133]{Stein}), (\ref{Heat*}) and (\ref{Heat**}), we obtain
	\begin{equation*}
		\|f\|_{B^{s,b}_{\infty,\infty}(\mathbb{R}^d)}  \asymp \|I_{-2 m}f\|_{ L_\infty(\mathbb{R}^d)} + \sup_{0 < t < \infty} t^{-\frac{s}{2}} (1+|\log t|)^b \Big\|t^{k}  \frac{\partial^{k} W_t f}{\partial t^{k}} \Big\|_{L_\infty(\mathbb{R}^d)}.
	\end{equation*}
	Furthermore, the argument given above also showed that $\Lambda^{s,b}(\mathbb{R}^d)$ with $s \leq 0$ does not depend on $m \in \mathbb{N}$ with $m > -s/2$ and $k \in \mathbb{N}_0$ with $k > s/2$.
	

(iii): We first note that, by (\ref{lem:LambdaBesov*}), the embedding $\Lambda^{s_0,b_0}(\mathbb{R}^d) \hookrightarrow \Lambda^{s_1,b_1}(\mathbb{R}^d)$ is equivalent to $B^{s_0,b_0}_{\infty,\infty}(\mathbb{R}^d) \hookrightarrow  B^{s_1,b_1}_{\infty,\infty}(\mathbb{R}^d)$.

Assume that (a) or (b) holds. Then, it is plain to check that $B^{s_0,b_0}_{\infty,\infty}(\mathbb{R}^d) \hookrightarrow B^{s_1,b_1}_{\infty,\infty}(\mathbb{R}^d)$. Conversely, suppose that $B^{s_0,b_0}_{\infty,\infty}(\mathbb{R}^d) \hookrightarrow B^{s_1,b_1}_{\infty,\infty}(\mathbb{R}^d)$ holds. This implies that one of the conditions $(a)$ or $(b)$ is satisfied. Indeed, we first note that applying the lifting property given in Lemma \ref{LemmaLift}, we may assume without loss of generality that $s_0 > 0$.
Suppose that $s_0 < s_1$. Let $k > s_1$ and $s_0 < \varepsilon < s_1$. Then there exists $f \in L_\infty(\mathbb{R}^d)$ such that $\omega_k(f,t)_\infty \asymp t^{\varepsilon}$, see \cite{Tikhonov-real}.
Clearly, we have $f \in \mathbf{B}^{s_0,b_0}_{\infty,\infty}(\mathbb{R}^d) = B^{s_0,b_0}_{\infty,\infty}(\mathbb{R}^d) $ but $f \not \in \mathbf{B}^{s_1,b_1}_{\infty,\infty}(\mathbb{R}^d) = B^{s_1,b_1}_{\infty,\infty}(\mathbb{R}^d)$. If $s_0 = s_1$ and $b_0 < b_1$, we let $f \in L_\infty(\mathbb{R}^d)$ with $\omega_k(f,t)_\infty \asymp t^{s_0} (1-\log t)^{-b_0}$. This gives the desired counterexample.

(iv): The characterization (\ref{LambdaDifferences}) follows from (\ref{lem:LambdaBesov*}) and Lemma \ref{CharSobolevNorm2}.
\end{proof}

Our next result gives some characterizations of H\"{o}lder and Zygmund spaces.
\begin{prop}\label{PropRonStin+}
\begin{enumerate}[\upshape(i)]
\item Let 
 $-\infty < b < \infty$. Then, 
\begin{equation}\label{BesovDiff****}
\mathcal{Z}^{1,b}(\mathbb{R}^d)= \Lambda^{1,b}(\mathbb{R}^d) = \mathbf{B}^{1,b}_{\infty,\infty}(\mathbb{R}^d).
 \end{equation}

			\item Let $k \in \mathbb{N}_0$. Then,
$$
C^{k,s,b}(\mathbb{R}^d)=
\left\{
  \begin{array}{ll}
    \mathbf{B}^{k + s,b}_{\infty,\infty}(\mathbb{R}^d)=\Lambda^{k + s,b}(\mathbb{R}^d), & \hbox{$0<s<1$ and $-\infty < b < \infty$;} \\
    \mathbf{B}^{0,b}_{\infty,\infty}(\mathbb{R}^d), & \hbox{$k=s=0$ and $b > 0$;}\\
    \emph{Lip}^{(k+1,b)}_{\infty,\infty}(\mathbb{R}^d), & \hbox{$s=1$ and $b\le0$;}
  \end{array}
\right.
$$
and, for $k \in \mathbb{N}$ and $b > 0$,
\begin{equation}\label{CGagliardo2}
	\mathbf{B}^{k,b+1}_{\infty,\infty}(\mathbb{R}^d)
\hookrightarrow C^{k,0,b}(\mathbb{R}^d) \hookrightarrow \mathbf{B}^{k,b}_{\infty,\infty}(\mathbb{R}^d)
\end{equation}
or, equivalently, 
$$
\Lambda^{k ,b+1}(\mathbb{R}^d) \hookrightarrow C^{k,0,b}(\mathbb{R}^d) \hookrightarrow 
\Lambda^{k,b}(\mathbb{R}^d).
$$
More precisely, if $s=0$ then
\begin{equation}\label{CGagliardo}
\|f\|_{C^{k,0,b}(\mathbb{R}^d)} \asymp \sum_{|\beta| \leq k} \|D^\beta f\|_{ \mathbf{B}^{0,b}_{\infty,\infty}(\mathbb{R}^d)}, \quad k \in \mathbb{N}_0, \quad b > 0.
\end{equation}
			\item Let $k \in \mathbb{N}_0, b \geq 0$, and $-\infty < d < \infty$. Then,
			\begin{equation}\label{EmbCLambda}
			C^{k,1,-b}(\mathbb{R}^d) = \emph{Lip}^{(k+1,-b)}_{\infty,\infty}(\mathbb{R}^d) \hookrightarrow \Lambda^{k+1,-d}(\mathbb{R}^d) \iff d \geq b.
			\end{equation}
						\item
Let $m \in \mathbb{N}$ and $-\infty < b < \infty$. Then,
			\begin{equation}\label{LambdaDifferences2}
				\|f\|_{\Lambda^{m+1,b}(\mathbb{R}^d)} \asymp \sum_{|\beta| \leq m} \|D^\beta f\|_{ \mathcal{Z}^{1,b}(\mathbb{R}^d)}.
			\end{equation}
	\end{enumerate}
\end{prop}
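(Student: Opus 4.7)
The plan is to reduce each identification to the Besov scale via Proposition~\ref{lem:LambdaBesov}(i), which gives $\Lambda^{s,b}(\mathbb{R}^d)=B^{s,b}_{\infty,\infty}(\mathbb{R}^d)$, combined with the equality $B^{s,b}_{\infty,\infty}=\mathbf{B}^{s,b}_{\infty,\infty}$ for $s>0$ from (\ref{BesovComparison}), the modulus-of-smoothness descriptions in Corollary~\ref{CorRonStin}, and the Sobolev-type lifting of Lemma~\ref{CharSobolevNorm2}. For (i), apply (\ref{BesovDiff**}) with $s=1$, $p=\infty$, $k=2$: this rewrites $\|f\|_{\mathbf{B}^{1,b}_{\infty,\infty}}$ as $\|f\|_{L_\infty(\mathbb{R}^d)}+\sup_{0<|h|<1}(1-\log|h|)^{b}|\Delta^2_h f(x)|/|h|$, which is exactly $\|f\|_{\mathcal{Z}^{1,b}(\mathbb{R}^d)}$. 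Part (iv) is then immediate: Lemma~\ref{CharSobolevNorm2} applied to $B^{m+1,b}_{\infty,\infty}$ expresses its norm as $\sum_{|\beta|\le m}\|D^\beta f\|_{B^{1,b}_{\infty,\infty}}$, and each factor equals the Zygmund norm by (i).

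For (ii), I treat the three cases separately. When $0<s<1$, Lemma~\ref{CharSobolevNorm2} yields $\|f\|_{B^{k+s,b}_{\infty,\infty}}\asymp\sum_{|\beta|\le k}\|D^\beta f\|_{B^{s,b}_{\infty,\infty}}$, and each summand is identified via (\ref{BesovDiff**}) (applicable because one may take the inner difference order to be $1$, since $s<1$) as $\|D^\beta f\|_{L_\infty}+\sup_{|x-y|<1}(1-\log|x-y|)^{b}|D^\beta f(x)-D^\beta f(y)|/|x-y|^{s}$; Hölder seminorms at orders $|\beta|<k$ are absorbed into $\|f\|_{C^k(\mathbb{R}^d)}$ by the mean value theorem, producing $\|f\|_{C^{k,s,b}(\mathbb{R}^d)}$. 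The case $k=s=0$, $b>0$ is just (\ref{BesovDiff**}) read at $s=0$. For $s=1$ and $b\le 0$, I write $\text{Lip}^{(k+1,b)}_{\infty,\infty}=(L_\infty,W^{k+1}_\infty)_{(1,b),\infty}$ via (\ref{interpolationLipschitz}), reiterate with Lemma~\ref{PrelimLemma7.2}(iii) to reduce to the $k=0$ case, and then identify $\text{Lip}^{(1,b)}_{\infty,\infty}$ with $C^{0,1,b}$ directly from (\ref{Lipschitz}); the passage to general $k$ again uses the mean-value absorption of lower-order terms. The equivalence (\ref{CGagliardo}) for $k\in\mathbb{N}_0$ and $b>0$ follows from the $k=s=0$ case together with the observation that $|h|(1-\log|h|)^{b}$ is bounded on $(0,1)$, forcing $\|D^\beta f\|_{\mathbf{B}^{0,b}_{\infty,\infty}}\lesssim\|f\|_{C^k}$ for $|\beta|<k$. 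The embeddings (\ref{CGagliardo2}) then follow: the left inclusion is Theorem~\ref{Theorem 11.1}(ii) specialized to $p=\infty$ (where $1/\min\{1,q\}=1$), and the right inclusion uses the elementary bound $\omega_{k+1}(f,t)_\infty\lesssim t^{k}\sup_{|\beta|=k}\omega_1(D^\beta f,t)_\infty$ together with (\ref{CGagliardo}).

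For (iii), the equality $C^{k,1,-b}=\text{Lip}^{(k+1,-b)}_{\infty,\infty}$ is the $s=1$ case of (ii) applied with parameter $-b\le 0$. If $d\ge b\ge 0$, the monotonicity $\omega_{k+2}(f,t)\le 2\,\omega_{k+1}(f,t)$ gives $\omega_{k+2}(f,t)\lesssim t^{k+1}(1-\log t)^{b}\le t^{k+1}(1-\log t)^{d}$, placing $f$ in $\mathbf{B}^{k+1,-d}_{\infty,\infty}=\Lambda^{k+1,-d}$. For the converse assume the embedding, fix $\theta\in(0,1)$ and $q\in(0,\infty]$, and interpolate both sides with $L_\infty$ by the real method $(\cdot,\cdot)_{\theta,q}$. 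The source side, using (\ref{interpolationLipschitz}), Lemma~\ref{PrelimLemma7.2}(ii) (with $c=-b$, requiring $b>0$), and (\ref{PrelimInterpolation}), identifies with $\mathbf{B}^{(k+1)\theta,-\theta b}_{\infty,q}$; the target side, by (\ref{PrelimInterpolationnew2.2}), identifies with $\mathbf{B}^{(k+1)\theta,-\theta d}_{\infty,q}$. The resulting inclusion $\mathbf{B}^{(k+1)\theta,-\theta b}_{\infty,q}\hookrightarrow\mathbf{B}^{(k+1)\theta,-\theta d}_{\infty,q}$ forces $-\theta b\ge -\theta d$ by Proposition~\ref{RecallEmb**optim}, i.e., $d\ge b$; the boundary case $b=0$ is automatic since $d\ge 0$ is assumed in the statement.

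The main obstacle will be the precise Sobolev-type reduction of the Lipschitz space in the $s=1$ case of (ii), namely $\|f\|_{\text{Lip}^{(k+1,b)}_{\infty,\infty}}\asymp\sum_{|\beta|\le k}\|D^\beta f\|_{\text{Lip}^{(1,b)}_{\infty,\infty}}$: this identity is not stated explicitly in the excerpt, and a naive transcription of Lemma~\ref{CharSobolevNorm2} produces the Fourier-analytic Besov space $B^{k+1,b}_{\infty,\infty}$ rather than the Lipschitz space, whose characteristic modulus of smoothness is of one order lower. The cleanest route is the interpolation representation (\ref{interpolationLipschitz}) combined with Lemma~\ref{PrelimLemma7.2}(iii), which preserves the $(1,b),\infty$ limiting structure when descending from $W^{k+1}_\infty$ to $W^1_\infty$; care is required to track the $L_\infty$ base space and to avoid spurious logarithmic losses when identifying the resulting norm with $\sum_{|\beta|\le k}\|D^\beta f\|_{C^{0,1,b}}$.
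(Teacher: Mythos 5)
Your overall architecture matches the paper's: (i) is read off from (\ref{BesovDiff**}) with $s=1$, $k=2$; (iv) follows from (i) together with the Sobolev-type lifting (\ref{11.1i}); the $0<s<1$ and $k=s=0$ cases of (ii) come from (\ref{lem:LambdaBesov*}), (\ref{BesovComparison}), (\ref{11.1i}) and (\ref{BesovDiff**}); (\ref{CGagliardo2}) from Theorems \ref{Theorem 11.1}(ii) and \ref{Theorem 11.1Conv}(ii); and the necessity in (iii) by interpolating the embedding against $L_\infty$ and invoking Proposition \ref{RecallEmb**optim}. All of that is sound and is essentially the paper's route.

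The genuine gap is exactly where you locate it: the identity $\|f\|_{\mathrm{Lip}^{(k+1,-b)}_{\infty,\infty}(\mathbb{R}^d)} \asymp \sum_{|\beta|\le k}\|D^\beta f\|_{\mathrm{Lip}^{(1,-b)}_{\infty,\infty}(\mathbb{R}^d)}$ needed for the $s=1$ case of (ii). Your proposed tool, Lemma \ref{PrelimLemma7.2}(iii), gives $((A_0,A_1)_{\theta,p;\alpha},A_1)_{(1,c),q}=(A_0,A_1)_{(1,c),q}$ — it swaps the \emph{left} endpoint $A_0$ for an intermediate space while keeping $A_1=W^{k+1}_\infty$ fixed; it cannot descend from $W^{k+1}_\infty$ to $W^1_\infty$, and no reiteration of the couple $(L_\infty,W^{k+1}_\infty)$ by itself produces a decomposition in terms of the derivatives $D^\beta f$ (for that one would need $D^\beta$ to act boundedly on both endpoints, which fails on $L_\infty$). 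The correct argument is direct: the lower bound $\sum_{|\beta|\le k}\|D^\beta f\|_{\mathrm{Lip}^{(1,-b)}}\gtrsim \|f\|_{\mathrm{Lip}^{(k+1,-b)}}$ follows from (\ref{convTrebels}) ($t^{-k}\omega_{k+1}(f,t)_p\lesssim\sup_{|\beta|=k}\omega_1(D^\beta f,t)_p$), and the upper bound from the Marchaud inequality (\ref{MarchaudClassic}), $\omega_1(D^\beta f,t)_p\lesssim\int_0^t u\,u^{-(k+1)}\omega_{k+1}(f,u)_p\,\frac{du}{u}$, followed by Hardy's inequality (\ref{HardyInequal1}) applied to the decreasing function $\omega_{k+1}(f,u)_p/u^{k+1}$. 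This is Lemma \ref{lem:derivativesLipschitz} in the paper, and without it your $s=1$ case does not close. A second, smaller slip: in (iii) you dispose of the boundary case $b=0$ by asserting that $d\ge 0$ is assumed, but the statement allows $-\infty<d<\infty$; the necessity $d\ge 0$ for $b=0$ still requires the interpolation argument (which does go through at $c=-b=0$, $q=\infty$, since the limiting space $(A_0,A_1)_{(1,0),\infty}$ is admissible there).
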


\begin{rem}
(i) Note that (\ref{CGagliardo}) also holds if $b \leq 0$ because both sides are equivalent to $\|f\|_{C^k(\mathbb{R}^d)}$.


(ii) If $b < 0$ then the embedding given in (\ref{EmbCLambda}) trivially holds for all $d \in \mathbb{R}$ because $C^{k,1,-b}(\mathbb{R}^d)= \{0\}$.

\end{rem}

\begin{proof}[Proof of Proposition \ref{PropRonStin+}]
(i): Formula (\ref{BesovDiff****}) follows from (\ref{BesovDiff**}) (with $s=1$ and $k=2$).

(ii): By (\ref{lem:LambdaBesov*}), (\ref{BesovComparison}), (\ref{11.1i}), and (\ref{BesovDiff**}) we have for $0<s<1$
\begin{equation*}
	\|f\|_{\Lambda^{k + s,b}(\mathbb{R}^d)} \asymp \|f\|_{\mathbf{B}^{k+s,b}_{\infty,\infty}(\mathbb{R}^d)} \asymp \sum_{|\beta| \leq k} \|D^\beta f\|_{\mathbf{B}^{s,b}_{\infty,\infty}(\mathbb{R}^d)} \asymp \|f\|_{C^{k,s,b}(\mathbb{R}^d)}.
\end{equation*}
To show (\ref{CGagliardo}), compare (\ref{BesovDiff**}) with $s=0$ and (\ref{def:HolderSpace}). In particular, putting $k=0$ in (\ref{CGagliardo}) we derive $C^{0,0,b}(\mathbb{R}^d) = \mathbf{B}^{0,b}_{\infty,\infty}(\mathbb{R}^d)$. Suppose now that $k \in \mathbb{N}$ and $b > 0$. According to Theorem \ref{Theorem 11.1Conv}(ii) and (\ref{CGagliardo}), we have
\begin{equation*}
	\|f\|_{\mathbf{B}^{k,b}_{\infty,\infty}(\mathbb{R}^d)} \lesssim \sum_{|\beta| \leq k} \|D^\beta f\|_{\mathbf{B}^{0,b}_{\infty,\infty}(\mathbb{R}^d)} \asymp \|f\|_{C^{k,0,b}(\mathbb{R}^d)},
\end{equation*}
that is, $C^{k,0,b}(\mathbb{R}^d) \hookrightarrow \mathbf{B}^{k,b}_{\infty,\infty}(\mathbb{R}^d)$. Conversely, applying (\ref{11.1new}), we get
\begin{equation}\label{DerEmb*}
	\sum_{|\beta| < k} \|D^\beta f\|_{\mathbf{B}^{0,b}_{\infty,\infty}(\mathbb{R}^d)} \lesssim \sum_{|\beta| < k} \|D^\beta f\|_{\mathbf{B}^{k-|\beta|,b}_{\infty,\infty}(\mathbb{R}^d)} \lesssim \|f\|_{\mathbf{B}^{k,b}_{\infty,\infty}(\mathbb{R}^d)}
\end{equation}
where the first estimate follows from the trivial embedding $\mathbf{B}^{k-|\beta|,b}_{\infty,\infty}(\mathbb{R}^d) \hookrightarrow \mathbf{B}^{0,b}_{\infty,\infty}(\mathbb{R}^d)$ with $|\beta| < k$. Furthermore, by Theorem \ref{Theorem 11.1}(ii), we have
\begin{equation}\label{DerEmb**}
	\sum_{|\beta| = k} \|D^\beta f\|_{\mathbf{B}^{0,b}_{\infty,\infty}(\mathbb{R}^d)} \lesssim \|f\|_{\mathbf{B}^{k,b+1}_{\infty,\infty}(\mathbb{R}^d)}.
\end{equation}
Finally, by (\ref{CGagliardo}), (\ref{DerEmb*}) and (\ref{DerEmb**}), we arrive at
\begin{equation*}
	 \|f\|_{C^{k,0,b}(\mathbb{R}^d)} \asymp \sum_{|\beta| \leq k} \|D^\beta f\|_{\mathbf{B}^{0,b}_{\infty,\infty}(\mathbb{R}^d)} \lesssim  \|f\|_{\mathbf{B}^{k,b}_{\infty,\infty}(\mathbb{R}^d)} +  \|f\|_{\mathbf{B}^{k,b+1}_{\infty,\infty}(\mathbb{R}^d)} \lesssim \|f\|_{\mathbf{B}^{k,b+1}_{\infty,\infty}(\mathbb{R}^d)}
\end{equation*}
which gives the left-hand side embedding in (\ref{CGagliardo2}).

In order to verify that
			\begin{equation}\label{LipC}
			\text{Lip}^{(k+1,-b)}_{\infty,\infty}(\mathbb{R}^d) = C^{k,1,-b}(\mathbb{R}^d),
\qquad b \geq 0,
			\end{equation}
by elementary computations we arrive at 
\begin{align*}
	\sup_{0 < t < 1} t^{-1} (1+ |\log t|)^{-b} \omega_1(f,t)_\infty & \asymp \sup_{|h| < 1} \|\Delta_h f\|_{L_\infty(\mathbb{R}^d)} \sup_{ |h| < t < 1} t^{-1} (1 -\log t)^{-b} \\
	& \asymp \sup_{0 < |h| < 1} \frac{(1 -\log |h|)^{-b} \|\Delta_h f\|_{L_\infty(\mathbb{R}^d)}}{|h| } \\
	& \asymp \sup_{\substack{x, y \in \mathbb{R}^d \\ 0 < |x-y| < 1}} \frac{(1 -\log |x-y|)^{-b} |f(x)- f(y)|}{|x-y|}.
\end{align*}
Hence, by (\ref{Lipschitz}) and (\ref{def:HolderSpace}), we have shown that
\begin{equation}\label{14.13**}
	f \in C^{k,1,-b}(\mathbb{R}^d) \quad \text{if and only if} \quad D^\beta f \in \text{Lip}^{(1,-b)}_{\infty,\infty}(\mathbb{R}^d) \quad \text{for all} \quad  |\beta| = k.
\end{equation}
This finishes the proof of (\ref{LipC}) if $k=0$. If $k \geq 1$ the proof follows from (\ref{14.13**}) and the following lemma, which is interesting for its own sake.

\begin{lem}\label{lem:derivativesLipschitz}
	Let $k \in \mathbb{N}, 1 \leq p \leq \infty, 0 < q \leq \infty$ and $-\infty < b < \infty$. Then,
	\begin{equation}\label{lem:derivativesLipschitz1}
		\|f\|_{\emph{Lip}^{(k+1,-b)}_{p,q}(\mathbb{R}^d)} \asymp \sum_{|\beta| \leq k} \|D^\beta f\|_{\emph{Lip}^{(1,-b)}_{p,q}(\mathbb{R}^d)}.
	\end{equation}
\end{lem}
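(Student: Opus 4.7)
The plan is to use the interpolation representation (\ref{interpolationLipschitz}), namely $\text{Lip}^{(\ell,-b)}_{p,q}(\mathbb{R}^d) = (L_p(\mathbb{R}^d), W^\ell_p(\mathbb{R}^d))_{(1,-b),q}$ for $\ell = 1$ and $\ell = k+1$, together with the classical Sobolev-norm decomposition $\|g\|_{W^{k+1}_p(\mathbb{R}^d)} \asymp \sum_{|\beta| \leq k}\|D^\beta g\|_{W^1_p(\mathbb{R}^d)}$. In view of (\ref{KFunctional}) and (\ref{limitinginterpolation}), after a change of variable the problem reduces to a weighted $L_q$-equivalence between $t^{-(k+1)}\omega_{k+1}(f,t)_p$ and the family $\{t^{-1}\omega_1(D^\beta f,t)_p : |\beta| \leq k\}$, both equipped with the logarithmic weight $(1-\log t)^{-b}$ and integrated over $(0,1)$.

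For the $\lesssim$ direction in (\ref{lem:derivativesLipschitz1}), the case $\beta = 0$ is immediate from the inclusion $W^{k+1}_p(\mathbb{R}^d) \hookrightarrow W^1_p(\mathbb{R}^d)$ and the monotonicity of the $K$-functional in the second space, applied to the interpolation identity. For $1 \leq |\beta| \leq k$ I would chain two Marchaud-type inequalities: first apply (\ref{MarchaudClassic}) with differentiation order $|\beta|$ to bound $\omega_1(D^\beta f, t)_p$ by an integral involving $\omega_{1+|\beta|}(f, u)_p$, and then (when $|\beta| < k$) invoke the generalized Marchaud inequality (\ref{Kol}) applied with the first exponent $1+|\beta|$ and the second $k+1$ to express $\omega_{1+|\beta|}(f, u)_p$ as a tail involving $\omega_{k+1}(f, v)_p$. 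A change of order of integration followed by the logarithmic Hardy inequalities (\ref{HardyInequal1})--(\ref{HardyInequal2}) (whose integrability requires $b > 1/q$, which is precisely the non-triviality condition on $\text{Lip}^{(k+1,-b)}_{p,q}(\mathbb{R}^d)$) yields the required bound.

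For the $\gtrsim$ direction, I would work at the level of the $K$-functional through the de la Vallée-Poussin decomposition $f = (f - \eta_{1/t}f) + \eta_{1/t}f$ from (\ref{eta}). Its key virtue is that $\eta_{1/t}$ is a Fourier multiplier commuting with every $D^\beta$, producing a simultaneous decomposition of $f$ and each of its derivatives in the respective $K$-functional couples. Applying the Sobolev-norm decomposition to the band-limited piece $\eta_{1/t} f \in W^{k+1}_p(\mathbb{R}^d)$, combined with a realization formula of the type (\ref{KfunctEta}) adapted to the inhomogeneous Sobolev couple via (\ref{aux5.2}), yields a pointwise-in-$t$ comparison
\[
K(t^{k+1}, f; L_p(\mathbb{R}^d), W^{k+1}_p(\mathbb{R}^d)) \lesssim \sum_{|\beta|\leq k} t^{k+1-|\beta|}\, K(t, D^\beta f; L_p(\mathbb{R}^d), W^1_p(\mathbb{R}^d)),
\]
which is then integrated against the $(1,-b),q$ weight. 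The main obstacle is the mismatch between the two $K$-functional scalings ($t$ versus $t^{k+1}$); it is resolved by a change of variable in the interpolation integral (\ref{limitinginterpolation}) that transforms $(1-\log t)^{-b}$ into $(1-\log(t^{k+1}))^{-b} \asymp (1-\log t)^{-b}$, preserving the Lipschitz scale up to harmless constants.
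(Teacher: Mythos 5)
Your first inequality (bounding $\sum_{|\beta|\le k}\|D^\beta f\|_{\text{Lip}^{(1,-b)}_{p,q}(\mathbb{R}^d)}$ by $\|f\|_{\text{Lip}^{(k+1,-b)}_{p,q}(\mathbb{R}^d)}$) follows essentially the paper's route: the Marchaud inequality (\ref{MarchaudClassic}) followed by the Hardy inequality (\ref{HardyInequal1}), the latter being applicable also for $q<1$ because $\omega_{k+1}(f,u)_p/u^{k+1}$ is equivalent to a decreasing function. Your aside that this step ``requires $b>1/q$'' is not correct: (\ref{HardyInequal1}) holds for every real $b$, the lemma is stated for all $b$, and the lower-order term $\|D^\beta f\|_{L_p(\mathbb{R}^d)}$ is absorbed via the trivial embedding $\text{Lip}^{(k+1,-b)}_{p,\infty}(\mathbb{R}^d)\hookrightarrow W^k_p(\mathbb{R}^d)$, not via convergence of a logarithmic integral.

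The converse inequality is where your argument has a genuine gap. First, the exponents in your claimed pointwise estimate are wrong: for $|\beta|=k$ you assign the factor $t^{k+1-|\beta|}=t$, so after dividing by $t^{k+1}$ and integrating against $(1-\log t)^{-bq}\,dt/t$ you are left with $t^{-k}\omega_1(D^\beta f,t)_p$, which for $k\ge 2$ is not controlled by the $\text{Lip}^{(1,-b)}_{p,q}$ seminorm of $D^\beta f$ (that seminorm carries the weight $t^{-1}$, not $t^{-k}$). The factor on the top-order terms must be $t^{k}$; in other words, what is needed is exactly $\omega_{k+1}(f,t)_p\lesssim t^k\sup_{|\beta|=k}\omega_1(D^\beta f,t)_p$. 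Second, even with the corrected exponents, the de la Vall\'ee-Poussin bookkeeping does not yield this estimate for free: in the decomposition $f=(f-\eta_{1/t}f)+\eta_{1/t}f$ the term $\|f-\eta_{1/t}f\|_{L_p(\mathbb{R}^d)}$ must itself be bounded by $t^k\sup_{|\beta|=k}\omega_1(D^\beta f,t)_p$, which requires a Jackson inequality through the derivatives of the type $E_{1/t}(f)_{L_p(\mathbb{R}^d)}\lesssim t^k E_{1/t}(D^\beta f)_{L_p(\mathbb{R}^d)}$; this is precisely the content of the inequality you are trying to prove, so the realization argument is circular without an independent proof of it. The paper sidesteps all of this by invoking the classical converse estimate (\ref{convTrebels}), $t^{-k}\omega_{k+1}(f,t)_p\lesssim\sup_{|\beta|=k}\omega_1(D^\beta f,t)_p$, which gives $t^{-(k+1)}(1-\log t)^{-b}\omega_{k+1}(f,t)_p\lesssim t^{-1}(1-\log t)^{-b}\sum_{|\beta|\le k}\omega_1(D^\beta f,t)_p$ pointwise and hence the desired bound after taking $L_q(dt/t)$ norms. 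You should either quote (\ref{convTrebels}) directly or supply a proof of the Jackson-through-derivatives inequality; as written, the second half of your proof is missing its essential ingredient.
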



\begin{rem}
 Lemma \ref{lem:derivativesLipschitz} provides the Sobolev-type description of Lipschitz spaces, see also
 (\ref{11.1i}).
\end{rem}

\begin{proof}[Proof of Lemma \ref{lem:derivativesLipschitz}]
	According to (\ref{convTrebels}), we have
	\begin{align*}
		\sum_{|\beta| \leq k} \|D^\beta f\|_{\text{Lip}^{(1,-b)}_{p,q}(\mathbb{R}^d)}^q & \asymp 	 \| f\|_{W^k_p(\mathbb{R}^d)}^q +\int_0^1t^{-q} (1 - \log t)^{-b q} \Big( \sum_{|\beta| \leq k} \omega_1(D^\beta f,t)_p\Big)^q \frac{dt}{t} \\
		& \hspace{-2cm}\gtrsim \|f\|^q_{L_p(\mathbb{R}^d)} + \int_0^1 t^{-(k+1) q} (1-\log t)^{b q} \omega_{k+1}(f,t)_p^q \frac{dt}{t} \asymp \|f\|_{\text{Lip}^{(k+1,-b)}_{p,q}(\mathbb{R}^d)}^q.
	\end{align*}
	
	Conversely, applying (\ref{MarchaudClassic}) and Hardy's inequality (\ref{HardyInequal1}) (noting that the function $\omega_{k+1}(f,t)_p/t^{k+1}$ is equivalent to a decreasing function), one gets
	\begin{align*}
		\|D^\beta f\|_{\text{Lip}^{(1,-b)}_{p,q}(\mathbb{R}^d)} & \lesssim \|D^\beta f\|_{L_p(\mathbb{R}^d)}\\
		&  \hspace{1cm}+  \left(\int_0^1\left(t^{-1} (1 - \log t)^{-b}  \int_0^t u \frac{\omega_{k+1}(f,u)_p}{u^{k+1}} \frac{du}{u}\right)^q \frac{dt}{t}\right)^{1/q} \\
		& \lesssim \|D^\beta f\|_{L_p(\mathbb{R}^d)} + \left(\int_0^1 (t^{-(k+1)} (1-\log t)^b \omega_{k+1}(f,t)_p)^q \frac{dt}{t}\right)^{1/q}
	\end{align*}
	for every $|\beta| \leq k$. Consequently,
	\begin{equation*}
		\sum_{|\beta| \leq k} \|D^\beta f\|_{\text{Lip}^{(1,-b)}_{p,q}(\mathbb{R}^d)} \lesssim \|f\|_{W^k_p(\mathbb{R}^d)} + \|f\|_{\text{Lip}^{(k+1,-b)}_{p,q}(\mathbb{R}^d)} \lesssim  \|f\|_{\text{Lip}^{(k+1,-b)}_{p,q}(\mathbb{R}^d)},
	\end{equation*}
	where the last estimate follows from the trivial embedding $\text{Lip}^{(k+1,-b)}_{p,\infty}(\mathbb{R}^d) \hookrightarrow W^k_p(\mathbb{R}^d)$.
\end{proof}

(iii): Next we treat (\ref{EmbCLambda}). Assume that $d \geq b$. Using the fact that $\omega_{k+2}(f,t)_\infty \lesssim \omega_{k+1}(f,t)_\infty$, we obtain
\begin{equation*}
	\sup_{0 < t < 1} t^{-(k+1)} (1 - \log t)^{-d} \omega_{k+2}(f,t)_\infty \lesssim \sup_{0 < t < 1} t^{-(k+1)} (1 - \log t)^{-b} \omega_{k+1}(f,t)_\infty,
\end{equation*}
which implies, by (\ref{lem:LambdaBesov*}), (\ref{BesovComparison}) and (\ref{LipC}), that $C^{k,1,-b}(\mathbb{R}^d) = \text{Lip}^{(k+1,-b)}_{\infty,\infty}(\mathbb{R}^d)\hookrightarrow \mathbf{B}_{\infty, \infty}^{k+1,-d}(\mathbb{R}^d) = \Lambda^{k+1,-d}(\mathbb{R}^d) $.

Conversely, suppose that
\begin{equation}\label{14.17new*}
C^{k,1,-b}(\mathbb{R}^d) \hookrightarrow \Lambda^{k+1,-d}(\mathbb{R}^d)
\end{equation}
 holds. Let $\theta \in (0,1)$. By (\ref{PrelimInterpolation}), Lemma \ref{PrelimLemma7.2}(ii), (\ref{interpolationLipschitz}) and (\ref{LipC}), we have
\begin{align}
	\mathbf{B}^{\theta(k+1),-\theta b}_{\infty,\infty}(\mathbb{R}^d) & = (L_\infty(\mathbb{R}^d), W^{k+1}_\infty(\mathbb{R}^d))_{\theta,\infty;-\theta b} \nonumber\\
	& = (L_\infty(\mathbb{R}^d), (L_\infty(\mathbb{R}^d), W^{k+1}_\infty(\mathbb{R}^d))_{(1,-b),\infty})_{\theta,\infty} \nonumber\\
	& = (L_\infty(\mathbb{R}^d),\text{Lip}^{(k+1,-b)}_{\infty,\infty}(\mathbb{R}^d) )_{\theta,\infty} \nonumber\\
	 & = (L_\infty(\mathbb{R}^d),C^{k,1,-b}(\mathbb{R}^d) )_{\theta,\infty}. \label{14.18new*}
\end{align}
On the other hand, by (\ref{lem:LambdaBesov*}) and (\ref{PrelimInterpolationnew2.2}), we derive
\begin{equation}\label{14.19new*}
	(L_\infty(\mathbb{R}^d), \Lambda^{k+1,-d}(\mathbb{R}^d) )_{\theta,\infty} = (L_\infty(\mathbb{R}^d), \mathbf{B}^{k+1,-d}_{\infty,\infty}(\mathbb{R}^d) )_{\theta,\infty} = \mathbf{B}^{\theta (k+1),-\theta d}_{\infty,\infty}(\mathbb{R}^d).
\end{equation}
It follows from (\ref{14.17new*}), (\ref{14.18new*}) and (\ref{14.19new*}) that $\mathbf{B}^{\theta(k+1),-\theta b}_{\infty,\infty}(\mathbb{R}^d) \hookrightarrow \mathbf{B}^{\theta (k+1),-\theta d}_{\infty,\infty}(\mathbb{R}^d)$ which implies $b \leq d$.

(iv): Equivalence (\ref{LambdaDifferences2}) is a combination of (\ref{LambdaDifferences}) together with (\ref{BesovDiff****}).
\end{proof}

\begin{rem}
 Let $b \geq 0$. The optimal relationships between $C^{k,1,-b}(\mathbb{R}^d)$ and $\Lambda^{k+1,-b}(\mathbb{R}^d)$ given in (\ref{EmbCLambda}) is complemented by the assertion that these spaces do not coincide. For technical reasons we switch temporarily to the periodic case. Assume, e.g., that $k$ is odd and let
\begin{equation*}
	f (x) \sim \sum_{j=1}^\infty j^{-k-2} (1 + \log j)^{-\varepsilon} \cos jx, \quad x \in \mathbb{T},
\end{equation*}
where $-b < \varepsilon < 1-b$. Note that $f$ is a continuous function (see, e.g., \cite[Theorem 1.1]{Tikhonov-jat}). For $n \in \mathbb{N}$, applying the realization result obtained in \cite[Corollary 4.4.1]{Tikhonov-jat}, we get
\begin{align*}
	\omega_{k+1}\Big(f,\frac{1}{n}\Big)_\infty &\asymp n^{-k-1} \sum_{j=1}^n j^{k+1}  j^{-k-2} (1 + \log j)^{-\varepsilon} + \sum_{j=n}^\infty  j^{-k-2} (1 + \log j)^{-\varepsilon} \\
	& \asymp n^{-k-1} (1+\log n)^{1-\varepsilon}
\end{align*}
 (note that $\varepsilon < 1$ since $b \geq 0$), and
\begin{equation*}
	\omega_{k+3}\Big(f,\frac{1}{n}\Big)_\infty  \asymp n^{-k-1} (1+\log n)^{-\varepsilon}.
\end{equation*}
Therefore, 
\begin{equation}\label{14.26**}
	\sup_{0 < t < 1} t^{-k-1} (1-\log t)^{-b} \omega_{k+3}(f,t)_\infty \asymp \sup_{0 < t < 1}  (1-\log t)^{-b-\varepsilon} < \infty
\end{equation}
and
\begin{equation}\label{14.27**}
	\sup_{0 < t < 1} t^{-k-1} (1-\log t)^{-b} \omega_{k+1}(f,t)_\infty \asymp \sup_{0 < t < 1}  (1-\log t)^{-b-\varepsilon + 1} = \infty.
\end{equation}
According to (\ref{14.26**}), (\ref{14.27**}), (\ref{lem:LambdaBesov*}) and (\ref{LipC}) we derive that $f \in \Lambda^{k+1,-b}(\mathbb{T})$ but $f \not \in C^{k,1,-b}(\mathbb{T})$. Analogously, one can proceed with the case $k$ is even by taking the Fourier series
\begin{equation*}
	g (x) \sim \sum_{j=1}^\infty j^{-k-2} (1 + \log j)^{-\varepsilon} \sin jx, \quad x \in \mathbb{T},
\end{equation*}
where $-b < \varepsilon < 1-b$.
\end{rem}

\begin{rem}
	The homogeneous counterparts of Propositions \ref{lem:LambdaBesov} and \ref{PropRonStin+} (with appropriate modifications) also hold true. Further details are left to the interested reader.
\end{rem}


\subsection{Local regularity for the Dirichlet fractional Laplacian}
We also introduce some smoothness spaces on domains which will be useful to formulate the Dirichlet problem involving $(-\Delta)^s$.
Let $\Omega$\index{\bigskip\textbf{Sets}!$\Omega$}\label{DOMAIN} be a bounded open set of $\mathbb{R}^d$. The notations $\partial \Omega$\index{\bigskip\textbf{Sets}!$\partial \Omega$}\label{BOUNDARY} and $\overline{\Omega}$\index{\bigskip\textbf{Sets}!$\overline{\Omega}$}\label{CLOSURE} stand for the boundary and the closure of $\Omega$, respectively. The space $\dot{\mathscr{L}}^{s}_{p,\text{loc}}(\Omega)$\index{\bigskip\textbf{Spaces}!$\dot{\mathscr{L}}^{s}_{p,\text{loc}}(\Omega)$}\label{LOCALRIESZ} is defined as
\begin{equation*}
	\dot{\mathscr{L}}^{s}_{p,\text{loc}}(\Omega) = \{f \in L_p(\Omega) : \zeta f \in \dot{\mathscr{L}}^{s}_p(\mathbb{R}^d) \quad \text{for any test function}  \quad \zeta \in \mathcal{D}(\Omega)\}.
\end{equation*}
Analogously, we introduce\index{\bigskip\textbf{Spaces}!$\dot{\mathbf{B}}^{s,b}_{p,q,\text{loc}}(\Omega)$}\label{LOCALBESOV}
\begin{equation*}
	\dot{\mathbf{B}}^{s,b}_{p,q,\text{loc}}(\Omega) = \{f \in L_p(\Omega) : \zeta f \in \dot{\mathbf{B}}^{s,b}_{p,q}(\mathbb{R}^d) \quad \text{for any test function}  \quad \zeta \in \mathcal{D}(\Omega)\}.
\end{equation*}
By (\ref{BesovDiff2}), $\dot{\mathscr{L}}^{s}_{2,\text{loc}}(\Omega) =\dot{\mathbf{B}}^{s}_{2,2,\text{loc}}(\Omega), s > 0$.

Let\index{\bigskip\textbf{Spaces}!$\dot{\mathscr{L}}^{s}_2(\overline{\Omega})$}\label{SOBOLEVDOMAIN}
\begin{equation*}
	\dot{\mathscr{L}}^{s}_2(\overline{\Omega}) = \{f \in \dot{\mathscr{L}}^{s}_2(\mathbb{R}^d) : f = 0 \quad \text{on} \quad \mathbb{R}^d \setminus \Omega\}, \quad 0 < s < 1,
\end{equation*}
endowed with the semi-norm
\begin{equation*}
	\|f\|_{\dot{\mathscr{L}}^{s}_2(\overline{\Omega}) } =  \left(\int_{\mathbb{R}^d} \int_{\mathbb{R}^d} \frac{|f(x) - f(y)|^2}{|x-y|^{d + s 2}} d x d y \right)^{1/2}
\end{equation*}
(see (\ref{BesovDiff}) and (\ref{BesovDiff*3})).
Define $\dot{\mathscr{L}}^{-s}_2(\overline{\Omega}) = (\dot{\mathscr{L}}^{s}_2(\overline{\Omega}))'$\index{\bigskip\textbf{Spaces}!$\dot{\mathscr{L}}^{-s}_2(\overline{\Omega})$}\label{SOBOLEVDUAL}. The following embeddings hold
\begin{equation}\label{14.7n}
	\dot{\mathscr{L}}^{s}_2(\overline{\Omega}) \hookrightarrow L_2(\Omega) \hookrightarrow \dot{\mathscr{L}}^{-s}_2(\overline{\Omega}).
\end{equation}

Let $0 \leq s < 1$ and $1 \leq p \leq \infty$. Following (\ref{BesovDiff*3}), one can define the Besov space $W^s_p(\Omega)$ as the collection\index{\bigskip\textbf{Spaces}!$W^s_p(\Omega)$}\label{BESOVDOMAIN}
\begin{equation*}
	W^s_p(\Omega) = \left\{f \in L_p(\Omega) : |f|_{W^s_p(\Omega)} = \left(\int_{\Omega} \int_{\Omega}\frac{|f(x) - f(y)|^p}{|x-y|^{d + s p}} d x d y  \right)^{1/p} < \infty \right\}
\end{equation*}
endowed with the norm
\begin{equation*}
	\|f\|_{W^s_p(\Omega)} = \|f\|_{L_p(\Omega)} + |f|_{W^s_p(\Omega)}.
\end{equation*}
Further properties and characterizations of the spaces considered above can be found in the books \cite{Burenkov, Mazya, Triebel1, Triebel3, Triebel08}.

Next we collect some known results on the local regularity of solutions to the Dirichlet elliptic problem related to the fractional Laplacian given by
\begin{equation}\label{DirProblem}
		 \left\{\begin{array}{cl}  (-\Delta)^s u = f & \text{in} \quad  \Omega, \\
		u= 0  & \text{on} \quad \mathbb{R}^d \setminus \Omega.
		       \end{array}
                        \right.
	\end{equation}
It is important to notice that, due to the non-local nature of the operator $(-\Delta)^s$, the Dirichlet condition in (\ref{DirProblem}) is not involving only the boundary of $\Omega$ but its complement $\mathbb{R}^d \setminus \Omega$.

The existence and uniqueness of weak solutions to the Dirichlet problem (\ref{DirProblem}) is guaranteed by the following result.

\begin{lem}\label{lem:DirProblem}
	Let $\Omega$ be a bounded open set in $\mathbb{R}^d$ and $0 < s < 1$. If $f \in \dot{\mathscr{L}}^{-s}_2(\overline{\Omega})$ then there exists a unique weak solution $u \in \dot{\mathscr{L}}^{s}_2(\overline{\Omega}) $ of the Dirichlet problem (\ref{DirProblem}). Furthermore, we have
	\begin{equation*}
		\|u\|_{\dot{\mathscr{L}}^{s}_2(\overline{\Omega}) } \lesssim \|f\|_{ \dot{\mathscr{L}}^{-s}_2(\overline{\Omega})}.
	\end{equation*}
\end{lem}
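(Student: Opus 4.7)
The plan is to cast the Dirichlet problem (\ref{DirProblem}) in variational form and then invoke the Lax--Milgram theorem on the Hilbert space $\dot{\mathscr{L}}^{s}_2(\overline{\Omega})$. First, I would introduce the symmetric bilinear form
\begin{equation*}
a(u,v) = \frac{c_{d,s}}{2} \int_{\mathbb{R}^d}\int_{\mathbb{R}^d} \frac{(u(x)-u(y))(v(x)-v(y))}{|x-y|^{d+2s}}\, dx\, dy,
\end{equation*}
which, by Plancherel's identity and (\ref{BesovDiff2}), is equal (up to a positive constant depending only on $d$ and $s$) to $\int_{\mathbb{R}^d} |\xi|^{2s} \widehat{u}(\xi) \overline{\widehat{v}(\xi)}\, d\xi$, so that $a(u,v) = \langle J_s u, J_s v\rangle_{L_2(\mathbb{R}^d)}$ in view of (\ref{RieszSpace})--(\ref{RieszPotential}). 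Then the weak formulation of (\ref{DirProblem}) reads: find $u \in \dot{\mathscr{L}}^{s}_2(\overline{\Omega})$ such that $a(u,v) = \langle f, v\rangle$ for every $v \in \dot{\mathscr{L}}^{s}_2(\overline{\Omega})$, where the right-hand side pairing makes sense thanks to $f \in \dot{\mathscr{L}}^{-s}_2(\overline{\Omega}) = (\dot{\mathscr{L}}^{s}_2(\overline{\Omega}))'$.

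Next I would verify the hypotheses of Lax--Milgram. Continuity of $a$ on $\dot{\mathscr{L}}^{s}_2(\overline{\Omega}) \times \dot{\mathscr{L}}^{s}_2(\overline{\Omega})$ is immediate from the Cauchy--Schwarz inequality together with the identification above, since $|a(u,v)| \lesssim \|u\|_{\dot{\mathscr{L}}^{s}_2(\overline{\Omega})} \|v\|_{\dot{\mathscr{L}}^{s}_2(\overline{\Omega})}$. Coercivity is $a(u,u) = \|u\|_{\dot{\mathscr{L}}^{s}_2(\overline{\Omega})}^2$ (up to a harmless constant), which is tautological once the constant $c_{d,s}$ is absorbed. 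Likewise, the linear functional $v \mapsto \langle f,v\rangle$ is continuous on $\dot{\mathscr{L}}^{s}_2(\overline{\Omega})$ with norm bounded by $\|f\|_{\dot{\mathscr{L}}^{-s}_2(\overline{\Omega})}$. Lax--Milgram then yields a unique $u \in \dot{\mathscr{L}}^{s}_2(\overline{\Omega})$ satisfying the variational identity, together with the estimate $\|u\|_{\dot{\mathscr{L}}^{s}_2(\overline{\Omega})} \lesssim \|f\|_{\dot{\mathscr{L}}^{-s}_2(\overline{\Omega})}$.

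The main obstacle, and the point that needs some care, is showing that $\dot{\mathscr{L}}^{s}_2(\overline{\Omega})$ is actually a Hilbert space under the semi-norm $\|\cdot\|_{\dot{\mathscr{L}}^{s}_2(\overline{\Omega})}$, so that Lax--Milgram applies. Since elements of $\dot{\mathscr{L}}^{s}_2(\overline{\Omega})$ vanish outside the bounded set $\Omega$, a fractional Poincaré inequality gives $\|u\|_{L_2(\Omega)} \lesssim \|u\|_{\dot{\mathscr{L}}^{s}_2(\overline{\Omega})}$, which promotes the semi-norm to a full norm equivalent to the one inherited from $\dot{\mathscr{L}}^{s}_2(\mathbb{R}^d)$ and in particular yields the continuous embedding $\dot{\mathscr{L}}^{s}_2(\overline{\Omega}) \hookrightarrow L_2(\Omega)$ in (\ref{14.7n}). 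Completeness then follows because $\dot{\mathscr{L}}^{s}_2(\overline{\Omega})$ is a closed subspace of $\dot{\mathscr{L}}^{s}_2(\mathbb{R}^d)$: if $\{u_n\}$ is Cauchy in $\dot{\mathscr{L}}^{s}_2(\overline{\Omega})$ with limit $u$ in $\dot{\mathscr{L}}^{s}_2(\mathbb{R}^d)$, the pointwise a.e.\ extraction of a subsequence shows $u = 0$ a.e.\ on $\mathbb{R}^d \setminus \Omega$. Once this functional framework is in place, everything else is standard and the proof is complete.
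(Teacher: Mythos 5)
Your proposal is correct and takes essentially the same approach as the paper: the paper simply states that the lemma is a straightforward consequence of the Lax--Milgram theorem and refers the reader to the literature for details, while you spell out that standard Lax--Milgram argument (bilinear form, continuity, coercivity, and the fractional Poincaré inequality giving the Hilbert space structure on $\dot{\mathscr{L}}^{s}_2(\overline{\Omega})$) in full.
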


The proof of Lemma \ref{lem:DirProblem} is a straightforward consequence of the Lax-Milgram Theorem. For further details, see \cite[Theorem 12]{LeonoriPeralPrimoSoria} and \cite[Proposition 2.1]{BiccariWarmaZuazua}.

The local $L_2$-regularity for weak solutions of the Dirichlet problem (\ref{DirProblem}) has been recently obtained by Biccari, Warma and Zuazua \cite[Theorem 1.3]{BiccariWarmaZuazua} (with \cite{Grubb} as a  forerunner) and reads as follows.

\begin{thm}\label{thm:DirProblem2}
	Let $\Omega$ be a bounded open set in $\mathbb{R}^d$ and let $0 < s < 1$. Let $f \in \dot{\mathscr{L}}^{-s}_2(\overline{\Omega})$ and let $u \in \dot{\mathscr{L}}^{s}_2(\overline{\Omega}) $  be the unique weak solution to the Dirichlet problem (\ref{DirProblem}). If $f \in L_2(\Omega)$ then $u \in \dot{\mathscr{L}}^{2s}_{2,\emph{loc}}(\Omega)$.
\end{thm}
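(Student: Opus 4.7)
The strategy is to localize by a cutoff and then reduce the regularity question to an $L_2$-bound on $(-\Delta)^s$ applied to a compactly supported function, where this $L_2$-bound is obtained by a commutator estimate. First, I would fix an arbitrary $\zeta\in\mathcal{D}(\Omega)$ and observe that, by the definition of $\dot{\mathscr{L}}^{2s}_{2,\mathrm{loc}}(\Omega)$, it suffices to prove $\zeta u\in \dot{\mathscr{L}}^{2s}_2(\mathbb{R}^d)$. Since $\zeta u\in L_2(\mathbb{R}^d)$, Plancherel's identity together with $\widehat{(-\Delta)^sg}(\xi)=|\xi|^{2s}\widehat{g}(\xi)$ shows that this is equivalent to $(-\Delta)^s(\zeta u)\in L_2(\mathbb{R}^d)$.

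To handle the latter, I would use the singular integral representation \eqref{LaplaceRiesz} to write (initially for smooth data, then by density)
\[
(-\Delta)^s(\zeta u)(x)=\zeta(x)(-\Delta)^s u(x)+u(x)(-\Delta)^s\zeta(x)+\mathcal{R}(x),
\]
where
\[
\mathcal{R}(x)=c_{d,s}\,\mathrm{P.V.}\!\int_{\mathbb{R}^d}\frac{(\zeta(x)-\zeta(y))(u(y)-u(x))}{|x-y|^{d+2s}}\,dy.
\]
The first term equals $\zeta f$ a.e.\ on $\mathbb{R}^d$ (outside $\Omega$ both sides vanish because $\zeta=0$ there; on $\Omega$ one uses $(-\Delta)^s u=f$), hence lies in $L_2(\mathbb{R}^d)$ since $\zeta$ is bounded with compact support and $f\in L_2(\Omega)$. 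The second term is controlled by $|u(x)|\|(-\Delta)^s\zeta\|_{L_\infty(\mathbb{R}^d)}$; because $\zeta\in\mathcal{S}(\mathbb{R}^d)$, $(-\Delta)^s\zeta$ is bounded, so this piece sits in $L_2(\mathbb{R}^d)$ by Lemma \ref{lem:DirProblem}.

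The core estimate is on $\mathcal{R}$. Cauchy--Schwarz gives
\[
|\mathcal{R}(x)|^2\le c_{d,s}^2\,K(x)\int_{\mathbb{R}^d}\frac{|u(y)-u(x)|^2}{|x-y|^{d+2s}}\,dy,\qquad K(x):=\int_{\mathbb{R}^d}\frac{|\zeta(x)-\zeta(y)|^2}{|x-y|^{d+2s}}\,dy.
\]
Splitting $K(x)$ at $|x-y|=1$, I would use the Lipschitz bound $|\zeta(x)-\zeta(y)|\le\|\nabla\zeta\|_{L_\infty}|x-y|$ on the near zone (the resulting integral converges precisely because $s<1$) and the crude bound $|\zeta(x)-\zeta(y)|\le 2\|\zeta\|_{L_\infty}$ on the far zone (converging as $d+2s>d$) to conclude $\sup_{x\in\mathbb{R}^d}K(x)<\infty$. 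Integrating in $x$ and invoking \eqref{BesovDiff2} together with the Gagliardo-type expression \eqref{BesovDiff} (homogeneous version) for $\dot{\mathscr{L}}^s_2=\dot{\mathbf{B}}^s_{2,2}$, one obtains $\|\mathcal{R}\|_{L_2}^2\lesssim \|u\|_{\dot{\mathscr{L}}^s_2(\overline{\Omega})}^2$, which is finite by Lemma \ref{lem:DirProblem}. Combining the three pieces yields $(-\Delta)^s(\zeta u)\in L_2(\mathbb{R}^d)$ and hence $\zeta u\in \dot{\mathscr{L}}^{2s}_2(\mathbb{R}^d)$; since $\zeta\in\mathcal{D}(\Omega)$ was arbitrary, $u\in\dot{\mathscr{L}}^{2s}_{2,\mathrm{loc}}(\Omega)$.

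\textbf{Main obstacle.} The delicate point is justifying the pointwise commutator identity and the convergence of the principal-value integral for $u$ of only $\dot{\mathscr{L}}^s_2$-regularity with $u\equiv 0$ on $\mathbb{R}^d\setminus\Omega$, rather than for a Schwartz function. I would address this by approximating $u$ in $\dot{\mathscr{L}}^s_2(\overline{\Omega})$ by smooth functions supported in a slightly larger neighborhood of $\overline{\Omega}$ (using a standard mollification combined with a cutoff adapted to $\mathrm{dist}(\cdot,\partial\Omega)$), verifying the identity and the $L_2$-bounds for each approximant, and passing to the limit using the uniform commutator estimate just obtained together with continuity of the map $u\mapsto\mathcal{R}$ from $\dot{\mathscr{L}}^s_2$ to $L_2$.
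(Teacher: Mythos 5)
Your argument is correct and is essentially the proof behind the cited result: the paper does not prove Theorem \ref{thm:DirProblem2} itself but quotes it from Biccari--Warma--Zuazua, and the cutoff decomposition with the $L_2$ bound on $u(-\Delta)^s\zeta+\mathcal{R}$ that you derive is precisely the commutator lemma (\ref{PointwiseLaplacian})--(\ref{PointwiseLaplacianRemainder}) that the paper imports from the same source, combined with $\zeta(-\Delta)^su=\zeta f\in L_2(\mathbb{R}^d)$ and the Fourier characterization of $\dot{\mathscr{L}}^{2s}_2$. The technical point you flag can be handled as you propose (and note that the principal value in $\mathcal{R}$ is in fact superfluous, since after Cauchy--Schwarz the integrand is absolutely integrable for a.e.\ $x$, while the distributional identity can also be verified directly by expanding the symmetric Gagliardo bilinear form for $\langle(-\Delta)^s(\zeta u),\varphi\rangle$), so there is no gap.
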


The corresponding extension to $L_p(\Omega), 1 < p < \infty,$ was investigated in \cite[Theorem 3]{BiccariWarmaZuazua2}.

\begin{thm}\label{thm:DirProblemP}
	Let $\Omega$ be a bounded open set in $\mathbb{R}^d$ and let $0 < s < 1$. Let $f \in \dot{\mathscr{L}}^{-s}_2(\overline{\Omega})$ and let $u \in \dot{\mathscr{L}}^{s}_2(\overline{\Omega}) $  be the unique weak solution to the Dirichlet problem (\ref{DirProblem}). Assume that $1 < p < \infty$. If $f \in L_p(\Omega)$ and $u \in W^s_p(\Omega)$ then $u \in \dot{\mathbf{B}}^{2 s}_{p,\max\{2,p\}, \emph{loc}}(\Omega)$.

\end{thm}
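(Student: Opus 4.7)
The plan is to reduce the problem to the combination of two ingredients: the $L_p$-Sobolev local regularity result for the Dirichlet problem (available from the recent work of Biccari, Warma and Zuazua \cite{BiccariWarmaZuazua2}) and the sharp embedding between Bessel potential spaces and Besov spaces provided by (\ref{Prelim7.18}). Fixing an arbitrary test function $\zeta \in \mathcal{D}(\Omega)$, the goal is to show that $\zeta u \in \dot{\mathbf{B}}^{2s}_{p,\max\{2,p\}}(\mathbb{R}^d)$, which by definition of the localized space is equivalent to $u \in \dot{\mathbf{B}}^{2s}_{p,\max\{2,p\},\mathrm{loc}}(\Omega)$.

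First I would invoke the $L_p$-counterpart of Theorem \ref{thm:DirProblem2} obtained in \cite{BiccariWarmaZuazua2}: under the standing hypotheses $f \in L_p(\Omega)$ and $u \in W^s_p(\Omega)$, the weak solution $u$ of (\ref{DirProblem}) satisfies $u \in \dot{\mathscr{L}}^{2s}_{p,\mathrm{loc}}(\Omega)$. In particular, for any $\zeta \in \mathcal{D}(\Omega)$, the product $\zeta u$ belongs to $\dot{\mathscr{L}}^{2s}_p(\mathbb{R}^d)$. (The extension from $p=2$ to general $p$ is the content of that paper and the additional assumption $u \in W^s_p(\Omega)$ ensures the needed control near $\partial\Omega$, where the solution is in general only H\"older continuous of order $s$.)

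Second, I would apply the embedding
\begin{equation*}
\dot{\mathscr{L}}^{2s}_p(\mathbb{R}^d) \hookrightarrow \dot{\mathbf{B}}^{2s}_{p,\max\{p,2\}}(\mathbb{R}^d),
\end{equation*}
which is the homogeneous version of the right-hand side inclusion in (\ref{Prelim7.18}) combined with (\ref{BesovComparison}) (recall that $2s>0$, so $\mathbf{B}^{2s}_{p,q}(\mathbb{R}^d) = B^{2s}_{p,q}(\mathbb{R}^d)$ with equivalent quasi-norms, and an analogous identification holds for the homogeneous scales; cf.\ (\ref{5.5new2})). Combined with the preceding step, this yields $\zeta u \in \dot{\mathbf{B}}^{2s}_{p,\max\{p,2\}}(\mathbb{R}^d)$ with quantitative control
\begin{equation*}
\|\zeta u\|_{\dot{\mathbf{B}}^{2s}_{p,\max\{p,2\}}(\mathbb{R}^d)} \lesssim \|\zeta u\|_{\dot{\mathscr{L}}^{2s}_p(\mathbb{R}^d)}.
\end{equation*}
Since $\zeta \in \mathcal{D}(\Omega)$ was arbitrary, the localized Besov membership $u \in \dot{\mathbf{B}}^{2s}_{p,\max\{2,p\},\mathrm{loc}}(\Omega)$ follows.

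The main obstacle is the first step: the passage from the weak formulation in $\dot{\mathscr{L}}^s_2(\overline{\Omega})$ to $L_p$-Sobolev regularity is the deep part (and is precisely the content of the cited Biccari--Warma--Zuazua result). Once this is granted, the sharp refinement from Bessel potential spaces to Besov spaces with fine index $\max\{p,2\}$ is a comparatively clean functional-analytic step relying only on (\ref{Prelim7.18}) and the multiplier property of $\mathcal{D}(\Omega)$ on Bessel potential spaces (needed to justify that $\zeta u$ inherits the regularity of $u$ on compact subsets of $\Omega$). A subordinate technical point is to verify that pointwise multiplication by $\zeta$ acts boundedly on $\dot{\mathscr{L}}^{2s}_p(\mathbb{R}^d)$ when $2s < 1$, which follows from the standard characterization of these spaces and the compact support of $\zeta$.
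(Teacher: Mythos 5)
This theorem is not proved in the paper; it is quoted directly from \cite[Theorem~3]{BiccariWarmaZuazua2}, so there is no internal argument to compare against. Your proposal, however, has a genuine gap. Your first step invokes an ``$L_p$-counterpart of Theorem~\ref{thm:DirProblem2}'' that would give $u \in \dot{\mathscr{L}}^{2s}_{p,\mathrm{loc}}(\Omega)$, attributing it to \cite{BiccariWarmaZuazua2}. But that reference does not contain a Sobolev (Bessel potential) conclusion: its Theorem~3 is \emph{precisely} the Besov statement $u \in \dot{\mathbf{B}}^{2s}_{p,\max\{2,p\},\mathrm{loc}}(\Omega)$ that you are trying to prove. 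So the structure of your argument is circular --- you cite \cite{BiccariWarmaZuazua2} for something strictly stronger than what it says, then use the embedding $\dot{\mathscr{L}}^{2s}_p(\mathbb{R}^d) \hookrightarrow \dot{\mathbf{B}}^{2s}_{p,\max\{p,2\}}(\mathbb{R}^d)$ to recover what \cite{BiccariWarmaZuazua2} actually states.

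The distinction is not cosmetic. The addendum \cite{BiccariWarmaZuazua2} exists precisely to \emph{correct} the $L_p$-regularity claim of the original paper: the form it settles on, after the correction, is the Besov space with fine index $\max\{p,2\}$, not the Bessel potential space $\dot{\mathscr{L}}^{2s}_{p,\mathrm{loc}}(\Omega)$. If the stronger Sobolev membership followed easily from the commutator identity $(-\Delta)^s(\zeta u) = \zeta f + g$ and the full-space Calder\'on--Zygmund isomorphism $(-\Delta)^s : \dot{\mathscr{L}}^{2s}_p(\mathbb{R}^d) \to L_p(\mathbb{R}^d)$, those authors would presumably have stated it; the fact that they retreated to the Besov scale is evidence that the bootstrap you take for granted (``$(-\Delta)^s(\zeta u)\in L_p$ plus $\zeta u \in W^s_p$ compactly supported forces $\zeta u \in \dot{\mathscr{L}}^{2s}_p$'') requires care about reconciling the pointwise singular-integral and Fourier-multiplier definitions and is not something you can simply cite. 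Your second step (Jawerth's embedding of Bessel potential spaces into Besov spaces with $q=\max\{p,2\}$, i.e.\ the homogeneous form of the right-hand inclusion in (\ref{Prelim7.18})) and the localization via $\zeta\in\mathcal{D}(\Omega)$ are both fine; the gap is entirely in the unsupported passage through $\dot{\mathscr{L}}^{2s}_{p,\mathrm{loc}}(\Omega)$. To make the argument non-circular you would need to \emph{prove}, not cite, that the cutoff identity and whole-space $L_p$ theory give Bessel potential regularity --- or, as the paper does, simply quote \cite[Theorem~3]{BiccariWarmaZuazua2} for the Besov statement directly.
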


 We refer to  \cite{LeonoriPeralPrimoSoria} (cf. also \cite{BiccariWarmaZuazua}) for precise definitions of the concepts of weak solutions to the problem (\ref{DirProblem}) for $L_p$-data, $1 < p < \infty$.

\subsection{$L_2$-Regularity estimates}

The goal of this section is to show a quantitative version of the local $L_2$-regularity result for the Dirichlet problem (\ref{DirProblem}) given in Theorem \ref{thm:DirProblem2} above. In order to get this, we will make use of the following classical result for the fractional Poisson-type equation
\begin{equation*}
	(-\Delta)^s u = f \quad \text{in} \quad \mathbb{R}^d.
\end{equation*}

\begin{thm}\label{thm:FractLaplace}
	Let $s \in (0,1)$. Let $f \in \dot{\mathscr{L}}^{-s}_2(\mathbb{R}^d)$ and let $u \in \dot{\mathscr{L}}^{s}_2(\mathbb{R}^d)$ be the weak solution to the fractional Poisson-type equation
\begin{equation*}
	(-\Delta)^s u = f \quad \text{in} \quad \mathbb{R}^d.
\end{equation*}
Let $\lambda \geq 0$. Then
\begin{equation*}
	u \in  \dot{\mathscr{L}}^{\lambda + 2 s}_2(\mathbb{R}^d) \quad \text{if and only if} \quad f \in \dot{\mathscr{L}}^{\lambda}_2(\mathbb{R}^d),
\end{equation*}
and
\begin{equation}\label{EquivFractLaplace}
\|u\|_{\dot{\mathscr{L}}^{\lambda + 2 s}_2(\mathbb{R}^d)} \asymp \|f\|_{\dot{\mathscr{L}}^{\lambda}_2(\mathbb{R}^d)}.
\end{equation}
In particular, we have
\begin{equation}\label{EquivFractLaplace*}
\|u\|_{\dot{\mathscr{L}}^{2 s}_2(\mathbb{R}^d)} \asymp \|f\|_{L_2(\mathbb{R}^d)}.
\end{equation}
\end{thm}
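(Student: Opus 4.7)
The plan is to reduce the statement to a Fourier-analytic computation combined with the lifting property of the Riesz potential $J_\sigma$ on the homogeneous Sobolev scale $\dot{\mathscr{L}}^{\sigma}_2(\mathbb{R}^d)$, stated in Lemma \ref{LemmaLift2}(iii) (via the identification $\dot{F}^{\sigma}_{2,2}(\mathbb{R}^d) = \dot{\mathscr{L}}^{\sigma}_{2}(\mathbb{R}^d)$, which follows from Plancherel and (\ref{LPHom})). The starting observation is that $(-\Delta)^s$ coincides on $\dot{\mathcal{S}}'(\mathbb{R}^d)$ with the Fourier multiplier operator of symbol $|\xi|^{2s}$, i.e., $(-\Delta)^s = J_{2s}$ in the sense of (\ref{RieszPotential2}). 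Under the hypothesis $f \in \dot{\mathscr{L}}^{-s}_2(\mathbb{R}^d)$, the operator equation $(-\Delta)^s u = f$ thus reads $J_{2s} u = f$, and the weak solution is uniquely represented (modulo polynomials, i.e., within $\dot{\mathcal{S}}'(\mathbb{R}^d)$) as $u = J_{-2s} f \in \dot{\mathscr{L}}^{s}_2(\mathbb{R}^d)$.

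First I would write down the definition (\ref{RieszSpace}) at $p=2$ and apply Plancherel to obtain, for any $\mu \in \mathbb{R}$,
\[
\|g\|_{\dot{\mathscr{L}}^{\mu}_2(\mathbb{R}^d)} = \|J_\mu g\|_{L_2(\mathbb{R}^d)} = \bigl\| |\xi|^\mu \widehat{g}(\xi)\bigr\|_{L_2(\mathbb{R}^d)}.
\]
Applying this identity with $\mu = \lambda + 2s$ and $g = u$, and using $|\xi|^{2s}\widehat{u}(\xi) = \widehat{f}(\xi)$ (which is the Fourier form of $(-\Delta)^s u = f$), I obtain
\[
\|u\|_{\dot{\mathscr{L}}^{\lambda + 2s}_2(\mathbb{R}^d)}
= \bigl\| |\xi|^{\lambda + 2s}\widehat{u}(\xi)\bigr\|_{L_2(\mathbb{R}^d)}
= \bigl\| |\xi|^{\lambda}\widehat{f}(\xi)\bigr\|_{L_2(\mathbb{R}^d)}
= \|f\|_{\dot{\mathscr{L}}^{\lambda}_2(\mathbb{R}^d)},
\]
which yields both implications of the biconditional as well as the norm equivalence (\ref{EquivFractLaplace}) with equality constants. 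Setting $\lambda=0$ gives (\ref{EquivFractLaplace*}).

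The only genuine technical point is to make sure the Fourier manipulations above are carried out in the correct distributional class, since $|\xi|^{\lambda + 2s}$ is singular at $\xi=0$ and the weak solution $u$ is a priori only defined modulo polynomials. This is handled by working throughout in the framework of $\dot{\mathcal{S}}'(\mathbb{R}^d)$ described at the beginning of Section~\ref{Lifting Estimates}, within which the Riesz potentials $J_\sigma$ act as isomorphisms of $\dot{\mathcal{S}}'(\mathbb{R}^d)$ for every $\sigma \in \mathbb{R}$. Under this interpretation, the composition rule $J_{\lambda+2s} = J_\lambda \circ J_{2s}$ is rigorous and the computation above is valid; alternatively, one can invoke directly the homogeneous lifting property of Lemma \ref{LemmaLift2}(iii) applied to $\sigma = -2s$ and the space $\dot{\mathscr{L}}^{\lambda + 2s}_2(\mathbb{R}^d)$, to conclude $\|u\|_{\dot{\mathscr{L}}^{\lambda + 2s}_2(\mathbb{R}^d)} = \|J_{-2s}f\|_{\dot{\mathscr{L}}^{\lambda + 2s}_2(\mathbb{R}^d)} \asymp \|f\|_{\dot{\mathscr{L}}^{\lambda}_2(\mathbb{R}^d)}$ without recourse to Plancherel.
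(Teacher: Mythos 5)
Your argument is correct, and it is essentially the same Plancherel-based Fourier multiplier computation that the paper defers to: the paper gives no proof of Theorem~\ref{thm:FractLaplace} itself but cites \cite[Propositions~3.4 and~3.6]{DiNPalVald} for ``an elementary proof,'' and that reference proceeds exactly as you do, writing $\|g\|_{\dot{\mathscr{L}}^{\mu}_2(\mathbb{R}^d)} = \||\xi|^{\mu}\widehat{g}\|_{L_2(\mathbb{R}^d)}$ via Plancherel and then substituting $|\xi|^{2s}\widehat{u}=\widehat{f}$. Your care in flagging the $\dot{\mathcal{S}}'(\mathbb{R}^d)$ (modulo polynomials) framework and the alternative route through the homogeneous lifting of Lemma~\ref{LemmaLift2}(iii) are both appropriate and consistent with the conventions the paper establishes at the start of Section~\ref{Lifting Estimates}.
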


An elementary proof of Theorem \ref{thm:FractLaplace} can be found in \cite[Propositions 3.4 and 3.6]{DiNPalVald}. See also \cite[Remark 10.5, p. 59]{LionsMagenes}.

\begin{thm}\label{thm 14.2}
	Let $s \in (0,1)$. Let $f \in \dot{\mathscr{L}}^{-s}_2(\mathbb{R}^d)$ and let $u \in \dot{\mathscr{L}}^{s}_2(\mathbb{R}^d)$ be the weak solution to the fractional Poisson-type equation
\begin{equation*}
	(-\Delta)^s u = f \quad \text{in} \quad \mathbb{R}^d.
\end{equation*}
Let $\lambda > 0$. Then
	\begin{equation}\label{EquivFractLaplace0}
		\left(\int_0^t (\xi^{-2 s} \omega_{ \lambda + 2 s} (u, \xi)_2)^2 \frac{d \xi}{\xi}\right)^{1/2} \asymp \omega_\lambda(f,t)_2.
	\end{equation}
\end{thm}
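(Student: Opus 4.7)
The plan is to reduce the claim to the two-sided equivalence for Riesz potentials from Corollary \ref{cor EquivRiesz}. The starting point is the observation that, on the Fourier side, the fractional Laplacian and the Riesz potential coincide up to a power: since
\begin{equation*}
\widehat{(-\Delta)^s u}(\xi) = |\xi|^{2s}\widehat{u}(\xi) = \widehat{J_{2s}u}(\xi),
\end{equation*}
the equation $(-\Delta)^s u = f$ in $\mathbb{R}^d$ is equivalent to the identity $f = J_{2s}u$, with $\sigma := 2s \in (0,2)$ playing the role of the Riesz index. I would first verify this identification in the sense of tempered distributions modulo polynomials (i.e.\ in $\dot{\mathcal{S}}'(\mathbb{R}^d)$), using the hypotheses $u \in \dot{\mathscr{L}}^{s}_2(\mathbb{R}^d)$ and $f \in \dot{\mathscr{L}}^{-s}_2(\mathbb{R}^d)$ together with the lifting property of $J_{\sigma}$ on the homogeneous scale (cf.\ the remarks preceding Lemma \ref{LemmaLift2} and the homogeneous analogue stated in its part (iii)).

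The second, and main, step is then a direct application of Corollary \ref{cor EquivRiesz} with parameters $\sigma = 2s > 0$ and $\alpha = \lambda > 0$, applied with the roles played by $u$ (in the corollary, the input function) and $J_{2s}u = f$ (the lifted function). This produces
\begin{equation*}
\omega_\lambda(J_{2s}u,t)_2 \asymp \left(\int_0^t (\xi^{-2s}\,\omega_{\lambda+2s}(u,\xi)_2)^2\,\frac{d\xi}{\xi}\right)^{1/2},
\end{equation*}
and substituting $J_{2s}u = f$ on the left-hand side is exactly (\ref{EquivFractLaplace0}).

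The main obstacle is a technical compatibility issue rather than a conceptual one: the moduli $\omega_\lambda(f,\cdot)_2$ and $\omega_{\lambda+2s}(u,\cdot)_2$ require $f,u \in L_2(\mathbb{R}^d)$, while the natural framework provided by Lemma \ref{lem:DirProblem} and Theorem \ref{thm:FractLaplace} is the homogeneous Sobolev scale, in which one works modulo polynomials. I would handle this by regularising, for instance by convolving $u$ with a Schwartz mollifier whose Fourier transform is supported away from the origin (so that the approximants lie in $\dot{\mathcal{S}}(\mathbb{R}^d)$), applying Corollary \ref{cor EquivRiesz} to the approximants, and then passing to the limit using the homogeneity of both sides and the two-sided control provided by (\ref{EquivFractLaplace}). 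An alternative, and slightly cleaner, route is to observe that whenever either side of (\ref{EquivFractLaplace0}) is finite the corresponding homogeneous Besov quasi-norm of $u$ is finite, which already places $u$ in the right space for Corollary \ref{cor EquivRiesz} to apply without further regularisation, and that the two-sided direction of Corollary \ref{cor EquivRiesz} (i.e.\ both (i$_a$) and (ii$_a$) of Theorem \ref{Thm Trebels2Lift} with $p=q=2$) then gives both inequalities simultaneously.
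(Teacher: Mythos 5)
Your proposal is correct, and it reaches (\ref{EquivFractLaplace0}) by a shorter route than the paper, though the underlying mechanism is the same. The paper's proof works directly with the $K$-functional: it uses the isomorphism statements (\ref{EquivFractLaplace*}) and (\ref{EquivFractLaplace}) of Theorem \ref{thm:FractLaplace} to get $K(t^\lambda,u;\dot{\mathscr{L}}^{2s}_2,\dot{\mathscr{L}}^{\lambda+2s}_2)\asymp K(t^\lambda,f;L_2,\dot{\mathscr{L}}^{\lambda}_2)$, then identifies the right-hand side with $\omega_\lambda(f,t)_2$ via Lemma \ref{LemmaModuli} and the left-hand side with the integral expression via the interpolation formula $\dot{\mathscr{L}}^{2s}_2=(L_2,\dot{\mathscr{L}}^{\lambda+2s}_2)_{\frac{2s}{\lambda+2s},2}$ and Holmstedt's formula (Lemma \ref{PrelimHolmstedt}(ii)). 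You instead note $f=J_{2s}u$ and quote Corollary \ref{cor EquivRiesz} with $\sigma=2s$, $\alpha=\lambda$. Since Corollary \ref{cor EquivRiesz} is the case $p=q=2$ of Theorem \ref{Thm Trebels2Lift}(i$_a$)--(ii$_a$), whose proof runs exactly the same Holmstedt computation on $K(t^\alpha,\cdot;\dot{B}^{\sigma}_{2,2},\dot{\mathscr{L}}^{\alpha+\sigma}_2)$ with $\dot{B}^{2s}_{2,2}=\dot{\mathscr{L}}^{2s}_2$, the two arguments are mathematically identical; what your version buys is brevity and an explicit display of the logical dependence on the lifting results of Section \ref{Lifting Estimates}. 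One small point in your favour: you flag the requirement $u\in L_2\cap\dot{\mathscr{L}}^{2s}_2$ (needed for both moduli in (\ref{EquivFractLaplace0}) to be meaningful and for both halves of Theorem \ref{Thm Trebels2Lift} to apply), which the paper leaves implicit; your proposed handling of it (finiteness of either side already forces the relevant membership, or a mollification supported away from the Fourier origin) is adequate and need not be elaborate.
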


\begin{rem}
	As application of (\ref{EquivFractLaplace0}) one immediately gets the estimates (\ref{EquivFractLaplace}) and (\ref{EquivFractLaplace*}). For example, taking the supremum over all $t > 0$ in (\ref{EquivFractLaplace0}) and using (\ref{12.12new2**}) and (\ref{BesovDiff2}), we derive (\ref{EquivFractLaplace*}). Similarly, one can deduce (\ref{EquivFractLaplace}) from (\ref{EquivFractLaplace0}).
\end{rem}

\begin{proof}[Proof of Theorem \ref{thm 14.2}]
	It follows from (\ref{EquivFractLaplace*}) and (\ref{EquivFractLaplace}) that
	\begin{equation}\label{EquivFractLaplace1}
		K(t^\lambda, u; \dot{\mathscr{L}}^{2 s}_2(\mathbb{R}^d), \dot{\mathscr{L}}^{\lambda + 2 s}_2(\mathbb{R}^d)) \asymp K(t^\lambda,f ; L_2(\mathbb{R}^d), \dot{\mathscr{L}}^{\lambda}_2(\mathbb{R}^d)).
	\end{equation}
	By Lemma \ref{LemmaModuli} we have
	\begin{equation}\label{EquivFractLaplace2}
		 K(t^\lambda,f ; L_2(\mathbb{R}^d), \dot{\mathscr{L}}^{\lambda}_2(\mathbb{R}^d)) \asymp \omega_{\lambda}(f,t)_2, \quad \lambda > 0.
	\end{equation}
	On the other hand, using the well-known interpolation formula for Sobolev spaces $\dot{\mathscr{L}}^{2 s}_2(\mathbb{R}^d) = (L_2(\mathbb{R}^d), \dot{\mathscr{L}}^{\lambda + 2 s}_2(\mathbb{R}^d))_{\frac{2 s}{ \lambda + 2 s}, 2}$ (cf. \cite[Lemma 23.1, p. 115]{Tartar}), Lemma \ref{PrelimHolmstedt}(ii) (with $\alpha = 0$) and (\ref{EquivFractLaplace2}), we get
	\begin{align*}
		K(t^\lambda, u; \dot{\mathscr{L}}^{2 s}_2(\mathbb{R}^d), \dot{\mathscr{L}}^{\lambda + 2 s}_2(\mathbb{R}^d)) & \asymp \left(\int_0^{t} (\xi^{- 2 s} K(\xi^{\lambda + 2 s}, u; L_2(\mathbb{R}^d), \dot{\mathscr{L}}^{\lambda + 2 s}_2(\mathbb{R}^d)))^2 \frac{d \xi}{\xi}\right)^{1/2} \\
		& \asymp 	\left(\int_0^t (\xi^{-2 s} \omega_{ \lambda + 2 s} (u, \xi)_2)^2 \frac{d \xi}{\xi}\right)^{1/2}.
	\end{align*}
	Combined with (\ref{EquivFractLaplace1}) and (\ref{EquivFractLaplace2}) one obtains (\ref{EquivFractLaplace0}).
\end{proof}

As a consequence of Theorem \ref{thm 14.2}, we show that Theorem \ref{thm:FractLaplace} can be extended from the Hilbert setting $\dot{\mathscr{L}}^{\lambda}_2(\mathbb{R}^d) = \dot{\mathbf{B}}^{\lambda}_{2,2}(\mathbb{R}^d), \lambda > 0,$ (cf. (\ref{BesovDiff2})) to the non-Hilbert setting $\dot{\mathbf{B}}^{\lambda}_{2,q}(\mathbb{R}^d), 0 < q \leq \infty$.

\begin{thm}\label{CorFractLapl2}
	Let $s \in (0,1)$. Let $f \in \dot{\mathscr{L}}^{-s}_2(\mathbb{R}^d)$ and let $u \in \dot{\mathscr{L}}^{s}_2(\mathbb{R}^d)$ be the weak solution to the fractional Poisson-type equation
\begin{equation*}
	(-\Delta)^s u = f \quad \text{in} \quad \mathbb{R}^d.
\end{equation*}
Let $\lambda > 0, b \in \mathbb{R}$, and $0 < q \leq \infty$. Then
\begin{equation*}
	u \in  \dot{\mathbf{B}}^{2 s + \lambda, b}_{2, q}(\mathbb{R}^d) \quad \text{if and only if} \quad f \in \dot{\mathbf{B}}^{\lambda, b}_{2,q}(\mathbb{R}^d),
\end{equation*}
and
\begin{equation*}
\|u\|_{\dot{\mathbf{B}}^{2 s + \lambda, b}_{2, q}(\mathbb{R}^d)} \asymp \|f\|_{\dot{\mathbf{B}}^{\lambda, b}_{2,q}(\mathbb{R}^d)}.
\end{equation*}
\end{thm}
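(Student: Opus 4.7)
The plan is to derive the stated norm equivalence by combining Theorem \ref{thm:FractLaplace}, which provides a one-parameter family of isomorphisms $(-\Delta)^s\colon \dot{\mathscr{L}}^{\mu+2s}_2(\mathbb{R}^d)\to \dot{\mathscr{L}}^{\mu}_2(\mathbb{R}^d)$ for every $\mu\geq 0$, with real interpolation equipped with a logarithmic functional parameter. The idea is that passing from scale-level isomorphisms to isomorphisms between interpolation spaces is essentially automatic, provided one can identify the homogeneous interpolation spaces with the Besov spaces appearing in the statement.

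Concretely, I would first pick $0\leq \mu_0<\mu_1$ with $\mu_0<\lambda<\mu_1$ and set $\theta=(\lambda-\mu_0)/(\mu_1-\mu_0)\in(0,1)$. Applying Theorem \ref{thm:FractLaplace} at $\mu=\mu_0$ and $\mu=\mu_1$ and then using the interpolation property of the $(\theta,q;b)$-method, one obtains that $(-\Delta)^s$ maps $(\dot{\mathscr{L}}^{\mu_0+2s}_2,\dot{\mathscr{L}}^{\mu_1+2s}_2)_{\theta,q;b}$ isomorphically onto $(\dot{\mathscr{L}}^{\mu_0}_2,\dot{\mathscr{L}}^{\mu_1}_2)_{\theta,q;b}$, with two-sided norm control. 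The next step identifies these interpolation spaces: since $\dot{\mathscr{L}}^{\mu}_2=\dot{\mathbf{B}}^{\mu}_{2,2}(\mathbb{R}^d)$ for every $\mu\geq 0$, the homogeneous analogue of the interpolation formula (\ref{PrelimInterpolationW}) (together with the logarithmic weight insertion as in (\ref{PrelimInterpolationnew2.2})) yields
\begin{equation*}
(\dot{\mathscr{L}}^{\mu_0}_2,\dot{\mathscr{L}}^{\mu_1}_2)_{\theta,q;b}=\dot{\mathbf{B}}^{\lambda,b}_{2,q}(\mathbb{R}^d),\qquad (\dot{\mathscr{L}}^{\mu_0+2s}_2,\dot{\mathscr{L}}^{\mu_1+2s}_2)_{\theta,q;b}=\dot{\mathbf{B}}^{\lambda+2s,b}_{2,q}(\mathbb{R}^d),
\end{equation*}
so that the claimed equivalence $\|u\|_{\dot{\mathbf{B}}^{2s+\lambda,b}_{2,q}(\mathbb{R}^d)}\asymp \|f\|_{\dot{\mathbf{B}}^{\lambda,b}_{2,q}(\mathbb{R}^d)}$ is immediate, with the ``if and only if'' content encoded in the fact that the interpolated map is an isomorphism.

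The main technical difficulty will be the identification step in the previous paragraph: the interpolation machinery developed in Section \ref{Preliminaries} is stated for non-homogeneous spaces and care must be taken in passing to the homogeneous setting, in particular with the usual modding out by polynomials. In the Hilbert regime $p=2$ this can be handled cleanly on the Fourier side via Plancherel, reducing the identification to a weighted $\ell_q$ statement in the Littlewood--Paley decomposition, but writing this rigorously is where most of the work lies.

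As an alternative that bypasses the interpolation identification, one can argue directly from Theorem \ref{thm 14.2}. Writing each side as an integral of fractional moduli of smoothness weighted by $t^{-(\lambda+2s)q}(1+|\log t|)^{bq}$ and $t^{-\lambda q}(1+|\log t|)^{bq}$ respectively, and setting $F(t)=t^{-2s}\omega_{\lambda+2s}(u,t)_2$, the desired equivalence reduces to
\begin{equation*}
\int_0^\infty \bigl(t^{-\lambda}(1+|\log t|)^b F(t)\bigr)^q\,\frac{dt}{t}\asymp \int_0^\infty \bigl(t^{-\lambda}(1+|\log t|)^b\bigr)^q\Bigl(\int_0^t F(\xi)^2\,\frac{d\xi}{\xi}\Bigr)^{q/2}\,\frac{dt}{t}.
\end{equation*}
Using that $t^{-(\lambda+2s)}\omega_{\lambda+2s}(u,t)_2$ is equivalent to a non-increasing function (Marchaud), so that $F(t)\asymp t^{\lambda}\phi(t)$ with $\phi$ non-increasing, one reduces this to a standard Hardy-type equivalence with logarithmic weights, along the lines of (\ref{HardyInequal1})--(\ref{HardyInequal4}), applied to the variable $t^{2\lambda}$.
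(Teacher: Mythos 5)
Your second (``alternative'') argument is essentially the paper's own proof: the paper likewise starts from Theorem \ref{thm 14.2}, rewrites both homogeneous Besov norms through moduli of smoothness, and closes the two-sided estimate with Hardy-type inequalities --- concretely it discretizes, gets the lower bound from the pointwise estimate $\int_0^t F(\xi)^2\,d\xi/\xi\gtrsim F(t)^2$ (monotonicity), and for the upper bound uses Hardy's inequality (\ref{HardyInequal2**}) when $q\geq 2$ and $\ell_{q/2}\hookrightarrow\ell_1$ plus Fubini when $q<2$, where you instead invoke the small-exponent monotonicity proviso of the continuous Hardy lemma; both work. One small repair is needed in your setup: you represent $\dot{\mathbf{B}}^{\lambda+2s,b}_{2,q}(\mathbb{R}^d)$ and $\dot{\mathbf{B}}^{\lambda,b}_{2,q}(\mathbb{R}^d)$ by moduli of order exactly equal to the smoothness ($\lambda+2s$ and $\lambda$), whereas the characterization requires strictly larger order; the paper handles this by applying Theorem \ref{thm 14.2} at some $\lambda_0>\lambda$ and then using elementary monotonicity of moduli. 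Your primary route --- interpolating the isomorphisms of Theorem \ref{thm:FractLaplace} by the $(\theta,q;b)$-method --- is genuinely different from what the paper does and is plausible, but as written it hinges on the unproved identification $(\dot{\mathscr{L}}^{\mu_0}_2,\dot{\mathscr{L}}^{\mu_1}_2)_{\theta,q;b}=\dot{\mathbf{B}}^{\lambda,b}_{2,q}(\mathbb{R}^d)$, which you correctly flag as the hard part and which the paper avoids entirely; in the Hilbert case $p=2$ this identification is accessible on the Fourier side, so the interpolation route would buy a cleaner conceptual statement at the cost of developing homogeneous logarithmic interpolation machinery that the paper never sets up.
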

\begin{proof}
	Let $\lambda_0 > \lambda$. Applying (\ref{EquivFractLaplace0}) and elementary monotonicity estimates, we derive
	\begin{align}
		\|f\|_{\dot{\mathbf{B}}^{\lambda, b}_{2,q}(\mathbb{R}^d)}
		& \asymp \left(\int_0^\infty t^{-\lambda q} (1 + |\log t|)^{b q} \left(\int_0^t (\xi^{-2 s} \omega_{\lambda_0 + 2s}(u,\xi))^2 \frac{d \xi}{\xi}\right)^{q/2} \frac{dt}{t}\right)^{1/q} \nonumber \\
		& \asymp \left(\sum_{j=-\infty}^\infty 2^{j \lambda q} (1 + |j|)^{b q} \left(\sum_{\nu = j}^\infty (2^{\nu 2 s} \omega_{\lambda_0 + 2 s}(u,2^{-\nu})_2)^2 \right)^{q/2}\right)^{1/q} \label{14.8}.
	\end{align}
	In particular, this implies
	\begin{equation*}
		\|f\|_{\dot{\mathbf{B}}^{\lambda, b}_{2,q}(\mathbb{R}^d)} \gtrsim \left(\sum_{j=-\infty}^\infty 2^{j (\lambda + 2 s) q} (1 + |j|)^{b q} \omega_{\lambda_0 + 2 s}(u,2^{-j})_2^q\right)^{1/q} \asymp \|u\|_{\dot{\mathbf{B}}^{\lambda + 2 s, b}_{2,q}(\mathbb{R}^d)}.
	\end{equation*}
	
	Conversely, assume first that $q \geq 2$. Then, it follows from (\ref{14.8}) and Hardy's inequality (see (\ref{HardyInequal2**})) that
	\begin{equation*}
		\|f\|_{\dot{\mathbf{B}}^{\lambda, b}_{2,q}(\mathbb{R}^d)} \lesssim  \left(\sum_{j=-\infty}^\infty 2^{j (\lambda + 2 s) q} (1 + |j|)^{b q} \omega_{\lambda_0 + 2 s}(u,2^{-j})_2^q\right)^{1/q} \asymp \|u\|_{\dot{\mathbf{B}}^{\lambda + 2 s, b}_{2,q}(\mathbb{R}^d)}.	
		\end{equation*}
		On the other hand, if $q < 2$ then, by (\ref{14.8}), we have
		\begin{align*}
				\|f\|_{\dot{\mathbf{B}}^{\lambda, b}_{2,q}(\mathbb{R}^d)}&  \lesssim  \left(\sum_{j=-\infty}^\infty 2^{j \lambda q} (1 + |j|)^{b q} \sum_{\nu = j}^\infty (2^{\nu 2 s} \omega_{\lambda_0 + 2 s}(u, 2^{-\nu})_2)^q\right)^{1/q} \\
				& = \left(\sum_{\nu=-\infty}^\infty (2^{\nu 2 s} \omega_{\lambda_0 + 2 s}(u, 2^{-\nu})_2)^q \sum_{j=-\infty}^\nu 2^{j \lambda q} (1 + |j|)^{b q} \right)^{1/q} \\
				& \asymp \left(\sum_{\nu=-\infty}^\infty 2^{\nu(\lambda + 2 s) q} (1 + |\nu|)^{b q} \omega_{\lambda_0 + 2 s}(u, 2^{-\nu})_2^q\right)^{1/q}  \asymp \|u\|_{\dot{\mathbf{B}}^{\lambda + 2 s, b}_{2,q}(\mathbb{R}^d)}.
		\end{align*}
		The proof is finished.
\end{proof}

We can now establish the quantitative version of Theorem \ref{thm:DirProblem2}.

\begin{thm}
	Let $\Omega$ be a bounded open set in $\mathbb{R}^d$ and let $0 < s < 1$. Let $f \in \dot{\mathscr{L}}^{-s}_2(\overline{\Omega})$ and let $u \in \dot{\mathscr{L}}^{s}_2(\overline{\Omega}) $  be the weak solution to the Dirichlet problem
	\begin{equation*}
		 \left\{\begin{array}{cl}  (-\Delta)^s u = f & \text{in} \quad  \Omega, \\
		u= 0  & \text{on} \quad \mathbb{R}^d \setminus \Omega.
		       \end{array}
                        \right.
	\end{equation*}
	Assume that $f \in L_2(\Omega)$ and $\lambda > 0$. Let $\omega$ and $\widetilde{\omega}$ be two open subsets of $\Omega$ such that $\widetilde{\omega}  \subset \omega \subset \Omega$, and let $\zeta \in C^\infty(\mathbb{R}^d)$ satisfying
		\begin{equation}\label{cutoff}
		 \left\{\begin{array}{ll} \zeta (x) = 1 & \text{if} \quad x \in \widetilde{\omega} ,\\
		 0 \leq \zeta(x) \leq 1 & \text{if} \quad x \in \omega \setminus \widetilde{\omega}, \\
		 \zeta(x) =0 & \text{if} \quad x \in \mathbb{R}^d \setminus \omega.
		 \end{array}
                        \right.
	\end{equation}
Then
	\begin{equation*}
		\left(\int_0^t (\xi^{-2 s} \omega_{ \lambda + 2 s} (\zeta u, \xi)_2)^2 \frac{d \xi}{\xi}\right)^{1/2} \lesssim \omega_\lambda(\zeta f,t)_2 + \|f\|_{L_2(\Omega)}, \quad t > T > 0.
	\end{equation*}
\end{thm}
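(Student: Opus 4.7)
The plan is to freeze the geometry by multiplying the equation by the cutoff $\zeta$, thereby converting the local problem on $\Omega$ into a whole-space fractional Poisson-type equation to which Theorem \ref{thm 14.2} applies. Set $v:=\zeta u$ and compute, from \eqref{LaplaceRiesz}, the commutator identity
\begin{equation*}
(-\Delta)^{s}(\zeta u)(x) = \zeta(x)\,(-\Delta)^{s} u(x) + h(x), \qquad h(x):= c_{d,s}\int_{\mathbb{R}^d}\frac{\zeta(x)-\zeta(y)}{|x-y|^{d+2s}}\,u(y)\,dy.
\end{equation*}
Because $\operatorname{supp}\zeta\subset\omega\Subset\Omega$ and $(-\Delta)^{s}u=f$ on $\Omega$, we have $\zeta\,(-\Delta)^{s}u=\zeta f$ identically on $\mathbb{R}^d$; consequently $v$ is the (unique) weak solution in $\dot{\mathscr{L}}^{s}_2(\mathbb{R}^d)$ of
\begin{equation*}
(-\Delta)^{s} v = \zeta f + h \qquad \text{in } \mathbb{R}^d.
\end{equation*}

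First I would check that this reduction is admissible: $u\in\dot{\mathscr{L}}^{s}_2(\overline{\Omega})\hookrightarrow \dot{\mathscr{L}}^{s}_2(\mathbb{R}^d)$ and the smoothness and compact support of $\zeta$ give $v=\zeta u\in\dot{\mathscr{L}}^{s}_2(\mathbb{R}^d)$, while $\zeta f\in L_2(\mathbb{R}^d)$ with $\|\zeta f\|_{L_2(\mathbb{R}^d)}\le\|\zeta\|_{\infty}\|f\|_{L_2(\Omega)}$. The crucial analytic input is the commutator estimate
\begin{equation*}
\|h\|_{L_2(\mathbb{R}^d)} \lesssim \|u\|_{L_2(\Omega)},
\end{equation*}
which I would derive from the pointwise bound $|\zeta(x)-\zeta(y)|\lesssim\min\{1,|x-y|\}$ (using $\zeta\in C^\infty_c$) together with $0<s<1$, which ensures that the kernel $\min\{1,|x-y|\}/|x-y|^{d+2s}$ is locally and globally integrable and yields $L_2$-boundedness of the convolution-type operator mapping $u\mapsto h$. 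Combining this with $\|u\|_{L_2(\Omega)}\lesssim\|u\|_{\dot{\mathscr{L}}^{s}_2(\overline{\Omega})}\lesssim\|f\|_{\dot{\mathscr{L}}^{-s}_2(\overline{\Omega})}\lesssim\|f\|_{L_2(\Omega)}$ (from \eqref{14.7n} and Lemma \ref{lem:DirProblem}) gives
\begin{equation*}
\|h\|_{L_2(\mathbb{R}^d)}\lesssim \|f\|_{L_2(\Omega)}.
\end{equation*}

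With these ingredients in hand, applying Theorem \ref{thm 14.2} to the pair $(v,\zeta f+h)$ on $\mathbb{R}^d$ yields
\begin{equation*}
\left(\int_0^t \bigl(\xi^{-2s}\,\omega_{\lambda+2s}(\zeta u,\xi)_2\bigr)^{2}\,\frac{d\xi}{\xi}\right)^{\!1/2}
\asymp \omega_{\lambda}(\zeta f+h,t)_2.
\end{equation*}
Subadditivity of the modulus of smoothness and the trivial bound $\omega_\lambda(h,t)_2\lesssim \|h\|_{L_2(\mathbb{R}^d)}$ (valid for all $t>0$) now give
\begin{equation*}
\omega_{\lambda}(\zeta f+h,t)_2 \le \omega_{\lambda}(\zeta f,t)_2 + \omega_{\lambda}(h,t)_2 \lesssim \omega_{\lambda}(\zeta f,t)_2 + \|f\|_{L_2(\Omega)},
\end{equation*}
which is the desired inequality (in fact for every $t>0$; the auxiliary threshold $T$ may be used simply to absorb constants depending on $\operatorname{supp}\zeta$).

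The principal obstacle is the commutator bound: one must extract $L_2$-boundedness of the off-diagonal singular integral $u\mapsto h$ from the smoothness and compact support of $\zeta$ together with $s\in(0,1)$; this step is what forces the geometric separation between $\widetilde\omega$, $\omega$, and $\Omega$ encoded in the cutoff \eqref{cutoff}. A secondary point requiring care is the identification of $v=\zeta u$ as the weak solution of $(-\Delta)^s v=\zeta f+h$ on $\mathbb{R}^d$, which must be justified within $\dot{\mathscr{L}}^{s}_2(\mathbb{R}^d)$ modulo constants so that Theorem \ref{thm 14.2} is applicable.
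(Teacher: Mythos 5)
Your overall architecture coincides with the paper's: localize with the cutoff, use the commutator identity $(-\Delta)^s(\zeta u)=\zeta(-\Delta)^s u + h$, feed the pair $(\zeta u,\zeta f+h)$ into Theorem \ref{thm 14.2}, and finish by subadditivity of the modulus together with $\omega_\lambda(h,t)_2\lesssim\|h\|_{L_2(\mathbb{R}^d)}$. The paper simply imports the pointwise formula and the $L_2$ bound on the remainder from Biccari--Warma--Zuazua rather than deriving them.

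There is, however, a genuine gap in your commutator estimate. You claim $\|h\|_{L_2(\mathbb{R}^d)}\lesssim\|u\|_{L_2(\Omega)}$ on the grounds that $|\zeta(x)-\zeta(y)|\lesssim\min\{1,|x-y|\}$ makes the kernel $\min\{1,|x-y|\}\,|x-y|^{-d-2s}$ locally integrable. Near the diagonal this kernel behaves like $|x-y|^{1-d-2s}$, and $\int_0^1 r^{1-d-2s}\,r^{d-1}\,dr=\int_0^1 r^{-2s}\,dr$ converges only for $s<1/2$. For $s\geq 1/2$ the commutator $[(-\Delta)^s,\zeta]$ is an operator of positive order $2s-1$ and is \emph{not} bounded on $L_2$ with only $L_2$ control of $u$; one must spend the extra $H^s$ regularity of $u$ (a second-order Taylor expansion of $\zeta$, or interpolation, handles the principal-value cancellation). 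This is precisely why the lemma the paper cites bounds the remainder by $\|u\|_{\dot{\mathscr{L}}^{s}_2(\overline{\Omega})}$ rather than by $\|u\|_{L_2(\Omega)}$. Fortunately your argument survives once the estimate is corrected: Lemma \ref{lem:DirProblem} gives $\|u\|_{\dot{\mathscr{L}}^{s}_2(\overline{\Omega})}\lesssim\|f\|_{\dot{\mathscr{L}}^{-s}_2(\overline{\Omega})}$ and \eqref{14.7n} gives $\|f\|_{\dot{\mathscr{L}}^{-s}_2(\overline{\Omega})}\lesssim\|f\|_{L_2(\Omega)}$, so the chain closes exactly as in the paper; but as written, your justification of the key bound fails for $s\geq1/2$.
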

\begin{proof}
	We shall apply the following technical lemma which has been shown in \cite[Proposition 1.5, (3.2) and (3.3)]{BiccariWarmaZuazua} (see also \cite{RosOtonSerra}).
	
	\begin{lem}
		The following pointwise formula holds
		\begin{equation}\label{PointwiseLaplacian}
			(-\Delta)^s (\zeta u) = \zeta (-\Delta)^s u + g \quad \text{in} \quad \mathbb{R}^d
		\end{equation}
		where $g \in L_2(\mathbb{R}^d)$ with
		\begin{equation}\label{PointwiseLaplacianRemainder}
		\|g\|_{L_2(\mathbb{R}^d)} \lesssim \|u\|_{\dot{\mathscr{L}}^{s}_2(\overline{\Omega})}.
		\end{equation}
	\end{lem}
	
	Since $f \in L_2(\Omega)$, we derive that $\zeta (-\Delta)^s u + g \in L_2(\mathbb{R}^d)$. By (\ref{PointwiseLaplacian}) and Theorem \ref{thm 14.2}, we get
	\begin{align*}
		\left(\int_0^t (\xi^{-2 s} \omega_{ \lambda + 2 s} (\zeta u, \xi)_2)^2 \frac{d \xi}{\xi}\right)^{1/2} & \asymp \omega_\lambda(\zeta (-\Delta)^s u + g, t)_2 \\
		& \lesssim  \omega_\lambda(\zeta (-\Delta)^s u, t)_2 + \|g\|_{L_2(\mathbb{R}^d)} \\
		& \lesssim  \omega_\lambda(\zeta f, t)_2 + \|f\|_{L_2(\Omega)}
	\end{align*}
	where we have used (\ref{PointwiseLaplacianRemainder}), Lemma \ref{lem:DirProblem} and (\ref{14.7n}) in order to obtain the last estimate.
\end{proof}

\subsection{$L_p$-Regularity estimates} In this section we focus on quantitative estimates for the fractional Laplacian in the general $L_p$ setting. First, we recall the $L_p$ regularity result for the Poisson-type equation involving the fractional Laplacian. See, e.g., \cite{Stein}.

\begin{thm}\label{thm:FractLaplaceP}
	Let $s \in (0,1)$ and $1 < p < \infty$. Let $f \in \dot{\mathscr{L}}^{-s}_2(\mathbb{R}^d)$ and let $u \in \dot{\mathscr{L}}^{s}_2(\mathbb{R}^d)$ be the weak solution to the fractional Poisson-type equation
\begin{equation*}
	(-\Delta)^s u = f \quad \text{in} \quad \mathbb{R}^d.
\end{equation*}
Let $\lambda \geq 0$. Then
\begin{equation*}
	u \in  \dot{\mathscr{L}}^{\lambda + 2 s}_p(\mathbb{R}^d) \quad \text{if and only if} \quad f \in \dot{\mathscr{L}}^{\lambda}_p(\mathbb{R}^d),
\end{equation*}
and
\begin{equation}\label{EquivFractLaplaceP}
\|u\|_{\dot{\mathscr{L}}^{\lambda + 2 s}_p(\mathbb{R}^d)} \asymp \|f\|_{\dot{\mathscr{L}}^{\lambda}_p(\mathbb{R}^d)}.
\end{equation}
In particular, we have
\begin{equation}\label{FractLaplaceP}
\|u\|_{ \dot{\mathbf{B}}^{2 s}_{p, q}(\mathbb{R}^d)} \lesssim \|f\|_{L_p(\mathbb{R}^d)}, \quad q \geq \max\{p,2\}
\end{equation}
and
\begin{equation}\label{FractLaplaceConvP}
\|u\|_{ \dot{\mathbf{B}}^{2 s}_{p, q}(\mathbb{R}^d)} \gtrsim \|f\|_{L_p(\mathbb{R}^d)}, \quad q \leq \min\{p,2\}.
\end{equation}
\end{thm}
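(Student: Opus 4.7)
The starting point is the Fourier-side identity $\widehat{(-\Delta)^s u}(\xi) = |\xi|^{2s}\widehat{u}(\xi) = \widehat{J_{2s}u}(\xi)$, which identifies the unique weak solution of the Poisson-type equation with $u = J_{-2s} f$, where $J_\sigma$ is the Riesz potential defined in (\ref{RieszPotential2}). This has to be understood in the quotient framework $\dot{\mathcal{S}}'(\mathbb{R}^d)$ of tempered distributions modulo polynomials, as recalled at the start of Section \ref{Lifting Estimates}. Once this identification is in place, the entire statement of the theorem reduces to mapping properties of $J_{-2s}$ on the homogeneous scales $\dot{\mathscr{L}}^{\lambda}_p(\mathbb{R}^d)$ and $\dot{B}^{0}_{p,q}(\mathbb{R}^d)$.

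For (\ref{EquivFractLaplaceP}), I would proceed directly from the definition (\ref{RieszSpace}) of the Riesz potential space together with the composition rule $J_{\lambda+2s}\circ J_{-2s} = J_{\lambda}$, which gives
\begin{equation*}
\|u\|_{\dot{\mathscr{L}}^{\lambda+2s}_p(\mathbb{R}^d)} = \|J_{\lambda+2s}J_{-2s}f\|_{L_p(\mathbb{R}^d)} = \|J_{\lambda}f\|_{L_p(\mathbb{R}^d)} = \|f\|_{\dot{\mathscr{L}}^{\lambda}_p(\mathbb{R}^d)}.
\end{equation*}
Equivalently, one can invoke Lemma \ref{LemmaLift2}(iii) applied to homogeneous Triebel-Lizorkin spaces, coupled with the homogeneous Littlewood-Paley identification $\dot{\mathscr{L}}^{\alpha}_p(\mathbb{R}^d) = \dot{F}^{\alpha}_{p,2}(\mathbb{R}^d)$, i.e.\ the homogeneous counterpart of (\ref{LPgeneral}), to obtain the same equivalence.

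For the Besov-side estimates (\ref{FractLaplaceP}) and (\ref{FractLaplaceConvP}), since $2s > 0$, the homogeneous counterpart of (\ref{BesovComparison}) yields $\dot{\mathbf{B}}^{2s}_{p,q}(\mathbb{R}^d) = \dot{B}^{2s}_{p,q}(\mathbb{R}^d)$ with equivalence of quasi-norms. Lemma \ref{LemmaLift2}(iii) then gives that $J_{-2s}\colon \dot{B}^{0}_{p,q}(\mathbb{R}^d) \to \dot{B}^{2s}_{p,q}(\mathbb{R}^d)$ is an isomorphism, so
\begin{equation*}
\|u\|_{\dot{\mathbf{B}}^{2s}_{p,q}(\mathbb{R}^d)} \asymp \|u\|_{\dot{B}^{2s}_{p,q}(\mathbb{R}^d)} = \|J_{-2s}f\|_{\dot{B}^{2s}_{p,q}(\mathbb{R}^d)} \asymp \|f\|_{\dot{B}^{0}_{p,q}(\mathbb{R}^d)}.
\end{equation*}
The two desired one-sided inequalities now reduce to the homogeneous versions of the Littlewood-Paley-type embeddings obtained by putting $s=b=0$ in (\ref{Prelim7.18}), namely $L_p(\mathbb{R}^d) \hookrightarrow \dot{B}^{0}_{p,q}(\mathbb{R}^d)$ for $q \geq \max\{p,2\}$ and $\dot{B}^{0}_{p,q}(\mathbb{R}^d) \hookrightarrow L_p(\mathbb{R}^d)$ for $q \leq \min\{p,2\}$.

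The only delicate point, and the main technical obstacle, is to justify the identity $u = J_{-2s} f$ for the specific class of weak solutions lying in $\dot{\mathscr{L}}^{s}_2(\mathbb{R}^d)$ furnished by Lemma \ref{lem:DirProblem}: one must check that the two sides agree as elements of $\dot{\mathcal{S}}'(\mathbb{R}^d)$ and that the lifting property of $J_{-2s}$ can be applied unambiguously within this quotient space, so that no polynomial indeterminacy spoils the norm estimates. Once this foundational step is settled, everything else is a straightforward combination of the lifting property of Riesz potentials and the classical Littlewood-Paley embeddings between $L_p$ and homogeneous Besov spaces.
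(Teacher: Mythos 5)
Your argument is correct. Note first that the paper does not prove Theorem \ref{thm:FractLaplaceP} at all: it is presented as a recalled classical fact with a reference to Stein's book, so there is no "paper proof" to compare against. What you supply is a legitimate self-contained derivation, and it uses exactly the machinery the paper deploys elsewhere for closely related statements: the identification $u=J_{-2s}f$ in $\dot{\mathcal{S}}'(\mathbb{R}^d)$ via the symbol $|\xi|^{2s}$, the composition rule for Riesz potentials giving (\ref{EquivFractLaplaceP}) as an exact identity of semi-norms, the homogeneous lifting property (\ref{5.5new3}) together with $\dot{\mathbf{B}}^{2s}_{p,q}(\mathbb{R}^d)=\dot{B}^{2s}_{p,q}(\mathbb{R}^d)$ for $2s>0$, and the endpoint Littlewood--Paley embeddings $L_p(\mathbb{R}^d)\hookrightarrow\dot{B}^0_{p,q}(\mathbb{R}^d)$ for $q\geq\max\{p,2\}$ and $\dot{B}^0_{p,q}(\mathbb{R}^d)\hookrightarrow L_p(\mathbb{R}^d)$ for $q\leq\min\{p,2\}$ — the same ingredients the authors use in the proofs of Theorems \ref{Thm Trebels2} and \ref{Thm Trebels2Lift}. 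Your caveat about the polynomial indeterminacy is the right one to flag, and it is harmless here: all the quantities involved are semi-norms on $\dot{\mathcal{S}}'(\mathbb{R}^d)$ modulo polynomials and are therefore insensitive to the ambiguity, while the hypothesis $u\in\dot{\mathscr{L}}^s_2(\mathbb{R}^d)$ pins down a genuine representative. In short, the proposal is a correct proof of a result the paper merely cites, assembled from lemmas the paper itself establishes or invokes.
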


Now we are in a position to show the $L_p$ version of Theorem \ref{thm 14.2}.

\begin{thm}\label{thm 14.2 p}
	Let $s \in (0,1)$. Let $f \in \dot{\mathscr{L}}^{-s}_2(\mathbb{R}^d)$ and let $u \in \dot{\mathscr{L}}^{s}_2(\mathbb{R}^d)$ be the weak solution to the fractional Poisson-type equation
\begin{equation*}
	(-\Delta)^s u = f \quad \text{in} \quad \mathbb{R}^d.
\end{equation*}
	Let $ \lambda > 0, 1 < p < \infty$, and $0 < q \leq \infty$. Then
	\begin{equation}\label{EquivFractLaplace0 p}
		\left(\int_0^t (\xi^{-2 s} \omega_{ \lambda + 2 s} (u, \xi)_p)^q \frac{d \xi}{\xi}\right)^{1/q} \lesssim \omega_\lambda(f,t)_p \iff q \geq \max\{p,2\}
	\end{equation}
	and
	\begin{equation}\label{EquivFractLaplace0 p*}
		\left(\int_0^t (\xi^{-2 s} \omega_{ \lambda + 2 s} (u, \xi)_p)^q \frac{d \xi}{\xi}\right)^{1/q} \gtrsim \omega_\lambda(f,t)_p \iff q \leq \min\{p,2\}.
	\end{equation}
\end{thm}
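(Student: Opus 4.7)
The plan is to follow the blueprint of the proof of Theorem~\ref{thm 14.2}, substituting the $L_p$-lifting identity (\ref{EquivFractLaplaceP}) for the $L_2$-version (\ref{EquivFractLaplace}); the lack of coincidence between $\dot{\mathscr{L}}^{2s}_p(\mathbb{R}^d)$ and the Besov space $\dot{\mathbf{B}}^{2s}_{p,q}(\mathbb{R}^d)$ when $p\neq 2$ is precisely what produces the two separate conditions on $q$. First I would combine (\ref{EquivFractLaplaceP}), applied with the parameter $\lambda$ and with $\lambda=0$, to get
\[
K(t^\lambda,u;\dot{\mathscr{L}}^{2s}_p(\mathbb{R}^d),\dot{\mathscr{L}}^{\lambda+2s}_p(\mathbb{R}^d))\asymp K(t^\lambda,f;L_p(\mathbb{R}^d),\dot{\mathscr{L}}^{\lambda}_p(\mathbb{R}^d))\asymp\omega_\lambda(f,t)_p,
\]
where the last equivalence is Lemma~\ref{LemmaModuli}.

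Next I would rewrite the same $K$-functional, but with the source space replaced by $\dot{\mathbf{B}}^{2s}_{p,q}(\mathbb{R}^d)$. Using the homogeneous interpolation identity $\dot{\mathbf{B}}^{2s}_{p,q}(\mathbb{R}^d)=(L_p(\mathbb{R}^d),\dot{\mathscr{L}}^{\lambda+2s}_p(\mathbb{R}^d))_{\frac{2s}{\lambda+2s},q}$ (Bergh--L\"ofstr\"om, as already used in Theorem~\ref{thm 14.2}), an application of Lemma~\ref{PrelimHolmstedt}(ii) with $\alpha=0$ and a change of variables yields, together with Lemma~\ref{LemmaModuli},
\[
K(t^\lambda,u;\dot{\mathbf{B}}^{2s}_{p,q}(\mathbb{R}^d),\dot{\mathscr{L}}^{\lambda+2s}_p(\mathbb{R}^d))\asymp\left(\int_0^t(\xi^{-2s}\omega_{\lambda+2s}(u,\xi)_p)^q\frac{d\xi}{\xi}\right)^{1/q}.
\]
The sufficiency in both equivalences then reduces to the embedding between the two source couples. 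If $q\geq\max\{p,2\}$, the homogeneous Littlewood--Paley embedding $\dot{\mathscr{L}}^{2s}_p(\mathbb{R}^d)\hookrightarrow\dot{\mathbf{B}}^{2s}_{p,q}(\mathbb{R}^d)$ (see (\ref{Prelim7.18})) gives the $K$-functional inequality in one direction, and hence (\ref{EquivFractLaplace0 p}). Dually, if $q\leq\min\{p,2\}$, the reverse embedding $\dot{\mathbf{B}}^{2s}_{p,q}(\mathbb{R}^d)\hookrightarrow\dot{\mathscr{L}}^{2s}_p(\mathbb{R}^d)$ yields (\ref{EquivFractLaplace0 p*}).

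For the necessity (``only if'') part, I would let $t\to\infty$ and use (\ref{12.12new2**}), namely $\sup_{t>0}\omega_\lambda(f,t)_p\asymp\|f\|_{L_p(\mathbb{R}^d)}$, combined with the homogeneous counterpart of (\ref{BesovWeierstrassHom}) or just the monotonicity of the integrand, to reduce the one-sided estimates to the global bounds $\|u\|_{\dot{\mathbf{B}}^{2s}_{p,q}(\mathbb{R}^d)}\lesssim\|f\|_{L_p(\mathbb{R}^d)}$ and $\|f\|_{L_p(\mathbb{R}^d)}\lesssim\|u\|_{\dot{\mathbf{B}}^{2s}_{p,q}(\mathbb{R}^d)}$, respectively. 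Applying the homogeneous lift $J_{-2s}$ and the lifting property of Besov spaces from Lemma~\ref{LemmaLift2}(iii), these global bounds are equivalent, respectively, to $L_p(\mathbb{R}^d)\hookrightarrow\dot{\mathbf{B}}^0_{p,q}(\mathbb{R}^d)$ and $\dot{\mathbf{B}}^0_{p,q}(\mathbb{R}^d)\hookrightarrow L_p(\mathbb{R}^d)$, which, by the homogeneous Littlewood--Paley theorem, hold precisely when $q\geq\max\{p,2\}$ and $q\leq\min\{p,2\}$.

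The routine computations (Holmstedt, change of variables) are immediate; the only place where care is needed is the necessity argument, since $\omega_\lambda(f,t)_p$ does not decay as $t\to\infty$ but saturates at $\|f\|_{L_p(\mathbb{R}^d)}$. The most delicate step will thus be justifying the passage to the limit in a way that extracts the correct scale-invariant embedding between $\dot{\mathbf{B}}^0_{p,q}(\mathbb{R}^d)$ and $L_p(\mathbb{R}^d)$, but this is handled cleanly by the Weierstrass--mean characterization (\ref{BesovWeierstrassHom}) applied to $f$ and $J_{2s}u$.
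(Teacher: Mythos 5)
Your proposal is correct and follows essentially the same route as the paper: the $L_p$-regularity estimate (\ref{EquivFractLaplaceP})--(\ref{FractLaplaceConvP}) transfers the $K$-functional for $(L_p,\dot{\mathscr{L}}^{\lambda}_p)$ of $f$ to that for $(\dot{\mathbf{B}}^{2s}_{p,q},\dot{\mathscr{L}}^{\lambda+2s}_p)$ of $u$, Holmstedt's formula identifies the latter with the truncated integral of $\xi^{-2s}\omega_{\lambda+2s}(u,\xi)_p$, and the necessity follows by letting $t\to\infty$ and invoking the characterization of when $\dot{\mathscr{L}}^{2s}_p$ embeds into (or contains) $\dot{\mathbf{B}}^{2s}_{p,q}$. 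The only cosmetic difference is that you deduce the necessity via the lift $J_{-2s}$ and the homogeneous Littlewood--Paley embeddings, whereas the paper cites Stein directly; both are valid.
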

\begin{proof}
Let $q \geq \max\{p,2\}$. By (\ref{EquivFractLaplaceP}), (\ref{FractLaplaceP}) and Lemma \ref{LemmaModuli}, we have
	\begin{equation}\label{EquivFractLaplace1p}
		K(t^\lambda, u; \dot{\mathbf{B}}^{2 s}_{p, q}(\mathbb{R}^d), \dot{\mathscr{L}}^{2 s + \lambda}_p(\mathbb{R}^d)) \lesssim K(t^\lambda,f ; L_p(\mathbb{R}^d), \dot{\mathscr{L}}^{\lambda}_p(\mathbb{R}^d)) \asymp \omega_{\lambda}(f,t)_p.
	\end{equation}
	On the other hand, since $\dot{\mathbf{B}}^{2 s}_{p, q}(\mathbb{R}^d) = (L_p(\mathbb{R}^d), \dot{\mathscr{L}}^{2 s + \lambda}_p(\mathbb{R}^d))_{\frac{2 s}{ 2 s+\lambda}, q}$ (cf. \cite[Theorem 6.4.5]{BerghLofstrom}. This is a special case of the homogeneous counterpart of (\ref{PrelimInterpolationW})), one can apply Lemma \ref{PrelimHolmstedt}(ii) (with $\alpha = 0$) and (\ref{8+}) in order to derive
	\begin{align}
		K(t^\lambda, u; \dot{\mathbf{B}}^{2 s}_{p, q}(\mathbb{R}^d), \dot{\mathscr{L}}^{2 s + \lambda}_p(\mathbb{R}^d)) & \nonumber\\
		& \hspace{-4cm} \asymp \left(\int_0^{t} (\xi^{- 2 s} K(\xi^{\lambda + 2 s}, u; L_p(\mathbb{R}^d),  \dot{\mathscr{L}}^{2 s + \lambda}_p(\mathbb{R}^d)))^{q} \frac{d \xi}{\xi}\right)^{1/q} \nonumber\\
		& \hspace{-4cm} \asymp 	\left(\int_0^t (\xi^{-2 s} \omega_{ \lambda + 2 s} (u, \xi)_p)^{q} \frac{d \xi}{\xi}\right)^{1/q}. \label{14.22}
	\end{align}
	Inserting this estimate into (\ref{EquivFractLaplace1p}) we get the desired estimate given in (\ref{EquivFractLaplace0 p}).
	
	Conversely, assume that there exists $q$ such that the following inequality holds
	\begin{equation*}
		\left(\int_0^t (\xi^{-2 s} \omega_{ \lambda + 2 s} (u, \xi)_p)^q \frac{d \xi}{\xi}\right)^{1/q} \lesssim \omega_\lambda(f,t)_p.
	\end{equation*}
	In particular, we have
		\begin{equation*}
		\left(\int_0^t (\xi^{-2 s} \omega_{ \lambda + 2 s} (u, \xi)_p)^q \frac{d \xi}{\xi}\right)^{1/q} \lesssim \|f\|_{L_p(\mathbb{R}^d)}
	\end{equation*}
	which yields that $\|u\|_{\dot{\mathbf{B}}^{2 s}_{p,q}(\mathbb{R}^d)} \lesssim \|f\|_{L_p(\mathbb{R}^d)} = \|(-\Delta)^s u\|_{L_p(\mathbb{R}^d)}$. This implies that $q \geq \max\{p,2\}$ (see \cite[Chapter V, Sections 6.8 and 6.9]{Stein}). The proof of (\ref{EquivFractLaplace0 p}) is completed.
	
	Next we will show (\ref{EquivFractLaplace0 p*}). Assume that $q \leq \min\{p,2\}$. It follows from (\ref{14.22}), (\ref{EquivFractLaplaceP}), (\ref{FractLaplaceConvP}) and Lemma \ref{LemmaModuli} that
	\begin{align*}
		 	\left(\int_0^t (\xi^{-2 s} \omega_{ \lambda + 2 s} (u, \xi)_p)^{q} \frac{d \xi}{\xi}\right)^{1/q} & \asymp K(t^\lambda, u; \dot{\mathbf{B}}^{2 s}_{p, q}(\mathbb{R}^d), \dot{\mathscr{L}}^{2 s + \lambda}_p(\mathbb{R}^d)) \\
			& \gtrsim K(t^\lambda,f ; L_p(\mathbb{R}^d), \dot{\mathscr{L}}^{\lambda}_p(\mathbb{R}^d)) \asymp \omega_{\lambda}(f,t)_p.
	\end{align*}
	This gives the desired estimate in (\ref{EquivFractLaplace0 p*}).
	
	Conversely, assume that there is $q$ such that
	\begin{equation*}
		\left(\int_0^t (\xi^{-2 s} \omega_{ \lambda + 2 s} (u, \xi)_p)^q \frac{d \xi}{\xi}\right)^{1/q} \gtrsim \omega_\lambda(f,t)_p, \quad t > 0.
	\end{equation*}
	Therefore, by (\ref{12.12new2**}), we derive
		\begin{equation*}
		\|u\|_{\dot{\mathbf{B}}^{2 s}_{p,q}(\mathbb{R}^d)} \gtrsim \|f\|_{L_p(\mathbb{R}^d)}=  \|(-\Delta)^s u\|_{L_p(\mathbb{R}^d)}
	\end{equation*}
	which implies $q \leq \min\{p,2\}$ (see \cite[Chapter V, Sections 6.8 and 6.9]{Stein}).
\end{proof}

As application of Theorem \ref{thm 14.2 p} we obtain the next result which deals with the smoothness properties of the operator $(-\Delta)^s$ and complements the $L_2$-result of Theorem \ref{CorFractLapl2}.

\begin{thm}
	Let $s \in (0,1)$. Let $f \in \dot{\mathscr{L}}^{-s}_2(\mathbb{R}^d)$ and let $u \in \dot{\mathscr{L}}^{s}_2(\mathbb{R}^d)$ be the weak solution to the fractional Poisson-type equation
\begin{equation*}
	(-\Delta)^s u = f \quad \text{in} \quad \mathbb{R}^d.
\end{equation*}
Let $\lambda > 0, b \in \mathbb{R}, 1 < p < \infty$, and $0 < q \leq \infty$. Then
\begin{equation*}
	u \in  \dot{\mathbf{B}}^{2 s + \lambda, b}_{p, q}(\mathbb{R}^d) \quad \text{if and only if} \quad f \in \dot{\mathbf{B}}^{\lambda, b}_{p,q}(\mathbb{R}^d),
\end{equation*}
and
\begin{equation*}
\|u\|_{\dot{\mathbf{B}}^{2 s + \lambda, b}_{p, q}(\mathbb{R}^d)} \asymp \|f\|_{\dot{\mathbf{B}}^{\lambda, b}_{p,q}(\mathbb{R}^d)}.
\end{equation*}
\end{thm}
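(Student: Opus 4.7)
The plan is to deduce the claimed norm equivalence from the two pointwise-in-$t$ estimates on the moduli of smoothness furnished by Theorem \ref{thm 14.2 p}, one for each direction, followed by integration against the weight $t^{-\lambda q}(1+|\log t|)^{bq}\,dt/t$. This mirrors the proof of Theorem \ref{CorFractLapl2}, which used the two-sided equivalence (\ref{EquivFractLaplace0}); here the argument must be split since only one-sided versions of that equivalence are available in the $L_p$ setting.

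For the upper estimate $\|u\|_{\dot{\mathbf{B}}^{2s+\lambda,b}_{p,q}(\mathbb{R}^d)}\lesssim\|f\|_{\dot{\mathbf{B}}^{\lambda,b}_{p,q}(\mathbb{R}^d)}$, I would apply (\ref{EquivFractLaplace0 p}) with inner index $\max\{p,2\}$ (which satisfies the required condition $q\geq\max\{p,2\}$ trivially) to obtain
\[
\Bigl(\int_0^t(\xi^{-2s}\omega_{\lambda+2s}(u,\xi)_p)^{\max\{p,2\}}\frac{d\xi}{\xi}\Bigr)^{1/\max\{p,2\}}\lesssim \omega_\lambda(f,t)_p.
\]
Restricting the integral on the left to $\xi\in[t/2,t]$ and using the elementary monotonicity $\omega_{\lambda+2s}(u,\xi)_p\gtrsim \omega_{\lambda+2s}(u,t)_p$ for $\xi\geq t/2$ (a consequence of $\omega_k(u,2\xi)_p\leq 2^k\omega_k(u,\xi)_p$, valid for fractional orders as well) yields the pointwise bound
\[
t^{-2s}\omega_{\lambda+2s}(u,t)_p\lesssim \omega_\lambda(f,t)_p\qquad\text{for all }t>0.
\]
Multiplying by $t^{-\lambda}(1+|\log t|)^b$, raising to the $q$-th power and integrating against $dt/t$ on $(0,\infty)$ then gives the desired upper bound, uniformly in $0<q\leq\infty$ and $b\in\mathbb{R}$.

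For the converse $\|f\|_{\dot{\mathbf{B}}^{\lambda,b}_{p,q}(\mathbb{R}^d)}\lesssim\|u\|_{\dot{\mathbf{B}}^{2s+\lambda,b}_{p,q}(\mathbb{R}^d)}$, invoke (\ref{EquivFractLaplace0 p*}) with inner index $1\leq\min\{p,2\}$ (allowable since $p>1$) to obtain
\[
\omega_\lambda(f,t)_p\lesssim \int_0^t\xi^{-2s}\omega_{\lambda+2s}(u,\xi)_p\frac{d\xi}{\xi}=\int_0^t\psi(\xi)\,d\xi,\qquad \psi(\xi):=\xi^{-2s-1}\omega_{\lambda+2s}(u,\xi)_p.
\]
Hardy's inequality (\ref{HardyInequal1}), in its version on $(0,\infty)$ indicated in Remark \ref{RemarkHardy}(i), with parameter $\lambda>0$ and exponent $q\geq 1$, then gives
\[
\|f\|_{\dot{\mathbf{B}}^{\lambda,b}_{p,q}(\mathbb{R}^d)}\lesssim \Bigl(\int_0^\infty(t^{1-\lambda}(1+|\log t|)^b\psi(t))^q\frac{dt}{t}\Bigr)^{1/q}=\Bigl(\int_0^\infty(t^{-(\lambda+2s)}(1+|\log t|)^b\omega_{\lambda+2s}(u,t)_p)^q\frac{dt}{t}\Bigr)^{1/q},
\]
which is equivalent to $\|u\|_{\dot{\mathbf{B}}^{2s+\lambda,b}_{p,q}(\mathbb{R}^d)}$. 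For the remaining range $0<q<1$, the same Hardy inequality applies by its last clause, because $\psi(\xi)=\xi^{\lambda-1}\varphi(\xi)$ with $\varphi(\xi):=\omega_{\lambda+2s}(u,\xi)_p/\xi^{\lambda+2s}$ equivalent to a decreasing function (by the standard monotonicity $t\mapsto t^{-k}\omega_k(u,t)_p$ is equivalent to a decreasing function). The principal technical point, and the mildest obstacle, is this last verification of the decreasing-function hypothesis needed to extend Hardy's inequality below $q=1$ in the present homogeneous log-weighted setting.
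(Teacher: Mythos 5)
Your overall strategy is the right one and matches the paper's: split the two-sided $L_2$ identity into the two one-sided estimates of Theorem \ref{thm 14.2 p}, extract a pointwise bound for the forward direction, and use Hardy's inequality for the converse, exactly as in the model proof of Theorem \ref{CorFractLapl2}. Your treatment of the case $0<q<1$ of Hardy's inequality is also fine and is not where the difficulty lies.

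The genuine gap is that you apply Theorem \ref{thm 14.2 p} at the critical order and then identify quantities of the form $\bigl(\int_0^\infty(t^{-\sigma}(1+|\log t|)^b\omega_{\sigma}(g,t)_p)^q\frac{dt}{t}\bigr)^{1/q}$ with $\|g\|_{\dot{\mathbf{B}}^{\sigma,b}_{p,q}(\mathbb{R}^d)}$, once with $\sigma=\lambda$, $g=f$ and once with $\sigma=\lambda+2s$, $g=u$. This identification is false: by (the homogeneous version of) Lemma \ref{LemmaFractionalBesov} the Besov norm of smoothness $\sigma$ is computed by $\omega_\alpha$ only for $\alpha>\sigma$ strictly, and at $\alpha=\sigma$ the quantity is only an upper bound for the norm, not an equivalent one. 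Concretely, for $f$ with $\omega_{\lambda_0}(f,t)_p\asymp t^\lambda(1-\log t)^{-\beta}$ near $0$ ($\lambda_0>\lambda$), the sharp Jackson inequality (\ref{SharpJackson}) forces $\omega_\lambda(f,t)_p\gtrsim t^\lambda(1-\log t)^{1/\max\{p,2\}-\beta}$, so for $b+1/q<\beta<b+1/q+1/\max\{p,2\}$ one has $f\in\dot{\mathbf{B}}^{\lambda,b}_{p,q}(\mathbb{R}^d)$ while your right-hand side in the forward direction is $+\infty$; symmetrically, in the converse direction your final expression dominates $\|u\|_{\dot{\mathbf{B}}^{2s+\lambda,b}_{p,q}(\mathbb{R}^d)}$ but is not controlled by it, so neither inequality of the asserted equivalence is actually established. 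The repair is the device the paper uses in the proof of Theorem \ref{CorFractLapl2}: fix $\lambda_0>\lambda$ and run your entire argument with Theorem \ref{thm 14.2 p} applied to $\lambda_0$ in place of $\lambda$. Then the pointwise bound reads $t^{-2s}\omega_{\lambda_0+2s}(u,t)_p\lesssim\omega_{\lambda_0}(f,t)_p$ and the Hardy step produces $\omega_{\lambda_0+2s}(u,\cdot)_p$, and since $\lambda_0>\lambda$ and $\lambda_0+2s>\lambda+2s$ both resulting weighted integrals genuinely compute $\|f\|_{\dot{\mathbf{B}}^{\lambda,b}_{p,q}(\mathbb{R}^d)}$ and $\|u\|_{\dot{\mathbf{B}}^{2s+\lambda,b}_{p,q}(\mathbb{R}^d)}$; with that change your proof goes through.
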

\begin{proof}
	The proof is similar to that given in Theorem \ref{CorFractLapl2} and is left to the reader.
\end{proof}

We now turn to the quantitative version of the regularity result for the Dirichlet problem (\ref{DirProblem}) with $L_p$-data given in Theorem \ref{thm:DirProblemP}.

\begin{thm}
	Let $\Omega$ be a bounded open set in $\mathbb{R}^d$ and let $0 < s < 1$. Let $f \in \dot{\mathscr{L}}^{-s}_2(\overline{\Omega})$ and let $u \in \dot{\mathscr{L}}^{s}_2(\overline{\Omega}) $  be the weak solution to the Dirichlet problem
	\begin{equation*}
		 \left\{\begin{array}{cl}  (-\Delta)^s u = f & \text{in} \quad  \Omega, \\
		u= 0  & \text{on} \quad \mathbb{R}^d \setminus \Omega.
		       \end{array}
                        \right.
	\end{equation*}
	Let $1 < p < \infty, q \geq \max\{p,2\}$ and $\lambda > 0$. Let $\zeta \in C^\infty(\mathbb{R}^d)$ be a cut-off function satisfying (\ref{cutoff}).
Assume that $f \in L_p(\Omega)$ and $u \in W^s_p(\Omega)$. Then
	\begin{equation*}
		\left(\int_0^t (\xi^{-2 s} \omega_{ \lambda + 2 s} (\zeta u, \xi)_p)^q \frac{d \xi}{\xi}\right)^{1/q} \lesssim \omega_\lambda(\zeta f,t)_p + \|u\|_{W^s_p(\Omega)}, \quad t > T > 0.
	\end{equation*}
\end{thm}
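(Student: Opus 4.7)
The plan is to follow the same strategy as in the proof of the $L_2$-regularity theorem above (Theorem 14.4), replacing every $L_2$-ingredient by its $L_p$-analogue. The two key substitutions are: Theorem \ref{thm 14.2 p} in place of Theorem \ref{thm 14.2}, and an $L_p$-version of the localization identity \eqref{PointwiseLaplacian}--\eqref{PointwiseLaplacianRemainder}.

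First I would establish the pointwise identity
$$(-\Delta)^s(\zeta u) = \zeta(-\Delta)^s u + g \quad \text{in } \mathbb{R}^d,$$
where the remainder $g$ lies in $L_p(\mathbb{R}^d)$ and satisfies the key estimate
$$\|g\|_{L_p(\mathbb{R}^d)} \lesssim \|u\|_{W^s_p(\Omega)}.$$
This is the $L_p$-analogue of the identity used in the Hilbert case. It can be derived directly from the singular-integral representation \eqref{LaplaceRiesz}: $g(x)$ is the commutator integral $c_{d,s}\,\mathrm{P.V.}\int_{\mathbb{R}^d}\frac{(\zeta(x)-\zeta(y))(u(x)-u(y))}{|x-y|^{d+2s}}\,dy$ plus a fully non-local term involving $y$ outside a neighbourhood of $\mathrm{supp}\,\zeta$; both pieces are controlled in $L_p(\mathbb{R}^d)$ using the Calder\'on--Zygmund theory and the hypothesis $u\in W^s_p(\Omega)$.

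Once this identity is available, note that since $\mathrm{supp}\,\zeta \subset \omega \subset \Omega$, the equation $(-\Delta)^s u = f$ in $\Omega$ gives $\zeta(-\Delta)^s u = \zeta f$, so
$$(-\Delta)^s(\zeta u) = \zeta f + g \quad \text{in } \mathbb{R}^d,$$
with $\zeta f + g \in L_p(\mathbb{R}^d)\cap \dot{\mathscr{L}}^{-s}_2(\mathbb{R}^d)$, and $\zeta u \in \dot{\mathscr{L}}^s_2(\mathbb{R}^d)$ (extended by zero). Hence $\zeta u$ is the unique weak solution on $\mathbb{R}^d$ of the fractional Poisson equation with datum $\zeta f + g$. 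The forward direction of \eqref{EquivFractLaplace0 p} in Theorem \ref{thm 14.2 p} applies because $q\geq \max\{p,2\}$, yielding
$$\left(\int_0^t \big(\xi^{-2s}\omega_{\lambda+2s}(\zeta u,\xi)_p\big)^q \frac{d\xi}{\xi}\right)^{1/q} \lesssim \omega_\lambda(\zeta f + g, t)_p.$$
The subadditivity of $\omega_\lambda(\cdot,t)_p$ and the trivial bound $\omega_\lambda(g,t)_p \lesssim \|g\|_{L_p(\mathbb{R}^d)}$ (valid for $t>T>0$) then give
$$\omega_\lambda(\zeta f+g,t)_p \leq \omega_\lambda(\zeta f,t)_p + \omega_\lambda(g,t)_p \lesssim \omega_\lambda(\zeta f,t)_p + \|u\|_{W^s_p(\Omega)},$$
which is the claim.

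The main obstacle is step one: justifying the commutator-type identity together with the $L_p$-bound on $g$. In the Hilbert setting this is quoted directly from Biccari--Warma--Zuazua, whose argument leans on test-function/duality techniques tailored to $L_2$. For arbitrary $p\in(1,\infty)$ one has to work with the singular-integral kernel itself, decomposing $[(-\Delta)^s,\zeta]$ into a principal Calder\'on--Zygmund part (bounded on $L_p$ by classical theory) and a smooth, compactly supported remainder absorbing the regularity of $\zeta$. The explicit hypothesis $u\in W^s_p(\Omega)$ in the theorem (which was automatic for $p=2$ thanks to Lemma \ref{lem:DirProblem}) is precisely what is needed to close these $L_p$-estimates, and it is the only point at which the proof departs substantively from the $L_2$-case.
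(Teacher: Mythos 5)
Your argument is essentially the paper's proof: localize via the identity $(-\Delta)^s(\zeta u)=\zeta(-\Delta)^s u+g$ with $\|g\|_{L_p(\mathbb{R}^d)}\lesssim\|u\|_{W^s_p(\Omega)}$, apply the forward implication of Theorem \ref{thm 14.2 p} (which is where $q\geq\max\{p,2\}$ enters), and finish with subadditivity of $\omega_\lambda$ and the trivial bound $\omega_\lambda(g,t)_p\lesssim\|g\|_{L_p(\mathbb{R}^d)}$. The only difference is that the step you flag as the main obstacle — the $L_p$ commutator estimate — is not proved in the paper but quoted directly from Biccari--Warma--Zuazua (Section 3.2, (3.10) and (3.18)), where the hypothesis $u\in W^s_p(\Omega)$ plays exactly the role you identify.
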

\begin{proof}
	The proof relies on the following result obtained in \cite[Section 3.2, (3.10) and (3.18)]{BiccariWarmaZuazua}.
	
	\begin{lem}
		The following pointwise formula holds
		\begin{equation}\label{PointwiseLaplacian*}
			(-\Delta)^s (\zeta u) = \zeta (-\Delta)^s u + g \quad \text{in} \quad \mathbb{R}^d
		\end{equation}
		where $g \in L_p(\mathbb{R}^d)$ with
		\begin{equation}\label{PointwiseLaplacianRemainderP*}
		\|g\|_{L_p(\mathbb{R}^d)} \lesssim \|u\|_{W^{s}_{p}(\Omega)}.
		\end{equation}
	\end{lem}
	
	Since $f \in L_p(\Omega)$, it follows that $\zeta (-\Delta)^s u + g \in L_p(\mathbb{R}^d)$. According to (\ref{PointwiseLaplacian*}), (\ref{EquivFractLaplace0 p}) and (\ref{PointwiseLaplacianRemainderP*}), we have
	\begin{align*}
		\left(\int_0^t (\xi^{-2 s} \omega_{ \lambda + 2 s} (\zeta u, \xi)_p)^q \frac{d \xi}{\xi}\right)^{1/q} & \lesssim \omega_\lambda(\zeta (-\Delta)^s u + g, t)_p \\
		& \lesssim  \omega_\lambda(\zeta (-\Delta)^s u, t)_p + \|g\|_{L_p(\mathbb{R}^d)} \\
		& \lesssim  \omega_\lambda(\zeta f, t)_p + \|u\|_{W^{s}_{p}(\Omega)}.
	\end{align*}
\end{proof}

\newpage
\appendix
\section{List of symbols}\label{A}

\bigskip

  \textbf{Sets}

\medskip

    $\mathbb{R}^d$, $d$-dimensional real Euclidean space,  \pageref{SETR}

    $\mathbb{T}$, unit circle, \pageref{SETT}

    $\mathbb{Z}^d$, integer lattice in $\mathbb{R}^d$, \pageref{SETZ}

    $\mathbb{N}$, natural numbers, \pageref{SETN}

    $\mathbb{N}_0$, natural numbers with $0$, \pageref{SETN0}

    $GM$, set of general monotone functions, \pageref{GM}

    $\widehat{GM}^d$, set of radial functions whose Fourier transform is general monotone, \pageref{GMF}

    $\mathfrak{L}$, set of lacunary Fourier series, \pageref{LAC}

    $\mathfrak{L}^{\text{wm}}$, set of lacunary Fouries series with weak monotone Fourier coefficients, \pageref{LACMON}

     $\mathfrak{l}$, set of lacunary sequences, \pageref{LACSEQ}

     $SV$, class of slowly varying functions, \pageref{SV}

     $B_t(x)$, ball of radius $t$ centered at $x$, \pageref{BALL}

     $\mathbb{R}^{d+1}_+$, upper half-space of $\mathbb{R}^{d+1}$, \pageref{HALFSPACE}

     $Q_{\nu m}$, cube with sides parallel to the coordinate axes, centred at $2^{-\nu} m$, and with side length $2^{-\nu}$, \pageref{CUBE}

     $\Omega$, bounded open set, \pageref{DOMAIN}

     $\partial \Omega$, boundary of $\Omega$, \pageref{BOUNDARY}

     $\overline{\Omega}$, closure of $\Omega$, \pageref{CLOSURE}
%
%
%
%

\bigskip

\textbf{Numbers and relations}

\medskip

    $X \hookrightarrow Y$, continuous embedding, \pageref{XY}

    $[a]$,  the largest integer not greater than $a$, \pageref{[a]}

    $p'$, dual exponent of $p$, \pageref{p'}

    $A \lesssim B$, the estimate $A \leq C B,$ where $C$ is a positive constant, \pageref{AB}

    $A \asymp B$, the estimates $C^{-1} B \leq A \leq C B$, where $C > 1$ is a constant, \pageref{ASYMP}

    $m_t$, surface measure of the sphere of radius $t$, \pageref{MEAS}

%
%

\bigskip

 \textbf{Spaces}

\medskip

$\mathbf{B}^{s,b}_{p,q}(\mathbb{R}^d)$, Besov space defined by differences, \pageref{BESOVDIFF}

$\text{Lip}^{(k,-b)}_{p,q}(\mathbb{R}^d)$, logarithmic Lipschitz space, \pageref{LOGLIPSCHITZ}

$\text{Lip}(\mathbb{R}^d)$, Lipschitz space, \pageref{LIPSCHITZ}

$\text{BV}(\mathbb{R}^d)$,  the space of functions of bounded variation, \pageref{BV}

$\mathbf{B}^{s,b}_{p,q}(\mathbb{T})$, periodic Besov space defined by differences, \pageref{BESOVDIFFPER}

$\text{Lip}^{(k,-b)}_{p,q}(\mathbb{T})$, periodic Lipschitz space, \pageref{LOGLIPSCHITZPER}

$\mathcal{D}(\mathbb{T})$, the space of infinite differentiable periodic functions, \pageref{D}

$\mathcal{D}'(\mathbb{T})$, dual space of $\mathcal{D}(\mathbb{T})$, \pageref{D'}

$\mathcal{S}(\mathbb{R}^d)$, Schwartz space, \pageref{S}

$\mathcal{S}'(\mathbb{R}^d)$, space of tempered distributions, \pageref{S'}

$B^{s,b}_{p,q}(\mathbb{R}^d)$, Besov space defined by Fourier analytic tools, \pageref{BESOVF}

$B^{s,b}_{p,q}(\mathbb{T})$, periodic Besov space defined by Fourier analytic tools, \pageref{BESOVFPER}

$F^{s,b}_{p,q}(\mathbb{R}^d)$, Triebel-Lizorkin space, \pageref{TL}

$H^{s,b}_p(\mathbb{R}^d)$, Sobolev space, \pageref{SOB}

$F^{s,b}_{p,q}(\mathbb{T})$, periodic Triebel-Lizorkin space, \pageref{TLPER}

$H^{s,b}_p(\mathbb{T})$, periodic Sobolev space, \pageref{SOBPER}

$W^k_p(\mathbb{R}^d)$, classical Sobolev space, \pageref{SOBCLAS}

$W^k_p(\mathbb{T})$, classical periodic Sobolev space, \pageref{SOBPERCLAS}

$(A_0,A_1)_{\theta,q}$, real interpolation space, \pageref{REAL}

$(A_0,A_1)_{\theta,q;\mathbb{A}}, (A_0,A_1)_{\theta,q;\alpha}, (A_0,A_1)_{\theta,q;\alpha,\xi}$, logarithmic interpolation spaces, \pageref{REALLOG}, \pageref{REALLOGIT}

$(A_0,A_1)_{(\theta,\alpha),q}$, limiting interpolation space, \pageref{REALLIM}

$B^{s,b,\xi}_{p,q}(\mathbb{R}^d)$, Besov space of iterated logarithmic smoothness defined by Fourier analytic tools, \pageref{BESOVFLOG}

$H^{s,b,\Psi}_p(\mathbb{R}^d)$, Sobolev space of generalized smoothness, \pageref{SOBGENERAL}

$B^{s,b,\Psi}_{p,q}(\mathbb{R}^d)$, Besov space of generalized smoothness, \pageref{BESOVGENERAL}

$A'$, dual space of $A$, \pageref{DUAL}

$\dot{\mathscr{L}}^{s}_p(\mathbb{R}^d)$, Riesz potential space, \pageref{RIESZ}

$\dot{W}^k_p(\mathbb{R}^d)$, homogeneous Sobolev space, \pageref{SOBHOM}

$\dot{B}^s_{p,q}(\mathbb{R}^d)$, homogeneous Besov space defined by Fourier analytic tools, \pageref{BESOVHOM}

$D(\Lambda)$, domain of the infinitesimal generator $\Lambda$ of a semi-group, \pageref{DOMGEN}

$V_p(\mathbb{R})$, bounded $p$-variation space, \pageref{pVAR}

$V_p(\mathbb{T})$, periodic bounded $p$-variation space, \pageref{pVARPER}

$\dot{V}_p(\mathbb{R})$, homogeneous bounded $p$-variation space, \pageref{pVARHOM}

$\dot{V}_p(\mathbb{T})$, homogeneous periodic bounded $p$-variation space, \pageref{pVARPERHOM}

$\mathbf{B}^{s,b,\xi}_{p,q}(\mathbb{R}^d)$, Besov space of iterated logarithmic smoothness defined by differences, \pageref{BESOVDLOG}

$\dot{\mathcal{S}}(\mathbb{R}^d)$, space formed by $f \in \mathcal{S}(\mathbb{R}^d)$ with $(D^\beta \widehat{f})(0) = 0$ for $\beta \in \mathbb{N}_0^d$, \pageref{SCLASS}

$\dot{\mathcal{S}}'(\mathbb{R}^d)$, $\mathcal{S}'(\mathbb{R}^d)$ modulo polynomials, \pageref{S'CLASS}

$L_1(s)$, weighted $L_1$-space, \pageref{WEIGHTEDL1}

$\Lambda^{s,b}(\mathbb{R}^d)$, Lipschitz space defined in terms of the heat semi-group, \pageref{LIPSCHITZHEAT}

$C^{k,s,b}(\mathbb{R}^d)$, H\"older space, \pageref{HOLDER}

$\mathcal{Z}^{1,b}(\mathbb{R}^d)$, Zygmund space, \pageref{ZYGMUND}

$\dot{\mathscr{L}}^{s}_{p,\text{loc}}(\Omega)$, local Riesz potential space, \pageref{LOCALRIESZ}

$\dot{\mathbf{B}}^{s,b}_{p,q,\text{loc}}(\Omega)$, local (homogeneous) Besov space, \pageref{LOCALBESOV}

$\dot{\mathscr{L}}^{s}_2(\overline{\Omega})$, (homogeneous) Sobolev space on $\Omega$, \pageref{SOBOLEVDOMAIN}

$\dot{\mathscr{L}}^{-s}_2(\overline{\Omega})$, dual space of $\dot{\mathscr{L}}^{s}_2(\overline{\Omega})$, \pageref{SOBOLEVDUAL}

$W^s_p(\Omega)$, Besov space on $\Omega$, \pageref{BESOVDOMAIN}

%

\bigskip

\textbf{Functionals and functions}

\medskip

$\chi_E$, characteristic function of $E$, \pageref{INDICATOR}

     $\omega_{k}(f,t)_p$, modulus of smoothness of integer order $k$, \pageref{MODK}

     $\Delta^k_h$, difference of integer order $k$ with step $h$, \pageref{DELTA}

     $K(t,a)$, Peetre's $K$-functional, \pageref{K}

     $\ell(t)$, $\ell^{\mathbb{A}}(t)$, logarithmic functions, \pageref{LOG}

    $E^\ast_n(f)_{L_p(\mathbb{T})}$, $L_p$-best approximation of the periodic function $f$ by trigonometric polynomials of degree at most $n-1$, \pageref{ERRORPER}

 $E_n(f)_{L_p(\mathbb{R}^d)}$,  $L_p$-best approximation of the function $f$ by entire functions of spherical exponential type $n$, \pageref{ERROR}

     $\omega_{\alpha}(f,t)_{p}$, modulus of smoothness of fractional order $\alpha$, \pageref{MODA}

     $\Delta^\kappa_h$, difference of fractional order $\kappa$ with step $h$, \pageref{DELTAFRAC}

     $a_{\nu m}$, atom located on $Q_{\nu m}$, \pageref{ATOM}

     $f^\ast$, non-increasing rearrangement of $f$, \pageref{REARRANGEMENT}

\bigskip

\textbf{Operators}

\medskip

$\text{id}_X$, identity map on $X$, \pageref{ID}

    $\widehat{f}$, Fourier transform of $f$, \pageref{FT}

    $f^\vee$, inverse Fourier transform of $f$, \pageref{IFT}

    $D^\alpha$, partial derivative, \pageref{DERIVATIVE}

    $\mathfrak{J}_{s,b}$, periodic lifting operator, \pageref{LIFTPER}

    $I_\sigma$, lifting operator, \pageref{LIFT}

    $S_n(f)$, $n$-th partial sum of the Fourier series of $f$, \pageref{PARTIALSUM}

    $\eta_R$, de la Vall\'ee-Poussin operator, \pageref{VALLEEPOUSSIN}

    $\mathcal{J}_b$, logarithmic lifting operator, \pageref{LIFTLOG}

    $J_s$, Riesz potential operator, \pageref{RIESZOPERATOR}

    $B_t, B_{l,t}$, ball average operators, \pageref{AVERAGE}

    $\Delta^l$, $l$th order Laplacian, \pageref{LAPLACEP}

    $V_t, V_{l,t}$, sphere average operators, \pageref{AVERAGESPH}

    $S^{\lambda,\alpha}_t$, generalized Bochner-Riesz operators, \pageref{BOCHNERRIESZ}

    $W^\alpha_t$, Weierstrass operators, \pageref{WEIERSTRASS}

    $W_t$, Gauss-Weierstrass semi-group, \pageref{GAUSSWEIERSTRASS}

    $P_t$, Cauchy-Poisson semi-group, \pageref{POISSON}

    $\Lambda$, infinitesimal generator of a semi-group, \pageref{GEN}

    $(-\Delta)^{s}$, fractional Laplace operator, \pageref{FRACTLAPLACE}
%

\newpage

\end{document}